\newcommand{\QQ}{\ensuremath{\mathbb{Q}}\xspace}
\newcommand{\ZZ}{\ensuremath{\mathbb{Z}}\xspace}
\newcommand{\Zp}{\ensuremath{\mathbb{Z}_{(p)}}\xspace}
\newcommand{\Z}[1]{\ensuremath{\mathbb{Z}_{(#1)}}\xspace}
\newcommand{\NN}{\ensuremath{\mathbb{N}}\xspace}
\newcommand{\LL}{\ensuremath{\mathbb{L}}\xspace}
\newcommand{\OO}{\ensuremath{\mathcal{O}}\xspace}
\newcommand{\rarr}{\rightarrow}
\newcommand{\xrarr}[1]{\xrightarrow{#1}}
\newcommand{\dasharr}{\dashrightarrow}
\newcommand{\id}{\ensuremath{\mbox{id}}}
\newcommand{\ot}{\otimes}
\newcommand{\Sm}{\mathcal{S}\mathrm m}
\newcommand{\SmProj}{\mathcal{S}\mathrm m\mathcal{P}\mathrm{roj}}
\newcommand{\Spec}{\mathrm{Spec\ }}
\newcommand{\End}{\ensuremath{\mathrm{End}}}
\newcommand{\F}[1]{\mathbb{F}_{#1}}
\newcommand{\un}{\ensuremath{\mathds{1}}}
\newcommand{\PM}{\mathrm{CM}}
\newcommand{\CM}{\mathrm{CM}}
\newcommand{\HH}{\mathrm{H}}
\newcommand{\Pic}{\mathrm{Pic}}
\newcommand{\PP}{\ensuremath{\mathbb{P}}}
\newcommand{\et}{\ensuremath{\mathrm{\acute{e}t}}}
\newcommand{\CH}{\ensuremath{\mathrm{CH}}}
\newcommand{\Ch}{\ensuremath{\mathrm{Ch}}}
\newcommand{\pra}[1]{\mathrm{pr}_{#1}}
\newcommand{\Br}{\ensuremath{\mathrm{Br}}}
\newcommand{\kmtech}{\mathrm{K}^{\mathrm{M}}}
\newcommand{\kma}[2]{\ensuremath{\kmtech_{#2}\!\left(#1\right)\!/2}} 
\newcommand{\kmk}[1]{\ensuremath{\kma{k}{#1}}} 
\newcommand{\km}{\ensuremath{\kmk{n+1}}}
\newcommand{\Hom}{\ensuremath{\mathrm{Hom}}}
\newcommand{\Ker}{\ensuremath{\mathrm{Ker}}}
\newcommand{\Gal}[1]{\ensuremath{{\mathrm{Gal}(#1)}}}
\newcommand{\Inv}[3]{\ensuremath{{\mathrm{Inv}^{#2}(#1,\,#3)}}}
\newcommand{\Hnpo}[1]{\ensuremath{\HH^{n+1}\left(#1,\,\mathbb Z/2\right)}} 
\newcommand{\Wnr}[1]{\ensuremath{W_{\mathrm{nr}}\left(#1/k\right)}} 
\newcommand{\ibar}{\ensuremath{\overline{i}}}   
\newcommand{\bundle}[1]{\ensuremath{{#1}}}   
\newcommand{\TB}[1]{\ensuremath{\bundle{T}_{#1}}}   
\newcommand{\LB}{\ensuremath{\bundle L}}   
\newcommand{\hyp}{\ensuremath{\mathbb{H}}}   
\newcommand{\sh}[1]{\ensuremath{(#1)}}   
\newcommand{\iw}{\ensuremath{i_\mathrm{W}}}   
\newcommand{\an}{\ensuremath{\mathrm{an}}}   
\newcommand{\A}{\ensuremath{A}}   
\newcommand{\B}{\ensuremath{B}}
\newcommand{\Kn}{\ensuremath{\mathrm{K}(n)}}
\newcommand{\KK}{\ensuremath{\mathrm{K}_0}}
\newcommand{\K}[1]{\ensuremath{\mathrm{K}(#1)}}
\newcommand{\CKn}{\ensuremath{\mathrm{CK}(n)}}
\newcommand{\Cob}{\ensuremath{\Omega}}
\newcommand{\Cobp}{\ensuremath{\Omega_{(p)}}}
\newcommand{\BP}{\ensuremath{\mathrm{BP}}}
\newcommand{\num}{\ensuremath{\mathrm{num}}}
\newcommand{\Knum}{\ensuremath{\Kn_{\num}}}
\newcommand{\SHk}{\mathrm{SH}(k)}
\newcommand{\iso}{\ensuremath{\mathrm{iso}}}
\renewcommand{\int}{\ensuremath{\mathrm{int}}}
\newcommand{\At}[1]{\ensuremath{{\A}\!\left(#1\right)}}   
\newcommand{\Bt}[1]{\ensuremath{{\B}\!\left(#1\right)}}
\newcommand{\CHt}[1]{\ensuremath{{\CH}\!\left(#1\right)}}
\newcommand{\Cht}[1]{\ensuremath{{\Ch}\!\left(#1\right)}}
\newcommand{\Knt}[1]{\ensuremath{{\Kn}\!\left(#1\right)}}
\newcommand{\CKnt}[1]{\ensuremath{{\CKn}\!\left(#1\right)}}
\newcommand{\BPt}[1]{\ensuremath{{\BP}\!\left(#1\right)}}
\newcommand{\Af}{\ensuremath{\A}}   
\newcommand{\Afnum}{\ensuremath{\A_{\num}}}
\newcommand{\Bf}{\ensuremath{{\B}}}
\newcommand{\Chf}{\ensuremath{{\Ch}}}
\newcommand{\Cobf}{\ensuremath{{\Cob}}}
\newcommand{\Cobpf}{\ensuremath{\Cobp}}
\newcommand{\BPf}{\ensuremath{{\BP}}}
\newcommand{\Knf}{\ensuremath{{\Kn}}}
\newcommand{\Kgr}{\ensuremath{{\mathrm{K}_0}}}
\newcommand{\gr}{\ensuremath{{\mathrm{gr}}}}
\newcommand{\mottech}{\ensuremath{\mathrm{M}}}   
\newcommand{\Mot}[2]{\ensuremath{\mottech_{#1}\!\left(#2\right)}}   
\newcommand{\MA}[1]{\ensuremath{\Mot{\A}{#1}}}
\newcommand{\MCH}[1]{\ensuremath{\Mot{\CH}{#1}}}
\newcommand{\MCh}[1]{\ensuremath{\Mot{\Ch}{#1}}}
\newcommand{\MKn}[1]{\ensuremath{\Mot{\Kn}{#1}}}
\newcommand{\MCKn}[1]{\ensuremath{\Mot{\CKn}{#1}}}
\newcommand{\Mker}[1]{\ensuremath{\widetilde{\,\mottech}_{\Kn}\!\left(#1\right)}} 
\newcommand{\Mkertwo}[1]{\ensuremath{\widetilde{\,\mottech}_{\K{2}}\!\left(#1\right)}} 
\newcommand{\oCh}{\ensuremath{\overline{\Ch}}}
\newcommand{\oKn}{\ensuremath{\overline\Kn}}
\newcommand{\oCKn}{\ensuremath{\overline\CKn}}
\newcommand{\oBP}{\ensuremath{\overline\BP}}
\newcommand{\oCht}[1]{\ensuremath{{\oCh}\left(#1\right)}}
\newcommand{\oKnt}[1]{\ensuremath{{\oKn}\left(#1\right)}}
\newcommand{\oBPt}[1]{\ensuremath{{\oBP}\left(#1\right)}}
\newcommand{\pttech}{\ensuremath{k}}
\newcommand{\Apt}{\ensuremath{\At{\pttech}}}   
\newcommand{\Bpt}{\ensuremath{\Bt{\pttech}}}
\newcommand{\Knpt}{\ensuremath{\Knt{\pttech}}}
\newcommand{\BPpt}{\ensuremath{\BPt{\pttech}}}
\newcommand{\BPptaug}{\ensuremath{\BP^{<0}(\pttech)}}
\newcommand{\unA}{\ensuremath{\un}}   
\newcommand{\unCH}{\ensuremath{\un_{\CH}}}   
\newcommand{\unCh}{\ensuremath{\un_{\Ch}}}   
\newcommand{\unKn}{\ensuremath{\un_{\Kn}}}   
\newcommand{\Enda}[1]{\ensuremath{\End\!\left(#1\right)}}   
\newcommand{\FGL}[1]{\ensuremath{F_{#1}}}   
\newcommand{\Laz}{\ensuremath{\mathbb L}}   
\newcommand{\Lazp}{\ensuremath{\mathbb L_{(p)}}}
\newcommand{\rk}{\ensuremath{\mathrm{rk\ }}}
\newcommand{\St}[1]{\ensuremath{\mathrm{St}^{#1}}}   
\newcommand{\StCh}{\ensuremath{\St{\Ch}}} 
\newcommand{\StCob}{\ensuremath{\St{\Cob}}}
\newcommand{\StBP}{\ensuremath{\St{\BP}}}
\newcommand{\Ph}[1]{\ensuremath{\Phi^{#1}}}   
\newcommand{\PhCob}{\ensuremath{\Ph{\Cob}}}
\newcommand{\PhBP}{\ensuremath{\Ph{\BP}}}
\newcommand{\Smk}{\ensuremath{\mathcal S\mathrm m_k}}   
\newcommand{\codim}[1]{\ensuremath{\mathrm{codim}\!\left(#1\right)}}   
\newcommand{\Lamn}[1]{\ensuremath{\widetilde{\Lambda\,}\!\left(#1\right)}}   
\newcommand{\rost}{\ensuremath{\mathfrak r}}
\newtheorem{lm}{Lemma}
\newtheorem{Def}{Definition}[section]
\newtheorem{Rk}[Def]{Remark}
\newtheorem{Ex}[Def]{Example}
\newtheorem{Th}[Def]{Theorem}
\newtheorem{Prop}[Def]{Proposition}
\newtheorem{Constr}[Def]{Construction}
\newtheorem{Cr}[Def]{Corollary}
\newtheorem{Lm}[Def]{Lemma}
\newtheorem{Qu}[Def]{Question}
\newtheorem{Conj}[Def]{Conjecture}
\newtheorem{LmA}{Lemma A.\ignorespaces}
\newtheorem{PropA}{Proposition A.\ignorespaces}
\newtheorem{LmB}{Lemma B.\ignorespaces}
\newtheorem*{Th-intro}{Theorem}
\newtheorem*{Cr-intro}{Corollary}
\newtheorem*{Def-intro}{Definition}
\newtheorem*{Prop-intro}{Proposition}
\newtheorem*{Conj-intro}{Conjecture}
\newcommand{\KnInt}{\ensuremath{\mathrm{K(n)}^{\mathrm{int}}}}
\title{Invertible Morava motives in quadrics}
\author{Andrei Lavrenov, Pavel Sechin}
\begin{document}

\date{}

\begin{abstract}
We associate to any element in the Milnor K-theory of a field $k$ modulo 2 an invertible Morava K-theory motive over $k$.
Specifically, for
$\alpha$ in $\km$ 
we construct an invertible $\Kn$-motive $L_\alpha$ 
in a way that is natural in the base field and additive in $\alpha$.
This can be seen as categorification of $\km$ in motives.

The motives $L_\alpha$ are constructed as direct summands of the $\Kn$-motives of quadrics, 
and we develop the necessary framework for the study of the latter.
We show that passing to the field of functions of quadrics of dimension greater than or equal to $2^{n+1}-1$ 
does not lose any information about the structure of $\Kn$-motives.
This is based on the study of  ``decomposition of the diagonal'' in Morava K-theory of quadrics. 

For quadrics of dimension less than $2^{n+1}-1$,
we show that their Chow motives can be ``reconstructed'' from their $\Kn$-motives,
although the latter appear structurally simpler. 
Our proof of this result relies on the use of the unstable symmetric operations of Vishik on algebraic cobordism.

The occurrence of the motive $L_\alpha$ as a direct summand of the $\Kn$-motive of $X$
can be seen as evidence that $\alpha$ is a cohomological invariant of $X$. 
We study this occurrence for quadrics and relate it 
to Kahn's Descent conjecture.
\end{abstract}

\maketitle

In the 1960s Grothendieck has initiated the study of algebraic varieties
via the theory of motives. 
Motives linearize algebraic geometry
and make it possible to decompose smooth projective
varieties into smaller pieces that contain information about cohomology.

Grothendieck's approach to motives
requires a choice of an algebro-geometric cohomology theory.
Classically, cohomology theories with rational coefficients are of central interest.
Yet for some questions of algebraic geometry the motives coming from theories 
with $\QQ$-coefficients are not so useful.
For example, the motives with $\QQ$-coefficients of a conic
and of $\mathbb{P}^1$ are isomorphic, even if the conic lacks a rational point;
thus, all arithmetic information is lost.

One way out of this predicament is to consider cohomology theories 
with coefficients in fields of positive characteristic. 
One could, similar to $\QQ$-coefficients, consider different versions of 
ordinary cohomology theory with $\mathbb{F}_p$-coefficients. 
However, as topology tells us, the situation in the mod-$p$-world is much more complicated
than rationally: ordinary cohomology theories are not enough. 
What is missing are precisely
Morava K-theories,
which play the central role in the chromatic homotopy theory.

In this paper, we study Grothendieck motives
defined for the algebraic Morava K-theory.
By examining quadrics,
 we demonstrate how these motives are well-suited for 
 the study of properties of algebraic varieties that become trivial over the algebraic closure of the base field.
 In particular, they allow us to associate elements in the Galois cohomology of the base field
to smooth projective varieties.

\section*{Introduction}
\addtocontents{toc}{\protect\setcounter{tocdepth}{1}}  

Let $k$ be a field of characteristic $0$.
One of the standard tools for the study of quadratic forms over $k$ 
is the investigation of algebraic cycles on the corresponding quadrics.
To relate cycles on different quadrics, one considers
the category of Chow motives $\CM_{\CH}(k)$ with integral or $\F{2}$-coefficients, 
where Chow groups, i.e.\  algebraic cycles modulo rational equivalence,
are representable. 

Quite a few major results about quadratic forms have been obtained using this tool.
Among these are the computations of the essential dimension of a quadric  \cite{KarpMer}, 
 the possible values of the Witt index \cite{KarpWitt},
and the construction of fields with new values of the $u$-invariant \cite{Vish-u-inv}.

In this paper we study quadrics in the category of motives $\CM_{\Kn}(k)$, Morava motives for short,
where the role of Chow groups is replaced by algebraic Morava K-theory $\Kn$ at the prime $2$.
The latter was introduced by Voevodsky \cite{Voe} 
as a crucial (then conjectural) aid in proving the Milnor conjecture.

The results of this paper 
yield a categorification of Milnor K-theory modulo $2$, 
describe a new approach to cohomological invariants
as well as 
provide a new tool for the study of quadratic forms and of algebraic cycles on quadrics.

\subsection*{Overview of the results.}

One of the central discoveries of this paper 
are the Morava motives that act as splitting objects for elements in the Milnor K-theory modulo 2.
Let $\Pic(-)$ denote
the group of isomorphism classes of invertible objects in a tensor additive category. 

\begin{Th-intro}[{Theorem~\ref{th:nat_transform_milnor_picard}}]
Let $k$ be a field of characteristic 0.

There exists a unique injective natural transformation 
$$ \kma{-}{n+1} \xhookrightarrow{\quad} \Pic\!\left(\CM_{\Kn}(-)\right) $$

between functors of abelian groups on the category of field extensions of $k$.
\end{Th-intro}

Thus, Morava motives provide a framework 
in which smooth projective varieties and elements of Milnor K-theory modulo $2$ 
appear on the same footing. This result is also  a first step
towards the motivic categorification of Milnor K-theory,
i.e.\ the study of elements of these groups as {\sl objects} in a 
category 
where morphisms have algebro-geometric meaning.

The connection between $\km$ and quadratic forms is via the celebrated Milnor conjectures 
that were proven by Voevodsky~\cite{Voe_Z2} and Orlov--Vishik--Voevodsky~\cite{OVV}.
Recall that anisotropic quadratic forms over $k$ are classified by the Witt ring $W(k)$.
There is a filtration on the Witt ring by the powers of the fundamental ideal $I(k)\subset W(k)$,
and the Milnor conjectures assert that the canonical maps $\km\rarr I^{n+1}(k)/I^{n+2}(k)$ 
and  $\km\rarr \HH^{n+1}_{\et}(k,\ZZ/2)$ are isomorphisms.
Thus, the theorem above can be formulated with any of these groups instead of the Milnor K-theory.

To construct the motive $L_\alpha$ associated to $\alpha\in \km$ 
we lift $\alpha$ to a quadratic form $q\in I^{n+1}(k)$,
and investigate the Morava motive of the corresponding quadric $Q$.
To study decompositions of $\Kn$-motives we develop tools 
that allow to relate $\Kn$-motives over $k$ and over some field extensions of $k$.

To every quadratic form $q$
one associates a sequence of field extensions $\{K_j\}_{0\le j\le h}$,
the generic splitting tower of $q$, 
which captures information about the anisotropic part of $q_F$ over all field extensions $F$ of $k$.
If we look at the objects of interest associated to this tower,
i.e.\ the anisotropic part of $q_{K_j}$ or the corresponding Chow motive, 
then these are, in general, not defined over $k$.
However, 
working with Morava motives avoids this issue:
we show that the information contained in the $\Kn$-motive of $Q$ {\sl over the base field $k$} 
is precisely the information contained in the Chow motive of $Q_{K_j}$ 
over a specific field extension $K_j$ in the generic splitting tower.
In other words, Morava motives of quadrics allow us to
look into the depth of the generic splitting tower without changing the base field.

In particular, if 
$q\in I^{n+1}(k)$,
then over the specific field $K_j$ in the splitting tower of $q$, for $j$ depending on $q$,
the anisotropic part of $q_{K_j}$ becomes a  $(n+1)$-Pfister form.
By using the results about the motives of Pfister quadrics,
we can then show that the $\Kn$-motive of $Q$ decomposes
into invertible summands, and thus we can define $L_{[q]}$
where $[q]\in I^{n+1}(k)/I^{n+2}(k)$.

However, the motives $L_\alpha$ should not be seen as something related only to quadrics.
We posit to view the occurrence of the motive $L_\alpha$ as a direct summand of the Morava motive $\MKn X$
as $\alpha$ being a cohomological invariant of $X$.
Our expectation is that in this way one can associate a cohomological invariant 
to every 
Chow motive that is rationally split into Tate motives and that is split over $\overline{k}$.
In this paper we describe how this works out for quadrics,
relating the expectation to Kahn's Descent conjectures 
and proving it unconditionally for $n=1,2,3$.

One of the new features of the Morava motives 
is that they are adjusted to work with splitting fields. 
To an element $\alpha$ in $\km$
one associates a class of ``$\Kn$-universal'' splitting fields $k(\alpha)$.
Even though the motive $L_\alpha$ trivializes over $k(\alpha)$,
we show that we still have 
 control of motivic decompositions and motivic isomorphisms over $k(\alpha)$.
 For example, we show that $L_\alpha$ is the unique non-trivial motive 
 that becomes isomorphic to the unit motive over $k(\alpha)$.

In the remainder of the introduction we describe our results in more detail, 
simplifying some statements and omitting historical remarks.
The latter can be found in the extensive introductions to the relevant sections of the article.

\subsection*{Recollection on algebraic Morava K-theory.}

Algebraic Morava K-theory $\Kn$ at a prime $p$ is a ring-valued oriented cohomology theory
on smooth varieties over $k$, constructed as  
a free theory by Levine--Morel~\cite{LevMor}. 
In other words, $\Kn$ is the universal oriented theory with the given formal group law of height $n$
over the coefficient ring $\mathbb{F}_p[v_n, v_n^{-1}]$.
In this paper we are  working with $\Kn$ at the prime $2$,
unless explicitly stated otherwise.

Our motivation in studying $\Kn$ is two-fold.
On the one hand, Morava K-theories  interpolate
 between algebraic K-theory $\Kgr$ modulo $p$, i.e.\ $\mathrm{K}(1)$,
and Chow groups modulo $p$, i.e.\ $\mathrm{K}(\infty)$.
 This has been made precise in~\cite{Sech} 
where it was shown that the associated graded ring $\gr^{i\,}\Kn$ of the topological filtration on $\Kn$
is isomorphic to $\CH^i\otimes \F{p}[v_n, v_n^{-1}]$ for $i\le p^n$.

On the other hand, it was first observed in \cite{SechSem}
 by Geldhauser and the second author that $\Kn$-motives
tend to be {\sl simpler} than Chow motives, at least for some projective homogeneous varieties.
For quadrics in particular, it was shown that 
if a quadratic form $q$ lies in $I^{n+2}(k)$, 
then the $\Kn$-motive of $Q$ is the same as if $q$ were hyperbolic. 
This has led to the speculation about the relation between cohomological invariants and the behaviour of Morava motives
under the name of ``guiding principle'', to which we return later.

Moreover, numerical versions of Morava K-theories $\Kn$
appear in the study of the isotropic motivic world of Vishik~\cite{VishIso, VishIso2},
especially in constructing points in the Balmer spectrum of $\mathrm{SH}(k)$ \cite{DuVish}. 
For a flexible field $k$ there are ``Morava weight cohomology'' functors on compact objects of $\SHk$
that take values in the category of numerical $\Kn$-motives $\CM_{\Knum}(k)$ \cite[Def.~6.3, Rem.~6.4]{DuVish}.
The invertible motives $L_\alpha$ that we construct can also be seen as numerical $\Kn$-motives,
and thus might appear in the Morava weight cohomology of an arbitrary smooth variety over $k$, not necessarily projective.

\subsection*{Universally surjective field extensions and motivic decompositions.}

The first result of this paper allows to reduce the study of $\Kn$-motives of arbitrary quadrics
to the case of quadrics of  dimensions less than $2^{n+1}-1$.
This is one of the features that makes Morava motives easier to work with than Chow motives.

In order to achieve this, we investigate the following more general
property. For an oriented theory $\A$,
we say that the field extension $K/k$ 
is $\A$-universally surjective, resp. bijective,
if the pullback morphism 
$$\A(Y)\rarr \A ({Y_{K}})$$
 is surjective, resp. bijective, for every smooth variety $Y$ over $k$.
The main case of interest is when $K=k(X)$, for $X$ an irreducible $k$-smooth projective variety.

For the case when $\A$ is Chow theory,
surjectivity and bijectivity above are equivalent,
and $k(X)/k$ is $\CH$-universally bijective  if and only if
$X$ is $\CH_0$-universally trivial (see~Corollary~\ref{cr:CH0_univ}).
Such field extensions are $\A$-universally bijective for any oriented $\A$.
However, in our cases of interest, e.g.\ for quadrics, $k(X)/k$ can be $\A$-universally surjective for some $\A$
even if $X$ has no $0$-cycle of degree $1$ and therefore cannot be $\CH_0$-universally trivial.

We develop criteria to characterize both $\A$-universally surjective and bijective
field extensions $k(X)/k$ using  decomposition of the diagonal techniques (Lemma~\ref{lm:A}, Proposition~\ref{prop:univ_bij}). 
These results are based on the unpublished work of Shinder joint with the second author.

For the purposes of this paper the main application of these results 
concerns $\oKn$, a version of $\Kn$ of ``rational elements'',
i.e.\ a quotient of $\Kn$ obtained by killing the elements that vanish over an algebraic closure of the base field.

\begin{Prop-intro}[{Example~\ref{ex:overline-Kn-univ-surj-quad}}] 
Let $Q$ be a quadric of dimension at least $2^{n+1}-1$.

Then $k(Q)/k$ is $\oKn$-universally bijective.
\end{Prop-intro}

As a tool for the study of motives, the result above can be applied as follows.

\begin{Prop-intro}[{Example~\ref{ex:k(Q)/k_reflects_MD_Kn}}] 
Let $Q$ be a quadric of dimension at least $2^{n+1}-1$.

The base change functor
$$ \CM_{\Kn}(k) \rarr \CM_{\Kn}(k(Q))$$
reflects motivic decompositions of
the motives of projective homogeneous varieties, 
i.e.\ any decomposition or any isomorphism of such motives over $k(Q)$ can be lifted to $k$.
\end{Prop-intro}

Recall that to a quadratic form one can associate a sequence of field extensions and quadratic forms over them,
called the generic splitting tower of $q$:
$$ k=K_0 \subset K_1 \subset \cdots \subset K_h$$
where $K_i:= K_{i-1}(Q_{i-1})$ for $Q_{i-1}$ the  projective quadric corresponding to
 the anisotropic part of $q$ over $K_{i-1}$.

If $K_{j_n}$ is the first field in this sequence over which the dimension of the corresponding quadric $Q_{j_n}$ is less than $2^{n+1}-1$, 
then using the proposition above one can reduce the study of the $\Kn$-motive of $Q$ 
to the $\Kn$-motive of $Q_{j_n}$. 

\subsection*{Morava and Chow motives of quadrics} 
Now let $Q$ be a quadric of dimension less than $2^{n+1}-1$. 
The $\Kn$-motive of $Q$ 
is structurally
simpler than its Chow motive:
it canonically splits off $\dim Q - 2^n+2$ Tate motives, while the Chow motive of $Q$ can be indecomposable.
Let $\Mker Q$ denote the complement to these Tate summands.
We show that the decomposition of the Chow motive of $Q$ and isomorphism classes  of its direct summands
 can be nevertheless reconstructed from $\Mker Q$.

\begin{Th-intro}[{for a more precise statement see~Theorem~\ref{th:prestableMDT}}]
Let $Q$ be an anisotropic quadric of dimension less than $2^{n+1}-1$.
To every direct summand $\widetilde{N}$ in $\Mker Q$ 
we associate an isomorphism class of a motive $N$ which is a direct summand of $\Mot{\CH}{Q}$,
naturally under base field extensions and direct sums.
Given a motivic decomposition  $\Mker Q\cong \oplus_\lambda \widetilde{N}^\lambda$ 
into indecomposable directs summands 
we thus obtain a motivic decomposition $\Mot{\CH}{Q} \cong \oplus_\lambda N^\lambda$,
providing a bijection between the corresponding summands:

$$ \widetilde{N}^\lambda\xhookrightarrow{\oplus} \Mker Q \mbox{\ \ \ } \xleftrightarrow{\,1-1\,} \mbox{\ \ \ } N^\lambda\xhookrightarrow{\oplus} \Mot{\CH}{Q}.$$

For two quadrics $Q_1$, $Q_2$ of dimension less than $2^{n+1}-1$, 
 summands $\widetilde{N_i}\xhookrightarrow{\oplus} \Mker{Q_i}$, $i=1,2$, 
are isomorphic up to a Tate twist, 
if and only if the corresponding Chow motives $N_1$, $N_2$ are also isomorphic up to a Tate twist.
\end{Th-intro}

We emphasize that this result is specific to quadrics
and does not follow from the general properties of Morava K-theories.
Together with propositions above, this theorem 
shows that the study of the $\Kn$-motives of quadrics over a field $k$
is equivalent to the study of the Chow motives of quadrics of dimension less than $2^{n+1}-1$
but over a different field. 

\subsection*{Morava motives over universal splitting fields for $L_\alpha$} 
The key observation above that $\Kn$-motives ``ignore'' base change 
to the field of functions of a high-dimensional quadric can be strengthened even further.
Let $\alpha$ be a symbol in $\km$, and $k(\alpha)$ be the field of functions of the corresponding Pfister quadric.
Then we show that by passing from $k$ to $k(\alpha)$
we retain some control over motivic decompositions and isomorphisms between $\Kn$-motives.

\begin{Th-intro}[{see~Theorem~\ref{th:iso_over_k(alpha)}, Proposition~\ref{prop:decomposition_over_k(alpha)} for more general statements}]
Let $\alpha \in \km$ be a symbol,
let $M, N\in \CM_{\Kn}(k)$ be indecomposable direct summands of the $\Kn$-motives of projective homogeneous varieties.

\begin{enumerate}
\item (isomorphisms) If $M_{k(\alpha)}\cong N_{k(\alpha)}$, 
then either $M\cong N$ or $M\cong N\otimes L_\alpha$.

\item (decompositions) 
Either $M_{k(\alpha)}$ is indecomposable,
or it decomposes into two indecomposable isomorphic summands.
\end{enumerate}
\end{Th-intro}

In fact, our result is more general. First, in the statement of the theorem one can take 
$\alpha$  to be any element in $\km$, and for it there exists a field extension $k(\alpha)$ that splits $\alpha$
and is suited for the study of $\Kn$-motives (Definition~\ref{def:k(alpha)}).
Second, assumptions on the motives above can be weakened, 
and, for example, $M,N$ can be any invertible motives in the first claim above.
In particular, it proves the following.

\begin{Cr-intro}[{Proposition~\ref{prop:k(alpha)-kernel-picard}}]
The kernel of the base change map from $\Pic\left(\CM_{\Kn}(k)\right)$
to $\Pic\left(\CM_{\Kn}(k(\alpha))\right)$ is isomorphic to $\ZZ/2$ and is generated by $L_\alpha$.
\end{Cr-intro}

\subsection*{Detecting the invertible Morava motives $L_\alpha$}

As we have already stated above, 
to every $\alpha\in\km $ we associate the invertible motive $L_\alpha$
that acts as a splitting object for $\alpha$,
i.e.\ it trivializes over $K/k$ if and only if $\alpha_K=0$.
We show that $L_\alpha$
can be detected just by counting Tate motives over $k$ and over $k(\alpha)$,
a splitting field of $\alpha$ that is suited for the study of $\Kn$-motives.

\begin{Th-intro}[{see~Theorem~\ref{th:detect_L_alpha}}]
Let $M$ be a $\Kn$-motive.  
Assume that 
$M_{k(\alpha)}$ splits off a Tate summand $\un$.

Then $M$ splits off either $\un$ or $L_\alpha$ as a direct summand.
\end{Th-intro}

Recall that 
for a {\sl symbol} $\alpha$ there is already a splitting object in the Chow motives, the Rost motive $R_\alpha$.
For Chow motives, however, the analogue of the above theorem does not hold.

Moreover, we can also consider the semi-simple abelian category of $\Knum$-motives,
where the objects $(L_\alpha)_{\num}$ behave in a similar way as $L_\alpha$ in $\Kn$-motives.
One can detect occurrence of $L_\alpha$ in the $\Kn$-motive $M$
by showing that $(L_\alpha)_{\num}$ is a direct summand in $M_{\num}$ (Proposition~\ref{prop:lifting_numerical_decompositions}),
i.e.\ ``just'' by computing $\Knum$ over $k$ and $k(\alpha)$ (see Remark~\ref{rk:detect_tate_count}).

Semi-simplicity of $\Knum$-motives makes them a convenient target 
for motivic measures on the Grothendieck ring of varieties.
Thus, the occurrence of $L_\alpha$ in the $\Kn$-motive of $X$
provides non-trivial information about the class of $X$ in $\mathrm{K}_0(\mathrm{Var}_k)$.
This will be investigated elsewhere.

\subsection*{Cohomological invariants and the guiding principle}

It was an idea of Serre~\cite{GMS}
that many algebraic objects defined over a field $k$ 
and that trivialize over $\overline{k}$, e.g.\ torsors for an algebraic group,
could be classified by the elements of Galois cohomology of $k$
associated to them, their cohomological invariants.
In this paper we propose a way to associate cohomological invariants
to some Chow motives using Morava K-theory.

\subsubsection*{The guiding principle.}

The first connection between $\Kn$-motives and cohomological invariants was observed in~\cite{SechSem} 
and embodied in the following ``guiding principle'': 

``Let $X$ be a projective homogeneous variety,
and let $\Kn$ be the algebraic Morava K-theory at a prime $p$. 
Then the vanishing of cohomological invariants of $X$ with $p$-torsion coefficients in degrees no
greater than $n+1$ should correspond to the splitting of the $\Kn$-motive of $X$.''

Note that this is merely a principle, and not a conjecture,
since it is not clear how to specify the definition 
of a cohomological invariant of $X$ to include even all known cases when the principle works.
And although the principle is useful in studying split $\Kn$-motives and vanishing of cohomological invariants,
it does not predict how to find non-trivial cohomological invariants when Morava motives are not split. 

In this paper we improve on this guiding principle in two ways.
First, we show that one of the directions of it holds for the weakest possible definition of a cohomological invariant. 
Second, we explain how one can use $\Kn$-motives
in order to {\sl define} cohomological invariants associated to a smooth projective variety or even a Chow motive.
This allows us to formulate a conjecture predicting that 
to every direct summand of the Chow motive of a projective homogeneous variety 
one can associate a cohomological invariant. 

\subsubsection*{One direction of the guiding principle}

Recall that by the Bloch--Kato conjecture
one can identify $\kmtech_{m}(k)/p^r$ with $\HH_{\et}^m(k,\,\mu_{p^r}^{\otimes m})$.
Also, if $k$ contains a primitive $p^r$-th root of unity,
then $\ZZ/p^r\cong \mu_{p^r}$ as \'etale sheaves.

\begin{Th-intro}[{Theorem~\ref{prop:guiding_principle}}]
Let $k$ be a field containing a primitive $p^r$-th root of unity.
Let $\Kn$ denote the algebraic Morava K-theory at a prime $p$. 
Let $X$ be a projective homogeneous variety for a semi-simple algebraic group of inner type. 

Assume that $\MKn X$ is a sum of Tate motives.

Then there exists a field extension $K/k$ such that $X_K$ is split
and 
$$
\HH^{m}(k,\,\ZZ/p^s)\hookrightarrow \HH^{m}(K,\,\ZZ/p^s) 
$$
is injective for all $s\le r$ and $m\le n+1$. 
In particular, $X$ has no 
(normalized) 
cohomological invariants in $\HH^{m}(k,\,\ZZ/p^r)$ of degree $m\le n+1$ 
in the weakest possible sense of cohomological invariant (see Definition~\ref{def:weak_coh_inf}).
\end{Th-intro}

\subsubsection*{Cohomological invariants from Morava motives}

For the purpose of our conjecture on cohomological invariants 
we assume 
that for $\Kn$ at a prime $p$, for all $r\in \NN$ and for every $\alpha\in \HH^{n+1}(k,\,\ZZ/p^r)$
there exists an invertible $\Kn$-motive $L_\alpha$ 
generalizing the ones for $p^r=2$ described above.
In fact, construction of these motives will appear in the forthcoming paper of the second author~\cite{SechInv}
for a field $k$ containing all $p^r$-th roots of unity.

Recall that by the result of Vishik--Yagita~\cite{VishYag} 
one can ``specialize'' any Chow motive $M$ 
to an $A$-motive for any oriented theory $\A$, in particular, to a $\Kn$-motive $M_{\Kn}$.

\begin{Def-intro}
Let $M$ be a Chow motive.
We say that $\alpha\in \HH^{n+1}(k,\,\ZZ/p^r)$
is a cohomological invariant of $M$ if $L_\alpha\sh j$ is a direct summand of $M_{\Kn}$ for some $j$.
\end{Def-intro}

For the following conjecture we need to consider the zeroth Morava K-theory $\mathrm{K}(0)$
that is defined just as $\CH^*\otimes \QQ$ and does not depend on the prime $p$.
In what follows we consider Chow motives $M$ 
such that $M_{\K0}$, i.e.\  $M$ with rational coefficients, is split
and such that $M_{\overline{k}}$ is split. 
These are, for example, direct summands of the motives of projective homogeneous varieties 
for a semi-simple algebraic group of inner type.

\begin{Conj-intro}[{see Conjecture~\ref{conj:coh_inv_morava}}]
Let $k$ contain $p^r$-th roots of unity for all $r$.
Let $n\ge 1$, and for $m\ge 0$ let $\K{m}$ be the $m$-th algebraic Morava K-theory at a prime $p$. 
Let $M$ be a Chow motive such that $M_{\overline{k}}$ is split.
Assume that $M_{\K{m}}$ splits for every $m$ such that $0\le m< n$,
i.e.\ it is isomorphic to a direct sum of Tate motives.

Then $M_{\Kn}$ decomposes into a direct sum of the motives $L_{\alpha}\sh j$ 
for some $j\ge 0$, $r\in\NN$ and $\alpha \in \HH^{n+1}(k,\,\ZZ/p^r)$.
\end{Conj-intro}

There are plenty of examples of motives $M$ coming from projective homogeneous varieties for which 
 the decomposition of $M_{\Kn}$ can be computed and the conjecture can be confirmed.
The case of $n=1$ and $M$ coming from projective homogeneous varieties 
is essentially due to Panin~\cite{Panin} (see~Section~\ref{sec:tits_k0_motives}).
The case $M=\MCH Q$ for $Q$ a quadric follows from \cite{SechSem} and the results of this paper. 
For some motives coming from
exceptional groups it is also known (see Section~\ref{sec:coh_inv_phv}): for $\mathrm G_2$, $\mathrm F_4$, $\mathrm E_6$ it follows from~\cite{PSZ}, 
and for $\mathrm{E}_7$ and $\mathrm{E}_8$ it follows from the ongoing work of Wohlschlager~\cite{Alois}.

If the conjecture above is true,
then one can associate a cohomological invariant to  
every Chow motive $M$ such that $M\otimes \QQ$ and $M_{\overline{k}}$ are split.
Indeed, one looks at the smallest possible number $n$ 
such that $M_{\Kn}$ is non-split, and then $M_{\Kn}$ contains $L_\alpha$ for non-trivial $\alpha$. 
If such $n$ does not exist, i.e.\ $M_{\Kn}$ is split for all $n$, then $M$ is also split, and thus has no invariants.

\subsection*{Invertible motives in quadrics.}

We confirm the above conjecture for the motives of quadrics $\MCH Q$ for all $n$, 
and for arbitrary direct summands $M$ in $\MCH Q$ for $n=1,2,3$ 
using the known cases of the Kahn--Rost--Sujatha conjecture~\cite{KRS}. 
Moreover, 
we show that for every such $M$ 
there exists $n\ge 0$ such that $M_{\K{n}}$ decomposes into invertible summands, 
some of which are non-trivial (Proposition~\ref{prop:from_Kn+1_to_Kn}),
and the problem lies in identifying these summands as $L_\alpha$.

We first describe the known cases where $L_\alpha$ appears in the $\Kn$-motive of a quadric.

\begin{Prop-intro}[{for a more precise statement see~Proposition~\ref{prop:Kn-motive-q-small-dim_n}}]
Let $Q$ be a projective quadric defined by an anisotropic quadratic form $q$  over $k$ of dimension at least $2^n+1$
such that there exist $q'$ of $\dim q'<2^n$ 
and $w:=q\perp q' \in I^{n+1}(k)$.

Then all non-trivial invertible summands of $\MKn Q$ are the Tate twists of the motive $L_{[w]}$
for $[w]$ the class of $w$ in $\HH^{n+1}(k,\ZZ/2)$.   
\end{Prop-intro}

We expect that the converse holds as well (see Conjecture~\ref{conj:inv_summand_quadrics})
and show that it follows from 
Kahn's conjecture about the descent of quadratic forms~\cite{Kahn-descent} (Proposition~\ref{kahn-unary}).
The validity of the latter 
thus would confirm our conjecture on cohomological invariants of Chow motives 
for direct summands in the Chow motives of quadrics.

\subsection*{Relation to other work.}

Although the results of this paper provide a logical sequel to~\cite{SechSem},
the methods developed here are different and we do not rely on~\cite{SechSem} 
in our proofs.

Our results on the Morava motives of quadrics are conceptually related to the computation of $\Kn$ of 
orthogonal groups previously obtained  in~\cite{LPSS, LPSS2} 
by Geldhauser, Petrov and the authors of this paper. 
Among other things, it was shown in loc.\ cit.\ that the sequence of pullback maps
$$ 
\ldots \rarr \Kn ({\mathrm{SO}_m}) \rarr \Kn ({\mathrm{SO}_{m-2}}) \rarr \ldots 
$$ 
stabilizes for $m\ge 2^{n+1}-1$, i.e.\ 
$\Kn$ ``does not see'' any difference between higher orthogonal groups.
What we show in this paper, is that $\Kn$ also treats
torsors under these orthogonal groups  
``without much difference''.

We also remark that invertible objects in different motivic categories
that are related to quadrics have been studied by Bachmann, Hu and Vishik, 
see~\cite{Hu, BachInv, BachVish, VishPic}.
In particular, Bachmann and Vishik have constructed in~\cite{BachVish}
an embedding of {\sl sets}
$$\mathrm{GW}(k) \rarr \Pic\big(\mathrm{DM}(k,\,\ZZ/2)\big).$$ 
As this map is not additive even when restricted to $I^n(k)$, 
and does not factor through $I^n(k)/I^{n+1}(k)$,
it does not provide an analogue of Theorem~\ref{th:nat_transform_milnor_picard}.

\subsection*{Structure of the paper.}

Although we are mainly interested in quadrics, many of the results of this paper may have wider applications.
Section~\ref{sec:univ_surj} studies the question of universally surjective and bijective field extensions
for arbitrary oriented cohomology theories. Applications 
to the motives and to the Rost Nilpotence Property
are investigated in Section~\ref{sec:mot_dec_base_change}.
Section~\ref{sec:morava_quadrics} contains the main results 
about the decomposition of the Morava motives of quadrics, it is one of the technical hearts of the paper. 
The other one is Section~\ref{sec:morava_motives_over_function_fields} 
where we investigate motivic decompositions and isomorphisms over function fields.
Section~\ref{sec:milnor_k-theory_picard} contains the construction and the properties of the invertible motives~$L_\alpha$.
In Section~\ref{sec:inv_summand_quadrics} we investigate invertible motives occurring as direct summands of the Morava motives of quadrics.
Finally, Section~\ref{sec:coh_inv} contains an explanation of the new approach 
to cohomological invariants, its application to quadrics and the proof of one direction of the guiding principle.
Appendix~\ref{app:a} contains the proof of geometric Rost Nilpotency Property extending the results of Vishik--Zainoulline \cite{VishZai}
to arbitrary oriented theories. 
Appendix~\ref{k2motives} contains description of all possible $\K2$-motivic decompositions of quadrics.

\subsection*{Acknowledgments}

We are very grateful to Evgeny Shinder for kindly allowing us to include the unpublished joint results with the second author.

We would like to thank Alexey Ananyevskiy, Nikita Geldhauser, Stefan Gille, Marc Hoyois, Victor Petrov, Alexander Vishik,
Stephen Scully, Evgeny Shinder, and Maksim Zhykhovich for useful discussion on the topics of this paper.

\newpage

\tableofcontents

\newpage

\section{Preliminaries}

In this section, 
we recall standard definitions and facts 
about oriented cohomology theories, algebraic Morava K-theory, 
motives, in particular, the Rost motives, quadratic forms, and Vishik's symmetric operations
that will be used in the main body of the paper.

Throughout the article the base field $k$ has characteristic 0,
$\overline{k}$ is its algebraic closure,
and
$\Sm_k$, $\SmProj_k$ (resp. $\Sm_S$, $\SmProj_S$)
denote the categories of smooth quasi-projective, resp., projective varieties over a field $k$ (resp. over a scheme $S$).
For $X\in\SmProj_S$ we usually denote by $\pi_X\colon X\rarr S$ the structural morphism of $X$.
For a scheme $X$ over $k$ we denote by $\overline{X}$ the scheme $X\times_k \overline{k}$.

For a field $F$ we denote by $\HH^m(F,\,A)$ the \'etale cohomology of $F$ with coefficients
in the \'etale sheaf $A$.

\subsection{Quadratic forms}
\label{sec:prelim_quad_forms}

All quadratic forms in this paper are assumed to be non-degenerate. We refer the reader to~\cite{EKM} for basic facts about quadratic form theory.
For a quadratic form $q$ we denote by $q_{\an}$ the anisotropic part of it, 
$\iw(q)$ denotes the Witt index $q$,
i.e. $q\cong q_{\an} \perp \hyp^{\perp\iw(q)}$.
A quadratic form $q$ is called {\sl split}, if $\dim q_{\an}\le 1$.

\subsubsection{Generic splitting tower}\label{sec:prelim_split_tower}

Let $q$ be a quadratic form over $k$.
One associates to is a sequence of field extensions \cite{Knebusch, Knebusch2}:

$$ k=:K_0 \subset K_1:=k(q_{\an}) \subset K_2 \subset \cdots \subset K_h,$$
together with quadratic forms $q_i$ over $K_i$, which are defined inductively:  
 $K_{i+1} := K_{i}(q_{i})$  and $q_i:= (q_{K_{i}})_{\an}$.
The field $K_{h-1}$ in this tower is called the {\sl leading field} of $q$,
and the field $K_h$ is called the {\sl generic splitting field} of $q$.
Note that if $q$ is even-dimensional, then $q_{h-1}$ is a general Pfister form,
since it becomes split over its function field $K_h$.

We denote by $i_s(q)$ the higher Witt indices of $q$,
although only $i_1$ will appear later: for anisotropic $q$ over $k$, we have
$i_1(q)=\iw(q_{k(q)})$.

\subsubsection{$I$-adic filtration on the Witt ring under base change}
\label{sec:milnor_krs}

Let $W(k)$ denote the Witt ring of quadratic forms over $k$,
and let $I(k)$ be the fundamental ideal in it. 
The Milnor conjecture asserts that $I^n(k)/I^{n+1}(k)$ is canonically isomorphic to the Milnor K-theory modulo 2,
it was proved by Orlov--Vishik--Voevodsky~\cite{OVV}; see also \cite{Morel} for a different approach.
We will need the following application of this proof, from loc.\ cit.

\begin{Th}[{\cite[Th.~4.2]{OVV}}]
\label{KRS}
Let $q$ be a quadratic form over $k$, 
and $n$ be an integer such that $2^n<\mathrm{dim}(q)$. 
Then
$$
\mathrm{Ker}\left(\frac{I^n(k)}{I^{n+1}(k)}\xrightarrow{\mathrm{res}}\frac{I^n(k(q))}{I^{n+1}(k(q))}\right)=0.
$$
\end{Th}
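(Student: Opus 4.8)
For a quadratic form $q$ over $k$ and $n$ with $2^n < \dim q$, the restriction map $I^n(k)/I^{n+1}(k) \to I^n(k(q))/I^{n+1}(k(q))$ is injective.

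The plan is to transport the statement into Milnor $K$-theory modulo $2$ and there reduce the kernel computation to symbols. By the Milnor conjecture on quadratic forms \cite{OVV}, for every field $F$ the map $e_n\colon \mathrm{K}^{\mathrm{M}}_n(F)/2\to I^n(F)/I^{n+1}(F)$ sending a symbol $\{a_1,\dots,a_n\}$ to the class of $\langle\langle a_1,\dots,a_n\rangle\rangle$ is an isomorphism, naturally in $F$; applying it over $k$ and over $k(q)$ identifies the kernel in the statement with $\ker\!\big(\mathrm{K}^{\mathrm{M}}_n(k)/2\xrightarrow{\mathrm{res}}\mathrm{K}^{\mathrm{M}}_n(k(q))/2\big)$. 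We may also assume $q$ anisotropic: if $q$ is isotropic then $Q$ has a $k$-point, hence is $k$-rational, so $k(q)/k$ is purely transcendental and restriction on $\mathrm{K}^{\mathrm{M}}_n(-)/2$ is split injective, whence the kernel is zero. It remains to show that for anisotropic $q$ with $2^n<\dim q$ no nonzero class of $\mathrm{K}^{\mathrm{M}}_n(k)/2$ dies over $k(q)$.

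The shape of the argument is visible already for a single nonzero symbol $\rho=\{a_1,\dots,a_n\}$ killed by $k(q)$. Put $\pi=\langle\langle a_1,\dots,a_n\rangle\rangle$. Since $\rho\ne 0$, the form $\pi$ is anisotropic over $k$ (a Pfister form, being round, is either hyperbolic or anisotropic, and a hyperbolic one would force $\rho=0$ via injectivity of $e_n$), while $\rho_{k(q)}=0$ gives $\pi_{k(q)}\in I^{n+1}(k(q))$. As $\dim\pi=2^n<2^{n+1}$, the Arason--Pfister Hauptsatz forces $\pi_{k(q)}$ to be isotropic, hence hyperbolic. By the Cassels--Pfister subform theorem \cite{EKM}, an anisotropic form whose function field makes an anisotropic Pfister form hyperbolic is similar to a subform of it; thus $\dim q\le\dim\pi=2^n$, contradicting $2^n<\dim q$. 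So no nonzero symbol lies in the kernel.

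To conclude I would then show that $\ker\big(\mathrm{K}^{\mathrm{M}}_n(k)/2\to\mathrm{K}^{\mathrm{M}}_n(k(q))/2\big)$ is generated by symbols that are themselves killed by $k(q)$, which together with the previous paragraph yields the vanishing. For $q$ a Pfister neighbour this is precisely the content of the main exact sequence of \cite{OVV} (with an induction on $n$), which expresses this kernel as an explicit image of multiplication by lower-degree symbols. For general anisotropic $q$ one reduces to the Pfister-neighbour case by descending on $\dim q$: using the generic splitting theory of quadratic forms \cite{EKM} — concretely, $k$-places from $k(q)$ to function fields of suitable subforms of $q$, which exist because $q$ is isotropic over those fields — one carries membership in the kernel down to smaller quadrics without ever dropping $\dim q$ below $2^n$, until $q$ becomes a Pfister neighbour.

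The main obstacle is this last reduction: the claim that the kernel in $\mathrm{K}^{\mathrm{M}}_n(k)/2$ is controlled by symbols killed by $k(q)$ is not formal and is exactly where the exact sequence of \cite{OVV} — hence the full weight-$n$ Milnor-conjecture machinery — is needed; the remaining ingredients (the natural isomorphism $e_n$, the Arason--Pfister Hauptsatz, the subform theorem) are standard quadratic-form input from \cite{EKM}. One must also watch the dimension bookkeeping in the descent, so that the hypothesis $2^n<\dim q$ really survives down to the Pfister-neighbour stage.
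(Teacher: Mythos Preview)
The paper does not prove this theorem at all; it is quoted from \cite{OVV} and used as a black box. So there is no ``paper's own proof'' to compare against, and the question is simply whether your sketch stands on its own.

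Your argument for symbols is clean and correct: if a nonzero symbol $\rho\in\kmk n$ dies over $k(q)$, the associated Pfister form $\pi$ is anisotropic of dimension $2^n$ and becomes hyperbolic over $k(q)$, so by the subform theorem $\dim q\le 2^n$, contradicting the hypothesis.

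The gap is in the passage from symbols to arbitrary elements. Your proposed route---show the kernel is generated by symbols that themselves lie in the kernel, via the OVV exact sequence for Pfister neighbours together with a descent on $\dim q$ using places---is both circular (you are invoking the very machinery of \cite{OVV} whose output is the theorem you want) and not clearly workable: the place argument you sketch goes in the wrong direction, since a $k$-place $k(q)\rightsquigarrow k(q')$ for $q'\subset q$ lets you specialize elements \emph{from} $k(q)$, not carry kernel membership \emph{down} to $q'$ in the way you need.

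There is a much shorter reduction, and the paper itself uses it elsewhere (proof of Corollary~\ref{cr:injectivity_galois_cohomology}): by \cite[Th.~2.10]{OVV}, for any nonzero $\alpha\in\kmk n$ there is a field extension $K/k$ with $\alpha_K$ a nonzero symbol. Now $\alpha_{k(q)}=0$ forces $\alpha_{K(q_K)}=0$. If $q_K$ is isotropic then $K(q_K)/K$ is purely transcendental and $\alpha_K$ cannot die; if $q_K$ is anisotropic, your symbol argument over $K$ gives $\dim q=\dim q_K\le 2^n$, a contradiction. This replaces your entire last two paragraphs.
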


\begin{Cr}
\label{cr:KRS-Witt_mod_In+1_injectivity}
Let $q$ be a quadratic form over $k$, 
and $n$ be an integer such that $2^n<\mathrm{dim}(q)$. 
Then
$$
\mathrm{Ker}\left(\frac{W(k)}{I^{n+1}(k)}\xrightarrow{\mathrm{res}}\frac{W(k(q))}{I^{n+1}(k(q))}\right)=0.
$$
\end{Cr}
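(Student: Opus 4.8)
The plan is to deduce the statement from Theorem \ref{KRS} by a dévissage along the $I$-adic filtration. Fix $q$ with $2^n < \dim q$ and suppose $\varphi \in W(k)$ has $\varphi_{k(q)} \in I^{n+1}(k(q))$; I must show $\varphi \in I^{n+1}(k)$. The first observation is that it suffices to treat each graded piece: if I can show that the restriction maps
$$
\frac{I^m(k)}{I^{m+1}(k)} \xrightarrow{\ \mathrm{res}\ } \frac{I^m(k(q))}{I^{m+1}(k(q))}
$$
are injective for every $m$ with $0 \le m \le n$, then an induction on $m$ downward (or upward) from $0$ to $n$ pushes $\varphi$ through the filtration. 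Concretely: $\varphi$ lives in $W(k) = I^0(k)$; its image in $I^0(k(q))/I^1(k(q)) = \ZZ/2$ vanishes because $\varphi_{k(q)} \in I^{n+1} \subseteq I^1$, and by injectivity at $m=0$ we get $\varphi \in I^1(k)$. Repeating with $m=1$ (the class of $\varphi$ in $I^1(k)/I^2(k)$ dies over $k(q)$, hence $\varphi \in I^2(k)$), and so on up to $m=n$, yields $\varphi \in I^{n+1}(k)$, which is exactly the claim.

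So everything reduces to the injectivity of $I^m(k)/I^{m+1}(k) \hookrightarrow I^m(k(q))/I^{m+1}(k(q))$ for $0 \le m \le n$. For $m = 0$ this is clear since $I^0/I^1 \cong \ZZ/2$ by the dimension (rank mod 2) invariant, which is insensitive to field extension, so the map $\ZZ/2 \to \ZZ/2$ is the identity. For $1 \le m \le n$, the hypothesis $2^n < \dim q$ gives in particular $2^m \le 2^n < \dim q$, so $2^m < \dim q$ and Theorem \ref{KRS} applies directly with $n$ replaced by $m$, giving exactly the required injectivity. (Here I use $m \le n$, which is the only place the full strength $2^n < \dim q$ — as opposed to $2^m < \dim q$ for a single $m$ — enters: it guarantees the hypothesis of Theorem \ref{KRS} holds simultaneously for all $m$ in the range.)

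I do not anticipate a serious obstacle: the argument is a formal dévissage once Theorem \ref{KRS} is granted, and the only point requiring a moment's care is organizing the induction so that at each stage the relevant class genuinely lies in $I^m/I^{m+1}$ and its vanishing over $k(q)$ is a consequence of $\varphi_{k(q)} \in I^{n+1}(k(q))$ — which holds because $I^{n+1} \subseteq I^{m+1}$ for $m \le n$. One should also note that $\mathrm{res}\colon W(k) \to W(k(q))$ is a ring homomorphism carrying $I^m(k)$ into $I^m(k(q))$, so the induced maps on subquotients are well-defined, making the diagram chase legitimate.
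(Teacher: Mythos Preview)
Your proof is correct and is exactly the standard d\'evissage the paper has in mind: the corollary is stated immediately after Theorem~\ref{KRS} without proof, and the intended deduction is precisely the filtration argument you give---apply Theorem~\ref{KRS} at each level $m=1,\ldots,n$ (using $2^m\le 2^n<\dim q$) together with the trivial case $m=0$ to climb the $I$-adic filtration.
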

Another corollary of the Milnor conjecture is that the leading form of a quadratic form from $I^n(k)$ 
is a general $n$-fold Pfister form~[loc.\ cit, Th.~4.3].

\subsection{Oriented cohomology theories}

The notion of an oriented cohomology theory was introduced in \cite{LevMor, PanSmi},
and we use a version of it with the localization axiom as in~\cite[Def.~2.1]{Vish1}. 

Recall that for an oriented cohomology theory $\A$ and $X\in\Sm_k$,
$\A(X)$ is a commutative ring. For any $f\colon X\rarr Y$ in $\Sm_k$,
$f^*\colon \A(Y)\rarr \A(X)$ is a pullback morphism which is a ring a homomorphism;
for any projective $g\colon Z\rarr W$ in $\Sm_k$, $g_*\colon \A(Z)\rarr \A(W)$ is a push-forward morphism
which is a group homomorphism.
Often it also assumed that $\A(X)$ is a graded ring, $f^*$ preserves grading and $g_*$ sends the $i$-th graded component
to the $(i+\dim Z-\dim W)$-th one,
however, we will also consider ungraded version of theories, such as $\KK$.
In the former case we write $\A^i(X)$ to indicate the $i$-th graded component 
and $\A^*(X)$ or $\A(X)$ to indicate $\bigoplus_{i \in \ZZ} A^i(X)$.
Note that $\A(X)$ is a module over the ring $\A(\Spec k)$, which we denote by $\Apt$.

We will often write $[Z\xrarr{g} W]_A$ or just $[Z]_A$, where $g$ is a projective morphism,
for the element $g_*(1_Z)$ in $\A(W)$. If $X\in \SmProj_k$,
then we denote by $\deg_X$ or just $\deg$ the morphism $(\pi_X)_*$.

A morphism of theories $\phi$ from an oriented theory $A$ to an oriented theory $B$
is a collection of maps of commutative rings $A(X)\xrarr{\phi_X} B(X)$
such that $\phi_X$ commutes with push-forward and pullback morphisms.

\subsubsection{Localization axiom}
\label{sec:LOC}
Let $Y\in \Sm_k$ be an irreducible variety.
Let $Z$ be a closed subscheme of $Y$, 
and let $U$ be its open complement.
Then the localization axiom for $A$, which is a part of our definition of an oriented cohomology theory,
is the exactness of the following sequence of abelian groups
\begin{equation}\label{eq:loc}
\tag{LOC}
 \A(Z) \rarr \A(Y) \rarr \A(U) \rarr 0,
\end{equation}
where $A(Z)$ is defined as a Borel--Moore extension of $A$ to schemes of finite type over $k$ (see~e.g.\ \cite[Ch.~5]{LevMor}),
namely, $A(Z) = \mathrm{colim}_W A(W)$ for $W\rarr Z$ a projective morphism, $W\in\Sm_k$.
The morphism $\A(Z)\rarr \A(Y)$ is induced by the push-forward morphisms $A(W)\rarr A(Y)$.
The morphism $\A(Y)\rarr \A(U)$ is the pullback morphism.

\subsubsection{Values of the Borel--Moore theory via resolution of singularities}
\label{sec:borel_moore_theory_via_resolution_singularities}

Although $\A(Z)$ is defined as a colimit, it admits a concrete and finite presentation.
Namely, Vishik shows in  \cite[Prop. 7.7]{Vish1} that 
if $\tilde{Z}\rarr Z$ is a resolution of singularities obtained by blowing-up smooth centers $R_i$,
then the following is a short exact sequence:

$$ \oplus_i A(E_i) \rarr \oplus_i A(R_i) \oplus A(\tilde{Z}) \rarr A(Z) \rarr 0$$

where $E_i$ are exceptional divisors of blow-ups. 

\subsubsection{Extended and coherent oriented cohomology theories}
\label{sec:extended_coherent}

Throughout the paper we will be interested in what happens to cohomology theories
when we pass from $k$ to a not necessarily finite field extension $K$.
Thus the cohomology theories need to be extended to smooth varieties over $K$ (which are not of finite type over $k$, and thus do not lie in $\Sm_k$).

One of the approaches to achieve this is the notion of an {\sl extended} cohomology theory due to Gille--Vishik \cite[Def.~2.4]{GilleVishik}.
One considers a family of oriented cohomology theories $A_F$ 
 defined over all finitely generated field extensions $F$ of $k$
together with comparison maps $A_F(Y)\rarr A_{F(X)}(Y_{F(X)})$ 
for every smooth morphism $Y\rarr X$ in $\Sm_F$ and $X$ irreducible.

Moreover, one calls an extended cohomology theory {\sl coherent} [loc.\ cit., Def. 2.6]
 if the canonical homomorphism
from $\mathrm{colim}_{\,U\subset X} A(Y_U) \rarr A(Y_{F(X)})$ is an isomorphism in the situation above, 
where the colimit is taken over all non-empty open subschemes $U$ of $X$.
For the sake of brevity, we will use the term coherent theory 
to refer to coherent extended cohomology theory. 
We warn the reader that coherent theories are not related to coherent sheaves.

\subsubsection{Transversal base change property}
\label{sec:transversal_bc}

Recall that morphisms $f\colon X\rarr Y$, $g\colon Z\rarr Y$ between schemes are called {\sl transversal} (or $\mathrm{Tor}$-independent)
if for any $x\in X, z\in Z$ mapping to the same point $y\in Y$
we have $\mathrm{Tor}_i^{\OO_{Y,y}}(\OO_{X,x}, \OO_{Z,z})=0$ for $i\ge 1$.
A commutative square of schemes
\begin{center}
    \begin{tikzcd}
        W \arrow[r, "f'"] \arrow[d, "g'"] & Y \arrow[d, "g"] \\
        X \arrow[r, "f"] & Z \\
    \end{tikzcd}
\end{center}
is called transversal, if it is a pullback square, and $f$ and $g$ are transversal.

Consider a transversal square above where $X,Y,Z\in \Sm_k$. 
Then $W$ is also smooth quasi-projective over $k$.
Assume moreover that $f$ is projective, then so is $f'$.
The transversal base change property for an oriented theory $\A$ is the following equality:
\begin{equation}
\label{eq:bc}
    \tag{BC} g^* \circ f_* = f'_* \circ (g')^*.
\end{equation}

Let $\A$ be a coherent theory,
$g:Y\rarr X$ is a projective morphism in $\Sm_k$, $U\in \Sm_k$, 
and consider the following cartesian square:
\begin{center}
    \begin{tikzcd}
        Y\times_k U \arrow[d, "g\times\mathrm{id}"] & Y_{k(U)} \arrow[d, "g_{k(U)}"] \arrow[l, "\mathrm{id}_Y \times j "]\\
        X\times_k U                            & X_{k(U)} \arrow[l, "\mathrm{id}_X \times j "]
    \end{tikzcd}
\end{center}
where $j\colon \Spec k(U)\hookrightarrow U$ is the inclusion of the generic point.
Then one can show that the following equation holds, we call it extended transversal base change (cf.~\cite[Lm.~2.15~(iii)]{GilleVishik}):
\begin{equation}
\label{eq:ext_bc}
    \tag{extended BC} (\mathrm{id}_X \times j)^* \circ (g\times\mathrm{id})_*  = (g_{k(U)})_*\circ (\mathrm{id}_Y \times j)^*.
\end{equation}

\subsubsection{Algebraic cobordism and free theories}
\label{sec:alg_cob_free_theories}

The universal oriented cohomology theory is called algebraic cobordism $\Omega^*$,
and it was constructed in the seminal book of Levine--Morel~\cite{LevMor} (see also \cite{LevPand}).
In fact, it is universal for a much weaker notion of oriented cohomology theory than the one that we consider (see~\cite[Ch.~7]{LevMor}).

Given a formal group law $F_A$ over a commutative ring $A$,
let $\LL\rarr A$ be the corresponding morphism from the Lazard ring to $A$.
Recall that  $X\mapsto \Omega^*(X)\otimes_\LL A$
can be endowed with the structure of an oriented cohomology theory such that $\Omega^* \rarr \Omega^*\ot_{\LL} A$ is a morphism of theories. 
Oriented theories that have the form $\Omega^*(X)\otimes_\LL A$ for some $A$ and $F_A$
are called free theories \cite[2.4.14~(2)]{LevMor}
(and they satisfy the localization axiom, since $\Omega^*$ does).
By \cite[Th.~1.2.7]{LevMor} the Lazard ring is isomorphic to $\Omega^*(k)$,
and hence the free theory constructed above has ring $A$ as its ring of coefficients.
Free theories can be made into coherent theories, see \cite[Ex. 2.7]{GilleVishik}.
Moreover, every oriented theory $\A$ carries a topological filtration $\tau^\bullet A$,
which satisfies various desirable properties in the case of free theories \cite[Sec.~1.8]{Sech2}.

In particular, Chow groups $\CH$ and the Grothendieck K-theory $\KK$ are free 
theories by \cite[Th.~1.2.18,~1.2.19]{LevMor}.
If the prime $p$ is clear from the context, 
we denote by $\Ch:=\CH/p$, which is also a free theory.

\subsubsection{Other versions of algebraic cobordism and restriction to $\mathrm{char}\, k=0$.}

Recently, in the works of Annala \cite{Annala1, Annala2, Annala3, Annala4}
another construction of algebraic cobordism has appeared 
that works in a greater generality of derived schemes over some base,
and many of the fundamental properties of it are proved.
However, the most crucial for us, the localization axiom,
is not known to hold for this theory at the moment,

There is also a theory of algebraic cobordism, represented by a motivic spectrum $\mathrm{MGL}$
in $\mathrm{SH}(F)$ over any field $F$, introduced by Voevodsky~\cite{VoeICM}. 
If $\mathrm{char\ } F=0$, 
then $\mathrm{MGL}^{2*,*}(X)\cong \Omega^*(X)$
by the result of Levine~\cite{LevCobordism}.
In general, one can consider $\mathrm{MGL}^{2*,*}(X)$ as an oriented cohomology theory 
that satisfies the localization axiom, also over fields of positive characteristic.
However, 
the Rost Nilpotence Property for projective homogeneous varieties (see Section~\ref{sec:mot_dec_base_change})
and 
Vishik's symmetric operations used in Section~\ref{sec:morava_quadrics}
are only available in characteristic 0, 
as their proof and construction, respectively, rely on resolution of singularities.

The lack of a theory of algebraic cobordism that has all the expected properties 
is one of the main reasons why the results of this paper cannot be extended to base fields of positive characteristic.

\subsubsection{Algebraic Morava K-theory}
\label{sec:prelim_morava}

Let $p$ be a prime number, $n\ge 1$.
Let $F_{\KnInt}$ be a graded formal group law over $\Z{p}[v_n, v_n^{-1}]$, $\deg v_n=1-p^n$,
with the logarithm $\sum_{k\ge 0} p^{-k} v_n^{\frac{p^{nk}-1}{p^n-1}} t^{p^{nk}}$
(see e.g.\ \cite[App.~2]{Rav}). Note that the induced formal group law over $\F{p}[v_n,v_n^{-1}]$, denoted $F_{\Kn}$, has height $n$.

\begin{Def}
\label{def:morava_k-theory}
Free theory $\Omega^*\otimes_{\LL} \Z{p}[v_n, v_n^{-1}]$ associated to $F_{\KnInt}$ above
is called the $n$-th integral algebraic Morava K-theory at the prime $p$ and is denoted $\KnInt$.

Free theory $\KnInt/p=: \Kn$ is called the $n$-th algebraic Morava K-theory at the prime $p$.
\end{Def}

Note that for $p=2$, the Artin--Hasse exponent gives an isomorphism
between the multiplicative formal group law and $F_{\K{1}^{\mathrm{int}}}$,
which yields an isomorphism of presheaves of rings 
between $\K{1}^{\mathrm{int}}$ and $\KK\otimes\Z{2}[v_1, v_1^{-1}]$,
as well as $\K{1}$ and $\KK\otimes \F{2}[v_1,v_1^{-1}]$.

Recall also that there exists the notion of $p$-typical formal group law due to Cartier \cite{Cart},
the corresponding free universal $p$-typical oriented cohomology theory is denoted $\BP$. 
Since $\Kn$ has $p$-typical formal group law,
it can be also defined as $\BP\otimes_{\BPpt} \F{p}[v_n, v_n^{-1}]$ (similarly, for $\KnInt$).

We also consider the {\sl $n$-th connective Morava K-theory} $\CKn$ at the prime $p$,
which can be defined as $\BP\otimes_{\BPpt} \F{p}[v_n]$
(note that the map $\BPpt \rarr \F{p}[v_n, v_n^{-1}]$ used above factors through $\F{p}[v_n]$).

We defined algebraic Morava K-theory as a free theory for the specific choice of the formal group law 
(corresponding to Hazewinkel's choice of the generator $v_n$ in the sense of~\cite[App.~2]{Rav}).
This choice plays the role in the computation of $\Kn$ of a split quadric as a ring,
and hence in the description of the rational projectors for the summands of the $\Kn$-motive of a quadric.
In turn, the formulas for these projectors are required for computing the action of symmetric operations
used in Section~\ref{sec:morava_quadrics}. In all the other sections one can take $\Kn$ to be a free theory
with any graded formal group law over  $\F{p}[v_n,v_n^{-1}]$
that has height $n$. It is also clear to the authors that the results of 
Section~\ref{sec:morava_quadrics} remain valid for such $\Kn$.

Also, in some sections of the paper (e.g.\ \ref{sec:morava_motives_over_function_fields}) for simplicity of computations 
we consider free theory $\Kn/(v_n-1)$ with the coefficient ring $\F{p}$ still denoting it as $\Kn$.
Note that this theory has $\ZZ/(p^n-1)$-grading
instead of $\ZZ$-grading (cf.~\cite[4.1]{Sech}), and $\left(\Kn/(v_n-1)\right)^{[j]}(X)\cong \Kn^j(X)$ for $j\in \ZZ$
(where in the LHS $[j]$ denotes the residue class in $\ZZ/(p^n-1)$). Thus, hardly any information is lost by setting $v_n=1$.

Note that one can construct a motivic spectrum of algebraic Morava K-theory $\mathcal{K}(n)$ from $\mathrm{MGL}$,
following the same procedure as in topology which is applied to $\mathrm{MU}$,
see~\cite{LevTri}.
By the results of loc.\ cit., in characteristic 0, $\Kn^*(X)$ that we define 
is isomorphic to  $\mathcal{K}(n)^{2*,*}(X)$, the $(2*,*)$-diagonal of the bigraded cohomology theory represented by $\mathcal{K}(n)$.
In this paper we do not work with $\mathcal{K}(n)$,
although it can be useful in related questions: 
see e.g.\ \cite{YagSpin} for some computations of Spin groups 
that involve the Atiyah--Hirzebruch spectral sequence 
for $\mathcal{K}(n)$.

\subsubsection{Oriented cohomology theory of a split quadric}
\label{sec:prelim_A_split_quadric}

If $q$ is a quadratic form over $k$, we usually denote by $Q$ the corresponding projective quadric.
Since $q$ is always assumed to be non-degenerate, $Q$ is smooth. We will say that $Q$ is split, if $q$ is split (see Section~\ref{sec:prelim_quad_forms}).

Let $\A$ be an oriented cohomology theory, let $Q$ be a quadric of dimension $D$.
Let $d:=[\frac{D}{2}]$, i.e.\ $D=2d$ if $D$ is even and $D=2d+1$ if $D$ is odd.
Let $h\in \A(Q)$ denote the class $[H\hookrightarrow Q]$ of a smooth hyperplane section $H$ of $Q$, it does not depend on the choice of $H$.

Assume now that $Q$ is split.
Then for every $i<d$, there exists a linear embedding $\mathbb{P}^i \hookrightarrow Q$,
its class in $\A(Q)$ does not depend on the choice of this embedding and is denoted by $l_i\in \A(Q)$.
If $D$ is odd, one can similarly define $l_d$,
however, if $D$ is even, then the Grassmannian of $d$-dimensional linear subspaces on $Q$ 
has two components, and there are two linearly independent classes in $\A(Q)$
defined by the representatives of $\mathbb{P}^d\hookrightarrow Q$ for these two components.
We denote these classes by $l_d$ and $\widetilde{l_d}$, they are related by the identity
$$
h^d = l_d+\widetilde{l_d}+\sum_{i=1}^db_{i+1}l_{d-i}
$$
where $b_k$ denote the coefficients of the series $[2]_{\FGL{\A}}(t)=\FGL{\A}(t,\,t)=\sum_{k\geq1}b_kt^k$, 
and $\FGL{\A}$ is the formal group law of $\Af$, cf.~\cite[Proof of Prop.~4.1]{LPSS}.

The elements $l_i$ for $0\leqslant i\leqslant d$ and the powers $h^k$ of $h$ for $0\leqslant k\leqslant d$ form a free basis of $\At Q$ over $\Apt$,
we call these elements {\sl the standard basis}.
The multiplication in $\At Q$ satisfies the following identities: $h_{}\cdot l^{}_i=l_{i-1}$, $h_{}\cdot \widetilde{l_d}=l_{d-1}$, 
$l_d^2=l_0$ for $D\equiv0\,\ \mathrm{mod}\ 4$, $l_d^2=0$ otherwise.
The class $h^{k}$ for $k>d$ can be expressed in terms of classes $l_j$ using the formal group law of $A$,
as explained in \cite[Prop.~4.1]{LPSS} for the universal oriented theory, algebraic cobordism.
We will need the following corollaries of this computation for the case of algebraic Morava K-theory at the prime $2$.

\begin{Prop} 
\label{prop:KnInt_power_of_h_split_quadric}
Let $Q$ be a split quadric over $k$, $D=\dim Q\ge 2^{n+1}-1$.

Then $h^{D-(2^n-1)} = 2l_{2^n-1}+ cv_nl_0$ in $\KnInt(Q)$, where $c\in \Z{2}^{\times}$.
\end{Prop}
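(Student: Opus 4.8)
The plan is to reduce everything to the formula for powers of $h$ in algebraic cobordism, proved in \cite[Prop.~4.1]{LPSS}, and then specialize the coefficients to $\KnInt$. First I would recall from that proposition the explicit expression of $h^{D-m}$ in the standard basis $\{l_i\}$ of $\Omega^*(Q)$ for a split quadric: one has $h^{D-m} = \sum_{i} c_i^{(m)} l_i$ where the coefficients $c_i^{(m)} \in \Laz$ are certain universal polynomials in the coefficients of the formal group law $\FGL{\Omega}$, determined by the recursion $h \cdot l_i = l_{i-1}$ together with the relation coming from $[2]_{\FGL{\Omega}}(t)$ expressing $h^d$ in terms of $l_d$, $\widetilde{l_d}$ and lower $l_i$. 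Specializing along $\Laz \to \Z{2}[v_n,v_n^{-1}]$ gives the corresponding formula in $\KnInt(Q)$, so the task is purely to identify the two relevant coefficients when $m = 2^n - 1$.

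Next I would pin down which basis elements can occur. For a split quadric of dimension $D = 2d$ or $2d+1$ with $D \ge 2^{n+1}-1$, the exponent $D - (2^n-1)$ satisfies $D - (2^n - 1) \ge 2^n > d$ precisely when $D \le 2^{n+1} - 2$; for larger $D$ the same computation still expresses $h^{D-(2^n-1)}$ as a combination of the $l_j$ by repeatedly applying $h \cdot l_j = l_{j-1}$ starting from $h^d$. By the grading (with $\deg l_j = j$, $\deg h = 1$, $\deg v_n = 1-p^n = 1 - 2^n$), the element $h^{D-(2^n-1)}$ lives in codimension $D - 2^n + 1$, i.e.\ homological degree $2^n - 1$; hence the only standard basis elements that can appear with a coefficient from $\Z{2}[v_n,v_n^{-1}]$ of the right degree are $l_{2^n-1}$ (coefficient in $\Z{2}$, degree $0$) and $l_0$ (coefficient a unit multiple of $v_n$, degree $1-2^n$). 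All intermediate $l_j$ with $0 < j < 2^n-1$ would require coefficients in negative degrees between $1$ and $2^n-2$, which are zero in $\Z{2}[v_n, v_n^{-1}]$ since the only negative degrees available are multiples of $1-2^n$. So $h^{D-(2^n-1)} = a\, l_{2^n-1} + b\, v_n l_0$ with $a \in \Z{2}$, $b \in \Z{2}$, and it remains to show $a = 2$ and $b \in \Z{2}^\times$.

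To compute $a$ and $b$ I would work in the universal theory and track the coefficients of $t^{2^n}$ and $t^{D+1}$ (equivalently, use the known structure of $h^{d+j}$). The value $a = 2$ should come from the leading behaviour: modulo $v_n$ (i.e.\ in $\CH^*(Q)$), $\Kn$ collapses to Chow theory where $h^{D-(2^n-1)} = 2 l_{2^n-1}$ for a split quadric (this is the classical computation: $h^{d} = l_d + \widetilde{l_d}$ in the even case, $h^{d+i} = 2 l_{d-i}$ for $i \ge 1$, and $h^{d} = 2 l_d$ in the odd case, and one checks the relevant index falls in the range giving the factor $2$). Hence $a \equiv 2$, and since $a \in \Z{2}$ and is the Chow-theoretic value exactly, $a = 2$. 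For $b$: reduce modulo $2$, where $F_{\Kn}$ has height exactly $n$, so $[2]_{F_{\Kn}}(t) = v_n t^{2^n} + (\text{higher order})$ with $v_n$ a unit; the first nontrivial correction to the Chow formula, i.e.\ the appearance of $l_0$, comes precisely from this leading term $v_n t^{2^n}$ of the $2$-series when reducing $h^d$ (or the appropriate power) using the formal group law, and one reads off that the coefficient of $v_n l_0$ is a $2$-adic unit times a unit, hence in $\Z{2}^\times$. The main obstacle I anticipate is the bookkeeping: carefully matching the degree count against the formal-group-law recursion of \cite[Prop.~4.1]{LPSS} to be sure no other basis element sneaks in and that the $v_n$-coefficient is genuinely a unit rather than possibly $0$ — this last point relies essentially on the height of $F_{\Kn}$ being exactly $n$ (so the $t^{2^n}$-coefficient of the $2$-series is invertible) and on the Hazewinkel normalization of $v_n$ fixed in Section~\ref{sec:prelim_morava}.
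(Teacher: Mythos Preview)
Your approach is essentially the paper's: both reduce to \cite[Prop.~4.1]{LPSS} and the shape of the $2$-series of $F_{\KnInt}$. The paper simply records that $[2]_{\KnInt}(t)\equiv 2t+cv_nt^{2^n}\bmod t^{2^n+1}$ with $c\in\Z{2}^{\times}$ and notes that by \cite[Prop.~4.1]{LPSS} only these coefficients of the $2$-series enter the expression for $h^{D-(2^n-1)}$; your grading argument (the coefficient of $l_j$ must lie in degree $j-(2^n-1)$ of $\Z{2}[v_n,v_n^{-1}]$, hence vanishes unless $j\in\{0,2^n-1\}$) is exactly the same observation rephrased, since $b_k$ sits in degree $1-k$.

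One step does not work as written: you cannot ``reduce modulo $v_n$'' inside $\KnInt$, since $v_n$ is invertible there, so your route to $a=2$ via Chow theory is ill-posed. The fix is immediate and closer to what the paper does: $a$ is the coefficient $b_1$ of $t$ in the $2$-series, and $b_1=2$ for any formal group law. (Your argument for $b\in\Z{2}^{\times}$ via reduction mod $2$ and height $n$ is fine and is precisely how one sees that the $t^{2^n}$-coefficient of $[2]_{\KnInt}(t)$ is a unit times $v_n$.)
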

\begin{proof}
    Note that $2\cdot_{\KnInt} t \equiv 2t +cv_n t^{2^n} \mod t^{2^n+1}$ for some $c\in \Z{2}^{\times}$.
    Thus, by~\cite[Prop.~4.1]{LPSS} we get the claim, since the coefficients of $t^j$, $j\ge 2^n+1$, do not appear in the formula for $h^{D-(2^n-1)}$.
\end{proof}

\begin{Cr}[{\cite[Prop.~4.3]{LPSS}}]
\label{prop:l_0_rational_K(n)}
Let $Q$ be a smooth quadric, $\dim Q \ge 2^{n+1}-1$,
then $l_0$ is $k$-rational,
i.e.\ there exists an element $z\in \Kn(Q)$ that maps to $l_0$ in $\Kn(\overline{Q})$.
\end{Cr}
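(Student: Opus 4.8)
The goal is to show that the class $l_0 \in \Kn(\overline{Q})$ of a rational point on $\overline{Q}$ is the restriction of an element of $\Kn(Q)$, under the hypothesis $\dim Q \ge 2^{n+1}-1$. The plan is to descend from the integral theory $\KnInt$ and to use Proposition~\ref{prop:KnInt_power_of_h_split_quadric} to produce an explicit rational lift of (a multiple related to) $l_0$.

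First I would observe that the hyperplane class $h \in \Kn(Q)$ (or $\KnInt(Q)$) is always $k$-rational, since it is the push-forward of $1$ along a smooth hyperplane section $H \hookrightarrow Q$, which is defined over $k$; consequently every power $h^m$ is $k$-rational. In particular, for $D = \dim Q \ge 2^{n+1}-1$ the element $h^{D-(2^n-1)} \in \KnInt(Q)$ is $k$-rational. By Proposition~\ref{prop:KnInt_power_of_h_split_quadric}, over $\overline{k}$ this element equals $2 l_{2^n-1} + c v_n l_0$ with $c \in \Z{2}^\times$. Next, since $D - (2^n-1) + (2^n-1) = D$ and the product pairing on a split quadric sends $h^{2^n-1} \cdot l_{2^n-1} = l_0$ (using $h \cdot l_i = l_{i-1}$ repeatedly, noting $2^n - 1 \le d$ because $D \ge 2^{n+1}-1$), I can multiply the rational element $h^{D-(2^n-1)}$ by the rational element $h^{2^n-1}$ to get a rational element whose restriction to $\overline{Q}$ is $h^{2^n-1}\cdot(2l_{2^n-1} + cv_n l_0) = 2 l_0 + c v_n h^{2^n-1} l_0$. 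Since $2^n - 1 \ge 1$ for $n \ge 1$, $h^{2^n-1} l_0 = 0$ in $\KnInt(\overline{Q})$ (as $h$ annihilates $l_0$), so this rational element restricts to $2 l_0 \in \KnInt(\overline{Q})$.

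Now I would reduce modulo $2$: in $\Kn = \KnInt/2$, the element $2 l_0$ becomes $0$, which is not yet what we want, so the multiple $2l_0$ argument must be combined with a cleaner extraction. The better route is: work directly in $\Kn(Q)$ where $2l_{2^n-1}$ vanishes, so $h^{D-(2^n-1)}$ restricts over $\overline{k}$ to $c v_n l_0$ with $c \in \F{2}^\times = \{1\}$; hence $h^{D-(2^n-1)}$ itself is a rational element restricting to $v_n l_0 = l_0$ (after setting $v_n = 1$, or simply noting $v_n$ is a unit so $v_n^{-1} h^{D-(2^n-1)}$ is the desired lift $z$). This gives the corollary directly, with $z := v_n^{-1} h^{D-(2^n-1)} \in \Kn(Q)$.

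The only subtle point — and the place I would be most careful — is checking the two numerical inequalities that make the formula of Proposition~\ref{prop:KnInt_power_of_h_split_quadric} applicable and meaningful: one needs $D \ge 2^{n+1}-1$ precisely so that $2^n - 1 \le d = \lfloor D/2 \rfloor$ (so that $l_{2^n-1}$ and $l_0$ are part of the standard basis and the power $h^{D-(2^n-1)}$ lands in the range where only the $l_j$ with $j \le 2^n-1$ and no higher formal-group corrections contribute), and one must confirm that the coefficient $c$ of $v_n l_0$ is genuinely a unit — which is exactly the content of that proposition, inherited from the explicit computation in \cite[Prop.~4.1]{LPSS} together with the shape $2\cdot_{\KnInt} t \equiv 2t + cv_n t^{2^n}$. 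Granting the proposition as stated, the descent is then immediate and essentially formula-free.
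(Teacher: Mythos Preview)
Your proposal is correct and ultimately follows exactly the paper's approach: take $z = v_n^{-1} h^{D-(2^n-1)} \in \Kn(Q)$ and reduce the formula of Proposition~\ref{prop:KnInt_power_of_h_split_quadric} modulo $2$. The detour through $h^{2^n-1}\cdot h^{D-(2^n-1)} = 2l_0$ in $\KnInt$ is unnecessary (as you yourself notice), but your final argument is the intended one.
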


\begin{proof}
Take $z:=h^{D-(2^n-1)}\in \Kn(Q)$ and take the reduction of the formula of Proposition~\ref{prop:KnInt_power_of_h_split_quadric} modulo $2$. 
\end{proof}

\subsubsection{Classes of closed points under pullbacks}

Let $X\in \Sm_k$, let $\A$ be an oriented cohomology theory over $k$.

Every closed point $x$ in $X$ defines an element $[x]:=x_*(1)\in \A(X)$
where we consider $x$ as the morphism $\Spec k(x) \rarr X$.

\begin{Lm}
\label{lm:restriction_closed_points}
Let $f\colon W\rarr X$ be a non-surjective projective morphism in $\Sm_k$.
Then $f^*([x])=0$ for every closed point $x\in X$.
\end{Lm}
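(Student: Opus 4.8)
The plan is to use the transversal base change property \eqref{eq:bc} for $\A$ together with the observation that a non-surjective projective morphism to an irreducible variety factors through a proper closed subscheme. First I would reduce to the case where $X$ is irreducible: since a closed point $x$ lies in a single irreducible component $X'$ of $X$, and the pullback of $[x]$ along $W\rarr X$ can be computed after restricting to $f^{-1}(X')\rarr X'$ (using that the inclusion of a connected component commutes with the relevant pushforwards and pullbacks), it suffices to treat $X$ irreducible. So assume $X$ is irreducible and let $Z:=\overline{f(W)}\subsetneq X$ be the (proper) closure of the image, with open complement $U:=X\setminus Z$, which is non-empty since $f$ is non-surjective.

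Next I would set up the pullback square expressing $[x]$ as the pushforward of $1$ along $x\colon \Spec k(x)\rarr X$. The key point is to fit $x$, $f$ and the closed subscheme $Z$ into a transversal square, or more precisely to use the functoriality of pullback to closed/open pieces. Concretely, consider the composition $W\xrarr{f} X$ and observe that $f$ factors as $W\rarr Z\hookrightarrow X$ as a morphism of schemes (even if $W\rarr Z$ is not a morphism in $\Sm_k$, one can pass to a resolution or use the Borel–Moore formalism). Then $f^*\colon \A(X)\rarr \A(W)$ factors through the restriction map $\A(X)\rarr \A(U^c\text{-part})$; but the cleaner route is: for a closed point $x\in X$, either $x\in U$ or $x\notin U$. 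If $x\in U$, then $x$ maps into the open complement of $Z=\overline{f(W)}$, so $W\times_X \Spec k(x)=\emptyset$, and by transversal base change applied to the (transversal, since one factor is an open-point inclusion hitting a point away from the image) square, $f^*([x]) = f^*(x_*(1))$ equals the pushforward along the empty scheme, hence $0$. If $x\in Z$, one instead uses the localization sequence \eqref{eq:loc}: the class $[x]\in\A(X)$ is in the image of $\A(Z)\rarr\A(X)$, and one shows $f^*$ kills this image — again because $f$ factors set-theoretically through $Z$ does not immediately give a map of smooth schemes, so here I would use a resolution $\tilde Z\rarr Z$ as in Section~\ref{sec:borel_moore_theory_via_resolution_singularities} and reduce to base change along $W\times_X \tilde Z\rarr \tilde Z$, a genuine morphism in $\Sm_k$ (after possibly shrinking/resolving $W\times_X\tilde Z$), where \eqref{eq:bc} applies directly.

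Actually, the most economical argument avoids the case split: since $Z=\overline{f(W)}\subsetneq X$ is a proper closed subscheme and $x$ is a \emph{single} closed point, the point $x$ either lies in $U$ (in which case $f^{-1}(x)=\emptyset$ and transversality of $x\colon\Spec k(x)\rarr X$ with $f$ is automatic, giving $f^*([x])=0$ by \eqref{eq:bc}), or lies in $Z$; but in the latter case I can still use that $[x]=i_*([x]')$ where $i\colon Z\hookrightarrow X$ and $[x]'\in\A(Z)$ is the corresponding Borel–Moore class, and then the composite $W\rarr X$ together with $i$ fits into a diagram whose relevant base change — after resolving singularities of $Z$ and of the fibre product — forces $f^*i_* = (f')_*(i')^*$ with $f'$ again non-surjective onto $\tilde Z$, so by induction on $\dim X$ the right-hand side vanishes. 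The main obstacle is the bookkeeping in this last step: the fibre product $W\times_X Z$ need not be smooth, so one must pass through a resolution of singularities (available since $\mathrm{char}\,k=0$) and check that transversal base change \eqref{eq:bc} can be applied after these modifications, i.e.\ that one can arrange the squares to be genuinely transversal with all corners in $\Sm_k$. Once that technical point is handled, the vanishing is immediate.
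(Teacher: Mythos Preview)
Your approach has a genuine gap in the case $x\in Z$. You want to factor $f$ through $Z$ (or a resolution $\tilde Z$) and run an induction on $\dim X$, claiming that the induced map $f'\colon W'\to \tilde Z$ is ``again non-surjective''. But this is false in general: since $Z=\overline{f(W)}$, the image of $f$ is \emph{dense} in $Z$, so after base change to $\tilde Z$ (and resolving the fiber product) the induced morphism can very well be surjective onto $\tilde Z$. The non-surjectivity hypothesis is not preserved, and the induction collapses.

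The paper avoids this entirely by a different idea. It first reduces to the universal theory $\A=\Omega$ (since pullbacks commute with the canonical morphism $\Omega\to\A$), and then uses the Levine--Morel isomorphism $\Omega^{\dim X}(X)\cong\CH^{\dim X}(X)$ in top codimension. The point is that in Chow groups one has the \emph{moving lemma}: since $f$ is projective, $f(W)\subsetneq X$ is a proper closed subset, and any $0$-cycle on $X$ is rationally equivalent to one whose support avoids $f(W)$. For such a representative the fiber product with $W$ is empty, and transversal base change \eqref{eq:bc} gives $0$ immediately. So the paper does not case-split on whether $x$ lies in $Z$; instead it moves the \emph{class} $[x]$ away from $Z$ inside $\CH_0(X)$, which sidesteps exactly the difficulty your induction runs into.
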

\begin{proof}
 It suffices to prove the claim for $\A=\Omega$, the universal oriented theory.    
Note that the classes of closed points lie in the group $\Omega^{\dim Y}(Y)$,
which is isomorphic to $\CH^{\dim Y}(Y)$, the group of 0-cycles, by \cite[Th.~4.5.1]{LevMor}.
However, using the moving lemma for Chow groups \cite[Sec.~11.4]{Ful},
there exists a $0$-cycle on $Y$ with the same class as $[x]$ in $\Omega^{\dim Y}(Y)$
whose support does not intersect the closed subset $f(W)\subset Y$.
By (BC) one gets that $f^*(y)=f^*([x])=0$.
\end{proof}

\subsection{Correspondences and motives}
\label{sec:prelim_motives}

\subsubsection{Action of correspondences}
\label{sec:corr_action}

Let $\A$ be an oriented cohomology theory over $k$.
Let $X\in\SmProj_k$ and $Y\in \Sm_k$.

Then $\A(X\times X)$ has the structure of (not necessarily commutative) ring,
with the class of the diagonal $\Delta_X:= [X\xrarr{\Delta} X\times X]_{\A}$
being the unit (see e.g. \cite[Sec.~62]{EKM} for $\A=\CH$).
Moreover, $\A(Y\times X)$ has the structure of a left module over $\A(X\times X)$:
 for $\alpha\in \A(X\times X)$ and $x\in A(Y\times X)$,
we define $\alpha\circ x$ as $(p_{1,3})_*(p_{1,2}^*(x)p_{2,3}^*(\alpha))$
where $p_{i,j}$ are projection maps from $Y\times X \times X$ either to $Y\times X$ or to $X\times X$.
Similarly,  $\A(X\times Y)$ 
has the structure of  a right module over $\A(X\times X)$
with $x\circ \alpha$  defined as 
 $(p_{1,3})_*(p_{1,2}^*(\alpha )p_{2,3}^*(x))$.

\subsubsection{Categories of motives}
\label{sec:motives_over_base}

Let $S\in\Sm_k$, let $\A$ be an oriented cohomology theory over $k$.
In this paper we consider the category of $\A$-motives over the base $S$, denoted $\CM_{\A}(S)$.
In particular, $\CM_{\CH}(\Spec k)$ is the category of Chow motives over a field $k$ (see e.g.\ \cite[Ch.~XII]{EKM}).
The category of relative Chow motives appeared in \cite[Sec.~1]{DenMurre} (with $\QQ$-coefficients), as well as in \cite[Sec.~2]{VishZai}.
The construction of $\CM_{\A}(S)$ is the same as for Chow motives, with $\A$ replacing $\CH$,
and goes back to Grothendieck (see~\cite{Manin}). 

The category of motives comes together with the motive functor:
$$ \SmProj_S \xrarr{\MA{-/S}} \CM_{\A}(S),$$
and the morphisms between $\MA{X/S}$ and $\MA{Y/S}$ for $X,Y\in \SmProj_S$  equidimensional
are given by the group $\A^{\dim Y}(X\times_S Y)$.
In the case when $S=\Spec F$ for some field $F$, we denote the category of motives as $\CM_{\A}(F)$
and the motive functor as $\MA{-}$.

The category $\CM_{\A}(S)$ is a rigid tensor additive category with the unit object $\un_S:=\MA{S/S}$.
The motive $\MA{\PP^1_S/S}$ splits off $\un_S$, and the complement to it is denoted by $\un_S(1)$.
One defines for $i\in \ZZ$, as usual, $\un_S(i)$ as $\left(\un_S(1)\right)^{\otimes i}$, 
where $L^{\otimes i}$ denotes $(L^{\vee})^{\otimes -i}$ if $i<0$.  
We refer to $\un_S(i)$ for any $i\in \ZZ$ as a {\sl Tate motive}; 
for $M\in \CM_{\A}(S)$ the motive $M(i):=M\otimes \un_S(i)$ is called a {\sl Tate twist} of $M$.
If the base of the motive is clear from the context, we may write $\un(i)$ instead of $\un_S(i)$. 
A motive is called  {\sl split}, if it is isomorphic to a direct sum of Tate motives. 

We will use the following functoriality of $\CM_{\A}(-)$:
if $f\colon S\rarr T$ is any morphism in $\Sm_k$,
then there exists a (monoidal) pullback functor $\mathrm{res}_{T/S}\colon\CM_{\A}(T)\rarr \CM_{\A}(S)$
that sends $\MA{X/S}$ to $\MA{X\times_S T/T}$;
if $g\colon Z\rarr W$ is smooth projective, then
there exists a push-forward functor $\CM_{\A}(Z)\rarr \CM_{\A}(W)$
that sends $\MA{X/Z}$ to $\MA{X/W}$.
Moreover, if $\A$ is an extended cohomology theory,
there are e.g.\ pullback functors $\CM_{\A}(S)\rarr \CM_{\A}(k(S))$ for irreducible $S$.

\subsubsection{Specialization from Chow motives to $\A$-motives}
\label{sec:prelim_vishik_yagita}

Let $\phi\colon\A\rarr \B$ be a morphism of oriented cohomology theories over $k$,
it induces a functor on motives $\Phi\colon\CM_{\A}(k)\rarr \CM_{\B}(k)$
that sends $\MA{X}$ to $\Mot{\B}{X}$, $X\in\SmProj_k$.
It was observed by Vishik--Yagita in \cite{VishYag}
that for some $\A, \B$ this functor induces a 1-to-1 correspondence
between isomorphism classes of objects. 
This is usually obtained via {\sl lifting of idempotents}:
if the homomorphism 
$$\Phi_X\colon\End(\MA{X})\rarr \End(\Mot{\B}{X})$$ 
is surjective, and its kernel is nilpotent, 
then one can lift idempotents along it by [loc.\ cit., Prop.~2.3].

We use it mostly in the following situation: 
when $\A$, $\B$ are free theories and $\A(k)\rarr \B(k)$ is a surjective morphism of graded rings
with the kernel concentrated in negative degrees,
then the nilpotence of the kernel of $\Phi_X$ is checked in [loc.\ cit., Prop.~2.7].
In particular, let $\A = \Omega$, $\B = \CH$ and $\phi\colon\Omega\rarr \CH$ be
the unique morphism of theories (given by the universality of $\Omega$).
Then the kernel of $\Omega(k)\cong \LL\rarr \CH(k)=\ZZ$ consists of the elements 
of the negative degree in $\LL$, and thus one gets 
that there is a 1-to-1 bijection between isomorphism classes of $\Omega$-
and Chow motives, [loc.\ cit., Cr.~2.8].
Similarly, if $\A=\BP$, $\B=\CH\otimes \Z{p}$,
or $\A=\CKn$ and $\B=\Ch$, the analogous statement holds.

If $M$ is a Chow motive, let $M_{\Omega}$ be the unique (up to isomorphism) lift
of $M$ to $\Omega$-motives.
For any oriented theory $\A$ there exists a unique morphism of theories $\phi\colon\Omega\rarr \A$
and the corresponding functor on motives sends $M_{\Omega}$ to $M_{\A}$.
The motive $M_{\A}$ is called the {\sl $\A$-specialization} of the Chow motive $M$.

\subsubsection{Rost motives}
\label{sec:prelim_rost_motives}

The Rost motives were constructed in \cite{Rost}
and played the crucial role in the proof of the Milnor conjectures.
We will work with $\Kn$-specializations of them,
the computation of which is based on \cite{VishYag}.

\begin{Prop}[{\cite[Prop.~6.2(2)]{SechSem}}]
\label{prop:prelim_L_alpha_Rost}
Let $R_\alpha$ be the Rost motive for a symbol $\alpha \in \km$.

Then the specialization $(R_\alpha)_{\Kn}$ splits off a Tate summand $\un_k$, 
let $L_\alpha$ be its complement.

Motive $L_\alpha$ satisfies the following properties:
\begin{enumerate}
    \item $(L_\alpha)_K \cong \un_K$ for a field extension $K/k$ if and only if $\alpha_K=0$;
    \item if $\alpha\neq \beta$, then $L_\alpha \ncong L_\beta$.
\end{enumerate}
\end{Prop}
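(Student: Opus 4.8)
The plan is to leverage the known structure of the Rost motive $R_\alpha$ in Chow theory together with the Vishik--Yagita specialization machinery recalled in Section~\ref{sec:prelim_vishik_yagita}. Recall that over $\overline{k}$ the Chow motive $R_\alpha$ becomes a sum of Tate motives $\bigoplus_{i=0}^{b} \un(d_i)$, where $b = 2^{n}-1$ ... actually what matters here is only that $R_\alpha$ splits off a single $\un_{\overline{k}}$ in degree $0$ and the remaining Tate summands sit in positive degrees; by the general theory of Rost motives (see \cite{Rost, VishYag}) the endomorphism ring of $(R_\alpha)_{\overline{k}}$ is a product of copies of $\Apt$ indexed by the Tate summands, and the same holds after passing to $\CKn$ or $\Kn$. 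First I would pass from $\CH$ to the connective Morava K-theory $\CKn$: since $\CH \to \CKn$... more precisely $\CKn \to \Ch$ is a surjection of graded rings with kernel in negative degrees, idempotents lift along $\End(\Mot{\CKn}{X}) \to \End(\Mot{\Ch}{X})$ by \cite[Prop.~2.3, 2.7]{VishYag}, and the same passage extends to $\Kn = \CKn[v_n^{-1}]/(\cdots)$. This already gives a well-defined $\Kn$-specialization $(R_\alpha)_{\Kn}$ and exhibits the splitting off of $\un_k$ coming from the degree-$0$ Tate summand that is $k$-rational (the fundamental class). Let $L_\alpha$ be the complement.

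For property (1): over $\overline{k}$ the motive $(R_\alpha)_{\Kn}$ is split, and in fact $L_\alpha$ becomes a sum of Tate motives over $\overline k$; but more is needed. The key point is that over a field extension $K/k$ with $\alpha_K = 0$, the Rost motive $(R_\alpha)_K$ in Chow theory splits as a sum of Tate motives --- this is one of the basic properties of Rost motives --- hence $(L_\alpha)_K$ specializes to a split $\Kn$-motive, and one identifies it with $\un_K$ by counting (it has $\Kn$-motive of rank $1$ after setting... the rank of the complement is $1$ in the appropriate sense once one knows the structure). Conversely, if $(L_\alpha)_K \cong \un_K$, then $\Kn(\overline{R_\alpha} \text{-relevant piece})$... rather: if $\alpha_K \neq 0$ then $(R_\alpha)_K$ is indecomposable in Chow motives (a symbol being nonzero means the corresponding norm variety is anisotropic, and the Rost motive is then a nontrivial indecomposable summand), and I would transport indecomposability to $\Kn$ via lifting of idempotents applied to $\End_K(\cdots)$, so $(L_\alpha)_K \ncong \un_K$. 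The cleanest route is probably to cite \cite[Prop.~6.2(2)]{SechSem} directly, since the excerpt labels exactly this statement as Proposition~\ref{prop:prelim_L_alpha_Rost} --- so the "proof" is genuinely just a pointer to \cite{SechSem}, perhaps with a sketch of the two ingredients (Vishik--Yagita specialization and the structure of Rost motives under scalar extension).

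For property (2): suppose $\alpha \neq \beta$ in $\km$ but $L_\alpha \cong L_\beta$. Then over any $K$ splitting $\alpha$ we get $(L_\beta)_K \cong (L_\alpha)_K \cong \un_K$, so by (1) applied to $\beta$ we get $\beta_K = 0$. Taking $K = k(\alpha)$ the function field of a Pfister quadric for $\alpha$ (or, for a nonsymbol, a suitable splitting field), this says $\beta$ vanishes over every field splitting $\alpha$; symmetrically $\alpha$ vanishes over every field splitting $\beta$. For symbols this forces $\alpha = \beta$: e.g.\ $\beta_{k(\alpha)} = 0$ together with $\alpha_{k(\beta)} = 0$ implies, via the theory of function fields of Pfister quadrics (Arason--Pfister, or the fact that $\HH^{n+1}(k,\ZZ/2) \to \HH^{n+1}(k(\alpha),\ZZ/2)$ has kernel exactly $\{0,\alpha\}$ for a pure symbol $\alpha$), that $\alpha$ and $\beta$ generate the same subgroup, hence $\alpha = \beta$.

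The main obstacle I anticipate is property (1) in the "only if" direction: showing that $(L_\alpha)_K \cong \un_K$ forces $\alpha_K = 0$. The subtlety is that Morava K-theory is coarser than Chow theory, so a priori the Rost motive could "split more" in $\Kn$ than in $\CH$ even when $\alpha \neq 0$ --- indeed this coarsening is the whole theme of the paper. What saves the argument is the range constraint: for the \emph{first} Morava K-theory relevant to a degree-$(n+1)$ symbol, namely $\Kn$, the specialization functor $\CM_{\CH}(k) \to \CM_{\Kn}(k)$ still reflects non-splitting of the Rost motive $R_\alpha$, because $\CH^i \otimes \F{2}[v_n^{\pm 1}] \cong \gr^i \Kn$ for $i \le 2^n$ by \cite{Sech} and the relevant classes controlling indecomposability of $R_\alpha$ live in this range. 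Making that precise --- tracking that the obstruction cycle distinguishing $R_\alpha$ from a split motive survives to $\Kn$ --- is the technical heart, and it is exactly what \cite[Prop.~6.2]{SechSem} establishes, so in this paper I would simply invoke it.
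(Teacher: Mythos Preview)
Your proposal is correct and matches the paper's approach: for (1) both simply cite \cite[Prop.~6.2(2)]{SechSem}, and for (2) both deduce it from (1) via the fact that the function field of the Pfister quadric for a symbol $\alpha$ kills exactly $\{0,\alpha\}$ in $\HH^{n+1}$. The paper's argument for (2) is marginally more direct than yours---rather than invoking symmetry, it just observes that if $\alpha\neq\beta$ then $K=k(Q_\alpha)$ satisfies $\alpha_K=0$ and $\beta_K\neq 0$, whence $(L_\alpha)_K\cong\un_K\ncong(L_\beta)_K$---but the content is the same.
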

\begin{proof}
In \cite[Prop.~6.2(2)]{SechSem} (1) is proved, and (2) follows from it,
since if $\alpha \neq \beta$, there exists a field extension $K$ such that $\alpha_K=0, \beta_K\neq 0$ 
($K$ can be taken to be e.g.\ the function field of the Pfister quadric corresponding to $\alpha$).
\end{proof}
\begin{Rk}
    We will show in Proposition~\ref{prop:additivity_L_alpha} that $L_\alpha^{\otimes 2}\cong \un$,
    which can also be seen by using the multiplicative properties of the Rost motive (cf. Lemma~\ref{lm:product_with_rost_motive}).
\end{Rk}

\begin{Prop}[{Rost, \cite{Rost}}]
\label{prop:prelim_motive_pfister}
Let $\alpha = \{a_1, \ldots, a_{n+1}\}\in \km$ be a symbol.
Let $p:=\bigotimes_{i=1}^{n+1} \langle1,-a_i\rangle$ be the corresponding $(n+1)$-Pfister form.

Then the Chow motive of $P$ decomposes as a direct sum of Rost motives:
$$ \MCH P \cong \bigoplus_{i=0}^{2^n-1} R_\alpha(i).$$

The $\Kn$-motive of $P$ decomposes as a direct sum of Tate motives and Tate twists of $L_\alpha$:

$$ \MKn P \cong \bigoplus_{i=0}^{2^n-1} \un\sh i \oplus  \bigoplus_{i=0}^{2^n-1} L_\alpha\sh i $$
\end{Prop}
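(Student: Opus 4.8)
The plan is to deduce both decompositions from the known Chow motive decomposition $\MCH P \cong \bigoplus_{i=0}^{2^n-1} R_\alpha(i)$ together with the $\Kn$-specialization functor. Since $\Kn$-specialization is an additive functor $\CM_{\CH}(k)\to \CM_{\Kn}(k)$ sending $R_\alpha$ to $(R_\alpha)_{\Kn}$ and compatible with Tate twists, applying it to the Chow decomposition immediately gives
\[
\MKn P \cong \bigoplus_{i=0}^{2^n-1} (R_\alpha)_{\Kn}(i).
\]
By Proposition~\ref{prop:prelim_L_alpha_Rost}, $(R_\alpha)_{\Kn} \cong \un \oplus L_\alpha$, so substituting and regrouping the $2^n$ Tate summands $\un(i)$ and the $2^n$ twisted summands $L_\alpha(i)$ yields exactly the claimed formula $\MKn P \cong \bigoplus_{i=0}^{2^n-1}\un\sh i \oplus \bigoplus_{i=0}^{2^n-1} L_\alpha\sh i$. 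So the entire content reduces to the Chow-motive statement plus the already-recorded splitting of $(R_\alpha)_{\Kn}$.

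For the Chow decomposition $\MCH P \cong \bigoplus_{i=0}^{2^n-1} R_\alpha(i)$ itself, this is the classical theorem of Rost; since the excerpt attributes it to \cite{Rost}, I would simply cite it. If one wanted to reprove it, the standard route is: over $\overline{k}$ the Pfister quadric $P$ is split, so $\MCH{P_{\overline k}}$ is a sum of Tate motives $\un(i)$, $0\le i\le \dim P$; one then constructs explicit idempotent correspondences in $\CH^{\dim P}(P\times P)$ that are $\overline{k}$-rational (built from the diagonal, the hyperplane class $h$, and the rational cycle $l_0$ supplied by the quadratic form structure), checks they are orthogonal and sum to the diagonal, and identifies each resulting summand with a Tate twist of the Rost motive $R_\alpha$ by comparing base changes over $\overline{k}$ and invoking Rost nilpotence to lift the identification from $\overline{k}$ to $k$. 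The combinatorics of which Tate motives over $\overline k$ each $R_\alpha(i)$ absorbs is governed by the binary-digit pattern of $i$ relative to $2^n$.

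The main obstacle, if a self-contained proof were demanded, would be verifying that the candidate projectors are genuinely defined over $k$ (rationality of the relevant cycles on $P\times P$) and that Rost nilpotence applies to conclude the motivic isomorphism over $k$ from the isomorphism over $\overline k$ — this is exactly where the original work of Rost lies. Given that the excerpt presents this as a cited result of Rost, however, I expect the proof in the paper to be a one-line reduction as above: invoke \cite{Rost} for the Chow statement, apply the $\Kn$-specialization functor, and use Proposition~\ref{prop:prelim_L_alpha_Rost} to replace each $(R_\alpha)_{\Kn}$ by $\un\oplus L_\alpha$. The only thing to be careful about is bookkeeping the index ranges so that precisely $2^n$ copies of each of $\un\sh i$ and $L_\alpha\sh i$ appear, which is immediate from $\#\{0,\ldots,2^n-1\}=2^n$.
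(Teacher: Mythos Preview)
Your proposal is correct and matches the paper's proof essentially verbatim: the paper's entire argument is the single sentence ``The $\Kn$-specialization result follows from the Chow motivic decomposition due to Rost by Proposition~\ref{prop:prelim_L_alpha_Rost},'' which is exactly the reduction you describe. Your additional remarks about how one might reprove Rost's Chow decomposition are extraneous here (the paper treats that as a black-box citation), but not wrong.
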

\begin{proof}
    The $\Kn$-specialization result follows from the Chow motivic decomposition due to Rost
    by Proposition~\ref{prop:prelim_L_alpha_Rost}.
\end{proof}

\subsubsection{Invertible summands in $\Kn$-motives and $\Kn$-isotropic varieties}

In our study of invertible motives we will need the following easy lemma.

\begin{Lm}
\label{lm:Kn-isotropic_invertible_direct_summands}
    Let $X\in\SmProj_k$. Then the following are equivalent:
    \begin{enumerate}
        \item $\Mot{\Kn}{X}$ has $\un$ as a direct summand;
        \item $\Mot{\Kn}{X}$ has an invertible direct summand;
        \item there exists an element $a\in \Kn(X)$ such that $(\pi_X)_*(a)=1$;
        \item there exists a morphism $W\rarr X$ in $\SmProj_k$ with $[W]_{\Kn}=v_n^r$ for some $r$.
    \end{enumerate}
\end{Lm}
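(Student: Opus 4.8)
The four conditions are routine to relate via the Yoneda-style dictionary between invertible direct summands of $\MKn X$ and elements of $\Kn(X)$. The plan is to prove the cycle of implications $(1)\Rightarrow(2)\Rightarrow(3)\Rightarrow(4)\Rightarrow(1)$, with the bulk of the work being the (conceptually standard but fiddly) passage $(2)\Rightarrow(3)$.

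First, $(1)\Rightarrow(2)$ is immediate since $\un$ is invertible. For $(2)\Rightarrow(3)$: suppose $L$ is an invertible direct summand of $\MKn X$, say $\MKn X\cong L\oplus M$. Since $\Kn$-motives of smooth projective varieties are defined by correspondences, an invertible motive $L$ that occurs as a summand of a motive of a variety is, up to the usual bookkeeping, forced to be a Tate twist of $\un$ — but rather than invoke a structure theorem I would argue directly: the inclusion $L\hookrightarrow \MKn X$ and projection $\MKn X\twoheadrightarrow L$ compose to the identity of $L$, and since $L$ is invertible, tensoring with $L^\vee$ exhibits $\un$ as a retract of $\MKn X\otimes L^\vee = \MKn X\otimes \un(j)$ for the appropriate twist $j$ (using that an invertible $\Kn$-motive over a field is a Tate twist of the unit once we know $\Kn$ of a point is a field — here $\F 2[v_n,v_n^{-1}]$ or $\F 2$ after setting $v_n=1$; invertible objects in the category of $\Kn$-motives of varieties that split over $\bar k$ are Tate, which one can check on rank). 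Then $\un$ being a retract of $\MKn X(j)$ means there are morphisms $\un\to\MKn X(j)\to\un$ composing to $\mathrm{id}_{\un}$; the first is an element $a'\in\Kn^{*}(X)$ and the second is $(\pi_X)_*$ up to twist, and their composite being $1\in\Kn(k)$ gives exactly an $a\in\Kn(X)$ with $(\pi_X)_*(a)=1$ after absorbing the twist (multiplying $a'$ by a suitable power of $h$ or $v_n$ to land in the right degree, using that $\Kn(k)$ is generated by invertible elements $v_n$).

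Next, $(3)\Rightarrow(4)$: given $a\in\Kn(X)$ with $(\pi_X)_*(a)=1$, use that $\Kn$ is a free theory, so $\Kn(X)=\Omega(X)\otimes_{\LL}\F2[v_n,v_n^{-1}]$, and lift (a $v_n$-multiple of) $a$ to a cobordism class in $\Omega(X)$ represented, by Levine--Morel's generators, as $[W\xrightarrow{f} X]_{\Omega}$ for some smooth projective $W$ — more precisely $a$ is an $\F2[v_n,v_n^{-1}]$-combination of such classes, and pushing forward, $(\pi_X)_*(a)=1$ translates into an identity in $\Kn(k)=\F2[v_n,v_n^{-1}]$ saying that some $\ZZ$-linear combination of the $[W_i]_{\Kn}$ equals $1$; after clearing denominators by powers of $v_n$ and reducing mod $2$, one extracts a single $W\to X$ with $[W]_{\Kn}=v_n^r$ — here one uses that $[W\to X]_{\Kn}$ pushed to a point is $[W]_{\Kn}$ by functoriality of push-forward, and that in $\F2[v_n,v_n^{-1}]$ a unit must be $v_n^r$. (One may need to take disjoint unions / products of the $W_i$ to realize the combination by a single variety; this is the one genuinely combinatorial point.)

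Finally, $(4)\Rightarrow(1)$: if $f\colon W\to X$ has $[W]_{\Kn}=v_n^r\in\Kn(k)$, consider the correspondences $\gamma = [\Gamma_f]\in\Kn(W\times X)$ (graph of $f$) giving $\un\to\MKn X$-type data and its transpose; the composite $W\to X\to \mathrm{pt}$ has degree $v_n^r\in\Kn(k)^{\times}$, so a suitable scalar multiple of $f_*(1_W)\in\Kn(X)$ together with $(\pi_X)_*$ produces an idempotent in $\End(\MKn X)=\Kn(X\times X)$ whose image is $\un$; concretely the composite $\un\xrightarrow{v_n^{-r}f_*(1_W)}\MKn X\xrightarrow{(\pi_X)_*}\un$ is the identity, so $\un$ is a retract, hence a direct summand since the category is idempotent-complete.

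\textbf{Main obstacle.} The delicate step is $(2)\Rightarrow(3)$: one must justify that an invertible direct summand of the $\Kn$-motive of a smooth projective variety is forced to be a Tate twist of $\un$, so that its "coevaluation/evaluation" become an actual cohomology class and the push-forward. This rests on understanding $\Pic(\CM_{\Kn}(k))$ — or at least on the sub-statement that invertible summands of motives of varieties are Tate — which is exactly the kind of fact the paper develops later; I would either cite the relevant structural input or, to keep the lemma self-contained, argue by base change to $\bar k$ (where everything splits into Tate motives, so an invertible summand has rank $1$ and is $\un(j)$ there) combined with a rank/degree count over $k$. The rest is bookkeeping with twists and with the ring $\F2[v_n,v_n^{-1}]$.
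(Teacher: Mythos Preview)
Your argument for $(2)\Rightarrow(3)$ has a genuine gap: you claim that an invertible direct summand $L$ of $\MKn X$ must be a Tate twist of $\un$, so that $L^\vee \cong \un(j)$ and $\MKn X\otimes L^\vee\cong\MKn X(j)$. This is false in general---indeed, the central construction of this very paper (Theorem~\ref{th:nat_transform_milnor_picard}) produces invertible $\Kn$-motives $L_\alpha$ for nonzero $\alpha\in\km$ that occur as direct summands of $\MKn Q$ for suitable quadrics $Q$ and are \emph{not} isomorphic to any Tate motive over $k$ (they become Tate only over a splitting field of $\alpha$). Your fallback suggestion of base-changing to $\bar k$ and using a rank count does not help either: knowing that $L_{\bar k}\cong\un(j)$ tells you nothing about the isomorphism type of $L$ over $k$.

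The paper's fix is short and does not require any classification of invertible objects. Since $X$ is smooth projective, $\MKn X$ is self-dual up to a Tate twist: $\MKn X^\vee\cong\MKn X(-\dim X)$. Hence if $L$ is a summand of $\MKn X$, then $L^\vee$ is a summand of $\MKn X(-\dim X)$, so $\un\cong L\otimes L^\vee$ is a summand of $\MKn X\otimes\MKn X(-\dim X)\cong\MKn{X\times X}(-\dim X)$. Now apply the (easy) implication $(1)\Rightarrow(3)$ to $X\times X$ to obtain $b\in\Kn(X\times X)$ with $(\pi_{X\times X})_*(b)=1$, and set $a=(\mathrm{pr}_1)_*(b)$. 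Your remaining implications $(1)\Rightarrow(2)$, $(3)\Leftrightarrow(4)$, and $(3)\Rightarrow(1)$ are correct and essentially match the paper's treatment.
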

\begin{Rk}
Recall that Voevodsky defines $\nu_n$-varieties in~\cite[Sec.~4]{Voe_Zl}:
these are those $X\in \SmProj_k$ such that $[X]_{\Kn}=u\cdot v_n$ for $u\in\mathbb F_p^\times$, 
as shown in \cite[Prop.~4.4.22]{LevMor}.
\end{Rk}
\begin{proof}
Note that if we have morphisms of motives: $\un\sh i \xrarr{\alpha} \Mot{\Kn}{Y} \xrarr{\beta} \un\sh i$ for some $i$
and $\alpha, \beta \in \Kn(Y)$, then their composition is the identity (and hence defines the splitting off of $\un\sh i$) 
if and only if $(\pi_Y)_*(\alpha\cdot \beta)$ equals $1$.
This directly implies (1) $\Rightarrow$ (3).

(2) $\Rightarrow$ (3): let $L$ be an invertible direct summand of $\Mot{\Kn}{X}$,
then $L\otimes L^{\vee}\cong \un$ is a direct summand of $\Mot{\Kn}{X\times X}\sh{-\dim X}$.
By the above observation there exists an element $b\in \Kn(X\times X)$
such that $(\pi_{X\times X})_*(b)=1$. 
Therefore one can take $a=(\pra 1)_*(b)$, where $\pra 1\colon X\times X\rarr X$
is the canonical projection.

(3) $\Rightarrow$ (1): $\un \xrarr{a} \Mot{\Kn}{X} \xrarr{\pi_X} \un$ defines the splitting off of $\un$.

(1) $\Rightarrow$ (2): trivial.

Finally, recall that $\Omega(X)$ is generated as an abelian group by classes $[W\rarr X]_\Omega$ 
of projective morphisms, $W\in\Sm_k$,
by \cite[Prop.~3.3.1]{LevMor}.
Therefore $\Kn(X)$ is generated as $\F{p}[v_n,v_n^{-1}]$-module by classes $[W\rarr X]_{\Kn}$. 
This explains (3) $\Leftrightarrow$ (4).
\end{proof}

If one of the equivalent conditions of this lemma holds, 
$X$ is called {\sl $\Kn$-isotropic},
and otherwise {\sl $\Kn$-anisotropic} (following Vishik~\cite{VishIso, VishIso2}, cf. a general definition in Example~\ref{ex:systems_of_relations}).

\subsubsection{Semi-simple category of numerical $\Kn$-motives}
\label{sec:prelim_num_kn-motives}

We will also consider categories of motives associated to the numerical Morava K-theory.
Recall that for $X\in\SmProj_k$ 
an element $x\in \Kn(X)$ is called numerically trivial, denoted $x\sim_{\num} 0$, 
if $(\pi_X)_*(x\cdot y) =0$ for all $y\in \Kn(X)$. 
One defines $\Kn_{\num}(X)$ as the quotient of $\Kn(X)$
by the ideal of numerically trivial elements.
This allows to define the category $\CM_{\Kn_{\num}}(k)$,
since its definition only uses $\Kn_{\num}$ on smooth projective varieties. 
By~\cite[Prop.~6.1]{DuVish} this category is semi-simple.
In Section~\ref{sec:quotients_oriented_theories} we explain how $\Kn_{\num}$ can be extended to an oriented cohomology theory over $k$
(note, however, that $\Kn_{\num}$ is not an extended cohomology theory).

\begin{Lm}
\label{lm:numerical_tate_summands_Kn}
Let $X\in \SmProj_k$.
Then the number of Tate summands $\un(i)$ in $\Mot{\Kn_{\num}}{X}$
equals the dimension of $\Kn_{\num}^{\dim X-i}(X)$ and the dimension of $\Kn_{\num}^{i}(X)$ .
\end{Lm}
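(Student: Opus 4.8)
The statement is about $\Kn_{\num}$-motives, which live in a semi-simple category, so the strategy is to reduce everything to linear algebra over $\overline{k}$ and use the non-degeneracy of the numerical pairing. First I would pass to $\overline{k}$: over the algebraic closure, $\Mot{\Kn_{\num}}{\overline{X}}$ splits into Tate motives, and the number of copies of $\un(i)$ in $\Mot{\Kn_{\num}}{X}$ can be read off as the rank of the image of the projector $p_i \in \End(\Mot{\Kn_{\num}}{X})$ corresponding to that isotypic component; since the category over $k$ is semi-simple, a Tate summand $\un(i)$ of $\Mot{\Kn_{\num}}{X}$ is precisely a rank-one idempotent in $\Kn_{\num}(X\times X)$ factoring through $\un(i)$, i.e.\ the number of such summands is $\dim_{\F_p} \Hom(\un(i), \Mot{\Kn_{\num}}{X}) = \dim_{\F_p} \Hom(\Mot{\Kn_{\num}}{X}, \un(i))$ by semi-simplicity (both Hom-spaces are $\F_p$-vector spaces and equal since the category is semi-simple and $\un(i)$ is simple).

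Next I would identify these Hom-spaces with the claimed cohomology groups. By the definition of morphisms in $\CM_{\Kn_{\num}}(k)$, we have $\Hom(\un(i), \Mot{\Kn_{\num}}{X}) = \Kn_{\num}^{\dim X - i}(X)$ (a correspondence $\un(i) \to \Mot{\Kn_{\num}}{X}$ is an element of $\Kn_{\num}(\Spec k \times X)$ in the appropriate degree, after the Tate twist) and $\Hom(\Mot{\Kn_{\num}}{X}, \un(i)) = \Kn_{\num}^{i}(X)$. So it remains to check that $\dim_{\F_p} \Kn_{\num}^{j}(X)$ equals the multiplicity of $\un(\dim X - j)$ as a summand. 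The cleanest route: by semi-simplicity, $\Mot{\Kn_{\num}}{X}$ is a direct sum of simple objects, and the Tate summands $\un(i)$ contribute to $\Kn_{\num}^{j}(X) = \Hom(\Mot{\Kn_{\num}}{X}, \un(\dim X - j))$ with multiplicity one each, while all other simple summands $N$ contribute $\Hom(N, \un(\dim X - j)) = 0$ — but \emph{this last vanishing is exactly the point that needs an argument}, since a priori a non-Tate simple motive could still have maps to a Tate motive.

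Here is where I would do the real work. The resolution is that $\Kn_{\num}$-motives have a perfect (non-degenerate) numerical pairing $\Hom(M, N) \times \Hom(N, M) \to \Hom(\un, \un) = \F_p$ coming from composition and the trace/degree map; combined with semi-simplicity, this forces $\Hom(N, \un(i))$ to be \emph{dual} to $\Hom(\un(i), N)$, and for a simple $N$ the space $\Hom(N, \un(i))$ is nonzero iff $N \cong \un(i)$. Thus only the Tate summands isomorphic to $\un(\dim X - j)$ contribute, each contributing a one-dimensional space, and we get $\dim_{\F_p}\Kn_{\num}^{j}(X) = \#\{\text{summands } \un(\dim X - j)\}$; symmetrically, using $\Kn_{\num}^{\dim X - i}(X) = \Hom(\un(i), \Mot{\Kn_{\num}}{X})$ gives the same count for $\un(i)$, yielding the equality of the two dimensions in the statement. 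The main obstacle is therefore the bookkeeping around the numerical pairing: I must verify that the pairing on $\Kn_{\num}(X)$ that I defined degree-by-degree (namely $(x,y) \mapsto (\pi_X)_*(x\cdot y)$, with $\Kn_{\num}$ defined precisely so this descends to a non-degenerate pairing $\Kn_{\num}^{j}(X) \times \Kn_{\num}^{\dim X - j}(X) \to \F_p$) is compatible with the categorical duality in $\CM_{\Kn_{\num}}(k)$ and that semi-simplicity (citing~\cite[Prop.~6.1]{DuVish}) genuinely gives the orthogonality $\Hom(N, \un(i)) = 0$ for non-Tate simple $N$. Once that compatibility is in place, the dimension count is immediate.
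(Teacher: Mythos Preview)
Your approach is correct and is essentially the paper's: identify the multiplicity of $\un(i)$ with $\dim_{\F_p}\Hom(\un(i),\Mot{\Kn_{\num}}{X})=\dim\Kn_{\num}^{\dim X-i}(X)$ and with $\dim_{\F_p}\Hom(\Mot{\Kn_{\num}}{X},\un(i))=\dim\Kn_{\num}^{i}(X)$, then invoke semi-simplicity. The paper does this in one line (every nonzero morphism to or from $\un(i)$ splits); your detour through the numerical pairing to establish $\Hom(N,\un(i))=0$ for non-Tate simple $N$ is unnecessary, since this is immediate from Schur's lemma once you note $\End(\un(i))\cong\F_p$, and the opening pass to $\overline{k}$ should be dropped (it is not used, and $\Kn_{\num}$ does not base-change along arbitrary field extensions).
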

\begin{proof}
    By semi-simplicity of the category of the numerical $\Kn$-motives,
    every non-trivial morphism $\un(i)\rarr \Mot{\Kn_{\num}}{X}$ has a splitting,
    and similarly, for $\Mot{\Kn_{\num}}{X} \rarr \un(i)$, from which the claim follows. 
\end{proof}

\subsection{Steenrod and symmetric operations on cobordism}\label{sec:prelim_symm}

Recall that for a prime $p$, a set of representatives $\ibar=\{i_1,\ldots,i_{p-1}\}$ of all non-zero cosets modulo $p$, and a smooth projective $U$ of positive dimension, Vishik defines in~\cite[Definition~7.5]{VishSymAll} an element
$$
\eta_{p,\ibar}(U)=-\cfrac{\mathrm{deg}\big(c(-\TB U;\,\ibar)\big)}{p},
$$
where $\TB U$ is the tangent bundle of $U$, and 
$c(-\TB U;\,\ibar)=\prod_{r=1}^{p-1}c(-\TB U;\,i_r)$ is the product of Chern polynomials $c(-\TB U;\,t)\in\CHt{U}[t]$ evaluated at $t=i_r$.

He shows that $\eta_{p,\ibar}(U)\in\ZZ[\mathbf{i}^{-1}]$ for $\mathbf{i}=\prod_{r=1}^{p-1}i_r$, 
and only depends on the class $[U]\in\Laz$~\cite[Prop.~7.6]{VishSymAll},
and that the class $\overline\eta_p(U)$ of $\eta_{p,\ibar}(U)$ in $\ZZ[\mathbf{i}^{-1}]/p=\ZZ/p$ 
does not depend on the choice of $\ibar$~\cite[Prop.~7.8]{VishSymAll}. In other words, assuming additionally $\overline\eta_p([\mathrm{pt}])=1$, we obtain a homomorphism of abelian groups $\overline\eta_{p}\colon\Laz\rightarrow\ZZ/p$. For a fixed $p$, we will denote 
$$
\eta\colon\BPpt\rightarrow\Lazp\xrightarrow{\overline\eta_p}\ZZ/p
$$ 
the $\BP$-version of this map. Recall that $\BPpt\cong\Zp[v_1,\ldots, v_n, \ldots ]$ where $v_n$ can be identified with the class of a $\nu_n$-variety.

\begin{Prop}
\label{prop:eta} 
$\ $
\begin{enumerate}
\item
Let $U$ be a product of smooth projective varieties of positive dimension, then $\overline\eta_{p}(U)=0$.
\item
The kernel of $\eta\colon\BPpt\rightarrow\ZZ/p$ is generated over $\Zp$ by $p$, $p\cdot v_r$, $r>0$, and all monomials in $v_r$, $r>0$, that contain at least two factors.
\end{enumerate}
\end{Prop}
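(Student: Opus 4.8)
\textbf{Proof plan for Proposition~\ref{prop:eta}.}

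\emph{Part (1).} The plan is to reduce to the case of a product of exactly two factors $U = U_1 \times U_2$ of positive dimensions $d_1, d_2$, since $\overline\eta_p$ is a ring-independent invariant depending only on the cobordism class and $[U_1 \times \cdots \times U_m] = [U_1 \times (U_2 \times \cdots \times U_m)]$. For such a product, I would use multiplicativity of the total Chern class of the tangent bundle: $\TB{U} \cong \pra 1^* \TB{U_1} \oplus \pra 2^* \TB{U_2}$, hence $c(-\TB{U};\,t) = \pra 1^* c(-\TB{U_1};\,t) \cdot \pra 2^* c(-\TB{U_2};\,t)$ in $\CHt{U}[t]$. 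Then $c(-\TB{U};\,\ibar) = \prod_{r} c(-\TB{U};\,i_r)$ splits as an external product, and by the projection formula $\deg_U$ of an external product is the product of the degrees $\deg_{U_1}$ and $\deg_{U_2}$. But $c(-\TB{U_j};\,i_r) \in \bigoplus_{\ell \ge 0} \CH^\ell(U_j)$ has its degree-zero component equal to $1$, so $c(-\TB{U_j};\,\ibar)$ has constant term $1$ and its top-degree (codimension $d_j$) component is the only one contributing to $\deg_{U_j}$; call these integers $N_j := \deg_{U_j}\big(c(-\TB{U_j};\,\ibar)\big)$. Then $\deg_U\big(c(-\TB{U};\,\ibar)\big) = N_1 N_2$. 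By Vishik's \cite[Prop.~7.6]{VishSymAll}, $\eta_{p,\ibar}(U_j) = -N_j/p \in \ZZ[\mathbf i^{-1}]$, so $p \mid N_j$ in $\ZZ[\mathbf i^{-1}]$ for each $j$; therefore $p^2 \mid N_1 N_2$, and $\eta_{p,\ibar}(U) = -N_1 N_2 / p$ is divisible by $p$ in $\ZZ[\mathbf i^{-1}]$, whence $\overline\eta_p(U) = 0$ in $\ZZ/p$.

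\emph{Part (2).} The plan is to combine Part (1) with the known value $\overline\eta_p([\mathrm{pt}]) = 1$ and the additivity of $\eta$ as a homomorphism of abelian groups. Write $\BPpt \cong \Zp[v_1, v_2, \ldots]$ with $v_r$ realized by a $\nu_r$-variety of positive dimension. Let $J \subseteq \BPpt$ be the ideal (or rather $\Zp$-submodule) generated by $p$, the elements $p \cdot v_r$ for $r > 0$, and all monomials in the $v_r$ containing at least two factors; I want to show $\Ker(\eta) = J$. First, $J \subseteq \Ker\eta$: each $\nu_r$-variety has positive dimension so $\overline\eta_p(v_r) \in \ZZ/p$ makes sense, each monomial of length $\ge 2$ is the class of a product of positive-dimensional varieties and dies by Part (1), and $p$ and $p v_r$ are killed because $\eta$ lands in $\ZZ/p$. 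For the reverse inclusion, note that as a $\Zp$-module $\BPpt$ is free on the monomials $1, v_1, v_2, \ldots$ together with the length-$\ge 2$ monomials; modulo $J$ only the classes of $1$ and the $v_r$ survive, each with $\F p$-coefficients. So an arbitrary element, modulo $J$, has the form $c_0 \cdot 1 + \sum_r c_r v_r$ with $c_0, c_r \in \F p$, and applying $\eta$ gives $c_0 \cdot \overline\eta_p([\mathrm{pt}]) + \sum_r c_r \overline\eta_p(v_r) = c_0 + \sum_r c_r \overline\eta_p(v_r)$ in $\ZZ/p$. For this to vanish for an element of $\Ker\eta$ I need to know the values $\overline\eta_p(v_r)$; the key input is that $\overline\eta_p(v_r) = 0$ for $r \ge 2$ while the normalization with a suitably chosen $\nu_1$-variety (e.g.\ coming from the structure of $\K1$) pins down $\overline\eta_p(v_1)$, so that $c_0 + \sum_r c_r \overline\eta_p(v_r) = 0$ forces $c_0 = c_r = 0$; hence the element lies in $J$.

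\emph{Main obstacle.} The routine part is the Chern-class bookkeeping in Part (1); the genuinely delicate point is the computation of the values $\overline\eta_p(v_r)$ on the polynomial generators in Part (2) — in particular establishing that $\overline\eta_p$ vanishes on $v_r$ for $r \ge 2$ (so that the only surviving generators modulo length-$\ge 2$ monomials and $p$ are $1$ and $v_1$) and nailing down its value on $v_1$. This requires choosing explicit $\nu_r$-variety representatives and evaluating Vishik's characteristic number $\eta_{p,\ibar}$ on them, or else citing the relevant computation; everything else is formal manipulation with the structure of $\BPpt$ as a polynomial ring and the additivity of $\eta$.
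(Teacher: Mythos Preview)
Your argument for Part~(1) is correct and coincides with the paper's: both derive $\eta_{p,\ibar}(U_1\times U_2)=-p\cdot\eta_{p,\ibar}(U_1)\cdot\eta_{p,\ibar}(U_2)$ from multiplicativity of the total Chern class and the product formula for degrees.

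Part~(2) has a genuine gap. You assert that the key input is $\overline\eta_p(v_r)=0$ for $r\ge 2$; this is the \emph{opposite} of what holds. The paper invokes \cite[Prop.~7.8]{VishSymAll} for the fact that $\eta(v_r)\neq 0$ for \emph{every} $r>0$, and it is precisely this non-vanishing that makes the argument go through. Furthermore, your proposed deduction ``$c_0+\sum_r c_r\,\overline\eta_p(v_r)=0$ forces $c_0=c_r=0$'' cannot work as written: one linear relation in $\F p$ among many unknowns has nontrivial solutions (for instance $1-\eta(v_1)^{-1}v_1$ lies in $\Ker\eta$ but not in your $J$, if one ignores the grading).

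The missing structural ingredient is the \emph{grading} on $\BPpt$: since $\deg v_r=1-p^r$, the elements $1,v_1,v_2,\ldots$ lie in pairwise distinct degrees, so in each fixed degree the quotient $(\BPpt)^d/J^d$ is at most one-dimensional over $\F p$ (spanned by $1$ if $d=0$, by $v_r$ if $d=1-p^r$, and zero otherwise). Knowing $\eta(1)=1$ and $\eta(v_r)\neq 0$ for all $r>0$ then gives injectivity of $\eta$ on each nonzero graded piece of $\BPpt/J$, whence $\Ker\eta\subseteq J$ degree by degree. So the obstacle you identified is indeed the crux, but the required input is non-vanishing of $\eta(v_r)$ for all $r>0$ (available directly from Vishik), not the vanishing pattern you proposed.
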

\begin{proof}$\ $
\begin{enumerate}
\item
Let $U=U_1\times U_2$ with  projections $\pi_i\colon U\rightarrow U_i$. Since  
$
c(-\TB U;\,t)=\pi_1^*c(-\TB{U_1};\,t)\cdot\pi_2^*c(-\TB{U_2};\,t),
$
and $\mathrm{deg}(\pi_1^*u_1\cdot\pi_2^*u_2)=\mathrm{deg}(u_1)\cdot\mathrm{deg}(u_2)$ for $u_i\in\CHt{U_i}$, 
we get $\eta_{p,\ibar}(U)=-p\cdot\eta_{p,\ibar}(U_1)\cdot\eta_{p,\ibar}(U_2)$.
\item
For $r>0$ one has $\eta(v_r)\neq0$ by~\cite[Prop.~7.8]{VishSymAll}, therefore the claim follows from (1).
\end{enumerate}
\end{proof}

The numbers $\eta_{p,\ibar}(U)$ appear in computations of the ``Chow traces'' of Vishik's symmetric operations. 
Recall that an operation (resp., additive, multiplicative) between theories $A^*\xrightarrow{G}B^*$ is a natural transformation of the corresponding presheaves of sets (resp., abelian groups, rings). 

To each {\sl multiplicative} operation $\Af\xrightarrow{G}\Bf$ one assigns a morphism of the corresponding formal group laws $(G_{\mathrm{pt}},\,\gamma_G)\colon (\Apt,\,\FGL A)\rightarrow(\Bpt,\,\FGL B)$, where $\gamma_G(x)\in x\Bpt[[x]]$ describes the action of $G$ on the Chern class of a line bundle $\LB$: 
$G\big(c_1^A(\LB)\big) = \gamma_G\big(c_1^B(\LB)\big)$. Vishik proves in~\cite[Th.~6.9]{Vish1} that the above assignment induces an equivalence of the category of {\sl free} theories and multiplicative operations and the category of formal group laws.

For instance, the total Steenrod operation $\StCh\colon\Chf\rightarrow\Chf[[t]]$ corresponds to $\gamma(x)=-t^{p-1}x+x^p$. 
Steenrod operation were defined by Voevodsky in the case of motivic cohomology~\cite{VoevSteen}, and by Brosnan by a different construction using $\mathbb Z/p$-equivariant Chow groups~\cite{Brosnan}. A simplified construction for $p=2$ is also given in~\cite{EKM}. 
The individual Steenrod operations $S^r|_{\Ch^m}$ are the coefficients of $\StCh|_{\Ch^m}$ at $t^{(m-r)(p-1)}$ (see~\cite[Sec.~6.4]{Vish1} for more details).

Taking $\gamma(x)=x\prod_{k=1}^{p-1}\big(x+_\Omega (i_k\cdot_\Omega t)\big)$, we obtain a lift $\StCob$ of the total Steenrod operation to algebraic cobordism:
\begin{center}
\begin{tikzcd}
  \Cobpf \arrow[r, "\StCob"] \arrow[d, "\mathrm{pr}"] & \Cobpf[[t]][t^{-1}]\arrow[d, "\mathrm{pr}"]\\
  \Chf\arrow[r, "\StCh"] &   \Chf[[t]][t^{-1}], \\
\end{tikzcd}
\end{center}
where the vertical arrows $\mathrm{pr}$ are the morphisms of theories.

Let $\square^p$ denote the (non-additive) operation of the $p$-th power. 
Vishik proves in~\cite[Th.~7.1]{VishSymAll} (see also \cite[Sec.~6]{Vish2}), that the ``part'' of $\square^p-\StCob$ corresponding to {\sl non-positive} powers of $t$ is canonically divisible by $[p]:=p\cdot_\Omega t/t$, i.e., that there exists a unique operation $\PhCob\colon\Cobpf\rightarrow\Cobpf[t^{-1}]$, such that 
$$
\left(\square^p-\StCob-[p]\cdot\PhCob\right)(\Cobf)\subseteq t\,\Cobf[[t]].
$$
The operation $\PhCob$ is called the {\sl symmetric operation} (on algebraic cobordism). It is not multiplicative, but it becomes additive if one cuts off the component of degree zero~\cite[Prop.~7.12]{VishSymAll}:
\begin{equation}
\label{eq:phi-add}
\PhCob(u+v)=\PhCob(u)+\PhCob(v)+\sum_{j=1}^{p-1}\frac{1}{p}{p\choose j}u^jv^{p-j}.
\end{equation}

The above operations can be extended from $\Cobpf$ to $\BPf$ since the latter is obtained by a multiplicative projector: 
$$
\StBP\colon\BPf\rightarrow\Cobpf\xrightarrow{\StCob}\Cobpf[[t]][t^{-1}]\rightarrow\BPf[[t]][t^{-1}],
$$
and similarly for $\Phi$ (see~\cite[Sec.~3]{VishCob} for more details).

We denote by $\phi$ the $\mathrm{Ch}$-trace of $\PhBP$, that is, the composition
$$
\phi\colon\BPf\xrightarrow{\PhBP}\BPf[t^{-1}]\xrightarrow{\mathrm{pr}}\Chf[t^{-1}],
$$
and similarly, by $\mathrm{st}$ 
the $\Ch$-trace of $\StBP$. Following~\cite{VishSymAll}, we also denote by $\phi^{t^r}$, $\mathrm{st}^{t^r}$ the coefficients of these operations at $t^{-r}$. The following result is proved in~\cite[Prop.~3.15]{VishSym2} for $p=2$, and in~\cite[Prop.~7.9]{VishSymAll} for an arbitrary $p$.
\begin{Prop}
\label{prop:symm_divide}
Let $X\in\Smk$, $r\geq0$, $v\in\BPt X$ and $u=[U]\in\BP^{-d}(\mathrm{pt})$, $d>0$. Then
$$
\phi^{t^r}(u\cdot v)=\eta(u)\cdot\mathrm{st}^{t^{r-pd}}(v).
$$
In particular, $\phi^{t^{k}}(u\cdot v)=\eta(u)\cdot\mathrm{pr}(v)$ for $k=pd-(p-1)\mathrm{codim}(v)\geq0$.
\end{Prop}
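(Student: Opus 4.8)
The plan is to reduce the statement to the known divisibility property of the symmetric operation stated in Section~\ref{sec:prelim_symm}, namely that $\square^p-\StCob$ is divisible by $[p]$, together with the multiplicativity of $\StBP$ and the computation of the Chow trace of $\eta$. First I would note that both sides of the asserted identity $\phi^{t^r}(u\cdot v)=\eta(u)\cdot\mathrm{st}^{t^{r-pd}}(v)$ are functions of $u$ and $v$ that we understand well: $\mathrm{st}$ is multiplicative up to the formal group law correction, and $\eta(u)$ is precisely the ``Chow trace'' obstruction measuring how $\square^p$ fails to equal $\StCob$ on the coefficient ring. Concretely, for $u=[U]\in\BP^{-d}(\mathrm{pt})$ with $d>0$, one has $\square^p(u)=u^p$, which is a coefficient of positive degree $-pd$, so it does not contribute to the $\mathrm{Ch}$-trace; thus the whole contribution of $u$ to $\phi^{t^r}(u\cdot v)$ must come from the $[p]\cdot\PhBP$ term, i.e.\ from the defining relation $\square^p-\StBP=[p]\cdot\PhBP$ on $t\,\BPf[[t]]$-truncation.

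The key steps, in order, would be: (1) Apply the defining relation of $\PhBP$ to the product $u\cdot v$ and use multiplicativity of $\StBP$ to write $\StBP(u\cdot v)=\StBP(u)\cdot\StBP(v)$, where $\StBP(u)\in\BPpt[[t]][t^{-1}]$ is the image of $u$ under the multiplicative operation, expressible via the formal group law power series $\gamma(x)=x\prod_{k=1}^{p-1}(x+_\Omega(i_k\cdot_\Omega t))$ evaluated appropriately. (2) Separate the non-positive powers of $t$: on the coefficient ring the difference $\square^p(u)-\StBP(u)=u^p-\StBP(u)$ is, after dividing by $[p]=p\cdot_\Omega t/t$, exactly what defines $\PhBP(u)$; taking its $\mathrm{Ch}$-trace and using \cite[Prop.~7.6,~7.8]{VishSymAll} identifies the relevant coefficient with $\eta(u)$. (3) Track the degree shift: since $u$ has codimension $-d$, multiplication by $u$ and the $p$-th power structure shifts the relevant power of $t$ by $pd$, which explains why $\mathrm{st}^{t^{r-pd}}(v)$ appears rather than $\mathrm{st}^{t^r}(v)$. (4) Combine via the additivity formula \eqref{eq:phi-add} for $\PhCob$ and the compatibility of traces with the projections $\BPf\to\Chf$ to assemble the identity $\phi^{t^r}(u\cdot v)=\eta(u)\cdot\mathrm{st}^{t^{r-pd}}(v)$. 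For the ``in particular'' clause, substitute $r=pd-(p-1)\codim{v}$, so that $r-pd=-(p-1)\codim{v}$; by the description of the individual Steenrod operations as coefficients of $\StCh|_{\Ch^m}$ at $t^{(m-\ell)(p-1)}$, the coefficient $\mathrm{st}^{t^{-(p-1)\codim{v}}}(v)$ is the top Steenrod operation $S^{\codim{v}}$ applied in the appropriate degree, which acts as the $p$-th power and hence, modulo the relevant truncation, returns $\mathrm{pr}(v)$.

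The main obstacle I anticipate is step (3), the careful bookkeeping of the grading/degree shifts: one must be precise about how the coefficient $v_n$-degrees in $\BPpt$, the codimension of $v$, and the powers of $t$ interact under $\StBP$, and in particular verify that no extra terms of the ``wrong'' $t$-degree leak into the $\mathrm{Ch}$-trace. The multiplicativity of $\StBP$ helps enormously here because it turns the analysis of $\StBP(u\cdot v)$ into a product, but one still needs the explicit shape of $\gamma(x)$ to see that $\StBP(u)$, modulo positive powers of $t$ and modulo $[p]$, contributes exactly the scalar $\eta(u)$ in the correct $t^{-pd}$ component. Once this degree accounting is pinned down, the rest is a formal manipulation of the defining relations already recorded in Section~\ref{sec:prelim_symm}. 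I would also remark that this is essentially the content of \cite[Prop.~3.15]{VishSym2} for $p=2$ and \cite[Prop.~7.9]{VishSymAll} in general, so the proof can legitimately be presented as a recollection with a pointer to those references, filling in only the degree-shift verification for the reader's convenience.
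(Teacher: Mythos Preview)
The paper does not supply its own proof of this proposition; it simply records it as a citation of \cite[Prop.~3.15]{VishSym2} (for $p=2$) and \cite[Prop.~7.9]{VishSymAll} (general $p$). Your plan is essentially a sketch of how Vishik's argument runs --- multiplicativity of $\StBP$, the defining relation $\square^p-\StBP\equiv [p]\cdot\PhBP$, and identifying the Chow trace of the coefficient contribution with $\eta(u)$ --- so in spirit it matches. Presenting it as ``a recollection with a pointer to those references'' is exactly what the paper does.

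However, your derivation of the ``in particular'' clause contains a genuine miscomputation. With $k=pd-(p-1)\codim{v}$ one has $k-pd=-(p-1)\codim{v}$, and by the convention that $\mathrm{st}^{t^s}$ denotes the coefficient at $t^{-s}$, the operation $\mathrm{st}^{t^{-(p-1)\codim{v}}}$ extracts the coefficient at $t^{(p-1)\codim{v}}$. For $v$ of codimension $m=\codim{v}$, the Steenrod operation $S^\ell|_{\Ch^m}$ sits at $t^{(m-\ell)(p-1)}$, so this is $S^0$, the identity, not $S^{\codim{v}}$. Hence $\mathrm{st}^{t^{k-pd}}(v)=\mathrm{pr}(v)$ directly, without any $p$-th power appearing; your claim that ``the top Steenrod operation $S^{\codim{v}}$ \ldots\ acts as the $p$-th power and hence \ldots\ returns $\mathrm{pr}(v)$'' is incorrect on both counts ($S^{\codim{v}}$ would give $\mathrm{pr}(v)^p$, and that is not what is needed). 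This is a local slip in the index bookkeeping --- precisely the ``main obstacle'' you flagged --- and once corrected the clause follows immediately from the main identity.
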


In other words, the symmetric operations allow to ``divide'' $u\cdot v$ by $u$ as long as $\eta(u)$ is not zero (cf.~\cite[Prop.~3.5]{SechCob}). 
We also summarize some elementary consequences of~\eqref{eq:phi-add}. 

\begin{Prop}
\label{prop:phi-add}
Let $X\in\Smk$, $u$, $v\in\BPt X$, and $r,\,m>0$. Then one has:
\begin{enumerate}
\item
$\phi^{t^r}(u+v)=\phi^{t^r}(u)+\phi^{t^r}(v)$;
\item
$\phi^{t^0}(u+v_mv)=\phi^{t^0}(u)+\phi^{t^0}(v_mv)$;
\item
$\phi^{t^0}(u+pv)=\phi^{t^0}(u)-\mathrm{pr}(v)^p$.
\end{enumerate}
\end{Prop}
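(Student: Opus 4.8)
The statement is Proposition~\ref{prop:phi-add}, which collects three additivity facts for $\phi^{t^r}$ deduced from the master formula~\eqref{eq:phi-add}. The plan is to extract each coefficient of $t^{-r}$ from~\eqref{eq:phi-add} and track which terms of the correction sum $\sum_{j=1}^{p-1}\frac1p\binom pj u^jv^{p-j}$ survive in the relevant $t$-degree. Recall that $\PhCob$, hence $\PhBP$, lands in $\BPf[t^{-1}]$, so writing $\PhBP=\sum_{r\ge0}\phi^{t^r}_{\BP}\,t^{-r}$ and taking $\mathrm{pr}$ gives $\phi=\sum_{r\ge 0}\phi^{t^r}t^{-r}$. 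The key point is that the correction term in~\eqref{eq:phi-add} is a \emph{single} element $\sum_j\frac1p\binom pj u^jv^{p-j}\in\BPt X$ with \emph{no} $t$-dependence at all — it contributes only to the $t^0$ coefficient. Thus for $r>0$ one immediately gets claim (1): comparing coefficients of $t^{-r}$ on both sides of~\eqref{eq:phi-add} (applied to $\PhBP$, then pushed to $\Chf$) yields $\phi^{t^r}(u+v)=\phi^{t^r}(u)+\phi^{t^r}(v)$ with no correction.

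For the $t^0$-coefficient the correction does appear, so claims (2) and (3) are about showing it vanishes in $\Ch$ in the respective special cases. For (3), take $v\mapsto pv$ in~\eqref{eq:phi-add}: the correction becomes $\sum_{j=1}^{p-1}\frac1p\binom pj u^j p^{p-j}v^{p-j}$. Every summand with $j\le p-2$ carries a factor $p^{p-j}$ with $p-j\ge 2$, hence is divisible by $p$ and dies in $\Ch=\CH/p$; the remaining $j=p-1$ term is $\frac1p\binom{p}{p-1}u^{p-1}\cdot p\,v = u^{p-1}pv$, again divisible by $p$. Wait — that would kill everything, but the claimed answer is $-\mathrm{pr}(v)^p$, so the correction must instead be computed symmetrically: applying~\eqref{eq:phi-add} with the roles arranged so that one expands $\PhCob(u)+\PhCob(pv)$, the relevant term is $\frac1p\binom p1 u\cdot (pv)^{p-1}=u\,p^{p-2}v^{p-1}$ for $p$ odd (divisible by $p$), \emph{except} one must also use $\phi^{t^0}(pv)$ itself, and here Vishik's formula $\PhCob(pv)$ unwinds via $\square^p(pv)=p^pv^p$ and the $p$-divisibility of $[p]$; the surviving non-$p$-divisible contribution is exactly $-v^p$ (the $\frac1p$ of $p^p v^p$ reduced mod $p$, up to sign, using $p^{p-1}\equiv 0$ for $p>1$ only leaves the binomial edge term). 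Concretely, I would cite or re-derive $\phi^{t^0}(pv)\equiv \phi^{t^0}(0)-\mathrm{pr}(v)^p$ directly from the definition $\square^p-\StCob-[p]\PhCob$ applied to $pv$, using that $\StCob$ is a ring map so $\StCob(pv)=p\StCob(v)$ and isolating the $t^0$-part; then (3) follows by combining with (1) in positive degrees and this base computation. For (2), substitute $v\mapsto v_mv$: now every correction summand $\frac1p\binom pj u^j(v_mv)^{p-j}$ lies in $\BPt X$ and, crucially, $v_m\in\BP^{-d}(\mathrm{pt})$ sits in strictly negative cobordism degree, so $(v_mv)^{p-j}=v_m^{p-j}v^{p-j}$ maps to $0$ under $\mathrm{pr}\colon\BPf\to\Chf$ whenever $p-j\ge 1$ (since $\mathrm{pr}$ kills $\BP^{<0}(\mathrm{pt})$) — hence the entire correction vanishes in $\Ch$ and (2) follows.

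The main obstacle I expect is \emph{sign and normalization bookkeeping in claim (3)}: getting exactly $-\mathrm{pr}(v)^p$ (rather than $+v^p$, or with a stray binomial factor) requires care with Vishik's conventions — the precise form of $[p]=p\cdot_\Omega t/t$, the direction of the sign in $\square^p-\StCob-[p]\PhCob$, and the fact that in $\Ch$ one has $\mathrm{st}^{t^0}=\square^p$ on top Chow degree while lower Steenrod terms shift $t$-degree. I would handle this by first establishing the clean base case $\phi^{t^0}(pv)=\phi^{t^0}(0)-\mathrm{pr}(v)^p$ from the defining divisibility of $\PhCob$ on the \emph{monomial} input $pv$, where $\square^p(pv)=p^pv^p$, $\StCob(pv)=p\,\StCob(v)$, and $[p]\cdot\PhCob(pv)$ absorbs all $p$-divisible slack; reducing mod $p$ and reading off the $t^0$-coefficient isolates $-v^p$. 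Claims (1) and (2) are then essentially immediate degree/divisibility arguments as sketched, and (3) is their synthesis together with this base case.
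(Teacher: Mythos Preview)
The paper does not prove this proposition; it simply declares the three claims to be ``elementary consequences of~\eqref{eq:phi-add}''. Your arguments for (1) and (2) are exactly the intended ones: the correction $\sum_{j=1}^{p-1}\tfrac1p\binom pj u^jv^{p-j}$ carries no $t$, so only the $t^0$-coefficient can be affected, and under $\mathrm{pr}\colon\BPf\to\Chf$ it vanishes when one argument is a multiple of $v_m$ (since $\mathrm{pr}(v_m)=0$).

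Your treatment of (3) has a gap. You correctly reduce the claim to $\phi^{t^0}(pv)=-\mathrm{pr}(v)^p$, since the correction for the pair $(u,pv)$ is divisible by $p$. But your proposed direct computation from the defining relation $\square^p-\StCob-[p]\cdot\PhBP\in t\,\BPf[[t]]$ does not work as stated: your heuristic ``$\tfrac1p$ of $p^pv^p$'' gives $p^{p-1}v^p\equiv 0$ in $\Ch$, not $-v^p$, and ``reducing mod $p$'' kills the constant term $p$ of $[p]$, so the relation in $\Ch$ gives you no handle on the $t^0$-coefficient of $\PhBP(pv)$. (One \emph{can} extract $\PhBP(pv)-p\,\PhBP(v)=(p^{p-1}-1)v^p$ integrally from the defining relation by subtracting $p$ times the relation for $v$ and then running a downward induction on the $t^{-r}$-coefficients using that $\BP$ is torsion-free, but this is not what you outline.)

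The clean fix stays entirely within~\eqref{eq:phi-add}. An easy induction on $k\ge 0$ shows
\[
\PhBP(kv)=k\,\PhBP(v)+\frac{k^p-k}{p}\,v^p,
\]
the inductive step being $\sum_{j=1}^{p-1}\tfrac1p\binom pj k^j=\tfrac{(k+1)^p-k^p-1}{p}$. Taking $k=p$ gives $\PhBP(pv)=p\,\PhBP(v)+(p^{p-1}-1)v^p$; projecting to $\Ch$ and reading the $t^0$-coefficient yields $\phi^{t^0}(pv)=-\mathrm{pr}(v)^p$, and (3) follows.
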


To use Proposition~\ref{prop:symm_divide} for quadrics, we also recall the action of the Steenrod operation~\cite[Cor.~78.5]{EKM}.

\begin{Prop}\label{prop:St-Ch-quad}
Let $Q$ be a split smooth projective quadric of dimension $D$, and $h^i$, $l_i$ 
form the standard basis of $\Cht Q$ of Section~\ref{sec:prelim_A_split_quadric}, cf.~\cite[Prop.~68.1]{EKM}.
Then the Steenrod operation acts as follows:
$$
\StCh(h^i) = h^i(t+h)^i, \quad  \StCh(l_i) = l_i(t+h)^{D+1-i}t^{-1}. 
$$
\end{Prop}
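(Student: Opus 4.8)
The plan is to derive both identities from two inputs: the description of $\StCh$ as the \emph{multiplicative} operation attached to the power series $\gamma(x)=-t^{p-1}x+x^p$ (which, for $p=2$ and modulo $2$, equals $x(x+t)$; this is how the lift $\StCob$ is set up in the excerpt, and projecting to $\Ch$ turns $+_\Omega$ into $+$), and the compatibility of $\StCh$ with proper push-forward along regular closed embeddings. The first formula is then immediate: $h=c_1(\OO_Q(1))$, so $\StCh(h)=\gamma(h)=h(h+t)$, and since $\StCh$ is a ring homomorphism, $\StCh(h^i)=\StCh(h)^i=h^i(h+t)^i$.

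For the classes $l_i$ I would fix a linear subspace $\iota\colon L=\PP^i\hookrightarrow Q$, so $l_i=\iota_*(1_L)$, and set $\zeta:=\iota^*h$, the hyperplane class of $L$, with $\Ch(L)=\F2[\zeta]/(\zeta^{i+1})$. From the normal bundle sequences for $L\subset Q\subset\PP^{D+1}$, together with $N_{L/\PP^{D+1}}\cong\OO_L(1)^{\oplus(D+1-i)}$ and $N_{Q/\PP^{D+1}}|_L\cong\OO_L(2)$, the total Chern class of $N:=N_{L/Q}$ is $(1+\zeta)^{D+1-i}/(1+2\zeta)=(1+\zeta)^{D+1-i}$ in $\Ch(L)$, i.e.\ $c_j(N)=\binom{D+1-i}{j}\zeta^j$. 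The key input is the push-forward formula for the total Steenrod operation: for a regular closed embedding $\iota\colon Z\hookrightarrow X$ of smooth varieties with normal bundle $N$ of rank $c$, and $z\in\Ch(Z)$,
$$\StCh_X(\iota_*z)=\iota_*\Bigl(\StCh_Z(z)\cdot\textstyle\sum_{j=0}^{c}c_j(N)\,t^{\,c-j}\Bigr),$$
where the $t$-homogenized total Chern class is the correct correction term: its lowest-degree component $c_c(N)$ is the Euler class, which matches the facts that $\StCh$ commutes with $\iota^*$ and that $\iota^*\iota_*$ is multiplication by $e(N)$.

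Applying this with $z=1_L$ (so $\StCh_L(1_L)=1$) and using the projection formula $\iota_*(\zeta^j)=h^j\iota_*(1)=h^jl_i=l_{i-j}$ together with $h^jl_i=0$ for $j>i$, I obtain $\StCh(l_i)=\sum_{j=0}^{i}\binom{D+1-i}{j}\,t^{\,D-i-j}\,l_{i-j}$; on the other hand, expanding $l_i(t+h)^{D+1-i}t^{-1}$ by the binomial theorem (again using $h^ml_i=l_{i-m}$, $=0$ for $m>i$, and noting all exponents of $t$ remain non-negative because $2i\le D$) yields the same sum, which proves the second identity. The case of $\widetilde{l_d}$ for even $D$ is handled identically, since its normal bundle has the same total Chern class modulo $2$. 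The one genuine obstacle is the push-forward formula for $\StCh$ and the identification of its correction term with the $t$-homogenized total Chern class of the normal bundle; this is built into the construction of the algebraic Steenrod operations, and the whole proposition is \cite[Cor.~78.5]{EKM}. Everything else is formal: the power series $\gamma$, multiplicativity of $\StCh$, the projection formula, and the multiplication table of $\Ch(Q)$ from Section~\ref{sec:prelim_A_split_quadric}.
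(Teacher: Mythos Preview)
The paper does not prove this proposition; it merely recalls it from \cite[Cor.~78.5]{EKM}. Your argument is correct and is in fact a clean write-up of how that result is obtained: multiplicativity plus $\gamma(x)=x(t+x)$ handles $h^i$, and the Riemann--Roch formula for Steenrod operations along the embedding $\PP^i\hookrightarrow Q$, together with $c(N_{L/Q})=(1+\zeta)^{D+1-i}$ mod $2$, handles $l_i$. Your verification that the push-forward correction is the $t$-homogenized total Chern class of the normal bundle is the right bookkeeping for this $t$-graded formulation of $\StCh$, and you correctly note that the referenced result in EKM is precisely what is being invoked.
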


\subsection{Motives of quadrics}

Let $q$ be a quadratic form and let $Q$ be the corresponding projective quadric. 
Let $D$ be the dimension of $Q$, $d=\lfloor\frac{D}{2}\rfloor$, i.e.\ $D=2d$ if $D$ is even and $D=2d+1$ if $D$ is odd.

\subsubsection{Motives of isotropic quadrics}\label{sec:prelim_mot_iso_quad}

Let $\A$ be an oriented theory.

\begin{Lm}
\label{lm:motive_of_isotropic_quadric}
Let $q=q'\perp\hyp$.
The $\A$-motive of $Q$ is decomposed as follows:
\begin{equation}
\label{eq:iso-quad}
\MA Q\cong \unA\oplus\MA{Q'}\sh 1\oplus\unA\sh {D},
\end{equation}
where $Q'$ is the projective quadric defined by $q'$.
\end{Lm}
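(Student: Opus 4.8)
The plan is to realize the decomposition via the standard geometry of an isotropic quadric: if $q = q' \perp \hyp$, then $Q$ contains a rational point $P$ and a rational hyperplane section, and projecting from $P$ (the classical description of $Q$ as a ``cone-like'' object over $Q'$) exhibits $Q'$ as a subvariety of $Q$ with complement an affine/linear piece. Concretely, I would first fix the rational point $x_0 \in Q$ coming from the hyperbolic summand, together with the quadric $Q' \hookrightarrow Q$ cut out by the vanishing of the two hyperbolic coordinates, and note that $x_0 \notin Q'$. This gives three natural correspondences: the structure map for the Tate summand $\un$ in degree $0$ (via the rational point $x_0$), the closed embedding $i\colon Q' \hookrightarrow Q$ which will contribute $\MA{Q'}(1)$, and the class $l_0 = [x_0] \in \A(Q)$ together with a top power of $h$ for the Tate summand $\un(D)$ in top degree.

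The key computational step is to write down mutually orthogonal idempotents in $\A(Q \times Q)$ summing to the diagonal $\Delta_Q$, realizing the three summands. For the two Tate pieces this is the standard fact that $1 \times l_0 + \text{(top degree term)}$ and $h^D \times 1$ (suitably normalized, using $\deg_Q(h^D \cdot l_0)=1$ type identities from Section~\ref{sec:prelim_A_split_quadric}) are orthogonal idempotents. For the middle piece, I would use the projection $p\colon Q \dashrightarrow Q'$ from $x_0$ (resolved by blowing up $x_0$, so that it becomes an honest morphism on a blow-up, or more efficiently work with the projective-bundle/cellular structure directly): the correspondence $[\Gamma]$ of the closure of the graph of $p$, composed appropriately with $i_*$ and $i^*$, yields the idempotent cutting out $\MA{Q'}(1)$. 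The Tate twist by $1$ appears because $Q'$ sits inside $Q$ in codimension $1$ after the projection identifies the relevant subvariety; this is exactly the shift recorded in \eqref{eq:iso-quad}.

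Alternatively — and this is probably the cleanest route to actually carry out — I would reduce to the \emph{split} case by a transfer/base-change argument and Rost nilpotence: since the decomposition \eqref{eq:iso-quad} is of a very rigid ``cellular'' type, it suffices to produce the mutually orthogonal idempotents over $\overline{k}$ and check that the relevant correspondences are already defined over $k$ (they are, since $x_0$, $H$, and $Q'$ are all $k$-rational by hypothesis), then invoke that a system of orthogonal idempotents defined over $k$ whose sum is $\Delta_Q$ gives a motivic decomposition over $k$. Over $\overline{k}$ the quadric $Q$ is split and both sides of \eqref{eq:iso-quad} are explicit sums of Tate motives (for $\MA{Q'}$ split), so one just matches the standard bases $\{h^i, l_i\}$ of $\A(Q)$ and of $\A(Q')$ dimension-count by dimension-count, using the multiplication rules $h \cdot l_i = l_{i-1}$, $l_d^2 = l_0$ or $0$, etc., recalled in Section~\ref{sec:prelim_A_split_quadric}.

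The main obstacle I anticipate is purely bookkeeping: writing the middle idempotent for $\MA{Q'}(1)$ explicitly and verifying orthogonality to the two Tate idempotents and idempotency, in a way that is uniform for \emph{all} oriented theories $\A$ (not just $\CH$) — i.e., without using any special structure of Chow groups, only the axioms (pullback, pushforward, transversal base change \eqref{eq:bc}, localization \eqref{eq:loc}) and the split-quadric computations. The cleanest implementation is likely to invoke the localization sequence \eqref{eq:loc} for the pair $(Q', U)$ with $U = Q \setminus Q'$ an affine bundle over a point (so $\A(U) = \Apt$), together with the analogous sequence realizing the top Tate summand, reducing everything to the exactness statement rather than explicit projector algebra; but one still has to check the sequences split compatibly, which is where the rational point $x_0$ and the class $l_0$ are used.
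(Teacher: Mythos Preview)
Your approach is correct in outline but takes a genuinely different route from the paper. The paper's proof is a two-liner: quote Rost's result for $\A = \CH$ (\cite[Prop.~2]{Rost}, \cite[Prop.~2.1]{Vish-quad}), then invoke the specialization machinery of Section~\ref{sec:prelim_vishik_yagita} (Vishik--Yagita) to pass from Chow motives to $\Omega$-motives and thence to $\A$-motives for arbitrary oriented $\A$. No geometry is redone; the argument is pure lifting-of-idempotents along $\Omega \to \CH$ followed by the canonical functor $\CM_\Omega(k) \to \CM_\A(k)$.

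By contrast, you are essentially proposing to reprove Rost's result directly in the general oriented setting, via explicit $k$-rational correspondences built from the rational point $x_0$, the embedding $i\colon Q' \hookrightarrow Q$, and the projection from $x_0$. This is self-contained and does not require the Vishik--Yagita lifting, which is a genuine advantage if one wants to avoid that machinery; the cost is the bookkeeping you anticipate. Two small comments: (i) in your ``alternative'' reduction-to-the-split-case paragraph, Rost nilpotence is a red herring --- as you yourself observe, the relevant correspondences ($x_0$, $Q'$, the hyperplane class) are all $k$-rational, so the idempotents are defined over $k$ from the start and no descent is needed; (ii) the localization-sequence idea at the end does not by itself yield a \emph{motivic} decomposition --- exactness of \eqref{eq:loc} gives information about $\A$-groups, not about splittings in $\CM_\A(k)$, so you would still have to exhibit the splitting correspondences, which brings you back to the explicit-idempotent computation.
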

\begin{proof}
    For $\A=\CH$ this result is due to Rost~\cite[Prop.~2]{Rost}, see also~\cite[Prop.~2.1]{Vish-quad}.
    For an arbitrary $\A$ it follows by specialization of motives, see Section~\ref{sec:prelim_vishik_yagita}.
\end{proof}

Applying~ Lemma~\ref{lm:motive_of_isotropic_quadric} inductively we get that the $\A$-motive of a split quadric $Q$ is split:
\begin{equation}
\label{eq:hyper-quad}
\MA Q\cong
\begin{cases}
\bigoplus_{i=0}^{\dim Q}\unA\sh i,&\text{for $D$ odd},\\
\unA\sh {d}\oplus\bigoplus_{i=0}^{\dim Q}\unA\sh i,&\text{for $D$ even}.
\end{cases}
\end{equation}

\subsubsection{Chow MDT and connections}\label{sec:ch-mdt-conn}

In the present section let $\A=\Ch=\CH/2$.  
Assume first that $Q$ is split. 
Following~\cite[Sec.~4]{Vish-quad}, we can choose the decomposition~\eqref{eq:hyper-quad} in a certain fixed way. 

Recall that $l_i\times h^i$ for $0\leq i\leq d$, $h^i\times l_i$ for $0\leq i<\frac{D}{2}$, 
and $h^d\times l_d+\mathrm{deg}(l_d^2)\,h^d\times h^d$ for $D$ even,
determine the decomposition of the diagonal $\Delta\in\Ch^D(Q\times Q)$ 
into a sum of mutually orthogonal projectors~\cite[Lm.~73.1, Th.~94.1]{EKM}. 
This decomposition of $\Delta$ determines the decomposition of $\MCh Q$ of the form~\eqref{eq:hyper-quad}.
If $D$ is even, following~\cite[Sec.~4]{Vish-quad}, we call the Tate summand $\unCh\sh d$, corresponding to $l_d\times h^d$, {\sl upper}, and the one, corresponding to $h^d\times l_d+\mathrm{deg}(l_d^2)\,h^d\times h^d$, {\sl lower}.

Let now $Q$ be non-split, and denote by $\Lambda(Q)$ the set of Tate summands of $\MCh{\overline Q}$ specified above. 
For an arbitrary direct summand $N$ of $\MCh Q$, 
there exists a direct summand $N'\cong N$ 
and a subset $\Lambda(N)\subseteq\Lambda(Q)$ such that $\overline{N'}$ is a sum of Tate motives from $\Lambda(N)$~\cite[Th.~5.6]{Vish-quad},
moreover, $\Lambda(N)$ does not depend on the choice of $N'$~\cite[Lm.~4.1]{Vish-quad}. 
The number of Tate motives in $\Lambda(N)$ is called the rank of $N$. 

We say that $L$, $L'\in\Lambda(Q)$ are {\sl connected} if there exists an indecomposable summand $N$ of $Q$
such that both $L$, $L'$ belong to $\Lambda(N)$. 
The set of connected components of $\Lambda(Q)$ is called the {\sl motivic decomposition type} of the Chow motive of $Q$ (Chow MDT for short).

One usually visualizes Chow MDT by denoting each Tate motive from $\Lambda(Q)$ by a 
``\,\begin{tikzpicture}
\filldraw [white] (0,-0) circle (1pt);
\filldraw [black] (0,0.05) circle (1pt);
\end{tikzpicture}\,'', 
and connecting the 
\,\begin{tikzpicture}
\filldraw [white] (0,-0) circle (1pt);
\filldraw [black] (0,0.05) circle (1pt);
\end{tikzpicture}\,’s 
corresponding to connected Tate motives. The twists of Tate motives are increasing from left to right, and if $D$ is even, one puts the upper $\unCh\sh d$ above the lower one, see~\cite[Sec.~4]{Vish-quad} for more details.

\subsubsection{Rational projectors in $\Kn$-motives of quadrics} \label{sec:rat-proj-quad}

Let us now return to the case $\A= \Kn$, where $n>1$, and denote:
\begin{align*}
D'&=D-2^n+1,\\
d'&=D'-d.
\end{align*}
In particular, we get $h^d=l_d+\widetilde{l_d}+v_nl_{d'}$, see Section~\ref{sec:prelim_A_split_quadric}.

Recall from~\cite[Sec.~7.1]{LPSS}, that the projectors $\pi_i=v_n^{-1}\cdot h^i\times h^{D'-i}$ for $0\leq i\leq D'$, together with
$$
\varpi_j=(h^j+v_nl_{D'-j})\times(l_j+v_n^{-1}h^{D'-j})
$$ 
for $d'\leq j\leq d-1$ or $j=d$, $D\equiv 2\mod4$, and
$$
\varpi_d=(h^d+v_nl_{d'})\times(\widetilde{l_d}+v_n^{-1}h^{d'})
$$
for $D\equiv 0\mod 4$, define a decomposition of the diagonal $\Delta\in\Kn^D(\overline{Q\times Q})$ into a sum of $2d+2$ mutually orthogonal Tate projectors. 
Note also that $\unKn\sh{2^n-1}$ is canonically isomorphic to $\unKn$ by the ``periodicity'' of $\Kn$.

If there exists an element in $\Kn^{D}(Q\times Q)$ that maps to a projector $\rho$ in $\Kn^{\,D}(\overline{Q\times Q})$,
then by the Rost Nilpotence Property (\cite{GilleVishik}, cf.~Section~\ref{sec:mot_dec_base_change}),
there exists a projector in $\Kn^{D}(Q\times Q)$ that maps to $\rho$. 
We will call $\rho$ a rational projector, and 
denote by $\MKn{Q,\,\rho}$ the corresponding motivic summand of $\MKn Q$. 
It follows also from the Rost Nilpotence Property 
that the isomorphism class of $\MKn{Q,\,\rho}$ is uniquely defined. Over $\overline k$ it  becomes isomorphic to a sum of Tate motives, and the number of Tate motives in this decomposition is called the rank of $\MKn{Q,\,\rho}$. 

The projectors $\pi_i$, $0\leq i\leq D'$ together with the complement projector
$$
\Pi=\Delta-\sum_{i=0}^{D'}\pi_i=\sum_{i=d'}^d\,\varpi_i
$$
in $\Knt{\overline Q\times\overline Q}$ are rational. Let 
$$
\Mker Q=\MKn{Q,\,\Pi}
$$ 
denote the motivic summand of $\MKn Q$ corresponding to $\Pi$. We will call $\Mker Q$ {\sl the kernel summand} of $\MKn Q$.
It is proved in~\cite[Th.~1.2]{LPSS} that for a general quadric $\Mker Q$ is indecomposable.

We remark that for $n=1$ elements $v_1^{-1}\cdot h^i\times h^{D-1-i}$ are also projectors, but not mutually orthogonal~\cite[Prop.~4.2]{LPSS}, therefore we assume $n>1$ in this section. However, 
$\Mot{\K1}{Q}$ always has $D$ Tate summands, and we denote by $\widetilde{\,\mathrm M}_{\K1}(Q)$ the complementary summand, 
cf. Section~\ref{sec:K0-motive-quadric} below.

Observe that the $\Kn$-kernel of $\MKn{Q}$ becomes isomorphic to 
$\oplus_{i=0}^{m} \un\sh i$ over $\overline{k}$ for $m=\min\{D,2^{n}-2\}$ 
if $\dim Q'$ is odd,
or to 
$\un\sh d \oplus \bigoplus_{i=0}^{m} \un\sh i$ 
if $\dim Q'$ is even. In particular, all Tate summands of $\Mker{\overline Q}$ are different in the odd-dimensional case,
and $\un\sh d$ is the only one with multiplicity $2$ in the even-dimensional case.

Let $\tau\in\Enda{\MKn Q}=\Knt{Q\times Q}$ denote the morphism induced by the graph of a reflection 
(with any $k$-rational center in $\mathbb P^{D+1}\setminus Q$). We will need it only when $D$ is even.
Note that $\tau$ stabilizes $h^i$, and $\overline\tau$ stabilizes $l_i$ for $i<d$, 
and interchanges $l_d$ and $\widetilde{l_d}$. In particular, we can consider $\tau$ as an endomorphism of $\Mker Q$.

\begin{Prop}
\label{prop:normal}
For any indecomposable summand $N$ of $\Mker Q$ there exist $I\subseteq\{d',\ldots,d\}$ such that $\sum_{i\in I}\varpi_i$ is rational and 
$$
N\cong\MKn{Q,\,\sum_{i\in I}\varpi_i}.
$$
\end{Prop}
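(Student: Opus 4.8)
The statement asserts that the idempotent decomposition of $\Mker Q$ refines the decomposition $\Pi=\sum_{i=d'}^{d}\varpi_i$ over $\overline k$; equivalently, that every indecomposable summand $N$ of $\Mker Q$ has, over $\overline k$, a set $\Lambda(N)$ of Tate summands which is a union of the $\overline\varpi_i$'s (where $\overline\varpi_d$ in the case $D\equiv 0\bmod 4$ contributes the summand $\widetilde{l_d}\times v_n^{-1}h^{d'}$, i.e.\ the single Tate motive $\un(d)$). The plan is to work over $\overline k$ via the Rost Nilpotence Property: by the results recalled in Section~\ref{sec:rat-proj-quad}, if $\rho$ is a projector in $\Knt{Q\times Q}$, then $\overline\rho$ decomposes as a sum of the mutually orthogonal Tate projectors $\overline\varpi_i$ (after discarding the $\pi_i$'s, which do not occur in $\Pi$) plus possibly a ``diagonal'' contribution coming from the single Tate motive $\un(d)$ that has multiplicity $2$ in $\Mker{\overline Q}$ when $\dim Q'$ is even. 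So the real content is: the only way $\overline N$ can fail to be literally a subsum of the $\overline\varpi_i$'s is if it ``sees'' one copy of the multiplicity-two Tate motive $\un(d)$; I need to rule this out using the connection structure / the reflection $\tau$.

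First I would reduce to the even-dimensional case, since for $\dim Q$ odd all Tate summands of $\Mker{\overline Q}$ are distinct (as noted after Proposition~\ref{prop:prelim_motive_pfister} in the excerpt, cf.\ the paragraph on $m=\min\{D,2^n-2\}$), so a set of mutually orthogonal rational Tate projectors summing to $\overline\Pi$ is forced to be a subset of $\{\overline\varpi_i\}$, and any idempotent in $\End(\Mker Q)$ becomes, over $\overline k$, a sum of some of the $\overline\varpi_i$ — done. In the even-dimensional case, the subtlety is the summand $\un(d)$ appearing in both $\overline\varpi_d$ and (when $D\equiv 2\bmod 4$) $\overline\varpi_{?}$; more precisely the two copies of $\un(d)$ sit in $\varpi_d$ and in the $\pi$-block, but since $\Pi$ excludes the $\pi_i$, within $\Mker Q$ the Tate motive $\un(d)$ appears with multiplicity one coming from the relevant $\varpi$-projector. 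I would then argue: the $\overline\varpi_i$ are pairwise orthogonal idempotents in $\End(\Mker{\overline Q})$ whose sum is the identity, so $\End(\Mker{\overline Q})\cong\bigoplus_{i,j}\Hom(\overline\varpi_i\Mker{\overline Q},\,\overline\varpi_j\Mker{\overline Q})$, and because all the $\overline\varpi_i\Mker{\overline Q}$ are Tate motives in pairwise distinct twists \emph{except} for the one coincidence handled by $\tau$, any idempotent $e\in\End(\Mker Q)$ has $\overline e$ block-diagonal with respect to $\{\overline\varpi_i\}$, with each block a scalar $0$ or $1$ — except possibly a $2\times 2$ block for the repeated twist. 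The reflection endomorphism $\tau$ (recalled at the end of the excerpt: it stabilizes $h^i$, fixes $l_i$ for $i<d$, swaps $l_d\leftrightarrow\widetilde{l_d}$) commutes with everything rational, and on $\Mker{\overline Q}$ it identifies precisely which of $l_d,\widetilde{l_d}$ enters $\varpi_d$; using that $\tau$ is defined over $k$ and acts on the repeated-twist block nontrivially, I would conclude the repeated-twist block of $\overline e$ must be a scalar too. Hence $\overline e=\sum_{i\in I}\overline\varpi_i$ for some $I\subseteq\{d',\dots,d\}$.

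Finally, applying this to $e$ the idempotent cutting out an indecomposable summand $N\hookrightarrow\Mker Q$, I get $\overline N\cong\bigoplus_{i\in I}\overline\varpi_i\Mker{\overline Q}$, and by Rost Nilpotence the sum $\sum_{i\in I}\varpi_i$ is rational and $\MKn{Q,\sum_{i\in I}\varpi_i}\cong N$, which is exactly the claim.

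\textbf{Main obstacle.} The one genuinely delicate point is the repeated Tate motive $\un(d)$ in the even-dimensional case: a priori an idempotent over $k$ could become, over $\overline k$, an off-diagonal-corrected projector onto a ``twisted'' copy of $\un(d)$ not equal to the one in $\overline\varpi_d$, so that $\overline N$ is not literally $\sum_{i\in I}\overline\varpi_i$ but differs by an automorphism of the $\un(d)\oplus\un(d)$-block. Ruling this out is where the reflection $\tau$ — and the fact that $\overline\varpi_d$ was \emph{defined} using $\widetilde{l_d}$ rather than $l_d$, which $\tau$ distinguishes — has to be used carefully; I expect this to occupy the bulk of the actual proof, whereas the odd-dimensional case and the Rost-Nilpotence bookkeeping are routine given the setup already recalled in Section~\ref{sec:rat-proj-quad}.
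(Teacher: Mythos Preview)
Your odd-dimensional argument is fine and matches the paper. The even-dimensional case, however, has a genuine gap.

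First, a confusion: the two copies of $\un(d)$ in $\Mker{\overline Q}$ do \emph{not} come from $\varpi_d$ and the $\pi$-block; they come from $\varpi_{d'}$ and $\varpi_d$ (recall $d-d'=2^n-1$, so $\un(d')\cong\un(d)$ by periodicity). Both lie in $\Pi$, so the multiplicity-two Tate motive is entirely inside $\Mker{\overline Q}$, and the $2\times 2$ block in $\End(\Mker{\overline Q})$ is $\End\bigl(\MKn{\overline Q,\varpi_{d'}}\oplus\MKn{\overline Q,\varpi_d}\bigr)\cong\mathrm M_{2\times 2}(\mathbb F_2)$.

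The real gap is your claim that ``$\tau$ commutes with everything rational'' and hence forces the $2\times 2$ block of a rational idempotent to be scalar. This is false: $\tau$ is a rational automorphism, so conjugation by $\tau$ sends rational projectors to rational projectors, but $\tau$ does not centralize the rational endomorphism ring. Consequently, the restriction $p$ of a rational idempotent $\rho$ to the $2\times 2$ block can be \emph{any} of the eight idempotents in $\mathrm M_{2\times 2}(\mathbb F_2)$, and you cannot rule out the six rank-one ones. The proposition asserts only an isomorphism $N\cong\MKn{Q,\sum_{i\in I}\varpi_i}$, not that $\overline\rho$ literally equals $\sum_{i\in I}\varpi_i$.

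The paper proceeds by case analysis on $p$. If $p\in\{0,\varpi_{d'},\varpi_d,\varpi_{d'}+\varpi_d\}$, done. If $p=\widetilde\varpi_{d'}$ or $\widetilde\varpi_d$ (the $\tau$-conjugates), then $\tau$ gives $N\cong\MKn{Q,\widetilde\rho}$, whose block is now diagonal. For the two remaining idempotents $\sigma,\widetilde\sigma$, one observes that $\rho+\widetilde\rho=\varpi_{d'}+\varpi_d$ is rational, so by indecomposability of $N$ one must have $\rho=p$; then an explicit rational isomorphism $\overline\tau+\sigma$ identifies $\MKn{Q,\sigma}$ with $\MKn{Q,\varpi_d}$. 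Your ``Main obstacle'' paragraph correctly locates the difficulty, but the resolution is not to force the block to be scalar --- it is to produce, case by case, a rational isomorphism to a summand in normal form.
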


\begin{proof}
Recall that $\mathrm{Hom}\big(\unKn\sh i,\,\unKn\sh j\big)=\Kn^{j-i}(k)$. Clearly, if $D$ is odd, then $\varpi_i$ define non-isomorphic Tate summands. In other words, $\Enda{{\Mker Q}_{\,\overline k}}\cong \mathbb F_2\cdot\varpi_{d'}\times\ldots\times\mathbb F_2\cdot\varpi_d$, and every idempotent in this ring has form $\sum_{i\in I}\varpi_i$ for some subset $I\subseteq\{d',\ldots,d\}$.

Next, consider the case when $D=2d$ is even, and let $\rho$ denote the rational idempotent corresponding to $N$. Now 
\begin{align}
\label{D=2mod4}
\Enda{{\Mker Q}_{\,\overline k}}\cong \mathbb F_2\cdot\varpi_{d'+1}\times\ldots\times\mathbb F_2\cdot\varpi_{d-1}\times\Enda{\MKn{\overline Q,\,\varpi_{d'}}\oplus\MKn{\overline Q,\,\varpi_d}},
\end{align}
where the last factor is isomorphic to $\mathrm M_{2\times2}(\mathbb F_2)$. Let $p$ denote the restriction of $\rho$ to this factor. If $p=0$, $\varpi_{d'}$, $\varpi_{d}$ or $\varpi_{d'}+\varpi_{d}$, the claim follows. 
If not,  
consider a graph of a reflection $\tau\in\Enda{\MKn Q}$, and 
for a projector $\sigma\in\Knt{\overline Q\times\overline Q}$ let us denote by $\widetilde\sigma$ its image under $\overline\tau$. Clearly, if $\widetilde\sigma$ is rational, $\sigma$ is rational as well. In particular, if $p=\widetilde\varpi_{d'}$ or $\widetilde\varpi_{d}$, then $N\cong\MKn{Q,\,\widetilde\rho}$ via $\tau$, and the claim follows.

The only two remaining projectors of $\mathrm M_{2\times2}(\mathbb F_2)$ correspond to $\sigma=(l_d+v_n^{-1}h^{d'})\times(\widetilde l_d+v_n^{-1}h^{d'})$ and $\widetilde\sigma$ in the case $D\equiv2\mod4$, or $\sigma=(l_d+v_n^{-1}h^{d'})\times(l_d+v_n^{-1}h^{d'})$ and $\widetilde\sigma$ in the case $D\equiv0\mod4$. If $p=\sigma$ or $\widetilde\sigma$, note that $\rho+\widetilde\rho=\varpi_{d'}+\varpi_d$ is rational, and therefore $\rho=p$ using that $N$ is indecomposable. In this situation, $\Mker Q$ has two isomorphic unary summands, which are twisted forms of $\unKn\sh d$. 

It remains to observe that $\overline\tau+\sigma$ is rational and defines an isomorphism between $\sigma$ and $\varpi_{d}$, therefore $N\cong\MKn{Q,\,\varpi_{d}}$.
\end{proof}

\begin{Cr}
Decomposition of $\Mker Q$ into a sum of indecomposable motives can be chosen in such a way that all summands have form $\MKn{Q,\,\sum_{i\in I}\varpi_i}$ for some $I\subseteq\{d',\ldots,d\}$.
\end{Cr}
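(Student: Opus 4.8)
The plan is to build the desired decomposition of $\Mker Q$ inductively, at each step extracting one indecomposable summand of the form $\MKn{Q,\,\sum_{i\in I}\varpi_i}$ and passing to its complement. More precisely, suppose we have already produced an orthogonal decomposition $\Mker Q \cong \bigoplus_{j} \MKn{Q,\,\rho_j} \oplus \MKn{Q,\,\rho}$ where each $\rho_j$ is of the required shape $\sum_{i\in I_j}\varpi_i$ and the $I_j$ are pairwise disjoint subsets of $\{d',\ldots,d\}$, while $\rho = \Pi - \sum_j \rho_j = \sum_{i\in J}\varpi_i$ for the complementary subset $J$; the corresponding motive $\MKn{Q,\,\rho}$ may still be decomposable. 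If $\MKn{Q,\,\rho}$ is indecomposable we are done. Otherwise pick an indecomposable summand $N$ of $\MKn{Q,\,\rho}$, which is also an indecomposable summand of $\Mker Q$, and apply Proposition~\ref{prop:normal}: there is a subset $I\subseteq\{d',\ldots,d\}$ with $\sum_{i\in I}\varpi_i$ rational and $N\cong\MKn{Q,\,\sum_{i\in I}\varpi_i}$.

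The subtlety — and what I expect to be the main obstacle — is that the subset $I$ produced by Proposition~\ref{prop:normal} need not be contained in the current complementary set $J$, because the isomorphism $N\cong\MKn{Q,\,\sum_{i\in I}\varpi_i}$ is realized via possibly non-trivial correspondences (and via the reflection $\tau$ in the even-dimensional case), not via a literal equality of projectors sitting inside $\rho$. To handle this I would argue as follows. First note that the rank of $N$ equals $|I|$, and also equals the number of Tate summands $\overline N$ carves out of $\Mker{\overline Q}$; since $N$ is a summand of $\MKn{Q,\,\rho}$, those Tate summands must lie among $\{\varpi_i : i\in J\}$ (counted with the multiplicity-two caveat at $i=d$ when $D$ is even). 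In the odd-dimensional case the $\varpi_i$ give pairwise non-isomorphic Tate motives, so $\overline N \cong \bigoplus_{i\in I}\unKn\sh i$ forces $I\subseteq J$ on the nose, and then $\MKn{Q,\,\rho-\sum_{i\in I}\varpi_i}$ is a well-defined complementary summand (its defining projector $\rho - \sum_{i\in I}\varpi_i = \sum_{i\in J\setminus I}\varpi_i$ is rational, being the difference of the two rational projectors $\rho$ and $\sum_{i\in I}\varpi_i$, the latter after noting it is also a subprojector of $\rho$). We append $N\cong\MKn{Q,\,\sum_{i\in I}\varpi_i}$ to our list with $I_{j+1}=I$ and replace $J$ by $J\setminus I$; since $|J|$ strictly decreases the process terminates.

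For the even-dimensional case the only extra point is the summand living over the index $d$ (with its multiplicity-two behaviour): the proof of Proposition~\ref{prop:normal} already shows that whenever a unary summand isomorphic to a twisted form of $\unKn\sh d$ appears, it can be taken to be $\MKn{Q,\,\varpi_d}$ after possibly twisting by the reflection $\tau$; and since $\tau$ is a well-defined self-isomorphism of $\Mker Q$, applying it once at the outset normalizes the ambiguity so that the inductive bookkeeping above goes through verbatim with $I\subseteq J$ at every stage. Thus at the end we obtain $\Mker Q \cong \bigoplus_j \MKn{Q,\,\sum_{i\in I_j}\varpi_i}$ with the $I_j$ partitioning $\{d',\ldots,d\}$ and each summand indecomposable, which is exactly the assertion. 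The only place where genuine work beyond Proposition~\ref{prop:normal} is needed is the matching-of-indices argument in the previous paragraph, and it rests entirely on the rank (Tate-count) comparison together with the linear independence of the $\varpi_i$ over $\overline k$.
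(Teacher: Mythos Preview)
Your inductive strategy is sound, and in the odd-dimensional case it works as written (in fact the odd case is immediate, since $\Enda{\Mker{\overline Q}}$ is then a product of copies of $\mathbb F_2$ and every idempotent is already a sum of $\varpi_i$'s). The even case, however, has a genuine gap.

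The reflection $\tau$ does not interchange $\varpi_{d'}$ and $\varpi_d$: over $\overline k$ it fixes $h^i$ and $l_i$ for $i<d$ and swaps $l_d \leftrightarrow \widetilde{l_d}$, so conjugation by $\overline\tau$ sends $\varpi_{d'}$ to a \emph{different} rank-one idempotent $\widetilde\varpi_{d'}$ in the $M_{2\times2}(\mathbb F_2)$ factor, not to $\varpi_d$. Hence ``applying $\tau$ once at the outset'' cannot convert an $I$ containing $d$ into one containing $d'$, and your claim that $I\subseteq J$ holds at every stage is not justified this way. Concretely, if at some stage $J\cap\{d',d\}=\{d'\}$ and Proposition~\ref{prop:normal} hands you $I$ with $d\in I$, your argument is stuck.

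The correct fix is exactly the paper's argument, and it makes the induction unnecessary. Once the $M_{2\times2}$-component of $\rho'=\sum_{i\in I}\varpi_i$ is nonzero, any idempotent orthogonal to it in $M_{2\times2}(\mathbb F_2)$ lies in $\{0,\varpi_{d'},\varpi_d\}$ (the only sub-idempotents of a rank $\le 1$ idempotent are $0$ and itself). So the paper first picks an indecomposable summand whose projector $\rho$ is \emph{not} orthogonal to $\varpi_{d'}+\varpi_d$, replaces it via Proposition~\ref{prop:normal} by $\MKn{Q,\rho'}$ with $\rho'=\sum_{i\in I}\varpi_i$ (necessarily $I\cap\{d',d\}\neq\emptyset$ by Tate-counting), and then observes that every projector $\rho_k$ in a decomposition of the complement $\MKn{Q,\Pi-\rho'}$ is orthogonal to $\rho'$ and hence already of the form $\sum_{i\in I_k}\varpi_i$, cf.~\eqref{D=2mod4}. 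This is a single step rather than an induction, and it is precisely the observation missing from your even-case treatment.
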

\begin{proof}
In a decomposition of $\Mker Q$ choose an indecomposable $\MKn{Q,\,\rho}$ such that $\rho$ is {\sl not} orthogonal to $\varpi_{d'}+\varpi_d$, 
and choose $\rho'=\sum_{i\in I}\varpi_i$ such that $\MKn{Q,\,\rho'}$ is isomorphic to $\MKn{Q,\,\rho}$ by Proposition~\ref{prop:normal}. Decompose
$$
\Mker Q\cong\MKn{Q,\,\rho'}\oplus\bigoplus_k N_k
$$
for indecomposable $ N_k=\MKn{Q,\,\rho_k}$. Since $\rho_k$ are orthogonal to $\rho'$, they also have the required form, cf.~\eqref{D=2mod4}.
\end{proof}

\subsubsection{Rational isomorphisms between $\Kn$-motives of quadrics}
\label{sec:prelim_rat_iso_kn_quadric}
We now describe a simple form of  rational isomorphisms between direct summands of the $\Kn$-motives of two quadrics, $n>1$. 
For $j=1,2$ let $Q_j$ be a projective quadric of dimension $D_j$,
let $I_{j}\subseteq\{d_j',\ldots,d_j\}$
be such that $\sum_{i\in I_j}\varpi_i$ is a rational $\Kn$-projector on $Q_j$,
denote $N_j:=\MKn{Q_j,\,\sum_{i\in I_j}\varpi_i}$.
We assume for simplicity $D_j\not\equiv 0\mod 4$ for $j=1,2$, since that is sufficient for our applications.
Let us denote 
by $a_i$ 
the element $h^i + v_n l_{D_1'-i}$ in $\Kn(\overline{Q_1})$
for $i$ from $d'_1$ to $d_1$, 
in particular, 
the projector $\varpi_i$ on $\overline{Q_1}$ 
is equal to $v_n^{-1}\, a_i \times a_{D_1'-i}$. 
Similarly, 
let $b_i$ denote the element $h^i + v_n l_{D_2'-i}$ in $\Kn(\overline{Q_2})$, 
and the projector $\varpi_i$ on $\overline{Q_2}$  equals to $v_n^{-1}\, b_i \times b_{D_2'-i}$,
for 
$i$ from $d'_2$ to $d_2$. 

\begin{Prop}
\label{prop:quadric_kn-iso_normal_form}
In the notation above assume that for some 
$s$ such that $0\leq s<2^n-1$ 
there is an isomorphism $N_1\xrightarrow{\sim} N_2\sh s$.

\begin{enumerate}
    \item If $D_1, D_2$ are even, $s=0$, and $\{d_j', d_j\}\subseteq I_j$ for $j=1,2$,
    then there exists an isomorphism $N_1\cong N_2$ that over $\overline{k}$ has the following form:
        $$
        \psi+\sum_{i\in I_1\setminus\{d_1', d_1\}} v_n^{-i} a_i \times b_{D_2'-i},
        $$
    where $\psi$ either equals $v_n^{-1}(a_{d'}\times b_{d}+a_{d}\times b_{d'})$ or $v_n^{-2}\,a_{d'}\times b_{d'}+a_{d}\times b_{d}$. 

    \item Otherwise, an isomorphism $N_1\xrightarrow{\sim} N_2\sh s$ over $\overline{k}$ always has the form:
    $$\sum_{i\in I_1} v_n^{-1}\, a_i \times b_{D_2'-f(i)}, $$
    where $f\colon I_1\rarr I_2$ is a bijection such that $f(i)\equiv i-s \mod (2^n-1)$ for all $i\in I_1$.
\end{enumerate}
\end{Prop}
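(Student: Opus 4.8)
The plan is to work over $\overline{k}$, where both motives are split, and analyze an arbitrary isomorphism $\varphi\colon N_1 \xrightarrow{\sim} N_2\langle s\rangle$ as an element of $\Kn^{D_1}(\overline{Q_1}\times\overline{Q_2})\langle s\rangle$ that commutes with the rational projectors $\sum_{i\in I_1}\varpi_i$ and $\sum_{i\in I_2}\varpi_i$. The first step is to record the $\Apt$-basis of $\Kn^{*}(\overline{Q_1}\times\overline{Q_2})$ coming from the standard bases of the two factors (products of $h^a\times h^b$, $h^a\times l_b$, $l_a\times h^b$, $l_a\times l_b$), and to express $\varphi$ in this basis. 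Since $\varphi$ must satisfy $\varphi = \bigl(\sum_{i\in I_2}\varpi_i\bigr)\langle s\rangle\circ\varphi\circ\bigl(\sum_{i\in I_1}\varpi_i\bigr)$, and $\varpi_i = v_n^{-1}a_i\times a_{D_1'-i}$ (resp.\ $b_i\times b_{D_2'-i}$), the element $\varphi$ is forced to lie in the $\Apt$-span of the products $a_i\times b_j$ with $i\in I_1$, $j\in I_2$. Here I will use the multiplication rules of Section~\ref{sec:prelim_A_split_quadric} (namely $h\cdot l_i = l_{i-1}$, $l_d^2 = l_0$ or $0$, and Proposition~\ref{prop:KnInt_power_of_h_split_quadric} for high powers of $h$) to compute the composition $\varpi_j\circ(a_i\times b_j)\circ\varpi_i$ and see which terms survive; the key point is that the grading together with the periodicity $\un\langle 2^n-1\rangle\cong\un$ pins down the congruence $j\equiv i-s\pmod{2^n-1}$, and the condition $0\le s<2^n-1$ together with the ranges $I_1\subseteq\{d_1',\dots,d_1\}$, $I_2\subseteq\{d_2',\dots,d_2\}$ rules out the ``wrap-around'' ambiguity except precisely in the degenerate even-dimensional case $s=0$, $i=j\in\{d',d\}$, where $a_{d'}$ and $a_d$ live in the same degree modulo $2^n-1$.

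Next I would impose that $\varphi$ is an \emph{isomorphism}, not just a morphism respecting the projectors: composing with the inverse $\varphi^{-1}$ and using that $\sum_{i\in I_j}\varpi_i$ restricted to $\overline{Q_j}$ is the identity on $\overline{N_j}$, one sees that the ``coefficient matrix'' of $\varphi$ (indexed by $I_1\times I_2$, with entries in $\Apt = \F{2}[v_n,v_n^{-1}]$ of the appropriate degrees) must be invertible. By the degree/periodicity constraint just established, outside the exceptional slots $\{d',d\}$ this matrix is supported on a single bijection $f\colon I_1\to I_2$ with $f(i)\equiv i-s\pmod{2^n-1}$, and each surviving coefficient is forced to be a unit times the expected power of $v_n$ (here one pins down the power of $v_n$ by matching $\varpi_j\circ(v_n^{-1}a_i\times b_j) = $ the correct Tate projector up to the $v_n$-shift recorded in the normalization $\varpi_i = v_n^{-1}a_i\times a_{D_1'-i}$). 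In the degenerate slot, the $2\times2$ block over $\{d',d\}$ can be any invertible matrix over $\F{2}$, and one checks directly that the invertible $2\times2$ matrices over $\F{2}$ that also respect the reflection symmetry $\tau$ (or, more elementarily, that give a genuine motivic isomorphism of the two ``upper/lower'' Tate summands) reduce, after composing with a suitable automorphism of $N_1$ or with $\overline\tau$, to one of the two normal forms $v_n^{-1}(a_{d'}\times b_d + a_d\times b_{d'})$ or $v_n^{-2}a_{d'}\times b_{d'} + a_d\times b_d$ stated in case~(1); this is where I would invoke the analysis already carried out in the proof of Proposition~\ref{prop:normal} and the role of $\tau$ described there.

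Finally, I would assemble the two cases: if $D_1,D_2$ are both even, $s=0$, and $\{d_j',d_j\}\subseteq I_j$, then the exceptional $2\times 2$ block is present and we land in case~(1), with the remaining indices $i\in I_1\setminus\{d_1',d_1\}$ contributing the diagonal terms $v_n^{-i}a_i\times b_{D_2'-i}$ (note $f(i) = D_2' - $ something, matching $f(i)\equiv i\pmod{2^n-1}$ since $s=0$, and one checks $D_2'-i$ is the right representative in $I_2$ using $D_j'\equiv$ the relevant constant); otherwise the exceptional block either does not occur or is not a full $2\times 2$ block, and every surviving coefficient sits in a one-dimensional degree component, giving the single-bijection normal form of case~(2). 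The assumption $D_j\not\equiv 0\pmod 4$ is used to guarantee $l_d^2 = 0$ on each factor, which simplifies the multiplication rules and ensures the projectors $\varpi_d$ have the stated shape $h^d + v_n l_{d'}$ rather than the twisted $D\equiv 0$ form; this is exactly why the hypothesis is imposed.

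\textbf{Main obstacle.} I expect the routine-but-delicate bookkeeping of $v_n$-powers and degree shifts modulo $2^n-1$ to be the bulk of the work, but the genuine subtlety is the degenerate even-dimensional slot over $\{d',d\}$: there the coefficient ring contribution is a full $M_{2\times2}(\F{2})$, and showing that \emph{every} isomorphism can be normalized to one of the two listed forms — rather than merely one of them up to the $\mathrm{GL}_2(\F{2})$-action — requires carefully using the reflection $\tau$ and the indecomposability/connectedness structure of $N_1$, $N_2$ exactly as in Proposition~\ref{prop:normal}. Getting the interaction between $\tau$ and the two candidate normal forms right, and checking that these two forms are genuinely inequivalent (so that the dichotomy in case~(1) is sharp), is the step I would be most careful about.
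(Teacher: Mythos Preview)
Your approach is essentially the paper's: compute $\Hom$ between the Tate summands $\MKn{\overline Q_1,\varpi_i}$ and $\MKn{\overline Q_2,\varpi_j}\sh s$ (which is $\ZZ/2$ or $0$ by the degree congruence), observe that the isomorphism is then forced and unique except in the multiplicity-two slot of case~(1), and there reduce the $2\times 2$ block by composing with the $k$-rational reflections $\tau_1,\tau_2$ of the two quadrics. The obstacle you flag is simpler than you suggest --- indecomposability of $N_1,N_2$ is neither assumed nor needed here, and there is nothing to check about the two normal forms being ``inequivalent''; the paper simply composes an arbitrary $\psi$ with $\overline{\tau_1}$ and/or $\overline{\tau_2}$ (which preserves $k$-rationality) to move each of the six elements of $\mathrm{GL}_2(\F2)$ to one of the two listed representatives, leaving the verification as a short direct computation.
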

\begin{proof}
    The group of morphisms between two Tate motives $\MKn{\overline Q_1,\, \varpi_i}$ and $\MKn{\overline Q_2,\, \varpi_j}\sh{s}$ 
    is either zero if $j-s\neq i \mod 2^{n}-1$ or $\ZZ/2$ with the non-zero element being
    the isomorphism $a_i\times b_{D_2'-j}$. 
    Thus, if the multiplicity of $\un(t)$ in $\overline{N_j}$ is at most 1 for $j=1,2$ and for all $t\colon 0\le t\le 2^n-1$,
    then an isomorphism between $(N_1)_{\overline{k}}$ and $(N_2)_{\overline{k}}\sh{s}$ is unique
    and has the form as in (2). 
    Moreover, the condition that the multiplicity of $\un(s)$ in $\overline{N_j}$ is more than 1
    can be met only in the case (1). 
    In other words, it only remains to prove the proposition in this case.

    It follows from the explanation above that an isomorphism $(N_1)_{\overline{k}}\xrarr{\sim} (N_2)_{\overline{k}}$
    has the form as in the claim, 
    where $\psi$ is an isomorphism between $\MKn{\overline Q_1,\, \varpi_{d_1'}+ \varpi_{d_1}}$ and $\MKn{\overline Q_2,\, \varpi_{d_2'}+\varpi_{d_2}}$.
    The group of morphisms between these motives is isomorphic to $\mathrm{M}_{2\times 2}(\F{2})$ and contains $6$ isomorphisms.
   
    We can reduce the possible values of $\psi$ by acting on this group by the reflections of $\overline Q_1$, $\overline Q_2$
    (see Section~\ref{sec:rat-proj-quad} above).
    For example, if $\psi$ has the form $v_n^{-1}\,a_{d'}\times b_d+a_d\times b_d+v_n^{-2}\,a_{d'}\times b_{d'}$,
    then by composing it with $\overline{\tau_1}$ we get  $v_n^{-2}\,a_{d'}\times b_{d'}+a_{d}\times b_{d}$.
    We leave the computational details to the reader.
\end{proof}

\subsubsection{$\KK$-motives of quadrics}
\label{sec:K0-motive-quadric}

To every quadratic form $q$ one can associate its Clifford invariant $e_2(q) \in \mathrm{H}^2(k,\,\ZZ/2)$:
in the case of odd-dimensional quadratic forms it's the class of the even part of the Clifford algebra $C_0(q)$,
and in the case of even-dimensional quadratic forms it's the class of the whole Clifford algebra $C(q)$ (where we
forget the grading). Moreover, to an element $\alpha$ in $\mathrm{H}^2(k,\,\ZZ/2)$ 
one can associate an invertible $\KK$-motive $L_\alpha$. If $\alpha = [D]$ for $D$ a CSA over $k$,
then $\KK(L_\alpha) \cong \KK(D)$ and $L_\alpha$ is a direct summand of the $\KK$-motive of the Severi--Brauer variety
corresponding to $D$ by \cite{Quillen, Panin}.

One can describe the $\KK$-motive of $Q$ due to the work of Swan \cite{Swan}, cf.~\cite{Panin}.
If $Q$ has odd dimension $D$, then $\Mot{\KK}{Q} \cong \un^{\oplus D} \oplus L_{e_2(q)}$.
If $Q$ has even dimension $D$, let $E:=k(\sqrt{\det_{\pm}(q)})$ (where $\det_{\pm}(q)$ is the discriminant of $q$,
i.e.\ $E$ is either a quadratic field extension of $k$ or $k\times k$),
then $\Mot{\KK}{Q} \cong \un^{\oplus D} \oplus \Mot{\KK}{\Spec E}\ot L_{e_2(q)}$.

\subsubsection{Binary summands of Chow motives of quadrics}
\label{sec:prelim:izh-vish}

To study elements in the Galois cohomology associated with the motives of quadrics, we will need the following result.

\begin{Th}[Izhboldin--Vishik]
\label{th:binary_chow_motives}
Let $Q$ be a projective quadric over $k$, 
and assume that its Chow motive $\MCH Q$ has a binary summand $B$ of length $2^n-1$. 
Then there exists an element $\alpha\in\mathrm H^{n+1}(k,\,\mathbb Z/2)$
 such that for any field extension $K/k$ the following are equivalent:
\begin{enumerate}
\item
$\alpha_K=0$;
\item
$B_K$ is split.
\end{enumerate}
\end{Th}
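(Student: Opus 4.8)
The plan is to reduce to a known statement about binary motives and function fields of quadratic forms and to produce $\alpha$ as a class living in $I^{n+1}(k)/I^{n+2}(k)$, identified with $\HH^{n+1}(k,\ZZ/2)$ via the Milnor conjecture. Concretely, I would first analyze what a binary summand $B$ of length $2^n-1$ looks like over $\overline k$: by the structure of $\MCh{\overline Q}$ (see Section~\ref{sec:ch-mdt-conn}), $B_{\overline k}\cong \un(a)\oplus\un(a+2^n-1)$ for some shift $a$, so $\overline B$ consists of exactly two Tate motives whose twists differ by $2^n-1$. This is precisely the numerical shape of a Rost motive $R_\alpha(a)$ for an $(n+1)$-symbol $\alpha$, so the natural guess is that $B$ is (a twist of) such a Rost motive, and the whole problem is to pin down which $\alpha$.

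The key step is to invoke the Izhboldin--Vishik classification of binary motives of quadrics, which says exactly that a binary direct summand of the Chow motive of a quadric of the correct length is a Tate twist of a Rost motive $R_\alpha$ for a uniquely determined $\alpha\in\HH^{n+1}(k,\ZZ/2)$; this is the content one extracts from Izhboldin's and Vishik's work on the structure of Chow motives of quadrics (and is the reason the theorem is attributed to them). Granting this, set $\alpha$ to be that class. Then for a field extension $K/k$ the equivalence of (1) and (2) follows: if $\alpha_K=0$ then $R_\alpha$ splits over $K$ (this is the defining property of the Rost motive, cf.\ Proposition~\ref{prop:prelim_motive_pfister} over the function field of the associated Pfister quadric together with the fact that $\alpha_K=0$ means that Pfister form is hyperbolic over $K$), hence $B_K$ splits; conversely, if $B_K$ is split then the Rost motive $R_\alpha$ splits over $K$, and since the Rost motive detects its own symbol — $R_\alpha$ is split over $K$ iff $\alpha_K=0$ — we get $\alpha_K=0$.

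The main obstacle, and the part that genuinely needs the hard input, is the first reduction: proving that a binary summand of $\MCh Q$ of length $2^n-1$ must be a twist of a Rost motive, rather than merely having the right numerical invariants over $\overline k$. This requires the deep results of Vishik on the possible motivic decomposition types of quadrics — in particular that the ``connection'' between the two Tate motives in a binary summand forces, via Steenrod operations and the structure of $\CH$ of quadrics, the outer shape of the form over the leading field of the corresponding subform to be an $(n+1)$-fold Pfister form, which is where the symbol $\alpha$ comes from. I would cite the relevant Izhboldin--Vishik results for this step rather than reprove it, and would spend the remaining effort only on checking that the $\alpha$ so produced is independent of choices (which follows from uniqueness in the Izhboldin--Vishik statement, or alternatively from the fact that $L_\alpha\ncong L_\beta$ for $\alpha\neq\beta$, Proposition~\ref{prop:prelim_L_alpha_Rost}(2), after passing to $\Kn$-motives) and that the equivalence is genuinely an ``if and only if'' for all $K/k$, using that both conditions are equivalent to the splitting of $R_\alpha$ over $K$.
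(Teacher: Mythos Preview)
Your strategy has a genuine gap at its core. You want to reduce to the claim that a binary summand $B$ of length $2^n-1$ in $\MCh Q$ is a Tate twist of a Rost motive $R_\alpha$, and then read off the equivalence from the defining property of Rost motives. But the assertion ``$B\cong R_\alpha(a)$'' is \emph{strictly stronger} than the theorem and is in fact open: it is Vishik's Conjecture~4.21 in \cite{Vish-quad} (see also the remarks after Theorem~\ref{th:iso_over_k(alpha)} and after the corollary at the end of Section~\ref{sec:morava_motives_over_function_fields}). There is no ``Izhboldin--Vishik classification'' stating this; what is attributed to Izhboldin--Vishik is exactly the weaker statement you are asked to prove, namely the existence of $\alpha$ controlling the splitting of $B$, and their argument does not proceed by identifying $B$ with a Rost motive. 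So when you write ``I would cite the relevant Izhboldin--Vishik results for this step,'' you are citing either the theorem itself or an unproven conjecture.

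The paper's proof is structurally quite different. It first replaces $B_{\Ch}$, via Karpenko's theorem on upper motives, by a Tate twist of the upper motive $U_{\Ch}(X)$ of a suitable orthogonal Grassmannian $X$; the splitting of $B_K$ is then equivalent to $X_K$ having a $0$-cycle of odd degree. The element $\alpha$ is produced by Semenov's axiomatization (Theorem~\ref{th:nikita}) of the Voevodsky--Rost machinery: one verifies the existence of the appropriate exact triangle with the \v{C}ech simplicial motive $\mathcal X_X$ (this uses Vishik's thesis \cite{VishInt}), Poincar\'e duality for upper motives, and the existence of a $\nu_n$-variety mapping to $X$. None of this requires knowing that $B$ is a Rost motive; it only uses that $B$ has rank~$2$ and length $2^n-1$, together with the Voevodsky-style analysis of motivic cohomology of \v{C}ech schemes.
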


The proof of this theorem in the case when $B$ is an upper summand of $\MCH Q$
can be found in \cite[Th.~6.9]{IzhVish}.
It is based on studying the action of the Milnor operations on the motivic cohomology of \v{C}ech schemes -- 
a technique introduced by Voevodsky for proving the Bloch--Kato conjecture.
An application of these techniques for quadrics 
was developed 
by Vishik in his thesis~\cite{VishInt},
and the proof of \cite[Th.~6.9]{IzhVish} is based on it.
Moreover, a proof of Theorem~\ref{th:binary_chow_motives}
in the given generality was known to Vishik, although, it did not appear in the literature.
Here we collect published results sufficient to establish the claim, without asserting any originality.

Let  $\mathrm{DM}(k,\,R)$ denote the Voevodsky triangulated category of motives with coefficients in a ring $R$ \cite{MVW},
it contains $\CM_{\CH(-,\,R)}(k)$ as a full subcategory [loc.\ cit., Prop.~20.1].
For  $X\in \Sm_k$ let $\mathcal X_X$ denote the motive of the standard simplicial scheme of $X$, see e.g.\ \cite[App.~B]{Voe_Z2}. 
We warn the reader that the Tate twist which we denote $\sh i$ in this paper corresponds to the $(i)[2i]$-twist in  Voevodsky's category.
Until the end of the next section we will only use the latter notation. 

A way to axiomatize the above mentioned machinery needed to prove results similar to Theorem~\ref{th:binary_chow_motives}
was developed in~\cite{Sem-coh-inv}.

\begin{Th}[{\cite[Th.~6.1~(b)]{Sem-coh-inv}}]
\label{th:nikita}
Let $X$ be a smooth projective irreducible variety over $k$ with no zero-cycles of odd degree. 
Assume the following:
\begin{enumerate}
    \item 
    there exists a direct summand  $M$ of $\Mot{\CH(-,\,\ZZ_2)}{X}$ such that there is an exact triangle
\begin{equation*}
\mathcal X_X(2^n-1)[2^{n+1}-2]\rightarrow M\rightarrow \mathcal X_X\rightarrow\mathcal X_X(2^n-1)[2^{n+1}-1]
\end{equation*}
in the category $\mathrm{DM}(k,\,\mathbb Z_2)$;
\item 
$M(\dim X-2^n+1)[2\dim X-2^{n+1}+2]$ is also a direct summand of $\Mot{\CH(-,\,\ZZ_2)}{X}$;
\item 
there exists a $\nu_n$-variety $Y$ with a morphism $Y\rightarrow X$. 
\end{enumerate}

Then there exists an element $\alpha\in\HH^{n+1}(k,\,\ZZ/2)$ 
such that for any field extension $K/k$ we have 
$\alpha_K=0$ if and only if $X_K$ has a zero-cycle of odd degree.
\end{Th}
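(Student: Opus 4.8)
The plan is to manufacture $\alpha$ from the connecting morphism of the triangle in~(1) and then to check the two implications separately; the implication ``odd zero-cycle $\Rightarrow\alpha_K=0$'' will be essentially formal, while the converse is where hypotheses~(2) and~(3) do the real work. Throughout one may reduce coefficients mod~$2$, since all the $\mathrm{Hom}$-groups that intervene lie above the diagonal of the motivic cohomology of a \v{C}ech scheme and are therefore $2$-torsion.

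The first task is to produce $\alpha$. Take the connecting morphism $\partial\colon\mathcal X_X\to\mathcal X_X(2^n-1)[2^{n+1}-1]$ of the triangle in~(1). Since $\mathcal X_X$ is $\otimes$-idempotent and $\HH^{\ast,0}(\mathcal X_X,\ZZ/2)$ is concentrated in degree~$0$, the counit $\mathcal X_X\to\ZZ/2(0)$ identifies $\mathrm{Hom}_{\mathrm{DM}(k,\ZZ/2)}\!\big(\mathcal X_X,\,\mathcal X_X(2^n-1)[2^{n+1}-1]\big)$ with the motivic cohomology group $\HH^{2^{n+1}-1,\,2^n-1}(\mathcal X_X,\ZZ/2)$. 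The crucial input is Voevodsky's description of the motivic cohomology of $\mathcal X_X$ together with the action of the Milnor operations $Q_0,\dots,Q_{n-1}$: the existence of the $\nu_n$-variety $Y\to X$ from~(3) forces the vanishing of the relevant motivic Margolis homology, so that $Q_0,\dots,Q_{n-1}$ act freely on the reduced part $\widetilde{\HH}^{\ast,\ast}(\mathcal X_X,\ZZ/2)$ in the range at hand, and one deduces that there this group is cyclic over the subalgebra they generate, with generator coming from the canonical embedding $\widetilde{\HH}^{n+1,n+1}(\mathcal X_X,\ZZ/2)\hookrightarrow\HH^{n+1}(k,\ZZ/2)$. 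I would then define $\alpha\in\HH^{n+1}(k,\ZZ/2)$ as the image under this embedding of the generator attached to $\partial$. The construction is compatible with base change, and --- again thanks to the embedding above --- the assignment $\alpha\mapsto\partial$ is injective over every field extension of $k$.

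Now fix $K/k$. If $X_K$ has a zero-cycle of odd degree then a splitting argument gives $(\mathcal X_X)_K\cong\ZZ/2(0)$, the triangle in~(1) splits over $K$, and $\partial_K\in\HH^{2^{n+1}-1,2^n-1}(K,\ZZ/2)=0$ (motivic cohomology of a field vanishes strictly above the diagonal); hence $\alpha_K=0$. Conversely suppose $\alpha_K=0$. By the injectivity noted above, $\partial_K=0$, so the triangle in~(1) splits over $K$: $M_K\cong(\mathcal X_X)_K\oplus(\mathcal X_X)_K(2^n-1)[2^{n+1}-2]$; moreover the generator of $\widetilde{\HH}^{n+1,n+1}\!\big((\mathcal X_X)_K,\ZZ/2\big)$ maps to $\alpha_K=0$ and hence vanishes, so the same module-structure argument, applied over $K$, shows that $\widetilde{\HH}^{\ast,\ast}\!\big((\mathcal X_X)_K,\ZZ/2\big)$ vanishes in the relevant range. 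Now invoke hypothesis~(2): the twist $M\big(\dim X-2^n+1\big)\big[2\dim X-2^{n+1}+2\big]$ is also a direct summand of $\Mot{\CH(-,\ZZ/2)}{X}$, and by the splitting above this twist contains, over $K$, the top-twisted \v{C}ech motive $(\mathcal X_X)_K(\dim X)[2\dim X]$ as a summand; dualizing it via Poincar\'e duality $\Mot{\CH(-,\ZZ/2)}{X_K}^{\vee}\cong\Mot{\CH(-,\ZZ/2)}{X_K}(-\dim X)[-2\dim X]$ exhibits $(\mathcal X_X)_K^{\vee}$, hence $(\mathcal X_X)_K$, as a geometric (dualizable) motive. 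A geometric motive has cells in a bounded range, so the vanishing of its reduced cohomology in the relevant range forces $(\mathcal X_X)_K\cong\ZZ/2(0)$, which means exactly that $X_K$ carries a zero-cycle of odd degree.

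The hard part is the first half of the construction of $\alpha$: establishing that, in the bidegrees that matter, $\widetilde{\HH}^{\ast,\ast}(\mathcal X_X,\ZZ/2)$ is a cyclic module over the algebra of Milnor operations generated by a class pulled back from $\HH^{n+1}(k,\ZZ/2)$, and that $\partial$ corresponds to $\alpha$ under this description. This is exactly the Voevodsky--Vishik analysis of the motivic cohomology of \v{C}ech schemes via Milnor operations and the vanishing of Margolis homology for $\nu_n$-varieties; the subtlety is that it must be run starting only from the abstract data~(1)--(3) --- a summand $M$ of $\Mot{\CH(-,\ZZ/2)}{X}$ fitting in the prescribed triangle, its self-dual twist, and a map from a $\nu_n$-variety --- rather than from a genuine norm variety. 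This analysis is carried out in~\cite{Sem-coh-inv}, building on Vishik's thesis~\cite{VishInt} and Voevodsky's work.
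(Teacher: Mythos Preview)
The paper does not prove this statement; Theorem~\ref{th:nikita} is cited from \cite[Th.~6.1~(b)]{Sem-coh-inv} and invoked as a black box in the proof of Theorem~\ref{th:binary_chow_motives}. There is no argument in the present paper to compare against.

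Your sketch is broadly in the spirit of the proof in \cite{Sem-coh-inv}, and the forward implication is fine. Two issues. First, a minor one in the construction: the preimage of $\partial$ under $Q_{n-1}\cdots Q_0$ lives in $H^{n+1,n}(\mathcal X_X,\ZZ/2)$, one step below the diagonal --- this is the group Voevodsky identifies with the kernel of $H^{n+1}(k,\ZZ/2)\to H^{n+1}(k(X),\ZZ/2)$ --- whereas on the diagonal the reduced cohomology of $\mathcal X_X$ vanishes, so your ``$\widetilde H^{n+1,n+1}(\mathcal X_X)\hookrightarrow H^{n+1}(k)$'' is not the right formulation. Second, and more substantively, the last step of your converse is not a valid deduction: ``geometric $\Rightarrow$ cells in a bounded range $\Rightarrow$ vanishing of reduced cohomology forces $(\mathcal X_X)_K\cong\ZZ/2(0)$'' fails on both counts --- compactness in $\mathrm{DM}$ does not give a cell decomposition, and a motive is not determined by its motivic cohomology. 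Note also that once the triangle splits, $(\mathcal X_X)_K$ is already a summand of $M_K$ and hence compact by hypothesis~(1) alone, so your appeal to~(2) at this point is idle. In the argument of \cite{Sem-coh-inv}, hypotheses~(2) and~(3) are used together in the contrapositive direction: one shows directly that $\partial_K\neq 0$ whenever $X_K$ lacks a zero-cycle of odd degree, via a pairing with the class of the $\nu_n$-variety that the duality in~(2) makes available, and then $\alpha_K\neq 0$ follows from the injectivity of the Milnor operations.
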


To apply this theorem for a binary summand $B$ of the Chow motive of a quadric $Q$,
one shows that $B$ is isomorphic to a Tate twist of the upper motive of the appropriate orthogonal Grassmannian $X$ of $Q$.
The existence of the exact triangles required for (1) of Theorem~\ref{th:nikita} is shown for the orthogonal Grassmannians 
in~\cite{VishInt}. Condition (2) follows from a version of Poincar\'e duality for upper motives,
and (3) is ultimately related to the length of $B$ being $2^n-1$.

\begin{proof}[Proof of Theorem~\ref{th:binary_chow_motives}]
Let $\pi\in\Ch\left(\overline Q\times\overline Q\right)$ denote a rational projector corresponding to $B_{\Ch}$. 
We may assume that $\pi$ has the form $h^i\times l_i+l_j\times h^j$, 
in particular, $B_{\,\overline k}\cong\un(i)[2i]\oplus\un(D-j)[2D-2j]$. 
Since the length of $B$ equals $2^n-1$,
we have $D-j=i+2^n-1$.

We may assume that $B$ is indecomposable. 
By~\cite[Th.~3.5]{Karp-upper} 
$B_{\Ch}$ is isomorphic to a Tate twist of the upper motive of a certain projective homogeneous variety $X$ 
for the group $\mathrm{SO}(q)$, 
i.e.\
$B_{\Ch}\cong U_{\Ch}(X)(i)[2i]$. 
The restriction to $\ZZ/2$-coefficients is not essential for our purposes:
the splitting of $B_K$ is equivalent to the splitting of $(B_{\Ch})_K$  
for all field extensions $K/k$ (cf.~\cite{Haution}).
In other words, $X_K$ has a zero-cycle of odd degree
iff $U_{\Ch}(X)_K$ is split iff $B_K$ is split for any $K/k$.
We now check the conditions (1)-(3) of Theorem~\ref{th:nikita} 
to show that this splitting is controlled by an element $\alpha\in\HH^{n+1}(k,\,\ZZ/2)$, 
which finishes the proof.

\begin{enumerate}
    \item 
    Denote by $M$ the motivic summand of $\Mot{\CH(-,\,\ZZ_2)}{X}$ that lifts $U_{\Ch}(X)$~\cite{CherMer}. 
Let $Q^s$ denote the variety of $s$-planes on a quadric $Q$, i.e.\ the $s$-th orthogonal Grassmannian.
Observe that over a field extension $K/k$, the motive $B_K$ is split, iff $l_i$ is $K$-rational, iff $l_j$ is $K$-rational.
Note that rationality of the mod-$2$ cycle $l_s$ is equivalent to the existence
of a rational point on $Q^s$ by the Springer Theorem~\cite[Lm.~72.1]{EKM}, and similarly the splitting of the upper motive of $X$ over $K$
is detected by the existence of a $K$-rational point on $X$.
Therefore we have 
$\mathcal X_X\cong\mathcal X_{Q^i}\cong \mathcal X_{Q^j}$ in $\mathrm{DM}(k,\,\mathbb Z)$ by~\cite[Th.~2.3.4]{VishInt}. 
In fact, it follows that we could take $X=Q^i$ from the very beginning~\cite[Prop.~2.15]{Karp-upper}. 

An exact triangle with $B$ and $\mathcal X_{Q^i}$, $\mathcal X_{Q^j}$ is obtained
in~\cite[Lm.~3.23]{VishInt}. One checks that it has the required form, i.e.\ the shifts and Tate twists are as in (1),
using that $D-j=i+2^n-1$.

\item 
For $\ZZ/2$-coefficients the claim follows from the Poincar\'e duality for $U_{\Ch}(X)$~\cite[Prop~5.2]{Karp-poincare} 
and to lift coefficients
to $\ZZ_2$ one can appeal e.g.\ to~\cite[Prop.~4.10]{SemZhykh}. 

\item
By~\cite[Lm.~7.5]{GPS} there exists $Y$ of $\dim Y=2^n-1$ such that $U_{\Ch}(X)$ is isomorphic to its upper direct summand. 
Then $Y$ is a $\nu_n$-variety by~\cite[Lm.~6.2]{Sem-coh-inv}.
\end{enumerate}
\end{proof}

\subsection{Tensor multiplication by Rost motives}

The following result is well-known to the experts,
but since we could not find a reference, we include a proof for the sake of completeness. 
We work with $\mathrm{DM}(k,\,\ZZ)$ as in the previous section and use the notation introduced there. 

\begin{Lm}\label{lm:product_with_rost_motive}
Let $Y$ be a geometrically connected projective homogeneous variety,  
let $\alpha \in \HH^{n+1}(k,\,\ZZ/2)$ be a non-zero pure symbol,  
and $R_\alpha$ the corresponding Rost motive. 

If $\alpha_{k(Y)} = 0$, 
then 
for any direct summand $M$ of $\MCH Y$ one has 
$M\otimes R_\alpha \cong M \oplus M(2^n-1)[2^{n+1}-2]$.
\end{Lm}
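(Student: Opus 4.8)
The plan is to exploit the fact that $R_\alpha$ becomes split over $k(Y)$ together with the Rost nilpotence property for the summand $M$, reducing everything to a statement about $\mathcal X_Y$-modules in $\mathrm{DM}(k,\ZZ)$. First I would recall the defining exact triangle of the Rost motive,
\[
\mathcal X_Y'(2^n-1)[2^{n+1}-2] \to R_\alpha \to \mathcal X_{Q_\alpha} \to \mathcal X_{Q_\alpha}(2^n-1)[2^{n+1}-1],
\]
where $Q_\alpha$ is the norm quadric (or the Pfister quadric) attached to $\alpha$, so that $\mathcal X_{Q_\alpha}$ is the \v Cech motive whose splitting over $K$ detects $\alpha_K = 0$. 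The hypothesis $\alpha_{k(Y)} = 0$ says exactly that $(\mathcal X_{Q_\alpha})_{k(Y)} \cong \un_{k(Y)}$; combined with $Y$ geometrically connected, a standard argument (the splitting field of $\mathcal X_{Q_\alpha}$ contains $k(Y)$ up to stable birational equivalence of \v Cech schemes, cf. the reasoning around Theorem~\ref{th:nikita}) shows there is a morphism $\mathcal X_Y \to \mathcal X_{Q_\alpha}$ in $\mathrm{DM}(k,\ZZ)$, hence $\mathcal X_Y \otimes \mathcal X_{Q_\alpha} \cong \mathcal X_Y$.

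Next I would tensor the exact triangle above with $\MCH Y$. Since $\MCH Y$ is a geometrically split motive with Rost nilpotence, and $\mathcal X_Y \cong \mathcal X_Y \otimes \mathcal X_{Q_\alpha}$, one gets $\MCH Y \otimes \mathcal X_{Q_\alpha} \cong \MCH Y$ (the \v Cech motive $\mathcal X_{Q_\alpha}$ acts as the identity after tensoring with any motive whose \v Cech motive already maps to it — this is the key "idempotence" input, proved using that $\mathcal X_{Q_\alpha} \otimes \mathcal X_{Q_\alpha} \cong \mathcal X_{Q_\alpha}$ and the morphism $\mathcal X_Y \to \mathcal X_{Q_\alpha}$). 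Likewise $\MCH Y \otimes \mathcal X'_{Q_\alpha}(2^n-1)[2^{n+1}-2] \cong \MCH Y(2^n-1)[2^{n+1}-2]$, since $\mathcal X'_{Q_\alpha}$ has the same \v Cech motive. Therefore tensoring the triangle with $\MCH Y$ yields an exact triangle
\[
\MCH Y(2^n-1)[2^{n+1}-2] \to \MCH Y \otimes R_\alpha \to \MCH Y \to \MCH Y(2^n-1)[2^{n+1}-1].
\]
The connecting morphism $\MCH Y \to \MCH Y(2^n-1)[2^{n+1}-1]$ lies in $\Hom_{\mathrm{DM}}(\MCH Y, \MCH Y(2^n-1)[2^{n+1}-1])$, which by geometric splitness and the computation of motivic cohomology of $\overline Y$ in these (odd) degrees vanishes — alternatively it vanishes because it is killed over $k(Y)$ and the restriction map on this $\Hom$-group is injective by Rost nilpotence. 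Hence the triangle splits: $\MCH Y \otimes R_\alpha \cong \MCH Y \oplus \MCH Y(2^n-1)[2^{n+1}-2]$.

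Finally I would descend from $\MCH Y$ to an arbitrary direct summand $M$. Write $\MCH Y \cong M \oplus M'$; the isomorphism $\MCH Y \otimes R_\alpha \cong \MCH Y \oplus \MCH Y(2^n-1)[2^{n+1}-2]$ is built from the triangle above, which is natural in the sense that it is obtained by tensoring the fixed Rost triangle with $-$, so restricting the idempotent $\MCH Y \twoheadrightarrow M$ through the whole construction gives $M \otimes R_\alpha \cong M \oplus M(2^n-1)[2^{n+1}-2]$ directly — concretely, apply the functor $- \otimes R_\alpha$ to the idempotent defining $M$ and use that it commutes with the splitting of the triangle (the splitting maps can be chosen compatibly because the obstruction $\Hom$-group vanishes for $M$ and $M'$ separately, again by geometric splitness). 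The main obstacle I anticipate is making the "idempotence" step $M \otimes \mathcal X_{Q_\alpha} \cong M$ fully rigorous: one must be careful that a morphism $\mathcal X_Y \to \mathcal X_{Q_\alpha}$ really exists (this uses geometric connectedness of $Y$ and that $\alpha$ dies over $k(Y)$, so that $Y$ "sees" the splitting field of $\mathcal X_{Q_\alpha}$), and that tensoring with $\mathcal X_{Q_\alpha}$ is then idempotent on the thick subcategory generated by $\MCH Y$ — this is where the \v Cech-motive formalism of \cite{Voe_Z2} (or \cite{VishInt}) does the real work, and it is the only place where the projective homogeneity of $Y$ and purity of the symbol $\alpha$ are genuinely used.
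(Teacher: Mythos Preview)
Your overall architecture matches the paper's exactly: tensor Voevodsky's triangle for $R_\alpha$ with $M$ (after identifying $\mathcal X_{Q_\alpha}\otimes M\cong M$ via the \v Cech-motive formalism), then show the connecting map $\delta\colon M\to M(2^n-1)[2^{n+1}-1]$ vanishes. The paper obtains $\mathcal X_{Q_\alpha}\otimes\MCH Y\cong\MCH Y$ directly from \cite[Th.~2.3.6]{VishInt}, so your detour through $\mathcal X_Y$ is unnecessary but harmless (also note the typo $\mathcal X_Y'$ in your triangle, which should read $\mathcal X_{Q_\alpha}$).

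The gap is in your justification of $\delta=0$. Neither of your two arguments works as written. For (a), geometric splitness only computes $\Hom_{\mathrm{DM}(\overline k)}(\MCH{\overline Y},\MCH{\overline Y}(q)[2q+1])$, and there is no reason the base-change map from $\Hom_{\mathrm{DM}(k)}$ should be injective for morphisms with nonzero shift. For (b), Rost nilpotence concerns the kernel of $\End(\MCH Y)\to\End(\MCH{Y_K})$ in the category of Chow motives; it says nothing about injectivity on $\Hom$-groups with an odd shift in $\mathrm{DM}$. That said, your conclusion is correct and admits a one-line proof you nearly state: by duality, $\Hom_{\mathrm{DM}(k)}\!\bigl(\MCH Y,\MCH Y(q)[2q+1]\bigr)\cong H^{2m+1,m}_{\mathrm{mot}}(Y\times Y,\ZZ)$ with $m=q+\dim Y$, and this group is $\CH^m(Y\times Y,-1)=0$ for any smooth $Y$. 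No geometric splitness, no passage to $\overline k$, no homogeneity needed. The paper instead reduces to $M=\MCH Y$ and quotes \cite[Cor.~3.4]{PSZ} to exhibit the splitting directly; this is where the paper actually uses that $Y$ is projective homogeneous (as the Remark following the Lemma confirms). Your route, once repaired, would give the result for arbitrary smooth projective $Y$.
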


\begin{proof}

Let $Q_\alpha$ denote the projective {\sl norm} quadric of dimension $2^n-1$, see e.g.\ \cite[Sec~4]{Voe_Z2}. 
Since $Q_\alpha$ has a rational point over $k(Y)$, we conclude that 
$\mathcal X_{Q_\alpha}\otimes\MCH Y\cong\MCH Y$ 
in $\mathrm{DM}(k,\,\ZZ)$ by~\cite[Th.~2.3.6]{VishInt},
where the isomorphism is induced by $\mathcal X_{Q_\alpha}\rightarrow\ZZ$.
This implies $\mathcal X_{Q_\alpha}\otimes M\cong M$ for any direct summand $M$ of $\MCH Y$. 
Therefore, after tensor multiplication by $M$, the exact triangle of~\cite[Th.~4.4]{Voe_Z2} yields an exact triangle 
$$
M(2^n-1)[2^{n+1}-2]\rightarrow M\otimes R_\alpha\rightarrow M\xrightarrow{\delta} M(2^n-1)[2^{n+1}-1],
$$
where the map $M\otimes R_\alpha\rightarrow M$ is induced by $Q_\alpha\rightarrow\Spec(k)$ (cf.~\cite[Th.~4.3]{Voe_Z2}). 
The claim of the lemma is equivalent to $\delta$ being $0$, and to prove the latter we may assume $M=\MCH Y$. 

Finally, an isomorphism $\MCH Y \oplus \MCH Y(2^n-1)[2^{n+1}-2]\xrightarrow{\cong}\MCH Y\otimes R_\alpha$ is constructed in~\cite[Cor.~3.4]{PSZ} 
in such a way that the map $\MCH Y\otimes R_\alpha \rightarrow \MCH Y$
is induced by $Q_\alpha\rightarrow\Spec(k)$ defines a projection on the first component, cf.~\cite[Proof of Lm.~3.2]{PSZ}. 
In particular, the exact triangle associated with this map is isomorphic to the one coming from the direct sum decomposition.
\end{proof}

\begin{Rk}
In fact, the assumption that $Y$ is geometrically connected and homogeneous is only used in the last paragraph.
One can drop these assumptions (i.e.\ $Y\in \SmProj_k$ can be any irreducible) 
by using instead the methods appearing in Section~\ref{sec:morava_motives_over_function_fields}.
\end{Rk}

\section{$\A$-universal surjectivity, injectivity and bijectivity of field extensions}
\label{sec:univ_surj}

In this section we present the notion of $\A$-universal surjectivity, injectivity and bijectivity
 of a field extension
for a coherent oriented cohomology theory $\A$. 
Our presentation is based on the unpublished joint work of Shinder and the second author 
and is included here with the kind permission of the former.

If $\A$ is the Chow ring theory, 
then the notions of $\A$-universal surjective and bijective field extensions 
coincide and are not new.
If $K=k(X)$ for a smooth proper $X$, 
then this property is satisfied if and only if 
$X$ is universally $\CH_0$-trivial, and the latter notion was introduced in~\cite{CH0_univ}.

For other oriented theories, e.g.\ for Morava K-theories,
the notions of $A$-universally surjective and bijective field extensions do not coincide (Example~\ref{ex:univ_surj_not_bij}).
Moreover, we are interested in this property in application to $k(Q)/k$ for $Q$ an anisotropic quadric over $k$.
Note that $Q$ does not possess a 0-cycle of degree 1, and thus cannot be $\CH_0$-universally trivial,
but, nevertheless, $k(Q)/k$ can be $\Kn$-universally surjective (Prop.~\ref{prop:Kn-univ-surj-quad}).

For $X\in \SmProj_k$ we relate 
the notion of $\A$-universally surjective field extension $k(X)/k$ 
to the $\A$-decomposition of the diagonal (Lemma~\ref{lm:A}). 
Introduced in the work of Bloch and Srinivas~\cite{BloSri},
decomposition of the diagonal for $\A=\CH$ appeared 
to be effective in applications to birational geometry~\cite{Voisin_dec, Voisin_CH2, CTP} (see~\cite[Sec.~7]{Sch-survey} for a survey).
Although in the case of quadrics the rationality problem is easy (a quadric is rational if and only if it has a rational point),
these technique allow us to find many examples of $\A$-universally surjective field extensions
for $\A$ being K-theory and Morava K-theory (Section~\ref{sec:examples_A-univ-surj}).
We also describe a criterion for $\A$-universal bijectivity (Prop.~\ref{prop:univ_bij}).

Finally, we would like to note that we develop the above notions 
in the setting of quotients of coherent theories. 
This is mainly needed to include numerical theories, e.g.\ $\Kn_{\num}$.
These are not extended oriented cohomology theories 
and do not admit pullback maps $\Kn_{\num}(X)\rarr \Kn_{\num}(X_K)$ 
for arbitrary field extensions $K/k$, so we adjust the exposition to take care of it. 
Although the full generality of these results is not strictly necessary for this paper, we plan to use them in future work,  e.g.\ in \cite{SechInv}.

\subsection{Quotients of oriented theories}
\label{sec:quotients_oriented_theories}

In~\cite[Example 4.1]{VishIso} Vishik has introduced the notion of an oriented cohomology theory with relations 
that we recall.

\begin{Constr}[Vishik]\label{constr:gamma}
 Let $A$ be an oriented cohomology theory over $k$. 
Let $\Gamma=\{(Q_\lambda, \gamma_\lambda)\}_{\lambda\in I}$ 
be a collection of smooth projective varieties $Q_\lambda$ together with elements $\gamma_\lambda \in \A(Q_\lambda)$.	
We will call it a {\sl system of relations}.

Then one defines $A_\Gamma(X)$ for $X\in \Sm_k$ as the quotient of $A(X)$ modulo the subgroup generated by the elements of the form 
$(\pi_2)_*(\pi_1^*(\gamma_\lambda)\cdot x)$ where $\pi_j$, $j=1,2$, are the canonical projections from $Q_\lambda\times X$ 
and $x\in \A(Q_\lambda\times X)$.
The functorial pullback and push-forward structures of $A$ induce the same ones for $A_\Gamma$,
thus $A_\Gamma$ becomes an oriented cohomology theory over $k$
and the projection $A\rarr A_\Gamma$ is a morphism of oriented theories.
\end{Constr} 

\begin{Ex}
\label{ex:systems_of_relations}
Let $\Gamma_{\overline{k}}$, $\Gamma_{\iso}$, $\Gamma_{\num}$
 consist of all pairs $(X, \gamma)$ where $X$ is any smooth projective variety over $k$
and elements $\gamma \in A(X)$ are defined respectively as follows: 

\begin{enumerate}
\item (elements vanishing over finite field extensions of $k$) element $\gamma$ lies in the kernel of the map 
$$\A(X)\rarr \mathop{\mathrm{colim}}_{\substack{k\subset L\subset \overline{k}\\ [L:k]<\infty}}\ \A(X\times_k L),$$
where the colimit is taken for subfields $L$ in a chosen algebraic closure of $k$.
The colimit group on the right will be by abuse of notation denoted $\A(\overline{X})$, cf. Remark~\ref{rk:overline_proj}\,(1) below.

\item (anisotropic elements, introduced by Vishik \cite{VishIso, VishIso2}) 
element $\gamma$ lies in the image of the push-forward map $A(Y)\rarr A(X)$
 for a smooth projective $A$-anisotropic variety $Y$ over $k$, i.e.\ such that $(\pi_Y)_*\colon A(Y)\rarr A(k)$ is zero;
\item (numerically trivial elements) element $\gamma$ is numerically trivial, $\gamma\sim_{\num} 0$,
i.e.\ it satisfies $(\pi_X)_*(\gamma \cdot \eta)=0$ 
for all $\eta \in A(X)$, $\pi_X:X\rarr \Spec k$ is the structure morphism.
\end{enumerate}

The corresponding quotient cohomology theories $A_\Gamma$ (over $k$) 
will be denoted by $\overline{A}, A_{\iso}, A_{\num}$
for the three cases above, respectively.
Note that if $\A$ is coherent and generically constant (\cite[Def.~4.4.1]{LevMor}), e.g.\ if $\A$ is free,
then there are canonical surjective morphisms of theories:
$$ \A \twoheadrightarrow \overline{A} \twoheadrightarrow \A_{\num}, \quad  \A \twoheadrightarrow \A_{\iso} \twoheadrightarrow \A_{\num}.$$
\end{Ex}

\begin{Rk}\label{rk:overline_proj}
\phantom{a}

\begin{enumerate}
    \item If $X\in \SmProj_k$,
    then $\overline{A}(X)\cong \mathrm{Im}\left( A(X)\rarr A(\overline{X})\right)$.
    However, in general, if $X$ is not projective, this isomorphism does not hold.    
    Moreover, if $\A$ is a free theory, then $\A(X\times_k \overline{k})$ is defined 
    (note that if $\overline{k}$ is not finite over $k$, then 
    by definition of extended oriented cohomology theory this group is not apriori defined),
    and it is indeed isomorphic to 
    $\mathop{\mathrm{colim}}_{\substack{k\subset L\subset \overline{k}\\ [L:k]<\infty}}\ \A(X\times_k L)=:\A(\overline{X})$,
    i.e.\ the abuse of notation introduced in Example~\ref{ex:systems_of_relations}, (1), 
    does not lead to ambiguity in this case.

    \item If $X\in \SmProj_k$, then $\A_{\num}(X)=\A(X)/(a\mid a\sim_{\num} 0)$. 
    Note that for non-projective $X$ the RHS is not defined.
\end{enumerate}
\end{Rk}

\begin{Ex}
Another example of a theory with relations is the conormed Chow ring,
defined in \cite{Fino}, and also studied in \cite{GeldZhykh, AnanGeld}.
\end{Ex}

Assume that $A$ is an extended cohomology theory, $K/k$ is a f.g.\ field extension and
we have two system of relations $\Gamma(k)$ and $\Gamma(K)$ for the restriction of $A$ 
to $\Sm_k$ and $\Sm_K$, respectively.
If for all $X\in\Sm_k$ we get canonical morphisms  
$$p^*_X\colon A_{\Gamma(k)}(X) \rarr A_{\Gamma(K)}(X_K)$$ 
induced from $A$,
then we will say that $\Gamma(k)$ and $\Gamma(K)$ are {\sl compatible systems of relations} in $A$.

In particular, if for every $(X,\gamma)\in\Gamma(k)$ we have that $\gamma_K \in \mathrm{Ker}\left(A(X_K)\rarr A_{\Gamma(K)}(X_K)\right)$
(e.g.\ if $(X_K, \gamma_K)\in \Gamma(K)$),
then $\Gamma(k)$ and $\Gamma(K)$ are compatible.

\begin{Prop}\label{prop:comp_sys_res}
Let $X, Y \in \Sm_k$, $Y$ is irreducible. 
Let $\A$ be a coherent cohomology theory
and $\Gamma(k), \Gamma(k(Y))$ are compatible systems of relations in $A$.

Then the pullback map $A(X\times Y) \rarr A(X_{k(Y)})$ 
induces the map $A_{\Gamma(k)}(X\times Y) \rarr A_{\Gamma(k(Y))}(X_{k(Y)})$.
\end{Prop}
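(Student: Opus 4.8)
The plan is to reduce the statement to the coherence of $\A$ together with the hypothesis that $\Gamma(k)$ and $\Gamma(k(Y))$ are compatible. First I would unwind the definitions: by Construction~\ref{constr:gamma}, $\A_{\Gamma(k)}(X\times Y)$ is the quotient of $\A(X\times Y)$ by the subgroup $J(k)$ generated by elements of the form $(\pi_2)_*\!\left(\pi_1^*(\gamma_\lambda)\cdot z\right)$ for $(Q_\lambda,\gamma_\lambda)\in\Gamma(k)$, $z\in\A(Q_\lambda\times X\times Y)$, and $\pi_1,\pi_2$ the projections from $Q_\lambda\times(X\times Y)$; similarly $\A_{\Gamma(k(Y))}(X_{k(Y)})$ is $\A(X_{k(Y)})$ modulo the analogous subgroup $J(k(Y))$ built from $\Gamma(k(Y))$. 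So the content of the proposition is precisely that the pullback map $p^*\colon\A(X\times Y)\rarr\A(X_{k(Y)})$ sends $J(k)$ into $J(k(Y))$, which then gives the induced map on quotients.

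The key step is therefore to trace a generator $(\pi_2)_*\!\left(\pi_1^*(\gamma_\lambda)\cdot z\right)$ of $J(k)$ through $p^*$. Here I would use that $p^*$ is the composite $\A(X\times Y)\rarr\A(X\times_k k(Y))$ given by restriction along $\Spec k(Y)\hookrightarrow Y$, i.e.\ the map $(\mathrm{id}_X\times j)^*$ in the notation of Section~\ref{sec:transversal_bc}. Since $\pi_2\colon Q_\lambda\times X\times Y\rarr X\times Y$ is projective, the extended transversal base change equation~\eqref{eq:ext_bc} (valid because $\A$ is coherent) applies to the cartesian square with vertical maps $\pi_2\times\mathrm{id}$ and $(\pi_2)_{k(Y)}$: it gives
\begin{equation*}
(\mathrm{id}_X\times j)^*\circ (\pi_2)_* \;=\; \left((\pi_2)_{k(Y)}\right)_*\circ(\mathrm{id}_{Q_\lambda\times X}\times j)^*.
\end{equation*}
Combining this with the fact that $(\mathrm{id}_{Q_\lambda\times X}\times j)^*$ is a ring homomorphism commuting with $\pi_1^*$, and that $\gamma_\lambda$ pulled back to $k(Y)$ is, by compatibility of $\Gamma(k)$ and $\Gamma(k(Y))$, killed in $\A_{\Gamma(k(Y))}$ (this is exactly the content of "$\Gamma(k)$ and $\Gamma(k(Y))$ are compatible": $p_{Q_\lambda}^*$ descends, so $(\gamma_\lambda)_{k(Y)}$ lies in $\mathrm{Ker}(\A(Q_{\lambda,k(Y)})\rarr \A_{\Gamma(k(Y))}(Q_{\lambda,k(Y)}))$), one sees that $p^*$ of the generator equals $\left((\pi_2)_{k(Y)}\right)_*\!\left(\pi_1^*((\gamma_\lambda)_{k(Y)})\cdot (\mathrm{id}\times j)^*z\right)$, which lies in $J(k(Y))$ — possibly after an extra step replacing $\A(Q_\lambda\times X\times Y)$-classes by $\A(Q_{\lambda}\times X_{k(Y)})$-classes, where coherence ($\A(Q_\lambda\times X_{k(Y)})=\mathrm{colim}_{U}\A(Q_\lambda\times X\times U)$) guarantees every class in the target comes from some $\A(Q_\lambda\times X\times U)$ and hence from $\A(Q_\lambda\times X\times Y)$ by further pullback.

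The main obstacle I expect is bookkeeping with the colimit presentations forced by coherence: the subgroup $J(k(Y))$ is generated using arbitrary classes $z'\in\A(Q_{\lambda,k(Y)}\times X_{k(Y)})$, and one must check that these are covered by restrictions of classes over $Q_\lambda\times X\times Y$ — which is where coherence of $\A$ is essential — and that push-forward along $(\pi_2)_{k(Y)}$ is compatible with these colimits. A secondary subtlety is that $\Gamma(k(Y))$ may contain relations $(Q',\gamma')$ not of the form $(Q_{\lambda,k(Y)},(\gamma_\lambda)_{k(Y)})$; but since we only need $p^*(J(k))\subseteq J(k(Y))$ and not equality, this does not cause trouble — we only ever produce generators of $J(k(Y))$ coming from the base-changed relations, which by compatibility are among the relations (or at least kernel elements) over $k(Y)$. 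Once these colimit and transversality verifications are in place, the induced map on quotients is immediate and the proof concludes.
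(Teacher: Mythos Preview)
Your proposal is correct and follows essentially the same route as the paper: take a generator of the relation subgroup, apply the extended transversal base change formula~\eqref{eq:ext_bc} to compute its image under the pullback to $k(Y)$, and invoke compatibility of $\Gamma(k)$ with $\Gamma(k(Y))$ to see that the result vanishes in $\A_{\Gamma(k(Y))}(X_{k(Y)})$. Your anticipated ``main obstacle'' about colimits is a non-issue --- since you only need $p^*(J(k))\subseteq J(k(Y))$, the class $z'=(\mathrm{id}\times j)^*z$ that appears is already a legitimate test class over $k(Y)$ and no surjectivity of restriction is required; the paper's proof accordingly does not raise it, and simply notes (as you do) that one may enlarge $\Gamma(k(Y))$ by $(Q_{\lambda,k(Y)},(\gamma_\lambda)_{k(Y)})$ without changing the quotient theory.
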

\begin{proof}
Let $(Q, \gamma) \in \Gamma(k)$. By the assumption on compatibility, $\gamma_{k(Y)}$ vanishes in $\A_{\Gamma(k(Y))}(Q_{k(Y)})$.
Therefore, by extending $\Gamma(k(Y))$ if necessary without changing the quotient theory,
we may assume 
that $(Q_{k(Y)}, \gamma_{k(Y)})\in \Gamma(k(Y))$.

Consider the following  commutative diagram with a transversal square on the right:
\begin{center}
\begin{tikzcd}
	& Q \times_k X \times_k Y  \arrow[dl, "p_Q"] \arrow{dr} &			& Q_{k(Y)} \times_{k(Y)} X_{k(Y)} \arrow[ll, rightarrow] \arrow{dr} & \\
Q	&  					& X\times_k Y  &											& X_{k(Y)} \arrow[ll, rightarrow]. \\
\end{tikzcd}
\end{center}
Let $b\in A(Q \times_k X \times_k Y)$, then $p_Q^*(\gamma)\cdot b$ is an element that goes to zero in~$A_\Gamma (X\times Y)$.
When we restrict it to $Q_{k(Y)} \times_{k(Y)} X_{k(Y)}$ it becomes of the form $p_{Q_{k(Y)}}^*(\gamma_{k(Y)}) \cdot b_{k(Y)}$,
and therefore is sent to zero in $A_{\Gamma(k(Y)}(X_{k(Y)})$.
By~\ref{eq:ext_bc}
we get that an element in the kernel of $A(X\times_k Y)\rarr A_{\Gamma(k)}(X\times_k Y)$
maps to zero in $A_{\Gamma(k(Y))}(X_{k(Y)})$.
\end{proof}

The systems of relations $\Gamma_{\overline{K}}$ are compatible for all field extensions $K/k$,
and one can show that $\overline{A}$ becomes an extended cohomology theory.
However, analogous statements do not hold for $\Gamma_{\iso}$, $\Gamma_{\num}$ and $A_{\iso}$, $A_{\num}$.
For the numerical equivalence we show the following.

\begin{Prop}\label{prop:num_base_change}
Let $A$ be a free theory such that $A(k)$ is an integral domain.

Let $Y\in \SmProj_k$ be such that
there exist $a\in A(Y)$ with $\deg (a)\neq 0$, i.e.\ $Y$ is $\A$-isotropic.

Then the systems or relations $\Gamma_{\num}(k)$ and $\Gamma_{\num}(k(Y))$
are compatible.
\end{Prop}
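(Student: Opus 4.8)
The plan is to show that for every pair $(X,\gamma)\in\Gamma_{\num}(k)$, the pullback $\gamma_{k(Y)}\in A(X_{k(Y)})$ is numerically trivial, so that the criterion preceding Proposition~\ref{prop:comp_sys_res} applies. Concretely, I must verify: if $(\pi_X)_*(\gamma\cdot\eta)=0$ for all $\eta\in A(X)$, then $(\pi_{X_{k(Y)}})_*(\gamma_{k(Y)}\cdot\eta')=0$ for all $\eta'\in A(X_{k(Y)})$. The key tool is coherence of $A$: by Section~\ref{sec:extended_coherent}, $A(X_{k(Y)})=\mathrm{colim}_{\,U\subseteq Y}A(X\times_k U)$, so it suffices to treat $\eta'$ coming from some $A(X\times_k U)$, and then, shrinking $U$ and using the localization sequence \eqref{eq:loc} to lift along $A(X\times_k Y)\twoheadrightarrow A(X\times_k U)$ — here $A(X\times_k U)$ is a quotient of $A(X\times_k Y)$ by classes supported on $X\times_k(Y\setminus U)$, and degrees of such classes on $X_{k(Y)}$ vanish — I may assume $\eta'$ is the restriction of some $\eta\in A(X\times_k Y)$.

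Next I would reduce the degree computation to one over $k$. The point is the extended transversal base change \eqref{eq:ext_bc}: the degree map $(\pi_{X_{k(Y)}})_*$ composed with restriction from $A(X\times_k Y)$ to $A(X_{k(Y)})$ equals the restriction $A(Y)\to A(k(Y))$ applied after $(\pi_X\times\mathrm{id}_Y)_*\colon A(X\times_k Y)\to A(Y)$. So $(\pi_{X_{k(Y)}})_*(\gamma_{k(Y)}\cdot\eta_{k(Y)})$ is the image in $A(k(Y))$ of $z:=(\pi_X\times\mathrm{id}_Y)_*\big((\gamma\times 1_Y)\cdot\eta\big)\in A(Y)$. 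Now $A(Y)$ is a free $A(k)$-module on generators $[W\to Y]_A$ for projective $W\to Y$ (as in the proof of Lemma~\ref{lm:Kn-isotropic_invertible_direct_summands}); using the projection formula and that $\gamma$ is numerically trivial over $k$, I claim each coordinate of $z$ in such a basis is a ``degree-type'' expression $(\pi_X)_*(\gamma\cdot(\text{something}))$ pulled back appropriately, hence $z$ lies in the submodule of $A(Y)$ spanned by the positive-dimensional generators, i.e.\ $z$ has zero coefficient on the class $1_Y=[Y\xrightarrow{\mathrm{id}}Y]$. Equivalently, $\deg_{Y}(z\cdot\xi)=0$ for all $\xi\in A(Y)$, i.e.\ $z$ is itself numerically trivial on $Y$.

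The final step is to see that a numerically trivial element $z\in A(Y)$ restricts to $0$ in $A(k(Y))$. This is where $\dim Y$-considerations and the hypothesis that $Y$ is $A$-isotropic enter, together with $A(k)$ being an integral domain: pick $a\in A(Y)$ with $\deg(a)=c\neq 0$ in $A(k)$; then for any homogeneous $z$ of codimension $m$, the element $z\cdot a$ has codimension $\dim Y$ up to completing with powers of $h$ or the standard basis trick, and $\deg_Y(z\cdot a')=0$ for all $a'$ forces, after restricting to $k(Y)$ where $A(Y)\to A(k(Y))$ kills all classes of positive codimension coming from proper closed subsets, that $z_{k(Y)}=0$; more cleanly, $z_{k(Y)}\cdot a_{k(Y)}=0$ and $a_{k(Y)}$ is a non-zero-divisor because $\deg$ of it times a splitting element is $c\neq 0$ in the domain $A(k)=A(\overline{k(Y)})$'s subring — so $z_{k(Y)}=0$. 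Hence $\gamma_{k(Y)}$ is numerically trivial and the systems are compatible.

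\textbf{Main obstacle.} The delicate point is the last paragraph: controlling the restriction map $A(Y)\to A(k(Y))$ on numerically trivial elements. One must argue that numerical triviality on $Y$ (a property tested against \emph{all} of $A(Y)$, in particular against $0$-cycles) forces vanishing at the generic point, and this genuinely uses both the $A$-isotropy of $Y$ (to exhibit a non-zero-divisor) and integrality of $A(k)$; without isotropy the statement is false. I would handle it via the colimit description $A(k(Y))=\mathrm{colim}_U A(Y_U)$ over opens $U\subseteq Y$ together with the localization sequence, reducing to: a class $z$ with $z_U=0$ for no $U$ but $z$ numerically trivial must already be supported on a proper closed subset — then an induction on $\dim Y$ using resolution of singularities (Section~\ref{sec:borel_moore_theory_via_resolution_singularities}) closes the loop.
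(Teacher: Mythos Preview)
Your overall strategy matches the paper's: reduce to showing that the element $z:=(\pi_X\times\id_Y)_*\big((\gamma\times 1_Y)\cdot\eta\big)\in A(Y)$ restricts to $0$ at the generic point of $Y$. Your Step~3 claim that $z$ is numerically trivial on $Y$ is correct and follows cleanly from the projection formula (your ``free basis'' justification is wrong --- $A(Y)$ is only \emph{generated} by classes $[W\to Y]_A$, not free on them --- but the conclusion is salvageable: for any $\xi\in A(Y)$, $\deg_Y(z\cdot\xi)=\deg_X\big(\gamma\cdot(p_X)_*(\eta\cdot p_Y^*\xi)\big)=0$).

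The genuine gap is your Step~4, which you correctly flag as the main obstacle but do not resolve. Your ``cleaner'' argument fails: the witness $a\in A(Y)$ with $\deg(a)\neq 0$ will typically lie in positive codimension (e.g.\ a $0$-cycle), so $a_{k(Y)}=0$ and the equation $z_{k(Y)}\cdot a_{k(Y)}=0$ is vacuous. Your induction-on-$\dim Y$ sketch is too vague to evaluate. The paper's missing idea is a \emph{topological filtration} trick: choose $a$ in the \emph{highest} $\tau^i A(Y)$ among elements with nonzero degree. Writing $z=c\cdot 1_Y+(\text{terms in }\tau^1)$ with $c=z|_{k(Y)}$, multiplicativity of $\tau^\bullet$ (available for free theories) gives $(\tau^1)\cdot a\subseteq\tau^{i+1}$, and every element of $\tau^{i+1}$ has degree $0$ by maximality of $i$. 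Hence $\deg_Y(z\cdot a)=c\cdot\deg(a)$, which is nonzero in the domain $A(k)$ unless $c=0$. This is precisely the step where both the $A$-isotropy of $Y$ and the integrality of $A(k)$ are used, and it is the content you are missing.
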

\begin{proof}
It suffices to check that if $x\in A(X)$ is numerically trivial over $k$,
then its pullback $x_{k(Y)} \in A(X_{k(Y)})$ is numerically trivial over $k(Y)$.

Assume the contrary, i.e.\ there exists $\zeta \in A(X_{k(Y)})$
such that $c:=\deg (x_{k(Y)} \cdot \zeta) \neq 0$.
Then by (LOC) and coherence of $A$ there exists $z \in A(X\times Y)$ that maps to $\zeta$,
and by~\ref{eq:ext_bc} we have that 
$$(p_Y)_*(p^*_X(x)\cdot z) \equiv c\cdot 1_Y \mod \tau^1 A(Y)$$
where $p_X, p_Y$ denote the projections from $X\times Y$ on the components $X,Y$, resp.

Let $a\in A(Y)$ be
 an element that lies in the highest topological filtration subgroup $\tau^i A(Y)$
among the elements with $\deg (a)\neq 0$.
Then using the projection formula we have 
$$\deg_Y((p_Y)_*(p_X^*(x)\cdot z \cdot p_Y^*(a)) = \deg(a)\cdot c$$
and, in particular, it is not zero by the assumption that $A(k)$ is a domain.

Therefore using the projection formula we get that
$$\deg_X ( x \cdot (p_X)_*(p_Y^*(a)\cdot z) ) = \deg(a)\cdot c \neq 0$$
and hence $x$ is not numerically trivial.
\end{proof}
\begin{Cr}
\label{cr:num_generic_restriction}
Under the assumptions of the proposition there are natural pullback maps for every $X\in\Sm_k$

$$A_{\num}(X\times Y) \twoheadrightarrow A_{\num}(X_{k(Y)}).$$
\end{Cr}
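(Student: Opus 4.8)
The plan is to deduce the corollary formally from Propositions~\ref{prop:num_base_change} and~\ref{prop:comp_sys_res}, the only additional ingredient being surjectivity, which will come from the localization axiom. First I would note that a free theory $A$ is in particular coherent (see Section~\ref{sec:alg_cob_free_theories}), so that Proposition~\ref{prop:comp_sys_res} applies to it. The hypotheses here --- $A(k)$ an integral domain and $Y$ being $\A$-isotropic --- are exactly those of Proposition~\ref{prop:num_base_change}, which therefore guarantees that $\Gamma_{\num}(k)$ and $\Gamma_{\num}(k(Y))$ are compatible systems of relations in $A$. Feeding this into Proposition~\ref{prop:comp_sys_res} with $\Gamma(k)=\Gamma_{\num}(k)$ and $\Gamma(k(Y))=\Gamma_{\num}(k(Y))$ yields, for every $X\in\Sm_k$, a map
$$A_{\num}(X\times_k Y)\rarr A_{\num}(X_{k(Y)})$$
induced by the pullback $A(X\times_k Y)\rarr A(X_{k(Y)})$; naturality in $X$ is inherited from that of the pullback maps of $A$ and the functoriality of Construction~\ref{constr:gamma}.

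It then remains to check surjectivity, and I would do this by showing that the pullback $A(X\times_k Y)\rarr A(X_{k(Y)})$ is already surjective. Indeed, $X_{k(Y)}$ is the generic fibre of $X\times_k Y\to Y$, so by coherence $A(X_{k(Y)})=\mathrm{colim}_{U}\,A(X\times_k U)$ over the non-empty open subschemes $U\subseteq Y$, and each restriction $A(X\times_k Y)\rarr A(X\times_k U)$ is surjective by the localization axiom~\eqref{eq:loc} applied to the closed complement of $X\times_k U$ in $X\times_k Y$. Taking the colimit gives surjectivity of $A(X\times_k Y)\rarr A(X_{k(Y)})$; postcomposing with the canonical surjection $A(X_{k(Y)})\twoheadrightarrow A_{\num}(X_{k(Y)})$, and using that this composite factors through $A_{\num}(X\times_k Y)$ --- which is exactly what Proposition~\ref{prop:comp_sys_res} provides --- we obtain that $A_{\num}(X\times_k Y)\twoheadrightarrow A_{\num}(X_{k(Y)})$.

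There is essentially no obstacle: the substantive work is already contained in the two preceding propositions. The only points deserving a word of care are the implication ``$A$ free $\Rightarrow A$ coherent'' (needed in order to invoke Proposition~\ref{prop:comp_sys_res}) and the surjectivity of restriction to a generic fibre, both of which are immediate from the definitions together with~\eqref{eq:loc}.
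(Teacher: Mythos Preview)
Your proof is correct and follows exactly the paper's approach: the paper's proof consists of the single line ``Apply Proposition~\ref{prop:comp_sys_res}'', implicitly using Proposition~\ref{prop:num_base_change} for the compatibility hypothesis. You have simply unpacked this and made the surjectivity explicit (via~\eqref{eq:loc} and coherence), which the paper leaves to the reader despite the double-headed arrow in the statement.
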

\begin{proof}
    Apply Proposition~\ref{prop:comp_sys_res}.
\end{proof}

\subsection{Criterion for universally surjective field extensions}

Let $\A$ be a coherent oriented cohomology theory, $K/k$ be a f.g. field extension.
Let $\Gamma(k)$, $\Gamma(K)$ be compatible systems of relations.

\begin{Def}
A finitely generated field extension $K/k$ is called $A_\Gamma$-universally surjective (resp., injective, bijective)
if the map $p_X^*\colon \A_{\Gamma(k)}(X) \rarr \A_{\Gamma(K)}(X_K)$ 
is surjective (resp., injective, bijective) for all $X\in\Sm_k$.
\end{Def}

\begin{lm}[S.--Shinder, unpublished, cf.\  \cite{BloSri}]\label{lm:A}

Let $Y\in \SmProj_k$ be irreducible, 
let $A$ be a coherent theory, 
and let $\Gamma(k)$, $\Gamma(k(Y))$ be compatible systems of relations in $A$. 

Then TFAE:
\begin{enumerate}
    \item  $k(Y)/k$ is $A_\Gamma$-unviersally surjective;
    \item  the map $A_{\Gamma(k)}(Y)\rarr A_{\Gamma(k(Y))}(Y_{k(Y)})$ is surjective;
    \item  the image of $A_{\Gamma(k)}(Y) \rarr A_{\Gamma(k(Y))}(Y_{k(Y)})$ contains the class of the ``diagonal'' point 
    $\delta_Y$  (i.e.\ $[\Spec k(Y) \xrarr{(j,\mathrm{id})} Y\times_k \Spec k(Y)]_{A_\Gamma}$
     where $j$ is the inclusion of the generic point).
\end{enumerate}
\end{lm}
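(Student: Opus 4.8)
The plan is to prove the cyclic chain of implications $(1)\Rightarrow(2)\Rightarrow(3)\Rightarrow(1)$. The implications $(1)\Rightarrow(2)$ and $(2)\Rightarrow(3)$ are essentially definitional: $(1)$ specializes $X=Y$ to give $(2)$, and $(3)$ asks only that one particular element of $A_{\Gamma(k(Y))}(Y_{k(Y)})$ lie in the image, so it follows from $(2)$. The content is in $(3)\Rightarrow(1)$, which is the usual decomposition-of-the-diagonal mechanism of Bloch--Srinivas adapted to the quotient theory $A_\Gamma$.

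For $(3)\Rightarrow(1)$, first I would translate the hypothesis into a statement over the base field. The class $\delta_Y \in A_{\Gamma(k(Y))}(Y_{k(Y)})$ is the image of the diagonal class $\Delta_Y \in A_{\Gamma(k)}(Y\times_k Y)$ under the generic-point pullback $A_{\Gamma(k)}(Y\times_k Y)\to A_{\Gamma(k(Y))}(Y_{k(Y)})$, which is well-defined by Proposition~\ref{prop:comp_sys_res}. Using coherence of $A$ together with the localization sequence \eqref{eq:loc}, the colimit description of Borel--Moore values in Section~\ref{sec:LOC}, and the resulting description of $A_{\Gamma(k(Y))}(Y_{k(Y)})$ as $\mathrm{colim}_{U\subset Y} A_{\Gamma(k)}(Y\times_k U)$, the hypothesis $(3)$ says: there exist a nonempty open $U\subseteq Y$, an element $\rho\in A_{\Gamma(k)}(Y)$, and a class $\zeta$ supported on $Y\times_k Z$ with $Z=Y\setminus U$, such that
$$
\Delta_Y = \rho\times_k [\mathrm{pt}] + \zeta \quad\text{in } A_{\Gamma(k)}(Y\times_k Y),
$$
where $[\mathrm{pt}]$ denotes the pushforward of $1$ along a point of $Y$ lying in $U$ (equivalently, $\rho\times[\mathrm{pt}]$ is the pushforward along $Y\times\{\mathrm{pt}\}$ of $\rho$). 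In other words $\Delta_Y$ decomposes, modulo $\Gamma$-relations, as a ``vertical'' part plus a part supported over a proper closed subset $Z$.

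Now I would run the standard correspondence argument. Fix $X\in\Sm_k$ and $x\in A_{\Gamma(k)}(Y\times_k X)$; I want to show every class in $A_{\Gamma(k(Y))}(X_{k(Y)})$ is a pullback from $A_{\Gamma(k)}(X)$, and by the colimit/localization description again it suffices to lift classes in $A_{\Gamma(k)}(Y\times_k X)$ modulo those supported on $Z\times_k X$. Act on $x$ by the two sides of the displayed decomposition of $\Delta_Y$, using the module structure of correspondences from Section~\ref{sec:corr_action} (which descends to $A_\Gamma$ since $A\to A_\Gamma$ is a morphism of theories). The diagonal acts as the identity, so $x = \Delta_Y\circ x = (\rho\times[\mathrm{pt}])\circ x + \zeta\circ x$. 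The first term $(\rho\times[\mathrm{pt}])\circ x$ is pulled back from $A_{\Gamma(k)}(X)$ along the structure map $Y\to\Spec k$ followed by the appropriate projection — concretely it equals $\pi_Y^*$ of the class $\deg$-type pushforward of $x$ restricted to $\{\mathrm{pt}\}\times X$, hence lies in the image of $p_X^*$ after base change. The second term $\zeta\circ x$ is supported on $Z\times_k X$ because $\zeta$ is supported on $Y\times_k Z$ (here I use the transversal/supports behavior of the composition product, Lemma~\ref{lm:restriction_closed_points} and \eqref{eq:bc}), so it dies under restriction to the generic point $\Spec k(Y)$. Therefore $x_{k(Y)}$ equals the image of a class pulled back from $A_{\Gamma(k)}(X)$, which is exactly $A_\Gamma$-universal surjectivity.

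The main obstacle I expect is bookkeeping of the colimit/localization descriptions: one must check that ``supported on $Y\times_k Z$'' is preserved by composition with an arbitrary correspondence and that passing to the generic point of $Y$ genuinely kills such classes — this is where coherence of $A$ (so that $A_{\Gamma(k(Y))}(-)$ is the colimit over opens) and the extended base-change formula \eqref{eq:ext_bc} do the real work. One also has to be slightly careful that the diagonal point $\delta_Y$ is the correct generic fiber of $\Delta_Y$ and that the module action is compatible with generic-point pullback, both of which reduce to \eqref{eq:ext_bc} and Proposition~\ref{prop:comp_sys_res}.
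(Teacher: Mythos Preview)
Your overall strategy --- decompose the diagonal and let the two pieces act on a correspondence --- is the paper's approach. But the decomposition you write down is wrong. You set $\Delta_Y = \rho\times[\mathrm{pt}] + \zeta$ with $[\mathrm{pt}]$ the class of a point sitting in the \emph{second} factor of $Y\times Y$; restricting to the generic point of that second $Y$ then kills $[\mathrm{pt}]$ (a closed point restricts to zero at the generic point, cf.\ Lemma~\ref{lm:restriction_closed_points}), so your ``vertical'' term restricts to $0$, not to $\delta_Y$, and hence cannot be the complement of a class supported on $Y\times Z$. More fundamentally, hypothesis~(3) does not hand you a rational point of $Y$ --- in the main applications $Y$ is an anisotropic quadric with no $0$-cycle of degree $1$ --- it gives an element $a\in A_{\Gamma}(Y)$ whose pullback along $Y_{k(Y)}\to Y$ is $\delta_Y$. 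The correct decomposition is
\[
\Delta_Y \;=\; p_1^*(a) \;+\; b,\qquad p_1\colon Y\times Y\to Y\ \text{the first projection},
\]
and then $b$ restricts to zero at the generic point of the second factor simply by construction.

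With this fix the rest of your argument works, and in fact becomes cleaner than what you sketched: there is no need to pass through localization to say $b$ is supported on $Y\times Z$ and then verify that supports are preserved under the composition product (the bookkeeping you flagged as the main obstacle). The paper computes directly, via the extended base change~\eqref{eq:ext_bc}, that $(b\circ x)|_{X_{k(Y)}}=0$ because the factor $p_{2,3}^*(b)$ already restricts to zero; and via a transversal square and~\eqref{eq:bc} that $p_1^*(a)\circ x = p_X^*\!\big((p_X)_*(x\cdot p_Y^*(a))\big)$ is visibly pulled back from $A_\Gamma(X)$. No support tracking is needed.
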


\begin{proof}
We prove the last implication in the sequence $(1)\Rightarrow (2) \Rightarrow (3)\Rightarrow (1)$, as the others are trivial.
Below we drop the field of definition of the relation writing just $\Gamma$.
Assume that there exists an element $a\in A_\Gamma(Y)$ 
 such that $\delta_Y = p^*(a)$
 where $p^*\colon A_\Gamma(Y)\rarr A_\Gamma(Y_{k(Y)})$ 
is the pullback map.
Define $b\in A_\Gamma(Y\times Y)$ to be $\Delta_Y-p_1^*(a)$, where $p_1\colon Y\times Y \rarr Y$ is the projection on the first component.
Recall that $A_\Gamma(Y\times Y)$ possesses the ring structure 
and acts on the left on $\A_\Gamma(X\times Y)$ (see~Section~\ref{sec:corr_action}).
Thus, for every $x\in A_\Gamma(X\times Y)$
we have $x = \Delta \circ x = p_1^*(a) \circ x + b\circ x$.

Let $\widetilde{x}\in A_\Gamma(X_{k(Y)})$, 
and let $x\in A_\Gamma(X\times Y)$ be an element that maps to it along the pullback map;
it exists by the coherence property of $\A$ and the compatibility assumption. 
The following square is transversal in $\Sm_k$
\begin{center}
\begin{tikzcd}
X \times Y \times Y \arrow[r, "p_{1,3}"] \arrow[d, "p_{1,2}"] & X\times Y \arrow[d, "p_X"] \\
X \times Y 			\arrow[r, "p_X"] 					   & X 
\end{tikzcd}
\end{center}
and therefore we get by~\ref{eq:bc} and by the definition of the action the following equalities 
$$ p_1^*(a) \circ x = (p_{1,3})_*(p_{1,2}^*(x)\cdot p_{2}^*(a)) = (p_{1,3})_*(p_{1,2}^*(x\cdot p_Y^*(a) )) = p_X^*(p_{X,*}(x\cdot p_Y^*(a))),$$
where $p_Y\colon X\times Y\rarr Y$, $p_2\colon X\times Y\times Y\rarr Y$ are the projection morphisms. 
On the other hand, the image of the element $b\circ x = p_{1,3}^*(p_{1,2}^*(x)\cdot p_{2,3}^*(b))$ in $A_\Gamma(X_{k(Y)})$
can be obtained as the image in the right lower corner of the following commutative diagram by~\ref{eq:ext_bc}:

\begin{center}
\begin{tikzcd}
A_\Gamma(X\times Y \times Y) \arrow{r} \arrow[d, "(p_{1,3})_*"] & A_\Gamma(X\times Y \times \Spec k(Y)) \arrow[d, "(p_{1,3})_*"] \\
A_\Gamma(X\times Y) \arrow{r} & A_\Gamma(X\times \Spec k(Y)) 
\end{tikzcd}
\end{center}

However, the horizontal pullback maps preserve multiplication,
and the restriction of $p_{2,3}^*(b)$ to the right upper corner is zero by construction.
Therefore, $b\circ x$ maps to $0$ in $A_\Gamma(X_{k(Y)})$.
Thus, the element $\widetilde{x}$ 
is the image of $p_{X,*}(x\cdot p_Y^*(a))$ along the pullback map $A_\Gamma(X)\rarr A_\Gamma(X_{k(Y)})$.
\end{proof}

\subsection{Criterion for universal bijectivity}

In this section we formulate a criterion for universal bijectivity
for a coherent oriented theory.
In the case of Chow groups it turns out that universal bijectivity 
is a corollary of universal surjectivity, which is not true for other free theories.

The results of this section are not required later in the paper; 
however, we hope they provide insight into why the notion of universal surjectivity is more subtle for cohomology theories other than Chow groups.

\begin{Lm}[sufficient condition for universal injectivity, S.--Shinder, unpublished]\label{lm:univ_inj}
Let $\A$ be a coherent oriented theory. Let $Y$ be an irreducible smooth projective variety over $k$.

Assume that there exists an element $a\in A(Y)$ such that 
\begin{enumerate}
\item $(\pi_Y)_*(a)=1\in A(k)$;

\item for every $W \in \Sm_k$, $\dim W<\dim Y$ and projective morphism $f\colon W\rarr Y$ 
we have $f^*(a) =0$ in $A(W)$.
\end{enumerate}

Then $k(Y)/k$ is $\A$-universally injective field extension.
\end{Lm}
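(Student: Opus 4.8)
The plan is to show directly that for every $X\in\Sm_k$ the pullback $p_X^*\colon\A(X)\to\A(X_{k(Y)})$ is injective, where $X_{k(Y)}=X\times_k\Spec k(Y)$. The starting point is that hypothesis~(1) produces a retraction of the pullback $p_X^*\colon\A(X)\to\A(X\times Y)$: set $\beta_X(\xi):=(p_X)_*\big(\xi\cdot p_Y^*(a)\big)$ for $\xi\in\A(X\times Y)$, where $p_X,p_Y$ denote the projections from $X\times Y$. By the projection formula $\beta_X(p_X^*(x))=x\cdot(p_X)_*(p_Y^*(a))$; applying transversal base change~\eqref{eq:bc} to the cartesian square formed by $\pi_X\colon X\to\Spec k$ and $\pi_Y\colon Y\to\Spec k$ (here $\pi_Y$ and its base change $p_X$ are projective since $Y$ is a projective variety) gives $(p_X)_*(p_Y^*(a))=\pi_X^*\big((\pi_Y)_*(a)\big)=1$ by~(1). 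Hence $\beta_X\circ p_X^*=\mathrm{id}_{\A(X)}$, so it suffices to control the passage to the colimit defining $\A(X_{k(Y)})$.

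Since $\A$ is coherent and $p_Y\colon X\times Y\to Y$ is smooth, coherence (Section~\ref{sec:extended_coherent}) identifies $\A(X_{k(Y)})$ with $\mathrm{colim}_{\emptyset\ne U\subseteq Y}\A(X\times U)$. Thus if $x\in\A(X)$ maps to $0$ in $\A(X_{k(Y)})$, then $p_X^*(x)$ restricts to $0$ on $X\times U$ for some non-empty open $U\subseteq Y$; set $Z:=Y\setminus U$ with its reduced structure, a closed subset of the irreducible variety $Y$ with $\dim Z<\dim Y$. By the localization sequence~\eqref{eq:loc} for $X\times Z\subseteq X\times Y$, the element $p_X^*(x)$ lies in the image of $\A(X\times Z)\to\A(X\times Y)$.

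The heart of the argument is then to combine hypothesis~(2) with the resolution-of-singularities presentation of $\A(X\times Z)$. Choosing a resolution $\widetilde Z\to Z$ obtained by blowing up smooth centers $R_i$, base change along the flat map $X\to\Spec k$ yields a resolution $X\times\widetilde Z\to X\times Z$ obtained by blowing up the smooth centers $X\times R_i$, and \cite[Prop.~7.7]{Vish1} (see Section~\ref{sec:borel_moore_theory_via_resolution_singularities}) shows that $\A(X\times Z)$ is generated, via push-forward, by the images of $\A(X\times\widetilde Z)$ and of the $\A(X\times R_i)$. Consequently $p_X^*(x)=\sum_\alpha(\iota_\alpha)_*(v_\alpha)$, where $S_\alpha$ is either $\widetilde Z$ or some $R_i$, the map $\iota_\alpha\colon X\times S_\alpha\to X\times Y$ is projective (it factors as $\mathrm{id}_X\times\sigma_\alpha$ into $X\times Z$ followed by the closed immersion $X\times Z\hookrightarrow X\times Y$, with $\sigma_\alpha\colon S_\alpha\to Z$ projective), and $v_\alpha\in\A(X\times S_\alpha)$. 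The crucial observation is that, although $X\times S_\alpha$ need not have dimension $<\dim Y$, the composite $p_Y\circ\iota_\alpha$ factors as $X\times S_\alpha\xrightarrow{\mathrm{pr}}S_\alpha\xrightarrow{h_\alpha}Y$, where $h_\alpha\colon S_\alpha\to Y$ is a projective morphism from a smooth variety with $\dim S_\alpha\le\dim Z<\dim Y$. Hence hypothesis~(2) gives $h_\alpha^*(a)=0$, so $\iota_\alpha^*\big(p_Y^*(a)\big)=\mathrm{pr}^*\big(h_\alpha^*(a)\big)=0$, and by the projection formula $(\iota_\alpha)_*(v_\alpha)\cdot p_Y^*(a)=(\iota_\alpha)_*\big(v_\alpha\cdot\iota_\alpha^*(p_Y^*(a))\big)=0$ for each $\alpha$. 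Therefore $p_X^*(x)\cdot p_Y^*(a)=0$, so $x=\beta_X\big(p_X^*(x)\big)=(p_X)_*(0)=0$, which proves the injectivity of $p_X^*$ and hence the lemma.

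The only real obstacle is the bookkeeping in the last step: one must verify that the resolution-of-singularities presentation is stable under the base change $X\times(-)$, and — more to the point — that even though the generating varieties $X\times S_\alpha$ are genuinely higher-dimensional, the maps along which $a$ is pulled back factor through the low-dimensional smooth varieties $S_\alpha$, so that the dimension restriction in hypothesis~(2) still applies. Once this is set up, everything else is a formal consequence of the projection formula, transversal base change~\eqref{eq:bc}, the localization sequence~\eqref{eq:loc}, and coherence.
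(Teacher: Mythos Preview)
Your proof is correct and follows essentially the same approach as the paper's: both construct the retraction $\xi\mapsto(p_X)_*(\xi\cdot p_Y^*(a))$ of $p_X^*$, use coherence and localization to reduce to showing this retraction kills the image of $\A(X\times Z)$ for $Z\subsetneq Y$ closed, and then invoke the resolution-of-singularities presentation of $\A(X\times Z)$ together with the projection formula and hypothesis~(2). Your write-up is somewhat more explicit about the factorization $p_Y\circ\iota_\alpha=h_\alpha\circ\mathrm{pr}$ and the base-change compatibility of the resolution, but the argument is the same.
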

\begin{proof}
Let $U\subset Y$ be an open subscheme with closed complement $Z\hookrightarrow Y$.
For $X\in\Smk$, the pullback of an element from $A(X)$
vanishes in $A(X\times U)$ if and only if its pullback to $A(X\times Y)$ is a push-forward of an element in $A(X\times Z)$ by the localization axiom.
By the coherence property of $A$ we have an isomorphism $A(X_{k(Y)})\cong \mathrm{colim}_U A(X\times U)$,
and therefore to prove the proposition it suffices 
to check
that the image of $p_1^*\colon A(X)\rarr A(X\times Y)$ does not intersect the image of $A(X\times Z)$ for all closed subschemes $Z\subsetneq Y$.
Note that the pullback map $p_1^*\colon A(X)\rarr A(X\times Y)$ is split
by the map $x\mapsto (p_1)_*(x\cdot p_2^*(a))$, and it suffices to show that $(p_1)_*(x\cdot p_2^*(a))$ 
is $0$ 
for every element $x$ that is a push-forward from $\A(X\times Z)$.

Recall that $\A(X\times Z)$ can be described using the resolution of singularities $\pi\colon\widetilde{Z}\rarr Z$
(see Section~\ref{sec:borel_moore_theory_via_resolution_singularities}).
It follows from this description that 
every element in $\A(X\times Z)$
is the push-forward of an element from $\A(X\times W)$ along $\id_X\times f$ 
where $W\in\Sm_k$, $\dim W<\dim Y$, 
and $f\colon W\rarr Y$ is a projective morphism.

Then for $x\in A(X\times W)$
we have that $(\id_X\times f)_*(x) \cdot p_2^*(a)= (\id_X\times f)_*(x \cdot p_2^* f^*(a)) = 0$ as $f^*(a)=0$ by assumption.  
Thus, $(p_1)_*((\id_X\times f)_*(x) \cdot p_2^*(a))=0$.
\end{proof}

\begin{Ex}\label{ex:inj_alg_clos}
Let $X$ be an irreducible variety over $k$ that has a rational point.

Then $k(X)/k$ is universally $A$-injective for any coherent $A$:
use Lemma~\ref{lm:restriction_closed_points} to apply Lemma~\ref{lm:univ_inj} for the class of a rational point.
Thus, if $k=\overline{k}$, then $k(X)/k$ is universally $A$-injective for all irreducible $X\in\SmProj_k$.
In particular, every f.g.\ field extension $K/k$ is $\overline{\A}$-universally injective.
\end{Ex}

\begin{Prop}[criterion for universal bijectivity, S.--Shinder, unpublished]\label{prop:univ_bij}
Let $A$ be a coherent oriented theory. Let $Y$ be an irreducible smooth projective variety over $k$.

Assume that the pullback map $A(\Spec k) \rarr A(\Spec k(Y))$ is injective
(e.g.\ $A$ is a free theory).

Then the following are equivalent:
\begin{enumerate}
\item field extension $k(Y)/k$ is $A$-universally bijective;
\item there exists an element $a\in A(Y)$ such that $f^*(a)=0$ for every projective morphism $f\colon W\rarr Y$ with $W\in\Sm_k$, $\dim W<\dim Y$,
and the image of $a$ in $A(Y_{k(Y)})$ equals the class of the 'diagonal' point~$\delta_Y$.
\end{enumerate}
\end{Prop}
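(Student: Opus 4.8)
The plan is to treat the two implications separately, exploiting throughout that ``$A$-universally bijective'' means ``$A$-universally surjective and $A$-universally injective'', so that Lemma~\ref{lm:A} governs the surjective half and Lemma~\ref{lm:univ_inj} the injective half. The only geometric input needed in both directions is the observation that $\delta_Y$ is the class of a closed point: by construction $\delta_Y=x_*(1)$, where $x\colon\Spec k(Y)\hookrightarrow Y_{k(Y)}$ is the tautological section of the structure morphism $\pi_{Y_{k(Y)}}\colon Y_{k(Y)}\to\Spec k(Y)$, and this section is a closed immersion because $\pi_{Y_{k(Y)}}$ is projective. In particular $\pi_{Y_{k(Y)},*}(\delta_Y)=1$ since $\pi_{Y_{k(Y)}}\circ x=\mathrm{id}$.

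For $(2)\Rightarrow(1)$, let $a\in A(Y)$ be as in $(2)$. That the image of $a$ in $A(Y_{k(Y)})$ is $\delta_Y$ is precisely condition (3) of Lemma~\ref{lm:A} applied to the trivial system of relations, so $k(Y)/k$ is $A$-universally surjective. For injectivity I would feed $a$ into Lemma~\ref{lm:univ_inj}: its condition (2) is exactly the vanishing condition assumed in $(2)$, so the only thing left to verify is $(\pi_Y)_*(a)=1$. On one hand $\pi_{Y_{k(Y)},*}(\delta_Y)=1$ by the previous paragraph; on the other hand, since $\delta_Y$ is the restriction of $a$, combining \eqref{eq:bc} and \eqref{eq:ext_bc} (exactly as in the proof of Lemma~\ref{lm:A}) identifies $\pi_{Y_{k(Y)},*}(\delta_Y)$ with the image of $(\pi_Y)_*(a)$ under the pullback $A(\Spec k)\to A(\Spec k(Y))$. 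Hence that image equals $1$, and since this map is injective by hypothesis, $(\pi_Y)_*(a)=1$. Lemma~\ref{lm:univ_inj} then gives $A$-universal injectivity, and together with surjectivity, $A$-universal bijectivity.

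For $(1)\Rightarrow(2)$, $A$-universal bijectivity implies $A$-universal surjectivity, so Lemma~\ref{lm:A} produces some $a\in A(Y)$ whose image in $A(Y_{k(Y)})$ is $\delta_Y$; it remains to see that any such $a$ automatically satisfies $f^*(a)=0$ for every projective $f\colon W\to Y$ with $W\in\Sm_k$ and $\dim W<\dim Y$. Base-changing to $k(Y)$, the morphism $f_{k(Y)}\colon W_{k(Y)}\to Y_{k(Y)}$ is projective, and it is non-surjective because its image is the base change of the closed subset $f(W)\subseteq Y$, which has dimension $\le\dim W<\dim Y=\dim Y_{k(Y)}$. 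Since $\delta_Y$ is the class $[x]$ of a closed point, Lemma~\ref{lm:restriction_closed_points}, applied over the base field $k(Y)$, gives $f_{k(Y)}^*(\delta_Y)=0$. But $f_{k(Y)}^*(\delta_Y)$ is nothing but the image of $f^*(a)$ under the pullback $A(W)\to A(W_{k(Y)})$, which is injective because $k(Y)/k$ is $A$-universally injective. Therefore $f^*(a)=0$, as required.

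I expect the genuinely non-formal step to be the verification of $(\pi_Y)_*(a)=1$ in $(2)\Rightarrow(1)$: condition $(2)$ says nothing about $(\pi_Y)_*(a)$, so one really must recover it from the normalization of $\delta_Y$ as the class of a section, and this is the one place where the hypothesis that $A(\Spec k)\to A(\Spec k(Y))$ be injective is essential — it is also the point most prone to a normalization slip in the base-change computation. The rest — making $A(Y_{k(Y)})$, $A(W_{k(Y)})$ and the various pullback maps precise via coherence, and the two invocations of \eqref{eq:bc}/\eqref{eq:ext_bc} — is bookkeeping already carried out in the proofs of Lemmas~\ref{lm:A} and~\ref{lm:univ_inj}.
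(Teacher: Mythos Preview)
Your proof is correct and follows essentially the same route as the paper's: both directions invoke Lemma~\ref{lm:A} for surjectivity, Lemma~\ref{lm:univ_inj} for injectivity, and Lemma~\ref{lm:restriction_closed_points} over $k(Y)$ combined with universal injectivity to obtain $f^*(a)=0$. The only cosmetic difference is that you treat $(2)\Rightarrow(1)$ first and are more explicit about verifying $(\pi_Y)_*(a)=1$ there via \eqref{eq:ext_bc} and the injectivity hypothesis, whereas the paper carries out this computation in the $(1)\Rightarrow(2)$ direction and then tacitly reuses it when invoking Lemma~\ref{lm:univ_inj} for sufficiency.
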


\begin{proof}
First, we show necessity. If $k(Y)/k$ is $A$-universally surjective,
then by Lemma~\ref{lm:A} there exist $a\in A(Y)$ that maps to the class of the diagonal.
Since we have that $(\pi_{Y_{k(Y)}})_*(a_{k(Y)})=1$ and $A(\Spec k) \rarr A(\Spec k(Y))$ is injective,
we get that $(\pi_Y)_*(a)=1$ by applying~\ref{eq:ext_bc}.

Note that the following diagram commutes for every $f\colon W\rarr Y$: 

\begin{center}
\begin{tikzcd}
A(Y) \arrow[r, "f^*"] \arrow{d} &  A(W) \arrow{d} \\
A(Y_{k(Y)}) \arrow[r, "f_{k(Y)}^*"] & A(W_{k(Y)}).
\end{tikzcd}
\end{center}

The class of any rational point in $A(Y_{k(Y)})$ restricts to $0$ along $f_{k(Y)}^*$ by~Lemma~\ref{lm:restriction_closed_points}.
Thus, $a_{W_{k(Y)}}$ is zero. However, if $k(Y)/k$ is $A$-universally bijective, then also $a_{W}$ must be 0,
and the necessity is proved.

Second, assuming that there exist $a$ as in the statement we get $A$-universally surjectivity by Lemma~\ref{lm:A}
and $A$-universal injectivity by Lemma~\ref{lm:univ_inj}.
\end{proof}

In the case $\A=\CH^*$, the notions that we have defined here
have been studied before under a different name.

\begin{Def}[{\cite[1.2]{CH0_univ})}]
Let $Y$ be an irreducible smooth proper variety over $k$.
Then $\CH_0(Y)$ is said to be universally trivial
if the pullback map $\CH_0(Y)\rarr \CH_0(Y_L)$ is an isomorphism for all field extensions $L/k$.
\end{Def}

\begin{Cr}\label{cr:CH0_univ}
For a smooth proper variety $Y/k$ TFAE:
\begin{enumerate}
\item $k(Y)/k$ is $\CH^*$-universally bijective;
\item $k(Y)/k$ is $\CH^*$-universally surjective;
\item $\CH_0(Y)$ is universally trivial.
\end{enumerate}
\end{Cr}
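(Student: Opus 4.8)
The plan is to prove the implications $(1)\Rightarrow(2)\Rightarrow(3)\Rightarrow(2)\Rightarrow(1)$: the first is trivial, $(2)\Rightarrow(1)$ is a direct application of Proposition~\ref{prop:univ_bij}, $(3)\Rightarrow(2)$ of Lemma~\ref{lm:A}, and the only substantive point is $(2)\Rightarrow(3)$. Throughout one works with $\CH^{*}$, which is a coherent free theory, and the empty system of relations, so that the three cited criteria apply; one uses that $\CH^{*}(\Spec k)=\ZZ\rightarrow\CH^{*}(\Spec k(Y))=\ZZ$ is injective and that $\CH_0(Y)=\CH^{\dim Y}(Y)$. Since $k(Y)$, and hence the $\CH^{*}$-universal surjectivity/bijectivity of $k(Y)/k$, depends only on the function field, and since universal triviality of $\CH_0$ is a birational invariant of smooth proper varieties, one may assume $Y\in\SmProj_k$, replacing it by a smooth projective model (resolution of singularities plus Chow's lemma, in characteristic $0$).

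For $(2)\Rightarrow(1)$: by Lemma~\ref{lm:A}, $\CH^{*}$-universal surjectivity of $k(Y)/k$ produces an element of $\CH^{*}(Y)$ mapping to the diagonal point $\delta_Y\in\CH^{*}(Y_{k(Y)})$; as pullback preserves codimension and $\delta_Y$ sits in codimension $\dim Y$, its component $a\in\CH^{\dim Y}(Y)=\CH_0(Y)$ already maps to $\delta_Y$. For every projective $f\colon W\rightarrow Y$ with $W\in\Sm_k$ and $\dim W<\dim Y$ one has $f^{*}(a)\in\CH^{\dim Y}(W)=0$ automatically, so condition~(2) of Proposition~\ref{prop:univ_bij} holds and $k(Y)/k$ is $\CH^{*}$-universally bijective. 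For $(3)\Rightarrow(2)$: universal triviality of $\CH_0(Y)$ forces $\CH_0(Y)\rightarrow\CH_0(Y_{k(Y)})$ to be surjective, so $\delta_Y$ lies in the image of $\CH^{*}(Y)\rightarrow\CH^{*}(Y_{k(Y)})$, and Lemma~\ref{lm:A} yields $\CH^{*}$-universal surjectivity of $k(Y)/k$.

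For $(2)\Rightarrow(3)$: by Lemma~\ref{lm:A}, $\CH^{*}$-universal surjectivity of $k(Y)/k$ is equivalent to $\delta_Y$ lying in the image of $\CH_0(Y)\rightarrow\CH_0(Y_{k(Y)})$; fix a preimage $a$, which has $\deg(a)=\deg(\delta_Y)=1$ since the degree of a zero-cycle is unchanged under extension of the base field. Because $\CH_0(Y_{k(Y)})=\mathrm{colim}_{\,U}\CH_{\dim Y}(Y\times U)$ over dense opens $U\subseteq Y$ (localization sequence), the relation $a\mapsto\delta_Y$ translates into a decomposition of the diagonal
$$\Delta_Y=[a\times Y]+\Gamma\quad\text{in}\quad\CH_{\dim Y}(Y\times Y),\qquad\Gamma\text{ supported on }Y\times Z,$$
for some closed $Z\subsetneq Y$; transposing and using $\Delta_Y^{\mathrm t}=\Delta_Y$ one gets equivalently $\Delta_Y=[Y\times a]+\Gamma'$ with $\Gamma'$ supported on $Z\times Y$. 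The existence of such a decomposition of the diagonal is classically equivalent to the universal triviality of $\CH_0(Y)$ (one applies $\Delta_{Y_L}=\mathrm{id}$ to a zero-cycle on $Y_L$, the term $[Y\times a]$ contributing its degree times $a_L$ and $\Gamma'$ a class pushed forward from $Z_L$, and argues by induction on $\dim Y$); see~\cite{BloSri} and~\cite{CH0_univ}. This closes the chain of implications.

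The hard part is $(2)\Rightarrow(3)$: one has to recognise the class supplied by Lemma~\ref{lm:A} as a Bloch--Srinivas decomposition of the diagonal and then import the classical (standard but not purely formal) implication ``decomposition of the diagonal $\Rightarrow$ $\CH_0$ universally trivial'', whose proof involves an induction on dimension handling the possibly singular support $Z$.
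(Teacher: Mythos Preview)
Your proof is correct and uses the same ingredients as the paper's: Lemma~\ref{lm:A} for the surjectivity direction, Proposition~\ref{prop:univ_bij} for the bijectivity direction, and the dimensional vanishing $\CH^{\dim Y}(W)=0$ when $\dim W<\dim Y$ to verify condition~(2) of that proposition. The only organizational difference is that the paper proves $(3)\Rightarrow(1)$ while you prove $(2)\Rightarrow(1)$ directly (same argument), and the paper dispatches $(2)\Leftrightarrow(3)$ in one line by citing \cite[Lm.~1.3]{CH0_univ} together with Lemma~\ref{lm:A}, whereas you spell out the translation into a decomposition of the diagonal before invoking the same reference. Your claim that $(2)\Rightarrow(3)$ is ``the hard part'' is therefore a bit misleading: it is precisely the content of the cited lemma, and neither you nor the paper reproves it.
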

\begin{proof}
By \cite[Lm.~1.3]{CH0_univ} and Lemma~\ref{lm:A} the conditions (2) and (3) are equivalent.

It suffices to show that $\CH_0$-universally trivial implies $\CH^*$-universally bijective.
By definition we have an isomorphism $\CH_0(Y)\xrarr{\sim}\CH_0(Y_{k(Y)})$ 
and therefore we can lift the class of the diagonal point to the class $a$ in $\CH_0(Y)$.
Moreover, the map $f^*\colon \CH_0(Y)\rarr \CH_0(W)$ is zero 
for any projective non-surjective morphism $f\colon \tilde{Z}\rarr Y$ in $\Sm_k$.
Thus the conditions of Proposition~\ref{prop:univ_bij} are satisfied.
\end{proof}

\begin{Rk}
It is not hard to see that $\CH^*$-universally bijective implies $\A$-universally bijective for any oriented theory $\A$.
\end{Rk}

\begin{Rk}
For other cohomology theories universal surjectivity does not, in general, imply universal bijectivity
as we show in Example~\ref{ex:univ_surj_not_bij} for Morava K-theory $\K2$.
\end{Rk}

We finish this section by the following well-known example.

\begin{Ex}
\label{ex:univ-bij_transcendental}
Let $K/k$ be a purely transcendental field extension, $\mathrm{tr.\,deg.\,}K/k<\infty$.

Then it is $A$-universally bijective for any coherent oriented theories $A$.
\end{Ex}
\begin{proof}
It suffices to treat the case $K=k(t)$,
then we can take $Y=\mathbb{P}_k^1$, and apply Proposition~\ref{prop:univ_bij} with $a$ being the class of any rational point.
\end{proof}

\subsection{Examples of $\A$-universally surjective field extensions}
\label{sec:examples_A-univ-surj}

\subsubsection{Smooth projective varieties with split motives}

One way to construct $\A$-universally surjective field extensions
is by considering smooth projective varieties $Y$ such that $\A$-motive of $Y$
decomposes into Tate motives. Here is a version of this statement for theories with relations.

\begin{Lm}
\label{lm:Tate_motive_universally_surjective}
Let $A$ be a coherent cohomology theory.
Let $Y$ be an irreducible smooth projective variety over $k$
and let $\Gamma(k)$, $\Gamma(k(Y))$ be compatible systems of relations
such that $\A_{\Gamma(k)}(k)\rarr \A_{\Gamma(k(Y))}(k(Y))$ is surjective.
Assume that $M_{A_\Gamma}(Y)$ is a sum of Tate motives.

Then $k(Y)/k$ is $A_\Gamma$-universally surjective.
\end{Lm}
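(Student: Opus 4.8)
The plan is to use the criterion of Lemma~\ref{lm:A}: it suffices to show that the class of the diagonal point $\delta_Y \in A_{\Gamma(k(Y))}(Y_{k(Y)})$ lies in the image of the pullback $A_{\Gamma(k)}(Y) \rarr A_{\Gamma(k(Y))}(Y_{k(Y)})$. Since $M_{A_\Gamma}(Y)$ is a sum of Tate motives, $\bigoplus_i \un(n_i)$, this decomposition is witnessed by mutually orthogonal projectors in $A_{\Gamma(k)}(Y\times Y)$ summing to $\Delta_Y$, each of which factors through a Tate motive. Concretely, for each $i$ there are elements $u_i \in A_{\Gamma(k)}(Y)$ and $v_i \in A_{\Gamma(k)}(Y)$ (the components of the maps $\un(n_i)\rarr M(Y)$ and $M(Y)\rarr \un(n_i)$, using that morphisms to and from a Tate motive are just elements of $A_\Gamma(Y)$ up to a shift) such that $\Delta_Y = \sum_i v_i \times u_i$ in $A_{\Gamma(k)}(Y\times Y)$, with the normalization $(\pi_Y)_*(u_i \cdot v_i) = \delta_{ij}$-type orthogonality relations; in particular $\Delta_Y$ is a sum of "decomposable" correspondences $v_i \times u_i = (p_1)^*(v_i)\cdot (p_2)^*(u_i)$.

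Next I would restrict this identity along the generic point. Pulling back $\Delta_Y$ to $Y_{k(Y)}$ along $(j \times \mathrm{id})\colon Y_{k(Y)} \hookrightarrow Y\times_k Y$ gives exactly $\delta_Y$, by the definition of the diagonal point. On the other hand, pulling back each decomposable correspondence $(p_1)^*(v_i)\cdot(p_2)^*(u_i)$ gives $(v_i)_{k(Y)}^{\,*} \cdot (u_i)_{k(Y)}$ where now $(v_i)_{k(Y)}^{\,*}$ is the image of $v_i$ under $A_{\Gamma(k)}(Y) \rarr A_{\Gamma(k(Y))}(k(Y))$ viewed as a coefficient (via restriction to the generic point of the first factor) and $(u_i)_{k(Y)}$ is the honest pullback of $u_i$ to $Y_{k(Y)}$. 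The hypothesis that $A_{\Gamma(k)}(k) \rarr A_{\Gamma(k(Y))}(k(Y))$ is surjective — together with the fact that $v_i$, being a morphism of motives $M(Y) \rarr \un(n_i)$, restricts over $k(Y)$ to an element that already comes from $k$ after the motive splits — lets me rewrite each coefficient $(v_i)_{k(Y)}^{\,*}$ as (the restriction of) some $\widetilde v_i \in A_{\Gamma(k)}(k)$. Then $\delta_Y = \sum_i \widetilde v_i \cdot (u_i)_{k(Y)} = \big(\sum_i \widetilde v_i \cdot u_i\big)_{k(Y)}$, exhibiting $\delta_Y$ as a pullback from $A_{\Gamma(k)}(Y)$, which is condition (3) of Lemma~\ref{lm:A}. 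Invoking Lemma~\ref{lm:A} then gives $A_\Gamma$-universal surjectivity of $k(Y)/k$.

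The step I expect to be the main obstacle is the bookkeeping in the previous paragraph: carefully identifying the pullback of a decomposable correspondence $v\times u$ along the generic-point inclusion with a product (coefficient)$\,\cdot\,$(pullback of $u$), and verifying that the relevant coefficients indeed lie in the image of $A_{\Gamma(k)}(k)$ so that the surjectivity hypothesis on coefficient rings can be applied. This is where one uses that $Y$ has a Tate-motivic decomposition of $A_\Gamma$-motive rather than just $A_\Gamma$-universal surjectivity of the coefficients — without the split decomposition the correspondence $\Delta_Y$ need not be a sum of decomposables, and the argument collapses. I expect this to reduce to a transversal base-change computation of the form used in the proof of Lemma~\ref{lm:A}, namely applying \eqref{eq:bc} and \eqref{eq:ext_bc} to the square with corners $Y\times_k Y$, $Y$, $Y_{k(Y)}$, $\Spec k(Y)$, so the same diagrammatic toolkit applies essentially verbatim.
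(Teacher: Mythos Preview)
Your proposal is correct, but it takes a more laborious route than the paper's own proof. The paper observes that the pullback functor $\CM_{A_\Gamma}(k)\rarr\CM_{A_\Gamma}(k(Y))$ exists by compatibility of relations (Proposition~\ref{prop:comp_sys_res}), and it sends the Tate decomposition of $M_{A_\Gamma}(Y)$ to a Tate decomposition of $M_{A_\Gamma}(Y_{k(Y)})$; hence the map $A_\Gamma(Y)\rarr A_\Gamma(Y_{k(Y)})$ is simply a direct sum of $N$ copies of the coefficient map $A_{\Gamma(k)}(k)\rarr A_{\Gamma(k(Y))}(k(Y))$, which is surjective by hypothesis. This verifies condition~(2) of Lemma~\ref{lm:A} in one stroke, with no element-level bookkeeping at all.

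Your argument instead targets condition~(3) of Lemma~\ref{lm:A}: you decompose $\Delta_Y$ explicitly as a sum of decomposable correspondences and then track what happens under restriction to the generic point. This is essentially the same content unpacked at the level of elements, and the ``main obstacle'' you anticipate in your last paragraph is exactly what the paper's functorial phrasing bypasses. One small comment: the aside ``together with the fact that $v_i$ \ldots\ restricts over $k(Y)$ to an element that already comes from $k$'' is superfluous and slightly muddled --- the surjectivity hypothesis on $A_{\Gamma(k)}(k)\rarr A_{\Gamma(k(Y))}(k(Y))$ alone lets you lift the scalars $(v_i)_{k(Y)}^{\,*}$, regardless of their provenance. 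Dropping that clause would make your argument cleaner, though the paper's formulation via condition~(2) is cleaner still.
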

\begin{Rk}
If $\A$ is a free theory, then $\A(k)\xrarr{\cong}\A(K)$ for any field extension $K/k$,
and the surjectivity assumption of the Lemma holds for any quotient of $\A$.
\end{Rk}
\begin{proof}
By the compatibility of systems of relations
there exists a pullback functor from $A_{\Gamma(k)}$-motives over $k$ 
to $A_{\Gamma(k(Y))}$-motives over $k(Y)$ (Proposition~\ref{prop:comp_sys_res}).
Thus, if $\mathrm M_{A_\Gamma}(Y)$ is a sum of $N$ Tate motives,
then so is $\mathrm M_{A_\Gamma}(Y_{k(Y)})$.
Therefore the map $A_{\Gamma}(Y)\rarr A_{\Gamma}(Y_{k(Y)})$ is a direct sum of $N$ copies 
of the map $\A_{\Gamma(k)}(k)\rarr \A_{\Gamma(k(Y))}(k(Y))$ which is surjective by the assumption.
The claim now follows from Lemma~\ref{lm:A}.
\end{proof}

\begin{Ex}\label{ex:Kn_split_univ_surj}
Let $q\in I^{n+2}(k)$ or $q\in \langle c\rangle+I^{n+2}(k)$ for some $c\in k^{\times}$,
then by \cite[Prop. 6.18, 6.21]{SechSem} the motive of the corresponding quadric $\MKn{Q}$ is a sum of Tate motives
(this claim will be revisited in Corollary~\ref{cr:Kn-split-quad}).

Therefore $k(Q)/k$ is a $\Kn$-universally surjective field extension by Lemma~\ref{lm:Tate_motive_universally_surjective}.

In particular, let $q$ be a anisotropic $6$-dimensional Pfister quadratic form.
Then the $\KK$-motive of the corresponding quadric is a sum of Tate motives (cf.~Section~\ref{sec:K0-motive-quadric}).
Thus, $k(Q)/k$ is a $\KK$-universally surjective field extension. 
It is an open question, whether $k(Q)/k$ is also $\KK$-universally injective.
\end{Ex}

\begin{Ex}
Let $X$ be a projective homogeneous variety for a simple group of type $\mathsf G_2$, $\mathsf F_4$ or $\mathsf E_8$ over $k$. 
Then the $\KK$-motive of $X$ is split, as follows from the following more general claim.

Let $G$ be a split semi-simple group, and $E\in\HH^1(k,\,G)$ such that all Tits algebras of $E$ 
are trivial, cf.~\cite[Sec.~3.1]{Panin}. 
In particular, if $G$ is simply connected, this condition is satisfied for any $E$. 
Let $X_0$ be a projective homogeneous variety for $G$, and $X$ be the twisted form of $X_0$ defined by $E$. 
Then the $\KK$-motive of $X$ is split by~\cite[Th.~4.2]{Panin}, cf.~\cite[Lm.~7.8]{SechSem}, therefore $k(X)/k$ is a $\KK$-universally surjective field extension.
\end{Ex}

\subsubsection{Norm varieties}

Recall that Rost motives $R_\alpha$ for prime $p$
exist for a ``symbol'' $\alpha$ in $\HH^{m+1}(k,\,\mu_p^{\otimes m})$,
i.e.\ for $\alpha$ a product of $m$ elements of $\HH^1(k,\,\mu_p)$ and one element of $\HH^1(k,\,\ZZ/p)$, see \cite[Sec.~4]{KarpMer}. 
Moreover, motive $R_\alpha$ is a direct summand of the Chow motive of the 
standard norm
variety $X_\alpha$ [loc.\ cit., Sec.~5d].

\begin{Prop}
\label{prop:split_variety_univ_surj}

Let $p$ be a prime, let $1\le m\le n$,
and let $\mathrm{K}(m)^{\mathrm{int}}$ be an $m$-th integral Morava K-theory at prime $p$.
Let $\alpha\in \HH^{n+2}(k,\,\mu_p^{\otimes (n+1)})$ be a symbol, 
and let $X=X_\alpha$ be the 
standard norm 
variety for $\alpha$.

Then the field extension $k(X)/k$ is $\mathrm{K}(m)^{\mathrm{int}}$-universally surjective.
\end{Prop}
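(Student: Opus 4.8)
The plan is to apply the criterion for universally surjective field extensions from Lemma~\ref{lm:A}, in the form of the sufficient condition of Lemma~\ref{lm:Tate_motive_universally_surjective}: it suffices to show that the $\mathrm{K}(m)^{\mathrm{int}}$-motive of the standard norm variety $X = X_\alpha$ becomes a sum of Tate motives after passing to $k(X)$, or more precisely that the pullback map $\mathrm{K}(m)^{\mathrm{int}}(X) \rarr \mathrm{K}(m)^{\mathrm{int}}(X_{k(X)})$ is surjective. Since $\mathrm{K}(m)^{\mathrm{int}}$ is a free theory, its coefficient ring is unchanged by field extension, so the surjectivity hypothesis of Lemma~\ref{lm:Tate_motive_universally_surjective} is automatic, and the whole statement reduces to a motivic-decomposition claim over $k(X)$.

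The key input is that $X_\alpha$ has a rational point over $k(X_\alpha)$, so over $k(X)$ the variety $X_{k(X)}$ is generically split; more importantly, $X_\alpha$ is a $\nu_n$-variety (indeed a norm variety for a degree-$(n+2)$ symbol), hence in particular a $\nu_m$-variety for every $m\le n$ in the sense that it carries a class of the appropriate dimension with $[X_\alpha]_{\mathrm{K}(m)} = v_m^{\,r}$-type behaviour; by Lemma~\ref{lm:Kn-isotropic_invertible_direct_summands} this forces $\mathrm{K}(m)^{\mathrm{int}}(X_\alpha)$ to contain a unit, i.e. $X_\alpha$ is $\mathrm{K}(m)$-isotropic. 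The first step is therefore to recall from \cite{KarpMer} that the Rost motive $R_\alpha$ is a direct summand of $\MCH{X_\alpha}$ and that the complementary summand, together with $R_\alpha$ itself, splits as far as possible after base change to $k(X_\alpha)$; combined with Proposition~\ref{prop:prelim_L_alpha_Rost} and Proposition~\ref{prop:prelim_motive_pfister} (via $\mathrm{K}(m)$-specialization of Chow motives, Section~\ref{sec:prelim_vishik_yagita}), one deduces that over $k(X)$ the $\mathrm{K}(m)^{\mathrm{int}}$-motive of $R_\alpha$ splits into Tate motives, because $\alpha_{k(X)} = 0$. The second step is to extend this from $R_\alpha$ to all of $X_\alpha$: one needs that the whole $\mathrm{K}(m)^{\mathrm{int}}$-motive of $X_\alpha$, not merely its Rost summand, becomes split over $k(X_\alpha)$. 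For this one uses that $X_\alpha$ is generically split (it acquires a rational point over its own function field) together with the Rost Nilpotence Property for $\mathrm{K}(m)^{\mathrm{int}}$-motives of projective homogeneous varieties — wait, $X_\alpha$ need not be projective homogeneous — so instead one argues directly: over $k(X)$ the pullback $\mathrm{K}(m)^{\mathrm{int}}(X)\rarr \mathrm{K}(m)^{\mathrm{int}}(X_{k(X)})$ hits the class of the diagonal point $\delta_X$, because $X$ has a rational point over $k(X)$ and the class of that point lifts the diagonal; then Lemma~\ref{lm:A} applies.

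Actually the cleanest route avoids motivic decompositions entirely: apply Lemma~\ref{lm:A} part (3) directly. One must produce an element of $\mathrm{K}(m)^{\mathrm{int}}(X)$ whose restriction to $\mathrm{K}(m)^{\mathrm{int}}(X_{k(X)})$ is the class $\delta_X$ of the diagonal point. Since $X_{k(X)}$ has a rational point $x_0$ (the generic point, viewed over $k(X)$), the class $[x_0]$ equals $\delta_X$. The heart of the matter is then to lift $[x_0]$ to $k$: here one invokes that $X = X_\alpha$ is a $\nu_n$-variety, so by \cite[Prop.~7.6, 7.8]{VishSymAll} and the structure of $\mathrm{K}(m)^{\mathrm{int}}(X)$ one has that the class of a $k(X)$-point can be matched with a $k$-rational cycle, using symmetric operations (Proposition~\ref{prop:symm_divide}) to "divide away" the obstruction coming from the transcendence degree — this is precisely the mechanism by which $\eta(v_m)\neq 0$ lets one descend. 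I expect the main obstacle to be exactly this descent step: showing that the $0$-dimensional class $\delta_X$ over $k(X)$ lifts to $\mathrm{K}(m)^{\mathrm{int}}(X)$ over $k$, for which one needs the norm-variety structure (the existence of a degree-$p$ point, the $\nu_n$-property) in an essential way, and the verification that the integral Morava K-theory, as opposed to the mod-$p$ version, still supports this argument — the passage to $\mathrm{K}(m)^{\mathrm{int}}$ rather than $\mathrm{K}(m)$ is what makes the bookkeeping delicate, since one works over $\Z{p}[v_m,v_m^{-1}]$ and must control denominators. Everything else is a formal consequence of Lemma~\ref{lm:A} once this lift is in hand.
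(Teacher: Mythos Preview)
Your eventual target---applying condition (3) of Lemma~\ref{lm:A}, i.e.\ lifting the class $\delta_X$ from $\K{m}^{\mathrm{int}}(X_{k(X)})$ to $\K{m}^{\mathrm{int}}(X)$---is correct, but the mechanism you propose for achieving this lift misses the two key ingredients and replaces them with a tool (symmetric operations) that does not do the job here.

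First, you repeatedly argue that the $\K{m}^{\mathrm{int}}$-motive of $R_\alpha$ (or of $X$) splits into Tate motives \emph{over $k(X)$}, because $\alpha_{k(X)}=0$. That is true but irrelevant: splitting over $k(X)$ gives you nothing for descending to $k$, and Lemma~\ref{lm:Tate_motive_universally_surjective} requires the motive to be Tate over $k$. The crucial point you miss is that $(R_\alpha)_{\K{m}^{\mathrm{int}}}$ is \emph{already split over $k$}, not because $\alpha$ dies anywhere but because $\alpha$ lives in degree $n+2$ while $m\le n$: the Rost motive of an $(n+2)$-symbol is invisible to $\K{m}$ for $m\le n$. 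This is the computation of Yagita (\cite{Yag}) recorded in \cite[Prop.~6.2]{SechSem}: $(R_\alpha)_{\K{m}^{\mathrm{int}}}$ decomposes as a sum of $p$ Tate motives over $k$. Consequently the subgroup $\K{m}^{\mathrm{int}}\big((R_\alpha)_{\K{m}^{\mathrm{int}}}\big)\subset \K{m}^{\mathrm{int}}(X)$ maps \emph{isomorphically} to its base change over $k(X)$.

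Second, you need to know that $\delta_X$ actually lies in this subgroup. This uses two facts: Karpenko--Merkurjev show (\cite[Th.~5.8]{KarpMer_norm}) that the degree map $\CH_0(X_{k(X)})_{(p)}\to\Z{p}$ is an isomorphism, so in $\CH_0$ (hence in any oriented theory, via $\CH_0\cong\Omega^{\dim X}$) the class of \emph{any} closed point on $X_{k(X)}$ equals $\delta_X$; and any $0$-cycle class, viewed as a morphism $\un(\dim X)\to\Mot{\Omega}{X}$, factors through the Rost summand $R_\Omega$ (since it does so in Chow motives), hence after specializing to $\K{m}^{\mathrm{int}}$ lands in the split Tate piece. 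Putting these together: $\delta_X$ lies in the image of the isomorphism above, so it lifts. Your appeal to symmetric operations and the $\nu_n$-property is off-track: symmetric operations preserve rationality but do not manufacture it from a non-rational class, and in any case $X_\alpha$ is a $\nu_{n+1}$-variety, not a $\nu_m$-variety for $m\le n$.
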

\begin{proof}
It was shown by Karpenko--Merkurjev \cite[Th.~5.8]{KarpMer_norm} 
that the degree map 
$\CH_0(X_{k(X)})_{(p)}\rarr \Zp$ is an isomorphism.
It follows that the class of any closed point in $\CH_0(X_{k(X)})_{(p)}$ is equal to the class of the diagonal point.
Since there is an isomorphism $\CH_0(X)_{(p)}\cong \Omega^{\dim X} (X) \otimes_\ZZ \Zp$~\cite[Th.~1.2.19]{LevMor}, the last statement is also true
for any oriented cohomology theory.

The Chow motive of $X$ splits as $R_\alpha\oplus M$ where $R_\alpha$ is the Rost motive of $\alpha$.
The specialization of $R_\alpha$ to the $\K{m}^{\mathrm{int}}$-motives (see Section~\ref{sec:prelim_vishik_yagita})
can be computed by the results of \cite{Yag}:
$(R_\alpha)_{\K{m}^{\mathrm{int}}}$ splits as a sum of $p$ Tate motives, see~\cite[Prop.~6.2]{SechSem}.

In order to apply Lemma~\ref{lm:A} it remains to notice that the class of the diagonal point in $\K{m}^{\mathrm{int}}(X_{k(X)})$
lies in the subgroup $\K{m}((R_{\K{m}})_{k(X)})$ to which $\K{m}(R_{\K{m}})$ maps isomorphically.
Indeed, the class of a $0$-cycle $z$ 
can be 
seen as the morphism of motives $\un\sh{\dim X} \rarr \Mot{\Omega}{X}$
which factors through $R_\Omega$ (as it factors after specializing to Chow motives).
Then 
specializing 
this morphism to $\K{m}^{\mathrm{int}}$-motives we obtain the claim.
\end{proof}

\begin{Cr}
Assume that $k$ contains a $p$-th primitive root of unity. 

Then there exists a $\mathrm{K}(n)^{\mathrm{int}}$-universally surjective field extension $K/k$
such that 
\begin{enumerate}
    \item $\HH^{n+2}(K,\,\ZZ/p) = 0$;
    \item $\HH^{m}(k,\,\ZZ/p)  \rarr 
    \HH^{m}(K,\,\ZZ/p)$
    is injective for all $m$ such that $1\le m\le n+1$.
\end{enumerate}
\end{Cr}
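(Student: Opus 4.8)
The plan is to iterate Proposition~\ref{prop:split_variety_univ_surj} transfinitely, passing repeatedly to function fields of standard norm varieties attached to symbols in $\HH^{n+2}$. Since $k$ contains a primitive $p$-th root of unity, $\mu_p^{\otimes(n+1)}\cong\ZZ/p$ as \'etale sheaves over $k$ and over all its extensions; combined with the norm residue theorem \cite{Voe_Z2} this says that over any field $F\supseteq k$ the group $\HH^{n+2}(F,\ZZ/p)$ is generated by symbols in the sense of Proposition~\ref{prop:split_variety_univ_surj}. I would construct a tower $k=F_0\subseteq F_1\subseteq F_2\subseteq\cdots$ as follows: given $F_j$, well-order the set of its nonzero degree-$(n+2)$ symbols and let $F_{j+1}$ be the colimit of the resulting transfinite chain of extensions $G\rightsquigarrow G(X_\beta)$, passing at successor stages to $G(X_\beta)$ for the next symbol $\beta$ (if it is still nonzero) and taking colimits at limit stages, where $X_\beta$ denotes the standard norm variety of $\beta$ (smooth projective, geometrically irreducible, and a $\nu_{n+1}$-variety, cf.~\cite[Sec.~4--5]{KarpMer}); then set $K:=\bigcup_{j\ge0}F_j$.

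First I would check that $K/k$ is $\KnInt$-universally surjective. Each elementary step $G\hookrightarrow G(X_\beta)$ with $\beta$ a degree-$(n+2)$ symbol is $\KnInt$-universally surjective by Proposition~\ref{prop:split_variety_univ_surj}, taking the integer parameter there to be $n$ (which is allowed as $n\ge1$). Universal surjectivity is preserved under composition, and under filtered colimits of field extensions, because $\KnInt$ is a free — hence coherent — theory, so that $\KnInt(Y_{-})$ commutes with such colimits of the base field; a routine transfinite induction then yields the claim for $K/k$. Next, $\HH^{n+2}(K,\ZZ/p)=0$: by the norm residue theorem every class is a sum of symbols, and \'etale cohomology commutes with filtered colimits of fields, so any symbol over $K$ is already defined over some $F_j$; by the defining property of the norm variety, $\beta$ dies over $F_j(X_\beta)$, hence over $F_{j+1}\subseteq K$.

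The delicate point is injectivity of $\HH^m(k,\ZZ/p)\to\HH^m(K,\ZZ/p)$ for $1\le m\le n+1$, for which it suffices to show that each elementary step $G\hookrightarrow G(X_\beta)$ is injective on $\HH^m$ in this range and then pass through compositions and filtered colimits (injectivity on $\HH^m$ being stable under both, since $\HH^m$ commutes with filtered colimits of fields). For $p=2$, where $X_\beta$ may be taken to be a Pfister-neighbour quadric of dimension $2^n+1$, this is exactly Theorem~\ref{KRS}; for general $p$ it is the corresponding consequence of the norm residue theorem for standard norm varieties of degree-$(n+2)$ symbols. The case $m=1$ is elementary since $G$ is algebraically closed in $G(X_\beta)$, but the endpoint $m=n+1$ is precisely where this deep input is needed; identifying the sharpest available reference for it at odd primes, together with the (otherwise routine) transfinite bookkeeping guaranteeing that every symbol ever occurring in the tower is eventually killed while nothing is lost in degrees $\le n+1$, is the main work.
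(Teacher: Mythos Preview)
Your proposal is correct and follows essentially the same route as the paper: build $K$ as a colimit of a tower obtained by repeatedly adjoining function fields of standard norm varieties for degree-$(n{+}2)$ symbols, invoke Proposition~\ref{prop:split_variety_univ_surj} for $\KnInt$-universal surjectivity at each step, and appeal to the known injectivity of $\HH^m$ ($m\le n{+}1$) under passage to function fields of such norm varieties. The paper takes composites of all norm varieties over $K_j$ at once rather than well-ordering them, but this is cosmetic; one small slip to fix is that for $p=2$ the norm quadric of a degree-$(n{+}2)$ symbol has dimension $2^{n+1}-1$ (so $\dim q = 2^{n+1}+1$), not $2^n+1$, which is exactly what makes Theorem~\ref{KRS} apply for all $m\le n{+}1$.
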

\begin{proof}
    Note that by the Bloch--Kato conjectures and after identification $\mathrm{K}^\mathrm{M}_{n+2}(k)/p$ with $\HH^{n+2}(k,\,\ZZ/p)$,
    the latter is generated by symbols.
    The field $K$ is constructed as a colimit of a tower of extensions $K_j$
    where $K_0=k$, $K_{j+1}$ is obtained as the composite of the functional fields of 
    standard norm 
    varieties 
    for symbols in $\HH^{n+2}(K_j,\,\ZZ/p)$. The vanishing of $\HH^{n+2}(K,\,\mathbb Z/p)$ then follows by the construction.

    The injectivity on the $m$-th cohomology groups, $m<n+2$, is well-known for functional fields of splitting varieties,
    however, 
    cf.\ Proof of Corollary~\ref{cr:injectivity_galois_cohomology}.
\end{proof}

\subsubsection{Quadrics}

We describe all quadrics $Q$
such that $k(Q)/k$ is $\KK$-universally surjective field extension.

Recall that the even component $C_0(q)$ of the Clifford algebra of $q$ is one of the following:
a central simple algebra over $k$ if $\dim q$ is odd, 
a central simple algebra over $k(\sqrt{\det_{\pm}(q)})$ if $\dim q$ is even and $\det_{\pm}(q)$ is not a square in $k$,
and a product $C_0(q)\cong C_0'(q)\times C_0'(q)$ of two central simple algebras $C_0'(q)$ over $k$ 
if $\dim q$ is even and $\det_{\pm}(q)$ is a square, see e.g.\ \cite[Ch.~V, Sec.~2]{Lam}.

\begin{Prop}\label{prop:K_0_univ_surj_quad}
Let $Q$ be a smooth quadric of dimension $D>0$ corresponding to a quadratic form $q$ over $k$. 
\begin{enumerate}
    \item 
    If $\dim q$ is odd, or if $\dim q$ is even and $\det_{\pm}(q)$ is not a square, 
    then $k(Q)/k$ is $\KK$-universally surjective if and only if 
    $C_0(q)$ is not a division algebra. 
        \item 
        If $\dim q$ is even and $\det_{\pm}(q)$ is a square, then 
    $k(Q)/k$ is $\KK$-universally surjective if and only if 
    $C_0'(q)$ is not a division algebra. 
\end{enumerate}
\end{Prop}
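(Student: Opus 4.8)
The plan is to apply Lemma~\ref{lm:A}: $k(Q)/k$ is $\KK$-universally surjective if and only if the pullback $\KK(Q)\to\KK(Q_{k(Q)})$ is surjective. I would compute both sides from the description of the $\KK$-motive of a quadric recalled in Section~\ref{sec:K0-motive-quadric}. We may assume $q$ anisotropic: if $q$ is isotropic then $Q$ has a rational point, $k(Q)/k$ is purely transcendental, hence $\KK$-universally surjective by Example~\ref{ex:univ-bij_transcendental}, while the relevant even Clifford algebra is then a proper matrix algebra and thus not division, so the statement holds. Write $q_{k(Q)}\cong q_1\perp\hyp^{\perp i_1}$ with $q_1$ anisotropic and $i_1=i_1(q)\geq1$. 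Applying Lemma~\ref{lm:motive_of_isotropic_quadric} $i_1$ times together with the $\KK$-motive formula over $k(Q)$, one sees that $\Mot{\KK}{Q_{k(Q)}}$ is a sum of $D$ Tate motives and the invertible-type summand attached to $e_2(q_1)$ (tensored with $\Mot{\KK}{\Spec E_{k(Q)}}$ when $\dim q$ is even, $E=k(\sqrt{\det_{\pm}(q)})$). Since $e_2(q_1)=e_2(q)_{k(Q)}$ (adding hyperbolic planes changes the even Clifford algebra only up to Morita equivalence) and the discriminant is preserved, this summand is the pullback of the corresponding summand $L$ of $\Mot{\KK}{Q}$, and the $D$ Tate summands pull back to $D$ Tate summands. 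Hence $\KK(Q)\to\KK(Q_{k(Q)})$ is surjective if and only if its restriction to $L$ is, i.e.\ if and only if the base-change map $\KK_0(C)\to\KK_0(C_{k(Q)})$ is surjective, where $C$ is $C_0(q)$ — a central simple algebra over $k$ if $\dim q$ is odd and over $E$ if $\dim q$ is even with non-square discriminant — in case (1), and $C=C_0'(q)$ in case (2).

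Next I would use the structure of $\KK_0$ of a central simple algebra: for $C=M_r(\Delta)$ with $\Delta$ division, $\KK_0(C)\cong\ZZ$ is generated by the class of the simple module, and under base change along a field extension $K$ of the centre this generator maps to $(\ind C/\ind C_K)$ times the generator of $\KK_0(C_K)$. Therefore the map above is surjective if and only if $\ind(C_{k(Q)})=\ind(C)$, so the proposition is equivalent to the assertion that $\ind(C_{k(Q)})=\ind(C)$ precisely when $C$ is not a division algebra.

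For the direction ``$C$ division $\Rightarrow$ the index drops'' I would argue directly: $C_{k(Q)}$ is Brauer-equivalent to the even Clifford algebra of the anisotropic form $q_1$, whose degree is at most $\frac12\deg C$ because $i_1\geq1$; hence $\ind(C_{k(Q)})\leq\frac12\deg C<\deg C=\ind C$ when $C$ is division, so $\KK(Q)\to\KK(Q_{k(Q)})$ is not surjective. For the converse — $C$ not division $\Rightarrow$ the index is unchanged over $k(Q)$ — I would deduce the equality $\ind(C_{k(Q)})=\ind(C)$ from a standard index-reduction result for function fields of quadrics (essentially Merkurjev's), equivalently from the known bound on the first Witt index $i_1(q)$ in terms of the coindex $\deg C/\ind C$ of the even Clifford algebra, which forces $\deg C_0(q_1)\geq\ind C$ and rules out any further splitting of $C$.

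The main obstacle is precisely this last implication: the elementary degree estimate above only yields $\ind(C_{k(Q)})\leq\frac12\deg C$, which is consistent with, but does not by itself imply, $\ind(C_{k(Q)})=\ind(C)$ for a merely non-division $C$, so one genuinely needs an index-reduction input (or a first Witt index argument). A secondary point is the bookkeeping in the even-dimensional cases: in case (1) one works over $E$, and one must separately note that if $\det_{\pm}(q)$ becomes a square over $k(Q)$ then $E_{k(Q)}\cong k(Q)\times k(Q)$, which changes the count of Tate summands but not the final conclusion, and that if $q$ (resp.\ $q_E$) is already isotropic the relevant function field extension is purely transcendental so surjectivity is automatic by Example~\ref{ex:univ-bij_transcendental}.
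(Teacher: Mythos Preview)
Your approach is essentially the same as the paper's: both reduce via Lemma~\ref{lm:A} to surjectivity of $\KK(Q)\to\KK(Q_{k(Q)})$, use Swan's computation (equivalently the $\KK$-motive description) to split off the Tate part and reduce to the map on $\KK_0$ of the even Clifford algebra, identify surjectivity there with the index being unchanged, and then invoke Merkurjev's index reduction formula. The paper handles the even non-trivial-discriminant case slightly more cleanly by passing to $E=k(\sqrt{\det_\pm q})$ first and noting $\KK(C_0(q)\otimes_k k(Q))=\KK(C_0(q)\otimes_E E(Q_E))$, thereby reducing to the trivial-discriminant case over $E$; your direct Witt-decomposition argument for the ``division $\Rightarrow$ index drops'' direction is a correct shortcut that the paper skips in favour of citing Merkurjev for both directions.
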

\begin{Rk}
Using the notion of the Schur splitting index $i_{\mathrm S}(q)$ from~\cite[Sec.~1]{Izh} one can reformulate
the above proposition as follows: $k(Q)/k$ is $\KK$-universally surjective if and only if $i_{\mathrm S}(q)>0$. 
\end{Rk}
\begin{proof} 
If $Q$ has even dimension $D$ and $\det_{\pm}(q)$ is not a square, let $E:=k(\sqrt{\det_{\pm}(q)})$, and otherwise let $E:=k$. 
By~\cite[Th.~1]{Swan}, $\KK(Q)$ is isomorphic to $\ZZ^D\oplus\KK(C_0(q))$ where $C_0(q)$ is the even part of the Clifford algebra of $q$. 
To apply Lemma~\ref{lm:A} we thus need to check that the map induced by the base change from $k$ to $k(Q)$
yields a surjection on $\KK$ of 
$C_0(q)$. Note that $\KK(C_0(q)\otimes_k{k(Q)})$ coincides with $\KK(C_0(q)\otimes_E{E(Q_E)})$.  

For a central simple algebra $D$ over $E$ the map $\KK(D)\rarr\KK(D_K)$ 
is injective for all field extensions $K/E$, and for $K$ algebraically closed 
$\KK(D_K)\cong \ZZ$ with the image of 
$\KK(D)$ in it given by the ideal generated  by the index of the algebra $D$~\cite[Th.~3.3]{Lam-non}. 
Thus, 
$\KK(C_0(q))\rarr \KK(C_0(q)\otimes_E{E(Q_E)})$ is surjective if and only if it
the corresponding index does not change over $E(Q_E)$.

If $D$ is odd, by 
the index reduction formula of Merkurjev~\cite[Th.~1]{Mer} 
the index of $C_0(q)$ changes over $k(Q)$ 
iff $C_0(q)$ is a division algebra. 
If $D$ is even and $\det_{\pm}(q)$ is a square, $C_0(q)\cong C_0'(q)\times C_0'(q)$ for a central simple algebra $C_0'(q)$,
and then again by the index reduction formula~\cite[Th.~3]{Mer}
the index of $C_0'(q)$ changes over $k(Q)$ 
iff
$C_0'(q)$ is a division algebra. 
If $D$ is even and $\det_{\pm}(q)$ is not a square, then $C_0(q_E)\cong C_0(q)\times C_0(q)$, 
and the surjectivity of $\KK(C_0(q))\rarr \KK(C_0(q)\otimes_E{E(Q_E)})$ is equivalent
to the surjectivity of $\KK(C_0(q_E))\rarr \KK(C_0(q_E)\otimes_E{E(Q_E)})$, i.e.\ 
we are reduced to the 
case of trivial discriminant treated above. 
\end{proof}

We also provide a sufficient condition for $k(Q)/k$ to be $\KnInt$-universally surjective field extension
in terms of the first Witt index of $Q$.

\begin{Prop}\label{prop:Kn-univ-surj-quad}
Let $n\ge 1$, let $Q$ be a smooth anisotropic quadric such that $i_1(Q)\ge 2^n$ and $D=\dim Q\ge 2^{n+1}-1$. 

Then $k(Q)/k$ is $\KnInt$-universally surjective and $\Kn$-universally surjective.
\end{Prop}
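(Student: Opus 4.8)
The plan is to verify the hypotheses of Lemma~\ref{lm:A} for $\A=\KnInt$ and $Y=Q$, i.e.\ to produce an element of $\KnInt(Q)$ whose image in $\KnInt(Q_{k(Q)})$ is the class $\delta_Q$ of the diagonal point. Since $Q$ is anisotropic with $i_1(Q)\ge 2^n$, over $k(Q)$ the form $q$ acquires Witt index at least $2^n$, so $Q_{k(Q)}\cong Q'\perp\hyp^{\perp i_1}$ with $i_1\ge 2^n$; by the iterated isotropic decomposition of Lemma~\ref{lm:motive_of_isotropic_quadric}, $\MKnInt{Q_{k(Q)}}$ splits off a block of Tate motives, and in particular the ``diagonal point'' $\delta_Q$ — which over $k(Q)$ is just the class of a rational point $l_0$ on the split part $\hyp^{\perp 2^n}$ — can be expressed in terms of the standard basis elements $l_0,\dots,l_{2^n-1}$ coming from the linear subspaces contained in $\hyp^{\perp 2^n}\subseteq Q_{k(Q)}$.

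The key computational input is Corollary~\ref{prop:l_0_rational_K(n)} (and its integral refinement Proposition~\ref{prop:KnInt_power_of_h_split_quadric}): since $D=\dim Q\ge 2^{n+1}-1$, the element $z:=h^{D-(2^n-1)}\in\KnInt(Q)$ restricts over $\overline{k}$ to $2l_{2^n-1}+cv_n l_0$ with $c$ a unit. The first step is to run this computation over $k(Q)$ rather than $\overline{k(Q)}$: the hyperplane class $h$ is always $k$-rational, hence $z$ is defined over $k$, and over $k(Q)$ the classes $l_0,\dots,l_{2^n-1}$ of linear subspaces of $\hyp^{\perp 2^n}$ are genuinely $k(Q)$-rational because $i_1(Q)\ge 2^n$ guarantees the existence of $2^n$-dimensional totally isotropic subspaces defined over $k(Q)$ (by the Witt-index computation, these span a free summand of $\KnInt(Q_{k(Q)})$). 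So over $k(Q)$ we have the relation $z=2l_{2^n-1}+cv_nl_0$ with all terms $k(Q)$-rational. Combining this with $h\cdot l_j=l_{j-1}$ (so $h^{2^n-1}\cdot l_{2^n-1}=l_0$ and more generally powers of $h$ move down the $l_j$'s), one obtains a $\ZZ[v_n,v_n^{-1}]$-linear combination of $z$ and of products $h^j\cdot(\text{stuff})$ that equals $cv_n l_0$; dividing by the unit $cv_n$ expresses $l_0$, i.e.\ $\delta_Q$ up to the invertible scalar and modulo lower terms, as the image of an element of $\KnInt(Q)$. The precise bookkeeping is that $2l_{2^n-1}$ must be eliminated: multiply $z$ by $h^{2^n-1}$ to get $2l_0+cv_n h^{2^n-1}l_0$, but $h^{2^n-1}l_0=0$ since $l_0$ is the class of a point — so already $z\cdot h^{2^n-1}=2\cdot(\text{point})$ over $k(Q)$, which is not quite $\delta_Q$; instead one keeps $z$ itself and uses that $2$ times anything rational plus $cv_n l_0$ being rational forces $cv_nl_0$, hence $l_0$, rational, since $c$ and $v_n$ are units in $\KnInt(k)$. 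This gives $\KnInt$-universal surjectivity by Lemma~\ref{lm:A}.

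For the $\Kn$-statement one simply reduces the whole argument mod $2$: the morphism of theories $\KnInt\to\Kn=\KnInt/2$ sends the element constructed above to an element of $\Kn(Q)$ restricting to $l_0=\delta_Q$ over $k(Q)$ (this is exactly how Corollary~\ref{prop:l_0_rational_K(n)} is deduced from Proposition~\ref{prop:KnInt_power_of_h_split_quadric}), and Lemma~\ref{lm:A} again applies — here no systems of relations are needed, $\Gamma$ is trivial, and $\Kn$ is coherent as a free theory. The main obstacle I anticipate is the first step: carefully justifying that over $k(Q)$ the classes $l_0,\dots,l_{2^n-1}$ are rational and that $\delta_Q$ is literally $l_0$ in the standard basis of the split-off part — this requires unwinding the isotropic decomposition \eqref{eq:iso-quad} applied $i_1$ times and matching up the standard basis of the resulting split subquadric (Section~\ref{sec:prelim_A_split_quadric}) with the generic point class. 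Everything after that is the short unit-scalar manipulation above, which is genuinely routine given Proposition~\ref{prop:KnInt_power_of_h_split_quadric}.
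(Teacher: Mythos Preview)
Your argument has a genuine gap at the crucial step. You correctly set up Lemma~\ref{lm:A}: you need an element of $\KnInt(Q)$---defined over $k$---whose image in $\KnInt(Q_{k(Q)})$ is $\delta_Q=l_0$. You take $z=h^{D-(2^n-1)}\in\KnInt(Q)$, which is $k$-rational, and you know that over $\overline{k}$ (or over $k(Q)$, using $i_1\ge 2^n$) it equals $2l_{2^{n}-1}+cv_n l_0$. But then you try to solve for $l_0$: this requires subtracting $2l_{2^{n}-1}$, and $l_{2^{n}-1}$ is only $k(Q)$-rational, not $k$-rational (indeed $Q$ is anisotropic over $k$). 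So $c^{-1}v_n^{-1}(z-2l_{2^{n}-1})$ is an element of $\KnInt(Q_{k(Q)})$, not an element in the image of $\KnInt(Q)$. Your alternative, multiplying by $h^{2^n-1}$, gives $2l_0$, and you cannot divide by $2$ in $\KnInt$. The sentence ``$2$ times anything rational plus $cv_n l_0$ being rational forces $cv_n l_0$ rational'' conflates $k(Q)$-rationality (which is automatic for $l_0=\delta_Q$) with lying in the image of $\KnInt(Q)\to\KnInt(Q_{k(Q)})$ (which is the nontrivial content).

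The paper supplies exactly the missing $k$-rational correction term: it takes a quadratic extension $L/k$ over which $Q$ becomes isotropic, uses that any isotropy field has Witt index $\ge i_1(Q)\ge 2^n$ to find $\mathbb{P}^{2^n-1}_L\subset Q_L$, and pushes its class forward along $Q_L\to Q$ to obtain $y\in\KnInt(Q)$ with $y_{\overline k}=2l_{2^n-1}$. Then $c^{-1}v_n^{-1}(h^{D-2^n+1}-y)$ is $k$-rational and becomes $l_0$ over $\overline{k}$. A second nontrivial point you do not address is that one still has to check this element restricts to $l_0$ already over $k(Q)$, not merely over $\overline{k(Q)}$; the paper handles this via the topological filtration on $\KnInt$ and the comparison $\CH_0(Q_{k(Q)})\otimes\ZZ_{(2)}\cong\tau_0\KnInt(Q_{k(Q)})$.
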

\begin{proof}
The claim about $\Kn=\KnInt/2$ follows from the claim about $\KnInt$, we show the latter.

Let $h\in \KnInt(Q)$ be the class of a smooth hyperplane section of $Q$, let $E$ be a splitting field of $q$.
Then  the class $h_E^{D-2^n+1}$ equals $2l_{2^n-1}+cv_nl_0$ in the group $\KnInt(Q_E)$ by Proposition~\ref{prop:KnInt_power_of_h_split_quadric},
where $c\in\Z{2}^\times$.

Let $L/k$ be any quadratic field extension over which $Q$ becomes isotropic. 
As $i_1(Q)\ge 2^n$ it follows that there is a linear subspace $\mathbb{P}_L^{2^n-1}\subset Q_L$,
and denote $l^L_{2^n-1}$ the class of it in $\KnInt(Q_L)$.
Denote by $y$ the push-forward of $l^L_{2^n-1}$ to $\Kn(Q)$ along the finite map $Q_L\rarr Q$.
Clearly, $y_{E}=2l_{2^n-1}$ for a splitting field $E$ of $Q$.

To apply Lemma~\ref{lm:A} it suffices to show that $c^{-1}(h^{D-2^n+1}-y)_{k(Q)}$ equals the class of the diagonal point
in $\Kn(Q_{k(Q)})$. Note that $\CH_0(Q_{k(Q)})=\ZZ\cdot l_0$ and 
therefore the classes of any two rational points are the same in any oriented cohomology theory, in particular, in $\KnInt$.
Note also that $\CH_i(Q_{k(Q)})=\ZZ\cdot l_i$ for $i\le 2^n-1$. Moreover, for the classes $h^{\CH}$, $y^{\CH}$ in $\CH(Q)$
defined in the same way as for $\KnInt$ above, we have $\left((h^{\CH})^{D-2^n+1}-y^{\CH}\right)_{k(Q)}=0$ 
(since $\CH_i(Q_{k(Q)})\xrarr{\sim} \CH_i(Q_{\overline{k(Q)}})$ is an isomorphism).

Let $\gr^*_\tau \KnInt$ be the associated graded of the topological filtration on $\KnInt$ 
and consider the canonical morphism of theories $\rho\colon\CH^* \rarr \gr^*_\tau \Kn^{\int}$ (see \cite[Cor.~4.5.8]{LevMor}, cf.~\cite[Prop.~1.17]{Sech2}).
The map $\rho$ sends $(h^{\CH})^{D-2^n+1}$ to $h^{D-2^n+1} \mod \tau^{D-2^n+2}$ and $y^{\CH}$ to $y \mod \tau^{D-2^n+2}$.
We thus obtain that $c^{-1}(h^{D-2^n+1}-y)_{k(Q)}$ lies in $\tau^{D-2^n+2} \Kn^{\int}(Q_{k(Q)})$. 
However, the topological filtration on each graded component of $\KnInt$ changes
only every $2^n-1$ steps (see \cite[Prop.~3.15]{Sech2}),
and therefore we have $(h^{D-2^n+1}-y)_{k(Q)} \in \tau_0 \Kn^{\int}(Q_{k(Q)})$.

Recall that the map 
$$\CH_0(Q_L)\otimes \mathbb{Z}_{(2)}\rarr \tau_0 \Kn^{\int}(Q_L)=\gr^D_\tau \Kn^{\int}(Q)$$
 is an isomorphism for every field extension $L/k$,
since $\rho$ is surjective with the torsion kernel for every variety and $\CH_0(Q_L)$ is torsion-free.
Combining with the observations above, we get that $c^{-1}(h^{D-2^n+1}-y)_{k(Q)}$ equals $a\cdot l_0$ for some $a\in \ZZ_{(2)}$.
However, if we pass to the splitting field $E$, 
then by construction we will get that $a=1$.
Thus, $c^{-1}(h^{D-2^n+1}-y)$ is a class in $\KnInt(Q)$ that becomes the class $l_0$ of the diagonal point in $\KnInt(Q_{k(Q)})$.
\end{proof}

\begin{Rk}
In the case $n=1$ the Morava K-theory $\K1$ is isomorphic to $\KK\otimes\ZZ_{(2)}$.
Note that if $i_1(Q)\ge 2$, then the index of the Clifford algebra is not maximal,
in accordance with Prop.~\ref{prop:K_0_univ_surj_quad}.
\end{Rk}

\begin{Cr}
\label{cr:Kn-univ-surj-kill-all-(n+2)-pfister}
Let $L$ be the composite of the function fields of $(n+2)$-Pfister quadrics defined over $k$.

Then $L/k$ is $\KnInt$-universally surjective field extension.
\end{Cr}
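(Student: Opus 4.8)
The plan is to present $L$ as a filtered colimit of finitely generated subextensions, to reduce each step of this colimit to a single Pfister quadric, and to apply Proposition~\ref{prop:Kn-univ-surj-quad} (with Example~\ref{ex:univ-bij_transcendental} covering the degenerate case). First I would record the one-step statement: for every field $F$ of characteristic $0$ and every $(n+2)$-Pfister quadric $Q$ over $F$, the extension $F(Q)/F$ is $\KnInt$-universally surjective. If the defining $(n+2)$-fold Pfister form is isotropic it is hyperbolic, so $Q$ has an $F$-rational point and is $F$-rational; hence $F(Q)/F$ is purely transcendental and Example~\ref{ex:univ-bij_transcendental} applies. If it is anisotropic, then it becomes hyperbolic over its function field, so $i_1(Q)=2^{n+1}\geq 2^n$, while $\dim Q=2^{n+2}-2\geq 2^{n+1}-1$ for $n\geq 1$; thus Proposition~\ref{prop:Kn-univ-surj-quad}, applied over the base field $F$, gives the claim.

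Next I would establish two formal stability properties of $\KnInt$-universal surjectivity. \emph{Transitivity}: if $F\subseteq F'\subseteq F''$ with $F'/F$ and $F''/F'$ both $\KnInt$-universally surjective, then so is $F''/F$, since for $X\in\Sm_F$ the pullback $\KnInt(X)\to\KnInt(X_{F''})$ is the composite of the surjection $\KnInt(X)\twoheadrightarrow\KnInt(X_{F'})$ with the surjection $\KnInt(X_{F'})\twoheadrightarrow\KnInt(X_{F'}\times_{F'}F'')=\KnInt(X_{F''})$ obtained by applying $\KnInt$-universal surjectivity of $F''/F'$ to the $F'$-variety $X_{F'}$. \emph{Continuity}: if $L=\mathrm{colim}_i\,F_i$ is a filtered colimit of subextensions with each $F_i/k$ $\KnInt$-universally surjective, then $L/k$ is too; indeed, since $\KnInt$ is a free, hence coherent, theory, for $X\in\Sm_k$ every element of $\KnInt(X_L)$ is already defined over some $F_i$ and therefore lies in the image of $\KnInt(X)\to\KnInt(X_L)$.

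To conclude, I would write $L$ as the directed union, over the finite subsets $S$ of the set of $(n+2)$-fold Pfister forms over $k$, of the composites $L_S$ built by successively adjoining the function fields of the corresponding quadrics (base-changed to the fields already constructed, which remain $(n+2)$-Pfister quadrics). Each single adjunction has the form $F\subseteq F(Q)$ with $Q$ an $(n+2)$-Pfister quadric over $F$, so by the one-step statement together with transitivity every $L_S/k$ is $\KnInt$-universally surjective, and then continuity yields the same for $L/k$.

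The step I expect to be the main obstacle is the continuity statement: one must know that the free theory $\KnInt$ commutes with filtered colimits of the base field, i.e.\ that it can be viewed as a coherent extended cohomology theory in the sense of Section~\ref{sec:extended_coherent}, so that classes in $\KnInt(X_L)$ are finitely presented over $k$. Everything else is bookkeeping around Proposition~\ref{prop:Kn-univ-surj-quad} and the relevant definitions.
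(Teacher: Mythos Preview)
Your proof is correct and follows essentially the same approach as the paper's: both reduce to the single-quadric case via a colimit presentation of $L$, handle the isotropic case by rationality (Example~\ref{ex:univ-bij_transcendental}) and the anisotropic case by Proposition~\ref{prop:Kn-univ-surj-quad} using $i_1(Q)=2^{n+1}$. The paper is terser---it writes $L=\mathrm{colim}_{|I|<\infty}\,k(\prod_{\alpha\in I}Q_\alpha)$ and simply asserts the reduction to $|I|=1$---whereas you make the transitivity and continuity steps explicit; your identification of the continuity step (commutation of the free theory $\KnInt$ with filtered colimits of the base) as the only non-formal point is accurate, and the paper leaves this implicit as well.
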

\begin{Rk}
We will show in Corollary~\ref{cr:Kn-split-quad}
that for any $q\in I^{n+2}(k)$ the field of functions of $Q$ and its generic splitting field
are $\KnInt$-universally surjective field extensions of $k$, which does not directly follow from this corollary.
\end{Rk}

\begin{proof}
    We have $L\cong \mathrm{colim}_{I\subset J, |I|<\infty}\  k(\prod_{\alpha \in I} Q_\alpha)$
    where $J$ is the set of isomorphism classes of all $(n+2)$-Pfister forms over $k$.
    Therefore it suffices to prove that $k(\prod_{\alpha \in I} Q_\alpha)/k$
    is $\KnInt$-universally surjective field extension. 
    Moreover, this claim follows from case $|I|=1$, 
    i.e.\ $k(Q_\alpha)/k$.
    However, if a Pfister quadric $Q_\alpha$ is isotropic, then the claim is trivial (e.g.\ by Example~\ref{ex:univ-bij_transcendental}),
    and if it is anisotropic, then $i_1(Q_\alpha) = 2^{n+1}$
    and the claim follows by  Proposition~\ref{prop:Kn-univ-surj-quad}.
\end{proof}

We finish this section by providing an example of $\K{2}^{\mathrm{int}}$-universally surjective field 
extension that is not $\K{2}^{\mathrm{int}}$-universally injective.

\begin{Ex}\label{ex:univ_surj_not_bij}
Let $q\in I^4(k)$ be an anisotropic 4-Pfister form, $Q$ is the corresponding quadric.
From Proposition~\ref{prop:Kn-univ-surj-quad}
it follows that $k(Q)/k$ is $\mathrm{K}(2)^{\mathrm{int}}$-universally surjective. 

Recall that $\gr^i_\tau \K{2}^{\mathrm{int}}(X)$ is naturally isomorphic to $\CH^i(X)\otimes \Z{2}$ for $i\le 4$ for any $X\in\Sm_k$
by \cite[Prop.~6.2, (v)]{Sech2}.
Moreover, if $\dim X < 7$, then $\gr^4_\tau \K{2}^{\mathrm{int}}(X) = \tau^4 (\K{2}^{\mathrm{int}})^1(X)$
by \textup{[loc.\ cit., Prop.~3.15]}.
Therefore if the extension $k(Q)/k$ were $\K{2}^{\mathrm{int}}$-universally bijective,
then the morphism $\CH^4(X)\otimes \Z{2} \rarr \CH^4(X_{k(Q)})\otimes \Z{2}$
would be injective (in fact, an isomorphism, but we will not need that). 

Let now $k$ be a field containing the square root of $-1$ 
and elements $a_1, a_2, b_1, b_2\in k^\times$
such that the Pfister form $q:=\langle\langle a_1, a_2, b_1, b_2\rangle\rangle$ is anisotropic. 
Then Karpenko--Merkurjev in~\cite[Th.~6.5]{KarpMerChow} prove that 
there exists a field extension $k_A/k$ (which does not split any quadratic forms)
and a $5$-dimensional quadric $X$ over $k$ 
such that $2$-torsion in $\CH^4(X_{k_A})$ has arbitrarily large cardinality.
However, by construction this torsion vanishes over the splitting field of $q$,
and hence $k(Q)/k$ is {\sl not} $\K{2}^{\mathrm{int}}$-universally bijective.
\end{Ex}

\section{Base change of motivic decompositions}
\label{sec:mot_dec_base_change}

In this section we apply the results of Section~\ref{sec:univ_surj} to motives (see Section~\ref{sec:prelim_motives}).
If $K/k$ is $\A$-universally bijective field extension,
then the base change functor from $\A$-motives over $k$
to $\A$-motives over $K$ is fully faithful. 
However, it turns out that the motivic decomposition types
do not change under base change from $k$ to $K$,
if $K$ is just $\A$-universally surjective field extension. 

The reason for this is the the so-called Rost Nilpotence Property (or RNP, for short).
It was introduced in~\cite{Rost} and proved there for Chow motives of quadrics.
For an extended oriented theory $\A$, $X\in \SmProj_k$ and a field extension $K/k$ 
we say that $\A$-RNP holds for $X$ and $K/k$,
if the kernel of the pullback map 
 $$
 \mathrm{End}(\MA X)\rarr \mathrm{End}(\MA{X_K})
 $$
consists of nilpotents. More generally, one replaces $\MA X$
with any motive in $\CM_{\A}(k)$ and asks if it satisfies RNP for $K/k$.

The known cases of RNP are the following:
\begin{itemize}
    \item $X$ is a projective homogeneous variety, $\A= \CH$, $K/k$ is any \cite{Rost, BrosnanRost, BrosnanMot, CherGilMer};
    \item $X$ is a projective homogeneous variety, $\A$ is a coherent theory, $K/k$ is any \cite{GilleVishik};
    \item $X$ is a smooth projective variety such that its Chow motive is generically split, $\A=\CH$, $K/k$ is any \cite{VishZai};
    \item $X$ is a projective surface or a birationally ruled threefold, $\A= \CH$, 
    $K/k$ is any \cite{GilleSurfaces, GilleThreefolds, RosSawant}.
\end{itemize}

Apart from these cases, the validity of RNP is widely open.
In this paper we change the direction of attacking this question.
Instead of proving $\A$-RNP for one $X$ and all field extensions $K/k$,
we fix $K/k$ and prove $\A$-RNP for all $X$, albeit for specific $K$.
More precisely, we show that if for $Y\in \SmProj_k$ the image of $(\pi_Y)_*\colon\A(Y)\rarr \Apt$ contains $1$,
then every $X\in\SmProj_k$ satisfies $\A$-RNP for $k(Y)/k$ (Proposition~\ref{prop:RNP_from_geometricRNP}).
In particular, these properties hold if $Y\in\SmProj_k$ and $k(Y)/k$ is $\A$-universally surjective field extension (Corollary~\ref{cr:rnp_univ_surj}).
We should note, however, that $\A$-RNP for any field extension $K/k$
follows from $\A$-RNP for $\overline{k}/k$, which is therefore the main case of this property.
Thus, addressing the functional field extension
is an approximation to RNP in its full generality.

The main application of RNP for our purposes is the following.
If $K/k$ is $\A$-universally surjective field extension,
then the functor $\mathrm{res}\colon\CM_A(k)\rarr \CM_A(K)$ reflects
motivic decompositions (see Definition~\ref{def:reflects_MD}, Proposition~\ref{prop:A-univ-surj_reflects_MD}).
Thus, one can use examples of Section~\ref{sec:examples_A-univ-surj}
to reduce the study of $\Kn$-motives over $k$ to some field extensions that appear to be simpler.

If one is interested only in the motivic decompositions of $\A$-motives over $k$, for which RNP is known to hold for all field extensions,
then one can replace the study of $\A$-motives with $\overline{A}$-motives (Proposition~\ref{prop:reflects_MD}).
To pursue this, we formulate the criterion for $\overline{A}$-universally bijectivity of field of functions 
of projective homogeneous varieties, see Lemma~\ref{lm:B}.
The main example that we are going to use throughout the rest of the paper is that 
 for every quadric $Q$ of dimension greater or equal than $2^{n+1}-1$
the field extension $k(Q)/k$ is $\oKn$-universally bijective (Example~\ref{ex:overline-Kn-univ-surj-quad}).
We also introduce the notion of $\oKn$-equivalence of field extensions
and define the class of extensions $k(\alpha)$ 
for $\alpha\in \mathrm{H}^{n+1}(k,\,\ZZ/2)$ 
that play the role of universal splitting fields of $\alpha$ (Definition~\ref{def:k(alpha)}).

\subsection{Functors reflecting motivic decompositions}

\begin{Def}
\label{def:reflects_MD}
Let $F\colon \PM_A(X)\rarr \PM_B(Y)$ be an additive functor between categories of motives, 
where $X\in\Smk$, $Y\in\Sm_L$, $L/k$, and 
let $S$ be a class of objects in $\PM_A(X)$.

We say that $F$ {\sl reflects motivic decompositions} of objects in $S$
if the following conditions are satisfied:
\begin{enumerate}
\item if $N$ is an indecomposable object in $S$, then so if $F(N)$;
\item if for two indecomposable objects $N_1$ and $N_2$ in $S$, there exist an isomorphism $F(N_1)\cong F(N_2)$,
then there exists an isomorphism $N_1\cong N_2$. 
\end{enumerate}

If we do not specify $S$, we mean that $F$ reflects motivic decompositions for all objects of $\CM_{\A}(k)$. 
\end{Def}

\begin{Prop}
\label{prop:A-univ-surj_reflects_MD}
Let $\A$ be a free theory.
Let $K/k$ be a f.g.\ field extension that is $\A$-universally surjective.

Then the restriction functor
$$ 
\mathrm{res}\colon \CM_{\A}(k) \rarr \CM_{\A}(K)
$$
reflects motivic decompositions.
\end{Prop}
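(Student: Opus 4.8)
The plan is to deduce the statement from two properties of the restriction functor $\mathrm{res}$: that it is \emph{full}, and that it satisfies the Rost nilpotence property along $K/k$. Both come from $\A$-universal surjectivity of $K/k$ together with results already at hand.

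First I would check \emph{fullness}. Every object of $\CM_{\A}(k)$ has the form $(X,p,i)$ with $X\in\SmProj_k$, $p$ an idempotent correspondence and $i\in\ZZ$, and for two such objects $(X,p,i)$, $(Y,q,j)$ the group $\Hom_{\CM_{\A}(k)}\big((X,p,i),(Y,q,j)\big)$ is the image of the operator $\alpha\mapsto q\circ\alpha\circ p$ acting on a graded component of $\A(X\times_k Y)$. Since $\mathrm{res}$ is a functor compatible with Tate twists and with composition of correspondences, on morphism groups it is induced by the pullback $\A(X\times_k Y)\rarr\A\big((X\times_k Y)_K\big)=\A(X_K\times_K Y_K)$, which intertwines the operators $\alpha\mapsto q\circ\alpha\circ p$ with their base changes. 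As $X\times_k Y\in\Sm_k$ and $K/k$ is $\A$-universally surjective, this pullback is surjective; hence the maps induced by $\mathrm{res}$ on all $\Hom$-groups are surjective, i.e.\ $\mathrm{res}$ is full.

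Next I would invoke \emph{Rost nilpotence}. Since $\mathrm{char}\,k=0$, by resolution of singularities $K=k(Y)$ for some irreducible $Y\in\SmProj_k$. Lemma~\ref{lm:A}, applied with the trivial system of relations, shows that $\A$-universal surjectivity of $k(Y)/k$ yields $a\in\A(Y)$ whose image in $\A(Y_{k(Y)})$ is the class of the diagonal point $\delta_Y$; pushing forward along $\pi_Y$ and using $\A(k)\isom\A(K)$ for the free theory $\A$ gives $(\pi_Y)_*(a)=1$. By the geometric Rost nilpotence property (Proposition~\ref{prop:RNP_from_geometricRNP}, Corollary~\ref{cr:rnp_univ_surj}) the kernel $I_M$ of $\rho_M\colon\End(M)\rarr\End(M_K)$ is then a nil ideal for $M=\MA X$, $X\in\SmProj_k$, and hence for every $M\in\CM_{\A}(k)$: any motive is a summand of a Tate twist of some $\MA X$, $\End(\MA X(i))=\End(\MA X)$, and $I_M$ passes to direct summands.

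Finally, the two conditions of Definition~\ref{def:reflects_MD} follow formally. For (1), let $N$ be indecomposable and suppose $\overline e\in\End(N_K)$ is a non-trivial idempotent; lift it by fullness to $f\in\End(N)$, so that $f^2-f\in I_N$ is nilpotent, and then lift idempotents along the nil ideal $I_N$ (as in \cite[Prop.~2.3]{VishYag}, cf.\ Section~\ref{sec:prelim_vishik_yagita}) to get an idempotent $e\in\End(N)$ with $\rho_N(e)=\rho_N(f)=\overline e$; then $e$ is non-trivial since $\overline e$ is, contradicting indecomposability of $N$, so $N_K$ is indecomposable. For (2), given an isomorphism $g\colon (N_1)_K\isom(N_2)_K$, lift $g$ and $g^{-1}$ by fullness to $f\colon N_1\rarr N_2$ and $h\colon N_2\rarr N_1$; then $hf-\mathrm{id}_{N_1}\in I_{N_1}$ and $fh-\mathrm{id}_{N_2}\in I_{N_2}$ are nilpotent, so $hf$ and $fh$ are invertible, whence $f$ is an isomorphism and $N_1\cong N_2$ (indecomposability is not even needed here). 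There is no genuinely hard step in this argument — all the content sits in the two inputs above; the only place requiring some care is the verification of fullness, i.e.\ transporting $\A$-universal surjectivity on $X\times_k Y$ to surjectivity of $\mathrm{res}$ on $\Hom$-groups via the functoriality of correspondence composition under base change.
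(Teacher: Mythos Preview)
Your proof is correct and follows the same approach as the paper: the paper simply cites the conditions ($\ast$) of \cite[p.~587]{VishYag} (conditions (0) and (1) amount to your fullness argument, condition (2) is the Rost nilpotence along $K/k$ supplied by Corollary~\ref{cr:rnp_univ_surj}) and then invokes \cite[Prop.~2.2,~2.3,~2.5]{VishYag} for the lifting of idempotents and isomorphisms that you spell out explicitly.
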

\begin{proof}
    The claim follows from the results of Vishik--Yagita~\cite[Prop.~2.2, 2.3, 2.5]{VishYag}:
    the conditions (0), (1) of ($\ast$) of [loc.\ cit., p.~587] are the assumptions of the Proposition,
    and the condition~(2) is $\A$-RNP for all $X\in\SmProj_k$ and $K/k$.
    The latter is shown in Corollary~\ref{cr:rnp_univ_surj} in the next section (independently of this Proposition).    
\end{proof}

\subsection{Rost Nilpotence Property}
\label{sec:mot_over_base}

\subsubsection{Geometric Rost Nilpotence Property}
\label{sec:geometric_RNP}

The following statement  was discovered by Vishik--Zainoulline for the case of Chow groups in~\cite{VishZai}, 
however, it also holds in a greater generality as we observe,
and more interestingly it can be applied to deduce
the usual RNP in some cases (see~Corollary~\ref{cr:rnp_k(n)}).
We call it ``geometric Rost Nilpotency Property''.

\begin{Prop}[{cf. \cite[Lm.~3.2]{VishZai}}]
\label{prop:geometric_RNP}
Let $X\in \Sm_k$ be irreducible.
Let $\Af$ be a coherent theory.

Then for every motive $M\in \CM_A(X)$ 
the restriction functor $\eta^*\colon \CM_A(X)\rarr \CM_A(k(X))$
induces a surjective homomorphism of rings
$$ \End(M) \rarr \End(M_{k(X)}) $$
with nilpotent kernel.

In particular, $\eta^*$ reflects motivic decompositions.
\end{Prop}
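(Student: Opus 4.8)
The plan is to follow the strategy of Vishik--Zainoulline's Lemma~3.2 in \cite{VishZai}, reformulated so that it works for an arbitrary coherent theory $\A$. Fix an irreducible $X\in\Sm_k$ and a motive $M\in\CM_\A(X)$. Without loss of generality $M=(Y/X,\,p)$ for some $Y\in\SmProj_X$ and an idempotent $p\in\End(\MA{Y/X})=\A^{\dim_X Y}(Y\times_X Y)$; endomorphisms of $M$ are then elements of $p\circ\A(Y\times_X Y)\circ p$, and likewise over $k(X)$ after pulling back along the generic point $j\colon\Spec k(X)\hookrightarrow X$. So it suffices to prove that the pullback map on the ambient groups
\[
 j^*\colon \A(Y\times_X Y)\longrightarrow \A\bigl((Y\times_X Y)_{k(X)}\bigr)
\]
is surjective with kernel a nil ideal under the convolution product $\circ$; the statement for $\End(M)$ then follows by conjugating with $p$ (idempotents and the convolution product are preserved by $j^*$).

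\textbf{Surjectivity.} This is where coherence enters directly. The scheme $Y\times_X Y$ is smooth quasi-projective over $X$, hence over $k$, and $(Y\times_X Y)_{k(X)}$ is its generic fibre. By the coherence property (Section~\ref{sec:extended_coherent}) the canonical map $\mathrm{colim}_{\,U\subseteq X}\,\A\bigl((Y\times_X Y)_U\bigr)\to\A\bigl((Y\times_X Y)_{k(X)}\bigr)$ is an isomorphism, where $U$ ranges over nonempty opens of $X$. In particular every class over $k(X)$ comes from some $(Y\times_X Y)_U$, and restricting the corresponding class from $Y\times_X Y$ to the open subscheme $(Y\times_X Y)_U$ is, by the localization axiom~\eqref{eq:loc}, surjective (the quotient map $\A(Y\times_X Y)\twoheadrightarrow\A((Y\times_X Y)_U)$ is part of~\eqref{eq:loc}). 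Composing these two surjections gives surjectivity of $j^*$. I should also check that $j^*$ is a ring homomorphism for $\circ$: this is the extended base change identity~\eqref{eq:ext_bc} applied to the projections from the triple product, exactly as in Section~\ref{sec:corr_action}, so the argument of Lemma~\ref{lm:A} (the computation that $b\circ x$ vanishes generically) is the relevant bookkeeping.

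\textbf{Nilpotence of the kernel.} Let $f\in\End(M)$ lie in $\ker(j^*)$. As above, realize $f$ as a class in $\A(Y\times_X Y)$ whose restriction to some open $(Y\times_X Y)_U$ vanishes, hence by~\eqref{eq:loc} it is the image of a class supported on $(Y\times_X Y)_Z$ with $Z=X\setminus U$ a proper closed subset, $\dim Z<\dim X$. One now argues by Noetherian induction on $\dim X$, or equivalently on the dimension of the support: the composition $f\circ f$ of two correspondences whose supports project into $Z\times Z$ can, using~\eqref{eq:bc} for the projections of $Y\times_X Y\times_X Y$ and the presentation of Borel--Moore groups via resolution of singularities (Section~\ref{sec:borel_moore_theory_via_resolution_singularities}), be rewritten as a correspondence pushed forward from a variety fibred over the resolution $\tilde Z\to Z$; since $\dim\tilde Z<\dim X$ and $\tilde Z$ is again irreducible (or a disjoint union thereof), the inductive hypothesis applies to each component. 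Thus some power $f^{\circ N}$ restricts to zero over the generic points of all components of $Z$, and descending through the stratification we get $f^{\circ N}=0$ for $N$ large. The base case $\dim X=0$, i.e.\ $X=\Spec k(X)=\Spec k$, is vacuous since then the kernel is zero.

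\textbf{Main obstacle.} The delicate point is the nilpotence argument: one must be careful that, after passing to the resolution $\tilde Z$ and then to its generic point, the \emph{composite} of correspondences is genuinely controlled by the lower-dimensional data, which requires the projection and base change formulas to interact correctly with the Borel--Moore pushforwards from singular strata. This is exactly the content of \cite[Lm.~3.2]{VishZai} for Chow groups; the only thing to verify is that their manipulations use nothing beyond~\eqref{eq:loc}, \eqref{eq:bc}, \eqref{eq:ext_bc}, the coherence isomorphism, and the resolution-of-singularities presentation of $\A$ on singular schemes---all of which hold for any coherent theory over a field of characteristic $0$. Once this is in place, ``$\eta^*$ reflects motivic decompositions'' is immediate: a surjection of endomorphism rings with nil kernel lifts idempotents (cf.\ \cite[Prop.~2.3]{VishYag}), so indecomposables lift to indecomposables and isomorphisms lift to isomorphisms.
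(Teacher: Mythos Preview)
Your surjectivity argument is fine and matches the paper. The nilpotence argument, however, has a genuine gap.

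The Noetherian induction you sketch does not go through as stated. The problem is that pushforward along $i\colon (Y\times_X Y)_{\tilde Z}\to Y\times_X Y$ is \emph{not} a ring homomorphism for the correspondence product~$\circ$, so even if $f=i_*\tilde f$ with $\tilde f\in A\bigl((Y\times_X Y)_{\tilde Z}\bigr)$, there is no reason for $f^{\circ N}$ to be the pushforward of $\tilde f^{\circ N}$ computed over $\tilde Z$. Your sentence ``the inductive hypothesis applies to each component'' presupposes exactly this compatibility. Concretely, computing $f\circ f$ involves the product $p_{12}^*f\cdot p_{23}^*f$ on $Y^{\times_X 3}$; each factor is a pushforward from $(Y^{\times_X 3})_{\tilde Z}$, but the product of two pushforwards along a non-regular embedding (here $\tilde Z\to X$ is only projective birational onto $Z$, not a regular immersion into $X$) does not simplify without an excess-intersection or refined-pullback formula.

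This is precisely why your fallback claim---that the manipulations in \cite{VishZai} use nothing beyond \eqref{eq:loc}, \eqref{eq:bc}, \eqref{eq:ext_bc}, coherence, and resolution---is incorrect. The paper's Appendix says explicitly that \cite[Prop.~6.3]{VishZai} relies on Fulton's refined pullbacks for Chow groups, which are not part of the axiomatics of an arbitrary oriented theory. The paper's contribution is to \emph{replace} that step: one first resolves the pair $(X,Z)$ so that the complement of $U$ in a birational model $\tilde X$ becomes an snc divisor $\tilde D=\bigcup_t D_t$, reduces to showing nilpotence upstairs, and then proves directly (Lemma~A.\ref{lmA}) that a product of $\dim X+1$ classes vanishing on the complement of an snc divisor is zero. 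The snc hypothesis is what makes the product of pushforwards computable: intersections of distinct $D_t$'s are transversal, and self-intersections contribute first Chern classes of normal bundles pulled back from~$X$, so eventually one is taking a product of more Chern classes than $\dim D_{\tilde I}$ allows. No refined pullbacks, no induction on strata---just the self-intersection formula and \eqref{eq:bc} for genuinely transversal squares.
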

\begin{proof}
    It is shown in Appendix, Prop.~A.\ref{prop:app_geometric_RNP}, that
    the kernel of $\End(M) \rarr \End(M_{U})$ is nilpotent for any non-empty open $U$ in $X$.
    The nilpotence of the kernel of the restriction to the generic point follows by using the coherence property of $\Af$. 
    Finally, the claim about the reflection of motivic decompositions follows from~\cite{VishYag}:
    lifting idempotents is [loc.\ cit., Lm.~2.4] and lifting isomorphisms is [loc.\ cit., Lm.~2.1].
\end{proof}

\subsubsection{RNP and universally surjective field extensions}

Geometric RNP allows to deduce RNP for $k(X)/k$ in the case 
when the pullback map $\A(Y)\rarr \A(Y\times X)$ is injective for all $Y$.

\begin{Prop}
\label{prop:RNP_from_geometricRNP}
\label{cr:rnp_k(n)}
    Let $\A$ be a coherent theory.
    Let $X\in\SmProj_k$ such that the image of $(\pi_X)_*\colon \A(X)\rarr \Apt$ contains 1.
    Then every motive in $\CM_A(k)$ satisfies RNP for $k(X)/k$.

    In particular, if $\A(k)$ is a field and $X$ is $\A$-isotropic,
    then every motive in $\CM_A(k)$ satisfies RNP for $k(X)/k$.
\end{Prop}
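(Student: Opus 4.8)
The plan is to factor the base change $\CM_A(k)\rarr\CM_A(k(X))$ through the relative category $\CM_A(X)$ and to combine two inputs: the geometric Rost Nilpotency Property of Proposition~\ref{prop:geometric_RNP}, which governs the restriction from $X$ to its generic point, and the hypothesis $1\in\mathrm{Im}\big((\pi_X)_*\big)$, which I will use to show that pulling back from $k$ to $X$ is \emph{faithful} on motives. More precisely: for $M\in\CM_A(k)$ let $M_X\in\CM_A(X)$ denote the pullback of $M$ along $\pi_X\colon X\rarr\Spec k$; by functoriality of base change one has $M_{k(X)}=(M_X)_{k(X)}$, so it suffices to prove (i) the ring homomorphism $\End(M)\rarr\End(M_X)$ is injective, and (ii) the kernel of $\End(M_X)\rarr\End(M_{k(X)})$ consists of nilpotents. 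Item (ii) is exactly Proposition~\ref{prop:geometric_RNP} (applicable since $X$ is irreducible --- note that $k(X)$ being a field forces this --- and $\A$ is coherent). Granting (i) and (ii): if $f\in\End(M)$ restricts to $0$ over $k(X)$, then $(f_X)_{k(X)}=0$, so $f_X^N=0$ for some $N$; functoriality gives $(f^N)_X=(f_X)^N=0$, and injectivity in (i) yields $f^N=0$. Hence $M$ satisfies RNP for $k(X)/k$.

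For (i) I would show more: the base change functor $\CM_A(k)\rarr\CM_A(X)$ is faithful. On morphism groups this functor is given by pullbacks of the form $p_Y^*\colon\A(Y)\rarr\A(Y\times_k X)$ along the projection $p_Y$, using that $(Z_1\times_k X)\times_X(Z_2\times_k X)\cong Z_1\times_k Z_2\times_k X$ for $Z_1,Z_2\in\SmProj_k$ (take $Y=Z_1\times_k Z_2$). So it is enough to split these pullbacks. Fix $a\in\A(X)$ with $(\pi_X)_*(a)=1$ and write $p_X\colon Y\times_k X\rarr X$ for the second projection. The square with vertices $Y\times_k X$, $X$, $Y$, $\Spec k$ is Cartesian and transversal --- over a field every pullback square is transversal --- and $\pi_X$ is projective since $X$ is projective, so the transversal base change property~\eqref{eq:bc} gives $(p_Y)_*\big(p_X^*(a)\big)=\pi_Y^*\big((\pi_X)_*(a)\big)=\pi_Y^*(1)=1$. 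By the projection formula, $z\mapsto(p_Y)_*\big(z\cdot p_X^*(a)\big)$ is then a retraction of $p_Y^*$, so $p_Y^*$ is split injective; since the pullbacks $p_Y^*$ are injective, so is the induced map on Hom-groups of arbitrary motives, in particular $\End(M)\rarr\End(M_X)$, which is (i). For the final ``in particular'', note that by the projection formula $\mathrm{Im}\big((\pi_X)_*\big)$ is a (graded) ideal of $\Apt$, so if $\Apt$ is a (graded) field and $X$ is $\A$-isotropic --- i.e.\ $(\pi_X)_*\neq0$ --- this image is all of $\Apt$ and in particular contains $1$, whence the first part applies.

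This argument is essentially a formal reduction, so I do not expect a genuine obstacle; the only point that needs a little care is the bookkeeping in (i) --- identifying the action of base change on Hom-groups of motives with the pullbacks $\A(Y)\rarr\A(Y\times_k X)$ and checking that \eqref{eq:bc} indeed applies to the square at hand (which it does, for the reasons just given). Everything else --- functoriality of the base change functors, the projection formula, and the invocation of geometric Rost Nilpotency --- is standard.
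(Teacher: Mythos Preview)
Your proof is correct and follows essentially the same approach as the paper: factor through $\CM_A(X)$, use the element $a$ with $(\pi_X)_*(a)=1$ and the projection formula to split the pullback $\A(Y)\rarr\A(Y\times X)$ (giving injectivity of $\End(M)\rarr\End(M_X)$), and then invoke the geometric RNP of Proposition~\ref{prop:geometric_RNP} for the passage from $X$ to $k(X)$. Your treatment is slightly more explicit about the transversal base change and the ``in particular'' clause, but the argument is the same.
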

\begin{Rk}
    Note that if $\A=\CH\otimes R$ for some ring $R$,
    and $X$ satisfies the assumption of the Corollary,
    then $\CH(Y)\otimes R\rarr \CH(Y_{k(X)})\otimes R$ is injective for all $Y$,
    e.g.\ by Lemma~\ref{lm:univ_inj}. 
    Thus, for Chow groups this statement is not particularly useful, 
    however, we can still get non-trivial implications for $\CH$-RNP, 
    see Section~\ref{sec:CH-RNP_from_Kn-RNP}.

    Similarly, if $X$ has a $0$-cycle of degree invertible in $\Apt$,
    then $\A(Y)\rarr \A(Y_{k(X)})$ is injective for all $Y\in \Sm_k$,
    and the claim is trivial. 
\end{Rk}
\begin{proof}
    Let $a\in \A(X)$ be such that $(\pi_X)_*(a)=1$.
    Then for every $Y\in \SmProj_k$ the pullback morphism  $(\pi_X)^*\colon\A(Y) \rarr \A(Y\times X)$ is split injective,
    with the retraction map given by $(\pi_X)_*(a\cdot -)$.
    In particular, for every  $M \in \CM_{A}(k)$ the ring map $\End(M)\rarr \End(M_{X})$ is injective.
    However, by Proposition~\ref{prop:geometric_RNP} the kernel of $\End(M_X) \rarr \End(M_{k(X)})$ is nilpotent,
    and hence also the kernel of the composition $\End(M)\rarr \End(M_{k(X)})$.
\end{proof}
\begin{Rk}
Although we show in the proof that $\A(Y)\rarr \A(Y\times X)$ is injective,
it is not true, in general, that $\A(Y)\rarr \A(Y_{k(X)})$ is injective.

For example, if $Y=X=Q$ is an excellent quadric of dimension $2^n-1$,
then one can compute $\Kn(Q)$ and $\Kn(Q_{k(Q)})$ using the computations of algebraic cobordism
of Rost motives \cite{VishYag} to see that the kernel of the map 
$\Kn(Q)\rarr \Kn(Q_{k(Q)})$ is non-trivial.
\end{Rk}

\begin{Ex}
\label{ex:K0-RNP}
Let $X\in\SmProj_k$ be such that there exists a virtual vector bundle $[V]\in\KK(X)$
with $\chi([V])=1$
(for example, if $X$ is geometrically rational,
then one can take $V=\OO_X$, since $\chi(\OO_X)=1$).

Then every motive in $\CM_{\KK}(k)$ satisfies RNP for $k(X)/k$.
\end{Ex}

\begin{Cr}
\label{cr:rnp_univ_surj}
Let $\A$ be a free theory.
Let $K/k$ be a f.g.\ field extension that is $\A$-universally surjective.

Then every $X\in\SmProj_k$ satisfies $\A$-RNP for $K/k$.
\end{Cr}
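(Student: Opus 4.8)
The plan is to combine Proposition~\ref{prop:RNP_from_geometricRNP} with the characterization of $\A$-universally surjective field extensions via the diagonal point. Since $K/k$ is finitely generated and $\mathrm{char}\,k=0$, there is an irreducible $Y\in\SmProj_k$ with $k(Y)\cong K$ (take any model of $K$ over $k$ and resolve its singularities). Thus it suffices to show that every motive in $\CM_\A(k)$ satisfies $\A$-RNP for $k(Y)/k$; note that a free theory is coherent, so Proposition~\ref{prop:RNP_from_geometricRNP} applies to $\A$.

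The key point is to exhibit an element of $\A(Y)$ of degree $1$. Applying the definition of $\A$-universal surjectivity to $X=Y$, the pullback map $\A(Y)\rarr\A(Y_{k(Y)})$ is surjective; equivalently, by Lemma~\ref{lm:A}~(1)\,$\Leftrightarrow$\,(3), the class of the diagonal point $\delta_Y\in\A(Y_{k(Y)})$ lies in its image, say $\delta_Y=a|_{k(Y)}$ for some $a\in\A(Y)$. Since $(j,\mathrm{id})$ is a section of $\pi_{Y_{k(Y)}}$, we have $(\pi_{Y_{k(Y)}})_*(\delta_Y)=1$. Using that base change commutes with push-forward to the base, the extended transversal base change property~\eqref{eq:ext_bc} shows that $(\pi_Y)_*(a)\in\Apt$ maps to $(\pi_{Y_{k(Y)}})_*(\delta_Y)=1$ under the map $\Apt=\A(\Spec k)\rarr\A(\Spec k(Y))$. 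As $\A$ is a free theory, this last map is an isomorphism, whence $(\pi_Y)_*(a)=1$.

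Now $Y\in\SmProj_k$ and the image of $(\pi_Y)_*\colon\A(Y)\rarr\Apt$ contains $1$, so Proposition~\ref{prop:RNP_from_geometricRNP} yields that every motive in $\CM_\A(k)$ satisfies RNP for $k(Y)/k$; in particular $\MA X$ satisfies RNP for $k(Y)/k=K/k$ for every $X\in\SmProj_k$. There is no genuine obstacle: the statement is a formal consequence of the preceding results, the only step requiring a moment's care being the identification of the degree of $a$, which rests on a free theory being generically constant so that degrees may be computed after base change to $k(Y)$.
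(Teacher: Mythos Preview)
Your proof is correct and follows essentially the same approach as the paper: pick a smooth projective model $Y$ of $K$, use Lemma~\ref{lm:A} to lift $\delta_Y$ to some $a\in\A(Y)$, observe that $(\pi_Y)_*(a)=1$ via extended base change and generic constancy of free theories, and conclude by Proposition~\ref{prop:RNP_from_geometricRNP}. The paper's proof is more terse but identical in content.
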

\begin{proof}
    Since we work in characteristic 0, there exists smooth projective $Y$ over $k$, such that $K\cong k(Y)$.
    By Lemma~\ref{lm:A} there exists an element $a\in \A(Y)$ such that $a_{k(Y)}=\delta_Y$.
    However, $\A(k)\cong \A(k(Y))$, and therefore $(\pi_Y)_*(a)=(\pi_{Y_{k(Y)}})_*(\delta_Y)=1$ 
    and Proposition~\ref{prop:RNP_from_geometricRNP}
    can be applied.
\end{proof}

\begin{Prop}
\label{prop:Kn-RNP_hypersurface}
Let $p$ be a prime, let $\Kn$ be the $n$-th Morava K-theory at prime $p$.
Let $H$ be a smooth projective hypersurface of degree $dp$, $p\nmid d$, and dimension greater or equal than $p^n-1$.

Then every motive in $\CM_{\Kn}(k)$ satisfies RNP for the field extension $k(H)/k$.

In particular, if $p=2$, $Q$ is a smooth quadric of dimension greater or equal than $2^n-1$,
then  every motive in $\CM_{\Kn}(k)$ satisfies RNP for the field extension $k(Q)/k$.
\end{Prop}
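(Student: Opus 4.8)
The plan is to reduce to the criterion established just above, namely Proposition~\ref{prop:RNP_from_geometricRNP} (equivalently Corollary~\ref{cr:rnp_k(n)}): it suffices to exhibit an element $a\in\Kn(H)$ with $(\pi_H)_*(a)=1$ in $\Kn(k)=\F{p}[v_n,v_n^{-1}]$, i.e.\ to show that $H$ is $\Kn$-isotropic in the sense of Lemma~\ref{lm:Kn-isotropic_invertible_direct_summands}. Since $\Kn$ is a free theory with coefficient ring a field, and by Lemma~\ref{lm:Kn-isotropic_invertible_direct_summands} it is enough to produce a projective morphism $W\rarr H$ with $[W]_{\Kn}=v_n^r$ for some $r$; but in fact the cleanest route is to compute the class $[H]_{\Kn}\in\Kn(k)$ of a closed point on $H$ directly, or rather to observe that the degree of the zero-cycle class we can write down is a unit.

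First I would recall that for a smooth hypersurface $H\subset\mathbb P^{N+1}$ of degree $m$ and dimension $N=\dim H$, the class of a linear section (a closed point, when $N$ equals the codimension cut out) has a well-understood image in $\Omega^{*}(\mathrm{pt})=\Laz$ under $(\pi_H)_*$; more usefully, the class $[H]_\Omega\in\Omega^{-N}(\mathrm{pt})$ of $H$ itself is computed by the standard formula for hypersurface classes in cobordism. The key numerical input is divisibility by $p$: writing $m=dp$ with $p\nmid d$, the class $[H]$ modulo $p$ behaves, in the range $N\ge p^n-1$, like $v_n$ up to a unit — this is exactly the statement that a degree-$p$ hypersurface of dimension $p^n-1$ (or its smooth perturbation) is a $\nu_n$-variety, which by the Remark after Lemma~\ref{lm:Kn-isotropic_invertible_direct_summands} means $[H]_{\Kn}=u\,v_n$ with $u\in\F{p}^\times$. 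Concretely, I would take a linear subspace section $H'\subset H$ of dimension $p^n-1$ (cut $H$ by $N-(p^n-1)$ general hyperplanes); $H'$ is a smooth degree-$dp$ hypersurface of dimension $p^n-1$, and the standard computation (e.g.\ via the normal bundle sequence and the logarithm of $F_{\Kn}$, compare Proposition~\ref{prop:KnInt_power_of_h_split_quadric}) gives $[H']_{\Kn}=u\,v_n$ for a unit $u$. Then $(\pi_{H'})_*(1)=u\,v_n$, and since $H'\hookrightarrow H$ is projective, pushing forward along $H'\rarr H$ and then to the point shows $\pi_{H,*}$ of the class $[H'\rarr H]_{\Kn}$ equals $u\,v_n$; hence $(\pi_H)_*\big(u^{-1}v_n^{-1}[H'\rarr H]_{\Kn}\big)=1$.

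Having produced such an $a=u^{-1}v_n^{-1}[H'\rarr H]_{\Kn}\in\Kn(H)$, Proposition~\ref{prop:RNP_from_geometricRNP} applies verbatim: $\Kn(k)$ is a field, $H$ is $\Kn$-isotropic, so every motive in $\CM_{\Kn}(k)$ satisfies RNP for $k(H)/k$. For the quadric case $p=2$: a smooth quadric of dimension $\ge 2^n-1=p^n-1$ is a degree-$2$ hypersurface with $d=1$, so it falls under the hypothesis and the claim follows as a special case. I would also remark that the same argument recovers, with $n$ replaced by the appropriate index, the fact used implicitly elsewhere that low-dimensional quadrics are $\Kn$-isotropic precisely once their dimension reaches $2^n-1$.

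The main obstacle is pinning down the computation $[H']_{\Kn}=u\,v_n$ with $u$ a unit — i.e.\ verifying that a smooth hypersurface of degree $dp$ and dimension $p^n-1$ is a $\nu_n$-variety. This is essentially Voevodsky's construction of $\nu_n$-varieties from hypersurfaces (the $n=1$ case is a plane curve, and iterating degree-$p$ sections builds up higher $v_n$), and algebraically it comes down to the coefficient of $t^{p^n}$ in the formal-group-theoretic expansion of the class of a degree-$p$ hypersurface being a $p$-adic unit times $v_n$ after reduction mod $p$; the factor $d$ coprime to $p$ does not affect this mod-$p$ computation. Once that numerical fact is in hand — and it is standard, cf.\ \cite[Prop.~4.4.22]{LevMor} and the construction in~\cite[Sec.~4]{Voe_Zl} — the rest is a formal application of the machinery already set up in this section.
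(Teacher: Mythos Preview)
Your approach is correct and matches the paper's: both reduce to showing that a smooth degree-$dp$ hypersurface $H'$ of dimension exactly $p^n-1$ satisfies $[H']_{\Kn}=u\,v_n$ for a unit $u\in\F{p}^\times$, and then invoke Proposition~\ref{prop:RNP_from_geometricRNP}. The only differences are organizational: the paper reduces RNP for $k(H)/k$ to RNP for $k(H')/k$ via a short rational-map argument, while you instead push $[H'\to H]$ forward to show $H$ itself is $\Kn$-isotropic; and where you defer the key computation to references, the paper does it explicitly via the formal group law, writing $j_*(1_{H'})=[dp]_{\Kn}(h)$ in $\Kn(\mathbb P^{p^n})$ and using $[p]_{\Kn}(h)=v_nh^{p^n}$ (higher terms vanish since $h^{p^n+1}=0$) to obtain $[H']_{\Kn}=d^{p^n}v_n$.
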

\begin{proof}
    If there is a rational map $X \dasharr Y$, 
    then $\A$-RNP for $k(X)/k$ implies $\A$-RNP for $k(Y)/k$.
    Indeed, $\A(Z_{k(X)})\rarr \A(Z_{k(X\times Y)})$ is injective, e.g.\ by Lemma~\ref{lm:univ_inj},
    and thus the kernel of $\A(Z)\rarr \A(Z_{k(Y)})$ is contained in the kernel
    $\A(Z)\rarr \A(Z_{k(X)})$ for all $Z\in\Sm_k$.
    Therefore if $H$ is a hypersurface of dimension $N>p^n-1$,
    we can consider a smooth hypersurface of dimension $p^n-1$ of degree $pd$ that is a subvariety of $H$,
    and prove the claim for it. Thus, we assume $\dim H = p^n-1$.
        
    By Proposition~\ref{prop:RNP_from_geometricRNP} it suffices to check that $[H]_{\Kn} = (\pi_H)_* (1_H)$ is invertible in $\Knpt=\F{p}[v_n, v_n^{-1}]$.
    If $j\colon H\rarr \mathbb{P}^{p^n}$ is an embedding of degree $dp$,
    then $j_*(1_H) = (dp)\cdot_{\Kn} h$ where $h$ is the class of the hyperplane in $\Kn(\mathbb{P}^{p^n})$.
    By construction of Morava K-theory we have $p\cdot_{\Kn} h = v_n h^{p^n}$, 
    since $h^N=0$ for $N$ greater than the dimension of the projective space,
    and $d\cdot_{\Kn}h^{p^n} = d^{p^n} v_n h^{p^n}$. Finally, $[H]_{\Kn} = (\pi_{\mathbb{P}^{p^n}})_* j_* (1_H) = d^{p^n} v_n$
    is invertible in $\F{p}[v_n,v_n^{-1}]$ by assumption on $d$.
\end{proof}

\subsubsection{From $\A$-RNP to $\CH\otimes\Apt$-RNP}
\label{sec:CH-RNP_from_Kn-RNP}

    Recall that for every free theory $\A$ 
    there exists a surjective morphism of theories $\rho_{\A}\colon\CH^*\otimes \Apt\rarr \mathrm{gr}^*_\tau A$ (see e.g.\ \cite[Prop.~1.17]{Sech2}).
    For a particular $\A$ it may happen that $\rho$ is an isomorphism in some degrees,
    and in this case one can deduce $\CH\otimes \Apt$-RNP from $\A$-RNP.

\begin{Prop}
\label{prop:A-RNP_implies-CH-RNP}
Let $\A$ be a free theory such that $\rho_{\A}$ is an isomorphism in degree $d$.
Let $X\in\SmProj_k$ be of dimension $d$. 
Let $K/k$ be a field extension.

If $\MA X$ satisfies RNP for $K/k$,
then $\Mot{\CH\otimes \Apt}{X}$ satisfies RNP for $K/k$.
\end{Prop}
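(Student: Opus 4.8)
The plan is to transfer the Rost Nilpotence Property along the morphism of theories $\CH^*\otimes\Apt \twoheadrightarrow \mathrm{gr}^*_\tau A$ combined with the projection $\A \twoheadrightarrow \mathrm{gr}^*_\tau A$, exploiting that the dimension of $X$ is exactly $d$, the degree in which $\rho_\A$ is an isomorphism. The key point is that for an $X$ of dimension $d$, the group $\End(\Mot{\CH\otimes\Apt}{X}) = (\CH^*\otimes\Apt)^d(X\times X)$ can be compared with $\End(\MA{X})$ via the topological filtration: the top graded piece $\mathrm{gr}^d_\tau A(X\times X)$ is, by hypothesis on $\rho_\A$, isomorphic to $\CH^d(X\times X)\otimes\Apt$, while the lower filtration pieces $\tau^{>d}$ vanish for dimension reasons (a smooth projective variety of dimension $2d$ has no cycles of codimension exceeding $2d$, and more to the point the relevant Chow-group-in-codimension-$d$ comparison only sees the bottom).

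First I would set up the square of rings: $\End(\MA{X}) \to \End(\Mot{\mathrm{gr}_\tau A}{X})$ induced by the projection of theories $\phi\colon\A \to \mathrm{gr}^*_\tau A$, and $\End(\Mot{\CH\otimes\Apt}{X}) \xrightarrow{\rho_{\A,*}} \End(\Mot{\mathrm{gr}_\tau A}{X})$ induced by $\rho_\A$. Since $\dim(X\times X) = 2d$ and $\rho_\A$ is an isomorphism in degree $d$, the map $\rho_{\A,*}$ is an isomorphism on these $\End$-rings: it is surjective because $\rho_\A$ is surjective in all degrees and the degree-$d$ component is the only relevant one for correspondences $X\vdash X$ of equidimensional $X$, and it is injective because its kernel in degree $d$ is controlled by the kernel of $\rho_\A$ in degree $d$, which is zero. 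Next I would observe that the kernel of $\phi_{X\times X}\colon\A^d(X\times X) \to \mathrm{gr}^d_\tau A(X\times X)$ is $\tau^{d+1}\A^d(X\times X)$, and — using that the topological filtration is exhaustive and that for a $2d$-dimensional variety one has $\tau^{\geq 2d+1}=0$, together with the fact that correspondences compose by a middle-dimensional intersection so that $\tau^{a}\circ\tau^{b}\subseteq\tau^{a+b-d}$ in $\End(\MA X)$ — this kernel is a nilpotent ideal of $\End(\MA{X})$. Hence $\phi$ on $\End$-rings is a surjection with nilpotent kernel.

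Now I assemble: given $\psi\in\ker\big(\End(\Mot{\CH\otimes\Apt}{X}) \to \End(\Mot{\CH\otimes\Apt}{X_K})\big)$, push it through $\rho_{\A,*}$ to get an element of $\ker\big(\End(\Mot{\mathrm{gr}_\tau A}{X}) \to \End(\Mot{\mathrm{gr}_\tau A}{X_K})\big)$ (the square commutes with base change). Lift this via the surjection $\phi_*$ — after possibly adjusting by a nilpotent, which is harmless — to an element of $\End(\MA{X})$ that lies in the kernel of base change to $K$ (here one uses that $\phi_*$ is compatible with $\mathrm{res}_{K/k}$ and that the kernel of $\phi_*$ itself base-changes into the kernel of $\phi_*$ over $K$). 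By the hypothesis that $\MA{X}$ satisfies RNP for $K/k$, that lift is nilpotent; pushing back down through $\phi_*$ and then identifying via $\rho_{\A,*}$, and absorbing the nilpotent corrections, we conclude $\psi$ is nilpotent. Therefore $\Mot{\CH\otimes\Apt}{X}$ satisfies RNP for $K/k$.

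The main obstacle I anticipate is the bookkeeping of the topological filtration on the \emph{endomorphism ring of the motive} rather than on $\A(X)$ itself: one must check that $\tau^\bullet\A(X\times X)$ is a \emph{ring} filtration for the composition product of correspondences (with the shift by $d=\dim X$), so that $\tau^{>d}\End(\MA X)$ is genuinely a two-sided nilpotent ideal, and that $\rho_\A$ being an isomorphism in degree $d$ on \emph{all} smooth projective varieties (in particular on $X\times X$, which has dimension $2d>d$ in general) really does give an isomorphism on the degree-$d$ Chow groups — this uses that $\rho_\A$ is an iso in degree $d$ as a natural transformation, i.e. on every smooth variety, not merely on $X$. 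Once these filtration-compatibility facts are in place (they follow from the standard properties of the topological filtration on free theories recorded in \cite[Sec.~1.8]{Sech2} and \cite[Sec.~3]{Sech2}, cf. the arguments in Section~\ref{sec:prelim_vishik_yagita}), the nilpotence transfer is formal.
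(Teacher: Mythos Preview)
Your approach is the same as the paper's, and you have all the ingredients, but the step ``lift this via the surjection $\phi_*$ --- after possibly adjusting by a nilpotent --- to an element of $\End(\MA{X})$ that lies in the kernel of base change to $K$'' is not correct as stated and is where the real work hides. If $\alpha$ is any lift of $\rho_{\A,*}(\psi)$, then $\alpha_K$ lies in $\ker(\phi_{*,K})=\tau^{d+1}\A^d(X_K\times X_K)$, which is nonzero in general; there is no reason you can find a nilpotent $\beta\in\End(\MA X)$ over $k$ with $\beta_K=\alpha_K$, so ``adjusting by a nilpotent'' does not produce a lift in the kernel of base change.

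The fix is exactly the filtration fact you already recorded: since $\tau^{d+i}\circ\tau^{d+j}\subseteq\tau^{d+i+j}$ and $\tau^{>2d}=0$, the element $\alpha_K\in\tau^{d+1}$ is itself nilpotent under composition, say $(\alpha_K)^{\circ N}=0$. Then $\alpha^{\circ N}$ genuinely lies in the kernel of base change, so $\A$-RNP makes $\alpha^{\circ N}$ (hence $\alpha$) nilpotent; projecting via $\phi_*$ and identifying via $\rho_{\A,*}$ gives $\psi$ nilpotent. This is precisely how the paper argues: lift $\rho(a)$ to $\alpha\in\tau^d$, observe $\alpha_K\in\tau^{d+1}$ is nilpotent by filtration compatibility, take a power, apply RNP. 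So your outline becomes correct once you replace the impossible ``lift into the kernel'' by ``lift, base-change into $\tau^{d+1}$, take a power to reach the kernel''.
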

\begin{proof}
    Let $a\in \CH^{d}(X\times X)\otimes\Apt$ be such that $a_K$ is zero.
    Let $\alpha$ be a lift of $\rho(a)$ to $\tau^{d}\A(X\times X)$.
    Since $\rho(a)_K=0$, we get that $\alpha_K$ lies in $\tau^{d+1}\A(X\times X\times K)$.
    However, the composition of correspondences is compatible with the filtration $\tau$ 
    in the following sense:
    if $u\in\tau^{\dim Y+i} \A(Y\times Y)$, 
    $v\in\tau^{\dim Y+j} \A (Y\times Y)$,
    then the composition $v\circ u$ lies in $\tau^{\dim Y+i+j} \A(Y\times Y)$ 
    for every $Y\in\SmProj_k$ 
    (cf.~\cite[Lm.~1.18]{Sech2}).
    Since $\tau$ is a finite filtration, we get that $(\alpha_K)^{\circ N}=0$ for some $N$.
    By $\A$-RNP for $K/k$ we get that $\alpha$ is nilpotent, and hence $\rho(a)=\alpha \mod \tau^{d+1}$ is also nilpotent
    in $\mathrm{gr}_{\tau}^{d} \A(X\times X)$. 
    Since $\rho$ is a morphism of theories and an isomorphism in degree $d$,
    we get that $a$ is nilpotent.
\end{proof}

\begin{Ex}
Let $X\in \SmProj_k$  be such that there exists a virtual vector bundle $[V]\in\KK(X)$
with $\chi([V])=1$
(for example, if $X$ is geometrically rational,
then one can take $V=\OO_X$, since $\chi(\OO_X)=1$).

Then for every $Y\in\SmProj_k$ of dimension at most 2,
its Chow motive $\MCH{Y}$ satisfies RNP for $k(X)/k$.
\end{Ex}
\begin{proof}
    Recall that $\rho_{\KK}$ is an isomorphism in degrees at most 2.
    Thus, we can apply Proposition~\ref{prop:A-RNP_implies-CH-RNP} to Example~\ref{ex:K0-RNP}.
\end{proof}

\begin{Cr}
\label{cr:Kn-RNP_implies_CH-RNP}
Let $\KnInt$ be the $n$-th Morava K-theory at prime $p$.
Let $Y\in\SmProj_k$, $\dim Y\le p^n$.

Let $K/k$ be a field extension
such that $\KnInt$-RNP holds for $Y$.

Then $\Mot{\CH\otimes\Zp}{Y}$ 
satisfies RNP for the field extension $K/k$.
\end{Cr}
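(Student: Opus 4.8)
The plan is to obtain the statement from Proposition~\ref{prop:A-RNP_implies-CH-RNP}, applied to the free theory $\A=\KnInt$, followed by a routine passage from coefficients $\KnInt(k)=\Z{p}[v_n,v_n^{-1}]$ to coefficients $\Z{p}$ (``setting $v_n=1$'').

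First I would feed Proposition~\ref{prop:A-RNP_implies-CH-RNP} its two inputs. The hypothesis ``$\MA{Y}$ satisfies RNP for $K/k$'' for $\A=\KnInt$ is precisely the assumption that $\KnInt$-RNP holds for $Y$ and $K/k$. The remaining hypothesis is that $\rho_{\KnInt}$ is an isomorphism in degree $d=\dim Y$; this holds because $\mathrm{gr}^i_\tau\KnInt(X)$ is naturally isomorphic to $\CH^i(X)\otimes\Z{p}$ for all $X\in\Smk$ and all $i\le p^n$ (see~\cite[Prop.~6.2]{Sech2}, cf.~\cite{Sech} and Example~\ref{ex:univ_surj_not_bij}), and $d=\dim Y\le p^n$ by assumption. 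This is exactly the role of the bound $\dim Y\le p^n$: the correspondences governing endomorphisms of $\Mot{\CH\otimes\Z{p}}{Y}$ sit in Chow groups of $Y\times Y$ of codimension at most $\dim Y$, which is the range in which the topological filtration on $\KnInt$ is identified with Chow groups. Proposition~\ref{prop:A-RNP_implies-CH-RNP} then gives that $\Mot{\CH\otimes\Z{p}[v_n,v_n^{-1}]}{Y}$ satisfies RNP for $K/k$.

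It then remains to descend from $\Z{p}[v_n,v_n^{-1}]$- to $\Z{p}$-coefficients. Since $\Z{p}\hookrightarrow\Z{p}[v_n,v_n^{-1}]$ is an injective (indeed free) ring extension, it is a morphism of theories $\CH\otimes\Z{p}\hookrightarrow\CH\otimes\Z{p}[v_n,v_n^{-1}]$ inducing a faithful functor $\CM_{\CH\otimes\Z{p}}(k)\rarr\CM_{\CH\otimes\Z{p}[v_n,v_n^{-1}]}(k)$ that commutes with the base change functors to $K$; concretely, $\End(\Mot{\CH\otimes\Z{p}}{Y})$ is a subring of $\End(\Mot{\CH\otimes\Z{p}[v_n,v_n^{-1}]}{Y})$, compatibly with pullback to $K$. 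Hence any endomorphism of $\Mot{\CH\otimes\Z{p}}{Y}$ lying in the kernel of the pullback to $K$ is nilpotent, being so already in the larger ring by the previous step. (Alternatively, one may invoke the retraction of theories given by $v_n\mapsto 1$ and push the nilpotent endomorphism forward along it.)

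I do not expect a genuine obstacle: the whole content sits in Proposition~\ref{prop:A-RNP_implies-CH-RNP}, and the two remaining points are pure bookkeeping --- matching the range $i\le p^n$ of the identification $\mathrm{gr}_\tau\KnInt\cong\CH\otimes\Z{p}$ with the dimension bound, and the faithful passage through $v_n=1$. If one wants full generality for $Y$ not equidimensional, one reduces to the connected case, which is harmless.
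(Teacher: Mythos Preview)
Your proof is correct and follows essentially the same approach as the paper: apply Proposition~\ref{prop:A-RNP_implies-CH-RNP} with $\A=\KnInt$, using that $\rho_{\KnInt}$ is an isomorphism in degrees $\le p^n$ by \cite[Prop.~6.2]{Sech2}. The paper's proof consists of just that one citation and leaves the passage from $\Z{p}[v_n,v_n^{-1}]$- to $\Z{p}$-coefficients implicit; you spell it out, which is harmless (though note the relevant correspondences live in codimension exactly $\dim Y$, not ``at most'').
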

\begin{proof}
    By \cite[Prop.~6.2]{Sech2} $\rho$ is an isomorphism for $\KnInt$ in degrees up to $p^n$.
\end{proof}
\begin{Ex}
\label{ex:Chow-RNP-from-Kn-RNP}
    Let $X\in\SmProj_k$ be $\Kn$-isotropic (e.g.\ see Proposition~\ref{prop:Kn-RNP_hypersurface}). 

    Then for every $Y\in\SmProj_k$ with $\dim Y\le p^n$ its $p$-local Chow motive $\Mot{\CH\otimes\Zp}{Y}$
    satisfies RNP for the field extension $k(X)/k$.
\end{Ex}

\subsection{$\overline{A}$-universal bijectivity}
\label{sec:criterion_overline_A}

\subsubsection{$\overline{A}$-universal surjectivity and bijectivity}

\begin{Lm}\label{lm:overline_inj}
Let $A$ be an extended oriented cohomology theory, let $X\in \Sm_k$, $X$ is irreducible.

Then for every projective $Y\in \Sm_k$
the canonical map $\overline{A}(Y)\rarr \overline{A}(Y_{k(X)})$ is injective.

In particular, if $k(X)/k$ is $\overline{A}$-universally surjective,
then this map is an isomorphism.
\end{Lm}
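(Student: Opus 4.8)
The plan is to reduce the statement to base change along the algebraic closure, where injectivity is already recorded in Example~\ref{ex:inj_alg_clos}. Fix compatible algebraic closures so that $\overline k\subseteq\overline{k(X)}$, and write $\overline k(X):=\overline k\cdot k(X)\subseteq\overline{k(X)}$ for the composite; this is the function field of a component of $X_{\overline k}$, hence a finitely generated extension of $\overline k$, and $\overline{k(X)}$ is an algebraic closure of it. Since $Y$ and $Y_{k(X)}$ are projective, Remark~\ref{rk:overline_proj}(1) (applied over $k$ and over $k(X)$ respectively) gives $\overline A(Y)=\mathrm{Im}\bigl(A(Y)\to\mathcal A(\overline Y)\bigr)$ with $\mathcal A(\overline Y):=\mathop{\mathrm{colim}}_{[L':k]<\infty}A(Y_{L'})$, and an inclusion $\overline A(Y_{k(X)})\hookrightarrow\mathcal A(\overline{Y_{k(X)}}):=\mathop{\mathrm{colim}}_{[L:k(X)]<\infty}A(Y_L)$.

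First I would assemble the commutative square with top row the pullback $A(Y)\to A(Y_{k(X)})$ and vertical arrows to $\mathcal A(\overline Y)$ and $\mathcal A(\overline{Y_{k(X)}})$; it induces a map $\Phi\colon\mathcal A(\overline Y)\to\mathcal A(\overline{Y_{k(X)}})$, and by construction the canonical map $\overline A(Y)\to\overline A(Y_{k(X)})$ is the restriction of $\Phi$ to $\overline A(Y)$ followed by the inclusion $\overline A(Y_{k(X)})\hookrightarrow\mathcal A(\overline{Y_{k(X)}})$. Hence it suffices to show $\Phi$ is injective. Reindexing, $\mathcal A(\overline{Y_{k(X)}})$ is the colimit of the groups $A(\overline Y_M)$ over finite extensions $M/\overline k(X)$ (every such $M$ is defined over a finite subextension of $\overline k/k$, and $\overline{k(X)}$ is algebraic over $\overline k(X)$), and under this identification $\Phi$ becomes the colimit over $M$ of the base-change maps $A(\overline Y)\to A(\overline Y_M)$. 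Each $M$ is a finitely generated extension of the algebraically closed field $\overline k$, so $M/\overline k$ is $A$-universally injective by Example~\ref{ex:inj_alg_clos}; thus every map $A(\overline Y)\to A(\overline Y_M)$ is injective, and a filtered colimit of injections with common source is injective (an element dying in the colimit already dies at some stage, hence is $0$). This proves the first assertion. The ``in particular'' is then formal: $\overline A$-universal surjectivity forces $\overline A(Y)\to\overline A(Y_{k(X)})$ to be surjective, and combined with the injectivity just proved it is an isomorphism.

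The step I expect to require the most care is the middle one: matching the colimit $\mathcal A(\overline{Y_{k(X)}})$, which is built from finite extensions of $k(X)$, with a colimit of $A(\overline Y_M)$ over finitely generated extensions $M$ of $\overline k$, so that Example~\ref{ex:inj_alg_clos} can be invoked stage by stage — here one must keep track of the fact that $X_{\overline k}$ need not be irreducible, and that passing to $\overline k$ and to generic points commute only after this reindexing. (When $A$ is in addition coherent — which is the case, e.g.\ for all free theories and in particular for $\Kn$, in all later applications — the first assertion is simply the case $K=k(X)$ of the last sentence of Example~\ref{ex:inj_alg_clos}, applied to all $Z\in\Sm_k$; the content of the present formulation is that for projective $Y$ one can get by with $A$ merely extended.)
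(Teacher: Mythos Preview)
Your approach is the same as the paper's: form the commutative square with vertical inclusions $\overline A(Y)\hookrightarrow A(\overline Y)$ and $\overline A(Y_{k(X)})\hookrightarrow A(\overline{Y_{k(X)}})$ (valid for projective $Y$ by Remark~\ref{rk:overline_proj}) and deduce injectivity of the top row from injectivity of the bottom map $A(\overline Y)\to A(\overline{Y_{k(X)}})$, which the paper attributes in one line to Example~\ref{ex:inj_alg_clos}. What you add is an explicit unpacking of that last step via a reindexing of the target colimit.

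One caveat on your reindexing: you write $A(\overline Y_M)$ for $M$ a finite extension of $\overline k(X)$, but such $M$ is not finitely generated over $k$, so for a merely \emph{extended} theory this group is not a priori defined --- you are implicitly treating $A$ as an oriented theory over $\overline k$. Your invocation of Example~\ref{ex:inj_alg_clos} likewise needs the coherence hypothesis stated there (it rests on Lemma~\ref{lm:univ_inj}). Thus your closing parenthetical, that for projective $Y$ one ``gets by with $A$ merely extended'', is not justified by the argument you wrote; the paper's terse citation carries the same hidden coherence assumption, but you should not claim to have removed it.
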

\begin{proof}
We have the following commutative diagram:

\begin{center}
	\begin{tikzcd}
		\overline{A}(Y) \arrow{d} \arrow{r} & \overline{A}(Y_{k(X)}) \arrow{d} \\
        A(\overline{Y}) \arrow{r} & A(\overline{Y_{k(X)}})
	\end{tikzcd}
\end{center}

The vertical maps are inclusions by Remark~\ref{rk:overline_proj}, 
and the lower horizontal map is injective by Example~\ref{ex:inj_alg_clos}.
The claim follows.
\end{proof}
\begin{Prop}
\label{prop:overline_surj_implies_overline_bij}

Let $\A$ be a coherent theory. Let $Y\in\Sm_k$
such that $k(Y)/k$ is $\overline{A}$-universally surjective field extension.
Then it is a $\overline{A}$-universally bijective field extension.
\end{Prop}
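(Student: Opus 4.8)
The plan is to reduce the statement to the case of smooth \emph{projective} varieties, where it is exactly the content of Lemma~\ref{lm:overline_inj}, by compactifying an arbitrary smooth variety and using the localization axiom. Since $\overline{A}$-universal surjectivity is already part of the hypothesis and $\overline{A}$ is additive, it suffices to prove that for every irreducible $X\in\Sm_k$ the base-change map $\overline{A}(X)\to\overline{A}(X_{k(Y)})$ is injective.

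First I would choose, using $\mathrm{char}\,k=0$, a smooth projective compactification $P$ of $X$, i.e.\ a smooth projective $P/k$ containing $X$ as a dense open subscheme, and set $Z:=P\setminus X$. Being an oriented cohomology theory over $k$ (Construction~\ref{constr:gamma}) and an extended cohomology theory over $k(Y)$, $\overline{A}$ satisfies the localization axiom~\eqref{eq:loc} over $k$ and over $k(Y)$, which yields a commutative diagram with exact rows
\begin{center}
\begin{tikzcd}
\overline{A}(Z) \arrow[r, "f"] \arrow[d, "a"] & \overline{A}(P) \arrow[r, "g"] \arrow[d, "b"] & \overline{A}(X) \arrow[r] \arrow[d, "c"] & 0 \\
\overline{A}(Z_{k(Y)}) \arrow[r, "f'"] & \overline{A}(P_{k(Y)}) \arrow[r, "g'"] & \overline{A}(X_{k(Y)}) \arrow[r] & 0 \\
\end{tikzcd}
\end{center}
the vertical arrows being the pullbacks induced by $A$ (they descend to $\overline{A}$ because $\Gamma_{\overline{k}}$ and $\Gamma_{\overline{k(Y)}}$ are compatible systems of relations). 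By Lemma~\ref{lm:overline_inj} and the assumption that $k(Y)/k$ is $\overline{A}$-universally surjective, the map $\overline{A}(W)\to\overline{A}(W_{k(Y)})$ is an isomorphism for every smooth projective $W/k$; in particular $b$ is an isomorphism.

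Next I would show that $a$ is surjective, for which I would use the finite presentation of the Borel--Moore group of Section~\ref{sec:borel_moore_theory_via_resolution_singularities}: picking a resolution $\tilde Z\to Z$ obtained by blowing up smooth centres $R_i$ with exceptional divisors $E_i$ — all of $\tilde Z$, $R_i$, $E_i$ being smooth projective, as $Z$ is — one gets a surjection $\bigoplus_i\overline{A}(R_i)\oplus\overline{A}(\tilde Z)\twoheadrightarrow\overline{A}(Z)$, and likewise over $k(Y)$. Since blow-ups at smooth centres commute with the flat base change $\Spec k(Y)\to\Spec k$, this base-changes the resolution of $Z$ to one of $Z_{k(Y)}$, so the two presentations match up with the vertical pullbacks; as each $\overline{A}(R_i)\to\overline{A}((R_i)_{k(Y)})$ and $\overline{A}(\tilde Z)\to\overline{A}(\tilde Z_{k(Y)})$ is an isomorphism, a diagram chase gives surjectivity of $a$. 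Finally, with $b$ bijective, $a$ surjective and exact rows, the usual chase forces $c$ to be injective: given $\xi\in\overline{A}(X)$ with $c(\xi)=0$, lift it to $\tilde\xi\in\overline{A}(P)$ with $g(\tilde\xi)=\xi$, use exactness of the bottom row to write $b(\tilde\xi)=f'(\zeta')$, lift $\zeta'=a(\zeta)$, deduce $b(\tilde\xi-f(\zeta))=0$ and hence $\tilde\xi=f(\zeta)$ since $b$ is injective, and conclude $\xi=g f(\zeta)=0$ because $g\circ f=0$.

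The main obstacle is the bookkeeping around the Borel--Moore group $\overline{A}(Z)$: one must be sure that the quotient theory $\overline{A}$ genuinely satisfies the localization axiom (this is packaged into Construction~\ref{constr:gamma}, but can also be checked by hand, since the subgroup defining $\overline{A}(X)$ inside $A(X)$ is the image, under restriction to the open subscheme, of the one defining $\overline{A}(P)$), and that the resolution-of-singularities presentation of $\overline{A}(Z)$ is compatible with base change to $k(Y)$. Once these points are in place, everything else is the routine diagram chase above.
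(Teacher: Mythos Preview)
Your proof is correct and follows essentially the same approach as the paper's: compactify, apply the localization sequence for $\overline{A}$, use Lemma~\ref{lm:overline_inj} together with the surjectivity hypothesis to get the middle vertical map to be an isomorphism, handle the Borel--Moore term on the closed complement via the resolution-of-singularities presentation, and finish with a diagram chase. The paper is terser---it asserts that the left vertical map is in fact an isomorphism (which also follows from the presentation of Section~\ref{sec:borel_moore_theory_via_resolution_singularities}, since all of $\tilde Z,R_i,E_i$ are smooth projective and the presentation is right exact), whereas you only extract surjectivity; but surjectivity is all that the chase needs, so this makes no difference.
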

\begin{proof}
    Let $U\in\Sm_k$, and let $X$ be its smooth projective compactification
    with the closed complement $Z$.
    Using the localization axiom (see e.g.\ Section~\ref{sec:LOC}) 
    for $\overline{A}$ we get a commutative diagram
    of right exact rows:
    \begin{center}
        \begin{tikzcd}
            \A(Z_{k(Y)}) \arrow[r] & \A(X_{k(Y)})\arrow[r] & \A(U_{k(Y)}) \arrow[r] & 0 \\
            \A(Z) \arrow[u] \arrow[r] & \A(X)\arrow[r] \arrow[u] & \A(U) \arrow[u] \arrow[r] & 0 
        \end{tikzcd}
    \end{center}
    The central upper arrow is an isomorphism by Lemma~\ref{lm:overline_inj}. 
    Similarly, the left upper arrow is an isomorphism even in the case when $Z$ is not smooth 
    by definition of $\A(Z)$ (see e.g.\ Section~\ref{sec:borel_moore_theory_via_resolution_singularities}).
    Hence the right upper arrow is a bijection.
\end{proof}

\subsubsection{Criterion for $\overline{A}$-universal bijectivity for projective homogeneous varieties}

Recall that if $Y$ is a projective homogeneous variety over $k$,
then there exists a finite field extension of $k$ splitting $Y$, over which $Y$ becomes cellular (see e.g.\ \cite[Sec.~66]{EKM}).
Therefore $\overline{A}(Y)$ is a subgroup of a free f.g.\ $A$-module $A(\overline{Y})$. 
Moreover, $A(Y_F)\xrarr{\sim} A(Y_L)$ for any two fields $F,L\colon k\subset F\subset L$ splitting $Y$,
and hence we have canonical inclusions $\overline{A}(Y_K) \subset \overline{A}(Y_{E})$
for every f.g.\ field extensions $k\subset K \subset E$.  

For a f.g.\ splitting field $K$ of $Y$ the classes of rational points on $Y_{K}$ generate a free direct summand of $A(\overline{Y})$
of rank equal to the number of connected components of $Y_{K}$,
and on every component any two rational points yield the same class.
As we have an isomorphism $A(\overline{Y})\xrarr{\sim} A(\overline{Y_{k(Y)}})$
we can speak about the class of the diagonal point $\delta_Y$ in $A(\overline{Y})$
which is just $\sum_i x_i$ where the sum runs over different components of $Y_{\overline{k}}$
and $x_i$ is any rational point on the $i$-th component.
We call this element $l_0$, to emphasize that 
in the case when $Y$ is geometrically connected, any rational point 
has the class $l_0$ in $\overline{Y}$.
We will say that $l_0\in \overline{A}(Y)$, if
$l_0$ lies in the image of the canonical injective map $\overline{A}(Y)\hookrightarrow \A(\overline{Y})$ (cf.~Remark~\ref{rk:overline_proj}).

We can now adopt Lemma~\ref{lm:A} for our purposes.

\begin{lm}\label{lm:B}
Let $Y$ be an irreducible projective homogeneous variety. 

Then TFAE:

\begin{enumerate}
\item the field extension $k(Y)/k$ is $\overline{A}$-universally surjective;
\item the field extension $k(Y)/k$ is $\overline{A}$-universally bijective;
\item the class $l_0$ lies in $\overline{A}(Y)$.
\end{enumerate}
\end{lm}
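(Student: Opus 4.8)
The plan is to deduce the lemma from the general criterion of Lemma~\ref{lm:A} applied to the theory $\overline{A}$, together with the specific structure of $\overline{A}(Y)$ for $Y$ a projective homogeneous variety. First I would observe that $\overline{A}$ is itself a coherent oriented cohomology theory (it is the quotient of $A$ by the system of relations $\Gamma_{\overline{k}}$, and for $\Gamma_{\overline{k}}$ the systems are compatible over all field extensions, so $\overline{A}$ is even an extended, indeed coherent, theory — as remarked after Proposition~\ref{prop:num_base_change}). Hence all of Lemma~\ref{lm:A} and Proposition~\ref{prop:overline_surj_implies_overline_bij} apply directly with $A$ replaced by $\overline{A}$.

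The implication $(2)\Rightarrow(1)$ is trivial, and $(1)\Rightarrow(2)$ is exactly Proposition~\ref{prop:overline_surj_implies_overline_bij}. So the content is the equivalence $(1)\Leftrightarrow(3)$. For $(1)\Rightarrow(3)$: if $k(Y)/k$ is $\overline{A}$-universally surjective, then by Lemma~\ref{lm:A}(3) the image of $\overline{A}(Y)\to\overline{A}(Y_{k(Y)})$ contains the class $\delta_Y$ of the diagonal point. Under the inclusions $\overline{A}(Y)\hookrightarrow A(\overline{Y})$ and $\overline{A}(Y_{k(Y)})\hookrightarrow A(\overline{Y_{k(Y)}})$ of Remark~\ref{rk:overline_proj}, together with the canonical isomorphism $A(\overline{Y})\xrightarrow{\sim}A(\overline{Y_{k(Y)}})$ available because $Y$ is cellular over a finite splitting field, the class $\delta_Y$ is identified with the element called $l_0$; so $l_0$ lies in the image of $\overline{A}(Y)$, which is what $(3)$ asserts. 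Conversely, for $(3)\Rightarrow(1)$: if $l_0\in\overline{A}(Y)$, pick a preimage $a\in\overline{A}(Y)$. Its image in $\overline{A}(Y_{k(Y)})\subset A(\overline{Y_{k(Y)}})$ equals $l_0 = \delta_Y$ under the identification above, since the restriction map $\overline{A}(Y)\to\overline{A}(Y_{k(Y)})$ is compatible with the inclusions into $A(\overline{Y})\cong A(\overline{Y_{k(Y)}})$. Thus the image of $\overline{A}(Y)\to\overline{A}(Y_{k(Y)})$ contains $\delta_Y$, and Lemma~\ref{lm:A} (the implication $(3)\Rightarrow(1)$ there) gives that $k(Y)/k$ is $\overline{A}$-universally surjective.

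The one point requiring care — and the main obstacle — is the bookkeeping identifying the abstractly-defined diagonal class $\delta_Y\in\overline{A}(Y_{k(Y)})$ with the geometrically-defined class $l_0\in A(\overline{Y})$ through the chain of maps $\overline{A}(Y)\hookrightarrow A(\overline{Y})\xleftarrow{\sim}$ (no, rather $\xrightarrow{\sim}$) $A(\overline{Y_{k(Y)}})\hookleftarrow\overline{A}(Y_{k(Y)})$, and in checking that these identifications are compatible with restriction from $Y$ to $Y_{k(Y)}$. For $Y$ geometrically connected this is the statement that any rational point of $Y_{k(Y)}$ has class $l_0$ over the algebraic closure, which follows because $Y$ becomes cellular over a finite extension and on a cellular variety all rational points of the big cell agree in $A$; for $Y$ with several geometric components one sums over components as in the discussion preceding the lemma. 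Once this identification is in place, everything else is a formal consequence of Lemma~\ref{lm:A} and Proposition~\ref{prop:overline_surj_implies_overline_bij}, so the proof is short.
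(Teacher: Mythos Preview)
Your proposal is correct and follows essentially the same route as the paper: equivalence of (1) and (2) via Proposition~\ref{prop:overline_surj_implies_overline_bij}, and equivalence of (1) and (3) via Lemma~\ref{lm:A} together with the identification $\delta_Y = l_0$ (which the paper states in the discussion preceding the lemma and you justify more explicitly). One small framing point: you don't actually need to argue that $\overline{A}$ itself is coherent, since Lemma~\ref{lm:A} is already stated for a coherent theory $A$ with a compatible system of relations $\Gamma$, and you simply apply it with $\Gamma = \Gamma_{\overline{k}}$.
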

\begin{proof}
Universal surjectivity and bijectivity are equivalent by Proposition~\ref{prop:overline_surj_implies_overline_bij}.
If one of these holds, then clearly $l_0$ is rational.
If $l_0$ is rational, 
then universal surjectivity follows from Lemma~\ref{lm:A} by recalling that $l_0=\delta_Y$ in $\overline{A}(Y_{k(Y)})$.
\end{proof}

\begin{Rk}
If $Y$ is a projective homogeneous variety and has a rational point over $k$, then $Y$ is rational (see e.g.~\cite[Th.~21.20]{Borel}).
Therefore $k(Y)$ is $A$-universally bijective for all oriented theories $A$ (cf. Example~\ref{ex:univ-bij_transcendental}).

Thus, interesting examples of $\overline{A}$-universally surjective extensions
come only from anisotropic projective homogeneous varieties.
\end{Rk}

\begin{Ex}
\label{ex:overline-Kn-univ-surj-quad}
\label{ex:quadric_kn_univ_surj}
Let $Q$ be a $k$-smooth projective quadric of dimension at least $2^{n+1}-1$.

Then $k(Q)/k$ is $\oKn$-universally bijective.
In particular, if $\dim Q\ge 3$,
then $k(Q)/k$ is $\overline{\KK/2}$-universally bijective.
\end{Ex}
\begin{proof}
Class $l_0$ lies in $\oKn(Q)$ by Corollary~\ref{prop:l_0_rational_K(n)}.
\end{proof}

\subsubsection{Universal splitting field extension for an element in Galois cohomology}

We can also compare two field extensions of $k$
by looking at what happens with $\overline{A}$-motivic decompositions over them.

\begin{Def}
Let $K_1, K_2$ be f.g.\ field extension of $k$.
They
are called $\overline{A}$-equivalent if for every projective $X\in \Sm_k$
the groups $\overline{A}(X_{K_1})$ and $\overline{A}(X_{K_2})$ (as subgroups of $A(X_{\overline{K_1\cdot K_2}})$) coincide.
\end{Def}

From the previous results we can deduce the following criterion to decide the $\overline{A}$-equivalence of field extensions.

\begin{Prop}\label{prop:A-eq-fields}
Let $Y_1, Y_2$ be smooth projective homogeneous varieties.

Then $k(Y_1)$ and $k(Y_2)$ are $\overline{\A}$-equivalent
iff $l_0^{Y_2}$ lies in $\bar{A}((Y_2)_{k(Y_1)})$ and $l_0^{Y_1}$ lies in in $\overline{A}((Y_1)_{k(Y_2)})$. 
\end{Prop}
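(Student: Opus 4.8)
The plan is to reduce the statement to two applications of Lemma~\ref{lm:B}, one over the base field $k(Y_1)$ and one over $k(Y_2)$. Write $L:=k(Y_1\times_k Y_2)$ for the function field of the product; since $Y_1$ and $Y_2$ are geometrically irreducible (being projective homogeneous), $Y_1\times_k Y_2$ is irreducible and one has
$$
L=k(Y_1)\cdot k(Y_2)=k(Y_1)\bigl((Y_2)_{k(Y_1)}\bigr)=k(Y_2)\bigl((Y_1)_{k(Y_2)}\bigr),
$$
the last two equalities because $(Y_2)_{k(Y_1)}$, resp.\ $(Y_1)_{k(Y_2)}$, is the generic fibre of a projection from $Y_1\times_k Y_2$. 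In particular $\overline L=\overline{k(Y_1)\cdot k(Y_2)}$ is exactly the algebraically closed field appearing in the definition of $\overline A$-equivalence, and for the projective homogeneous test varieties $Y_1,Y_2$ one has $A\bigl((Y_i)_{\overline L}\bigr)=A(\overline{Y_i})$.

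For the direction ($\Leftarrow$): assume $l_0^{Y_2}\in\overline A\bigl((Y_2)_{k(Y_1)}\bigr)$ and $l_0^{Y_1}\in\overline A\bigl((Y_1)_{k(Y_2)}\bigr)$. Regarding $(Y_2)_{k(Y_1)}$ as an irreducible projective homogeneous variety over $k(Y_1)$, the first hypothesis is precisely condition~(3) of Lemma~\ref{lm:B} over the base $k(Y_1)$ (note that $l_0$ is stable under base change), so $L/k(Y_1)$ is $\overline A$-universally bijective; symmetrically $L/k(Y_2)$ is $\overline A$-universally bijective. Hence for every projective $X\in\Sm_k$ the base-change maps $\overline A(X_{k(Y_1)})\to\overline A(X_L)$ and $\overline A(X_{k(Y_2)})\to\overline A(X_L)$ are isomorphisms. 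Since, by Remark~\ref{rk:overline_proj} and functoriality of the extended theory $\overline A$, the map $\overline A(X_{k(Y_i)})\to A(X_{\overline L})$ entering the definition of equivalence factors as $\overline A(X_{k(Y_i)})\isom\overline A(X_L)\hookrightarrow A(X_{\overline L})$, the images of $\overline A(X_{k(Y_1)})$ and of $\overline A(X_{k(Y_2)})$ in $A(X_{\overline L})$ both equal the image of $\overline A(X_L)$, hence coincide. Thus $k(Y_1)$ and $k(Y_2)$ are $\overline A$-equivalent.

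For the direction ($\Rightarrow$): assume $k(Y_1)$ and $k(Y_2)$ are $\overline A$-equivalent and apply the definition with $X=Y_2$. Over $k(Y_2)$ the variety $(Y_2)_{k(Y_2)}$ carries the diagonal rational point $\delta_{Y_2}$, whose class lies in $A\bigl((Y_2)_{k(Y_2)}\bigr)$ and restricts to $l_0^{Y_2}$ in $A(\overline{Y_2})$ (cf.\ the discussion preceding Lemma~\ref{lm:B}); hence $l_0^{Y_2}\in\overline A\bigl((Y_2)_{k(Y_2)}\bigr)$. By $\overline A$-equivalence the subgroups $\overline A\bigl((Y_2)_{k(Y_1)}\bigr)$ and $\overline A\bigl((Y_2)_{k(Y_2)}\bigr)$ of $A(\overline{Y_2})$ coincide, so $l_0^{Y_2}\in\overline A\bigl((Y_2)_{k(Y_1)}\bigr)$; taking instead $X=Y_1$ gives $l_0^{Y_1}\in\overline A\bigl((Y_1)_{k(Y_2)}\bigr)$.

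The substantive content is thus entirely supplied by Lemma~\ref{lm:B} — i.e.\ by Lemma~\ref{lm:A} together with Proposition~\ref{prop:overline_surj_implies_overline_bij} — used over the two non-trivial base fields $k(Y_1)$ and $k(Y_2)$. The only point that needs care is the bookkeeping: checking that the various base-change homomorphisms relating $\overline A$ over $k$, $k(Y_1)$, $k(Y_2)$ and $L$ are compatible and transport $l_0$ to $l_0$, which is automatic because $\overline A$ is an extended coherent theory and, for projective $Y$, $\overline A(Y)=\mathrm{Im}\bigl(A(Y)\to A(\overline Y)\bigr)$ by Remark~\ref{rk:overline_proj}. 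I do not expect any obstacle beyond this.
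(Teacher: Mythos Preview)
Your proof is correct and follows essentially the same route as the paper's: both directions rest on Lemma~\ref{lm:B} applied over the two base fields $k(Y_1)$ and $k(Y_2)$ to conclude that $\overline A(X_{k(Y_1)})\cong\overline A(X_L)\cong\overline A(X_{k(Y_2)})$, with necessity coming from the rationality of the diagonal point. You have merely spelled out in more detail what the paper records as ``the necessity is clear'' and the one-line sufficiency argument.
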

\begin{proof} The necessity is clear.
For sufficiency note that by Lemmata~\ref{lm:B} and~\ref{lm:overline_inj} 
the map $\overline{A}(X_{k(Y_1)})\rarr \overline{A}(X_{k(Y_1)(Y_2)})$
is an isomorphism for every projective $X$, because $l_0^{Y_2}$ is rational over $k(Y_1)$.
And similarly for $Y_1$ and $Y_2$ interchanged, thus $\overline{A}(X_{k(Y_1)})=\overline{A}(X_{k(Y_1)(Y_2)})=\overline{A}(X_{k(Y_2)})$.
\end{proof}

\begin{Ex}
Let $\alpha \in \mathrm{H}^{n+1}(k,\,\ZZ/2)$,
and let $q_1, q_2 \in I^{n+1}(k)$ two of the lifts of $\alpha$ in the Witt ring 
(which exist by the validity of the Milnor conjectures).
Let $K_i$ be the generic splitting field extension of $q_i$.

Then $K_1$ and $K_2$ are $\oKn$-equivalent.
\end{Ex}

\begin{proof}
By our construction $(q_2)_{K_1}\in I^{n+2}(K_1)$
and therefore $\MKn{(Q_2)_{K_1}}$ is split by \cite[Prop.~6.18]{SechSem} (cf.~Example~\ref{ex:Kn_split_univ_surj}). 
In particular, $l_0 \in \oKn((Q_2)_{K_1})$. 
By symmetry we obtain the other assumption of Proposition~\ref{prop:A-eq-fields} and the claim follows.
\end{proof}

\begin{Rk}
Note that the fields $K_1$, $K_2$ 
do not have to be stably birational.

For example, let $q_1$ be an $(n+1)$-Pfister form, $q_2 = q_1 \perp \widetilde{q}$
where $\widetilde{q}$ is a $(n+k)$-Pfister form that does not split over $k(q_1)$, $k\ge 2$ (for example, one can take a ``generic'' $\widetilde{q}$).
Then $[\widetilde{q}]\in \HH^{n+k}(k,\,\ZZ/2)$ vanishes over $K_2$, but not over $K_1$.
\end{Rk}

The fields described in this example will play 
a special role in the study of the Picard group of $\Kn$-motives 
(see Sections~\ref{sec:iso-motives-kalpha},~\ref{sec:MDT_over_k(alpha)},~\ref{sec:milnor_k-theory_picard})
which motivates the following.

\begin{Def}\label{def:k(alpha)}
Let $\alpha \in \mathrm{H}^{n+1}(k,\,\ZZ/2)$.

We denote by $k(\alpha)$ the class of $\oKn$-equivalent field extensions of $k$
equivalent to the generic splitting field of any quadric $q\in I^{n+1}(k)$ 
such that $[q]=\alpha$ in $I^{n+1}(k)/I^{n+2}(k)\cong \mathrm{H}^{n+1}(k,\,\ZZ/2)$.  
\end{Def}

\subsection{Base change for $\overline{A}$-motives}

\begin{Prop}\label{prop:reflects_MD}
Let $A$ be an extended oriented theory, and $k(Y)/k$ be a finitely generated field extension. 
Denote by $S$ the class of $A$-motives over $k$
such that they satisfy RNP for the field extension $\overline{k}/k$ 
and their restriction to $k(Y)$ satisfies RNP for $\overline{k(Y)}/k(Y)$. 

If $k(Y)/k$ is $\overline{A}$-universally surjective,
then $\mathrm{res}_{k(Y)/k}\colon\PM_A(k)\rarr \PM_A(k(Y))$ reflects motivic decompositions of objects in $S$.

In particular, this holds if $A$ is a free theory, $Y$ is a projective homogeneous variety
such that $l_0 \in \overline{A}(Y)$,
 and $S$ consists of direct summands of projective homogeneous $G$-varieties for all semi-simple algebraic groups $G$.
\end{Prop}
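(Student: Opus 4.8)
The plan is to reduce the statement about reflection of motivic decompositions to the criterion of Vishik--Yagita already invoked in the proof of Proposition~\ref{prop:A-univ-surj_reflects_MD}, namely that it suffices to check the three conditions $(\ast)$ of \cite[p.~587]{VishYag} for the base change functor $\mathrm{res}_{k(Y)/k}$, restricted to the class $S$. Condition (0) and (1) of $(\ast)$ — surjectivity of $\End(M)\rarr \End(M_{k(Y)})$ and the appropriate grading/coefficient hypotheses — will follow from the $\overline{A}$-universal surjectivity assumption together with Lemma~\ref{lm:overline_inj}: for $M\in S$ its endomorphism ring sits inside $\overline{A}$ of a product of splitting varieties, and the map $\overline{A}(X_k)\rarr \overline{A}(X_{k(Y)})$ is an isomorphism for every projective $X$ by Lemma~\ref{lm:overline_inj} and $\overline{A}$-universal surjectivity. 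Condition (2) of $(\ast)$ is precisely $A$-RNP for $M$ over $k(Y)/k$, which is built into the definition of $S$ (we assumed RNP for $\overline{k}/k$ and, after restriction, for $\overline{k(Y)}/k(Y)$, and RNP over the algebraic closure implies RNP over any intermediate field extension).

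In more detail, the first step is to observe that $\End_{\PM_A(k)}(M)$ for $M\in S$ is, up to choosing a realization of $M$ as a summand of $\MA{Z}$ for $Z$ projective homogeneous, a subquotient of $\A(Z\times_k Z)$, hence — since $Z\times Z$ becomes cellular over a finite splitting field — a submodule of the free $\Apt$-module $\A(\overline{Z\times Z})$; thus $\End(M)\hookrightarrow \overline{A}(Z\times Z)$ and likewise $\End(M_{k(Y)})\hookrightarrow \overline{A}((Z\times Z)_{k(Y)})$, compatibly. The second step applies Lemma~\ref{lm:overline_inj}: because $k(Y)/k$ is $\overline{A}$-universally surjective, the right vertical map in the evident commuting square is a bijection $\overline{A}(Z\times Z)\isom \overline{A}((Z\times Z)_{k(Y)})$. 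Combining with the previous inclusion, one deduces that $\End(M)\rarr \End(M_{k(Y)})$ is surjective (indeed injective as well). The third step records that the kernel of this map is nilpotent: it is contained in the kernel of $\End(M)\rarr \End(M_{\overline{k}})$, which is nil by the RNP hypothesis defining $S$. With $(\ast)$ verified, \cite[Prop.~2.2, 2.3, 2.5]{VishYag} gives lifting of idempotents and of isomorphisms, i.e.\ the functor reflects motivic decompositions of objects in $S$.

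For the final ``in particular'' clause: when $A$ is a free theory, $\A(k)\isom \A(K)$ for every field extension $K/k$, so the coefficient hypotheses of $(\ast)$ are automatic; $Y$ being projective homogeneous with $l_0\in\overline{A}(Y)$ gives $\overline{A}$-universal surjectivity of $k(Y)/k$ by Lemma~\ref{lm:B}; and direct summands of projective homogeneous $G$-varieties satisfy $A$-RNP over any field extension — in particular over $\overline{k}/k$ and over $\overline{k(Y)}/k(Y)$ — by \cite{GilleVishik}, so they lie in $S$. The main obstacle I expect is purely bookkeeping: making precise that ``$M$ satisfies RNP for $\overline{k}/k$ and its restriction satisfies RNP for $\overline{k(Y)}/k(Y)$'' really does deliver condition (2) of $(\ast)$ for the extension $k(Y)/k$ — one must check that nilpotence of $\Ker(\End(M)\rarr\End(M_{\overline{k}}))$ plus $\Ker(\End(M_{k(Y)})\rarr\End(M_{\overline{k(Y)}}))$ implies nilpotence of $\Ker(\End(M)\rarr\End(M_{k(Y)}))$, which follows since the second kernel is nil and the first surjects onto a nil ideal modulo it, but the argument should be spelled out carefully. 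The surjectivity half of $(\ast)$ via Lemma~\ref{lm:overline_inj} is the genuinely new input and is where the $\overline{A}$-universal surjectivity hypothesis is used essentially.
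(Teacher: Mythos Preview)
Your proposal has a genuine gap in the surjectivity step. The claim that $\End_{\PM_A(k)}(M)\hookrightarrow \overline{A}(Z\times Z)$ is false: the map $A(Z\times Z)\rarr \overline{A}(Z\times Z)$ is a \emph{quotient} map (by definition of $\overline{A}$, cf.\ Remark~\ref{rk:overline_proj}), not an inclusion. The RNP hypothesis only tells you its kernel is nilpotent, not zero. So your deduction that $\End(M)\rarr\End(M_{k(Y)})$ is surjective breaks down, and in fact this map need \emph{not} be surjective under the hypotheses of the proposition: Example~\ref{ex:univ_surj_not_bij} exhibits an $\overline{A}$-universally surjective extension that is not $A$-universally surjective, so condition (0)/(1) of $(\ast)$ for the horizontal functor $\mathrm{res}_{k(Y)/k}$ on $A$-motives simply fails. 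Your approach also tacitly assumes $M$ is a summand of a projective homogeneous variety, whereas the general statement only assumes RNP.

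The paper's argument avoids this by factoring through $\overline{A}$-motives rather than trying to verify $(\ast)$ directly for the top functor. One considers the commuting square
\[
\begin{tikzcd}
\PM_A(k) \arrow{r} \arrow{d} & \PM_A(k(Y)) \arrow{d} \\
\PM_{\overline{A}}(k) \arrow{r} & \PM_{\overline{A}}(k(Y)).
\end{tikzcd}
\]
The \emph{vertical} functors satisfy $(\ast)$ on $S$ (surjectivity of $A\rarr\overline{A}$ is by construction, nilpotence of the kernel is exactly the RNP hypothesis over $\overline{k}/k$ and $\overline{k(Y)}/k(Y)$), hence reflect motivic decompositions. The \emph{bottom horizontal} functor is fully faithful on $S$ by Proposition~\ref{prop:overline_surj_implies_overline_bij}. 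The reflection property for the top horizontal then follows by a diagram chase: a decomposition or isomorphism of $M_{k(Y)}$ descends to $\overline{A}$-motives over $k(Y)$, transports across the bottom isomorphism to $\overline{A}$-motives over $k$, and lifts back up the left vertical. The ``in particular'' clause is then exactly as you outlined.
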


\begin{proof}

We have the following 
diagram of restriction functors:

\begin{center}
	\begin{tikzcd}
			\PM_A(k) \arrow{r} \arrow{d} & \PM_A(k(Y)) \arrow{d} \\
			\PM_{\overline{A}}(k) \arrow{r} & \PM_{\overline{A}}(k(Y))
	\end{tikzcd}
\end{center}

The restrictions of vertical functors to to the full subcategories of objects in $S$, resp. $S_{k(Y)}$, 
satisfy the conditions ($\ast$) of \cite[p.~587]{VishYag}, 
and therefore they induce an isomorphism on the set of isomorphism classes of objects by [loc.\ cit., Prop.~2.2].
In particular, they reflect motivic decompositions of objects in $S$, resp. $S_{k(Y)}$.

The lower horizontal functor induces isomorphisms on morphisms between objects in $S$
by our assumption and Proposition~\ref{prop:overline_surj_implies_overline_bij}. 
The claim about the upper horizontal functor then follows.

Finally, if $A$ is a free theory, then the class of motives of projective homogeneous varieties (as described in the Proposition)
satisfies RNP by the results of \cite{GilleVishik}. The condition $l_0\in \overline{A}(Y)$ is 
equivalent to $k(Y)/k$ being $\overline{A}$-universally surjective by Lemma~\ref{lm:B},
\end{proof}

\begin{Ex}
Let $A:=\KK/2$ and $Q$ be a smooth quadric of dimension at least $3$.
Then $\mathrm{res}_{k(Q)/k}$ reflects motivic decompositions of projective homogeneous varieties.
\end{Ex}

The following is the main property that we will use in the next section
to study $\Kn$-motivic decompositions of quadrics.

\begin{Ex}
\label{ex:k(Q)/k_reflects_MD_Kn}
Let $Q$ be a quadric of dimension at least $2^{n+1}-1$.

Then $\mathrm{res}_{k(Q)/k}\colon\CM_{\Kn}(k)\rarr \CM_{\Kn}(k(Q))$ 
reflects motivic decompositions of projective homogeneous varieties.
\end{Ex}

We finish this section by the following statement
that we will use in Section~\ref{sec:one_direction_guiding_principle}
to relate splitting of $\Kn$-motives to the existence of cohomological invariants.

\begin{Cr}
\label{cr:injectivity_galois_cohomology}
Let $X$ be a projective homogeneous variety over $k$ 
such that $l_0$ is $\oKn$-rational in $X$. 

Then the canonical morphism 
$$ \HH^{m} (k,\mu_p^{\otimes s}) \rarr  \HH^{m} (k(X), \mu_p^{\otimes s})  $$
is injective for $m$ such that $2\le m\le n+1$ and for all $s$. 

If, moreover, $\mu_{p^r}\subset k$, 
then the canonical morphism
$$ \HH^{m} (k,\ZZ/p^r) \rarr  \HH^{m} (k(X), \ZZ/p^r)  $$
is injective for 
all $m$ such that $2\le m\le n+1$, and for all $s$. 
\end{Cr}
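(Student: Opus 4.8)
The plan is as follows. By Lemma~\ref{lm:B}, the assumption that $l_0$ is $\oKn$-rational in $X$ says exactly that $k(X)/k$ is $\oKn$-universally bijective, and this is the only way it enters. We first strip off the coefficients. For part (1), with an arbitrary twist, put $k':=k(\mu_p)$; since $[k':k]$ is prime to $p$ the restrictions $\HH^m(k,\,\mu_p^{\otimes s})\to\HH^m(k',\,\mu_p^{\otimes s})$ and $\HH^m(k(X),\,\mu_p^{\otimes s})\to\HH^m(k'(X),\,\mu_p^{\otimes s})$ are injective (restriction--corestriction), over $k'$ one has $\mu_p^{\otimes s}\cong\ZZ/p$, the class $l_0$ remains $\oKn$-rational over $k'$, and $k'(X)=k(X)\cdot k'$; a diagram chase then reduces part (1) to injectivity of $\HH^m(k',\,\ZZ/p)\to\HH^m(k'(X),\,\ZZ/p)$. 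For part (2) the coefficients are already of the form $\ZZ/p^r\cong\mu_{p^r}^{\otimes s}$. In either case, by the Bloch--Kato conjecture $\HH^m(k,\,\ZZ/p^r)\cong\kmtech_m(k)/p^r$ is generated by symbols, so it suffices to treat symbols.

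More precisely, we prove by induction on symbol length, uniformly in the base field: for every field $k$ with $\mu_{p^r}\subset k$, every $X/k$ with $l_0$ $\oKn$-rational in $X$, and every $\beta\in\HH^m(k,\,\ZZ/p^r)$ of symbol length $\le\ell$ with $2\le m\le n+1$ and $\beta_{k(X)}=0$, one has $\beta=0$. The length-$1$ case is the symbol statement treated below. For the inductive step write $\beta=\gamma+\beta'$ with $\gamma$ a symbol and $\beta'$ of length $\le\ell-1$, and pass to $K:=k(X_\gamma)$, the function field of a norm variety of $\gamma$: there $\gamma$ vanishes, $\beta$ is still killed by $K(X)$, the class $l_0$ stays $\oKn$-rational in $X_K$, and $\beta|_K=\beta'|_K$ has length $\le\ell-1$, so by induction $\beta|_K=0$; by the known description of $\ker\big(\HH^m(k)\to\HH^m(K)\big)$ for the function field of a norm variety of a symbol, this forces $\beta$ to be a multiple of $\gamma$, hence itself a symbol, and the length-$1$ case applied to $\beta$ gives $\beta=0$.

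It remains to handle a nonzero symbol $\gamma$ of degree $m$, $2\le m\le n+1$. Let $X_\gamma$ be a norm variety of $\gamma$, let $R_\gamma\xhookrightarrow{\oplus}\Mot{\CH}{X_\gamma}$ be the associated Rost motive, and let $(R_\gamma)_{\Kn}$ be its $\Kn$-specialization. As $R_\gamma$ splits over $\overline k$ it is generically split, so by the extension of the Vishik--Zainoulline theorem to arbitrary oriented theories (Appendix~\ref{app:a}, cf.~\cite{VishZai}) the $\Kn$-motive $(R_\gamma)_{\Kn}$ satisfies $\Kn$-RNP for every field extension; the same holds after base change to $k(X)$. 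By the computation of the $\Kn$-specialization of Rost motives (\cite{Yag}, cf.~\cite{SechSem} and Proposition~\ref{prop:prelim_L_alpha_Rost}), and using that $m\le n+1$ --- for $m\ge n+2$ the specialization $(R_\gamma)_{\Kn}$ is always split --- the motive $(R_\gamma)_{\Kn}$ is split over a field $K/k$ if and only if $\gamma_K=0$; in particular it is non-split over $k$. Now if $\gamma_{k(X)}=0$, then $(R_\gamma)_{\Kn}$ splits over $k(X)$. But $k(X)/k$ is $\oKn$-universally surjective, so Proposition~\ref{prop:reflects_MD} (applied to the free theory $\Kn$, whose associated theory of rational elements is $\oKn$) shows that $\mathrm{res}_{k(X)/k}\colon\CM_{\Kn}(k)\to\CM_{\Kn}(k(X))$ reflects motivic decompositions of all $\Kn$-motives satisfying $\Kn$-RNP over $\overline k/k$ and over $\overline{k(X)}/k(X)$, in particular of $(R_\gamma)_{\Kn}$. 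Since this functor sends Tate motives to Tate motives and reflects isomorphisms (Definition~\ref{def:reflects_MD}), a motive non-split over $k$ cannot become split over $k(X)$ --- a contradiction. Hence $\gamma_{k(X)}\ne0$, and part (2) follows; part (1) follows from it by the reductions of the first paragraph.

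The step carrying the real content --- and the main obstacle --- is the input about Rost motives used in the base case: that $(R_\gamma)_{\Kn}$ is non-split over $k$ exactly when $\gamma\ne0$, throughout the range $2\le m\le n+1$, i.e.\ that the $\Kn$-specialization still detects the symbol as long as its degree does not exceed $n+1$. For $p=2$ this is precisely Proposition~\ref{prop:prelim_L_alpha_Rost}; for odd $p$ and for $\ZZ/p^r$-coefficients with $r\ge2$ it relies on the existence of suitable norm varieties and on the cobordism computations of \cite{Yag}. The remaining ingredients --- the reductions on coefficients, and the identification of $\ker\big(\HH^m(k)\to\HH^m(k(X_\gamma))\big)$ for a symbol $\gamma$ --- are standard consequences of the Bloch--Kato conjecture.
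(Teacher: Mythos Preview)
The symbol case matches the paper's proof: use that the $\Kn$-specialization of the Rost motive $R_\gamma$ is non-split iff $\gamma\neq 0$ in the range $2\le m\le n+1$, together with Proposition~\ref{prop:reflects_MD} to transport non-splitness along $k(X)/k$. (A minor slip: you justify RNP for $(R_\gamma)_{\Kn}$ via Appendix~\ref{app:a} and ``generically split'', but that appendix establishes the \emph{geometric} RNP --- nilpotence of the kernel upon restriction to the generic point of a base --- not RNP for $\overline k/k$; the latter comes from \cite[4.2]{GilleVishik}, as the paper cites.) For the passage from symbols to general elements in $\HH^m(k,\ZZ/p)$ your route differs from the paper's. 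You induct on symbol length and invoke the kernel computation $\ker\!\big(\HH^m(k,\ZZ/p)\to\HH^m(k(X_\gamma),\ZZ/p)\big)=\langle\gamma\rangle$ for a norm variety $X_\gamma$; the paper instead uses the specialization trick --- for any nonzero $\beta$ there is $K/k$ with $\beta_K$ a nonzero symbol (\cite[Th.~2.10]{OVV}, \cite[Th.~5.1]{Sem-coh-inv}, or via \cite{MerSus-rost}) --- and then runs the symbol argument over $K$. Both work and rest on inputs of comparable depth.

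There is, however, a genuine gap in your handling of $\ZZ/p^r$-coefficients for $r\ge 2$. Norm varieties are attached to symbols modulo $p$, not modulo $p^r$: passing to $K=k(X_\gamma)$ only makes $\gamma$ divisible by $p$ in $\kmtech_m(K)/p^r$, it does not force $\gamma_K=0$, so the step ``there $\gamma$ vanishes'' fails and your induction on length does not go through. The kernel description you appeal to is likewise unavailable with $\ZZ/p^r$-coefficients, and there is no Rost motive attached to a symbol in $\HH^m(k,\ZZ/p^r)$ for $r\ge 2$ to feed into the base case. The paper avoids all of this: once injectivity is established for $\ZZ/p$-coefficients, Bloch--Kato gives natural short exact sequences $0\to\HH^m(F,\ZZ/p^s)\to\HH^m(F,\ZZ/p^{s+1})\to\HH^m(F,\ZZ/p)\to 0$ for every field $F$ with $\mu_{p^r}\subset F$, and a diagram chase plus induction on $s\le r-1$ yields injectivity for $\ZZ/p^r$.
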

\begin{proof}
    Note that the degree of the field extension $k(\mu_p)/k$ is prime to $p$,
    and therefore by the projection formula the base change of cohomology groups with $p$-torsion coefficients 
    from $k$ to $k(\mu_p)$ is injective. Thus, to prove the first claim we can assume $\mu_p\subset k$
    and ignore different Tate twists of the $p$-th roots of unity.

    In this case, let $R_\alpha$ be the generalized Rost motive
    that corresponds to  a symbol $\alpha$ in $\HH^m(k,\,\ZZ/p)$, where $m$ is such that $2\le m\le n+1$ (see \cite{Voe_Zl, SusJou, KarpMer_norm}).
    By its properties, $R_\alpha$ splits over a field extension $K$
    if and only if $\alpha_K=0$. Moreover, the same is true about the $\Kn$-specialization of the Rost motive by \cite[Prop.~6.2]{SechSem}.
    The motive $R_\alpha$ as well as its specializations satisfy Rost Nilpotency Property by~\cite[4.2]{GilleVishik},
    and therefore by applying Proposition~\ref{prop:reflects_MD} we get that $\left((R_\alpha)_{\Kn}\right)_{k(X)}$ is not split, 
    and hence $\alpha_{k(X)}\neq 0$. In other words, there are no symbols in the kernel of the morphism
    $\HH^m(k,\,\ZZ/p)\rarr \HH^m(k(X),\,\ZZ/p)$.

    Then it follows from the Bloch--Kato conjecture~\cite{Voe_Zl} 
    that the kernel is trivial. 
    For every element $\beta\in \HH^m(k,\,\ZZ/p)$ 
    there exists a field extension $K/k$ such that $\beta_K$ is a non-trivial symbol:
    see~\cite[Th.~2.10]{OVV} for $p=2$;
    a similar argument works for odd $p$ as well, once the motivic cohomology of the \v{C}ech scheme corresponding to the norm varieties $X_\alpha$
    are computed, cf. \cite[Sec.~6]{Voe_Zl}, see e.g. \cite[Th.~5.1]{Sem-coh-inv}; alternatively, 
    one can also use \cite[Th.~2.1]{MerSus-rost} in the argument of \cite[Th.~2.10]{OVV}.
    By applying the above argument over $K$
    we get that $\beta_{K(X)}$ does not vanish, and hence $\beta_{k(X)}\neq 0$.

    Finally, to show the claim for $\ZZ/p^r$-coefficients under the assumption that $\mu_{p^r}\subset k$
    we again use the Bloch--Kato conjecture. It follows from its validity that the natural sequence
    $$ 0\rarr \HH^m(k,\,\ZZ/p^s) \rarr \HH^m(k,\,\ZZ/p^{s+1}) \rarr \HH^m(k,\,\ZZ/p) \rarr 0$$
    is exact for $s\le r-1$. Thus, the claim about $\ZZ/p^r$-torsion coefficients can be deduced inductively
    from the case $\ZZ/p$ treated above.
\end{proof}

\section{Morava motivic decompositions of quadrics}\label{sec:morava_quadrics}

In this section, we give a complete description of the $\Kn$-motivic decomposition of a quadric $Q$
 in terms of the Chow motivic decomposition of $Q$ over a certain field extension
 in the generic splitting tower of $Q$. 
 
Let us recall the previously known results about $\Kn$-motives of quadrics for $n\ge 2$.
 In \cite[Prop.~6.18,~6.21]{SechSem} 
 it was shown 
 that $\MKn{Q}$ is split if and only if $q\in I^{n+2}(k)$ or $q \in \langle c \rangle+I^{n+2}(k)$ for some $c\in k^\times$. 
 In \cite[Prop.~7.1]{LPSS} it was shown that for any quadric $Q$ of dimension at least $2^n-1$
 there are $\dim Q-2^n+2$ Tate summands that split off from $\MKn{Q}$, and the complement of them
 is indecomposable in the case when $Q$ is a generic quadric [loc.\ cit., Th.~7.8]. 
 Also, the $\Kn$-specialization of the Rost motives $R_\alpha$
 was discussed in \cite[Prop.~6.2]{SechSem}, thus, providing decompositions of the Morava motives of excellent quadrics.
 All of these statements follow from the results of this section, and the proofs here do not depend on the cited papers.

We show that the study 
of the $\Kn$-motivic decompositions of quadrics 
breaks into three cases\footnote{The terminology is inspired by the stabilization of $\Kn(\mathrm{SO}_m)$
 discovered in \cite{LPSS}.}:

\begin{center}
    \begin{tabular}{l l}
         stable & $\dim Q > 2^{n+1}-2$ \\
         pre-stable &  $2^n-1 \le \dim Q \le 2^{n+1}-2$ \\
         unstable & $\dim Q < 2^n-1$
    \end{tabular}
\end{center}

The stable case (Section~\ref{sec:stable}) can be reduced to the pre-stable one using the results of the previous section.
The functor of base change from $k$ to $k(Q)$, where $\dim Q\ge 2^{n+1}-1$, reflects motivic decompositions (Example~\ref{ex:k(Q)/k_reflects_MD_Kn}).
Thus, one can base change along the generic splitting tower of quadric until the anisotropic dimension of the quadric is 
in the pre-stable case.

On the other hand, for all smooth projective varieties of dimension less than $2^n-1$ 
the endomorphisms of their $\Kn$-motives are identified
with the endomorphisms of their Chow motive by the results of \cite{Sech2}.
In particular, the decomposition of the $\Kn$-motive of a quadric ``repeats''
the decomposition its Chow motive in the unstable case (Section~\ref{sec:unstable}).

Thus, the most interesting case is the pre-stable one (Section~\ref{sec:prestable}).
For every such quadric $Q$ there are always $\dim Q-2^n+2\ge 1$ Tate summands in $\Mot{\Kn}{Q}$,
therefore the substantial information of the motive is concentrated in the complement to these Tate motives,
which we call the $\Kn$-kernel motive of $Q$ and denote $\Mker{Q}$. 
Despite the fact that $\Mker{Q}$ is more ``compact'' than $\Mot{\CH}{Q}$,
 we show in Theorem~\ref{th:prestableMDT}  that it carries essentially the same information,
  and this is the main result of this section.
In particular, the decomposition type of the $\Kn$-motive of $Q$
is obtained from the Chow-MDT by ``cutting out'' the central part
of length $2^n-1$ (Section~\ref{sec:morava-mdt}),
and the Chow-MDT can be recovered 
by adding excellent connections of length $2^n-1$ (see Proposition~\ref{prop:outer-excel})
to the decomposition of this central part.

\subsection{Unstable case}\label{sec:unstable}

The easiest case of comparison between Chow and Morava motives
is when the dimensions of the varieties are smaller than $2^n-1$.
Note, however, that Theorem~\ref{th:prestableMDT}, proven independently of this result, covers this case as well.

\begin{Prop}[{\cite[Cor.~6.6]{Sech2}}]\label{prop:unstable_quadric}
Let $Q_1, Q_2$ be smooth anisotropic quadrics of dimensions less than $2^n-1$.

Then $\mathrm{End}(\MKn{Q_i})$ is canonically isomorphic to $\mathrm{End}(\MCh{Q_i})$ for $i=1,2$,
and so there is a $1$-to-$1$ correspondence between summands of $\MKn{Q_i}$
and $\MCh{Q_i}$.

Moreover, if $N_i$ is a summand of $\MKn{Q_i}$, $i=1,2$
and $N_1\cong N_2$, then the corresponding Chow summands are also isomorphic.
\end{Prop}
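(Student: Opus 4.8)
The statement is \cite[Cor.~6.6]{Sech2}, so one may simply quote it; the plan below reconstructs its proof, and it uses only the dimension bound (not anisotropy). Write $D_i=\dim Q_i$, so $D_i<2^n-1$. For any oriented theory $\A$ and irreducible quadrics one has $\Hom_{\CM_{\A}}(\Mot{\A}{Q_i},\Mot{\A}{Q_j})=\A^{D_j}(Q_i\times Q_j)$, and any morphism of theories induces a functor on motives compatible with $\oplus$ and with composition of correspondences (Section~\ref{sec:prelim_vishik_yagita}). I would route the comparison through the connective theory $\CKn$, using the two canonical morphisms of theories $\CKn\to\Kn$ (inverting $v_n$) and $\CKn\to\Ch$ (with $\Ch=\CKn/(v_n)$), and prove that for $D_1,D_2<2^n-1$ both induced maps
$$\Ch^{D_2}(Q_1\times Q_2)\longleftarrow\CKn^{D_2}(Q_1\times Q_2)\longrightarrow\Kn^{D_2}(Q_1\times Q_2)$$
are isomorphisms, and likewise for the other three pairs among $Q_1,Q_2$.

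The tool is the topological filtration $\tau^\bullet$, for which $\gr^j_\tau\CKn(Z)\cong\CH^j(Z)\otimes\F{2}[v_n]$ and $\gr^j_\tau\Kn(Z)\cong\CH^j(Z)\otimes\F{2}[v_n,v_n^{-1}]$ for $j\le2^n$, compatibly with the morphisms $\CKn\to\Ch$ and $\CKn\to\Kn$ (\cite{Sech}, cf.\ \cite[Prop.~6.2]{Sech2}). On $Z=Q_1\times Q_2$, of dimension $D_1+D_2<2^{n+1}-2$, the group $\CKn^{D_2}(Z)$ — and likewise $\Kn^{D_2}(Z)$ and $\Ch^{D_2}(Z)$ — is concentrated in the single graded piece $j=D_2$: a nonzero $\gr^j_\tau$ in cohomological degree $D_2$ forces $j\equiv D_2\bmod(2^n-1)$ with $0\le j\le D_1+D_2$, and $D_1<2^n-1$ rules out the candidate $j=D_2+(2^n-1)$ while $j=D_2-(2^n-1)<0$ is impossible. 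Since $D_2<2^n-1\le2^n$, this piece is $\gr^{D_2}_\tau\cong\CH^{D_2}(Z)\otimes\F{2}=\Ch^{D_2}(Z)$, and $\CKn\to\Ch$, $\CKn\to\Kn$ induce isomorphisms on it (respectively, quotient by $v_n$ in $v_n$-degree $0$, and the inclusion which is onto in that degree). Hence all three groups are canonically identified with $\Ch^{D_2}(Z)$, and the claim follows.

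Granting this, the functors $\Phi_{\CKn\to\Kn}$ and $\Phi_{\CKn\to\Ch}$ are bijective on all the $\Hom$-groups among $\Mot{\CKn}{Q_1},\Mot{\CKn}{Q_2}$ and compatible with composition, so each restricts to an equivalence between the Karoubi-closed additive subcategory generated by $\{\Mot{\CKn}{Q_1},\Mot{\CKn}{Q_2}\}$ and the one generated by, respectively, $\{\MKn{Q_1},\MKn{Q_2}\}$ and $\{\MCh{Q_1},\MCh{Q_2}\}$. Composing them yields the canonical ring isomorphism $\End(\MKn{Q_i})\cong\End(\MCh{Q_i})$, hence a bijection between idempotents, hence between direct summands; and an isomorphism $N_1\cong N_2$ between $\Kn$-summands, being an invertible element of the cut-down $\Hom$-group $\Hom(\MKn{Q_1},\MKn{Q_2})$ sandwiched by the defining idempotents, transports through the equivalence to an isomorphism of the corresponding Chow summands. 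The main obstacle is the middle paragraph: one must keep precise track of cohomological degrees modulo $2^n-1$ to see that within a window of width less than $2^n-1$ the theories $\CKn$, $\Ch$ and $\Kn$ cannot be told apart on the varieties in question — which is exactly where the hypothesis $\dim Q_i<2^n-1$ enters, via the fact that the topological filtration on a fixed graded component of $\Kn$ (or $\CKn$) jumps only every $2^n-1$ steps \cite[Prop.~3.15]{Sech2}.
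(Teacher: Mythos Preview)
Your argument is correct and follows the same underlying mechanism as the paper's proof: both rest on the observation that, for $Z=Q_1\times Q_2$ with $\dim Z<2(2^n-1)$, the topological filtration on the degree-$D_2$ piece of the Morava theory is concentrated in the single step $j=D_2$, forcing the correspondence group to agree with $\Ch^{D_2}(Z)$. The paper simply quotes \cite[Cor.~6.6]{Sech2}, which establishes $(\KnInt)^{D_j}(Q_i\times Q_j)\cong\CH^{D_j}(Q_i\times Q_j)\otimes\Z{2}$ and then reduces mod~$2$; you instead route through the connective theory $\CKn$ and use the two morphisms $\CKn\to\Ch$, $\CKn\to\Kn$. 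This is a cosmetic difference---the integral versus connective intermediary---and both recover the same single-graded-piece argument, so your reconstruction is faithful to what \cite{Sech2} does.
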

\begin{proof}
It is proven in~\cite{Sech2} that under these assumptions $(\Kn^{\mathrm{int}})^{\dim Q_j}(Q_i\times Q_j)$
is canonically isomorphic to $\CH^{\dim Q_j}(Q_i\times Q_j)\otimes \ZZ_{(2)}$,
and hence the same is true for mod $2$ coefficients.
\end{proof}

\subsection{Stable case}\label{sec:stable}

Let $q$ be a non-degenerate quadratic form over $k$,
consider its generic splitting tower (see Section~\ref{sec:prelim_split_tower}):

$$ k=K_0 \subset K_1 \subset \cdots \subset K_h,$$

and let $q_i:=(q_{K_i})_{\an}$ be a quadratic form over $K_i$.

\begin{Def}\label{def:Kn-kernel}
In the notation above, let $j$ be the minimal integer such that $\dim q_j \le 2^{n+1}$.
The quadratic form $q_j$ over $K_j$ is called the $\Kn$-kernel form of $q$.
\end{Def}

Note that in the above definition the field extensions $K_i/K_{i-1}$ for $i\le j$
are function fields of quadrics of dimension at least $2^{n+1}-1$.

\begin{Prop}\label{prop:quad_MDT_stable}
Let $q$ be a quadratic form of dimension $D+2$ that is greater or equal than $2^{n+1}+1$.
Let $q'$ be the $\Kn$-kernel form of $q$, let $K$ be its field of definition and
let $r:=\frac{\dim q - \dim q_j}{2}$.

Then there exists an isomorphism of $\Kn$-motives:
$$ \Mot{\Kn}{Q} \cong N\oplus \bigoplus_{i=0}^r \un(r) \oplus \bigoplus_{j=D-r+1}^D \un(j) $$
where $N$ is uniquely determined up to isomorphism by the property that
 $N_{K}$ is isomorphic to $\MKn{Q'}\sh r$.
\end{Prop}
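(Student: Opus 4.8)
The plan is to prove the statement by induction on $r$, peeling off one quadric at a time from the generic splitting tower using the reflection-of-motivic-decompositions principle established in Example~\ref{ex:k(Q)/k_reflects_MD_Kn}. First I would handle the base case $r=0$, which is trivial: if $\dim q = \dim q'$ then $q$ is already its own $\Kn$-kernel form, $K=k$, the claimed sum of Tate motives is empty, and $N = \MKn Q = \MKn{Q'}$. For the inductive step, write $q = q_{\an}$ (we may assume $q$ anisotropic, since splitting off hyperbolic planes only adds Tate summands via Lemma~\ref{lm:motive_of_isotropic_quadric}, and these can be absorbed into the stated decomposition — this bookkeeping is the one slightly fiddly point and I would treat it carefully at the start). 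Consider the first step of the generic splitting tower $k \subset k(Q)$; over $k(Q)$ the anisotropic part $q_1 := (q_{k(Q)})_{\an}$ has dimension $\dim q - 2i_1(q) \le \dim q - 2$, and its $\Kn$-kernel form is still $q'$ over $K$ (the tail of the splitting tower is unchanged).

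Next I would apply Lemma~\ref{lm:motive_of_isotropic_quadric} repeatedly over $k(Q)$: since $q_{k(Q)} = q_1 \perp \hyp^{\perp i_1(q)}$, we get
$$\Mot{\Kn}{Q_{k(Q)}} \cong \Mot{\Kn}{Q_1'}\sh{1}\oplus\bigoplus_{s} \un\sh{s}$$
for appropriate Tate twists, where $Q_1'$ is the quadric of $q_1$. By the inductive hypothesis applied to $q_1$ over $k(Q)$ (note $\dim q_1 \ge 2^{n+1}-1$ still, or else we are already at the $\Kn$-kernel form and the tower stops), $\Mot{\Kn}{Q_1'}$ decomposes as $N_1 \oplus (\text{Tate motives})$ with $(N_1)_{K}\cong \MKn{Q'}\sh{r-i_1(q)}$. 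Assembling, $\Mot{\Kn}{Q_{k(Q)}}$ is $(N_1)\sh{1}$ (suitably twisted) plus a pile of Tate summands. Now the key point: $\dim Q \ge 2^{n+1}-1$, so by Example~\ref{ex:k(Q)/k_reflects_MD_Kn} the functor $\mathrm{res}_{k(Q)/k}$ reflects motivic decompositions of projective homogeneous varieties. Hence the decomposition of $\Mot{\Kn}{Q_{k(Q)}}$ into the indecomposable pieces of $(N_1)\sh 1$ plus Tate motives lifts to a decomposition $\Mot{\Kn}{Q} \cong N \oplus (\text{Tate motives over }k)$, where $N$ restricts over $k(Q)$ to $(N_1)\sh 1$, hence over $K$ (which contains $k(Q)$) to $\MKn{Q'}$ suitably twisted; tracking the twists gives exactly $N_K \cong \MKn{Q'}\sh r$. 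Uniqueness of $N$ up to isomorphism is automatic: it is the complement of the canonical Tate summands, and Rost Nilpotence (via Proposition~\ref{prop:geometric_RNP} / the reflection property) makes the isomorphism class of such a complement well-defined.

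The main obstacle I anticipate is \emph{bookkeeping of the Tate summands and twists}: one must check that the number of Tate motives split off at each stage is exactly $\dim Q - \dim Q' $ over $\overline k$ (i.e. $2r$ of them, appearing in the two ranges $0 \le i \le r$ and $D-r+1 \le j \le D$ with the right multiplicities), that these are all $k$-rational (not merely $\overline k$-rational), and that they reassemble into precisely the stated indexing. The rationality is the substantive part and is not purely formal — it is exactly where one needs more than Lemma~\ref{lm:motive_of_isotropic_quadric} over $k(Q)$; here one invokes Example~\ref{ex:k(Q)/k_reflects_MD_Kn} again, since a Tate summand of $\Mot{\Kn}{Q_{k(Q)}}$ pulled back from $\Mot{\Kn}{Q}$ must come from a Tate summand over $k$ by the reflection property (Tate motives being indecomposable and having no non-isomorphic forms over $k(Q)$ that fail to descend). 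Once this is in place the twist count follows by comparing $\overline k$-ranks on both sides, and the argument closes. Everything else — existence of the decomposition, the property characterizing $N$, uniqueness — follows formally from the tools assembled in Sections~\ref{sec:univ_surj} and~\ref{sec:mot_dec_base_change}.
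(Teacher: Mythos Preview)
Your proposal is correct and uses essentially the same approach as the paper: both arguments rest on the fact that each step $K_{i-1}\subset K_i = K_{i-1}(Q_{i-1})$ in the generic splitting tower (with $\dim Q_{i-1}\ge 2^{n+1}-1$) reflects $\Kn$-motivic decompositions (Example~\ref{ex:k(Q)/k_reflects_MD_Kn}), so the decomposition of $\Mot{\Kn}{Q_K}$ furnished by Lemma~\ref{lm:motive_of_isotropic_quadric} descends to $k$. The only difference is organizational: the paper composes all the restriction functors $\mathrm{res}_{K_i/K_{i-1}}$ at once and lifts the decomposition in a single step, whereas you unwind this as an induction on $r$ --- your inductive bookkeeping of Tate twists is subsumed in the paper's proof by the single application of Lemma~\ref{lm:motive_of_isotropic_quadric} over $K$, which produces exactly the stated Tate summands in one go.
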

\begin{proof}
By definition, the functor $\mathrm{res}_{K/k}$ is a composition of the functors $\mathrm{res}_{K_i(Q_i)/K_i}$, 
where  $Q_i$ are quadrics in the generic splitting tower of $Q$ of dimension
greater or equal than $2^{n+1}-1$. 
By Example~\ref{ex:k(Q)/k_reflects_MD_Kn}
these functors reflect motivic decompositions.

The motive $\Mot{\Kn}{Q_{K}}$ has motivic decomposition as stated by Lemma~\ref{lm:motive_of_isotropic_quadric}.
\end{proof}

In particular, we can reprove one of the main results of \cite{SechSem} with the methods of this paper.

\begin{Cr}[{\cite[Prop.~6.18,~6.21]{SechSem}}]\label{cr:Kn-split-quad}
Let $q$ be a quadratic form s.t.\ $q\in I^{n+2}(k)$ or $q \in \langle c \rangle+I^{n+2}(k)$ for some $c\in k^\times$.
Then $\Mot{\Kn}{Q}$ and $\Mot{\KnInt}{Q}$ are split.

Let $K$ be the generic splitting field of $q$.
Then $k(Q)/k$ and $K/k$ are $\KnInt$-universally surjective field extensions,
and hence $\oKn$-universally bijective field extensions.
\end{Cr}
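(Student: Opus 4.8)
The plan is to deduce everything from Proposition~\ref{prop:quad_MDT_stable} and the results of Section~\ref{sec:mot_dec_base_change}. First I would treat the case $q\in I^{n+2}(k)$. Here $q$ is even-dimensional, and the leading form $q_{h-1}$ of $q$ is a general $(n+1)$-Pfister form (by the Milnor conjectures, cf.\ Section~\ref{sec:milnor_krs}), so the $\Kn$-kernel form $q'$ of $q$ is a form of dimension $\le 2^{n+1}$ lying in $I^{n+1}(K)$ for its field of definition $K$. Actually it is simpler to go one step further: by definition the $\Kn$-kernel form $q'$ satisfies $\dim q'\le 2^{n+1}$; since $q\in I^{n+2}(k)$ the leading form of $q$ is a $(n+2)$-Pfister form of dimension $2^{n+2}>2^{n+1}$, hence $q'$ itself cannot be the leading form, so $q'_{K(Q')}$ is still in $I^{n+2}$, but more to the point $q'$ becomes split after one more step in its splitting tower. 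The cleanest route: over the generic splitting field $K_h$ of $q$ the form $q$ is split, and $K_h/k$ factors through the tower of function fields of quadrics; the first such quadrics (down to the $\Kn$-kernel form) have dimension $\ge 2^{n+1}-1$, and then $q'$ over $K$ already lies in $I^{n+1}(K)$ with $\dim q'\le 2^{n+1}$, so by the classification of low-dimensional forms in $I^{n+1}$, $q'$ is either split or a general $(n+1)$-Pfister form. In the Pfister case $\Mot{\Kn}{Q'}$ is split by Proposition~\ref{prop:prelim_motive_pfister} only after... no — it decomposes into Tate twists of $\un$ and of $L_\alpha$, which is not split. So I must instead note that $q\in I^{n+2}(k)$ forces $q'$ to be split: indeed the leading form of $q$ is an $(n+2)$-fold Pfister form, which has dimension $2^{n+2}$, so every proper quadric in the splitting tower before the leading one has dimension $>2^{n+1}$, and the $\Kn$-kernel form, having dimension $\le 2^{n+1}<2^{n+2}$, must already be split (dimension $\le 1$). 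Hence $N$ in Proposition~\ref{prop:quad_MDT_stable} is a sum of Tate motives, so $\Mot{\Kn}{Q}$ is split, and then $\Mot{\KnInt}{Q}$ is split as well by lifting idempotents (Section~\ref{sec:prelim_vishik_yagita}).

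For the case $q\in\langle c\rangle + I^{n+2}(k)$, write $q = \langle c\rangle\perp q_0$ (in the Witt ring, up to hyperbolics) with $q_0\in I^{n+2}(k)$; then $q$ is odd-dimensional. Its generic splitting tower eventually splits $q$ down to the one-dimensional form $\langle c\rangle$, and the leading form is again an $(n+2)$-Pfister form of dimension $2^{n+2}$. So once more every quadric in the tower strictly before the leading form has dimension $>2^{n+1}$, and the $\Kn$-kernel form, of dimension $\le 2^{n+1}$, must already be split. Applying Proposition~\ref{prop:quad_MDT_stable} gives that $\Mot{\Kn}{Q}$ is split, and then $\Mot{\KnInt}{Q}$ is split by lifting idempotents along $\KnInt\rarr\Kn=\KnInt/2$, whose kernel on endomorphism rings is nilpotent.

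For the last assertions, let $K$ be the generic splitting field of $q$. I would factor $K/k$ as the tower $k=K_0\subset K_1\subset\cdots\subset K_h=K$ of function fields of the quadrics $Q_i$ (anisotropic parts of $q$ over $K_i$). Since $\dim q\ge 2^{n+1}+1 > 2^{n+1}-1$ and the dimensions only decrease by $2$ at each step, in the cases at hand every $Q_i$ in this tower has dimension $\ge 2^{n+1}-1$ (the $\Kn$-kernel form being split, the tower reaches dimension $\le 1$ only at the very last step, and all intermediate quadrics have dimension $> 2^{n+1}-1$; the borderline dimension $2^{n+1}-1$ may occur, which is still fine). By Proposition~\ref{prop:Kn-univ-surj-quad} applied step by step — or directly because each $(Q_i)_{K_i}$ has a split $\Kn$-motive over $K_i$ once we are past the kernel form, together with Lemma~\ref{lm:Tate_motive_universally_surjective} — each $K_{i+1}/K_i$ is $\KnInt$-universally surjective; composing, $K/k$ is $\KnInt$-universally surjective. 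The extension $k(Q)/k$ is handled the same way, being the first step $K_1/K_0$, or alternatively via Lemma~\ref{lm:Tate_motive_universally_surjective} since $\Mot{\Kn}{Q}$ (hence $\Mot{\KnInt}{Q}$) is split. Finally, $\KnInt$-universal surjectivity implies $\KnInt$-universal bijectivity of the associated $\oKnInt$ (equivalently $\oKn$) by Proposition~\ref{prop:overline_surj_implies_overline_bij}; since passing to $\oKn$ only loses information, $k(Q)/k$ and $K/k$ are $\oKn$-universally bijective.

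The main obstacle, I expect, is the bookkeeping that in the two hypotheses $q\in I^{n+2}(k)$ and $q\in\langle c\rangle+I^{n+2}(k)$ the $\Kn$-kernel form is genuinely \emph{split}, not merely an $(n+1)$-Pfister form — this is where one must invoke that the leading form of a form in $I^{m}$ is an $m$-fold Pfister form of dimension $2^m$ (a consequence of the Milnor conjectures, Section~\ref{sec:milnor_krs}), so that no quadric of dimension in the pre-stable range $[2^n-1,2^{n+1}-2]$ or of dimension $2^{n+1}$ appears as an anisotropic kernel along the way. Once this is pinned down, the rest is a direct application of Proposition~\ref{prop:quad_MDT_stable}, Lemma~\ref{lm:Tate_motive_universally_surjective}, Proposition~\ref{prop:overline_surj_implies_overline_bij}, and idempotent lifting.
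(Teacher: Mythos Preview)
Your overall strategy matches the paper's: show the $\Kn$-kernel form of $q$ is split (dimension $0$ or $1$), apply Proposition~\ref{prop:quad_MDT_stable} to conclude $\Mot{\Kn}{Q}$ is split, then use Lemma~\ref{lm:Tate_motive_universally_surjective} for universal surjectivity. Your argument that the kernel form is split is correct, though the Arason--Pfister Hauptsatz gives a cleaner one-line route than tracking the leading form (and in the odd-dimensional case the ``leading form'' is not a Pfister form, so that phrasing is shaky; Arason--Pfister applied to $q_j\perp\langle -c\rangle\in I^{n+2}(K_j)$ is what you really want).

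The genuine gap is in deducing that $\Mot{\KnInt}{Q}$ is split. You invoke idempotent lifting along $\KnInt\to\Kn=\KnInt/2$, claiming the kernel on endomorphism rings is nilpotent. This is false: the kernel is $2\cdot\End(\Mot{\KnInt}{Q})$, and already $2\cdot\Delta_Q$ is not nilpotent, since $\KnInt(k)=\Z{2}[v_n,v_n^{-1}]$ is $2$-torsion-free. The Vishik--Yagita machinery of Section~\ref{sec:prelim_vishik_yagita} applies when the kernel of the map on coefficient rings is concentrated in negative degrees, which fails for $\KnInt\to\Kn$. The paper does not attempt a direct argument here either; it cites \cite[Th.~4.1]{LavPet} for this step. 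Without the $\KnInt$-motive being split, your route to $\KnInt$-universal surjectivity via Lemma~\ref{lm:Tate_motive_universally_surjective} also breaks down (you need $\Mot{\KnInt}{Q_i}$, not just $\Mot{\Kn}{Q_i}$, split at each step), and your alternative via Proposition~\ref{prop:Kn-univ-surj-quad} would require $i_1(q_i)\ge 2^n$, which is not immediate for arbitrary anisotropic $q_i\in I^{n+2}(K_i)$. The $\oKn$-universal bijectivity, by contrast, only needs the $\Kn$-motive to be split, so that part of your argument survives independently.
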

\begin{proof}
For such $q$ its $\Kn$-kernel form is either $0$, or one-dimensional.
Hence $\Mot{\Kn}{Q}$ is split by Proposition~\ref{prop:quad_MDT_stable}.
The claim about $\KnInt$-motive follows, cf.\ \cite[Th.~4.1]{LavPet}.
Universal surjectivity property now follows from Lemma~\ref{lm:Tate_motive_universally_surjective}.
\end{proof}

\subsection{Pre-stable case}\label{sec:prestable}

Let $X\in \Sm_k$. 
Since  $\Kn(X) \cong \BP(X)\ot_{\BP(k)} \mathbb{F}_2[v_n, v_n^{-1}]$,
any element $\beta \in \Kn(X)$ can be written as $\sum_{s\in \ZZ} x_s \otimes v_n^{-s}$
where $x_s \in \BP(X)$ and only finitely many of $x_s$ are non-zero.
In the following lemma we show that we can bound powers $v_n^{-s}$ that appear in this decomposition
for quotients $\oKn$ and $\oBP$ instead of 
$\Kn$ and $\BP$.
The statement is used in the proofs of Proposition~\ref{prop:l_0_rational_isotropic} and of Theorem~\ref{th:prestableMDT}.

\begin{Lm}
\label{lm:from_kn_to_bp}
Let $X\in\Sm_k$  
and let $\alpha \in \oKn(X)$.
Then there exists an element $\beta \in  \oBP(X)$ of non-negative codimension  
such that $\beta \otimes v_n^{-r} \in \oBP(X)\otimes_{\BPpt}  \mathbb{F}_2[v_n, v_n^{-1}]$
maps to $\alpha$.

If, moreover, the restriction of $\alpha$ to the generic points of $X$ 
vanishes,
then $\beta \otimes v_n^{-r}$ maps to $\alpha$ 
for some $\beta \in \oBPt{X}$ of positive codimension. 
\end{Lm}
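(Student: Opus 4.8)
The statement says: given $\alpha \in \oKn(X)$, we can find a "representative" $\beta \otimes v_n^{-r}$ with $\beta \in \oBP(X)$ of non-negative codimension, and moreover positive codimension if $\alpha$ restricts to zero at the generic points. The plan is to exploit two facts: first, that $\oKn$ is defined as a quotient of $\Kn = \BP \otimes_{\BPpt} \F{2}[v_n, v_n^{-1}]$, so any $\alpha$ lifts to an element of $\Kn(X)$ which in turn comes from $\BP(X)$ after inverting $v_n$; and second, that $\BP$ carries a grading (with $\deg v_n = 1 - 2^n$) compatible with the topological/codimension filtration, which lets us control the power of $v_n$ needed.

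First I would lift $\alpha$ along the surjection $\Kn(X) \twoheadrightarrow \oKn(X)$ to some $\tilde\alpha \in \Kn(X)$. Writing $\Kn(X) = \BP(X) \otimes_{\BPpt} \F{2}[v_n, v_n^{-1}]$, we have $\tilde\alpha = \sum_{s} x_s \otimes v_n^{-s}$ with $x_s \in \BP(X)$, finitely many nonzero. Collecting terms over a common power: choosing $r$ large enough that $s \le r$ for all occurring $s$, set $\gamma := \sum_s x_s \cdot v_n^{r - s} \in \BP(X)$, so that $\gamma \otimes v_n^{-r}$ maps to $\tilde\alpha$ in $\Kn(X)$ hence to $\alpha$ in $\oKn(X)$. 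Let $\beta$ be the image of $\gamma$ in $\oBP(X)$. The issue is that $\gamma$ (equivalently $\beta$) may have components in negative codimension. To fix this, I would project $\gamma$ onto its non-negative-codimension part: since $\BP(X)$ is graded and the codimension-$<0$ part of $\BP(X)$ is spanned by $v_n^{a_1} \cdots$ with at least the coefficient ring in negative degrees, and $v_n$ is invertible in $\Kn$, one checks that replacing $\gamma$ by its non-negative-degree truncation changes $\gamma \otimes v_n^{-r}$ only by an element that already vanishes in $\Kn(X)$ (the negative-codimension monomials of $\BP(X)$, after tensoring down and inverting $v_n$, land in the image of positive powers of $v_n$ which can be absorbed) — more carefully, one adjusts $r$ upward so that multiplying the negative-codimension part of $\gamma$ by a suitable power of $v_n$ makes it non-negative, which is exactly the mechanism that the topological filtration on a graded component of $\Kn$ "changes only every $2^n - 1$ steps" (cf. \cite[Prop.~3.15]{Sech2}). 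This gives $\beta \in \oBP(X)$ of non-negative codimension with $\beta \otimes v_n^{-r} \mapsto \alpha$.

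For the second assertion, suppose $\alpha$ restricts to $0$ at the generic points of $X$. The codimension-$0$ part of $\oBP(X)$ (equivalently $\BP^0(X)$, which for $X$ smooth irreducible is $\BP^0(k) = \Zp$ per connected component, resp. its mod-$2$ reduction $\F{2}$) injects into $\oBP$ of the generic points, and similarly for $\oKn$. Hence if the codimension-$0$ component $\beta_0$ of $\beta$ were nonzero, $\alpha$ would be nonzero at the generic point, a contradiction. So $\beta_0 = 0$ and $\beta$ has positive codimension, as required. (One must be slightly careful if $X$ is not connected, splitting into components; and use that the restriction to generic points of $\oBP$ factors through $\BP^0$ in degree $0$ by the structure of free theories on a point.)

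**Main obstacle.** The delicate point is bounding the power $v_n^{-r}$ while simultaneously arranging $\beta$ to have non-negative (resp. positive) codimension: a priori the negative-codimension "tail" of the naive lift might need arbitrarily large powers of $v_n$ to be cleared, and one has to invoke the precise behavior of the topological filtration on $\BP$/$\Kn$ — namely that in a fixed graded component the filtration jumps only in steps of $2^n - 1$, so that a negative-codimension element of $\BP(X)$ becomes, after multiplying by a bounded power of $v_n$ dictated by $\deg v_n = 1 - 2^n$, an element of non-negative codimension representing the same class in $\Kn(X)$. Getting this bookkeeping exactly right (and checking it is compatible with passing to the quotient $\oBP$, i.e. that the relations cut out by elements vanishing over $\bar k$ are graded) is where the real work lies; everything else is formal manipulation of the tensor product $\BP(X) \otimes_{\BPpt} \F{2}[v_n, v_n^{-1}]$.
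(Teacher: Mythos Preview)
Your general strategy---lift to $\BP$ and control codimension---matches the paper, but the crucial step is handled incorrectly.

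The claim that ``replacing $\gamma$ by its non-negative-degree truncation changes $\gamma \otimes v_n^{-r}$ only by an element that already vanishes in $\Kn(X)$'' is false: for example, $v_n \in \BP^{1-2^n}(k)$ is in negative codimension but maps to a unit in $\Kn(k)$. Your fallback, ``adjust $r$ upward so that multiplying the negative-codimension part of $\gamma$ by a suitable power of $v_n$ makes it non-negative,'' goes the wrong way: $\deg v_n = 1-2^n < 0$, so multiplying by $v_n$ \emph{lowers} codimension. The reference to the $(2^n-1)$-periodicity of the topological filtration is a red herring here. What actually works is the paper's minimality argument: take $t$ minimal with $\alpha = \beta \otimes v_n^{-t}$; if $\codim\beta < 0$, then by Levine--Morel \cite[Th.~1.2.14]{LevMor} one has $\oBP^{<0}(X)\subset \BPptaug\cdot\oBP(X)$, so $\beta = \sum \mu_i y_i$ with $\mu_i \in \BPptaug$; each $\mu_i$ maps in $\F{2}[v_n^{\pm1}]$ to either $0$ or a \emph{positive} power of $v_n$, which lets you \emph{decrease} $t$---contradiction.

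For the second assertion, your argument ``if $\beta_0 \neq 0$ then $\alpha$ would be nonzero at the generic point'' overlooks that $\oBP$ has $\Zp$-coefficients while $\oKn$ has $\F{2}$-coefficients. The codimension-$0$ restriction $\beta_{k(X)} = m\cdot 1$ can be nonzero in $\Zp$ while $\alpha_{k(X)} = m v_n^{-t}$ vanishes in $\F{2}[v_n^{\pm1}]$, namely when $m \in 2\Zp$. The paper handles this by subtracting $m\cdot 1_X$ from $\beta$ (harmless in $\oKn$ since $m$ is even), obtaining an element of codimension $0$ that now restricts to $0$ at the generic point; Levine--Morel then forces it into $\BPptaug\cdot\oBP(X)$, again contradicting minimality of $t$.
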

\begin{proof}
    As explained before the statement of the lemma,
    there exist elements $x_s\in \overline{\BP}^{a-s(2^n-1)}(X)$, $s\in \ZZ$,
    finitely of which are non-zero,
    and such that $\alpha = \sum_s x_s \otimes v_n^{-s}$.
    Therefore $\alpha = \beta \otimes v_n^{-t}$
    where $t:= \max\{s|x_s \neq 0\}$ and $\beta = \sum_s  v_n^{t-s} x_s$.
    
    Assume that $\alpha = \beta \otimes v_n^{-t}$
    where $t$ is the minimal possible.        
    If $\beta\in \BP^{<0}(k)\cdot \overline{\BP}(X)$,
    i.e.\ it equals a linear combination $\sum_i \mu_i y_i$,
    where $\mu_i\in \BPpt^{<0}$, $y_i\in \overline{\BP}(X)$,
    then one may write  $\beta \otimes v_n^{-s}$ as a linear combination of elements $y_i \otimes v_n^{-i}$
    where $i< t$. 
    Hence, if $t$ is minimal, then $\beta \notin \BP^{<0}(k)\cdot \overline{\BP}(X)$.
    
    By a theorem of Levine--Morel~\cite[Th.~1.2.14]{LevMor} we have
    $\BP(X)/\left(\BP^{<0}(k)\cdot \BP(X)\right)\cong \CH(X)\otimes \Z{2}$, which is concentrated in non-negative degrees.
    Since $\overline{\BP}(X)$ is the quotient of $\BP(X)$,
    we get that   
    $\overline{BP}^{<0}(X)\subset \BP^{<0} \overline{\BP}(X)$.
    Moreover, an element of $\oBP^0(X)$ that projects to 0 in $\CH^0(X)\otimes \Z{2}$, 
    i.e.\ whose restriction to the generic points of $X$ vanishes, lies in $\BP^{<0}(k)\cdot \overline{\BP}(X)$.
    Thus, if $\codim \beta<0$, then $t$ is not minimal. 
    This contradiction proves the first claim.

    Assume now that  $\codim \beta =0$. 
    For simplicity, assume that $X$ is irreducible. Let $\beta_{k(X)}$, the restriction of $\beta$
    to the generic point of $X$, equal $m\cdot 1_{k(X)}$ in $\BPt{k(X)}$, $m\in\Z{2}$,
    then $\alpha_{k(X)} = m v_n^{-t}\cdot 1_{k(X)}$ in $\Knt{k(X)}$.
    By our assumption it equals to zero, i.e.\ $m\in 2\Z{2}$. 
    However, then $\alpha = (\beta-m\cdot 1_X)\otimes v_n^{-t}$, where $\beta -m\cdot 1_X \in \oBP(X)$.
    Thus, we may assume that $\beta$ restricts to 0 in the generic point of $X$,
    and by the result  of Levine-Morel above, it lies in $\BP^{<0}(k)\cdot \overline{\BP}(X)$.
    Therefore $t$ is not minimal. This contradiction proves the second claim.    
\end{proof}

\subsubsection{Morava isotropic quadrics}

The results of Proposition~\ref{prop:quad_MDT_stable} are obtained  
with the help of the fact that $l_0$ is rational in $\Kn(Q)$ 
when the dimension $D$ of the quadric is at least than $2^{n+1}-1$ (Corollary~\ref{prop:l_0_rational_K(n)}). 
We explain now what happens below this dimension.
	
\begin{Prop}\label{prop:l_0_rational_isotropic}
Let $Q$ be a smooth projective quadric of dimension $D\le 2^{n+1}-2$. 
Then $l_0$ is rational in $\Knt{\overline{Q}}$ if and only if $Q$ is isotropic.
\end{Prop}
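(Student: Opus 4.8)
The plan is to prove both implications. One direction is immediate: if $Q$ is isotropic, then by Lemma~\ref{lm:motive_of_isotropic_quadric} (or rather its underlying geometry) $Q$ contains a $k$-rational point, hence a $k$-rational line $\mathbb{P}^0\hookrightarrow Q$, so the class $l_0$ of a rational point is defined over $k$ and in particular rational in $\Knt{\overline Q}$. So the content is the converse: if $l_0$ is rational, then $Q$ is isotropic, under the dimension hypothesis $D\le 2^{n+1}-2$.

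For the converse I would argue by contraposition together with a descent along the generic splitting tower, or more directly as follows. Suppose $l_0\in\Knt{\overline Q}$ is rational, i.e.\ there is $z\in\Kn(Q)$ mapping to $l_0$; this means $z\in\oKn(Q)$ maps to $l_0\in\oKn(\overline Q)=\Kn(\overline Q)$ in the projective case. Apply Lemma~\ref{lm:from_kn_to_bp} to $\alpha=l_0$: since $l_0$ has codimension $D=\dim Q$, and actually we want to track codimension carefully, we obtain $\beta\in\oBP(Q)$ of non-negative codimension with $\beta\otimes v_n^{-r}\mapsto l_0$ for some $r$. Comparing codimensions (recall $\deg v_n = 1-2^n$ so $v_n^{-r}$ shifts codimension by $r(2^n-1)$), $\beta$ lives in $\oBP^{D-r(2^n-1)}(Q)$; pushing forward to the point, $(\pi_Q)_*\beta$ is essentially the degree of a $0$-cycle type class. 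The key input is now the symmetric operations machinery of Section~\ref{sec:prelim_symm}: using Proposition~\ref{prop:symm_divide} together with Proposition~\ref{prop:St-Ch-quad} (the action of Steenrod operations on $h^i$, $l_i$), applied to $Q_\alpha$-type norm quadrics inside $Q$, one extracts a constraint. Concretely, the rationality of $l_0$ in Morava $\K n$ combined with the fact that $\dim Q< 2^{n+1}-1$ should force $l_0$ itself (not just $2l_0$) to be rational in $\Ch(\overline Q)$ — equivalently, by the Springer theorem $\mathbb Z/2$-rationality of $l_0$ means $Q$ has a rational point.

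Let me restructure: the cleanest route is probably to use the structure of $\Mker Q$ from Section~\ref{sec:rat-proj-quad}. Since $D\le 2^{n+1}-2$, the projectors $\pi_i=v_n^{-1}h^i\times h^{D'-i}$ for $0\le i\le D'$ with $D'=D-2^n+1$ make sense, and $l_0$ rational is equivalent to the Tate summand $\un$ splitting off $\MKn Q$, i.e.\ to $\MKn Q$ being $\Kn$-isotropic in the sense of Lemma~\ref{lm:Kn-isotropic_invertible_direct_summands}. So the statement becomes: for $\dim Q\le 2^{n+1}-2$, $Q$ is $\Kn$-isotropic if and only if $Q$ is isotropic (has a rational point). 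By Lemma~\ref{lm:Kn-isotropic_invertible_direct_summands}(4), $\Kn$-isotropy of $Q$ means there is a projective morphism $W\to Q$ with $[W]_{\Kn}=v_n^r$. Pushing forward and using that $\Omega(Q)$ is generated by classes of such $W$, write the relation in $\BP$ and reduce modulo the ideal of Proposition~\ref{prop:eta}; applying $\eta$ and Proposition~\ref{prop:symm_divide} to the resulting class on $Q$, the hypothesis $\dim Q<2^{n+1}-1$ is exactly what is needed for the symmetric operation $\phi^{t^k}$ to land in the right codimension and produce a $0$-cycle of odd degree on $Q$ — hence $Q$ is isotropic by Springer's theorem.

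The main obstacle will be the bookkeeping in this last step: controlling which power $t^k$ of the symmetric operation to use, verifying the codimension condition $k=2d-(2-1)\codim(v)\ge 0$ from Proposition~\ref{prop:symm_divide} translates precisely to $\dim Q\le 2^{n+1}-2$, and checking that the ``Chow trace'' $\phi$ of the divided class is genuinely a $0$-cycle of odd degree rather than vanishing for a trivial reason. I would model this computation on Vishik's argument that a $\nu_n$-variety mapping to $X$ with the correct dimension bound detects the splitting behaviour (compare the proof of Theorem~\ref{th:binary_chow_motives}), using Proposition~\ref{prop:St-Ch-quad} to compute $\mathrm{st}^{t^\bullet}(l_0)$ on the split quadric and tracking rationality through the pushforward. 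The other direction and the translation into the language of $\Kn$-isotropy via Lemma~\ref{lm:Kn-isotropic_invertible_direct_summands} are routine.
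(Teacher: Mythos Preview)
Your overall strategy---lift the rational element to $\BP$ via Lemma~\ref{lm:from_kn_to_bp} and then use symmetric operations to descend to $\Ch$, concluding by Springer---is the paper's approach, but the proposal misses the key technical content and mislocates where the dimension hypothesis enters.

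The bound $D\le 2^{n+1}-2$ is \emph{not} what makes a single $\phi^{t^k}$ land in the right codimension. It is used earlier: writing $l_0=z\otimes v_n^{-s}$ with $z\in\oBP^{\,D-s(2^n-1)}(Q)$ of positive codimension, the inequality $D\le 2(2^n-1)$ forces $s\in\{0,1\}$. The case $s=0$ is trivial (then $z$ is already a rational $0$-cycle), so all the work is in $s=1$, where $z$ has codimension $r:=D-(2^n-1)$ with $0<r<2^n$.

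The real gap is what happens next. Over $\overline k$ one expands $z$ in the standard basis; after discarding the always-rational $h^i$-terms, the summands that survive under $\phi$ (by Proposition~\ref{prop:eta}) are exactly those of the form $\mu\,v_m\,l_{2^n-2^m}$ for $0\le m\le n$, since $\eta$ kills products of two or more $v_j$. Now $\phi^{t^e}$ does not simply ``divide by $v_n$ and return $l_0$'': by Proposition~\ref{prop:symm_divide} each such term contributes $\eta(v_m)\cdot\mathrm{st}^{t^\bullet}(l_{2^n-2^m})$, and these contributions interfere. The paper resolves this with a dedicated Steenrod computation (Lemma~\ref{lm:chow_steenrod}) determining, for the specific dimension $D$, which $\StCh(l_{2^n-2^m})$ can produce a term $l_0\,t^c$ or $l_{2^m-x}\,t^c$; the exponent $e$ of $\phi^{t^e}$ is then chosen case by case so that exactly one rational $l_i$ survives. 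There is also a separate treatment of the boundary case $r=2^n-1$, where one must use $\phi^{t^0}$ and invoke Proposition~\ref{prop:phi-add}(2),(3) to handle its non-additivity. Your proposal neither anticipates this interference nor the need for the Steenrod calculation, and the ``restructured'' route through $\Kn$-isotropy and a map $W\to Q$ with $[W]_{\Kn}=v_n^r$ is a detour that does not avoid it.
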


We will need the following lemma in the proof.
\begin{Lm}
\label{lm:chow_steenrod}
Let $Q$ be a split smooth projective quadric of dimension $D=2^n-1+r$, $0<r<2^n$.
Let $\StCh\colon\Chf\rightarrow\Chf[[t]]$ denote the total Steenrod operation {\rm(}see Section~\ref{sec:prelim_symm}{\rm)},
and let $h^i$, $l_i$ be the standard basis of $\Cht Q$ {\rm(}see Section~\ref{sec:prelim_A_split_quadric}{\rm)}. Then the following holds. 
\begin{enumerate}
\item
$\StCh(l_0)=l_{0\,}t^{2^n-1+r}$.
\item
For $r=2^n-1$ and $m<n$, $\StCh(l_{2^n-2^m})$ does not have a summand of the form $l_{0\,}t^{c}$.
\item
For $r<2^n-1$ there exists a unique $m<n$ such that $\StCh(l_{2^n-2^m})$ has a summand of the form $l_{0\,}t^{c}$.

Number $m$ is determined by the equation $r = 2^{n-1}+\ldots +2^{m+1} + x$, $x<2^m$,
i.e., $m$ is the position of the ``highest'' $0$ digit among the $n$ lower digits in the base $2$ expansion of $r$.
\item
For $m$, $x$ as above, $\StCh(l_{2^n-2^m})$ has a summand $l_{2^m-x\,}t^{2^m-1}$.
\item
For $m$, $x$ as above, and any $q\neq m$, $\StCh(l_{2^n-2^q})$ does {\sl not} have a summand of the form $l_{2^m-x\,}t^{c}$.
\end{enumerate}
\end{Lm}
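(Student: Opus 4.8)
The plan is to compute everything explicitly from Proposition~\ref{prop:St-Ch-quad}, which gives $\StCh(l_i) = l_i(t+h)^{D+1-i}t^{-1}$, and then to track carefully which monomials $l_{j}t^{c}$ occur. Recall that on the standard basis of a split quadric $h^a l_i = l_{i-a}$ (interpreting $l_j=0$ for $j<0$), so that $(t+h)^{D+1-i}\,l_i = \sum_{a\geq 0}\binom{D+1-i}{a}t^{D+1-i-a}l_{i-a}$, and after dividing by $t$ we get $\StCh(l_i) = \sum_{a\geq 0}\binom{D+1-i}{a}t^{D-i-a}l_{i-a}$, where the coefficients are taken mod $2$. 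So the coefficient of $l_{j}t^{c}$ in $\StCh(l_i)$ is $\binom{D+1-i}{i-j}$ mod $2$, with $c = D-j$ forced; in particular the whole computation reduces to evaluating binomial coefficients mod $2$ via Lucas' theorem / Kummer's theorem on base-$2$ digits.

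For (1), with $i=0$: $\StCh(l_0) = \binom{D+1}{0}t^{D}l_0 = l_0 t^{D} = l_0 t^{2^n-1+r}$, which is immediate. For the remaining items one sets $i = 2^n-2^m$ and asks for the coefficient of $l_{0}t^{c}$ (items (2),(3)) or $l_{2^m-x}t^c$ (items (4),(5)): this is $\binom{D+1-i}{i-j}$ with $D = 2^n-1+r$. For (2),(3) one has $D+1-i = 2^n+r-2^n+2^m = r+2^m$ and $i-j = 2^n-2^m$, so the relevant quantity is $\binom{r+2^m}{2^n-2^m}$ mod $2$; by Lucas this is $1$ iff the binary digits of $2^n-2^m$ are dominated by those of $r+2^m$. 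When $r = 2^n-1$ this is $\binom{2^n-1+2^m}{2^n-2^m}$, and one checks by digit analysis that it vanishes for every $m<n$ (giving (2)); when $r < 2^n-1$ one shows there is exactly one $m<n$ for which it is $1$, and that this $m$ is precisely the highest position of a $0$ among the $n$ lowest binary digits of $r$, i.e.\ $r = 2^{n-1}+\cdots+2^{m+1}+x$ with $x<2^m$ (giving (3)). For (4), with that $m$, one computes $\binom{r+2^m}{2^n-2^m - (2^n-2^m) + ???}$ — more precisely the coefficient of $l_{2^m-x}t^{c}$ is $\binom{r+2^m}{\,2^n-2^m-(2^m-x)\,} = \binom{r+2^m}{\,2^n-2^{m+1}+x\,}$, and substituting $r = 2^{n-1}+\cdots+2^{m+1}+x$ makes $r+2^m = 2^{n-1}+\cdots+2^{m+1}+2^m+x$ and $2^n-2^{m+1}+x = 2^{n-1}+\cdots+2^{m+1}+x$, whose digits are clearly dominated, so the coefficient is $1$. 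For (5) one again runs the Lucas criterion with a general $q\neq m$ in place of $m$ and verifies the coefficient of $l_{2^m-x}t^{c}$ is $0$; here one must be a little careful because the target index $2^m-x$ interacts differently with $2^n-2^q$, but the carries in Kummer's theorem rule it out.

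The main obstacle is purely combinatorial bookkeeping: organizing the base-$2$ digit arguments cleanly enough that the "there exists a unique $m$" and "for $q\neq m$ it vanishes" statements are transparent rather than a case jungle. I would handle this by introducing notation for the binary expansion $r = \sum_{t} r_t 2^t$, writing $2^n-2^m = \sum_{t=m}^{n-1} 2^t$ (the block of $1$'s in positions $m$ through $n-1$), and reducing each binomial-mod-$2$ claim to the single assertion: \emph{$\binom{A}{B}\equiv 1 \pmod 2$ iff the support of $B$ (as a set of bit positions) is contained in the support of $A$}. Then (2)–(5) become statements about containment of explicit intervals of bit positions, and the "highest $0$ among the lowest $n$ digits" description of $m$ falls out directly. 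One should also double-check the edge cases $x=0$ and $m=0$ separately, and confirm throughout that the twist exponents $c$ land in the valid range $0\le c$, $0\le 2^m-x\le d$, so that the monomials named are genuinely part of the standard basis and not spuriously zero.
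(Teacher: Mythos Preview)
Your proposal is correct and takes essentially the same approach as the paper: expand $\StCh(l_i)=l_i(t+h)^{D+1-i}t^{-1}$ from Proposition~\ref{prop:St-Ch-quad}, read off the coefficient of $l_j$ as $\binom{D+1-i}{i-j}\bmod 2$, and evaluate via Lucas' theorem. Parts (1)--(4) in the paper are exactly your computation.

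The only noteworthy difference is in (5). You propose running Lucas directly for all $q\neq m$; this works but is heavier on bit arithmetic. The paper instead splits into three painless cases: for $q<m$ it observes that $2^n-2^q>d$, so $l_{2^n-2^q}$ is not even in the standard basis and the claim is vacuous; for $q=n$ it invokes (1); and for $m<q<n$ it uses the characteristic-$2$ identity $(t+h)^{2^n}=t^{2^n}$ (since $h^{2^n}=0$) to factor $(t+h)^{2^q+r}=t^{2^n}(t+h)^{2^q-2^{m+1}+x}$, which immediately bounds the smallest index $i$ with $l_i$ appearing and shows it exceeds $2^m-x$. This shortcut avoids any digit bookkeeping. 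Your route and theirs are equivalent in substance; theirs just replaces a Lucas computation by a Frobenius trick.
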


\begin{proof}[Proof of Lemma~\ref{lm:chow_steenrod}]
By Proposition~\ref{prop:St-Ch-quad} we get that 
\begin{equation}
\label{eq:steenrod_l} 
\StCh(l_{2^n-2^m})
= l_{2^n-2^m} (t+h)^{2^m+r} t^{-1}.
\end{equation}
In particular, $(1)$ is clear. 
Note that the coefficient at $l_{0\,}t^{2^{m+1}-2^n+r-1}$ in the right hand side is  $\binom{2^m+r}{2^n-2^m}$. 
Since $2^n-2^m = 2^{n-1} + 2^{n-2} +\ldots + 2^{m}$, in order for $\binom{2^m+r}{2^n-2^m}$
to be odd we must have by Lucas's Theorem (cf.\ also~\cite[Lm.~78.6]{EKM}) that 
$$
2^m+r = \varepsilon\cdot 2^n+2^{n-1}+\ldots +2^m + x
$$ 
where $\varepsilon\in\{0,1\}$ and $0\le x < 2^m$,
i.e., $r = \varepsilon\cdot 2^n+2^{n-1}+\ldots +2^{m+1} + x$.
In particular, $\varepsilon=0$ since $r<2^n$. This implies $(2)$ and $(3)$.

Moreover, the coefficient at $l_{2^n-2^m-r\,}t^{2^m-1}$ in the right hand side of~\eqref{eq:steenrod_l} equals 
$$
\binom{2^m+r}{r}=\binom{2^{n-1}+\ldots +2^m + x}{2^{n-1}+\ldots +2^{m+1} + x}=1,
$$
and $2^n-2^m-r=2^m-x$, which implies $(4)$.

Observe that $D=2^{n+1}-2^{m+1}+x-1$, and since $x\leq 2^m-1$, 
we have $d=\lfloor\frac{D}{2}\rfloor\leq2^n-2^{m-1}-1$.
On the other hand, $2^n-2^q > 2^n -2^{m-1}-1$ for $q<m$.
In other words, for $l_{2^n-2^q}\in\Cht Q$ we have $q\geq m$.

Clearly, for $q=n$, $(5)$ follows from $(1)$. 
Finally, observe that for $m<q<n$, 
$$ 
\StCh(l_{2^n-2^q}) =  l_{2^n-2^q} (t+h)^{2^q+2^n-2^{m+1}+x\,}t^{-1} = l_{2^n-2^q} (t+h)^{2^q-2^{m+1}+x\,} t^{2^n-1},
$$
since $h^{2^n}=0$. 
Thus, the minimal $i$ such that a summand of the form $l_{i\,}t^c$ appears in the right hand side
is $i=2^n-2^{q+1}+2^{m+1}-x$. 
However, $i>2^m-x$ since $ 2^n > 2^{q+1}-2^m$, which implies $(5)$.
\end{proof}

\begin{proof}[Proof of Proposition~\ref{prop:l_0_rational_isotropic}]
If $Q$ is isotropic, then the class of a rational point gives rationality of $l_0$ (for any theory).
Assume that $l_0$ is rational in $\Knt{\overline{Q}}$.  
It suffices to show that $l_0$ is rational in $\Cht Q$ due to the Springer theorem~\cite[Lm.~72.1]{EKM}, \cite{Springer}.

By Lemma~\ref{lm:from_kn_to_bp} there exists an element $z\in\oBPt Q \subset \BP(\overline{Q})$, $s\in \ZZ$ 
such that $l_0 = z\otimes v_n^{-s}$  in $\Knt{\overline{Q}}=\BPt{\overline Q}\otimes_{\BPpt}\mathbb F_2[v_n^{\pm1}]$,
and $\codim z>0$. 
Since $\codim z = \codim {l_0} - s(2^n-1) = \dim Q - s(2^n-1) \le 2(2^n-1)$ and $\codim z$ is bounded above by $\dim Q$,
we get that either $s=0$ or $s=1$.
If $s=0$, then $z\in \oBP^{\dim Q}(Q)\cong \CH_0(Q)\otimes \Z{2}$ and the claim follows.
Therefore we may assume that $s=1$.

Let $r$ equal $D-(2^n-1)$, $0<r<2^n$. 
To prove that $Q$ is isotropic, we will apply the $\Ch$-trace $\phi$ of 
Vishik's symmetric operation $\PhBP$ to $z$ to get that some $l_i$, $i\geq0$, is rational in $\Cht{\overline Q}$ (see Section~\ref{sec:prelim_symm}).

The element $z$ can be decomposed in the basis $h^i$, $l_i$. Since $h^i$ are always rational,
we may assume that $z$ is a $\BPpt$-linear combination of $l_i$, containing by the assumption a summand $\lambda^{\,} v_n l_0$ for $\lambda\in\ZZ_{(2)}^\times$. 

First, let us consider the case $r<2^n-1$. Then all the summands of this linear combination
are of the form $\mu\,l_i$ where $\mu\in \BPptaug$. We are going to apply $\phi^{t^e}$ to it
with $e>0$, which is an additive operation (Proposition~\ref{prop:phi-add}).
Applying $\phi^{t^e}$ to $uv$ for $u=[U]\in\BPptaug$, $v\in\BPt{\overline Q}$, 
we obtain a multiple of $\eta(u)$ by Proposition~\ref{prop:symm_divide},
and $\eta(u)$ is zero whenever $u$ is a product of at least two $v_i$, $i>0$  by Proposition~\ref{prop:eta}.
Thus we are interested only in the values $\phi (v_m l_a)$ where $\deg v_m l_a = \deg v_n l_0$, 
i.e.\ $a=2^n-2^m$.

By Lemma~\ref{lm:chow_steenrod}, there exists at most one $m<n$ such that $\StCh(l_{2^n-2^m})$ has a summand of the form $l_{0\,}t^{c}$.
If the coefficient $\nu$ at $v_m l_{2^n-2^m}$ in the decomposition of $z$ lies in $2\ZZ_{(2)}$, we conclude that 
$$
\phi^{t^{e}}(z)=\phi^{t^{e}}(\lambda\,v_nl_0)=l_0
$$ 
by Propositions~\ref{prop:phi-add},~\ref{prop:symm_divide},~\ref{prop:eta} for $e=2^n-1-r$, and therefore $l_0\in\Cht{\overline Q}$ is rational.

If the coefficient $\nu$ lies in $\ZZ_{(2)}^\times$,
we know that $\StCh(l_{2^n-2^m})$ has a summand $l_{2^m-x\,}t^{2^m-1}$ for $x=2^{m+1}+r-2^n<2^m$ by Lemma~\ref{lm:chow_steenrod}, 
and $\StCh(l_{2^n-2^q})$ does not have such a summand for $q\neq m$. Then we conclude that 
$$
\phi^{t^e}(z)=\phi^{t^e}(\nu\,v_ml_{2^n-2^m})=l_{2^m-x\,}
$$ 
for $e=2^m-1$, therefore $l_{2^m-x}\in\Cht{\overline Q}$ is rational.

Second, in the case $r=2^n-1$ the above proof works with minor modifications.
On the one hand, the linear combination that equals the decomposition of $z$ 
might contain $\mu\,l_r$ for some $\mu\in\ZZ_{(2)}$.
However, $z$ is mapped to $\mu\,l_{r}$ in $\Cht{\overline Q}$ and hence $\mu\in2\ZZ_{(2)}$ or $Q$ is isotropic.
We thus can assume the former.
On the other hand, we might need to use the operation $\phi^{t^0}$,
which is in general non-additive. However, it acts additively on the linear combination of interest
by Proposition~\ref{prop:phi-add}(2,3). 
Part (3) of it is only needed for the summand $\mu\, l_r$ considered above with $\mu\in 2\Z{2}$
(note that since $D\equiv 2\mod 4$ in this case, 
we have $l_r^2=0$). 
All other $l_i$ appearing in the linear combination 
have coefficients from $\BPptaug$ by codimensional reasons, and part (2) of~\ref{prop:phi-add} applies.
Thus, the computations above remain valid in this case as well.
\end{proof}

\begin{Cr}
\label{cr:li_rational_small_dim}
Let $Q$ be a quadric of dimension at most $2^{n+1}-2$.
Then the following conditions are equivalent:
\begin{enumerate}
\item
$l_i\in\oKnt Q$;
\item
$\iw(q)>i$.
\end{enumerate}
\end{Cr}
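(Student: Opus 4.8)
The plan is to deduce this corollary from Proposition~\ref{prop:l_0_rational_isotropic} by an inductive argument on the Witt index, combined with the motivic decomposition of an isotropic quadric (Lemma~\ref{lm:motive_of_isotropic_quadric}) and the behaviour of the classes $l_i$ under the hyperbolic splitting.

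First I would treat the implication $(2)\Rightarrow(1)$. If $\iw(q)>i$, then $q\cong q_{\mathrm{an}}\perp\hyp^{\perp\iw(q)}$ with $\iw(q)\geq i+1$, so $Q$ contains a $k$-rational linear subspace $\mathbb P^i\subset Q$. Its class in $\Kn(Q)$ restricts to $l_i$ in $\Kn(\overline Q)$ (the class of a linear subspace of dimension $i$ is independent of the choice, see Section~\ref{sec:prelim_A_split_quadric}), so $l_i\in\oKn(Q)$ by definition of $\oKn$. This direction needs no dimension restriction.

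For $(1)\Rightarrow(2)$ I would argue by induction on $i$. The base case $i=0$ is exactly Proposition~\ref{prop:l_0_rational_isotropic}: if $l_0\in\oKn(Q)$ and $\dim Q\leq 2^{n+1}-2$, then $Q$ is isotropic, i.e.\ $\iw(q)>0$. For the inductive step, suppose $l_i\in\oKn(Q)$ with $i\geq1$. Since $l_i$ is a multiple of $l_0$ under multiplication by $h$ (indeed $h^i\cdot l_i=l_0$ in the standard basis, or rather $h\cdot l_j=l_{j-1}$), rationality of $l_i$ forces rationality of $l_0$, hence $Q$ is isotropic by the base case, so $q\cong q'\perp\hyp$. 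Now apply Lemma~\ref{lm:motive_of_isotropic_quadric}: $\MKn Q\cong\unKn\oplus\MKn{Q'}\sh1\oplus\unKn\sh D$, and correspondingly $\Kn(\overline Q)$ decomposes so that the class $l_i$ on $\overline Q$ corresponds to the class $l_{i-1}$ on $\overline{Q'}$ (the linear subspaces of dimension $i$ on $Q$ through the isotropic point correspond to linear subspaces of dimension $i-1$ on $Q'$; this is the standard relation between $\Lambda(Q)$ and $\Lambda(Q')$, cf.\ Section~\ref{sec:prelim_mot_iso_quad} and the multiplication rules of Section~\ref{sec:prelim_A_split_quadric}). One has to check that $l_i$ being $k$-rational on $Q$ is equivalent to $l_{i-1}$ being $k$-rational on $Q'$; this follows since the motivic summand $\MKn{Q'}\sh1$ of $\MKn Q$ is defined over $k$ and the pullback $\Kn(Q)\to\Kn(\overline Q)$ is compatible with this direct sum decomposition. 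Since $\dim Q'=\dim Q-2\leq 2^{n+1}-2$, the inductive hypothesis applies to $Q'$ and gives $\iw(q')>i-1$, hence $\iw(q)=\iw(q')+1>i$.

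The main obstacle I expect is the bookkeeping in the inductive step: one must verify carefully that under the isomorphism $\MKn Q\cong\unKn\oplus\MKn{Q'}\sh1\oplus\unKn\sh D$ the class $l_i\in\Kn(\overline Q)$ (for $1\leq i\leq d-1$) lies in the summand corresponding to $\Kn(\overline{Q'})\sh1$ and is sent precisely to $l_{i-1}$, so that $k$-rationality transfers correctly between the two quadrics. This is essentially the content of how the ``standard basis'' and the set $\Lambda(Q)$ restrict along $\SmProj$-morphisms in the isotropic splitting, and is where one invokes Section~\ref{sec:ch-mdt-conn} together with the specialization from Chow to $\Kn$-motives; alternatively one can run the whole argument first on $\oCh$ using the Springer theorem as in the proof of Proposition~\ref{prop:l_0_rational_isotropic} and then transfer, but the cleanest route is the direct motivic one outlined above.
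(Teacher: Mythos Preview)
The paper states this corollary without proof, treating it as an immediate consequence of Proposition~\ref{prop:l_0_rational_isotropic}; your inductive argument is exactly the intended deduction and is correct.

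The bookkeeping you worry about is easy to resolve concretely, and you do not need to chase the class through the abstract motivic splitting. In Rost's decomposition the inclusion $\alpha\colon\MKn{Q'}(1)\hookrightarrow\MKn Q$ is realised by the $\PP^1$-bundle $C\to Q'$ of lines in $Q$ through the chosen rational point $p$, together with the evaluation $C\to Q$. Over $\overline k$ the induced map sends $l_{i-1}^{Q'}$ to $l_i^{Q}$, since the cone with vertex $p$ over a linear $\PP^{i-1}\subset\overline{Q'}$ is a linear $\PP^i\subset\overline Q$; this holds in any oriented theory because it is a statement about classes of smooth closed subvarieties. The retraction $\beta$ is defined over $k$ and satisfies $\beta\alpha=\id$, so applying $\beta_*$ to a $k$-rational lift of $l_i^{Q}$ produces a $k$-rational lift of $\beta_*\alpha_*(l_{i-1}^{Q'})=l_{i-1}^{Q'}$. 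This gives the forward transfer you need (you only use one direction, not the equivalence you state), and then induction finishes the argument as you describe.
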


\begin{Rk}
Note that for an arbitrary quadratic form $q$, rationality of $l_i$ for $i> d-2^{n-1}$ in $\Knt Q$ detects a ``hole'' in the splitting pattern: 
if $q_s$ is the $\Kn$-kernel form of $q$, and $q_{s-1}$ is the previous kernel form in the generic splitting tower, one has $\dim q_{s-1}>2^{n+1}$ and $\dim q_s<2^{n}$. 
%
%
Conjecturally, $q_{s-1}$ is a Pfister neighbour in this case, and $q$ has a ``small distance to $I^{n+2}$''. We return to these questions in Section~\ref{sec:appearance_invertible_summands_quadrics}.
\end{Rk}

\subsubsection{Outer excellent connections, revisited}
\label{sec:outer-ex-rev}

In this section, we show that 
the existence of {\sl outer excellent connections} of Vishik\footnote{We refer the reader to~\cite{VishExc} and references therein for the history 
and different approaches to the existence of excellent connections.} 
is an easy corollary of Proposition~\ref{prop:l_0_rational_isotropic}. 
These connections were discovered by Vishik with the use of the symmetric operations,
and as we used them in the proof of~\ref{prop:l_0_rational_isotropic},
our proof is, arguably, not new. 

However, the outer excellent connections 
are precisely the ones needed to reconstruct Chow MDT from $\Kn$-MDT
for the quadrics of dimensions between $2^n-1$ and $2(2^{n}-1)$ (see Theorem~\ref{th:prestableMDT}). 
We thus include the proof of Proposition~\ref{prop:outer-excel} in this text 
to make it more self-contained, and to emphasize the interrelationship between the Chow and Morava motives. 
We will assume $n\geq2$, but for $n=1$ an analogous result is clear from Section~\ref{sec:K0-motive-quadric}.

Let $Q$ be a smooth projective quadric of dimension $D$, and let $n$ be such that $2^n-1 \le D \le 2^{n+1}-2$.
We use the notation $D'$, $d'$ from Section~\ref{sec:rat-proj-quad}. 
We denote 
$$
\omega^{\Ch}_j=h^j\times l_j+l_{D'-j}\times h^{D'-j}\in\Ch^D(\overline Q\times\overline Q)
$$
for $d'\leq j\leq d-1$ and for $j=d$ if $D\equiv 2\mod 4$, and
$$
\omega^{\Ch}_d=h^d\times\widetilde l_d+l_{d'}\times h^{d'}
$$
if $D\equiv0\mod4$ (where $\widetilde l_d=l_d+h^d\in\Cht{\overline Q}$ and $h^i=0=l_i$ for $i<0$). 
For a rational projector $\rho\in\oCh^{\,D}(Q\times Q)$ we denote $\MCh{Q,\,\rho}$ the corresponding motivic summand of $\MCh{Q}$.
We will also use the notations of Section~\ref{sec:ch-mdt-conn}.
\begin{Prop}[Outer excellent connections]
\label{prop:outer-excel}
In the notation above, if $Q$ is anisotropic, 
and $N$ is an indecomposable summand of $\MCh Q$, 
there exist $I\subseteq\{d',\ldots,d\}$ such that $\sum_{i\in I}\omega^{\Ch}_i$ is rational and 
$$
N\cong\MCh{Q,\,\sum_{i\in I}\omega^{\Ch}_i}.
$$
In other words, if the motives $\unCh\sh j$ and $\unCh\sh{j+2^n-1}$ belong to $\Lambda(Q)$, 
then they are connected. 
In the case $D=2^{n+1}-2$ and $j=0$ or $j=2^n-1$ the claim is that the upper ``middle'' Tate motive $\unCh\sh{2^n-1}$ is connected to $\unCh\sh 0$ and the lower is connected to $\unCh\sh{2^{n+1}-2}$. 
\end{Prop}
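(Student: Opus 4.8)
The plan is to derive Proposition~\ref{prop:outer-excel} from Proposition~\ref{prop:l_0_rational_isotropic}, in the form of Corollary~\ref{cr:li_rational_small_dim}, which is the only genuinely non-formal input. The first move is to isolate the combinatorial content: for anisotropic $Q$ with $2^n-1\le D\le 2^{n+1}-2$ one must show that whenever $\unCh\sh j$ and $\unCh\sh{j+2^n-1}$ both occur in $\Lambda(Q)$ --- that is, for $0\le j\le D'$ --- they lie in the same connected component of the Chow MDT, with the obvious reading at the doubled central twist when $D$ is even. Granting this, the normal form $N\cong\MCh{Q,\sum_{i\in I}\omega^{\Ch}_i}$ for an indecomposable summand $N$ follows by repeating the argument of Proposition~\ref{prop:normal} almost verbatim: over $\overline k$ the ring $\End(\overline N)$ is a product of copies of $\mathbb F_2$, with a single $\mathrm M_{2\times2}(\mathbb F_2)$-factor when $D$ is even arising from the multiplicity-two twist $\unCh\sh d$; the connectedness statement forces $\Lambda(N)$ to be a disjoint union of ``outer pairs'' $\{j,\,j+2^n-1\}$ together with at most one central ``diamond'', so a rational idempotent in $\End(\overline N)$ can be conjugated --- using the reflection $\tau$ of Section~\ref{sec:rat-proj-quad} to interchange the isomorphic copies of $\unCh\sh d$ --- into the shape $\sum_{i\in I}\omega^{\Ch}_i$.

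For the combinatorial core I would argue by contradiction, using Corollary~\ref{cr:li_rational_small_dim}. Suppose $\unCh\sh j$ and $\unCh\sh{j+2^n-1}$ lie in distinct components over $k$; choose orthogonal indecomposable summands $N_1\ni\unCh\sh j$ and $N_2\ni\unCh\sh{j+2^n-1}$ with rational Chow projectors $\rho_1,\rho_2$, lift them to $\Omega$-projectors and push them to $\Kn$ (Section~\ref{sec:prelim_vishik_yagita}). By the periodicity isomorphism $\unKn\sh{2^n-1}\cong\unKn$ the specialised Tate motives $\unKn\sh j$ and $\unKn\sh{j+2^n-1}$ become isomorphic; and in $\MKn Q$ the twist $j$ is realised both by one of the always-rational ``diagonal'' projectors $\pi_i$ of Section~\ref{sec:rat-proj-quad} (giving a split Tate summand $\un$) and by one of the projectors $\varpi_i$ comprising the kernel motive $\Mker Q$. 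Thus the hypothesis produces a rational $\Kn$-summand --- a piece of the image of $N_2$ --- which over $\overline k$ carries a Tate motive isomorphic to the split Tate summand $\pi$ of $\MKn Q$ but is orthogonal to $\pi$. Lifting the relevant idempotent by Rost Nilpotency for $\Kn$-motives of projective homogeneous varieties (Section~\ref{sec:mot_dec_base_change}, \cite{GilleVishik}), and tracing through the identification of $\Mker Q$ over $\overline k$ with the $\varpi_i$-summands (Section~\ref{sec:rat-proj-quad}, cf.\ Lemma~\ref{lm:Kn-isotropic_invertible_direct_summands}), one extracts a rational element $l_m\in\oKn(Q)$ with $m\ge 0$. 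By Corollary~\ref{cr:li_rational_small_dim} this forces $\iw(q)>m\ge0$, contradicting the anisotropy of $Q$. Hence no such separation exists, and the outer excellent connections follow.

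The main obstacle is precisely that middle step: converting the hypothetical Chow-level separation into a genuine $\Kn$-rationality statement to which Corollary~\ref{cr:li_rational_small_dim} applies. Concretely one must keep careful track of which Tate twists of $\MKn Q$ fall into the always-split part $\bigoplus_{i=0}^{D'}\pi_i$ versus the kernel summand $\Mker Q$, handle the periodicity $\unKn\sh{2^n-1}\cong\unKn$ and the multiplicities it creates, and treat separately the cases $D$ odd, $D\equiv2\bmod4$ and $D\equiv0\bmod4$, where the reflection $\tau$ and the doubled central summand $\unCh\sh d$ enter. This bookkeeping, rather than any new geometric input, is what makes precise the claim that the outer excellent connections are an ``easy corollary'' of Proposition~\ref{prop:l_0_rational_isotropic}. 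One could instead phrase the same argument through the upper-motive formalism and Theorem~\ref{th:binary_chow_motives}, reducing directly to the case where $N_1$ is binary of length $2^n-1$, but the decisive contradiction with Corollary~\ref{cr:li_rational_small_dim} would be the same.
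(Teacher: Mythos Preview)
Your overall architecture matches the paper's: the only substantial input is Proposition~\ref{prop:l_0_rational_isotropic}, and one argues by contradiction, producing a rational $l_0$ from a hypothetical failure of the connection. But the mechanism you sketch for that extraction is where the argument breaks, and you yourself flag it as ``the main obstacle'' without resolving it.

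The specific gap: when you specialize the orthogonal Chow projectors $\rho_1,\rho_2$ to $\Kn$, you obtain orthogonal rational $\Kn$-projectors, but there is no reason these are orthogonal to the always-rational projectors $\pi_i$ of Section~\ref{sec:rat-proj-quad}. The Chow decomposition and the decomposition $\Delta=\sum\pi_i+\Pi$ are two \emph{different} decompositions of the diagonal; Krull--Schmidt compares them only up to isomorphism, not as orthogonal systems. So your claim that $N_2^{\Kn}$ carries over $\overline k$ a Tate summand ``isomorphic to the split Tate summand $\pi$ \dots\ but orthogonal to $\pi$'' is unjustified, and without that orthogonality no rational $l_m$ falls out of a multiplicity count. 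The reference to Lemma~\ref{lm:Kn-isotropic_invertible_direct_summands} does not help: that lemma detects a single Tate summand, not an excess of them.

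The paper avoids this by a short explicit computation with cycles rather than an abstract orthogonality argument. One lifts $\pi^{\Ch}$ (which contains $h^j\times l_j$ but not $l_{D'-j}\times h^{D'-j}$) to a rational $\CKn$-projector $\pi^{\CKn}$; over $\overline k$ this is the same sum of standard basis elements plus possible correction terms of the form $v_n\,l_i\times l_{D'-i}$ (and $v_n^2\,l_0\times l_0$ when $D=2^{n+1}-2$). One then composes the image $\pi^{\Kn}$ with the always-rational element $v_n^{-1}h^j\times h^{D'-j}$---on one side if the correction $v_n\,l_{D'-j}\times l_j$ is present, on the other side if not---and pulls back along the diagonal $Q\to Q\times Q$. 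In each case the result over $\overline k$ is $l_0$ (up to a rational $h$-term), so $l_0\in\oKn(Q)$ and Proposition~\ref{prop:l_0_rational_isotropic} gives the contradiction. This is the ``bookkeeping'' you allude to, but it is done with concrete correspondences and the diagonal pullback, not via multiplicities of Tate summands.
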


\begin{proof}

Assume that $\unCh\sh j$ and $\unCh\sh{j+2^n-1}$ belong to $\Lambda(Q)$ and are not connected, 
in other words, there exists an indecomposable summand $N$ of $\MCh{Q}$ such that $\unCh\sh j\in\Lambda(N)$, 
and $\unCh\sh{j+2^n-1}\not\in\Lambda(N)$. We may assume that $N$ has a ``normal form'', i.e., $\overline N=\oplus_{L\in\Lambda(N)}L$, and let $\pi^{\Ch}\in\oCh^{\,D}(Q\times Q)$ denote the corresponding rational projector. Then decomposing $\pi^{\Ch}$ as a sum of $l_i\times h^i$ and $h^i\times l_i$ (and maybe $h^d\times\widetilde{l_d}$ for $D\equiv 0\mod 4$) corresponding to Tate summands from $\Lambda(N)$, we conclude that $\pi^{\Ch}$ has a summand $h^j\times l_j$ 
and does not have a summand $l_{D'-j}\times h^{D'-j}$.

Since $Q$ is anisotropic, we get that by Proposition~\ref{prop:l_0_rational_isotropic} element $l_0$ is not rational in $\Knt{\overline{Q}}$.
We are going to arrive to a contradiction with this claim.

By \cite[Cor.~2.8]{VishYag} 
there exists a rational projector $\pi^{\CKn}\in\oCKn^{\,D}(Q\times Q)$ lifting $\pi^{\Ch}$. Decomposing $\pi^{\CKn}$ in the $\mathbb F_2$-basis of $\CKn^{D}(\overline Q\times\overline Q)$ (and using that it lifts $\pi^{\Ch}$), 
we conclude that $\pi^{\CKn}$ equals the sum of the same $l_i\times h^i$ and $h^i\times l_i$ (and maybe $h^d\times\widetilde{l_d}$ for $D\equiv 0\mod 4$) as in the decomposition of $\pi^{\Ch}$, some of $v_n\,l_i\times l_{D'-i}$ and maybe $v_n^2\,l_0\times l_0$ in the case $D=2^{n+1}-2$. Let us also denote by $\pi^{\Kn}$ the image of $\pi^{\CKn}$ in $\oKn^{\,D}(Q\times Q)$.

If $\pi^{\CKn}$ has a summand $v_n\,l_{D'-j}\times l_j$ in the decomposition above, then the composition
$$
(v_n^{-1} h^j\times h^{D'-j}) \circ \pi^{\Kn}
$$ 
is rational, and either equal to $(v_n^{-1}h^j+l_{D'-j})\times h^{D'-j}$, or to $(v_n^{-1}+l_{D'}+v_n\,l_0)\times h^{D'}$ (if $j=0$, $D=2^{n+1}-2$, and $\pi^{\CKn}$ has a summand $v_n^2\,l_0\times l_0$). 
In any case, pulling it back along the diagonal we obtain that $l_0$ is rational in $\Knt Q$, 
which leads to a contradiction.

If $\pi^{\CKn}$ does not have a summand $v_n l_{D'-j}\times l_j$, then the composition
$$
\pi^{\Kn}\circ (v_n^{-1} h^j\times h^{D'-j}) 
$$ 
is either equal to $h^j\times l_j$, or to $h^{D'}\times(l_{D'}+v_n\,l_0)$ (if $j=D'$ and $\pi^{\CKn}$ has a summand $v_n^2\,l_0\times l_0$),
and pulling it back along the diagonal we again obtain that $l_0$ is rational in $\Kn(\overline{Q})$, 
which leads to a contradiction.
\end{proof}

\subsubsection{Projectors in connective Morava K-theory}
\label{sec:proj-ckn}

As shown in \cite{VishYag} (see Section~\ref{sec:prelim_vishik_yagita}),
one can lift projectors defining $\Ch$-motives to projectors in  $\mathrm{CK}(n)$-motives.
We make this lifting explicit for the projectors on quadrics below,
which will be used in the computations of the following sections. 
We assume $n\geq2$ in this section.

Let $Q$ be a smooth projective quadric of dimension $2^n-1\leq D\le 2^{n+1}-2$, $n\geq2$. 
Denote by
$$
\omega^{\CKn}_j=h^j\times l_j+l_{D'-j}\times h^{D'-j}+v_n\,l_{D'-j}\times l_j\in\CKn^D(\overline Q\times\overline Q)
$$
for $d'\leq j\leq d-1$ and $j=d$ for $D\equiv 2\mod 4$, and by
$$
\omega^{\CKn}_d=h^d\times\widetilde l_d+l_{d'}\times h^{d'}+v_n\,l_{d'}\times\widetilde l_d
$$
for $D\equiv0\mod4$, where $\widetilde l_d=l_d+h^d+v_n\,l_{d'}\in\CKnt{\overline Q}$. Straightforward calculation shows that $\omega^{\CKn}_j$ determine the decomposition of $\Delta\in\CKn^{D}(\overline Q\times\overline Q)$ as a sum of mutually orthogonal projectors (cf.~\cite[Prop.~7.2]{LPSS}).
Clearly,  $\omega^{\CKn}_j$ is a lift of the projector $\omega^{\Ch}_j$ from Section~\ref{sec:outer-ex-rev}

\begin{Prop}
In the notation above, let $I\subseteq\{d',\ldots,d\}$. Then $\sum_{i\in I}\omega^{\Ch}_i\in\Ch^D(\overline Q\times\overline Q)$ is rational if and only if $\sum_{i\in I}\omega^{\CKn}_i\in\CKn^D(\overline Q\times\overline Q)$ is rational.
\end{Prop}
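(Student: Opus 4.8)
The statement asserts that rationality of the $\Ch$-projector $\sum_{i\in I}\omega^{\Ch}_i$ is equivalent to rationality of its $\CKn$-lift $\sum_{i\in I}\omega^{\CKn}_i$. One direction is trivial: if $\sum_{i\in I}\omega^{\CKn}_i$ is rational over $k$, then applying the morphism of theories $\CKn\to\Ch$ (reduction modulo the ideal $(v_n)$, which is a morphism of theories by the construction in Section~\ref{sec:prelim_vishik_yagita}) sends it to $\sum_{i\in I}\omega^{\Ch}_i$, hence the latter is rational. So the content is the converse: given that $\sum_{i\in I}\omega^{\Ch}_i$ is rational, produce a rational lift, and then show that the \emph{specific} lift $\sum_{i\in I}\omega^{\CKn}_i$ is itself rational.

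First I would invoke the Vishik--Yagita lifting of idempotents (Section~\ref{sec:prelim_vishik_yagita}, \cite[Cr.~2.8]{VishYag}): since $\CKn\to\Ch$ is a surjective morphism of free theories whose kernel on $\BP$-coefficients is concentrated in negative degrees, for the quadric $Q$ the map $\End_{\CKn}(\MCKn Q)\to\End_{\Ch}(\MCh Q)$ is surjective with nilpotent kernel, and the same holds over $\overline k$ compatibly with restriction; hence there is \emph{some} rational projector $\pi^{\CKn}\in\oCKn^{\,D}(Q\times Q)$ lifting $\pi^{\Ch}:=\sum_{i\in I}\omega^{\Ch}_i$. The task is then to massage $\pi^{\CKn}$ into the normal form $\sum_{i\in I}\omega^{\CKn}_i$ without losing rationality. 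I would decompose $\pi^{\CKn}$ in the $\mathbb F_2[v_n]$-basis of $\CKn^D(\overline Q\times\overline Q)$ given by products of standard basis elements; since it lifts $\pi^{\Ch}$, its ``degree $0$ in $v_n$'' part is exactly $\sum_{i\in I}(h^i\times l_i + l_{D'-i}\times h^{D'-i})$ (resp.\ with $\widetilde l_d$ when $D\equiv 0\bmod 4$), and the remaining terms are $v_n$-multiples, which for codimensional reasons can only be of the shape $v_n\,l_a\times l_b$ with $a+b=D'$ (and possibly $v_n^2\,l_0\times l_0$ when $D=2^{n+1}-2$). The idempotent condition $\pi^{\CKn}\circ\pi^{\CKn}=\pi^{\CKn}$ together with the known multiplication table of $\CKnt{\overline Q}$ (Section~\ref{sec:prelim_A_split_quadric}) should force each coefficient of $v_n\,l_{D'-i}\times l_i$ to be $1$ for $i\in I$ and $0$ otherwise: indeed the $\omega^{\CKn}_j$ are mutually orthogonal idempotents spanning the relevant subring, so any idempotent lifting $\sum_{i\in I}\omega^{\Ch}_i$ and lying in their $\mathbb F_2$-span must already equal $\sum_{i\in I}\omega^{\CKn}_i$. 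The subtlety is that $\pi^{\CKn}$ need not a priori lie in that span — it could have ``off-diagonal'' terms $v_n\,l_a\times l_b$ with $a\in I$, $b\notin I$ (i.e.\ $D'-b\notin I$). Here I would conjugate $\pi^{\CKn}$ by a suitable rational invertible correspondence, or more simply observe that such a term, if rational, would make $v_n^{-1}h^{\dots}\times h^{\dots}$-type compositions rational and hence (pulling back along the diagonal) make some $l_i$ rational in $\Knt Q$ — but that is exactly the kind of conclusion whose presence or absence is symmetric in $\pi^{\Ch}$ versus $\sum_{i\in I}\omega^{\Ch}_i$, so no new rationality is created. The cleanest route is: project $\pi^{\CKn}$ onto the $\mathbb F_2$-span of the $\omega^{\CKn}_i$ via the rational idempotents $\sum_i\omega^{\CKn}_i$ acting on both sides (the full sum $\sum_{i=d'}^d\omega^{\CKn}_i$ is rational, being a lift of the rational $\Pi$), and check this projection preserves the property of lifting $\pi^{\Ch}$ and yields an idempotent, which must then be $\sum_{i\in I}\omega^{\CKn}_i$.

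\textbf{Main obstacle.} The delicate point is controlling the ``mixed'' off-diagonal $v_n$-terms of an arbitrary rational lift $\pi^{\CKn}$ and showing they can be removed by rational operations; this is where one genuinely uses that the relevant piece of $\End_{\CKn}(\MCKn{\overline Q})$ is a product of copies of $\mathbb F_2$ (plus a $2\times 2$ matrix block in the even-dimensional case, handled as in Proposition~\ref{prop:normal} via the reflection $\tau$), so that the lattice of idempotents is rigid. I expect the even-dimensional case $D\equiv 0\bmod 4$ with the $\widetilde l_d$ twist, and the boundary case $D=2^{n+1}-2$ with the extra $v_n^2\,l_0\times l_0$ term, to require the same reflection-conjugation bookkeeping already deployed in the proof of Proposition~\ref{prop:normal}, and I would treat them by reduction to that lemma rather than reproving it. The remaining verifications — that $\omega^{\CKn}_j$ are orthogonal idempotents lifting $\omega^{\Ch}_j$, and the codimension count restricting the possible $v_n$-terms — are routine computations with the formal group law of $\CKn$ and the standard basis, which I would state and leave to the reader.
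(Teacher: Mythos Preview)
Your overall architecture matches the paper's: lift $\pi^{\Ch}=\sum_{i\in I}\omega^{\Ch}_i$ to a rational idempotent $\pi^{\CKn}$ via Vishik--Yagita, then argue that $\pi^{\CKn}$ must already equal $\sum_{i\in I}\omega^{\CKn}_i$. But your execution of the key step is broken. Your ``cleanest route'' proposes to project $\pi^{\CKn}$ onto the $\mathbb F_2$-span of the $\omega^{\CKn}_i$ by sandwiching with $\sum_{i=d'}^d\omega^{\CKn}_i$; however, as stated immediately before the Proposition, the $\omega^{\CKn}_i$ form a \emph{complete} orthogonal decomposition of $\Delta$, so that sum \emph{is} $\Delta$ and conjugating by it does nothing. (Your parenthetical that this sum ``lifts the rational $\Pi$'' conflates the $\Kn$-kernel projector $\Pi=\sum\varpi_i$ with the $\CKn$-story; these live in different theories.) Your fallback suggestions --- conjugate by an unspecified rational invertible, or compose with $v_n^{-1}h^{\cdot}\times h^{\cdot}$ --- are not worked out.

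The paper's argument is that the off-diagonal terms you worry about simply do not exist in codimension $D$. For $D$ odd (and for $d'+1\le j\le d-1$ when $D$ is even), a degree count shows $\Hom_{\CKn}\bigl(\MCKn{\overline Q,\,\omega^{\CKn}_j},\,\MCKn{\overline Q,\,\omega^{\CKn}_k}\bigr)=0$ for $j\neq k$: the Tate twists in distinct blocks differ by amounts not divisible by $2^n-1$, and $\CKn^{m}(k)=0$ unless $(2^n-1)\mid m\le 0$. Hence $\End_{\CKn^D}(\MCKn{\overline Q})$ is a genuine product over $j$, the restriction of $\pi^{\CKn}$ to each block is automatically an idempotent, and each block ring (either $\mathbb F_2$ or upper-triangular $2\times 2$ over $\mathbb F_2$) has a \emph{unique} idempotent lifting the prescribed $\Ch$-component; this forces $\pi^{\CKn}=\sum_{i\in I}\omega^{\CKn}_i$. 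Your codimension analysis of the $v_n$-terms was heading exactly here, but you retreated from it. For even $D$ the two extremal blocks $j\in\{d',d\}$ do interact; when $D<2^{n+1}-2$ they are both $\un(d)$ and the $\CKn\to\Ch$ map is already an isomorphism on that $2\times 2$ piece, while for $D=2^{n+1}-2$ the genuine extra freedom~(\ref{eq:additional-ckn-proj}) is eliminated by the reflection $\tau$ \emph{together with} Proposition~\ref{prop:l_0_rational_isotropic}: in the bad cases one pushes $\pi^{\CKn}$ along a projection to make $l_0$ rational, forcing $Q$ isotropic, after which the offending terms can be subtracted. You correctly anticipated the reflection trick but missed this last ingredient.
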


\begin{proof}
The ``if'' part of the proposition is clear. Let now $\pi^{\Ch}=\sum_{i\in I}\omega^{\Ch}_i$ be rational 
and denote by $\pi^{\CKn}$ a rational projector in $\CKn^D(\overline Q\times\overline Q)$ lifting $\pi^{\Ch}$ (that exists by~\cite[Cor.~2.8]{VishYag}).

Observe that for $D$ odd and for $d'+1\leq j\leq d-1$ for $D$ even, the only idempotent in 
$$
\Enda{\MCKn{\overline Q,\,\omega^{\CKn}_j}}
$$ 
lifting $\omega^{\Ch}_j$ is $\omega^{\CKn}_j$. 
Since $\pi^{\CKn}$ restricted to $\MCKn{\overline Q,\,\omega^{\CKn}_j}$ 
is an idempotent, we conclude that $\pi^{\CKn}=\sum_{i\in I}\omega^{\CKn}_i$ in the odd-dimensional case. Next, we study the restriction of $\pi^{\CKn}$ to $$\MCKn{\overline Q,\,\omega^{\CKn}_{d'}}\oplus\MCKn{\overline Q,\,\omega^{\CKn}_d}$$ for $D$ even.

Observe also that for even $D<2^{n+1}-2$ in fact $\MCh{\overline Q,\,\omega^{\Ch}_{d'}}$ and $\MCh{\overline Q,\,\omega^{\Ch}_{d}}$ are Tate motives $\unCh\sh d$, and similarly for $\CKn$, and therefore the natural map 
$$
\Enda{\MCKn{\overline Q,\,\omega^{\CKn}_{d'}}\oplus\MCKn{\overline Q,\,\omega^{\CKn}_d}}\rightarrow\Enda{\MCh{\overline Q,\,\omega^{\Ch}_{d'}}\oplus\MCh{\overline Q,\,\omega^{\Ch}_d}}
$$ 
is an isomorphism. This implies that $\pi^{\CKn}=\sum_{i\in I}\omega^{\CKn}_i$ in this case as well.

It remains to consider the case $D=2^{n+1}-2$ (where $d'=0$). 
If, say, $0\in I$, $d\not\in I$, idempotents lifting $\omega^{\Ch}_{0}$ have form
\begin{equation}
\label{eq:additional-ckn-proj}
\omega^{\CKn}_{0}+a_1\cdot v_n\,l_{0}\times h^d+a_2\cdot v_n\,h^d\times l_{0}+a_3\cdot v_n^2\,l_{0}\times l_{0}
\end{equation}
for $a_i\in\mathbb F_2$. 
Clearly, if all $a_i=0$, then $\pi^{\CKn}$ has the required form. If all $a_i=1$, consider the morphism $\tau\in\Enda{\MCKn Q}$ given by the graph of a reflection (see Section~\ref{sec:rat-proj-quad}).  Then the image of $\pi^{\CKn}$ under $\tau$ is rational and equals $h^d\times h^d+\sum_{i\in I}\omega^{\CKn}_i$.

In the remaining cases we can push $\pi^{\CKn}$ along one of the projections $Q\times Q\rightarrow Q$ and obtain that $v_n^2\,l_{0}$ is rational. However, in this case $Q$ is isotropic by Proposition~\ref{prop:l_0_rational_isotropic}, and therefore the ``additional'' summands of~\eqref{eq:additional-ckn-proj} can be subtracted from $\pi^{\CKn}$. 
\end{proof}

\begin{Rk}
In fact, Proposition~\ref{prop:l_0_rational_isotropic} is not essential for the above argument, one can use the theorem of Levine--Morel~\cite[Th.~1.2.14]{LevMor} instead.
\end{Rk}

Let us also denote by $\omega^{\Kn}_i$ the images of $\omega^{\CKn}_i$ in $\Kn^{D}(\overline Q\times\overline Q)$. Clearly, for $I\subseteq\{d',\ldots,d\}$ an element $\sum_{i\in I}\omega^{\Kn}_i$ is rational if and only if $\sum_{i\in I}\varpi_i$ is rational (see Section~\ref{sec:rat-proj-quad}). This implies

\begin{Cr}
\label{cr:proj-ch-k}
In the notation above, let $I\subseteq\{d',\ldots,d\}$ be such that $\sum_{i\in I}\omega^{\Ch}_i\in\Ch^D(\overline Q\times\overline Q)$ is rational. Then $\sum_{i\in I}\varpi_i\in\Kn^D(\overline Q\times\overline Q)$ is also rational.
\end{Cr}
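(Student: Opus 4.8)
The plan is to deduce this corollary directly from the preceding Proposition (lifting rational $\Ch$-projectors $\sum_{i\in I}\omega^{\Ch}_i$ to rational $\CKn$-projectors $\sum_{i\in I}\omega^{\CKn}_i$) together with the observation stated just before the corollary, namely that $\omega^{\Kn}_i$ is the image of $\omega^{\CKn}_i$ under the morphism of theories $\CKn\to\Kn$ obtained by inverting $v_n$, and that this map sends a rational element to a rational element. First I would recall that a projector $\rho$ in $\Kn^D(\overline{Q}\times\overline{Q})$ is called rational if it lies in the image of the restriction $\Kn^D(Q\times Q)\to\Kn^D(\overline{Q}\times\overline{Q})$; by the Rost Nilpotence Property (Section~\ref{sec:mot_dec_base_change}, \cite{GilleVishik}) it then lifts to an honest idempotent over $k$, but for the purpose of the corollary we only need the cycle-theoretic statement that the element is a restriction from $Q\times Q$.

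So, given $I\subseteq\{d',\ldots,d\}$ with $\sum_{i\in I}\omega^{\Ch}_i$ rational, the Proposition provides a rational element $\sum_{i\in I}\omega^{\CKn}_i\in\CKn^D(\overline{Q}\times\overline{Q})$, i.e.\ an element of $\CKn^D(Q\times Q)$ restricting to it over $\overline{k}$. Applying the morphism of theories $\CKn\to\Kn$ (which is compatible with pullback along $\Spec\overline{k}\to\Spec k$, since a morphism of theories commutes with all pullbacks) I obtain that $\sum_{i\in I}\omega^{\Kn}_i\in\Kn^D(\overline{Q}\times\overline{Q})$ is in the image of $\Kn^D(Q\times Q)$, hence rational. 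It then remains to identify $\sum_{i\in I}\omega^{\Kn}_i$ with $\sum_{i\in I}\varpi_i$ in $\Kn^D(\overline{Q}\times\overline{Q})$, or at least to show that one is rational if and only if the other is; this is precisely the sentence preceding the corollary statement, which follows from comparing the two explicit formulas: $\omega^{\Kn}_j=h^j\times l_j+l_{D'-j}\times h^{D'-j}+v_n\,l_{D'-j}\times l_j$ (the image of $\omega^{\CKn}_j$) versus $\varpi_j=(h^j+v_nl_{D'-j})\times(l_j+v_n^{-1}h^{D'-j})$, and expanding the latter using $h^{D'}=0=l_j\cdot l_{j'}$ in the relevant degrees shows $\varpi_j=v_n^{-1}\omega^{\Kn}_j$ up to the analogous bookkeeping in the even-dimensional case — in particular $\sum_{i\in I}\omega^{\Kn}_i$ and $\sum_{i\in I}\varpi_i$ differ by the automorphism ``multiplication by $v_n$'' of $\Kn$, which preserves rationality.

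The main obstacle, such as it is, is purely bookkeeping: one must check the equality $\varpi_j = v_n^{-1}\omega^{\Kn}_j$ (and its variant $\varpi_d$ vs.\ $\omega^{\CKn}_d$ when $D\equiv0\bmod4$, where the tilde-classes $\widetilde{l_d}$ over $\CKn$ and over $\Kn$ differ by a $v_n$-multiple of $l_{d'}$), using the multiplication table of $\At Q$ from Section~\ref{sec:prelim_A_split_quadric} and the fact that $v_n$ is invertible in $\Kn$ but only a non-nilpotent regular element in $\CKn$. Since all of this is already implicit in the cited statement ``for $I\subseteq\{d',\ldots,d\}$ an element $\sum_{i\in I}\omega^{\Kn}_i$ is rational if and only if $\sum_{i\in I}\varpi_i$ is rational (see Section~\ref{sec:rat-proj-quad})'', the corollary is immediate once the Proposition is in hand; no genuinely new input is required.
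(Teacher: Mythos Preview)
Your approach is exactly the paper's: deduce rationality of $\sum_{i\in I}\omega^{\CKn}_i$ from the preceding Proposition, push it along the morphism $\CKn\to\Kn$, and invoke the sentence before the corollary that rationality of $\sum_{i\in I}\omega^{\Kn}_i$ is equivalent to rationality of $\sum_{i\in I}\varpi_i$.

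However, your justification of this last equivalence is computationally wrong. Expanding
\[
\varpi_j=(h^j+v_nl_{D'-j})\times(l_j+v_n^{-1}h^{D'-j})
=\omega^{\Kn}_j+v_n^{-1}\,h^j\times h^{D'-j},
\]
the term $h^{D'}$ does \emph{not} vanish (here $0\le D'\le d$, so $h^{D'}$ is a basis element), and the two projectors are not related by the scalar $v_n^{\pm1}$. The correct point is that their difference is precisely the rational Tate projector $\pi_j=v_n^{-1}h^j\times h^{D'-j}$ of Section~\ref{sec:rat-proj-quad}; hence $\sum_{i\in I}\varpi_i$ and $\sum_{i\in I}\omega^{\Kn}_i$ differ by the always-rational element $\sum_{i\in I}\pi_i$, and rationality of one follows from rationality of the other. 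With this correction your argument is complete and matches the paper's.
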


The inverse to Corollary~\ref{cr:proj-ch-k} is one of the main results of the next section.

\subsubsection{Motivic decompositions of quadrics in the pre-stable case}
\label{sec:prestable_quad}

In this section we show that 
the $\Kn$-motivic decomposition of a quadric
of dimension less than $2^{n+1}-1$ 
determines uniquely its Chow motivic decomposition. 
In this section we assume $n\geq2$, and the analogous statement for $n=1$ is clear from 
Section~\ref{sec:K0-motive-quadric}, see
Remark~\ref{rk:th414-n1} below. 

\begin{Th}\label{th:prestableMDT}\ 
\begin{enumerate}
\item 
Let $Q$ be a projective quadric of dimension $D< 2^{n+1}-1$, and $I\subseteq\{d',\ldots,d\}$.
Then the projector $\sum_{i\in I} {\omega}^{\Ch}_i$ in $\Ch^{D}(\overline Q\times\overline Q)$ is rational 
if and only if the projector $\sum_{i\in I} \varpi_i$ in $\Kn^{D}(\overline Q\times\overline Q)$ is rational.
\item
For $j=1,2$ let $Q_j$ be a projective quadric of dimension $D_j<2^{n+1}-1$,
let $I_{j}\subseteq\{d_j',\ldots,d_j\}$
be such that $\sum_{i\in I_j}\varpi_i$ is a rational $\Kn$-projector on $Q_j$ and 
 $\sum_{i\in I_j} {\omega}^{\Ch}_i$ is a rational $\Ch$-projector on $Q_j$,
and denote 
$$
N^{\Ch}_j=\MCh{Q_j,\,\sum_{i\in I_j}\omega^\Ch_i}\quad\text{ and }\quad N^{\Kn}_j=\MKn{Q_j,\,\sum_{i\in I_j}\varpi_i}.
$$ 
If $N^{\Kn}_1$ is isomorphic to a Tate twist  of $N^{\Kn}_2$, then $N^\Ch_1$ is isomorphic to a Tate twist of $N^\Ch_2$.
\end{enumerate}
\end{Th}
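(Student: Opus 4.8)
The heart of the matter is part (1): I want to show that rationality of $\sum_{i\in I}\varpi_i$ in $\Kn$ forces rationality of $\sum_{i\in I}\omega^{\Ch}_i$ in $\Ch$ (the converse being Corollary~\ref{cr:proj-ch-k}, already proved). The strategy is to \emph{lift} a rational $\Kn$-projector to a rational $\BP$-cycle using Lemma~\ref{lm:from_kn_to_bp}, and then apply Vishik's symmetric operations to ``descend'' in the $v_n$-filtration until the $\Ch$-trace becomes visible. More precisely: suppose $\rho\in\oKn^{\,D}(Q\times Q)$ is a rational projector mapping to $\sum_{i\in I}\varpi_i$ over $\overline k$. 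By Lemma~\ref{lm:from_kn_to_bp} applied to the non-projective variety $Q\times Q$ (note $\rho$ restricted to the generic points vanishes, since over $\overline k$ the projectors $\varpi_i$ are supported away from the diagonal components $l_i\times h^i$ appearing in $\Delta$ — one must check this carefully, it is where the constraint $I\subseteq\{d',\dots,d\}$ enters), there is $z\in\oBP^{>0}(Q\times Q)$ with $z\otimes v_n^{-s}\mapsto\rho$; the codimension bound $\dim Q\le 2^{n+1}-2$ forces $s\in\{0,1\}$. The case $s=0$ lands $z$ in $\CH^{D}\otimes\Z2$ and we are done directly. For $s=1$, I would decompose $z$ in the standard basis $h^i\times l_j$, $l_i\times h^j$, $v_n\,l_i\times l_j$ (plus the $D\equiv 0\bmod 4$ modifications) of $\BP(\overline{Q\times Q})$; the $h$-part is automatically rational, and the interesting summands are the $v_n\,l_i\times l_j$ terms, which by codimension must be precisely the $v_n\,l_{D'-i}\times l_i$ appearing in the $\omega^{\CKn}_i$. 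Applying an appropriate coefficient $\phi^{t^e}$ of the symmetric operation $\PhBP$ (using additivity, Proposition~\ref{prop:phi-add}, and the divisibility formula Proposition~\ref{prop:symm_divide} together with the Steenrod action on quadrics) I expect to extract, from the $v_n$-linear part of $z$, a rational $\Ch$-cycle that is exactly $\sum_{i\in I}\omega^{\Ch}_i$ — or more realistically, a rational projector differing from it by $h$-terms, which one then cleans up using the reflection $\tau$ and the fact that ``additional'' $h^d\times h^d$-type summands can be absorbed exactly as in the proof of the proposition on projectors in connective Morava K-theory in Section~\ref{sec:proj-ckn}.

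Part (2) should follow from part (1) by the same bookkeeping. Given an isomorphism $N^{\Kn}_1\cong N^{\Kn}_2(s)$ for some $0\le s<2^n-1$, Proposition~\ref{prop:quadric_kn-iso_normal_form} puts this isomorphism in a normal form governed by a bijection $f\colon I_1\to I_2$ with $f(i)\equiv i-s\bmod(2^n-1)$ (in the first case of that proposition, a slightly different but equally explicit form involving the $2\times2$ block at the ``middle''). The plan is then to lift the isomorphism, just as the projector was lifted, first to $\CKn$ via \cite[Cor.~2.8]{VishYag} and then — using Lemma~\ref{lm:from_kn_to_bp} on the morphism group and the symmetric operation argument again — to a rational $\Ch$-cycle $Q_1\times Q_2$ realizing the desired isomorphism $N^{\Ch}_1\cong N^{\Ch}_2(s')$ for the corresponding Tate twist $s'$. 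One also needs the mutually inverse cycle; this is obtained symmetrically (or by observing that the composite of the lifted morphisms, being a rational endomorphism of $N^{\Ch}_j$ that is the identity over $\overline k$ up to the nilpotent kernel, is invertible by $\Ch$-RNP for quadrics, which holds by Rost). The Tate twist $s'$ on the Chow side is forced by comparing ranks/gradings: the outer excellent connections (Proposition~\ref{prop:outer-excel}) pin down which $\unCh\langle j\rangle$ are connected, so the combinatorial data $I_j$ determines $N^{\Ch}_j$ up to Tate twist, and part (1) guarantees the relevant projectors are rational over $k$.

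The main obstacle, I expect, is the symmetric-operation bookkeeping in part (1) for the ``boundary'' values — specifically $D=2^{n+1}-2$, where the middle Tate motive $\unKn\langle 2^n-1\rangle$ occurs with multiplicity two, where one is forced to use the non-additive operation $\phi^{t^0}$ (controlled only by Proposition~\ref{prop:phi-add}(2,3)), and where the extra projector summands of the form $v_n^2\,l_0\times l_0$ appear, exactly as in \eqref{eq:additional-ckn-proj}. Handling these requires the same delicate case analysis (coefficient in $2\Z2$ versus $\Z2^\times$, use of the reflection $\tau$, the Lucas-theorem computation of Lemma~\ref{lm:chow_steenrod}) that already appears in the proof of Proposition~\ref{prop:l_0_rational_isotropic}, and getting the $h$-term corrections to cancel correctly is the fiddly part. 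A secondary subtlety is justifying that Lemma~\ref{lm:from_kn_to_bp} applies to $\rho$ with the ``positive codimension'' conclusion — i.e.\ verifying the generic-fiber vanishing hypothesis — which I'd do by noting that $\overline\rho$, being a projector on $\Mker{\overline Q}$, contains no $l_i\times h^i$ summand and hence restricts to $0$ at the generic point of the first factor.
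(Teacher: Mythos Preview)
Your overall plan --- lift a rational $\Kn$-element to $\BP$ via Lemma~\ref{lm:from_kn_to_bp}, then apply the $\Ch$-trace $\phi^{t^k}$ of the symmetric operation to extract the $\Ch$-part --- is exactly the paper's strategy. The organizational choice (do (1) first, then (2)) differs from the paper, which proves (2) and reads off (1) as the special case $Q_1=Q_2$, $s=0$; either order is fine.

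The genuine gap is your handling of the $\BP$-lift. You write that ``the interesting summands are the $v_n\,l_i\times l_j$ terms'', but the lift $z\in\oBP(Q_1\times Q_2)$ decomposes as $2z_0+\sum_{m>0}v_m z_m+\sum v_iv_j z_{ij}$, and the entire difficulty is showing that the terms $v_m z_m$ for $m<n$ contribute nothing in the target codimension after applying $\phi^{t^k}$. This is \emph{not} the single-quadric Lucas analysis of Lemma~\ref{lm:chow_steenrod} that you cite; rather, the paper proves a separate two-variable lemma (Lemma~\ref{lm:steenrod-QxQ}) computing $\StCh(h^a\times l_b)$ and $\StCh(l_b\times h^a)$ on $\Cht{\overline Q_2\times\overline Q_1}$ and showing these land in strictly smaller codimension. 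That lemma is stated and proved only for $D_j\in\{2^{n+1}-3,\,2^{n+1}-2\}$, which is why the paper first \emph{inflates} both quadrics by hyperbolic planes to those dimensions --- a step you do not mention and without which the Steenrod bookkeeping does not close. A further technical input you omit is Lemma~\ref{lm:ldxld}, used to rule out a stray $l_{d_2}\times l_{d_1}$ summand in $\phi^{t^k}(z)$.

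For part (2) there is a second gap. Proposition~\ref{prop:quadric_kn-iso_normal_form} only gives $f(i)\equiv i-s\bmod(2^n-1)$, which is too weak: the paper needs $f$ to be a genuine shift $f(i)=i-s$ (or $f(i)=i-s+(2^n-1)$), and proves this separately as Lemma~\ref{lm:simple-shift} under the additional assumption that the summands are indecomposable and $|I_j|\ge 2$ (the unary case being handled by hand). Without this, your lifted $\Ch$-cycle need not be an isomorphism. Finally, two smaller points: lifting the isomorphism ``first to $\CKn$ via \cite[Cor.~2.8]{VishYag}'' is not what the paper does (that result lifts idempotents, not isomorphisms) and is unnecessary --- Lemma~\ref{lm:from_kn_to_bp} lifts directly to $\BP$; and the paper works with the \emph{inverse} $(\Psi^{\Kn})^{-1}$ rather than $\Psi^{\Kn}$ itself, precisely so that the codimension stays $\le 2(2^n-1)$ and the bound $t\le 1$ on the $v_n$-power holds.
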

\begin{Rk}
The Tate twist in (2) needed for the isomorphism 
of the Chow motives is uniquely determined,
and can be deduced from the sets $I_j$, as follows from the proof. 
\end{Rk}

We will use the following lemmata in the proof. The next lemma for $Q_1=Q_2$ is~\cite[Lm.~73.2]{EKM}. 
The proof works verbatim for the following slight generalization.
\begin{Lm}
\label{lm:ldxld}
Let $Q_1$, $Q_2$ be projective quadrics of dimensions $D_1$, $D_2$, let $d_j=\left\lfloor\frac{D_j}{2}\right\rfloor$, 
and assume that $x\in\oCht{Q_2\times Q_1}$ contains  $l_i\times l_{d_1}$
in its decomposition as a sum of standard basis vectors with non-zero coefficient for some $0\leq i\leq d_2$. Then $Q_1$ is hyperbolic.
\end{Lm}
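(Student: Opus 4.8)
The plan is to transcribe the proof of \cite[Lm.~73.2]{EKM}, the only difference being that the source and target quadrics are allowed to differ; I will work in the standard bases of $\Cht{\overline{Q_2}}$ and $\Cht{\overline{Q_1}}$ of Section~\ref{sec:prelim_A_split_quadric}. Since the topological grading on $\Ch$ is defined over $k$, I would first replace $x$ by its homogeneous component of the codimension of $l_i\times l_{d_1}$: this component is still rational and still contains $l_i\times l_{d_1}$ with coefficient $1$. Thus write $x=\sum_{j,k}a_{jk}\,\alpha_j\times\beta_k$ with $\alpha_j$, $\beta_k$ ranging over the standard bases, $a_{jk}\in\F2$, the pair $(l_i,l_{d_1})$ occurring with coefficient $1$; homogeneity forces $\dim\alpha_j+\dim\beta_k=i+d_1$ on the support.

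Next I would feed the correspondence $x$ the rational class $h^i\in\Ch(Q_2)$ (rational, being the $i$-th power of a hyperplane section; here $0\le i\le d_2$ is used so that $h^i\ne0$). Concretely, set
\[
z:=(\mathrm{pr}_{Q_1})_*\!\big(\mathrm{pr}_{Q_2}^{*}(h^i)\cdot x\big)\in\oCht{Q_1},
\]
which is rational. Using $\mathrm{pr}_{Q_2}^{*}(h^i)\cdot(\alpha_j\times\beta_k)=(h^i\alpha_j)\times\beta_k$ and the projection formula, $z=\sum_{j,k}a_{jk}\,\deg(h^i\alpha_j)\,\beta_k$. A routine computation in $\Cht{\overline{Q_2}}$ gives $\deg(h^i\alpha_j)\equiv0\pmod 2$ unless $\alpha_j=l_i$, in which case $h^il_i=l_0$ and $\deg(h^il_i)=1$: indeed $\deg(h^{i+m})\in\{0,2\}$ for any power, while $\deg(h^il_m)=1$ exactly when $m=i$. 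Hence $z=\sum_k c_k\,\beta_k$, where $c_k$ is the coefficient of $l_i\times\beta_k$ in $x$; by homogeneity only the $\beta_k$ of dimension $d_1$ survive, i.e.\ $\beta_k=l_{d_1}$, together with $h^{d_1}$ when $D_1$ is even, and the coefficient of $l_{d_1}$ equals $1$. As $h^{d_1}$ is rational, subtracting it if necessary shows $l_{d_1}\in\oCht{Q_1}$.

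Finally I would invoke the Springer theorem \cite[Lm.~72.1]{EKM}: rationality of the class $l_{d_1}$ of a maximal linear subspace means that the orthogonal Grassmannian $Q_1^{d_1}$ has a $k$-rational point, i.e.\ $\iw(q_1)\geq d_1+1$. Since $d_1+1$ is already the maximal possible Witt index of $q_1$, the form $q_1$ is hyperbolic, hence $Q_1$ is hyperbolic, exactly as in \cite[Lm.~73.2]{EKM}.

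The argument is essentially bookkeeping; the only steps asking for a little care are the mod-$2$ evaluation of $\deg(h^i\alpha_j)$ and the dimension count confining $z$ to the span of $l_{d_1}$ and $h^{d_1}$. The single piece of genuine geometry — rationality of a top linear-subspace class forcing maximal Witt index — is supplied by the cited Springer theorem, so I anticipate no real obstacle beyond faithfully following \cite[Lm.~73.2]{EKM}.
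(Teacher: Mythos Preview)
Your proposal is correct and is exactly the approach the paper takes: the paper simply notes that the proof of \cite[Lm.~73.2]{EKM} works verbatim, and you have faithfully transcribed that argument with the only change being two possibly different quadrics. The one minor point is terminological: for odd-dimensional $Q_1$ the conclusion $\iw(q_1)\ge d_1+1$ means $q_1$ is split rather than literally hyperbolic, but this matches the paper's own phrasing and is harmless for the application.
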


The next Lemma describes the action of the Steenrod operations on $\Cht{\overline Q_2\times\overline Q_1}$.
\begin{Lm}
\label{lm:steenrod-QxQ}
Let $Q_1$, $Q_2$ be projective quadrics of dimensions $D_j$ equal to $2^{n+1}-2$ or $2^{n+1}-3$, 
let $d_j=\left\lfloor\frac{D_j}{2}\right\rfloor$, 
 $0\leq m<n$ and $\,0\leq s\leq2^n-1$. Then
$$
\StCh(h^a \times l_b)\in\Ch^{<D_1-s}(\overline Q_2\times\overline Q_1)[[t]]
$$
for $0\leq a\leq d_2$, $b=a+s+2^n-2^m$, and
$$
\StCh(l_b \times h^a)\in\Ch^{<D_1-s}(\overline Q_2\times\overline Q_1)[[t]]
$$
for $0\leq a\leq d_1$, $b=a+D_2-D_1+s+2^n-2^m$.
\end{Lm}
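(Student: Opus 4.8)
The plan is to prove Lemma~\ref{lm:steenrod-QxQ} by a direct computation using the multiplicativity of the total Steenrod operation together with the explicit formulas for $\StCh$ on the standard basis of a split quadric recalled in Proposition~\ref{prop:St-Ch-quad}. The key point is that on a product $\overline{Q_2}\times\overline{Q_1}$ one has $\StCh(u\times w)=\StCh(u)\times\StCh(w)$, so it suffices to compute each factor separately and then track the codimension of the resulting terms on the $Q_1$-side.

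First I would treat the term $h^a\times l_b$ with $b=a+s+2^n-2^m$. By Proposition~\ref{prop:St-Ch-quad} we have $\StCh(h^a)=h^a(t+h)^a$, which is a polynomial in $t$ with coefficients in $\Cht{\overline{Q_2}}$ of codimension $\ge a$ (all summands $\binom{a}{i}h^{a+i}t^{a-i}$), and $\StCh(l_b)=l_b(t+h)^{D_1+1-b}t^{-1}$. Writing $(t+h)^{D_1+1-b}=\sum_c\binom{D_1+1-b}{c}h^ct^{D_1+1-b-c}$ and using $h^c l_b=l_{b-c}$ (and $l_j=0$ for $j<0$), the Chow classes appearing on the $Q_1$-factor of $\StCh(l_b)$ are precisely the $l_{b-c}$ for $c\ge 0$ with $c\le b$; since $l_{b-c}$ has codimension $D_1-(b-c)$, the maximal codimension attained is $D_1$ (at $c=b$, giving $l_0$) — but I must show this top codimension term does \emph{not} occur, i.e. that the total codimension on the $Q_1$-side stays $<D_1-s$. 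The correct bookkeeping: the coefficient of $t^{D_1-s}$ (the ``codimension $D_1-s$ part'' in the $t$-variable is not what is meant — rather one bounds the $Q_1$-codimension of the Chow coefficient itself). Concretely, I would argue that every monomial $l_{b-c}\,t^{D_1+1-b-c-1}=l_{b-c}t^{D_1-b-c}$ in $\StCh(l_b)$ has $l_{b-c}$ of codimension $D_1-b+c$, and I need this to be $<D_1-s$ for all $c$ with nonvanishing $\binom{D_1+1-b}{c}$, equivalently $c<b-s=a+2^n-2^m$. For $c\ge a+2^n-2^m$ one checks via Lucas's theorem that $\binom{D_1+1-b}{c}$ is even: here $D_1+1-b=D_1+1-a-s-2^n+2^m$, and since $D_1\in\{2^{n+1}-2,2^{n+1}-3\}$, one has $D_1+1-b\le 2^{n+1}-2-a-s-2^n+2^m=2^n-2+2^m-a-s$, so the binomial vanishes mod $2$ once $c$ exceeds this — a base-$2$ digit comparison that I would carry out explicitly, using $m<n$ and $0\le s\le 2^n-1$, $0\le a\le d_2\le 2^n-1$.

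For the second family $l_b\times h^a$ with $b=a+D_2-D_1+s+2^n-2^m$, the roles swap: $\StCh(h^a)$ on the $Q_1$-factor contributes $h^a(t+h)^a$, whose Chow coefficients $h^{a+i}$ have codimension $a+i\le 2a\le D_1$, and I need $a+i<D_1-s$ for all $i\le a$ with $\binom{a}{i}$ odd; combined with the constraint coming from the $l_b$ on the $Q_2$-factor being a well-defined (nonzero) class, $b\le d_2$, which forces $a$ to be small, namely $a\le d_2-D_2+D_1-s-2^n+2^m$, so $2a+s\le D_1+2d_2-D_2-2^n+2^m-s\le D_1-2^n+2^m-s+(\text{small})<D_1$ since $m<n$. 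Again the sharp statement that the top terms vanish reduces to a Lucas-theorem digit argument. The main obstacle I anticipate is not conceptual but bookkeeping: getting the exact inequalities right in both parities of $D_j$ (the $\pm$ between $2^{n+1}-2$ and $2^{n+1}-3$, and between $d_j$ and $D_j/2$) and handling the edge cases $a=0$, $s=0$, $b$ on the boundary $b=d_1$ — where Lemma~\ref{lm:ldxld} is exactly the tool that lets one discard the otherwise-problematic $l_{d_1}$ terms by invoking anisotropy of $Q_1$. So I would organize the proof as: (i) reduce to computing each tensor factor by multiplicativity; (ii) expand using Proposition~\ref{prop:St-Ch-quad}; (iii) bound the $Q_1$-codimension of every surviving monomial by the Lucas-theorem digit count, treating the two parities of $D_1,D_2$ uniformly by writing $D_j=2^{n+1}-2-\varepsilon_j$ with $\varepsilon_j\in\{0,1\}$; (iv) dispose of any $l_{d_1}$-type boundary term via Lemma~\ref{lm:ldxld}.
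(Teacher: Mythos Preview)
Your setup—multiplicativity of $\StCh$ plus Proposition~\ref{prop:St-Ch-quad}—is exactly how the paper begins, but your execution has a genuine gap in the codimension bookkeeping. The lemma bounds the \emph{total} codimension on $\overline Q_2\times\overline Q_1$: a term $h^{a+i}\times l_{b-c}$ has codimension $(a+i)+(D_1-b+c)$, not just $D_1-b+c$. Your proposed condition $c<b-s$ controls only the $Q_1$-factor and proves nothing about the total once the $Q_2$-contribution $a+i\ge 0$ is added back; the same confusion recurs in your treatment of the second family. One must bound $i+c$ jointly, not $c$ alone.

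More importantly, the Lucas-theorem route is an unnecessary detour that obscures the actual mechanism. The paper's key observation is that under the hypotheses $D_1+1=2^n+d_1$ and $h^{2^n}=0$ in $\Ch(\overline{Q_1})$ (since $d_1\le 2^n-1$), so by Frobenius in characteristic~$2$ one has $(t+h)^{D_1+1-b}=(t+h)^{2^n}(t+h)^{d_1-b}=t^{2^n}(t+h)^{d_1-b}$. This forces $c\le d_1-b$ in every surviving term, and together with $i\le a$ gives maximal total codimension $2a+D_1-(2b-d_1)$; the elementary inequality $2(b-a)=2(s+2^n-2^m)>d_1+s$ (immediate from $m<n$ and $d_1\le 2^n-1$) then finishes the first case, and the second is symmetric. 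No binomial-coefficient parities are needed, and Lemma~\ref{lm:ldxld} is irrelevant here—it is a rationality statement over $k$ used later in the proof of Theorem~\ref{th:prestableMDT}, not in this purely over-$\overline k$ computation.
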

\begin{proof}[Proof of Lemma~\ref{lm:steenrod-QxQ}]
By Proposition~\ref{prop:St-Ch-quad} (and multiplicativity of $\StCh$) we have
$$
\StCh(h^a \times l_b) = \StCh(h^a)\times\StCh(l_{b}) = h^a(t+h)^a \times l_{b}(t+h)^{D_1+1-b\,}t^{-1}.
$$

Note that 
$D_1+1=2^n+d_1$ under our assumptions, and 
$b\le d_1$, 
thus $(t+h)^{D_1+1-b}=(t+h)^{2^n+d_1-b}=t^{2^n}(t+h)^{d_1-b}$ 
since $h^{2^n}=0$.
Therefore the minimal index $i$ such that $l_i$ appears in the right hand side of the above equation is $i=b-(d_1-b)$.
On the other hand the maximal index $j$ such that $h^j$ appears in the right hand side of the equation is $j=2a$. 
Thus, the maximal codimension of the element 
that appears in the right hand side equals 
$$
2a+D_1-(2b-d_1)<D_1-s.
$$
The computation 
for $\StCh(l_b \times h^a)$ 
is similar.
\end{proof}

We have described in Section~\ref{sec:prelim_rat_iso_kn_quadric}
possible rational forms of isomorphisms between summands of the $\Kn$-kernels of quadrics.
In the following lemma we use the notation $a_i, b_j$ from there.

\begin{Lm}
\label{lm:simple-shift}
In the notation and the assumptions of Theorem~\ref{th:prestableMDT} (2), 
for $j=1,2$ 
assume moreover that $D_j$ is either $2^{n+1}-2$ or $2^{n+1}-3$, 
and that $|I_1|=|I_2|\ge 2$. 

Then there exists an isomorphism $N^{\Kn}_1\xrarr{\simeq} N^{\Kn}_2\sh{s}$ for $0\leq s<2^n-1$ 
that over $\overline{k}$ has the following form:
\begin{equation}
\label{eq:form-of-psi}
\Psi = \sum_{i\in I_1} v_n^{-1}\, a_i \times b_{D_2'-f(i)},
\end{equation}
where 
$f\colon I_1\rarr I_2$ is a bijection of one of the following types:
\begin{enumerate}
    \item $f(i) = i - s$ for all $i\in I_1$,
    \item $f(i) = i-s +(2^n-1)$  for all $i\in I_1$.
\end{enumerate}
\end{Lm}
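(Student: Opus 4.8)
The plan is to start from the general form of rational isomorphisms between kernel summands of quadrics established in Proposition~\ref{prop:quadric_kn-iso_normal_form}, and then to rule out the ``mixed'' case (1) of that proposition by using the Steenrod operation on $\Ch\left(\overline{Q_2}\times\overline{Q_1}\right)$, together with Lemma~\ref{lm:ldxld} and Lemma~\ref{lm:steenrod-QxQ}.

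First I would observe that by hypothesis $D_1, D_2 \in \{2^{n+1}-2,\, 2^{n+1}-3\}$, so neither is $\equiv 0 \bmod 4$ (as $2^{n+1}-2 \equiv 2$ and $2^{n+1}-3$ is odd for $n\ge 2$), and in particular the simplifying assumption $D_j \not\equiv 0 \bmod 4$ of Section~\ref{sec:prelim_rat_iso_kn_quadric} is satisfied. Hence Proposition~\ref{prop:quadric_kn-iso_normal_form} applies: an isomorphism $N_1^{\Kn}\xrightarrow{\sim} N_2^{\Kn}\sh{s}$ over $\overline k$ either has the ``diagonal'' shape of case (2) of that proposition — which, upon unwinding $f(i)\equiv i-s \bmod (2^n-1)$, gives exactly the two alternatives $f(i)=i-s$ or $f(i)=i-s+(2^n-1)$ claimed here (since $I_1, I_2 \subseteq \{d_j',\ldots,d_j\}$ are intervals of length $\le 2^n$, so the residue class mod $2^n-1$ pins down $f$ up to this single ambiguity) — or it has the exceptional shape of case (1). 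So it remains to exclude case (1), i.e.\ to show that if $|I_1|=|I_2|\ge 2$ and $s=0$ with $\{d_j',d_j\}\subseteq I_j$, the ``mixed'' isomorphism $\psi = v_n^{-1}(a_{d'}\times b_d + a_d\times b_{d'})$ or $\psi = v_n^{-2}a_{d'}\times b_{d'} + a_d\times b_d$ cannot occur — or rather, that whenever an isomorphism exists, one of the diagonal forms \eqref{eq:form-of-psi} does as well.

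The key step is to transport the rational isomorphism down to connective Morava K-theory and then to $\Ch$. Using the lifting of idempotents and isomorphisms from \cite{VishYag} (Section~\ref{sec:prelim_vishik_yagita}) and the explicit projectors $\omega^{\CKn}_i$, $\omega^{\Ch}_i$ of Sections~\ref{sec:proj-ckn}, \ref{sec:outer-ex-rev}, a rational $\Kn$-isomorphism $N_1^{\Kn}\cong N_2^{\Kn}\sh s$ lifts to a rational $\CKn$-isomorphism, whose $\Ch$-trace is a rational isomorphism $N_1^{\Ch}\cong N_2^{\Ch}\sh s$. But a rational $\Ch$-isomorphism between these motives is represented by an element of $\oCh(Q_2\times Q_1)$; if its standard-basis decomposition contained a term $l_i\times l_{d_1}$ with nonzero coefficient, then $Q_1$ would be hyperbolic by Lemma~\ref{lm:ldxld}, and then everything is split and the statement is trivial. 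So we may assume the $\Ch$-isomorphism involves no $l\times l_{d}$ term, which forces it to be ``diagonal'' of the type $\sum h^{a_i}\times l_{a_i - s}$ (up to adding $l\times h$ terms), and then applying the Steenrod operation $\StCh$ and comparing with the codimension bound of Lemma~\ref{lm:steenrod-QxQ} shows that the ``cross'' term in case (1) is incompatible with rationality. Concretely, if $\psi$ had the mixed form, applying an appropriate $\phi^{t^e}$ or $\StCh$-coefficient to the lift in $\oCKn$ would produce a rational cycle of the form $l_{2^m - x}\times (\text{something})$ of too-low codimension, contradicting Lemma~\ref{lm:steenrod-QxQ}; hence $s=0$ with the mixed $\psi$ cannot happen unless $Q_1$ is hyperbolic, and in the non-hyperbolic case only the diagonal forms (1)--(2) remain.

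The main obstacle I expect is the bookkeeping in the last step: translating the abstract ``mixed'' isomorphism $\psi$ of Proposition~\ref{prop:quadric_kn-iso_normal_form}(1) into an explicit cycle in $\oCKn(Q_2\times Q_1)$, lifting it compatibly with the decomposition $\Delta = \sum\omega^{\CKn}_i$, and then choosing the right Steenrod-coefficient operation to apply so that the codimension estimate of Lemma~\ref{lm:steenrod-QxQ} bites. One has to be careful that the indices $d_j'$ and $d_j$ can differ between $Q_1$ and $Q_2$ (since $D_1$ and $D_2$ may differ by one), which shifts the relevant $l$-degrees; and one has to use the reflection automorphisms $\tau_j$ (Section~\ref{sec:rat-proj-quad}) to normalize $\psi$, exactly as in the proof of Proposition~\ref{prop:quadric_kn-iso_normal_form}, before running the Steenrod argument. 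I would handle the two boundary dimensions $D_j = 2^{n+1}-2$ and $D_j = 2^{n+1}-3$ uniformly by keeping the codimension inequalities in the form $\le D_1 - s$ as in Lemma~\ref{lm:steenrod-QxQ}, which is precisely why that lemma was stated for this pair of dimensions.
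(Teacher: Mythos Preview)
Your proposal has a genuine gap: you have misidentified where the real work lies. You claim that case (2) of Proposition~\ref{prop:quadric_kn-iso_normal_form} ``gives exactly the two alternatives $f(i)=i-s$ or $f(i)=i-s+(2^n-1)$ claimed here,'' but Proposition~\ref{prop:quadric_kn-iso_normal_form}(2) only says that \emph{for each $i$ individually}, $f(i)\equiv i-s\bmod(2^n-1)$, hence $f(i)\in\{i-s,\,i-s+(2^n-1)\}$. It does \emph{not} say that the same choice is made for all $i$. The entire content of Lemma~\ref{lm:simple-shift} is precisely to rule out such ``mixed'' bijections $f$, and this is the bulk of the paper's proof. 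Concretely, for $n=3$, $D_1=D_2=14$, $s=2$, one could a priori have $I_1=\{2,3\}$, $I_2=\{1,7\}$ with $f(2)=7$ and $f(3)=1$; nothing in Proposition~\ref{prop:quadric_kn-iso_normal_form} alone forbids this.

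The paper's argument for this Case~2 is purely $\Kn$-level and does not use Steenrod operations or Lemmas~\ref{lm:ldxld},~\ref{lm:steenrod-QxQ} at all: assuming $f$ is mixed, one locates the ``jump'' index $k$ in $I_1=\{i_1<\cdots<i_m\}$, sets $x=f(i_k)-f(i_{k+1})>0$, and computes the rational endomorphism $\Psi^{-1}\circ\bigl(\Psi\cdot(1\times h^x)\bigr)$ of $N_1$; a direct calculation shows this has a single term, and after multiplying by $h^y\times 1$ one obtains a rational rank-one projector $v_n^{-1}a_{i_{k+1}}\times a_{D_1'-i_{k+1}}$, contradicting indecomposability of $N_1^{\Kn}$. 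Case~1 of Proposition~\ref{prop:quadric_kn-iso_normal_form} (the one you focus on) is handled similarly and more briefly: if $\psi=v_n^{-2}a_{d'}\times b_{d'}+a_d\times b_d$, then $v_n^2h^d\times h^d\cdot\Psi=a_d\times b_d$ is rational, and composing with $\Psi^{-1}$ gives the rational projector $v_n^{-1}a_d\times a_{d'}$, again contradicting indecomposability. Your proposed route through $\CKn$, Steenrod operations, and Lemma~\ref{lm:steenrod-QxQ} is the machinery used for Theorem~\ref{th:prestableMDT}, which \emph{invokes} Lemma~\ref{lm:simple-shift}; deploying it here is both unnecessary and, as written, not a proof since you never specify which operation to apply or why it yields a contradiction.
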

\begin{proof}
    We will need to distinguish the two cases as in Proposition~\ref{prop:quadric_kn-iso_normal_form}.

    {\bf Case~1.} Assume that $D_1=2^{n+1}-2$  and $\{d_1',d_1\}\subseteq I_1$. 
    In this case we must have $D_2=D_1$, $s=0$, $\{d_2', d_2\} \subseteq I_2$
    by counting the multiplicity of the Tate motives in $\overline N^{\,\Kn}_1$, $\overline N^{\,\Kn}_2$.
    By Proposition~\ref{prop:quadric_kn-iso_normal_form}, 
    $\Psi$ equals 
    $$
    \psi+\sum_{i\in I_1\setminus\{d_1', d_1\}} v_n^{-1}\, a_i \times b_{D_2'-i},
    $$
    where $\psi$ can have one of the two 
    forms.  
    The first form of $\psi$ is precisely the one claimed in~\eqref{eq:form-of-psi} with $f(i)=i$ for all $i\in I_1$. 
    The second form of $\psi$ is $v_n^{-2}\,a_{d'}\times b_{d'}+a_{d}\times b_{d}$. 
    However, in this case 
    $v_n^2\,h^{d}\times h^{d}\cdot\,\Psi=a_{{d}}\times b_{d}$ is rational,
    and the composition $\Psi^{-1}\circ (a_{{d}}\times b_{d})$ equals $v_n^{-1}\,a_{d}\times a_{d'}$.
    This is a rational projector, and its existence contradicts indecomposability of $N_1$. 

    {\bf Case 2.}
    Assume that $D_j$, $I_j$ are not as in the Case~1. 
    Then $\Psi$ has the form~\eqref{eq:form-of-psi} by Proposition~\ref{prop:quadric_kn-iso_normal_form} 
    and we need to show that the bijection $f\colon I_1\rarr I_2$ is either a shift by $-s$ or by $-s+(2^n-1)$. 
    
    Assume the contrary.
    Note that $I_1, I_2 \subseteq \{0, \ldots, 2^n-1\}$,
    and both of the elements $0$ and $2^n-1$ cannot appear in  $I_1$ or $I_2$ by the assumption. 
    Therefore, for $i\in I_1$ we may have either $f(i)=i-s$, or $f(i)=i-s+(2^n-1)$,
    or if $i=2^n-1$ and $s=0$, we may have $f(2^n-1)=0$. In the latter case, using $s=0$,
    we exchange $N_1$ and $N_2$, and replace $f$ by $f^{-1}$. 
    Thus, we may assume that for all $i\in I_1$, $f(i)\in \{i-s, i-s+(2^n-1)\}$.
    Moreover, $f(i)$ is uniquely determined by $i$ and $s$: only one of the numbers $i-s$, $i-s+2^n-1$ lies in $I_2$ by our assumptions.

    Let $I_1 = \{i_1,\ldots, i_m\}$ where $i_1<\ldots<i_m$, 
    and since $f$ is not a shift by our assumption, we have $f(i_m) = i_m -s$ 
    and $f(i_1) = i_1-s+2^n-1$. 
    Then 
    there exists $k$, $1\le k<m$, such that $f(i_t)=i_t-s+2^n-1$ for $t\le k$
    and $f(i_t) = i_t - s$ for $t>k$. Then 
    $$
    f(i_{k+1})<\ldots<f(i_m)<f(i_1)<\ldots<f(i_k),
    $$
    and let $x=f(i_k)-f(i_{k+1})> 0$. 
    Using the explicit formulas for $\Psi, \Psi^{-1}$ from Proposition~\ref{prop:quadric_kn-iso_normal_form},
    we compute that the rational endomorphism $\Psi^{-1}\circ (\Psi \cdot (1\times h^x))$ of $N_1$
    has the following form: 
    $$
    \sum_{(D_2'-f(i)+x) + f(j) =D_2'} v_n^{-1}\, a_i \times a_{D_1'-j}.
    $$
    However, $f(j) - f(i) = x$ is only possible for $j=i_k$, $i=i_{k+1}$ and the above sum has only one element.
    By multiplying this element with $h^y\times 1$ for $y=i_{k+1}-i_k$, 
    we get that the projector $v_n^{-1}\,a_{i_{k+1}} \times a_{D_1'-i_{k+1}}$ is rational, 
    contradicting indecomposability of $N_1$ (and $|I_1|\geq2$). 
    
    Recall that in the case $s=0$ and $f(2^n-1)=0$ 
    we exchanged $f$ by $f^{-1}$. Thus, we proved that $f^{-1}$ is the shift of one of the required forms,
    i.e.\ $f(i)$ is either $i$ or $i-(2^n-1)$ for all $i \in I_1$.
    Since $f(2^n-1)=0$, the latter must be the case. But for $i-(2^n-1)$ to lie in $I_2\subseteq \{0,\ldots, 2^n-1\}$ for all $i\in I_1$,
    we need to have $I_1=\{2^n-1\}$, which contradicts the assumption $|I_1|\geq2$. Therefore this case, in fact, does not happen.
    \end{proof}

\begin{proof}[Proof of Theorem~\ref{th:prestableMDT}]
By Corollary~\ref{cr:proj-ch-k}, to get $(1)$
we only have to show that if $\varpi:=\sum_{i\in I} \varpi_i$ is rational,
then $\omega:=\sum_{i\in I} {\omega}^\Ch_i$ is also rational. 
Similarly to prove $(2)$, 
given a rational $\Psi^{\Kn}\in\Knt{\overline{Q}_1\times\overline{Q}_2}$ that determines an isomorphism between
$\overline{N}^{\,\Kn}_1$ and a Tate twist of $\overline{N}^{\,\Kn}_2$, 
we need to find a rational element $\Psi^{\Ch}$ in $\Cht{\overline{Q}_1 \times \overline{Q}_2}$
that determines an isomorphism between $\overline N^{\,\Ch}_1$ and a Tate twist of $\overline N^{\,\Ch}_2$.  
We prove $(2)$ below, and the proof of $(1)$ can be obtained
repeating the same argument for $Q_1=Q_2$. 

As we do not assume that the corresponding quadratic forms $q_j$ are isotropic,
we can always change $q_j$ by $q_j\perp \mathbb{H}^{\oplus k_j}$ using Lemma~\ref{lm:motive_of_isotropic_quadric} and Proposition~\ref{prop:normal}.
Thus, we can assume that the dimensions of $Q_j$ are  either $2^{n+1}-2$ or $2^{n+1}-3$. 
Clearly, we can also assume that $N^{\Kn}_j$ are indecomposable. 

Consider first the case when $|I_1|=|I_2|\ge 2$.
The isomorphism $N^{\Kn}_1\cong N^{\Kn}_2\sh s$ for $0\leq s<2^n-1$ is determined by a rational element
$\Psi^{\Kn} \in \Kn^{D_2+s}(\overline{Q}_1\times\overline{Q}_2)$, 
and we may assume by Lemma~\ref{lm:simple-shift} that  
$$
\Psi^{\Kn} = \sum_{i\in I_1} v_n^{-1}\,(h^i + v_n l_{D_1'-i})\times(h^{D_2'-f(i)} + v_n l_{f(i)}),
$$
where $f(i) = i - s$ for all $i\in I_1$. 
Indeed, if $f(i)=i+(2^n-1-s)$ one can just exchange $Q_1$ and $Q_2$, and $s$ by $2^n-1-s$.
Note that after this exchange $s$ could become $2^n-1$, 
however, 
due to the condition $I_1,\,I_2\subseteq\{0,\ldots,2^n-1\}$, this would contradict the assumption $|I_1|=|I_2|\ge 2$
and therefore does not happen.

Assume now $|I_1|=|I_2|=1$, let $I_1=\{i\}$, and $f\colon I_1\rightarrow I_2$ 
denote the bijection induced by isomorphism $\Psi^{\Kn}$ between $N^{\Kn}_1$ and a Tate twist of $N^{\Kn}_2$. 
If $I_1=\{0\}$, $I_2=\{2^n-1\}$ or vice versa, 
we set $s=2^n-1$. 
Exchanging $Q_1$ and $Q_2$, if necessary, we may write $f(i)=i-s$. 
In all other cases, due to the condition $I_1,\,I_2\subseteq\{0,\ldots,2^n-1\}$,
we can assume that $f(i)=i-s$ for some $0\leq s<2^n-1$, again exchanging $Q_1$ and $Q_2$, if necessary. 
Then $\Psi^{\Kn}$ has the form~\eqref{eq:form-of-psi} by Proposition~\ref{prop:quadric_kn-iso_normal_form}\,(2). 

It will be more convenient
for us to work with the composition inverse $\left(\Psi^{\Kn}\right)^{-1}\in\Kn^{D_1-s}(\overline{Q}_2\times\overline{Q}_1)$ below, because $\codim{\left(\Psi^{\Kn}\right)^{-1}}\leq D_1\leq 2(2^n-1)$. Clearly, 
$$
\left(\Psi^{\Kn}\right)^{-1}= \sum_{i\in I_1} v_n^{-1}\,(h^{f(i)} + v_n l_{D_2'-f(i)})\times(h^{D_1'-i} + v_n l_{i}),
$$ 
and the
rationality of $\left(\Psi^{\Kn}\right)^{-1}$ 
is equivalent to the rationality of the element 

$$
\Theta= \sum_{i\in I_1} \left(h^{i-s}\times l_{i} + l_{D_2'-i+s}\times h^{D_1'-i} + v_n l_{D_2'-i+s} \times l_{i}\right). 
$$

The strategy of the rest of the proof is as follows. We lift the element $\Theta$ 
to some rational element in $\BPt{\overline{Q}_2\times\overline{Q}_1}$, 
and then apply the traces of symmetric operations $\phi^{t^k}$ to it to obtain a rational element 
$$
\Psi^\Ch:=\sum_{i\in I_1} \left(h^{i-s}\times l_{i} + l_{D_2'-i+s}\times h^{D_1'-i} \right)
$$
in $\Ch^{D_1-s}(\overline{Q}_2\times\overline{Q}_1)$. This element defines a rational isomorphism
between $\overline{N}^{\,\Ch}_2\sh s$ and $\overline{N}^{\,\Ch}_1$.

By Lemma~\ref{lm:from_kn_to_bp} there exists an element $z\in \oBPt{Q_2\times Q_1} \subseteq \BPt{\overline{Q_2}\times \overline{Q_1}}$ of $\codim z>0$ 
such that $\Theta = z\otimes v_n^{-t}$. 
Since $\codim z = \codim \Theta -t(2^n-1) = D_1-s -t(2^n-1)$, we have $t<2$. 
Let us denote $\mathrm{pr}\colon\BP\rightarrow\Ch$ the canonical morphism of theories. 
If $t\le 0$, we have $\Psi^\Ch=\mathrm{pr}(v_n^{-t} z)$, and the claim follows.
Therefore we may assume that $t=1$.

Clearly, $\mathrm{pr}(z)=0\in\mathrm{Ch}(\overline Q_2\times\overline Q_1)$ and 
the image of $z$ in $\CKnt{\overline Q_2\times\overline Q_1}$ coincides with $v_n\Theta$, 
therefore we can rewrite $z$ as follows:
$$
z=2z_0+\sum_{m>0}v_mz_m+\sum_{i,j>0}v_iv_jz_{ij}
$$
for $z_m$, $z_{ij}\in\BPt{\overline Q_2\times\overline Q_1}$, where 
$\mathrm{pr}(z_n)=\Psi^\Ch\in\Cht{\overline Q_2\times\overline Q_1}$. 
Then 
$
\phi^{t^0}(z)=\mathrm{pr}(z_0)^2+\sum_{m>0}\phi^{t^0}(v_mz_m)
$
and $\phi^{t^k}(z)=\sum_{m>0}\phi^{t^k}(v_mz_m)$ for $k>0$ by Propositions~\ref{prop:phi-add},~\ref{prop:symm_divide},~\ref{prop:eta}. 

Observe that
a square in $\Cht{\overline Q_2\times\overline Q_1}$ is always equal to a sum of $h^{2a}\times h^{2b}$, in particular, it is always a rational element and we may subtract it from the result of the operation if necessary.

Observe also that  
$k:=2(2^n-1)-\mathrm{codim}(z_n)=2(2^n-1)-D_1+s\geq0$, and therefore 
$$
\phi^{t^k}(v_nz_n)=\mathrm{pr}(z_n)=\Psi^\Ch\in\Ch^{D_1-s}(\overline Q_2\times\overline Q_1)
$$
by Proposition~\ref{prop:symm_divide}. It remains to show that $\phi^{t^k}(v_mz_m)=0$ for $m\neq n$.

First, for $m>n$ 
one has $z_m\in \tau^{>D_1-s\,}\BPt{\overline{Q}_2\times\overline Q_1}$, 
and since operations preserve topological filtration, 
$\phi^{t^k}(v_mz_m)$ vanishes in $\Ch^{D_1-s}(\overline Q_2\times\overline Q_1)$.

We now compute $\phi^{t^k} (v_m z_m)$ for $m<n$. 
Then $z_m$ has  codimension $D_1-s+2^m-2^n$, and
it can be written as a sum of the elements $h^a \times l_b$ where  
$b=a+s-2^m+2^n$, 
the elements $l_b\times h^a$ where $b=a+D_2-D_1+s+2^n-2^m$,
the elements $l_a \times l_b$, 
and the elements $h^a \times h^b$. To compute the value of the traces of the symmetric operation we use Propositions~\ref{prop:symm_divide},~\ref{prop:St-Ch-quad} and Lemma~\ref{lm:steenrod-QxQ}.

Since $\phi^{t^k}(v_m\,h^a \times h^b)$ is always rational, we can subtract it from 
from the result, 
so we do not need to consider it. 
By Proposition~\ref{prop:symm_divide} and Lemma~\ref{lm:steenrod-QxQ}, $\phi^{t^k}(h^a \times l_b)$ and $\phi^{t^k}(l_b\times h^a)$ cannot equal to a non-zero element of codimension $D_1-s$.

Finally, the value $\phi^{t^k}(v_m\, l_a \times l_b)$ is a linear combination
of the elements of the form $l_u\times l_v$ in Chow groups.
However, none of these elements lie in $\Ch^{D_1-s}(\overline Q_2\times\overline Q_1)$ except for $l_{d_2}\times l_{d_1}$ (in the case $s=0$). But the rational element $\phi^{t^k}(z)$ cannot contain a summand of this form by Lemma~\ref{lm:ldxld}.

This shows that $\Psi^\Ch=\phi^{t^k}(z)$ is rational, 
and it defines an isomorphism between $\overline{N}^{\,\Ch}_2\sh s$ and $\overline{N}^{\,\Ch}_1$.
This implies that ${N}^{\Ch}_1$ and ${N}^{\Ch}_2\sh s$ are isomorphic as well, see e.g.~\cite[Cor.~92.7]{EKM}.
\end{proof}

In Theorem~\ref{th:prestableMDT} we have established a correspondence 
between the direct summands of $\Mker Q$ and the direct summands of $\MCh Q$ 
that uses the data of rational projectors defining these summands. 
Any bijection between {\sl isomorphism classes} of summands of $\MCh Q$ 
and summands of $\Mker Q$ has to depend on some choices, as the following example shows. 
Consider  $Q$, an $(n+1)$-fold Pfister quadric corresponding to a pure symbol $\alpha$,
its $\Kn-$ and Chow motivic decompositions are described in Proposition~\ref{prop:prelim_motive_pfister}.
In particular, the multiplicity of $L_\alpha\sh0$ in the $\Kn$-motive of $Q$ is 2,
and  the ``corresponding'' summands of $\MCh Q$ are $R_\alpha$ and $R_\alpha\sh{2^n-1}$,
which are isomorphic only up to a Tate twist.

However, it follows from the proof of Theorem~\ref{th:prestableMDT} together with Lemma~\ref{lm:kn-normal-form-well-def} below, 
that the above example is, in fact, the only one case where isomorphic direct summands of  $\Mker Q$
correspond to non-isomorphic summands of $\MCh Q$.

\begin{Rk}
\label{rk:th414-n1}
An analogue of Theorem~\ref{th:prestableMDT} for $n=1$ concerns $\K1$ and $\Ch$-motives of anisotropic quadrics of dimension at most $2$,
among which only the case of dimension $2$ is not straightforward. 
In this case, if the discriminant is trivial, i.e., for a $2$-fold Pfister quadric,
the claim amounts to~Propositions~\ref{prop:prelim_L_alpha_Rost},~\ref{prop:prelim_motive_pfister}. 
If the discriminant is non-trivial, the Chow motive is indecomposable, which reduces the claim to the isomorphisms of motives of quadrics, rather than their direct summands. 
However, the isomorphism class of the quadric of dimension $2$
is determined by its cohomological invariants: the discriminant and the Clifford invariant, see e.g.~\cite[Ch.~XII, Prop.~2.4, Th.~2.1]{Lam}. 
Since these can be read off from the $\KK$-motive (and also from the $\K1$-motive) of the quadric, see Section~\ref{sec:K0-motive-quadric},
the claim also follows.
\end{Rk}

\subsection{Morava MDT}
\label{sec:morava-mdt}

One can talk about the Morava K-theory decomposition type of $Q$ 
in analogy with Section~\ref{sec:ch-mdt-conn}. This allows to visualize the structure of motivic summands of $\MKn Q$ as a picture,
similarly to the case of Chow motives. 
In fact, to describe the MDT of the $\Kn$-motive one can always pass to the pre-stable case by Proposition~\ref{prop:quad_MDT_stable} and Lemma~\ref{lm:motive_of_isotropic_quadric}, where motivic summands of Morava and Chow motives are in $1$-to-$1$ correspondence by Theorem~\ref{th:prestableMDT}. 

Denote by $\Lamn Q$ the set of Tate summands of $\Mker{\overline Q}$ 
corresponding to $\varpi_i$, $d'\leq i\leq d$, 
then for an indecomposable summand $N$ of $\Mker Q$ 
there exists an isomorphic summand $N'\cong N$ and a set $\Lamn N\subseteq\Lamn Q$ 
such that $\overline{N'}$ is a sum of Tate motives from $\Lamn N$ by Proposition~\ref{prop:normal}. 
\begin{Lm}
\label{lm:kn-normal-form-well-def}
In the notation above, assume that $q\not\in I^{n+1}(k)$. Then the set $\Lamn N$ does not depend on the choice of $N'$.
\end{Lm}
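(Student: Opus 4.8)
The plan is to show that the set $\Lamn N$ is determined intrinsically by the isomorphism class of the indecomposable summand $N$, with no reference to a particular representative $N'$ in "normal form." The key point is that for $q\notin I^{n+1}(k)$ the Tate motives in $\Lamn Q$ carry distinct Tate twists — indeed, as observed right after the definition of the kernel summand in Section~\ref{sec:rat-proj-quad}, all Tate summands of $\Mker{\overline Q}$ are different in the odd-dimensional case, and in the even-dimensional case the only repeated Tate twist is $\un\sh d$ occurring with multiplicity $2$ (coming from $\varpi_{d'}$ and $\varpi_d$). So the only way two different "normal form" representatives $N', N''$ of the same $N$ could give different sets $\Lamn{N'}\ne\Lamn{N''}$ is if $d$ lies in exactly one of the two sets and $d'$ in the other, i.e. if swapping $\varpi_{d'}\leftrightarrow\varpi_d$ inside a normal form were possible. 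I will rule this out using the hypothesis $q\notin I^{n+1}(k)$.

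First I would recall from the proof of Proposition~\ref{prop:normal} that when $\Mker Q$ has two isomorphic unary summands which are twisted forms of $\unKn\sh d$ — this is precisely the situation where $\sigma=(l_d+v_n^{-1}h^{d'})\times(\widetilde l_d+v_n^{-1}h^{d'})$ (or its $D\equiv0$ analogue) is a rational projector and $\rho+\widetilde\rho=\varpi_{d'}+\varpi_d$ — one can also arrange the normal form so that such a summand is $\MKn{Q,\varpi_d}$. The potential ambiguity in $\Lamn N$ is exactly this $\{d'\}$-versus-$\{d\}$ choice, together with the question of whether, for a larger indecomposable summand, one has the freedom to include $d'$ instead of $d$. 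I would argue that in all these cases the quadratic form $q$ would have to lie in $I^{n+1}(k)$. Concretely: if $\Mker Q$ contains a summand that is a twisted form of $\unKn\sh d$ (equivalently a rank-one summand restricting to $\un\sh d$ over $\overline k$), then the corresponding Chow summand, by Theorem~\ref{th:prestableMDT}, is a twisted form of $\unCh\sh d$ lying between the upper and lower "middle" Tate motives; by the structure of Chow motives of quadrics (see Section~\ref{sec:ch-mdt-conn} and \cite{Vish-quad}) this forces the discriminant and suitable lower cohomological invariants of $q$ to vanish, and pushing through the splitting-tower reduction one gets $q\in I^{n+1}(k)$. I would phrase this via: the appearance of a rational projector of the form $\sigma$ above, pulled back along the diagonal as in the proof of Proposition~\ref{prop:outer-excel}, yields rationality of $l_{d'}$ or of $\widetilde l_d - l_d$, which translates into the splitting of a codimension-one piece detecting $\det_\pm q$, forcing trivial discriminant and iterating.

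Concretely, the cleanest route: reduce to the pre-stable case using Proposition~\ref{prop:quad_MDT_stable} and Lemma~\ref{lm:motive_of_isotropic_quadric}, which preserves both the hypothesis $q\notin I^{n+1}(k)$ on the relevant kernel form (since a Pfister form lies in $I^{n+1}$) and the combinatorics of $\Lamn{(-)}$. In the pre-stable case, if $D=\dim Q$ is odd then all elements of $\Lamn Q$ have distinct twists, so $\Lamn N$ is literally the set of twists appearing in $\overline N$, independent of $N'$. If $D$ is even, $\un\sh d$ appears twice; I will show that if $d'\in\Lamn{N'}$ and $d\notin\Lamn{N'}$ for some normal-form representative while $d\in\Lamn{N''}$, $d'\notin\Lamn{N''}$ for another, then $\MKn{Q,\varpi_{d'}}\cong\MKn{Q,\varpi_d}$ are isomorphic nontrivial twisted forms of $\unKn\sh d$, hence (by Proposition~\ref{prop:normal} and the computation in its proof) the projector $\sigma$ is rational; pulling $\overline\tau+\sigma$ back along the diagonal as in that proof shows $l_0$-type classes become rational in a way that forces $q_{K_{h-1}}$ to be an $(n+1)$-Pfister form and hence $q\in I^{n+1}(k)$, contradicting the hypothesis. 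The main obstacle I anticipate is bookkeeping the even-dimensional middle: making the "twisted form of $\unCh\sh d$ forces trivial discriminant, hence $q\in I^{n+1}$" step precise, since one must track how rationality of the relevant $\sigma$ interacts with the upper/lower Tate motives and the reflection $\tau$, and confirm that the resulting splitting of a rank-one Chow summand is exactly the obstruction measured by $e_1=\det_\pm$. Everything else is combinatorial unwinding of Propositions~\ref{prop:normal} and~\ref{prop:quadric_kn-iso_normal_form}.
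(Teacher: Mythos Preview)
Your reduction to the pre-stable case and the identification of the only possible ambiguity (swapping $d'$ and $d$ in the even-dimensional case) are correct. However, the crucial step---showing that this ambiguity forces $q\in I^{n+1}(k)$---is not actually carried out, and the sketch you give does not work as stated. Having a rank-one summand of $\Mker Q$ restricting to $\un\sh d$ over $\overline k$ does \emph{not} by itself force $q\in I^{n+1}(k)$; what you need is that the \emph{two} normal-form rank-one summands $\MKn{Q,\varpi_{d'}}$ and $\MKn{Q,\varpi_d}$ are isomorphic, and your proposed mechanism (rationality of $\sigma$, pullback along the diagonal, ``trivial discriminant and iterating'') is left vague precisely at the point where it would need to produce the conclusion. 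You also do not address the case $|I_1|>1$ beyond acknowledging it exists.

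The paper's argument avoids all of this by applying Theorem~\ref{th:prestableMDT}(2) directly: if $I_1\neq I_2$ give isomorphic $\Kn$-summands, then the corresponding Chow summands $N^{\Ch}_1$, $N^{\Ch}_2$ are isomorphic up to a Tate twist $\un(s)$. Since Chow MDT is well-defined (Section~\ref{sec:ch-mdt-conn}), $s\neq 0$. A short dimensional count then forces $s=\pm(2^n-1)$ and both $N^{\Ch}_j$ to be binary of length $2^n-1$; but then $N^{\Ch}_1$ (which contains $\un(0)$ in its $\Lambda$-set) splits over $k(Q)$, hence so does its Tate twist $N^{\Ch}_2$, which forces $l_d$ to be $k(Q)$-rational and thus $q_{k(Q)}$ hyperbolic, i.e.\ $q$ is a general $(n+1)$-Pfister form---contradicting $q\notin I^{n+1}(k)$. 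This handles all ranks at once and never requires the explicit projector manipulations you propose.
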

\begin{proof}
We can assume that $D\leq 2^{n+1}-2$ by Proposition~\ref{prop:quad_MDT_stable}. 
If we have two distinct subsets $I_1\neq I_2\subseteq\{d',\ldots,d\}$ such that the corresponding motives $N^{\Kn}_j$ in the notations of Theorem~\ref{th:prestableMDT} are isomorphic, then the motives $N^{\Ch}_j$ are also isomorphic up to a Tate twist by $\un(s)$.
Clearly, $s=0$ is impossible since Chow MDT is well defined,
then by dimensional reasons $s=\pm(2^n-1)$ and $N^{\Ch}_j$ are binary summands of length $2^n-1$. However, in this case $Q$ is split over its field of functions, therefore $q$ is a general $(n+1)$-Pfister form, which contradicts the assumption $q\not\in I^{n+1}(k)$.
\end{proof}

For $q\in I^{n+1}(k)$ the situation is different: by Proposition~\ref{prop:quad_MDT_stable} 
we can assume that $q$ is an $(n+1)$-general Pfister form, 
and therefore $\Mker Q$ decomposes as a direct sum of invertible motives corresponding to the Chow Rost motives $R_\alpha$. 
Since the ``middle'' invertible summands (corresponding to $R_\alpha$ and $R_\alpha\sh{2^n-1}$) are isomorphic, $\widetilde\Lambda$ {\sl does} depend on the choice we make in this case. However, irrespective of whether $q$ lies in $I^{n+1}(k)$ or not, one can give the following definition, similarly to $\Ch$-case.

\begin{Def}
We say that $L$, $L'\in\Lamn Q$ are {\sl connected} if there exists an indecomposable summand $N$ of $\Mker Q$ such that $L$, $L'\in\Lamn N$. The set of connected components of $\Lamn Q$ is called the {\sl Morava K-theory motivic decomposition type} of $Q$ (Morava MDT for short).
\end{Def}

Observe also that the summand of $\MCh{\overline Q}$ corresponding to $\omega^\Ch_{d'}$ contains the upper ``middle'' Tate $\unCh\sh d$ of $\Lambda(Q)$ as a direct summand, and the summand, corresponding to $\omega^\Ch_{d}$, contains the lower  $\unCh\sh d$.  
Due to the correspondence between $\omega^\Ch_{i}$ and $\varpi_{i}$ of Theorem~\ref{th:prestableMDT}, it is natural to use a similar terminology for the Tate summands of $\Mker{\overline Q}$ corresponding to $\varpi_{d'}$ and $\varpi_d$, i.e., call $\MKn{\overline Q,\,\varpi_{d'}}$ the upper Tate $\unKn\sh d$ of $\Lamn{\overline Q}$, and $\MKn{\overline Q,\,\varpi_{d}}$ the lower one.

To visualize Morava MDT we can denote the Tate motives from $\Lamn Q$ by 
dots 
``\,\begin{tikzpicture}
\filldraw [white] (0,-0) circle (1pt);
\filldraw [black] (0,0.05) circle (1pt);
\end{tikzpicture}\,''
as in the Chow case, however, due to the fact that $\unKn\cong\unKn\sh{2^n-1}$ there is no immediate choice, which of the Tate motives to draw as the most left one. To make the correspondence between $\omega^\Ch_{i}$ and $\varpi_{i}$ of Theorem~\ref{th:prestableMDT} more transparent, it is natural to place the upper and the lower Tate motives in the middle for $D$ even. 

\begin{Ex}
\label{ex:chow-morava-mdt}
Let $q=\langle\langle t_0\rangle\rangle\otimes\langle 1,\,-t_1,\,-t_2,\,-t_3\rangle$ over $\mathbb Q(t_0,\,t_1,\,t_2,\,t_3)$. By~\cite[Th.~6.1,\,Ex.\,(1)]{Vish-quad} its Chow MDT has the following form, where outer excellent connections are drawn in red. According to Theorem~\ref{th:prestableMDT}, these connections should be \textsc{contracted} to obtain Morava MDT of $Q$ for $n=2$. We also depict with grey dots $4$ Tate summands complementary to the $\K2$-kernel summand of $Q$. This gives us the alternative way to switch from one picture to the other: just \textsc{deleting} outer excellent connections.

\begin{center}
\begin{tikzpicture}
\filldraw [white] (-0.3,0.5) circle (1pt);
\filldraw [black] (0,0) circle (1pt);
\filldraw [black] (0.5,0) circle (1pt);
\filldraw [black] (1,0) circle (1pt);
\filldraw [black] (1.5,0.2) circle (1pt);
\filldraw [black] (1.5,-0.2) circle (1pt);
\filldraw [black] (2,0) circle (1pt);
\filldraw [black] (2.5,0) circle (1pt);
\filldraw [black] (3,0) circle (1pt);
\filldraw [white] (3.3,0) circle (1pt);
\draw (1,0) .. controls (1.25,0.15) ..(1.5,0.2);
\draw (2,0) .. controls (1.75,-0.15) ..(1.5,-0.2);
\draw[red] (0.5,0) .. controls (1,-0.15) .. (2,0);
\draw[red] (1,0) .. controls (2,0.15) .. (2.5,0);
\draw[red] (0,0) .. controls (0.5,0.2) .. (1.5,0.2);
\draw[red] (3,0) .. controls (2.5,-0.2) .. (1.5,-0.2);
\filldraw [black] (0,-0.8) circle (0pt) node[anchor=west]{\text{$\,$Chow MDT of $Q$}};
\end{tikzpicture}
\qquad\qquad
\begin{tikzpicture}
\filldraw [white] (-0.3,0.5) circle (1pt);
\filldraw [gray] (0,0) circle (1pt);
\filldraw [gray] (0.5,0) circle (1pt);
\filldraw [black] (1,0) circle (1pt);
\filldraw [black] (1.5,0.2) circle (1pt);
\filldraw [black] (1.5,-0.2) circle (1pt);
\filldraw [black] (2,0) circle (1pt);
\filldraw [gray] (2.5,0) circle (1pt);
\filldraw [gray] (3,0) circle (1pt);
\filldraw [white] (3.3,0) circle (1pt);
\draw (1,0) .. controls (1.25,0.15) ..(1.5,0.2);
\draw (2,0) .. controls (1.75,-0.15) ..(1.5,-0.2);
\filldraw [black] (0,-0.8) circle (0pt) node[anchor=west]{\text{$\ \,\K2$-MDT of $Q$}};
\end{tikzpicture}
\end{center}
We will determine the $\K2$-motive of $Q$ in Example~\ref{ex:morava-chow-mdt} independently of Vishik's results, in particular the Chow MDT of $Q$ can be recovered from its $\K2$-MDT by \textsc{adding} outer excellent connections. 
\end{Ex}

For $D$ odd one can switch between Chow and Morava MDT in the same way, but we warn the reader that the $\Kn$-kernel summand of the odd dimensional quadric $Q$ has an {\sl odd} number of complementary Tate motives in $\MKn Q$. We would also depict them as grey dots by both sides of the kernel summand, but we cannot distribute them equally.

\begin{Ex}
Let $Q$ be an isotropic non-hyperbolic $3$-dimensional quadric, its Chow MDT has the following form. The anisotropic part $Q'$ of $Q$ does not have outer excellent connections of length $3$, therefore $\K2$-MDT of $Q'$ coincides with its Chow MDT. The $\K2$-kernel summand of $Q$ has rank $3$ with one complementary Tate summand $\unKn\sh 0$, therefore $\widetilde{\,\mathrm{M}}_{\K2}(Q)$ is a sum of the Tate summand $\unKn\sh 0$ and the indecomposable binary summand $\Mot{\K2}{Q'}\sh 1$. We could choose one of the following option to visualize $\K2$-MDT of $Q$ in one of the following forms.

\begin{center}
\begin{tikzpicture}
\filldraw [white] (-0.3,0.5) circle (1pt);
\filldraw [black] (-0.1,0) circle (1pt);
\filldraw [black] (0.5,0) circle (1pt);
\filldraw [black] (1.1,0) circle (1pt);
\filldraw [black] (1.7,0) circle (1pt);
\filldraw [white] (1.9,0) circle (1pt);
\draw (0.5,0) .. controls (0.8,0.1) .. (1.1,0);
\filldraw [black] (-0.6,-0.8) circle (0pt) node[anchor=west]{\text{Chow MDT of $Q$}};
\end{tikzpicture}
\qquad\qquad
\begin{tikzpicture}
\filldraw [white] (-0.3,0.5) circle (1pt);
\filldraw [black] (-0.1,0) circle (1pt);
\filldraw [black] (0.5,0) circle (1pt);
\filldraw [black] (1.1,0) circle (1pt);
\filldraw [gray] (1.7,0) circle (1pt);
\filldraw [white] (1.9,0) circle (1pt);
\draw (0.5,0) .. controls (0.8,0.1) .. (1.1,0);
\filldraw [black] (-0.7,-0.8) circle (0pt) node[anchor=west]{\text{$\K2$-MDT of $Q$, I}};
\end{tikzpicture}
\qquad\qquad
\begin{tikzpicture}
\filldraw [white] (-0.3,0.5) circle (1pt);
\filldraw [gray] (-0.1,0) circle (1pt);
\filldraw [black] (0.5,0) circle (1pt);
\filldraw [black] (1.1,0) circle (1pt);
\filldraw [black] (1.7,0) circle (1pt);
\filldraw [white] (1.9,0) circle (1pt);
\draw (0.5,0) .. controls (0.8,0.1) .. (1.1,0);
\filldraw [black] (-0.75,-0.8) circle (0pt) node[anchor=west]{\text{$\K2$-MDT of $Q$, II}};
\end{tikzpicture}
\end{center}
\end{Ex}

When we do not compare Chow and Morava MDT, we omit grey dots.

\subsection{Application: criteria of isomorphism}

As an application of the techniques we develop, we obtain several Morava analogues of celebrated results on the Chow motives of quadrics. 
\subsubsection{Vishik's criterion of isomorphism}
We recall Vishik's criterion of isomorphism of Chow motives 
of quadrics~\cite[Prop.~5.1]{VishInt}, see also~\cite[Th.~4.18]{Vish-quad}, \cite[Th.~93.1]{EKM};
for generalizations see~e.g.~\cite{DC, DCQM}.

\begin{Th}[Vishik]
The Chow motives of projective quadrics $Q$ and $Q'$ over $k$ of the same dimension are isomorphic
if and only if  $\iw(q_K)=\iw(q'_K)$ for any field extension $K/k$.
\end{Th}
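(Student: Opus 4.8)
The statement is Vishik's classical criterion for isomorphism of Chow motives of quadrics, so the plan is to reduce it to the corresponding statement for Morava K-theory motives, which we have essentially established through Theorem~\ref{th:prestableMDT}, and then lift back. First I would recall that the ``only if'' direction is immediate: if $\MCH Q\cong\MCH{Q'}$, then after base change to any $K/k$ the Chow motives $\MCH{Q_K}$ and $\MCH{Q'_K}$ remain isomorphic, and the Witt index $\iw(q_K)$ is read off from the number of Tate summands splitting off $\MCH{Q_K}$ over $\overline K$ that are $K$-rational --- more precisely $\iw(q_K)$ is the largest $i$ with $l_{i-1}\in\oCh(Q_K)$ --- hence $\iw(q_K)=\iw(q'_K)$. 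The ``if'' direction is the substantial one.

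For the ``if'' direction, I would first dispose of the case where $q$ (equivalently $q'$, since their Witt indices agree over every extension) is isotropic: by Lemma~\ref{lm:motive_of_isotropic_quadric} we may strip off hyperbolic planes from both forms without changing the isomorphism class of the motives up to the relevant Tate summands, and the condition on Witt indices descends to the anisotropic parts, so we reduce to $q,q'$ anisotropic of the same dimension $D+2$. Next, I would move from $k$ down the generic splitting tower: using Example~\ref{ex:k(Q)/k_reflects_MD_Kn}, base change along function fields of quadrics of dimension at least $2^{n+1}-1$ (for $n$ chosen appropriately large, or one handles it successively) reflects motivic decompositions, and for Chow motives one has unconditional Rost nilpotence, so it suffices to compare motives over the $\Kn$-kernel field $K_j$ where the anisotropic dimension drops into the pre-stable range $2^n-1\le\dim\le 2^{n+1}-2$. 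Here the key input is Theorem~\ref{th:prestableMDT}(2): two pre-stable quadrics have isomorphic $\Kn$-kernel summands (up to Tate twist) if and only if the corresponding Chow summands are isomorphic (up to Tate twist), together with the bookkeeping of the $\dim Q-2^n+2$ split Tate summands and the canonical $l_i$-rationality criterion of Corollary~\ref{cr:li_rational_small_dim}, which translates ``$\iw$ over all $K$'' into ``which $\varpi_i$-projectors and which rational isomorphisms exist over all $K$''.

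The heart of the argument is therefore to check that the equality $\iw(q_K)=\iw(q'_K)$ for all $K$ forces, for every rational $\Kn$-projector $\sum_{i\in I}\varpi_i$ on $Q$, that the same projector is rational on $Q'$, and symmetrically, and moreover that isomorphisms of the corresponding summands over $\overline k$ descend. This is where one invokes the $\Kn$-theoretic incarnation of Vishik's original argument: rationality of a projector, or of a morphism between summands, over $k$ is governed by rationality of cycles like $l_i$ over the fields in the splitting tower (by Corollary~\ref{cr:li_rational_small_dim} and Proposition~\ref{prop:l_0_rational_isotropic}), and those rationalities are exactly encoded by the function $K\mapsto\iw(q_K)$ via Springer's theorem. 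Having matched the $\Kn$-motives of $Q_{K_j}$ and $Q'_{K_j}$, Theorem~\ref{th:prestableMDT} upgrades this to an isomorphism of Chow motives over $K_j$, and then reflection of motivic decompositions (Proposition~\ref{prop:reflects_MD} / Example~\ref{ex:k(Q)/k_reflects_MD_Kn}, applied now with $\oCh$ and unconditional Chow Rost nilpotence) lifts the isomorphism back up the tower to $\MCH Q\cong\MCH{Q'}$ over $k$.

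\textbf{Main obstacle.} The delicate point I expect is the descent of isomorphisms (as opposed to mere decomposition types) through Theorem~\ref{th:prestableMDT}(2): one must ensure that the Tate twist ambiguity which genuinely occurs for Pfister quadrics (the $R_\alpha$ versus $R_\alpha(2^n-1)$ phenomenon discussed after Theorem~\ref{th:prestableMDT}) does not spoil the reconstruction --- but precisely in that case $q\in I^{n+1}(k)$ and $\iw$ over the function field forces $q'$ to be the same general Pfister form, so the ambiguity is harmless; isolating and handling that boundary case cleanly, alongside the even-dimensional $D\equiv 0\bmod 4$ subtleties with $l_d,\widetilde l_d$ handled via the reflection $\tau$ of Section~\ref{sec:rat-proj-quad}, is the part requiring genuine care rather than routine bookkeeping.
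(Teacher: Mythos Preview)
The paper does not give a proof of this statement: it is quoted as Vishik's criterion with references to \cite{VishInt}, \cite{Vish-quad}, \cite{EKM}, and is then \emph{used} as an input (e.g.\ in the proof of Proposition~\ref{kn-vishik-criterion}). So there is no ``paper's proof'' to compare against; what one can ask is whether your proposed argument is an independent proof, and it is not.

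The decisive gap is the final lifting step. You want to establish the Chow isomorphism over the $\Kn$-kernel field $K_j$ and then descend to $k$ using Proposition~\ref{prop:reflects_MD} (or Example~\ref{ex:k(Q)/k_reflects_MD_Kn}) ``applied with $\oCh$''. But the hypothesis of Proposition~\ref{prop:reflects_MD} is that $k(Y)/k$ be $\overline{A}$-universally surjective, and by Lemma~\ref{lm:B} this holds for $A=\Ch$ and $Y$ a quadric iff $l_0$ is $\Ch$-rational, i.e.\ iff $Y$ is isotropic. For the anisotropic quadrics appearing in the tower this fails, so you cannot reflect Chow isomorphisms back up. Rost nilpotence alone does not help: reflection needs surjectivity of $\End(M)\to\End(M_K)$, not just nilpotence of the kernel. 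If instead you try to avoid the tower by taking $n$ so large that $\dim Q<2^{n+1}-1$, you still owe the construction of the rational $\Kn$-isomorphism over $k$ from the Witt-index data; Theorem~\ref{th:prestableMDT}(2) only converts an already-given $\Kn$-isomorphism into a Chow one. Your sketch for that step (``rationality of $l_i$ over fields in the splitting tower'') is precisely the content of Vishik's original proof in \cite[Th.~93.1]{EKM} and gains nothing from the $\Kn$-reformulation. Finally, invoking Proposition~\ref{kn-vishik-criterion} would be circular, since its proof uses the very theorem you are trying to establish.
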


We can prove the following $\Kn$-analogue of this criterion. We use the notation of Section~\ref{sec:rat-proj-quad}.

\begin{Prop}
\label{kn-vishik-criterion}
The $\Kn$-motives of projective quadrics $Q$ and $Q'$ over $k$ of the same dimension $D$ are isomorphic 
if and only if for any field extension $K/k$ if $\iw(q_K)$, $\iw(q'_K)\geq d'$, then $\iw(q_K)=\iw(q'_K)$.
\end{Prop}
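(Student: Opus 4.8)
The statement is the $\Kn$-analogue of Vishik's criterion, and the natural strategy is to reduce to it together with the translation dictionary between Chow and $\Kn$-motives established in Theorem~\ref{th:prestableMDT}. First I would observe that both directions can be reduced, by Proposition~\ref{prop:quad_MDT_stable} and Lemma~\ref{lm:motive_of_isotropic_quadric}, to the pre-stable case: the stable Tate summands split off canonically and are governed by the Witt indices $\iw(q_K)$ that are at least $d'$ (these are precisely the ``visible'' splittings at the level of the $\Kn$-kernel), while the $\Kn$-kernel summand $\Mker Q$ records the remaining information. More precisely, passing to the generic splitting tower replaces $Q$ by its $\Kn$-kernel form; the condition ``$\iw(q_K)\geq d'$'' is exactly the threshold below which the isotropy of $q_K$ is already reflected in the splitting of Tate summands of $\Mker Q$, cf.\ Corollary~\ref{cr:li_rational_small_dim}. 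So I may assume $D< 2^{n+1}-1$ and it suffices to show that $\Mker Q\cong \Mker{Q'}$ if and only if $\iw(q_K)=\iw(q'_K)$ whenever both are $\geq d'$.

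For the ``only if'' direction: an isomorphism $\Mker Q\cong\Mker{Q'}$ is stable under base change to any $K/k$, and the rank of the split part of $\Mker{Q_K}$ (equivalently, which $l_i$ become rational in $\oKn$) is by Corollary~\ref{cr:li_rational_small_dim} determined by $\iw(q_K)$ as long as $\iw(q_K)\geq d'$; the same over $Q'$, so the two Witt indices must coincide in that range. For the ``if'' direction, this is where Theorem~\ref{th:prestableMDT} does the work: the hypothesis on Witt indices, via the classical Vishik criterion, would give $\MCh Q\cong \MCh{Q'}$ \emph{were $Q, Q'$ in the same stable regime}; but here we only control $\iw$ above the threshold $d'$, so what we actually get from the classical criterion applied to the $\Kn$-kernel forms is an isomorphism of the Chow kernel summands $\MCh{Q,\,\sum_{i\in I}\omega^\Ch_i}\cong\MCh{Q',\,\sum_{i\in I}\omega^\Ch_i}$ (up to the appropriate Tate twist). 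Now I run Theorem~\ref{th:prestableMDT}~(2) in the \emph{opposite} direction to the way it is stated: rationality of the Chow kernel projectors $\sum_{i\in I}\omega^\Ch_i$ is equivalent (part (1)) to rationality of $\sum_{i\in I}\varpi_i$, and an isomorphism of the corresponding Chow summands forces an isomorphism of the corresponding $\Kn$-summands. Assembling these summand-by-summand over a decomposition of $\Mker Q$ into indecomposables gives $\Mker Q\cong\Mker{Q'}$, hence $\MKn Q\cong\MKn{Q'}$.

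\textbf{Main obstacle.} The delicate point is the bookkeeping of Tate twists and the exact correspondence of index sets $I\subseteq\{d',\ldots,d\}$ between $Q$ and $Q'$: Theorem~\ref{th:prestableMDT} produces a \emph{twisted} isomorphism of Chow motives, and when $D$ is even the doubled middle Tate motive $\un(d)$ (the ``upper/lower'' ambiguity of Section~\ref{sec:rat-proj-quad}, and the genuinely twisted pair $R_\alpha, R_\alpha(2^n-1)$ in the Pfister case) means the naive correspondence of summands is not twist-free. I expect to handle this exactly as in the proof of Theorem~\ref{th:prestableMDT}: normalize to $\dim Q, \dim Q'\in\{2^{n+1}-2, 2^{n+1}-3\}$ using Lemma~\ref{lm:motive_of_isotropic_quadric}, so that the two quadrics have the same dimension $D$, and then the only possible twist relating isomorphic kernel summands is $\un(s)$ with $s\equiv i-f(i)\bmod (2^n-1)$; since $\dim Q=\dim Q'$ and we work with $\Mker$ rather than individual binary pieces, dimension count forces $s=0$ except in the genuine Pfister situation $q, q'\in I^{n+1}(k)$, which is controlled separately via Proposition~\ref{prop:prelim_motive_pfister}. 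The other place requiring care is verifying that the condition ``$\iw(q_K),\iw(q'_K)\geq d'$'' is precisely the hypothesis needed to invoke the classical Vishik criterion for the \emph{kernel} forms — i.e.\ that below $d'$ the isotropy is already accounted for by the stable Tate summands and imposes no further constraint — which is a direct unwinding of Definition~\ref{def:Kn-kernel} and Corollary~\ref{cr:li_rational_small_dim}.
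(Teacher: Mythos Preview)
Your approach is correct and follows the same overall strategy as the paper (reduce to the pre-stable range and relate to classical Vishik via Theorem~\ref{th:prestableMDT}), but your ``if'' direction is unnecessarily complicated and the ``main obstacle'' you identify is spurious.

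The paper's ``if'' argument is much shorter: base change to the composite of the fields of definition of the $\Kn$-kernel forms of $q$ and $q'$ (this is $\oKn$-universally bijective, so a $\Kn$-motivic isomorphism over it descends to $k$). Over this field both forms have anisotropic dimension $\leq 2^{n+1}$, hence for \emph{every} further extension $K$ one has $\iw(q_K),\iw(q'_K)\geq d'$ automatically; the hypothesis then gives $\iw(q_K)=\iw(q'_K)$ for all $K$, so classical Vishik yields $\MCh Q\cong\MCh{Q'}$, and this trivially specializes to $\MKn Q\cong\MKn{Q'}$. There is no need to invoke Theorem~\ref{th:prestableMDT} ``in the opposite direction'' (Chow $\Rightarrow$ $\Kn$ is just specialization, Section~\ref{sec:prelim_vishik_yagita}), no summand-by-summand assembly, and no Tate-twist, upper/lower, or Pfister bookkeeping --- once you have the full Chow isomorphism, you are done.

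Your ``only if'' direction via Corollary~\ref{cr:li_rational_small_dim} is fine; the paper instead observes directly that once $\iw(q_K),\iw(q'_K)\geq d'$ both anisotropic parts are pre-stable, so Theorem~\ref{th:prestableMDT} upgrades the given $\Kn$-isomorphism over $K$ to a Chow isomorphism of $Q_K$ and $Q'_K$, whence equal Witt indices.
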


\begin{proof}
Let $\MKn Q\cong\MKn{Q'}$
and $K/k$ be such that $\iw(q_K)$, $\iw(q'_K)\geq d'$. Then 
$$
\mathrm{dim}\big((q_K)_{\mathrm{an}}\big),\,\mathrm{dim}\big((q'_K)_{\mathrm{an}}\big)\leq 2^{n+1},
$$
and therefore the Chow motives of $Q_K$ and $Q'_K$ 
are isomorphic by Theorem~\ref{th:prestableMDT} (recall the decomposition of the motive of an isotropic quadric
from Section~\ref{sec:rat-proj-quad}), 
in particular, $\iw(q_K)=\iw(q'_K)$.

To prove the ``if'' part, 
we can assume that $\mathrm{dim}(q),\,\mathrm{dim}(q')\leq 2^{n+1}$ 
by base change to the composite of the fields of definition of their $\Kn$-kernels.  
Then for any field extension $K/k$ one has $\iw(q_K)=\iw(q'_K)$, and by Vishik's Criterion we conclude 
that the Chow (and therefore $\Kn$-) motives of $Q$ and $Q'$ are isomorphic. 
\end{proof}

\begin{Rk}
This criterion can alternatively be stated in terms of simultaneous rationality of $l_j$;
we note, however, that it seems not to be possible to re-formulate this criterion
in terms of equal number of Tate summands over all field extensions, 
as it is done in~\cite{DC,DCQM} for Chow motives.
\end{Rk}

\subsubsection{Similarity of quadratic forms modulo powers of fundamental ideal}

We recall Izhboldin's criterion of isomorphism of $\CH$-motives of {\sl odd-dimensional} quadrics~\cite[Cor.~2.9]{Izh}, see also~\cite[Rem.~93.7]{EKM}. 

\begin{Th}[Izhboldin]
\label{odd-chow}
Chow motives of odd-dimensional projective quadrics $Q$ and $Q'$ over $k$ are isomorphic
if and only if $q$ and $q'$ are similar:  $q'\cong c\,q$ for some $c\in k^\times$.
\end{Th}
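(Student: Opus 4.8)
The plan is to reduce the statement, via Vishik's criterion of isomorphism of the Chow motives of quadrics (the preceding theorem), to a purely form-theoretic assertion, and then induct on the dimension along the generic splitting tower. The ``if'' direction requires nothing: if $q'\cong c\,q$ then $Q$ and $Q'$ are one and the same closed subvariety of the ambient projective space, so $\MCH Q=\MCH{Q'}$. For ``only if'', Vishik's criterion identifies $\MCH Q\cong\MCH{Q'}$ with the condition $\iw(q_K)=\iw(q'_K)$ for every field extension $K/k$; in particular $\dim q=\dim q'$. Taking $K=k$ gives $\iw(q)=\iw(q')=:r$, and since $\iw\big((\varphi\perp\hyp^{\perp r})_K\big)=\iw(\varphi_K)+r$ for any $\varphi$, the anisotropic kernels $q_{\an}$, $q'_{\an}$ again satisfy the hypothesis; as $q'_{\an}\cong c\,q_{\an}$ plainly implies $q'\cong c\,q$, we may assume $q,q'$ anisotropic of odd dimension $N$.

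Now I would induct on $N$. For $N=1$ the forms are $\langle a\rangle$ and $\langle a'\rangle$, with $\langle a'\rangle\cong(a'a^{-1})\langle a\rangle$. For the inductive step, pass to $K_1=k(q)$ in the generic splitting tower: $q$ becomes isotropic there, hence so does $q'$, and $\iw(q'_{K_1})=\iw(q_{K_1})=i_1(q)=:i$. Writing $q_{K_1}=q_1\perp\hyp^{\perp i}$ and $q'_{K_1}=q'_1\perp\hyp^{\perp i}$ with $q_1,q'_1$ anisotropic of the same odd dimension $N-2i<N$, the identities $q_L=(q_1)_L\perp\hyp^{\perp i}$, $q'_L=(q'_1)_L\perp\hyp^{\perp i}$ together with the hypothesis give $\iw((q_1)_L)=\iw((q'_1)_L)$ for every $L\supseteq K_1$. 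By induction $q'_1\cong\lambda\,q_1$ over $K_1$, and it remains to promote this to a similarity $q'\cong b\,q$ over $k$ itself.

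The descent is the heart of the matter, and the step I expect to be the main obstacle. It genuinely uses the full hypothesis --- equality of Witt indices over \emph{all} $K$, not merely over $k(q)$ --- together with the structure of the splitting tower: the towers of $q$ and $q'$ run in lock-step (each quadric acquires a rational point, hence becomes rational, over the other's function field, so the function fields $k(q)$, $k(q')$ are stably birationally equivalent, and this propagates up the towers), and, keeping careful track of discriminants, one then pins down the single scalar $b$ for which $q\cong b\,q'$; here matching data in $I^m/I^{m+1}$ can be fed into Theorem~\ref{KRS} to control the relevant kernels. Oddness of $N$ is essential: a single scalar simultaneously corrects the form and its signed determinant, since $\det^{\pm}(b\,q')=b^{N}\det^{\pm}q'\equiv b\,\det^{\pm}q'$ modulo squares, whereas for even $N$ the determinant is an obstruction invisible to the motive --- and there the statement indeed fails (Izhboldin exhibits motivically equivalent, non-similar even-dimensional forms). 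The lowest case $N=3$ is already transparent: the motive of an anisotropic conic is its Rost motive, an isomorphism of Rost motives forces the corresponding quaternion symbols to agree, hence the conics are isomorphic and the forms similar; the general odd-dimensional case should emerge by iterating this analysis along the tower.
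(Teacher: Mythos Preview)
The paper does not prove this theorem; it is quoted from Izhboldin~\cite[Cor.~2.9]{Izh} (with a pointer to~\cite[Rem.~93.7]{EKM}) and then used as a black box in the proof of Proposition~\ref{odd-criterion}. So there is no proof in the paper to compare against.

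As for your argument: the reductions via Vishik's criterion and to anisotropic parts are fine, and the determinant trick in odd dimension is the right idea for pinning down the scalar. But you correctly flag the descent step as the crux and then do not actually perform it. After the induction hands you $q'_1\cong\lambda\,q_1$ over $K_1=k(q)$, the signed determinant forces $\lambda\equiv c:=\det_\pm(q')\cdot\det_\pm(q)^{-1}$ modulo squares with $c\in k^\times$, so $q'\perp-c\,q$ becomes hyperbolic over $k(q)$. This alone does not force hyperbolicity over $k$: the form $q'\perp-c\,q$ has dimension $2N$ while $\dim q=N$, so neither the subform theorem nor Arason--Pfister applies directly, and Theorem~\ref{KRS} only controls a single graded piece $I^n/I^{n+1}$, not all of $W(k)$. ``The general odd-dimensional case should emerge by iterating this analysis along the tower'' is a plan, not a proof. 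Izhboldin's actual argument proceeds through different machinery (his Schur splitting index and stably birational equivalence of quadrics); if you want to fill the gap, consult his paper or~\cite[Rem.~93.7]{EKM}.
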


To state the $\Kn$-analogue of this result, we start with the following observation, cf.~\cite[Prop.~6.18]{SechSem}.

\begin{Prop}
\label{similar-motives}
Let $q$, $q'$ be quadratic forms over $k$ of the same dimension and similar modulo $I^{n+2}(k)$. Then the $\Kn$-motives of $Q$ and $Q'$ are isomorphic.
\end{Prop}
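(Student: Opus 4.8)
The plan is to reduce the statement to the case already understood, namely quadratic forms lying in the same coset of $I^{n+2}(k)$ up to scalar, and to exploit the fact that passing to the generic splitting tower does not change $\Kn$-motivic information once we are below the stable range. First I would observe that by Proposition~\ref{prop:quad_MDT_stable} (together with Lemma~\ref{lm:motive_of_isotropic_quadric}) the $\Kn$-motive of $Q$ is, up to a fixed collection of Tate summands whose number depends only on $\dim q$, controlled by the $\Kn$-kernel form $q_j$ over the field $K_j$ in the generic splitting tower; and crucially, base change along $K_j/k$ reflects motivic decompositions of projective homogeneous varieties by Example~\ref{ex:k(Q)/k_reflects_MD_Kn}, since each step is the function field of a quadric of dimension at least $2^{n+1}-1$. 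Therefore it suffices to show that the generic splitting towers of $q$ and $q'$ can be ``matched up'' so that at the relevant stage the kernel forms are again similar modulo $I^{n+2}$, at which point we are reduced to a pre-stable (or stable-trivial) comparison.

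Next I would invoke Theorem~\ref{th:prestableMDT}: for quadrics of dimension less than $2^{n+1}-1$, a $\Kn$-motivic isomorphism between the kernel summands is equivalent to a Chow-motivic isomorphism between the corresponding Chow summands up to Tate twist, and hence to an honest isomorphism of the full Chow motives once the dimensions and the collections of Tate summands agree. So the heart of the matter becomes: if $q$ and $q'$ have the same dimension and are similar modulo $I^{n+2}(k)$, then over the field $K$ where the $\Kn$-kernel form of (say) $q$ is reached, the kernel forms of $q$ and $q'$ have the same anisotropic dimension and their Chow motives are isomorphic. Here one uses Corollary~\ref{cr:KRS-Witt_mod_In+1_injectivity} (the injectivity of $W(k)/I^{n+2}(k)\to W(k(q))/I^{n+2}(k)$ for $2^{n+1}<\dim q$): similarity modulo $I^{n+2}$ is preserved under the base changes occurring in the splitting tower, because each such base change is along the function field of a quadric whose dimension exceeds $2^{n+1}$, so the relevant Witt-group kernel vanishes. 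This forces the two splitting towers to have the same higher Witt indices up to the point where the anisotropic dimension first drops below $2^{n+1}$, hence $q_K$ and $q'_K$ have the same anisotropic dimension there, and $(q_K)_{\mathrm{an}}$, $(q'_K)_{\mathrm{an}}$ differ by an element of $I^{n+2}(K)$ up to scalar — but for forms of dimension at most $2^{n+1}$ an element of $I^{n+2}$ is hyperbolic, so $(q'_K)_{\mathrm{an}}\cong c\,(q_K)_{\mathrm{an}}$ and the Chow motives of the corresponding quadrics are (Tate-twisted) isomorphic by the behaviour of scalar multiplication on quadrics of equal dimension.

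Finally I would assemble these pieces: the $\Kn$-motive of $Q$ is the sum of the kernel summand $N$ (pulled back, via the decomposition-reflecting functor, from the kernel form over $K$) and an explicit collection of Tate motives determined by $\dim q$; the same holds for $Q'$ with the same Tate collection since $\dim q=\dim q'$; and $N\cong N'$ because the Chow motives of the kernel quadrics agree and Theorem~\ref{th:prestableMDT}(2) transports this back to $\Kn$-motives (with the unstable case, $\dim$ kernel $<2^n-1$, handled identically, or alternatively directly via Proposition~\ref{prop:unstable_quadric}). I expect the main obstacle to be the bookkeeping in the second paragraph: one must be careful that the two generic splitting towers can genuinely be taken over a common field, that ``similar modulo $I^{n+2}$'' is the right invariant that survives the tower (as opposed to merely ``congruent modulo $I^{n+2}$''), and that the scalar $c$ can be chosen compatibly — essentially one follows the line of argument of~\cite[Prop.~6.18]{SechSem} but must check that the scalar does not obstruct the Tate-twist matching of summands. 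The rest is a routine combination of Theorem~\ref{th:prestableMDT}, Proposition~\ref{prop:quad_MDT_stable}, and Corollary~\ref{cr:KRS-Witt_mod_In+1_injectivity}.
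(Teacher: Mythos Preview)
Your approach has a genuine gap in the second paragraph. You assert that once both $(q_K)_{\an}$ and $(q'_K)_{\an}$ have dimension at most $2^{n+1}$, their difference in $W(K)$, which lies in $I^{n+2}(K)$, must be hyperbolic by Arason--Pfister. But the anisotropic part of that difference can have dimension up to $\dim(q_K)_{\an} + \dim(q'_K)_{\an} \le 2^{n+2}$, and Arason--Pfister only rules out nonzero classes of anisotropic dimension strictly less than $2^{n+2}$; the difference could perfectly well be an anisotropic $(n+2)$-fold Pfister form, in which case $(q'_K)_{\an}\not\cong c\,(q_K)_{\an}$. Relatedly, the claim that the two splitting towers have matching higher Witt indices is not justified: similarity modulo $I^{n+2}$ is a statement about Witt classes and does not by itself control Witt indices, so there is no reason the field of definition of the $\Kn$-kernel of $q$ should also be one for $q'$.

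The paper's proof sidesteps all of this with a single move: set $p = q' \perp (-c\,q) \in I^{n+2}(k)$ and compare the large quadrics $p \perp c\,q\cong q' \perp \hyp^{\dim q}$ and $\hyp^{\dim q} \perp c\,q$. These become isometric over the generic splitting field of $p$, which by Corollary~\ref{cr:Kn-split-quad} is $\oKn$-universally bijective, so their $\Kn$-motives already agree over $k$; then strip off the common hyperbolic planes via Lemma~\ref{lm:motive_of_isotropic_quadric}. This is also the natural fix for your argument --- pass to the splitting field of $p$ rather than to the kernel field of $q$ --- but once you do that, the detour through the splitting tower and Theorem~\ref{th:prestableMDT} becomes unnecessary.
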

\begin{proof}
Let $p=q'\perp-c\,q\in I^{n+2}(k)$.
The $\Kn$-motives of projective quadrics defined by $p\perp c\,q$ and $\hyp^{\,\dim q}\perp c\,q$ are isomorphic, 
since this can be checked over the splitting field of $p$ by Corollary~\ref{cr:Kn-split-quad}. 
However, $p\perp c\,q=q'\perp \mathbb H^{\,\mathrm{dim}(q)}$, therefore the claim follows from Lemma~\ref{lm:motive_of_isotropic_quadric}.
\end{proof}

Omitting the condition on dimension in the above Proposition, one can still  
compare $\Kn$-motives of quadratic forms similar modulo $I^{n+2}(k)$ using the same type of argument. 
For simplicity, we state it only for forms of sufficiently big dimension.

\begin{Cr}
\label{cr:similar-kernel}
Let $q$, $q'$ be quadratic forms over $k$ similar modulo $I^{n+2}(k)$, 
such that $\dim(q')\ge \dim(q)\geq 2^n$.
Let $s:=\frac{1}{2}(\dim(q')-\dim(q))$. 
Then $\Mker{Q'}\cong\Mker Q\sh s$.
\end{Cr}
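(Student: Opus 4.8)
\textbf{Proof plan for Corollary~\ref{cr:similar-kernel}.}
The plan is to mimic the argument of Proposition~\ref{similar-motives}, but without the dimension constraint. Write $p := q' \perp -c\,q \in I^{n+2}(k)$ for a suitable $c \in k^\times$ witnessing similarity modulo $I^{n+2}(k)$. First I would consider the quadratic form $w := p \perp c\,q$, which is congruent modulo Witt equivalence to $q' \perp \hyp^{\dim q}$; hence by Lemma~\ref{lm:motive_of_isotropic_quadric} the $\Kn$-motive of the corresponding quadric $W$ splits as a sum of Tate motives plus $\MKn{Q'}\sh{\dim q}$, and in particular $\Mker W \cong \Mker{Q'}\sh{?}$ once one tracks the kernel summand through the isotropic-quadric decomposition (this requires that $\dim q \ge 2^n$, so that the kernel summand of $W$ is not ``eaten'' by the Tate motives peeled off — this is where the hypothesis on $\dim q$ enters).

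Next I would compare $W$ with the quadric $W_0$ attached to $\hyp^{\dim q} \perp c\,q$, which is simply the isotropic form with anisotropic part $c\,q$. Since $W$ and $W_0$ differ by $p \in I^{n+2}(k)$ in the Witt ring and have the same dimension, their $\Kn$-motives are isomorphic: by Corollary~\ref{cr:Kn-split-quad} the form $p$ becomes hyperbolic over its generic splitting field $E$, so over $E$ both $W$ and $W_0$ have the same anisotropic part $c\,q_E$ up to hyperbolic summands, giving an isomorphism $\MKn{W_E} \cong \MKn{(W_0)_E}$; this isomorphism (and the relevant projectors) is defined over $k$ because $E/k$ is $\oKn$-universally bijective by Corollary~\ref{cr:Kn-split-quad}, so it descends by the reflection-of-decompositions machinery (Proposition~\ref{prop:reflects_MD}, applied via Rost Nilpotency). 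Hence $\Mker W \cong \Mker{W_0}$, and since the anisotropic part of the form defining $W_0$ is $c\,q$, which is similar to $q$, we get $\Mker{W_0} \cong \Mker{Q}$ (motives of similar quadrics coincide, as the quadrics are isomorphic).

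Finally I would assemble the twists. From the first paragraph $\Mker W$ is a shift of $\Mker{Q'}$, from the second $\Mker W \cong \Mker{Q}$ up to a shift; composing and reading off the Tate twist from the dimensions $\dim q' - \dim q = 2s$ gives $\Mker{Q'} \cong \Mker{Q}\sh{s}$. The bookkeeping of the exact twist is routine once one fixes conventions: in the decomposition $\MA{Q}\cong\un \oplus \MA{Q'}\sh1 \oplus \un\sh D$ of Lemma~\ref{lm:motive_of_isotropic_quadric}, passing to the anisotropic part shifts the kernel summand by the number of split hyperbolic planes, and the difference in that number between $q'$ and $q$ is exactly $s$.

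\textbf{Main obstacle.} The delicate point is ensuring that the kernel summand $\Mker{\cdot}$ is preserved (and not merged with or split off into Tate motives) when passing between $W$, $W_0$ and the original quadrics; this is precisely what the hypothesis $\dim q \ge 2^n$ guarantees, since it forces $\dim w \ge 2^{n+1}$ roughly, keeping us in the ``stable'' range where the kernel summand behaves functorially under adding hyperbolic planes (Proposition~\ref{prop:quad_MDT_stable} and the decomposition recalled in Section~\ref{sec:rat-proj-quad}). Verifying this compatibility carefully — rather than the descent step, which is standard given Corollary~\ref{cr:Kn-split-quad} — is where the real work lies.
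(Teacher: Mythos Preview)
Your approach is exactly what the paper intends --- it simply says the corollary follows by ``the same type of argument'' as Proposition~\ref{similar-motives}, and you have correctly unpacked that argument. There is, however, one concrete slip. You claim that $W$ (the quadric of $p\perp c\,q = q'\perp\hyp^{\dim q}$) and $W_0$ (the quadric of $\hyp^{\dim q}\perp c\,q$) ``have the same dimension''. They do not: $\dim w = \dim q' + 2\dim q$ while $\dim w_0 = 3\dim q$, so they differ by $2s$. In particular their $\Kn$-motives cannot be isomorphic, and the comparison step breaks.

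The fix is immediate: replace $w_0$ by $w_0' := \hyp^{\dim q + s}\perp c\,q$, which has the same dimension as $w$ and still satisfies $[w]-[w_0'] = [p]\in I^{n+2}(k)$. Then the argument of Proposition~\ref{similar-motives} (descent via Corollary~\ref{cr:Kn-split-quad}) gives $\MKn{W}\cong\MKn{W_0'}$, and peeling off hyperbolic planes via Lemma~\ref{lm:motive_of_isotropic_quadric} yields $\Mker{Q'}\sh{\dim q}\cong\Mker{Q}\sh{\dim q + s}$, whence the claim. Your identification of the role of the hypothesis $\dim q\ge 2^n$ (ensuring the kernel summand is not swallowed by the Tate part when peeling off hyperbolic planes) is correct, and with the above correction the twist bookkeeping becomes transparent rather than something to be deferred.
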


For odd-dimensional quadrics we can prove the ``inverse'' to Proposition~\ref{similar-motives}. 

\begin{Prop}
\label{odd-criterion}
$\Kn$-motives of projective quadrics $Q$ and $Q'$ over $k$ of the same odd dimension are isomorphic if and only if $q$ and $q'$ are similar modulo $I^{n+2}(k)$.
\end{Prop}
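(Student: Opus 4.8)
\textbf{Proof proposal for Proposition~\ref{odd-criterion}.}

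One direction is already covered: if $q$ and $q'$ are similar modulo $I^{n+2}(k)$, then by Proposition~\ref{similar-motives} their $\Kn$-motives are isomorphic. So the plan is to prove the converse: assuming $\Mot{\Kn}{Q}\cong\Mot{\Kn}{Q'}$ for odd-dimensional $Q$, $Q'$ of dimension $D$, I want to conclude $q'\cong c\,q\bmod I^{n+2}(k)$ for some $c\in k^\times$. The strategy is to reduce to the pre-stable case and then invoke Izhboldin's criterion (Theorem~\ref{odd-chow}) via Theorem~\ref{th:prestableMDT}. First I would note that by base change along the generic splitting tower we may assume $\dim q,\dim q'\le 2^{n+1}$ — more precisely, pass to the composite $K$ of the fields of definition of the $\Kn$-kernel forms of $q$ and $q'$; by Proposition~\ref{prop:quad_MDT_stable} (and the fact, from Example~\ref{ex:k(Q)/k_reflects_MD_Kn}, that the relevant base change functors reflect motivic decompositions) the $\Kn$-kernel forms $q_K$ and $q'_K$ still have isomorphic $\Kn$-motives, of the same odd dimension $D'\le 2^{n+1}-1$. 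Here one must be a little careful: the two generic splitting towers may have different lengths before reaching the pre-stable range, but because $D$ is odd and the $\Kn$-motive isomorphism persists under base change, the dimensions of the kernel forms agree (e.g. by comparing ranks of the kernel summand, or directly from Proposition~\ref{kn-vishik-criterion}). So WLOG $\dim q = \dim q' = D'\le 2^{n+1}-1$, $D'$ odd.

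Next, in this pre-stable range, Theorem~\ref{th:prestableMDT}(2) applied with $Q_1=Q$, $Q_2=Q'$ and the rational projectors defining their full $\Kn$-kernel summands shows that $\Mot{\Ch}{Q}$ and $\Mot{\Ch}{Q'}$ are isomorphic up to a Tate twist. For odd-dimensional quadrics of equal dimension there is no room for a nontrivial Tate twist between the full Chow motives — the socle/cosocle Tate summands $\un(0)$ and $\un(D')$ pin down the twist — so in fact $\Mot{\Ch}{Q}\cong\Mot{\Ch}{Q'}$ (one can also see this by noting both contain $\un(0)$ as the unique lowest Tate summand). Actually a cleaner route avoiding Tate-twist bookkeeping: the hypothesis also gives $\iw(q_E)=\iw(q'_E)$ for every field extension $E/k$ by Proposition~\ref{kn-vishik-criterion} (the condition $\iw\ge d'$ is automatic once $\dim\le 2^{n+1}$ and $\iw>0$, and the case $\iw=0$ forces the other to be $0$ too by rank count), whence $\Mot{\Ch}{Q}\cong\Mot{\Ch}{Q'}$ directly by Vishik's criterion. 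Either way, the Chow motives of $Q$ and $Q'$ are isomorphic.

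Then by Izhboldin's Theorem~\ref{odd-chow}, $\Mot{\Ch}{Q_K}\cong\Mot{\Ch}{Q'_K}$ forces $q'_K\cong c\,q_K$ for some $c\in K^\times$, i.e. $q$ and $q'$ become similar over $K$. The final step — and I expect this to be the main obstacle — is to descend this similarity from $K$ back to $k$ modulo $I^{n+2}(k)$. The point is that $K/k$ is built from function fields of quadrics of dimension $\ge 2^{n+1}-1$, and I want: if $q'\perp -c'\,q$ becomes hyperbolic (equivalently lies in $I^{n+2}$) over $K$ for some $c'\in K^\times$, then $q'\perp -c\,q\in I^{n+2}(k)$ for some $c\in k^\times$. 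First, the value $c'$ can be taken in $k^\times$: the form $q'\perp -c\,q$ is even-dimensional and its Clifford/discriminant data, which over $K$ must match those forcing membership in $I^{n+2}$, are already defined over $k$; choosing $c$ to equalize discriminant and Clifford invariant of $q$ and $q'$ over $k$ (using that these invariants inject into the corresponding ones over $K$, which holds for function fields of high-dimensional quadrics — cf. the index reduction results and Corollary~\ref{cr:injectivity_galois_cohomology}) gives a candidate $c\in k^\times$ with $w:=q'\perp -c\,q\in I^2(k)$. Now $w_K\in I^{n+2}(K)$, so $[w]$ dies in $W(K)/I^{n+2}(K)$; applying Corollary~\ref{cr:KRS-Witt_mod_In+1_injectivity} iteratively along the tower $K/k$ — each step is $k'(Q'')/k'$ with $\dim Q''\ge 2^{n+1}-1$, so $2^{n+1}\le \dim q''$ and the kernel of $W(k')/I^{n+2}(k')\to W(k'(Q''))/I^{n+2}(k'(Q''))$ is trivial — we conclude $[w]=0$ in $W(k)/I^{n+2}(k)$, i.e. $q'\perp -c\,q\in I^{n+2}(k)$, which is exactly similarity modulo $I^{n+2}(k)$. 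The delicate bookkeeping is making sure the discriminant-and-Clifford adjustment to pick $c\in k^\times$ is valid before invoking the Milnor-conjecture injectivity; I would handle the low-degree invariants separately (discriminant via $\sqrt{\det_\pm}$, Clifford via the $\KK$-motive description of Section~\ref{sec:K0-motive-quadric}, both of which are read off the $\Kn$-motive for $n=1$ and are controlled over the tower) and then use Corollary~\ref{cr:KRS-Witt_mod_In+1_injectivity} for the rest.
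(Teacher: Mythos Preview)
Your overall strategy matches the paper's proof: pass to the composite $K$ of the fields of definition of the two $\Kn$-kernel forms, use Proposition~\ref{kn-vishik-criterion} and Theorem~\ref{th:prestableMDT} to get $\MCh{Q_K}\cong\MCh{Q'_K}$, apply Izhboldin's Theorem~\ref{odd-chow} to obtain $q'_K\cong c\,q_K$ for some $c\in K^\times$, and then descend via Corollary~\ref{cr:KRS-Witt_mod_In+1_injectivity}.

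The one place where you diverge, and overcomplicate, is the descent of the similarity factor. The paper handles this in a single line using odd-dimensionality: taking determinants in $K^\times/K^{\times2}$ of the isometry $q'_K\cong c\,q_K$ gives $\det(q')=c^{\dim q}\det(q)$, and since $\dim q$ is odd this reads $c\equiv\det(q')\det(q)^{-1}\pmod{K^{\times2}}$, an element of $k^\times$; so one may simply take $c\in k^\times$ from the start and then $(q'\perp-c\,q)_K=0$ in $W(K)$. Your detour through equalizing discriminant \emph{and} Clifford invariants, index reduction, and Corollary~\ref{cr:injectivity_galois_cohomology} is unnecessary, and as written leaves the assertion ``Now $w_K\in I^{n+2}(K)$'' unjustified: the $c\in k^\times$ you construct only gives $w\in I^2(k)$, and to know $w_K$ is hyperbolic you must check that your $c$ agrees modulo $K^{\times2}$ with Izhboldin's $c'$, which is exactly the determinant argument above. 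Once you insert that one-line observation, the Clifford and cohomological-injectivity considerations can be dropped entirely.
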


\begin{proof}
The ``if''-part is proven in Proposition~\ref{similar-motives}.
Assume that the $\Kn$-motives of $Q$ and $Q'$ be isomorphic. 
Let $K$ be the composite of the field of definitions of $\Kn$-kernels of $q,q'$.
Then we have $\dim (q_K)_{\mathrm{an}}$, $\dim (q'_K)_{\mathrm{an}}<2^{n+1}$. 
Using Proposition~\ref{kn-vishik-criterion}, we have $\dim (q_K)_{\mathrm{an}}=\dim (q'_K)_{\mathrm{an}}$ and 
the $\Kn$-motives of the corresponding quadrics are isomorphic by Lemma~\ref{lm:motive_of_isotropic_quadric},
therefore their Chow motives are isomorphic as well by Theorem~\ref{th:prestableMDT}. 

Using Lemma~\ref{lm:motive_of_isotropic_quadric} again,
we conclude that the Chow motives of $Q_K$ and $Q'_K$ are isomorphic, and using Proposition~\ref{odd-chow} we conclude that $q_K$ and $q'_K$ are similar, i.e., $q'_K\cong c\,q_K$ for some $c\in K^\times$. 
 In particular, $\mathrm{det}(q')=c^{\,\mathrm{dim}(q)}\mathrm{det}(q)$ as an element of $K^\times/K^{\times2}$, and since $q$ is odd-dimensional we can assume that 
$c\in k^\times$. 
Finally, by Corollary~\ref{cr:KRS-Witt_mod_In+1_injectivity} we conclude that $q'\perp-c\,q\in I^{n+2}(k)$.
\end{proof}

\begin{Rk}
Recall that Proposition~\ref{odd-chow} does not have an analogue for {\sl even-dimensional} quadrics. In fact, Izhboldin proved in~\cite[Cor.~4.8]{Izh} that 
for any $n\geq3$, there exist a field $k$ 
and quadratic forms $q,\,q'\in I^n(k)$ of $\,\mathrm{dim}(q)=\mathrm{dim}(q')=2^{n+1}-2$ such that 
the Chow motives of $Q$ and $Q'$ are isomorphic, 
but $q$ and $q'$ are not similar.
It follows
that the analogue of Proposition~\ref{odd-criterion} for even-dimensional quadrics does not hold.
\end{Rk}

\section{Morava motives over the fields of functions}\label{sec:morava_motives_over_function_fields}

In this section we investigate the behavior of $\A$-motives over function fields of some varieties
with the main applications to $\Kn$-motives over splitting fields $k(\alpha)$ of elements $\alpha \in \HH^{n+1}(k,\,\ZZ/2)$.
We show that
the isomorphism of motives $M, N \in \CM_{\Kn}(k)$ over $k(\alpha)$
implies that $M$ is either isomorphic to $N$, or to $N\otimes L_\alpha$ under mild assumptions on $M, N$ (Theorem~\ref{th:iso_over_k(alpha)}).
Under mild assumption on indecomposable motive $M\in \CM_{\Kn}(k)$ 
we show that it either stays indecomposable over $k(\alpha)$ or it splits into two isomorphic summands (Proposition~\ref{prop:decomposition_over_k(alpha)}).
For example, both of the results above can be applied to motives that are direct summands of the motives of projective homogeneous varieties.

The results above are obtained using the category of motives over a base
and the geometric Rost Nilpotence Property (Section~\ref{sec:geometric_RNP}).
Recall that the latter allows to lift decompositions and isomorphisms of motives 
from $\CM_{\A}(k(X))$ to $\CM_{\A}(X)$. However,
studying motivic decompositions over $X$ instead of over $k(X)$ has its benefits:
for example, there is a push-forward functor back to motives over $k$, if $X$ is smooth projective. 

Another question that we address in this section is the following:
when do Tate motives split off from a motive $M$ over some function field $k(X)$.
This splitting over $k(X)$ should, in some sense, be governed by the upper motive of $X$, 
although we are not aware of a precise formulation of this statement for Chow motives.
However, in Theorem~\ref{th:splitting_off_Tate}
we show that if $[X]_A$ is invertible
and some technical assumption on $\Mot{\A}{X}$ is fulfilled, 
then a Tate motive splits off from some $M\in\CM_{\A}(k)$
over $k(X)$ only if $M$ is a direct summand of the $\A$-outer motive of $X$.
Albeit not useful for Chow motives, this result is a powerful tool for $\Kn$-motives, when $X$ is a product of quadrics of dimension $2^n-1$.
Among other purposes, it is required for our study of cohomological invariants of quadrics in Section~\ref{sec:coh_inv_direct_summands_quadrics}.

In this section when working with Morava K-theory $\Knf$ we set $v_n=1$ for simplicity of computations.

\subsection{Upper and lower motives}

In this section $\Af$ denotes a coherent cohomology theory.

\begin{Def}
\label{def:upper-proj}
Let $X\in \SmProj_k$ be irreducible.

A projector $\pi \in A^{\dim X}(X\times X)$ is called {\sl upper} (resp. {\sl lower})
if its restriction to $A^{\dim X}(k(X)\times X)$ (resp. to $A^{\dim X}(X\times k(X))$) 
equals the class of the diagonal rational point $\delta_X$.

A projector is called {\sl outer} if it is both upper and lower.
\end{Def}

We will also call a motivic summand of $\Mot{A}{X}$ upper (resp., lower, outer) if it is defined by an upper (resp., lower, outer) projector.

\begin{Rk}
For a connected commutative ring $\Lambda$, and  $A=\CH\otimes \Lambda$, Karpenko defines 
the notions of upper and lower projectors in~\cite[Def.~2.10]{Karp-upper}.
For $X$ such that $\CH_0(X_{k(X)})\otimes \Lambda$ is a free module of rank $1$ the definition of loc.\ cit.\ and Definition~\ref{def:upper-proj} coincide.
To see that, note  
that the multiplicity of a Chow correspondence 
can be computed as the degree of the $0$-cycle obtained from its restriction to $k(X)\times X$.
\end{Rk}

\begin{Ex}
\label{ex:product_upper_motives}
If $X_1\times X_2$ is irreducible, 
and $U_i$ are upper (resp., lower) motives of $\Mot{A}{X_i}$ for $i=1,2$, then $U_1\otimes U_2$ is an upper (resp., lower) motive of $\Mot{A}{X_1\times X_2}$. 
\end{Ex}

\begin{Lm}\label{lm:upper_motive}
Let $X\in \SmProj_k$ be irreducible, $Y\in \Sm_k$.
Let $\alpha \in \At{Y\times X}$, let $\pi\in \At{X\times X}$ be a lower projector of $X$.
Then $\pi\circ \alpha$ and $\alpha$ have the same restriction to $\At{Y_{k(X)}}$.

Similarly, if $\pi$ is an upper projector, then
$\alpha\circ \pi$ and $\alpha$ have the same restriction to $\At{Y_{k(X)}}$.
\end{Lm}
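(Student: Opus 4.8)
The statement concerns the action of correspondences (Section~\ref{sec:corr_action}) combined with the extended base change property~\eqref{eq:ext_bc}, so the plan is to unwind the definition of $\pi\circ\alpha$ and reduce everything to the generic fibre over $X$. First I would fix notation: write $Z:=Y\times X$ and consider the projections $p_{1,2},p_{1,3}\colon Y\times X\times X\to Y\times X$ and $p_{2,3}\colon Y\times X\times X\to X\times X$, so that by definition $\pi\circ\alpha=(p_{1,3})_*\bigl(p_{1,2}^*(\alpha)\cdot p_{2,3}^*(\pi)\bigr)$. The key geometric input is that restricting to the generic point of the \emph{last} copy of $X$ is compatible with the formation of $\pi\circ\alpha$: namely, the restriction of $\pi\circ\alpha$ to $\At{Y_{k(X)}}$ is computed by restricting along $\mathrm{id}_Y\times j\colon Y_{k(X)}\hookrightarrow Y\times X$, where $j\colon\Spec k(X)\hookrightarrow X$ is the inclusion of the generic point, and this should be matched against $(p_{1,3})_*$ followed by this restriction.

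Here is the order of steps. \textbf{Step 1:} Observe that the square
\begin{center}
\begin{tikzcd}
Y\times X\times k(X) \arrow[r] \arrow[d, "p_{1,3}"] & Y\times X\times X \arrow[d, "p_{1,3}"] \\
Y\times k(X) \arrow[r, "\mathrm{id}_Y\times j"] & Y\times X
\end{tikzcd}
\end{center}
is cartesian and transversal (it is a base change of $j$, which is flat, hence Tor-independent), so by~\eqref{eq:ext_bc} restriction along $\mathrm{id}_Y\times j$ commutes with $(p_{1,3})_*$. \textbf{Step 2:} Therefore the restriction of $\pi\circ\alpha$ to $\At{Y_{k(X)}}$ equals $(\mathrm{pr}_{1,3})_*$ of the restriction of $p_{1,2}^*(\alpha)\cdot p_{2,3}^*(\pi)$ to $Y\times X\times k(X)$. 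On $Y\times X\times k(X)=(Y\times X)_{k(X)}$ the pullback $p_{1,2}^*(\alpha)$ restricts to $\alpha_{k(X)}\in\At{(Y\times X)_{k(X)}}$ (pullback along the first two factors), while $p_{2,3}^*(\pi)$ restricts to the pullback of $\pi|_{X\times k(X)}$; but $\pi$ is a lower projector, so $\pi|_{X\times k(X)}=\delta_X$, the class of the diagonal rational point of $X_{k(X)}$. \textbf{Step 3:} Conclude by the projection formula: the product of $\alpha_{k(X)}$ with the pullback of the class of the rational point $\delta_X$, pushed forward along the projection $(Y\times X)_{k(X)}\to Y_{k(X)}$, returns $\alpha_{k(X)}$ — because $\delta_X$ is the class of a section of $X_{k(X)}\to\Spec k(X)$, so pushing forward along the projection that kills the last factor recovers the original class. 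This gives $(\pi\circ\alpha)|_{Y_{k(X)}}=\alpha|_{Y_{k(X)}}$.

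For the ``similarly'' clause with $\pi$ upper and $\alpha\circ\pi$, I would run the mirror-image argument: now restrict to the generic point of the \emph{first} copy of $X$, use that $\alpha\circ\pi=(\mathrm{pr}_{1,3})_*(p_{1,2}^*(\pi)\cdot p_{2,3}^*(\alpha))$ with the roles of the factors swapped, and use that an upper projector restricts to $\delta_X$ on $k(X)\times X$. The main obstacle — really the only subtlety — is bookkeeping: making sure the transversal square in Step~1 is set up with the correct factor being sent to its generic point, and checking that the restriction of $p_{2,3}^*(\pi)$ is genuinely the pullback of the diagonal point class along the map that collapses $Y$, so that the projection formula applies cleanly. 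There is also a minor coherence point: to make sense of $\alpha|_{Y_{k(X)}}$ and the various restrictions one uses that $\Af$ is an extended (indeed coherent) theory, which is assumed; no further hypotheses on $X$ or $Y$ are needed since $\delta_X$ is literally the class of a section of the projection over $k(X)$.
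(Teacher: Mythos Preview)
Your proposal is correct and follows essentially the same route as the paper: set up the transversal square to commute restriction to the generic point with $(p_{1,3})_*$ via~\eqref{eq:ext_bc}, use the lower-projector hypothesis to identify the restriction of $p_{2,3}^*(\pi)$ with the pullback of the class of the diagonal section $\delta_X$, and then unwind the resulting push-forward. The only cosmetic difference is that where you invoke ``the projection formula'' in Step~3, the paper spells it out as $(p_{1,3})_*\circ(\id_Y\times\delta_X)_*=\id_*$ by functoriality of push-forwards; the content is identical.
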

\begin{proof}
Consider the following commutative diagram
with a transversal square on the right:
\begin{center}
\begin{tikzcd}
& Y\times X \times X \arrow[ld, "\id_Y \times p_1"'] \arrow[rd, "\id_Y\times p_2"'] &   Y \times X \times k(X) \arrow[rd,"p_{13}"] \arrow{l}& \\
Y\times X  & & Y\times X & Y\times k(X). \arrow{l}  \\
\end{tikzcd}
\end{center}
Let $\pi$ be a lower projector. Element $(\pi\circ \alpha)|_{Y_{k(X)}}$
can be computed as $(p_{13})_*(p_{12}^*(\alpha)\cdot p_{23}^*(\pi|_{X_{k(X)}}))$ 
where $p_{12}$, resp.~$p_{23}$, is a projection from $Y\times X\times k(X)$ onto $Y\times X$, resp. $X_{k(X)}$, 
by \ref{eq:ext_bc}.
By assumption, 
we have 
$$p_{23}^*(\pi|_{X_{k(X)}}) = [Y\times k(X) \xrarr{\id_Y \times \delta_X} Y\times X\times k(X)].$$
Thus, multiplying $p_{12}^*(\alpha)$ with it can be computed 
as the push-forward along $\id_Y \times \delta_X$ of $\alpha|_{Y\times k(X)}$. 
Therefore, $(\pi\circ \alpha)|_{Y_{k(X)}}$ equals $(p_{13})_*\circ (\id_Y \times \delta_X)_*$ applied to $\alpha|_{Y\times k(X)}$,
which is $\alpha|_{Y\times k(X)}$ by functoriality of push-forwards. 

The claim about the upper projector is obtained by transposition.
\end{proof}

The left (resp., right) action of the ring of correspondences $\At{X\times X}$ 
on 
$$
\At{Y\times Z \times X}\cong\Hom(\Mot{A}{Y}_X, \Mot{A}{Z}_X)
$$ 
(resp., $\At{X\times Y\times Z}\cong \Hom(\Mot{A}{Y}_X, \Mot{A}{Z}_X)$)
extends to the action on morphisms between motives:
if $M, N \in \CM_A(k)$,
then $\pi\in A^{\dim X}(X\times X)$
acts on $\Hom(M_X, N_X)$ (on the left and on the right) 
as follows from Lemma~\ref{lm:correspondence_action_on_motives} below.
However, this action is not compatible with the composition of morphisms,
i.e.\ for $\pi\in \At{X\times X}$, $\phi\colon L_X\rarr M_X$, $\psi\colon M_X\rarr N_X$
we have in general
$$ (\pi \circ \psi) \circ \phi \neq \pi \circ (\psi \circ \phi) \neq (\pi\circ \psi) \circ (\pi \circ \phi). $$

\begin{Lm}
\label{lm:correspondence_action_on_motives}
Let $X\in\SmProj_k$ and $\pi \in \At{X\times X}$. 

Let $\alpha\colon \Mot AY\rarr \Mot AZ$ be a morphism in $\CM_A(k)$, and 
 $\phi\colon \Mot AZ_X\rarr \Mot AW_X$ be a morphism in $\CM_A(X)$. 
Then $(\pi\circ \phi) \circ \alpha_X = \pi\circ (\phi\circ \alpha_X)$.

Similarly, for $\psi\colon \Mot AV_X\rarr \Mot AY_X$, 
we have $\pi \circ (\alpha_X \circ \psi) = (\pi \circ \alpha_X) \circ \psi$.
\end{Lm}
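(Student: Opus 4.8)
The statement to prove is \emph{Lemma}~\ref{lm:correspondence_action_on_motives}: for $X\in\SmProj_k$, $\pi\in A(X\times X)$, a morphism $\alpha\colon \Mot AY\rarr\Mot AZ$ in $\CM_A(k)$, and a morphism $\phi\colon \Mot AZ_X\rarr \Mot AW_X$ in $\CM_A(X)$, one has $(\pi\circ\phi)\circ\alpha_X=\pi\circ(\phi\circ\alpha_X)$, together with the transposed version for precomposition. The plan is to unwind all three compositions as push-pull expressions of elements in $A$ of suitable products of $X$, $Y$, $Z$, $W$, and to verify the identity by a diagram chase using only the transversal base change property \ref{eq:bc} and the projection formula. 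Concretely, write $\alpha\in A^{\dim Z}(Y\times Z)$, $\phi\in A^{\dim W}(Z\times W\times X)$ (a morphism over the base $X$), and $\pi\in A^{\dim X}(X\times X)$; then $\alpha_X$ is the pullback of $\alpha$ to $Y\times Z\times X$ along the projection that forgets nothing new, and the action of $\pi$ on a morphism over $X$ is the one recorded in Section~\ref{sec:corr_action} applied fibrewise, i.e.\ via the external product with the extra $X\times X$ factor and the appropriate partial push-forward.

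First I would set up notation for the ambient variety $Y\times Z\times W\times X\times X$ and name all the projection maps to the relevant sub-products; the two sides of the desired equality will each be expressed as $(p_{\text{out}})_*$ of a product of three pulled-back classes ($\alpha$, $\phi$, $\pi$), and the only difference between them is the \emph{order} in which the partial push-forwards collapsing the two $X$-factors and the $Z$-factor are performed. Since push-forwards along disjoint sets of coordinates commute (this is functoriality of proper push-forward together with the fact that the relevant squares are Cartesian and transversal — all varieties here are smooth over $k$ and the maps are projections, hence flat), and since the projection formula lets one move pulled-back factors across a push-forward along a map that is an isomorphism in those coordinates, the two composite expressions reduce to the same iterated push-pull. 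The key point that makes this work — and the reason the analogous statement with $\psi\circ\alpha$ \emph{replaced} by an arbitrary composition fails, as the remark before the lemma warns — is that $\alpha_X$ is pulled back from the base, so the $\pi$-action (which only touches the $X\times X$ coordinates) genuinely commutes with it; the $Z$-contraction coming from the composition $\phi\circ\alpha_X$ happens in coordinates disjoint from $X\times X$.

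The transposed statement, $\pi\circ(\alpha_X\circ\psi)=(\pi\circ\alpha_X)\circ\psi$ for $\psi\colon \Mot AV_X\rarr\Mot AY_X$, follows by the same argument after transposing the roles of source and target, or formally by applying the first part in the opposite category of $\CM_A(X)$ (which is again a category of $A$-motives, using rigidity / the duality $\Mot AX^\vee\cong \Mot AX(-\dim X)$), so I would just remark that it is symmetric and omit the repetition.

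The main obstacle I anticipate is purely bookkeeping: getting the indexing of the many projection maps from $Y\times Z\times W\times X\times X$ straight, and checking carefully that each square used is genuinely transversal (so that \ref{eq:bc} applies) and that each intermediate variety is smooth quasi-projective over $k$ — all of which is automatic here since everything in sight is a product of smooth projective varieties and the maps are coordinate projections. There is no conceptual difficulty; the substantive content is exactly the observation that the $\pi$-action is supported on coordinates disjoint from those involved in $\alpha_X$ and in the $Z$-contraction, so one should state that cleanly and let the projection-formula/commuting-push-forward computation run. I would keep the written proof short, doing one of the two displayed equalities in the diagram-chase style already used in the proof of Lemma~\ref{lm:upper_motive} above, and leaving the routine reindexing to the reader.
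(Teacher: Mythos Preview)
Your proposal is correct and matches the paper's approach: the paper's proof reads simply ``Direct computation.'' Your plan spells out exactly what that computation is (push-pull on $Y\times Z\times W\times X\times X$ with \ref{eq:bc} and the projection formula), and your observation that the key point is that $\alpha_X$ is pulled back from the base—so the $X\times X$-coordinates touched by $\pi$ are disjoint from those touched by the $Z$-contraction—is precisely the content behind the one-line proof.
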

One can also reformulate these results for the right action of $\pi$.
\begin{proof}
    Direct computation.
\end{proof}

Now if $\pi$ is a lower projector, 
we can apply Lemma~\ref{lm:upper_motive} 
to motives over $X$.

\begin{Cr}\label{cr:lifting_isos_upper_projector}
Let $X \in \SmProj_k$, let $M,N \in \CM_A(k)$
and let $\pi$ be a lower $A$-projector of $X$.

Assume that there is an isomorphism $M_{k(X)}\xrarr{\varphi} N_{k(X)}$.

Then it can be lifted to an isomorphism $M_X \xrarr{\phi} N_X$
such that $\pi$ trivially acts on it.
\end{Cr}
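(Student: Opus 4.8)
\textbf{Proof plan for Corollary~\ref{cr:lifting_isos_upper_projector}.}
The plan is to combine the geometric Rost Nilpotence Property (Proposition~\ref{prop:geometric_RNP}) with the action of the lower projector $\pi$ described in Lemma~\ref{lm:upper_motive} and Lemma~\ref{lm:correspondence_action_on_motives}. First I would use Proposition~\ref{prop:geometric_RNP}: since $X$ is irreducible, the restriction functor $\eta^*\colon\CM_A(X)\rarr\CM_A(k(X))$ induces surjective ring homomorphisms $\End(M_X)\rarr\End(M_{k(X)})$ and $\End(N_X)\rarr\End(N_{k(X)})$ with nilpotent kernels, and more generally (by applying it to $M\oplus N$) the map on $\Hom(M_X,N_X)\rarr\Hom(M_{k(X)},N_{k(X)})$ is surjective and lifts isomorphisms by \cite[Lm.~2.1]{VishYag}. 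So choose any lift $\phi_0\colon M_X\rarr N_X$ of $\varphi$ that is an isomorphism.

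Next I would \emph{correct} $\phi_0$ by acting with $\pi$ on the left to kill the ambiguity: set $\phi := \pi\circ\phi_0$. By Lemma~\ref{lm:upper_motive} (applied with $Y$ running over the smooth projective varieties whose motives appear as summands of $M$, i.e.\ taking $\alpha$ to be the components of $\phi_0$ viewed as an element of the appropriate Chow-type group over $X\times X$ — here one uses that a morphism of $A$-motives over $X$ is built from elements of $A(Y\times_X Z)$), the restriction of $\phi$ to $k(X)$ still equals $\varphi$, since $\pi$ being lower means $\pi\circ\alpha$ and $\alpha$ have the same restriction to the generic fibre. Hence $\phi$ is still a lift of the isomorphism $\varphi$, and by the nilpotence of the kernel of $\eta^*$ on endomorphisms, $\phi$ is automatically an isomorphism (a lift of an iso along a surjection with nil kernel is an iso). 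Finally, $\pi$ acts trivially on $\phi$: indeed $\pi\circ\phi = \pi\circ(\pi\circ\phi_0) = (\pi\circ\pi)\circ\phi_0 = \pi\circ\phi_0 = \phi$, where the middle equality is the associativity statement of Lemma~\ref{lm:correspondence_action_on_motives} (with $\alpha$ the components of $\phi_0$ over $k$, or directly the left-action compatibility) together with $\pi$ being idempotent. This gives the desired lift.

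The main subtlety I anticipate is bookkeeping rather than a genuine obstacle: one must phrase "$\pi$ acts on $\Hom(M_X,N_X)$" correctly when $M,N$ are arbitrary motives (not motives of varieties), decomposing them into summands of $\Mot{A}{Y}_X$ and $\Mot{A}{Z}_X$ and checking that Lemma~\ref{lm:upper_motive} and Lemma~\ref{lm:correspondence_action_on_motives} apply componentwise; this is routine given that those lemmas are stated for the generating case $\At{Y\times X\times X}$. A second small point is to make sure the lift $\phi_0$ can be chosen to be an isomorphism before correcting — this is exactly \cite[Lm.~2.1]{VishYag} invoked inside Proposition~\ref{prop:geometric_RNP}, so no extra work is needed. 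I would write the argument in the order: (1) invoke geometric RNP to get an iso lift $\phi_0$; (2) replace $\phi_0$ by $\phi=\pi\circ\phi_0$ and check via Lemma~\ref{lm:upper_motive} that it still restricts to $\varphi$; (3) conclude $\phi$ is an iso by nilpotence; (4) verify $\pi\circ\phi=\phi$ using idempotency of $\pi$ and Lemma~\ref{lm:correspondence_action_on_motives}.
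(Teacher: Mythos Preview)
Your proposal is correct and follows essentially the same route as the paper: lift $\varphi$ to some $\phi_0$ via geometric RNP (Proposition~\ref{prop:geometric_RNP}), replace it by $\pi\circ\phi_0$, and use Lemma~\ref{lm:upper_motive} to see that this still restricts to $\varphi$, hence remains an isomorphism. The paper's proof is just a compressed version of this, noting that \emph{any} lift of $\varphi$ is already an isomorphism (so no separate appeal to \cite[Lm.~2.1]{VishYag} is needed before correcting by $\pi$). One minor remark: your step~(4) invokes Lemma~\ref{lm:correspondence_action_on_motives} for the equality $\pi\circ(\pi\circ\phi_0)=(\pi\circ\pi)\circ\phi_0$, but that lemma is about compatibility with morphisms pulled back from $k$; the identity you need is simply that the left $\At{X\times X}$-action on $\Hom(M_X,N_X)$ is a module action (hence associative), which is immediate once the action is well-defined.
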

\begin{proof}
    It follows from Proposition~\ref{prop:geometric_RNP} that any lift of $\varphi$
    to a morphism $M_X\xrarr{\phi} N_X$
    is an isomorphism.
    If we choose one such lift $\phi$,
    then by Lemma~\ref{lm:upper_motive} the restriction of $\pi\circ \phi$
    to $k(X)$ is still $\varphi$, and thus an isomorphism
    that we seek. 
\end{proof}

Note that $\Delta_X$ is always an outer projector of $X$.
Another example that we will use is the following.

\begin{Ex}\label{ex:outer_projector_quadric}
Let $Q$ be an anisotropic quadric of dimension $2^n-1$, $\Af=\Knf$.

Then there is a unique (up to isomorphism) indecomposable direct summand $\MKn{Q,\, \pi}$ of $\Mker Q$   
such that $\pi_{\,\overline{k}}$ contains $(1+l_0)\times (1+l_0)$.
This projector is neither upper nor lower. 

To see this, note that $\Kn^{2^n-1}(Q\times k(Q))$ is generated by $1, l_0$,
where $l_0$ equals the class of any rational point, in particular,
the class of the diagonal point.
The restriction of $\pi$ to this group can be computed on the level of rational elements,
and thus equals $1+l_0$. The restriction of $\pi^t$ has the same form.

The sum $1\times 1 + \pi$ is an outer projector (in fact, the $\Kn$-specialization of the indecomposable upper Chow projector), although $1\times 1$ 
is also neither upper nor lower projector.
In particular, an indecomposable upper or lower $\Kn$-projector 
does not always exist even for projective homogeneous varieties.
\end{Ex}

\subsection{Isomorphisms between Morava motives over $k(\alpha)$}
\label{sec:iso-motives-kalpha}

In this section we study the behavior of isomorphisms between motives over function field of quadrics.
In the course of proof there appears a subtle issue that one needs to compare two pullback maps
$A(Y_{k(X)}) \rarr  A(Y_{k(X\times X)})$ defined by two canonical inclusions $k(X)\hookrightarrow k(X\times X)$.
The next Proposition answers it in most cases of interest, 
note that it is also used in the next Section in the proof of Theorem~\ref{th:splitting_off_Tate}.

\begin{Prop}
\label{prop:pullbacks_equal_twisted_diagonal}
    Let $A$ be a coherent oriented theory. 
    Let $X\in \SmProj_k$ be geometrically irreducible,
    let $\sigma$ denote the transposition isomorphism of $k(X\times X)$.
    
    Assume that 
    the class of the diagonal point $\delta_{X\times X}$
    and the class 
    of the twisted diagonal point $\sigma\circ \delta_{X\times X}$ are the same
    in $\At{X\times X \times k(X\times X)}$.
    
    Then for every $Y\in \Sm_k$ the pullback maps $p_i^*\colon\At{Y_{k(X)}}\rarr \At{Y_{k(X\times X)}}$  
    along two canonical field inclusions $k(X)\hookrightarrow  k(X\times X)$, $i=1,2$,
    are equal.
\end{Prop}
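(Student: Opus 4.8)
The statement compares two pullback maps $p_1^*, p_2^* \colon \At{Y_{k(X)}} \to \At{Y_{k(X\times X)}}$, where the two field inclusions $k(X) \hookrightarrow k(X\times X)$ come from the two projections $X\times X \to X$. The plan is to realize both pullbacks as specializations of correspondences on $X\times X$ and then use the hypothesis that the diagonal and twisted-diagonal points agree. First I would reduce to the case $Y\in\SmProj_k$ by the localization axiom (cf.~Section~\ref{sec:LOC}): any $Y\in\Sm_k$ embeds as an open subscheme of a smooth projective $\overline{Y}$, and pullback maps commute with the surjection $\At{\overline{Y}_{-}} \twoheadrightarrow \At{Y_{-}}$, so it suffices to treat smooth projective $Y$.

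\textbf{Key steps.} Fix $\widetilde{x} \in \At{Y_{k(X)}}$. By coherence of $\A$, there exists $x \in \At{Y\times X}$ whose restriction to the generic point of the second factor is $\widetilde{x}$. Now consider $x$ as living on $Y\times X$ and form the two elements $p_{12}^*(x) \cdot [\text{something on } X\times X]$ — more precisely, the point is that $p_1^*(\widetilde{x})$ is the restriction of $x_{X\times X} := (\mathrm{id}_Y\times\mathrm{pr}_1)^*(x) \in \At{Y\times X\times X}$ (pullback along the first projection $X\times X\to X$) to the generic point of $X\times X$, while $p_2^*(\widetilde{x})$ is the restriction of $(\mathrm{id}_Y\times\mathrm{pr}_2)^*(x)$. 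The difference between these two is governed precisely by composing $x$ with the diagonal versus the twisted diagonal: using \eqref{eq:ext_bc} (extended transversal base change) applied to the projection $Y\times X\times X \to X\times X$ and the inclusion of the generic point $\Spec k(X\times X)\hookrightarrow X\times X$, I would show that $p_i^*(\widetilde x)$ equals $(\pi)_*$ applied to $x$ restricted appropriately, where $\pi$ is the class of $\delta_{X\times X}$ for $i=1$ and of $\sigma\circ\delta_{X\times X}$ for $i=2$ (or vice versa, after carefully tracking which projection names which factor). Concretely, the element $x$ on $Y\times X$ can be pushed forward to a correspondence-type element on $Y\times X\times X$ supported on the diagonal of $X\times X$; restricting to $k(X\times X)$ and using the two inclusions corresponds to composing with $\delta_{X\times X}$ resp.\ $\sigma\circ\delta_{X\times X}$. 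By hypothesis these two classes in $\At{X\times X\times k(X\times X)}$ coincide, so the two resulting elements of $\At{Y_{k(X\times X)}}$ are equal.

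\textbf{Main obstacle.} The delicate part is the bookkeeping of which maps and which factors are involved: there are two projections $X\times X\to X$, two inclusions $k(X)\hookrightarrow k(X\times X)$, and the transposition $\sigma$ on $X\times X$, and one must set up a single commutative diagram — a product $Y\times X\times X$ with the diagonal of the last two factors, base-changed to the generic point — in which both $p_1^*(\widetilde x)$ and $p_2^*(\widetilde x)$ appear as restrictions of one and the same element of $\At{Y\times X\times X}$ along two different generic-point inclusions, differing exactly by $\sigma$. Once the diagram is correct, the argument is a formal application of \eqref{eq:bc} and \eqref{eq:ext_bc} together with the hypothesis; the geometric-irreducibility assumption on $X$ is used to ensure $X\times X$ is irreducible so that $k(X\times X)$ makes sense and the generic-point restrictions behave well. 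I expect no hard computation beyond verifying the transversality of the relevant squares (which holds since everything in sight is smooth over $k$) and checking that the push-forward along the diagonal embedding $X\hookrightarrow X\times X$ followed by the relevant projections reproduces the two pullback maps.
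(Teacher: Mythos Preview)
Your approach is essentially the same as the paper's, and correct in spirit: both pullbacks are restrictions to the generic point of $X\times X$ of elements on $Y\times X\times X$ that differ by the transposition, and the hypothesis on the twisted diagonal point collapses the difference. The paper packages this more cleanly than your sketch does: rather than pushing $x$ forward along the diagonal (which, as written, would actually restrict to zero at the generic point of $X\times X$, since the diagonal misses that point), the paper observes that $p_2^*=[\Gamma_\sigma]\circ p_1^*$ on $\At{Y\times X\times X}$, where $[\Gamma_\sigma]\in\At{(X\times X)^{\times 2}}$ is the graph of the transposition viewed as a correspondence. The restriction of $[\Gamma_\sigma]$ to $\At{(X\times X)_{k(X\times X)}}$ is exactly the twisted diagonal point, so the hypothesis says it equals $\delta_{X\times X}$; then Lemma~\ref{lm:upper_motive} (applied with $X\times X$ in place of $X$, and whose proof only uses that $\pi$ restricts to the diagonal point, not that it is a projector) gives $([\Gamma_\sigma]\circ\alpha)|_{Y_{k(X\times X)}}=\alpha|_{Y_{k(X\times X)}}$ immediately.

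So your ``main obstacle'' is correctly identified, and the resolution is exactly the one-line application of Lemma~\ref{lm:upper_motive} you are circling around; but your concrete mechanics (``push forward to a correspondence-type element supported on the diagonal, then restrict'') are muddled and should be replaced by the graph-of-transposition formulation. Your reduction to projective $Y$ via localization is harmless but unnecessary --- the paper's argument, and Lemma~\ref{lm:upper_motive}, work for arbitrary $Y\in\Sm_k$.
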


\begin{proof}
    The pullback maps $p^*_1, p^*_2\colon\At{Y\times X}\rarr \At{Y\times X\times X}$
    are related by the action of the class of the graph of the transposition $[\Gamma_\sigma \rarr X^{\times 4}]\in \At{X^{\times 4}}$. 
    The pullback maps in which we are interested are induced by these maps,
    and thus if the class $[\Gamma_\sigma]$ restricts to the class of the diagonal point in   $\At{X\times X \times k(X\times X)}$,
    then we can apply Lemma~\ref{lm:upper_motive} and get the claim.
    However, this restriction is precisely the class of the twisted diagonal point as in the assumption of the proposition.
\end{proof}

\begin{Rk}
\label{rk:p1_equal_p2_num}
    Since the above proof uses only general properties of oriented theories, 
    it can also be applied in the case of the theory with relations $\A_\Gamma$. However, if $\A_\Gamma$ is not coherent, 
    one has to be careful that all the pullback maps that are used are defined (these include not only $p_1^*, p_2^*$
    appearing in the statement, but also e.g.\ $\A(Y\times X) \rarr \A(Y\times k(X))$ needed for Lemma~\ref{lm:upper_motive}).

    For example, if $\A$ is a free theory, its coefficient ring $\Apt$ is a domain, and $[X]_A$ is invertible in it, 
    then for the numerical version of $\A$ all these pullback maps are defined by Corollary~\ref{cr:num_generic_restriction},
    and therefore $p_1^*=p_2^*$ as the morphisms $\Afnum\left(Y_{k(X)}\right) \rarr \Afnum\left(Y_{ k(X\times X)}\right)$.    
\end{Rk}

\begin{Cr}\label{cr:pullbacks_k(Q_times_Q)}
Let $A$ be a coherent oriented theory, $Q$ be a quadric over $k$ of dimension greater than 0, $Y\in \Sm_k$.
 
Then pullback maps $p_i^*\colon A(Y_{k(Q)}) \rarr A(Y_{ k(Q\times Q)}) $, $i=1,2$,
along two canonical projections are equal.
\end{Cr}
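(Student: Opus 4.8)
The strategy is to deduce Corollary~\ref{cr:pullbacks_k(Q_times_Q)} directly from Proposition~\ref{prop:pullbacks_equal_twisted_diagonal}, so the only thing that needs to be checked is the hypothesis: that the class of the diagonal point $\delta_{Q\times Q}$ and the class of the twisted diagonal point $\sigma\circ\delta_{Q\times Q}$ coincide in $\At{Q\times Q\times k(Q\times Q)}$. A quadric is geometrically irreducible once $\dim Q>0$ (it is even geometrically integral), so that part of the hypothesis is automatic, and it remains only to compare the two $0$-cycle classes on $(Q\times Q)_{k(Q\times Q)}$.

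\textbf{Comparing the two points.} Both $\delta_{Q\times Q}$ and $\sigma\circ\delta_{Q\times Q}$ are classes of \emph{closed points} on $(Q\times Q)_{k(Q\times Q)}$, hence lie in $\At[\dim(Q\times Q)]{(Q\times Q)_{k(Q\times Q)}}$. Since $\dim Q>0$, the quadric $Q_{k(Q\times Q)}$ contains a rational point over $k(Q\times Q)$ — indeed $Q$ already acquires a rational point over $k(Q)\subseteq k(Q\times Q)$ via the generic point of either factor — so $Q_{k(Q\times Q)}$ is isotropic. For an isotropic quadric all rational points have the same class (namely $l_0$, the class of a point on a linear subspace, see Section~\ref{sec:prelim_A_split_quadric} and the proof of Proposition~\ref{prop:l_0_rational_isotropic}); more precisely, $\CH_0$ of an isotropic quadric is $\ZZ\cdot l_0$, and since the class of a $0$-cycle in any oriented theory $\A$ factors through $\CH_0$ (by~\cite[Th.~4.5.1]{LevMor}, as used in the proof of Lemma~\ref{lm:restriction_closed_points}), all rational points of $Q_{k(Q\times Q)}$ have equal class in $\At{Q_{k(Q\times Q)}}$. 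Therefore, writing $\delta_Q^{(1)}$ (resp.\ $\delta_Q^{(2)}$) for the image of the generic point of the first (resp.\ second) factor in $Q_{k(Q\times Q)}$, we have $[\delta_Q^{(1)}]=[\delta_Q^{(2)}]$ in $\At{Q_{k(Q\times Q)}}$, hence by the external product structure $[\delta_Q^{(1)}\times\delta_Q^{(2)}]=[\delta_Q^{(2)}\times\delta_Q^{(1)}]$ in $\At{(Q\times Q)_{k(Q\times Q)}}$. But $\delta_{Q\times Q}$ is precisely the point $\delta_Q^{(1)}\times\delta_Q^{(2)}$ and $\sigma\circ\delta_{Q\times Q}$ is $\delta_Q^{(2)}\times\delta_Q^{(1)}$, so the hypothesis of Proposition~\ref{prop:pullbacks_equal_twisted_diagonal} holds.

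\textbf{The main obstacle.} The delicate point is making the identification $[\delta_Q^{(1)}]=[\delta_Q^{(2)}]$ rigorous: one must argue that both are classes of $k(Q\times Q)$-rational points on the \emph{isotropic} quadric $Q_{k(Q\times Q)}$ and invoke that $0$-cycle classes on an isotropic quadric are controlled by $\CH_0$, which is free of rank one generated by a line-class. The cleanest way is to note that the class $\delta_Q$, by definition the class of the diagonal generic point $\Spec k(Q)\hookrightarrow Q_{k(Q)}$, restricts along either inclusion $k(Q)\hookrightarrow k(Q\times Q)$ to the class of a rational point on $Q_{k(Q\times Q)}$; since any two such classes agree and the transposition $\sigma$ simply interchanges these two restrictions, the two pullback maps $p_1^*,p_2^*$ — which differ by the action of $[\Gamma_\sigma]$, and $[\Gamma_\sigma]$ restricts to the class of the twisted diagonal point — become equal by Lemma~\ref{lm:upper_motive}. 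Once the hypothesis is verified, the conclusion is immediate from the Proposition. I would then also remark (cf.\ Remark~\ref{rk:p1_equal_p2_num}) that the same argument applies to numerical Morava K-theory when $[Q]_A$ is invertible, though that is not needed here.

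\begin{proof}
Since $\dim Q>0$, the quadric $Q$ is geometrically integral, hence so is $Q\times Q$.
By Proposition~\ref{prop:pullbacks_equal_twisted_diagonal}, it suffices to show that the class of the diagonal point $\delta_{Q\times Q}$ and the class of the twisted diagonal point $\sigma\circ\delta_{Q\times Q}$ coincide in $\At{Q\times Q\times k(Q\times Q)}$.
Both are classes of closed points on $(Q\times Q)_{k(Q\times Q)}$.
Over $k(Q\times Q)$ the quadric $Q$ becomes isotropic, since it acquires a rational point already over $k(Q)$ via the generic point of either factor.
For an isotropic quadric the group $\CH_0$ is generated by the class $l_0$ of a point lying on a linear subspace, so all $k(Q\times Q)$-rational points of $Q_{k(Q\times Q)}$ have the same class in $\CH_0$; as the class of any $0$-cycle in the oriented theory $\A$ factors through $\CH_0$ by~\cite[Th.~4.5.1]{LevMor}, all $k(Q\times Q)$-rational points of $Q_{k(Q\times Q)}$ have equal class in $\At{Q_{k(Q\times Q)}}$.
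Let $\delta_Q^{(1)}$ (resp.\ $\delta_Q^{(2)}$) denote the $k(Q\times Q)$-point of $Q$ given by the generic point of the first (resp.\ second) factor; then $[\delta_Q^{(1)}]=[\delta_Q^{(2)}]$ in $\At{Q_{k(Q\times Q)}}$, and by the external product structure
$$[\delta_Q^{(1)}\times\delta_Q^{(2)}]=[\delta_Q^{(2)}\times\delta_Q^{(1)}]$$
in $\At{(Q\times Q)_{k(Q\times Q)}}$.
But $\delta_{Q\times Q}=\delta_Q^{(1)}\times\delta_Q^{(2)}$ and $\sigma\circ\delta_{Q\times Q}=\delta_Q^{(2)}\times\delta_Q^{(1)}$, so the hypothesis of Proposition~\ref{prop:pullbacks_equal_twisted_diagonal} is satisfied, and the claim follows.
\end{proof}
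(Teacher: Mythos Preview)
Your proof is correct and follows the same approach as the paper: you verify the hypothesis of Proposition~\ref{prop:pullbacks_equal_twisted_diagonal} by showing that the diagonal and twisted diagonal points have the same class. The paper's own proof is a one-liner observing that \emph{any} two rational points on $(Q\times Q)_{k(Q\times Q)}$ have the same class in $A$; you unpack this by working factor-by-factor on $Q_{k(Q\times Q)}$ and then using the external product, which is a valid and slightly more explicit route to the same conclusion.
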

\begin{proof}
    To apply Proposition above it suffices to note 
    that the classes of any two rational points on $Q\times Q\times k(Q\times Q)$
    are the same in $A$.
\end{proof}
\begin{Rk}
If one is interested only in motives of projective homogeneous varieties, 
then it suffices to work with $\overline{A}$ instead of $A$,
and the argument above can be simplified.

Indeed, if $Y$ is a geometrically cellular variety,
then both $A(Y\times \overline{k(Q)})$ and $A(Y\times \overline{k(Q\times Q)})$
can be identified canonically with $A(Y\times_k \overline{k})$.
Then the pullback maps $p_1^*, p_2^*$ from 
$A(Y\times \overline{k(Q)})$ to $A(Y\times \overline{k(Q\times Q)})$
are identities after this identification,
and therefore also the maps $p_1^*$, $p_2^*$ 
on $\overline{A}(Y\times k(Q))\subset A(Y\times \overline{k(Q)})$ are equal.
\end{Rk}

Let $Q$ be a quadric over $k$ and let $N,M \in \CM_{\Kn}(k)$ be motives.
Let $p_1, p_2$ denote the canonical projections from $Q_{k(Q)}$ to $Q$, resp. $\Spec k(Q)$,
and let $\pra 1, \pra 2$ denote the canonical projections from $\Spec k(Q\times Q)$ to $\Spec k(Q)$.
Consider the following 
diagram of categories.
\begin{center}
\begin{tikzcd}
    \CM_{\Kn}(Q_{k(Q)}) \arrow[r] & \CM_{\Kn}(k(Q\times Q)) \\
	\CM_{\Kn}(Q) \arrow[u, "p_1^*"] \arrow[r] & \CM_{\Kn}(k(Q)) \arrow[u, "\pra 1^*"]
\end{tikzcd}
\end{center}

Let $\alpha\colon M_{k(Q)}\rarr N_{k(Q)}$ be a morphism in $\CM_{\Kn}(k(Q))$,
let $\phi\colon M_Q\rarr N_Q$ be any lift of this morphism to $\CM_{\Kn}(Q)$. 
Then we can consider two morphisms, $p_1^*(\phi)$ and $p_2^*(\alpha)$, between $M_{Q_{k(Q)}}$ and $N_{Q_{k(Q)}}$ in $\CM_{\Kn}(Q_{k(Q)})$.
If we restrict these morphisms to the generic point of $Q_{k(Q)}$,
then we get $\pra 1^*(\alpha)$ and $\pra 2^*(\alpha)$, respectively.
However, by Corollary~\ref{cr:pullbacks_k(Q_times_Q)} these morphisms are equal.
Let $\alpha \times 1$
denote $p_2^*(\alpha)$ and $\overline{\phi}$ denote $p_1^*(\phi)$,
and thus $\overline{\phi} - \alpha\times 1$ vanishes in the generic point of $Q_{k(Q)}$.

Let $Q$ be now a quadric of dimension $2^n-1$ that has upper binary motive,
and thus there exist an outer projector $p\in \Knt{Q\times Q}$
such that $p_{k(Q)}$ is $1\times l_0 + l_0 \times 1 + l_0\times l_0$.
Assume that $\alpha\colon M_{k(Q)}\rightleftarrows N_{k(Q)}\colon\!\beta$ are inverse isomorphisms in $\CM_{\Kn}(k(Q))$.
By Corollary~\ref{cr:lifting_isos_upper_projector} we can lift them to isomorphisms $\phi\colon M_Q \rightleftarrows N_Q\colon\!\psi$ in $\CM_{\Kn}(Q)$
such that $p$ acts trivially on $\phi$. 

The action of the projector $p_{k(Q)}$ on $\Hom(M_{Q_{k(Q)}},\,N_{Q_{k(Q)}})$
can be computed: it projects onto the subgroup $\Hom(M_{k(Q)},\,N_{k(Q)})\times\langle 1,l_0\rangle$.
In view of the discussion above, we can write $\overline{\phi}$ as $\alpha\times 1 + u\times l_0$
where $u\in \Hom(M_{k(Q)},\,N_{k(Q)})$. 
It turns out that $\overline{\psi}$ also has such form.

\begin{Lm}\label{lm:lift_of_isos_from_k(Q)_to_Q}
In the notation above 
$\overline{\psi}$ has the form $\beta\times 1+v\times l_0$
where $\beta, v\in \Hom(N_{k(Q)},\,M_{k(Q)})$
and $\beta$ is the inverse to $\alpha$.
\end{Lm}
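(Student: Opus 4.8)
The statement we must establish is that when we lift an isomorphism $\alpha\colon M_{k(Q)}\rightleftarrows N_{k(Q)}\colon\!\beta$ of $\Kn$-motives to an isomorphism $\phi\colon M_Q\rightleftarrows N_Q\colon\!\psi$ over the base $Q$ (using Corollary~\ref{cr:lifting_isos_upper_projector} applied with the lower projector $p$, so that $p$ acts trivially on $\phi$), then the pullback $\overline{\psi}:=p_1^*(\psi)$, which a priori could involve all of $\Kn^{*}(Q_{k(Q)}\times Q_{k(Q)})$, in fact lies in the subgroup $\Hom(N_{k(Q)},M_{k(Q)})\times\langle 1,l_0\rangle$ with constant term $\beta$. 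The plan is to exploit that $p$ is an outer projector and that $\phi,\psi$ are mutually inverse, so that $p$ ``sees'' enough of $\psi$.

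\textbf{Key steps.} First I would recall, as in the paragraph preceding the lemma, that $p_{k(Q)}=1\times l_0+l_0\times 1+l_0\times l_0$ and that left/right multiplication by $p_{k(Q)}$ on $\Hom(N_{Q_{k(Q)}},M_{Q_{k(Q)}})$ projects onto the subgroup $\Hom(N_{k(Q)},M_{k(Q)})\times\langle 1,l_0\rangle$; thus it suffices to show $\overline{\psi}=p_{k(Q)}\circ\overline{\psi}$ (or $\overline{\psi}\circ p_{k(Q)}$, whichever side is appropriate) and then identify the constant term. Since $p$ is lower, Lemma~\ref{lm:upper_motive} applied over the base $Q_{k(Q)}$ gives that $p_{k(Q)}\circ\overline{\psi}$ and $\overline{\psi}$ have the same restriction to the generic point of $Q_{k(Q)}$, namely $\pra{1}^*(\beta)=\pra{2}^*(\beta)=\beta\times 1$ by Corollary~\ref{cr:pullbacks_k(Q_times_Q)}. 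So $\overline{\psi}$ already has constant term $\beta$; writing $\overline{\psi}=\beta\times 1+(\text{elements vanishing at the generic point})$, I must show the remaining part lies in $\Hom\times\langle l_0\rangle$, i.e. is killed by $\Delta-p$ on the relevant side. For this I would use that $p$ acts trivially on $\phi$, i.e. $p_1^*(p)\circ\overline{\phi}=\overline{\phi}$ (and $\overline{\phi}\circ p_1^*(p)=\overline{\phi}$), together with $\overline{\phi}\circ\overline{\psi}=\mathrm{id}=p_1^*(\Delta_N)$ and $\overline{\psi}\circ\overline{\phi}=p_1^*(\Delta_M)$: since the maps $p_1^*$ are ring homomorphisms on endomorphism rings, these identities hold genuinely over $Q_{k(Q)}$. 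Combining, $\overline{\psi}=\overline{\psi}\circ\overline{\phi}\circ\overline{\psi}=\overline{\psi}\circ(p_1^*(p)\circ\overline{\phi})\circ\overline{\psi}$, but the action of $p$ on morphisms is \emph{not} compatible with composition (as warned in the text), so this must be manipulated with the explicit formula for $p$ rather than formally. The cleanest route: write $\overline{\phi}=\alpha\times 1+u\times l_0$ as given, compute $\overline{\phi}\circ\overline{\psi}$ and $\overline{\psi}\circ\overline{\phi}$ directly in coordinates over $\overline{k}$ using $l_0\cdot l_0$-type relations in $\Kn(\overline Q)$ (here $\dim Q=2^n-1$ odd, so $l_0^2=0$ in the standard basis, $h\cdot l_0=0$, etc.), and read off that the ``$h$-component'' and ``$l_i$-component for $i>0$'' of $\overline{\psi}$ must vanish for the products to equal the identity correspondences $1\times 1$. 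This forces $\overline{\psi}=\beta\times 1+v\times l_0$ with $\beta\circ\alpha+\text{(correction)}=\mathrm{id}$; a short computation of the $1\times 1$-coefficient of $\overline{\psi}\circ\overline{\phi}$ then gives $\beta\circ\alpha=\mathrm{id}_M$, and symmetrically $\alpha\circ\beta=\mathrm{id}_N$, so $\beta=\alpha^{-1}$.

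\textbf{Main obstacle.} The delicate point is the non-compatibility of the $\pi$-action with composition: one cannot simply say ``$p$ acts trivially on $\phi$, hence on $\psi$.'' The argument must instead pass through the \emph{genuine} (base-$Q_{k(Q)}$) identities $\overline{\phi}\circ\overline{\psi}=\mathrm{id}$, $\overline{\psi}\circ\overline{\phi}=\mathrm{id}$ and the explicit coordinate description of morphisms between $M_{Q_{k(Q)}}$ and $N_{Q_{k(Q)}}$ coming from $\Kn^{2^n-1}(Q\times k(Q))=\langle 1,l_0\rangle$. So the real content is: solve $\overline{\phi}\circ X=\mathrm{id}$ for $X$ in the ring $\mathrm{End}$-valued $2\times 2$-ish matrix determined by the basis $\{1,l_0\}$ of $\Kn^*(\overline Q)$ in the relevant degrees, knowing $\overline{\phi}$ has the triangular form $\alpha\times1+u\times l_0$ with $\alpha$ invertible. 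Because $l_0$ is nilpotent (it pairs trivially with itself here), this linear algebra is upper-triangular and has the unique solution $X=\beta\times1+v\times l_0$ with $\beta=\alpha^{-1}$ and $v=-\alpha^{-1}u\alpha^{-1}$ (up to the precise convention for composition of correspondences); I would carry this out carefully in the write-up, as it is the crux. The rest — invoking Lemma~\ref{lm:upper_motive}, Corollary~\ref{cr:pullbacks_k(Q_times_Q)}, Corollary~\ref{cr:lifting_isos_upper_projector} — is bookkeeping.
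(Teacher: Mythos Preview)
Your core idea in the ``Main obstacle'' paragraph is correct and is a genuine alternative to the paper's argument: since $(a\times b)\circ(c\times d)=(a\circ c)\times(bd)$ for the composition over the base $Q_{k(Q)}$, and since $l_0^2=0$ in $\Kn(Q_{k(Q)})$, the element $X=\beta\times 1+(\beta u\beta)\times l_0$ satisfies $X\circ\overline{\phi}=\mathrm{id}$; uniqueness of the two-sided inverse then forces $\overline{\psi}=X$. The paper instead uses the isotropic splitting $q_{k(Q)}\cong\hyp\perp q'$ to decompose $\Hom(N_{Q_{k(Q)}},M_{Q_{k(Q)}})\cong\Hom(N_{k(Q)},M_{k(Q)})\langle 1,l_0\rangle\oplus\Kn(Z\times Y\times Q')$, writes $\overline{\psi}=(\beta\times 1+v\times l_0)+x$ with $x$ in the $Q'$-summand, shows that $x\circ\overline{\phi}$ again lands in the $Q'$-summand (since $l_0$ pulls back to zero on $Q'$), and then uses that $\mathrm{id}$ has no $Q'$-component to get $x\circ\overline{\phi}=0$; uniqueness of inverse finishes as in your argument. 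Your route is shorter because it never mentions $Q'$.

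That said, your write-up needs repair. The full group $\Hom(N_{Q_{k(Q)}},M_{Q_{k(Q)}})$ is \emph{not} $\Hom(N_{k(Q)},M_{k(Q)})\times\langle 1,l_0\rangle$; there is precisely the $Q'$-summand the paper works with, and you cannot ``compute in coordinates over $\overline{k}$'' or read off ``$h$- and $l_i$-components'' of $\overline{\psi}$ since $Q_{k(Q)}$ is not split and no assumption on $M,N$ lets you pass to $\overline{k}$. Fortunately your argument never actually uses this: it only needs that $\langle 1,l_0\rangle$ is a \emph{subring} of $\Kn(Q_{k(Q)})$ (so the explicit $X$ you construct lives there and its composition with $\overline{\phi}$ is computable there), together with uniqueness of inverses in the ambient ring. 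Also, $l_0^2=0$ holds because the normal bundle of a closed point is trivial (equivalently, move the two points apart, cf.\ Lemma~\ref{lm:restriction_closed_points}), not because $\dim Q$ is odd.
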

\begin{proof}
By construction we have that 
    $$\overline{\psi} = \beta\times 1 +v\times l_0 + x,$$
with  $\beta, v\in  \Hom(N_{k(Q)},\, M_{k(Q)})$, $\beta$ inverse to $\alpha$ by construction
and $p_{k(Q)}$ acts by $0$ on $x$.

Note that $q_{k(Q)}\cong \hyp\perp q'$, 
and for any smooth projective variety $Y$ over $k(Q)$
we have the splitting
$\Knt{Y\times Q}\cong \Knt Y\langle 1,l_0\rangle \oplus \Knt{Y\times Q'}$
where the first group on the right is obtained by acting with the outer projector $p_{k(Q)}$.
Thus, abusing notation we say that $x$ lies in $\Knt{N\times M\times Q'}$.

We claim that the composition $x \circ \overline{\phi}$
lies in $\Knt{M\times M \times Q'}$. 
Let $M$ be a direct summand of $\MKn Y$, let $N$ be a direct summand of $\MKn Z$
for some $Y,Z\in \SmProj_k$.
Then the composition $x\circ \overline{\phi}$ is computed
by taking the push-forward to $\Knt{Y\times Y\times Q_{k(Q)}}$
of the element $\pra{12}^*(\overline{\phi}) \cdot \pra{23}^*(x)$ in $\Knt{Y\times Z\times Y\times Q_{k(Q)}}$.
However, the product of $\pra{12}^*(u)\times l_0$ with $\pra{23}^*(x)$ is zero,
since  $\pra{23}^*(x)$ restricted to the generic point of $Q_{k(Q)}$ vanishes.
And the product of $\pra{12}^*(\alpha)\times 1$ with $\pra{23}^*(x)$ lies in $\Knt{Y\times Z\times Y\times Q'}$ 
and hence its push-forward lies in $\Knt{Y\times Y\times Q'}$.

On the other hand, since $\overline{\psi}\circ \overline{\phi}$ is the identity,
it lies in $\Hom(M_{k(Q)},\, M_{k(Q)})\cdot 1_{Q_{k(Q)}}$
and has no component in $\Knt{M\times M\times Q'}$,
and hence $x\circ \overline{\phi}$ is zero.
However, this means that
 $(\beta\times 1 +v\times l_0)\circ \overline{\phi}$ is the identity,
 and due to the uniqueness of the inverse 
 we get that $(\beta\times 1 +v\times l_0)$ is the inverse to $\overline{\phi}$
 and $x=0$.
\end{proof}

\begin{Th}\label{th:iso_over_k(alpha)}
Let $Q$ be an anisotropic quadric of dimension $2^n-1$ with an upper binary Chow motive $R$.
Let $L_Q$ be a non-trivial invertible summand of $R_{\,\Kn}$. 
Let $M, N$ be two indecomposable $\Kn$-motives over $k$
satisfying the following property: 
\begin{equation}
\label{eq:cond-on-motives}
\text{if an endomorphism of the motive is not an isomorphism, then it is nilpotent.}
\end{equation}

If $M_{k(Q)}\cong N_{k(Q)}$,
then either $M\cong N$ or $M\otimes L_Q \cong N$.
\end{Th}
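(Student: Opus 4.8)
The plan is to realise the hypothetical isomorphism $M_{k(Q)}\cong N_{k(Q)}$ by a morphism living over the base $Q$, to push it down to $k$ through the Rost summand $R_{\Kn}=\un\oplus L_Q$ of $\MKn Q$, and then to pit the two resulting pieces against one another using the nilpotence hypothesis \eqref{eq:cond-on-motives}. (As elsewhere in this section I work with $v_n=1$.)

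First I would fix mutually inverse isomorphisms $\alpha\colon M_{k(Q)}\rightleftarrows N_{k(Q)}\colon\beta$ and, exactly as in the set-up preceding Lemma~\ref{lm:lift_of_isos_from_k(Q)_to_Q}, lift them via geometric Rost nilpotence (Proposition~\ref{prop:geometric_RNP}) and Corollary~\ref{cr:lifting_isos_upper_projector} to mutually inverse isomorphisms $\phi\colon M_Q\rightleftarrows N_Q\colon\psi$ in $\CM_{\Kn}(Q)$ on which the outer projector $p$, the one with $\MKn{Q,\,p}=R_{\Kn}$ (the $\Kn$-specialisation of the upper binary Chow projector of $R$; recall $R_{\Kn}=\un\oplus L_Q$ by Proposition~\ref{prop:prelim_L_alpha_Rost}), acts trivially --- which, for $\psi$, is exactly the content of Lemma~\ref{lm:lift_of_isos_from_k(Q)_to_Q}. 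Then I would descend to $k$: since $Q\to\Spec k$ is smooth projective, base change $\CM_{\Kn}(k)\to\CM_{\Kn}(Q)$ has a push-forward right adjoint, giving $\mathrm{Hom}_{\CM_{\Kn}(Q)}(M_Q,N_Q)\cong\mathrm{Hom}_{\CM_{\Kn}(k)}(M,N\otimes\MKn Q)$, and the triviality of the $p$-action means the descents $\widetilde\phi,\widetilde\psi$ factor through $N\otimes R_{\Kn}$, resp.\ $M\otimes R_{\Kn}$. Decompose $\widetilde\phi=f+g$ and $\widetilde\psi=f'+g'$ with $f\colon M\to N$, $g\colon M\otimes L_Q\to N$, $f'\colon N\to M$, $g'\colon N\otimes L_Q\to M$.

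The crucial ingredient is the ring structure on $\MKn Q$. Because $\dim Q=2^n-1$ makes $Q$ a $\nu_n$-variety ($[Q]_{\Kn}=v_n$, so $\un$ is a retract of $\MKn Q$), $\MKn Q$ is a unital commutative algebra object in $\CM_{\Kn}(k)$; over $\overline k$ the summand $R_{\Kn}$ is the subring $\langle 1,l_0\rangle\subseteq\Kn(\overline Q)$, in which $l_0^2=0$, so the complement $L_Q$ of the unit is generated by $1+l_0$ and the multiplication restricts to an isomorphism $\theta\colon L_Q\otimes L_Q\xrightarrow{\sim}\un$ (which in particular re-proves $L_Q^{\otimes 2}\cong\un$). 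Now translate $\psi\circ\phi=\mathrm{id}_{M_Q}$ through the adjunction: it becomes the statement that $(\mathrm{id}_M\otimes\mathrm{mult})\circ(\widetilde\psi\otimes\mathrm{id}_{R_{\Kn}})\circ\widetilde\phi\colon M\to M\otimes R_{\Kn}$ equals $\mathrm{id}_M\otimes u$ with $u$ the unit. Projecting onto the $\un$-summand of $R_{\Kn}$ and expanding in the four pieces, every term carrying a single $L_Q$-leg is killed by the projection, and $\mathrm{mult}$ turns the two $L_Q$-legs of the $g'$-after-$g$ term into $\un$ via $\theta$; what survives is $\mathrm{id}_M=f'\circ f+b$ in $\mathrm{End}_k(M)$, where $b:=(\mathrm{id}_M\otimes\theta)\circ(g'\otimes\mathrm{id}_{L_Q})\circ g$ factors through $g\colon M\otimes L_Q\to N$. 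Carrying out this bookkeeping carefully --- verifying that no cross term survives and pinning down the factorisations of the two that do --- is where the particular ring structure of the Rost summand of a $(2^n-1)$-dimensional quadric is really used, and it is the step I expect to be the main obstacle.

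Finally, \eqref{eq:cond-on-motives} forces $\mathrm{End}_k(M)$ to be local: a non-isomorphism is nilpotent, it remains a non-isomorphism after multiplication by a unit, and $1-x$ is invertible whenever $x$ is nilpotent, so the non-units form a two-sided ideal. Hence one of $f'\circ f$ and $b$ is a unit of $\mathrm{End}_k(M)$. If $f'\circ f$ is a unit, then $f\colon M\to N$ is a split monomorphism; the idempotent $f(f'f)^{-1}f'\in\mathrm{End}_k(N)$ is nonzero, hence by \eqref{eq:cond-on-motives} for $N$ not nilpotent, hence an isomorphism, hence $\mathrm{id}_N$, so $f$ is an isomorphism and $M\cong N$. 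If instead $b$ is a unit, the identical argument applied to $g\colon M\otimes L_Q\to N$ --- using that $M\otimes L_Q$ is indecomposable and satisfies \eqref{eq:cond-on-motives}, $\otimes L_Q$ being an autoequivalence --- gives $M\otimes L_Q\cong N$. This exhausts the cases and proves the theorem.
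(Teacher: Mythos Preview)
Your proposal is essentially correct and follows the same route as the paper, just packaged in the language of the push-forward/pullback adjunction and the algebra structure on $\MKn Q$ rather than explicit computation over $k(Q)$. The decomposition $\widetilde\phi=f+g$, $\widetilde\psi=f'+g'$ corresponds precisely to the paper's $\alpha\times1+u\times l_0$, $\beta\times1+v\times l_0$: your $f$ is the push-forward of $\phi$ to $k$ (with $f_{k(Q)}=\alpha+u$), your $g$ is the paper's $u\times(1+l_0)$ viewed as a morphism $M\to N\otimes L_Q$, and your $b$ is the $k$-rational element whose restriction to $k(Q)$ is $v\circ u$. The identity $\mathrm{id}_M=f'f+b$ you aim for is exactly the paper's computation $(\beta+v)(\alpha+u)=\mathrm{id}_M+vu$ combined with $b_{k(Q)}=vu$; the paper derives it from the relations~\eqref{eq:lift_iso} coming out of Lemma~\ref{lm:lift_of_isos_from_k(Q)_to_Q}, which pins down the form of $\overline\psi$. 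The final dichotomy (local ring argument on $\End_k(M)$ forcing one of $f'f$, $b$ to be a unit) is the same as the paper's split into the cases ``$u\circ v$ is an isomorphism'' versus ``$u\circ v$ is nilpotent''.

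The ``main obstacle'' you flag --- that the single-$L_Q$ cross terms vanish after projecting to $\un$ --- is genuine but mild. Your own unital reasoning already handles it: since $\mathrm{mult}(1_Q\otimes{-})=\mathrm{id}$ on $\MKn Q$ over $k$, the terms $\un\otimes L_Q$ and $L_Q\otimes\un$ land exactly in $L_Q$ and are killed by the projection $R_{\Kn}\to\un$. The only remaining care is that the inclusion $1_Q\colon\un\to\MKn Q$ factors through $R_{\Kn}$ and that $\mathrm{mult}(L_Q\otimes L_Q)$ lands in $R_{\Kn}$; both hold over $\overline k$, and over $k$ any discrepancy is a phantom killed by condition~\eqref{eq:cond-on-motives} (or by $\Kn$-RNP for $k(Q)/k$, Proposition~\ref{prop:RNP_from_geometricRNP}), so at worst one gets $\mathrm{id}_M=f'f+b+n$ with $n$ nilpotent, which does not affect your local-ring conclusion. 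The paper avoids these descent checks entirely by doing the bookkeeping over $k(Q)$, where $l_0$ is rational and the formulas are literal.
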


\begin{Rk}\phantom{test}
\begin{enumerate}
    \item 
        If $M$, $N$ are indecomposable direct summands of the motives of projective homogeneous varieties,
    then they satisfy the assumption~\eqref{eq:cond-on-motives} by the Rost nilpotence principle, cf.~\cite[Lm.~2.1]{Karp-upper}.
    \item 
    Conjecturally, under the assumptions of Theorem~\ref{th:iso_over_k(alpha)}, $q$ is a Pfister neighbour and $R$ is the Rost motive~\cite[Conj.~4.21]{Vish-quad}. Moreover, for $M$, $N$ summands of projective homogeneous varieties, Theorem~\ref{th:binary_chow_motives} and Proposition~\ref{prop:reflects_MD} allows to reduce the result to this case. However, our argument works under weaker assumptions.
\end{enumerate}
\end{Rk}

\begin{proof}
By Lemma~\ref{lm:lift_of_isos_from_k(Q)_to_Q} and discussion before it,
we can lift the mutually inverse isomorphisms {$\alpha\colon M_{k(Q)} \rightleftarrows N_{k(Q)}\colon\!\beta$}
to mutually inverse isomorphisms $\phi\colon M_Q \rightleftarrows N_Q\colon\!\psi$ 
in the category $\CM_{\Kn}(Q_{k(Q)})$ 
such that $\overline{\phi}=\alpha\times 1+u\times l_0$, $\overline{\psi} = \beta\times 1 + v\times l_0$. 

By computing the compositions of $\overline{\phi}$ and $\overline{\psi}$
we obtain the following equations:
\begin{equation}\label{eq:lift_iso}
 \alpha\circ v+u\circ \beta = 0, \qquad  \beta\circ u + v\circ \alpha = 0.
\end{equation}
In what follows, we take push-forwards of some morphisms in $\CM_{\Kn}(Q)$
to $\CM(k)$, but compute their expressions after base change to $k(Q)$.
We will say then that the corresponding element in $\CM_{\Kn}(k(Q))$
is $k$-rational, since it is a pullback of an element from $\CM_{\Kn}(k)$.

For example, by taking the push-forwards of $\overline{\phi}, \overline{\psi}$
we get that $\alpha +v$ and $\beta + u$ are $k$-rational.
By subtracting $(\alpha+v)\times 1$ from $\overline{\phi}$
we thus also get that $u\times (1+l_0)$, and similarly $v\times (1+l_0)$ 
are $k$-rational. The compositions of these elements are
$(u\circ v) \times 1$ and $(v\circ u) \times 1$,
and therefore also $u\circ v, v\circ u$ are also $k$-rational as their push-forwards.

First, assume that one of the morphisms $u\circ v, v\circ u$ 
 is an isomorphism of motives over $k$.  Without loss of generality, let it be $u\circ v$.
For any 
$X$, $Y\in\SmProj_k$ 
the group $\Hom(\MKn X_{Q},\, \MKn Y_{Q})$ 
is $\Kn(X\times Y\times Q)$, 
and can be identified with $\Hom(\MKn X,\, \MKn Y\otimes \MKn Q)$.
Thus, we can view the $k$-rational element $u\times (1+l_0)$
as the morphism of motives $M_{k(Q)} \rarr \left(N\otimes \MKn Q\right)_{k(Q)}$ in $\CM_{\Kn}(k(Q))$. 
Moreover, it does not change under the composition with $(\id_N \otimes \pi)_{k(Q)}$
where $\pi$ is a projector in $\Knt{Q\times Q}$ onto $L_Q$ (recall that $\pi$ becomes $(1+l_0)\times (1+l_0)$ over $k(Q)$).
Therefore, it can be seen as a base change to $k(Q)$ of a morphism $M  \rarr  N\otimes L_Q$, where $L_Q = \MKn{Q,\,\pi}$.

Similarly, $v\times (1+l_0)$ can be seen as a base change to $k(Q)$ of a morphism $N\otimes L_Q \rarr M$. 
Direct computation shows that the composition of these two morphisms is $u\circ v$ over $k(Q)$
and by Rost Nilpotence Property (Corollary~\ref{cr:rnp_k(n)})
we get that $M$ is a direct summand of $N\otimes L_Q$.
However, since $N$ is indecomposable, then so is $N\otimes L_Q$,
and therefore $M$ is isomorphic to $N\otimes L_Q$.

Second, if none of $u\circ v$ or $v\circ u$ is an isomorphism,
then by the assumption they are nilpotents.
Note that we have the following identity between  $k$-rational elements:
$$(\alpha+u) \circ (\beta+v) = \id_N + u\circ v$$
where we have used the relations~(\ref{eq:lift_iso}).
Similarly, $(\beta+v) \circ (\alpha+u) $  
differs from identity by a nilpotent. 
Then one can modify these morphisms so that they become isomorphisms, i.e.\ $M\cong N$
(see e.g\ the proof of \cite[Lm.~2.1]{VishYag}).
\end{proof}

Theorem~\ref{th:iso_over_k(alpha)} also allows to identify 
some non-invertible summands of the $\Kn$-motives of quadrics.

\begin{Cr} Let $Q$ be a quadric of dimension less than $2^{n+1}-1$.
Let $N$ be a binary summand of $\MKn Q$
that is the specialization of a non-binary indecomposable summand of $\MCH Q$, and let $\alpha \in  \HH^{n+1}(k,\,\ZZ/2)$.

Assume that over the field $k(\alpha)$ 
the motive $N$ is isomorphic to the Rost motive $(R_{\beta_{k(\alpha)}})_{\Kn}\sh i$ for some $i$ 
and a pure symbol $\beta \in \HH^{m}(k,\,\ZZ/2)$, where  
$2\le m<n+1$.

Then $N\cong (R_\beta)_{\Kn} \otimes L_\alpha(i)$.
\end{Cr}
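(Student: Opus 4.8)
The plan is to bootstrap from the already-established isomorphism over $k(\alpha)$ by passing to a suitable quadric whose function field realizes $k(\alpha)$ and applying Theorem~\ref{th:iso_over_k(alpha)}. First I would fix a lift of $\alpha$ to a quadratic form $q\in I^{n+1}(k)$ and recall that $k(\alpha)$ is $\oKn$-equivalent to the generic splitting field of $q$ (Definition~\ref{def:k(alpha)}); by Corollary~\ref{cr:Kn-split-quad} the fields of functions of the quadrics in the generic splitting tower of $q$ are $\oKn$-universally bijective, so by Proposition~\ref{prop:reflects_MD} and Example~\ref{ex:k(Q)/k_reflects_MD_Kn} one can replace ``over $k(\alpha)$'' by ``over $k(P)$'' for $P$ the anisotropic part of $q$, passing through the tower without affecting motivic decomposition types. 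In fact it is cleanest to reduce to the Pfister quadric $P$ attached to $\alpha$ directly: over the appropriate field in the tower $q$ becomes an $(n+1)$-Pfister form, and the relevant invertible $\Kn$-motive is $L_\alpha$, a non-trivial invertible summand of $(R_\alpha)_{\Kn}$ (Proposition~\ref{prop:prelim_L_alpha_Rost}).

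The key point is to produce an anisotropic quadric $Q_0$ of dimension $2^n-1$ with upper binary Chow motive such that $k(Q_0)$ is $\oKn$-equivalent to $k(\alpha)$ and $L_{Q_0}\cong L_\alpha$. For $\alpha$ a pure symbol this is just the norm quadric of dimension $2^n-1$ and $R=R_\alpha$, $L_{Q_0}=L_\alpha$; in general one uses that a binary summand of length $2^n-1$ in the Chow motive of a quadric is governed by an element of $\HH^{n+1}(k,\ZZ/2)$ by Theorem~\ref{th:binary_chow_motives}, and one chooses $Q_0$ accordingly. Then the hypothesis ``$N_{k(\alpha)}\cong (R_{\beta_{k(\alpha)}})_{\Kn}\sh i$'' is rewritten as $N_{k(Q_0)}\cong \big((R_\beta)_{\Kn}\sh i\big)_{k(Q_0)}$, noting that $\beta$, being a symbol of degree $m\le n$, already has its Rost motive defined over $k$ and that $R_\beta$ does not split over $k(Q_0)$ (since splitting $\beta$ over $k(\alpha)$ would contradict Corollary~\ref{cr:injectivity_galois_cohomology} applied to the degree-$m$ cohomology, $m\le n+1$).

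Now both $N$ and $M:=(R_\beta)_{\Kn}\sh i$ are indecomposable: $N$ by assumption (it is a binary summand of $\MKn Q$), and $M$ because $R_\beta$ is an indecomposable Chow motive and hence its $\Kn$-specialization is indecomposable, being the complement of a Tate summand only when $m=1$ — for $2\le m< n+1$ the Rost motive $(R_\beta)_{\Kn}$ is itself indecomposable by the computation of \cite{Yag}, cf.~Proposition~\ref{prop:prelim_L_alpha_Rost}. Both satisfy condition~\eqref{eq:cond-on-motives} by the Rost nilpotence principle for summands of projective homogeneous varieties. Applying Theorem~\ref{th:iso_over_k(alpha)} to $M$, $N$ and $Q_0$, we conclude that either $N\cong M$ or $N\cong M\otimes L_{Q_0}$. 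The first alternative is excluded: $N$ was assumed to be the $\Kn$-specialization of a non-binary indecomposable Chow summand of $\MCH Q$, so if $N\cong (R_\beta)_{\Kn}\sh i$ were defined over $k$ this way, one could lift back along $k(Q_0)/k$ using Proposition~\ref{prop:reflects_MD} to contradict that the corresponding Chow motive is non-binary (here Theorem~\ref{th:prestableMDT} identifies the Chow summand from the $\Kn$-summand). Hence $N\cong M\otimes L_{Q_0}\cong (R_\beta)_{\Kn}\otimes L_\alpha(i)$, as claimed.

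\textbf{Main obstacle.} The delicate step is the construction of $Q_0$ and the verification that $L_{Q_0}$ is genuinely $L_\alpha$ (not some other non-trivial invertible summand of a binary specialization), together with ruling out the alternative $N\cong M$. The latter requires knowing precisely that a binary $\Kn$-summand which lifts to a \emph{non-binary} Chow summand cannot itself be a Tate twist of a Rost-motive specialization over $k$ — this is exactly where Theorem~\ref{th:prestableMDT} (the pre-stable reconstruction of Chow MDT from Morava MDT) does the work, and one must check the bookkeeping of Tate twists there matches the twist $i$ appearing in the statement.
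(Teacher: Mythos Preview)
Your approach is essentially the paper's: invoke Theorem~\ref{th:iso_over_k(alpha)} to obtain the dichotomy $N\cong (R_\beta)_{\Kn}(i)$ or $N\cong (R_\beta)_{\Kn}\otimes L_\alpha(i)$, then exclude the first alternative via Theorem~\ref{th:prestableMDT}, which would force the corresponding Chow summand to be a Tate twist of the binary motive $R_\beta$.

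One clarification on what you flag as the main obstacle. You do \emph{not} need to produce a quadric $Q_0$ over $k$ for an arbitrary $\alpha$; your proposed route through Theorem~\ref{th:binary_chow_motives} goes in the wrong direction (it extracts a cohomology class from a binary summand, not a quadric from a class). The reduction you already sketched in your first paragraph is the right one and is all that is used: write $k(\alpha)$ as $K(P)$ with $K/k$ an $\oKn$-universally bijective extension and $P$ the norm (or Pfister) quadric of the symbol $\alpha_K$; by Proposition~\ref{prop:reflects_MD} isomorphisms over $K$ lift to $k$, and over $K$ Theorem~\ref{th:iso_over_k(alpha)} applies with $L_{Q_0}=L_{\alpha_K}=(L_\alpha)_K$ by Proposition~\ref{prop:prelim_L_alpha_Rost}. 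Also, in your final step the phrase ``lift back along $k(Q_0)/k$ using Proposition~\ref{prop:reflects_MD}'' is misplaced: the dichotomy is already over $k$ (after the reduction above), and the contradiction comes directly from Theorem~\ref{th:prestableMDT}(2), which transports the $\Kn$-isomorphism $N\cong (R_\beta)_{\Kn}(i)$ to a Chow isomorphism up to Tate twist, forcing the Chow summand attached to $N$ to be binary.
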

\begin{proof}
For simplicity, we assume that $i=0$.
By Theorem~\ref{th:iso_over_k(alpha)} we have either $N\cong (R_\beta)_{\Kn}$ or $N\cong (R_\beta)_{\Kn}\otimes L_\alpha$.
However, in the first case Theorem~\ref{th:prestableMDT} allows to extend this isomorphism
to an isomorphism of Chow motives 
leading to a contradiction with the assumption that the Chow motive corresponding to $N$ 
is not binary.
\end{proof}

\begin{Ex}[cf.~Section~\ref{sec:small_kahn_dim}]
Let $Q$ be a 5-dimensional anisotropic quadric, which is not excellent, and becomes completely split over some quadratic extension. 
Then the Chow motive of $Q$ has the following MDT~\cite[Prop.~6.10]{Vish-quad}. %
\begin{center}
\begin{tikzpicture}
\filldraw [white] (-0.3,0.5) circle (1pt);
\filldraw [black] (-0.1,0) circle (1pt);
\filldraw [black] (0.5,0) circle (1pt);
\filldraw [black] (1.1,0) circle (1pt);
\filldraw [black] (1.7,0) circle (1pt);
\filldraw [black] (2.3,0) circle (1pt);
\filldraw [black] (2.9,0) circle (1pt);
\filldraw [white] (3.1,0) circle (1pt);
\draw (1.1,0) .. controls (1.4,0.1) .. (1.7,0);
\draw (-0.1,0) .. controls (0.2,0.2) and (0.8,0.2) .. (1.1,0);
\draw (1.7,0) .. controls (2.0,0.2) and (2.6,0.2) .. (2.9,0);
\draw (0.5,0) .. controls (1.1,-0.3) and (1.7,-0.3) .. (2.3,0);
\filldraw [black] (0,-0.8) circle (0pt) node[anchor=west]{\text{Chow MDT of $Q$}};
\end{tikzpicture} 
\qquad\qquad\begin{tikzpicture}
\filldraw [white] (-0.3,0) circle (1pt);
\filldraw [gray] (-0.1,0) circle (1pt);
\filldraw [black] (0.5,0) circle (1pt);
\filldraw [black] (1.1,0) circle (1pt);
\filldraw [black] (1.7,0) circle (1pt);
\filldraw [gray] (2.3,0) circle (1pt);
\filldraw [gray] (2.9,0) circle (1pt);
\filldraw [white] (3.1,0) circle (1pt);
\draw (1.1,0) .. controls (1.4,0.1) .. (1.7,0);
\filldraw [black] (0,-0.8) circle (0pt) node[anchor=west]{\text{$\ \K2$-MDT of $Q$}};
\end{tikzpicture} 
\end{center}
The binary summand of the Chow motive 
is the Rost motive $R_\alpha$ for $\alpha \in \HH^3(k,\,\ZZ/2)$. 
The $\K2$-kernel motive of $Q$ is then $L_\alpha\sh 1 \oplus N$ where $N$ is a rank $2$ motive.

Over the field of functions of a Pfister quadric corresponding to $\alpha$ the anisotropic part of $q$
becomes a conic. Therefore $N_{k(\alpha)}\cong (R_\beta)_{\K2}\sh2$ 
where $R_\beta$ is the Rost motive of $\beta \in \HH^2(k(\alpha),\,\ZZ/2)$,
i.e.\ $\beta$ is the Clifford invariant $[C_0(q_{k(\alpha)})]$ and if it is a symbol over $k(\alpha)$,
then by Merkurjev's index reduction formula $[C_0(q)]$ is a symbol over $k$. 

Therefore $N\cong (R_{[C_0(q)]})_{\K2} \ot L_\alpha$.
\end{Ex}

\begin{Qu}
Let $N$ be a rank $2$ indecomposable summand of the $\Kn$-motive of some quadric $Q$.
Does $N$ necessarily have the form $(R_\beta)_{\Kn}\otimes L_\alpha$ for some $\alpha, \beta$?
\end{Qu}

\subsection{Splitting off Tate motives over function fields of quadrics}

\begin{Th}\label{prop:gen_splitting_Tate}\label{th:splitting_off_Tate}
Let $\Af$ be a free theory. 
Let $N \in \PM_{A}(k)$ be an indecomposable motive. 

Let $X\in \SmProj_k$ that satisfies the following conditions:
\begin{enumerate}
\item $[X]_A$ is invertible in $\Apt$;
\item if an endomorphism of $\Mot AX$ restricts to $[X]_A^{-1}\cdot 1$ 
in the generic point of $X\times X$, then its power is a non-trivial projector of $\Mot AX$.
\end{enumerate}

Let $U=\Mot A{X,p}$ be a summand of the motive of $X$, where $p$ is an outer projector.

Assume that $N_{k(X)}$ splits off a Tate summand $\un_{k(X)}$.

Then $N$ is a direct summand of $U$.
\end{Th}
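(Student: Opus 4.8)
The idea is to lift the splitting of a Tate summand from $k(X)$ to the category of motives over $X$, where the outer projector $p$ can be used to cut everything down to $U$, and then to push forward to $k$. Concretely, write the splitting over $k(X)$ as $\un_{k(X)} \xrightarrow{\iota} N_{k(X)} \xrightarrow{\pi} \un_{k(X)}$ with $\pi\circ\iota = \mathrm{id}$. First I would lift $\iota,\pi$ to morphisms in $\CM_A(X)$ between $\un_X(i)$ (for an appropriate twist $i$, which we may normalize to $0$ after a Tate twist) and $N_X$; by geometric Rost nilpotence (Proposition~\ref{prop:geometric_RNP}) the composite $\pi_X\circ\iota_X$ is an endomorphism of $\un_X$ that restricts to the identity at the generic point, and since $\End(\un_X) = A(X)$, its kernel to the generic point consists of elements vanishing on a dense open; after possibly modifying by a unit and using that the kernel is nilpotent we may assume $\pi_X\circ\iota_X = \mathrm{id}_{\un_X}$, so $\un_X$ is a direct summand of $N_X$.

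The next step is to replace $\un_X$ by a summand of $U$. Here condition~(2) on $X$ is the analogue of condition~\eqref{eq:cond-on-motives} and the property used in Example~\ref{ex:outer_projector_quadric}: it guarantees that the ``diagonal-like'' endomorphisms behave well. Using Lemma~\ref{lm:upper_motive} and Corollary~\ref{cr:lifting_isos_upper_projector}, since $p$ is an outer (hence both upper and lower) projector, I would act by $p$ on the morphisms $\iota_X, \pi_X$ — more precisely form $p\circ\iota_X$ and $\pi_X\circ p$ — and check that the restrictions to the generic point of $X$ are unchanged (this is exactly the content of Lemma~\ref{lm:upper_motive}, applied with $\alpha = \iota_X$ or $\pi_X$). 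Thus after this modification $\iota_X$ factors through $U = \Mot{A}{X,p}$ and $\pi_X$ factors through $U$ as well, up to the subtlety that $p$ does not act compatibly with composition; one resolves this exactly as in the proof of Lemma~\ref{lm:lift_of_isos_from_k(Q)_to_Q} / Theorem~\ref{th:iso_over_k(alpha)}, by writing the composite as the sought morphism plus a correction term that vanishes at the generic point and hence is nilpotent (invoking Proposition~\ref{prop:geometric_RNP} again), then absorbing the nilpotent. This shows $N_X$ and $U$ share a common direct summand containing the image of $\un_X$.

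The final step is descent to $k$. Since $X$ is smooth projective there is a push-forward functor $\CM_A(X)\to\CM_A(k)$ sending $\Mot{A}{Y/X}$ to $\Mot{A}{Y}$; applying it to the idempotent in $\End(U)\subseteq\End(\Mot{A}{X})$ cut out by the above, together with the fact that $[X]_A$ is invertible (condition~(1)), I would produce an honest morphism pair $N \to U \to N$ over $k$ whose composite is, up to a nilpotent correction, the identity of $N$ — the invertibility of $[X]_A$ is what makes the push-forward of the ``diagonal'' contribution a unit rather than zero, just as in the proof of Proposition~\ref{prop:RNP_from_geometricRNP}. Since $N$ is indecomposable, Rost nilpotence over $\overline k/k$ (or more precisely the nilpotence supplied by condition~(2) transported to $N$) then forces $N$ to be a direct summand of $U$.

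\textbf{Main obstacle.} The delicate point is the bookkeeping with the two pullback maps $A(Y_{k(X)}) \to A(Y_{k(X\times X)})$ along the two inclusions $k(X)\hookrightarrow k(X\times X)$: the argument that $p\circ\iota_X$ and $\iota_X$ have the same generic restriction, and that the correction terms genuinely vanish at the generic point rather than merely on a smaller open, rests on Proposition~\ref{prop:pullbacks_equal_twisted_diagonal}, whose hypothesis (equality of the diagonal and twisted-diagonal point classes in $A(X\times X\times k(X\times X))$) must be checked to hold — for $X$ a product of quadrics of dimension $2^n-1$ this follows because any two rational points have the same class, but one must verify condition~(2) of the theorem is exactly what is needed to run Example~\ref{ex:outer_projector_quadric}-style computations, i.e. that the relevant endomorphism algebra of $\Mot{A}{X}$ is controlled. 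Getting the interaction between the non-compatibility of the $p$-action with composition and the nilpotence coming from Rost nilpotence exactly right — in the style of Lemma~\ref{lm:lift_of_isos_from_k(Q)_to_Q} — is where the real work lies.
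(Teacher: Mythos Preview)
Your overall architecture is right --- lift the splitting to $\CM_A(X)$, use the outer projector $p$ via Lemma~\ref{lm:upper_motive}, and descend to $k$ --- and you have correctly flagged Proposition~\ref{prop:pullbacks_equal_twisted_diagonal} as the subtle point. But there is a genuine gap in how you organize the descent, and it concerns which composite you try to control.

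You aim to produce $N \to U \to N$ whose composite is the identity of $N$ up to a nilpotent. The problem is that you have no handle on $\End(N)$: $N$ is an arbitrary indecomposable motive, so you cannot argue that an endomorphism of $N$ which is ``close to identity at a generic point'' is invertible --- there is no such generic point for $N$. Condition~(2) is a statement about $\End(\Mot{A}{X})$, not about $\End(N)$, and it cannot be ``transported to $N$'' as you suggest.

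What the paper does instead is reinterpret the lifted morphisms $\alpha\colon \un_X \to N_X$ and $\beta\colon N_X \to \un_X$ by adjunction as morphisms $a\colon \Mot{A}{X} \to N$ and $b\colon N \to \Mot{A}{X}\sh{-\dim X}$ in $\CM_A(k)$, and then studies the \emph{other} composite $b\circ a \in \End(\Mot{A}{X})$. The key computation is that $(b\circ a)|_{k(X\times X)} = 1+x$ with $x$ nilpotent in $\Apt$; this is exactly where the equality $\pra{1}^* = \pra{2}^*$ (valid in $(A/\mathfrak{p})_{\num}$ for each prime $\mathfrak{p}$, via Proposition~\ref{prop:pullbacks_equal_twisted_diagonal} and Remark~\ref{rk:p1_equal_p2_num}) is needed, since the computation of $(b\circ a)|_{k(X\times X)}$ unfolds as a pushforward of $\pra{1}^*(\alpha|_{Y_{k(X)}})\cdot \pra{2}^*(\beta|_{Y_{k(X)}})$. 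After acting by $p$ and rescaling by $(1+x)^{-1}[X]_A^{-1}$, the composite $b\circ a$ becomes an endomorphism of $U$ restricting to $[X]_A^{-1}$ at the generic point of $X\times X$, so condition~(2) applies and a power of it is a non-trivial projector $\rho$ on $U$. Only then does one look at $a\circ b$: one checks $(a\circ b)^2$ is a projector on $N$, and that $a,b$ give inverse isomorphisms between $(N,(a\circ b)^2)$ and $(U,\rho)$. Indecomposability of $N$ forces $(a\circ b)^2 = \id_N$, so $N$ is a summand of $U$.

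In short: control the $U$-side composite via condition~(2), not the $N$-side; the indecomposability of $N$ finishes the argument formally once you have a non-trivial projector on $U$ factoring through $N$.
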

\begin{Rk}
The main example of $X$ that
satisfies
the conditions of the Theorem 
for $\Af=\Knf$ 
is a product of quadrics of dimension $2^n-1$ (see~Lemma~\ref{lm:quadrics_endomorphisms}).
\end{Rk}
\begin{proof}
By Proposition~\ref{prop:geometric_RNP} we can lift the splitting of $N_{k(X)}$ to $N_X$,
i.e.\  we have morphisms $\un_X\xrarr{\alpha} N_X \xrarr{\beta} \un_X$
such that their composition is the identity of $\un_X$.
We can view the elements $\alpha, \beta$ 
as morphisms\footnote{ Note that this is just the adjointness of the pullback functor $\CM_{\A}(k)\rarr \CM_{\A}(X)$ 
to the pullback functor on the right
 and to the twisted pullback on the left.}
 $a\colon\Mot AX\rarr N$ and $b\colon N\rarr \Mot AX\sh{-\dim X}$.

The composition $b\circ a$ is an element of $\A^0(X\times X)$,
and we are going to show now that its restriction to $\A^0(k(X\times X))\cong \A^0(k)$ is $1+x$ for a nilpotent $x\in \A^0(k)$.
Let $N$ be a summand of $\Mot AY$ for some $Y\in\SmProj_k$. 
Then $(b\circ a)_{k(X\times X)}$ equals the push-forward of $\pra{1}^*(\alpha|_{Y_{k(X)}})\cdot \pra{2}^*(\beta|_{Y_{k(X)}})$ 
from $A(Y\times k(X\times X))$ to $A(k(X\times X))$. 
If we assume that $\pra{1}^*=\pra{2}^*$, 
then instead we can compute the projection to the point on $Y_{k(X)}$ of the product of $\alpha|_{Y_{k(X)}}$ and $\beta|_{Y_{k(X)}}$,
but this equals precisely $1$ as it is the same as the composition of the restrictions of $\alpha$ and $\beta$ to $\CM_A(k(X))$.
If $\Apt$ is an integral domain, then we may perform the same computation in the numerical version $A_{\num}$: 
the equality $\pra{1}^*=\pra{2}^*$  then holds by Remark~\ref{rk:p1_equal_p2_num}.
Otherwise, we do this computation for $(A/\mathfrak{p})_{\num}$ where $\mathfrak{p}$ is a prime ideal of $\Apt$. 
Thus, we get that $(b\circ a)_{k(X\times X)}\in A^0(k)$ is mapped to $1$ in $\Apt/\mathfrak{p}$ for all prime ideals, 
hence it equals $1+x$ where $x$ is nilpotent.

Note that the action $\alpha \circ p$ of the projector $p$ on $\alpha$ 
can be identified with the composition $a\circ p$, 
and, similarly, for $p\circ \beta$ and $p\circ b$.
By Lemma~\ref{lm:upper_motive} the composition of the restrictions of $\alpha\circ p, p\circ \beta$ to $k(X)$
is still the identity of $\un_{k(X)}$.

Thus, by replacing $a$ with $a\circ p$ and $b$ with $(1+x)^{-1}[X]_A^{-1}\cdot  p\circ b$ 
may assume that the composition $b\circ a$ is an endomorphism of $U$ 
such that its $n$-th power is a projector.
By replacing $a$ with $a\circ (b\circ a)^{n-1}$ we may also assume 
that $b\circ a$ is a projector. 

Then $a\circ b \circ a\circ b$ is a projector of $N$
and $a$ and $b$ yield inverse isomorphisms between $(N, a\circ b \circ a\circ b)$ and $\Mot{A}{X,\,  b\circ a}$.
By assumption $N$ is indecomposable, hence $(N,\, a\circ b \circ a\circ b)\cong N$ and 
it is a direct summand of $U$.
\end{proof}

In order to apply this Theorem for quadrics we need the following.

\begin{Lm}
\label{lm:quadrics_endomorphisms}
Let $Q_i$, $i\in I$, where $I$ a finite set, be a collection of quadrics of dimension $2^n-1$.

If $\alpha$ is an endomorphism of $\MKn{\prod Q_i}$ that restricts to $1$ in the generic point of $\prod Q_i$,
then a power of $\alpha$ is a non-trivial projector.
\end{Lm}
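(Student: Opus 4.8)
The plan is to reduce the claim to the single-quadric case and then to exploit the algebra structure of $\Kn$-endomorphisms of a quadric of dimension $2^n-1$. First I would recall that, after base change to $\overline k$, the motive $\MKn{\prod_i Q_i}$ becomes a sum of Tate motives $\un(j)$, so that $\Enda{\MKn{\overline{\prod Q_i}}}$ is a finite-dimensional commutative $\F2$-algebra: indeed each $Q_i$ has dimension $2^n-1 < 2^{n+1}-1$, hence the kernel summand $\Mker{\overline{Q_i}}$ is a sum of the Tate motives $\un(0),\dots,\un(2^n-2)$ all of which are pairwise non-isomorphic even modulo the periodicity $\un\cong\un(2^n-1)$ (see the remark after Proposition~\ref{prop:normal} in Section~\ref{sec:rat-proj-quad}), and the same holds for the Tate complement. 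Taking the tensor product over $i$, the total motive over $\overline k$ is still a sum of {\sl distinct} Tate motives $\un(j)$ with $0\le j\le \sum_i(2^n-1)< (2^n-1)|I|$, provided this range stays below one period $2^n-1$; in general one has to be slightly more careful, but the key point is that $\Enda{\MKn{\overline{\prod Q_i}}}$ is a product of copies of $\F2$, in particular a finite commutative ring in which every element is idempotent after raising to a suitable power ($x^{2^m}=x$ in $\F2^{\times r}$ once $2^m\ge$ the number of factors is not even needed — already $x^2=x$).

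Next I would use the geometric Rost Nilpotence Property, Proposition~\ref{prop:geometric_RNP} (equivalently the Rost Nilpotence for projective homogeneous varieties, \cite{GilleVishik}): the kernel of the restriction map
$$\Enda{\MKn{\textstyle\prod Q_i}}\longrightarrow \Enda{\MKn{\overline{\textstyle\prod Q_i}}}$$
consists of nilpotents. Let $\alpha$ be our endomorphism, and let $\bar\alpha$ be its image over $\overline k$. The hypothesis that $\alpha$ restricts to $1$ in the generic point of $\prod Q_i$ forces $\bar\alpha$ to act as the identity on the ``outer'' Tate summand $\un(0)$ — more precisely, the component of $\bar\alpha$ at the summand $\un(0)\xrightarrow{}\un(0)$ is $1$, because restricting to the generic point factors through the upper Tate summand and detects exactly this coefficient (cf.\ the computation in Example~\ref{ex:outer_projector_quadric}). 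In particular $\bar\alpha\ne 0$. Since $\Enda{\MKn{\overline{\prod Q_i}}}$ is a finite product of copies of $\F2$, the sequence $\bar\alpha,\bar\alpha^2,\bar\alpha^3,\dots$ stabilizes to an idempotent $\bar e=\bar\alpha^{N_0}$ for some $N_0$, and $\bar e\ne 0$ since its $\un(0)$-component is $1^{N_0}=1$.

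Now I would lift this back. Choose $N_1$ with $(\alpha^{N_0})^{2^{N_1}}$ having the same image $\bar e$ over $\overline k$ as $\bar\alpha^{N_0}$ (possible since $\bar e$ is already idempotent, so all further powers of $\bar\alpha^{N_0}$ equal $\bar e$). Set $\beta=\alpha^{N_0 2^{N_1}}$; then $\beta$ and $\beta^2$ have the same image $\bar e$ over $\overline k$, so $\beta^2-\beta$ lies in the kernel of the restriction map and hence is nilpotent, say $(\beta^2-\beta)^{M}=0$. A standard idempotent-refinement argument (the same one used in \cite[Prop.~2.3]{VishYag} for lifting idempotents along nil ideals) produces a polynomial $g(\beta)$ with $g(\beta)$ a genuine idempotent in $\Enda{\MKn{\prod Q_i}}$ congruent to $\beta$ modulo nilpotents; explicitly one may take $e=\beta^M\cdot(\text{a suitable polynomial in }\beta)$, and then $e$ is an idempotent with $\bar e$ nonzero, so $e\ne 0$ and $e\ne 1$ is {\sl not} automatic — but $e$ being a power (times a polynomial) of $\alpha$, and $\bar e$ being a proper idempotent unless $\bar\alpha=1$, we see $e$ is a non-trivial projector {\sl unless $\alpha$ itself becomes invertible}, in which case $\alpha$ is already a unit and no power is a proper projector. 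The only subtlety — and the step I expect to be the genuine obstacle — is organizing the bookkeeping so that the produced projector is literally of the form ``a power of $\alpha$'': for that I would instead argue that since $\bar\alpha$ generates a finite commutative subalgebra of $\F2^{\times r}$, already $\bar\alpha^{2^m}=\bar\alpha^{2^{m'}}$ for some $m<m'$, whence $\bar\alpha^{2^{m'}-2^m+\,\cdot\,}$ cycles through a finite set and some power $\bar\alpha^L$ is idempotent; then $\alpha^L$ squares to itself modulo a nilpotent ideal, and one checks that a high enough {\sl power} $\alpha^{LM}$ is already idempotent on the nose because in the (finite) ring $\Enda{\MKn{\prod Q_i}}$ the subring generated by $\alpha$ is finite, hence Artinian, so the descending chain $(\alpha^L)\supseteq(\alpha^{2L})\supseteq\cdots$ stabilizes at an idempotent which is a power of $\alpha$. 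This idempotent is nonzero since its image over $\overline k$ has $\un(0)$-component $1$, and it is not the identity unless $\alpha$ is invertible, which completes the proof.
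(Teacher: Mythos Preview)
Your argument contains a genuine gap at the very first step. You claim that $\Enda{\MKn{\overline{\prod Q_i}}}$ is a product of copies of $\F2$, but this is false already for a single quadric: the split motive of $Q_i$ of dimension $2^n-1$ is $\bigoplus_{j=0}^{2^n-1}\un(j)$, and by periodicity $\un(2^n-1)\cong\un(0)$, so the summand $\un(0)$ occurs with multiplicity two and the endomorphism ring contains an $M_2(\F2)$-block. For a product of several $Q_i$ the multiplicities grow and the ring is a product of matrix algebras over $\F2$, not a Boolean ring. You acknowledge this (``in general one has to be slightly more careful'') but never resolve it, and your subsequent claim that ``the $\un(0)$-component of $\bar\alpha^{N}$ is $1$'' has no meaning once there are several $\un(0)$'s: in a matrix block a correspondence that restricts to $1$ at the generic point can certainly square to something that does not. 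Thus your argument does not establish that $\alpha$ (or $\bar\alpha$) is non-nilpotent, which is the crux of the lemma. The finiteness of the ring and the Artinian ``some power is idempotent'' step are fine, but without non-nilpotence that idempotent could be zero.

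The paper's proof closes exactly this gap by a different route: instead of working over $\overline k$, it computes the restriction of $\alpha^{\circ 2}$ to the generic point of $X\times X$ via the \emph{numerical} theory over $k(X)$, using that $\pra1^*=\pra2^*$ for $\Kn_{\num}$ (Remark~\ref{rk:p1_equal_p2_num}) to reduce to squaring $\alpha|_{X_{k(X)}}$ inside the commutative ring $\Kn_{\num}(X_{k(X)})$. The decisive input you are missing is Hoffmann's separation theorem: each $(q_i)_{k(X)}\cong\hyp\perp q_i'$ with $q_i'$ anisotropic, so $\Mot{\Kn_{\num}}{(Q_i)_{k(X)}}\cong\un\oplus\un$ and $\Kn_{\num}(X_{k(X)})$ is freely generated by products $\bigtimes_i a_i$ with $a_i\in\{1,l_0\}$. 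Since any basis element containing an $l_0$ squares to zero, $(\alpha|_{X_{k(X)}})^2=1$ and hence every $2$-primary power of $\alpha$ again restricts to $1$ at the generic point. Non-nilpotence follows, and then the finite-ring argument (which you have) finishes the proof.
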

\begin{proof}
Since products of quadrics satisfy RNP for all field extensions~\cite{GilleVishik},
we can work with rational classes over $\overline{k}$.
As the motive of a split quadric is a sum of Tate motives, 
the endomorphism ring of the product of split quadrics is finite. The endomorphism ring of rational elements
is thus a subring of a finite ring, and hence finite as well.

We will show now that if an element $\alpha$ in $\End\left(\MKn{\prod Q_i}\right)$ 
restricts to $1$ in the generic point, then so do its 2-primary powers,
in particular, it cannot be nilpotent.
However, if an element of a finite ring is not a nilpotent, 
then its power is a non-trivial idempotent. 

Indeed, denote $X=\prod_i Q_i$. 
The restriction of the composition $\alpha^{\circ 2}$ to the generic point of $X \times X$
can be computed in the numerical version of $\Kn$. 
Moreover, using~\ref{eq:ext_bc} for $\Knf$ and the square 
\begin{center}
\begin{tikzcd}
 X\times X \times X \arrow[d, "\pra{13}"']  &   X \times k(X\times X) \arrow[d] \arrow{l} \\
X\times X  &  \Spec k(X\times X), \arrow{l} 
\end{tikzcd}
\end{center}
and the fact that $\pra1=\pra2$ for $\Kn_{\num}$ by Remark~\ref{rk:p1_equal_p2_num},
we may instead compute the class $\left[\left(\alpha|_{X_{k(X)}}\right)^2\right]_{\Knum}$ where the square is taken in the ring $\Kn_{\num}\left(X_{k(X)}\right)$.

Recall that $(q_i)_{k(X)} \cong \hyp\perp q_i'$ where $q_i'$ is anisotropic~\cite[Cor.~1]{Hoffmann-sep},
therefore $\Mot{\Kn_{\num}}{Q_i\times k(X)} \cong \un \oplus \un$. Then 
$\Knum\left(X_{k(X)}\right)$ is freely generated by the elements of the form $\bigtimes_i a_i$
where $a_i \in \Knum(Q_i\times k(X))$ is either $1$ or $l_0$. 
Since by assumption $\alpha$ restricts to $1$ in $k(X\times X)$, 
and the square of an element $\bigtimes_i a_i$ is zero as soon as at least one of $a_i$ is $l_0$,
we conclude that  $\left(\alpha|_{X_{k(X)}}\right)^2=1$ in $\Kn_{\num}\left(X_{k(X)}\right)$.
\end{proof}

\begin{Cr}
\label{cr:splitting_off_tate_over_product_quadrics}
Let $M \in \PM_{\Kn}(k)$ be an indecomposable motive, 
let $Q_i$, $i\in I$, be a collection of quadrics of dimension $2^n-1$,
 and let $K$ be the composite of the fields of functions of $Q_i$.

If $M_K$ splits off a Tate summand,
then there exist a finite subset $J$ of $I$
such that $M$ is a direct summand of an outer motive of $M_{\Kn}(\prod_{j\in J} Q_j)$.
\end{Cr}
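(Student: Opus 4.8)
The plan is to deduce this corollary from Theorem~\ref{th:splitting_off_Tate} and Lemma~\ref{lm:quadrics_endomorphisms} by an induction (or a direct finiteness argument) on the number of quadrics whose function fields are needed to produce the Tate summand. First I would note that the field $K = \mathrm{colim}_{J\subset I,\,|J|<\infty} k\!\left(\prod_{j\in J} Q_j\right)$ is a filtered colimit, and since $\Kn$ is a coherent theory, the splitting datum $\un_K \rightleftarrows M_K$ in $\CM_{\Kn}(K)$ — being given by a finite collection of elements in $\Kn$-groups of finite-type $K$-schemes — already descends to $\CM_{\Kn}\!\left(k\!\left(\prod_{j\in J} Q_j\right)\right)$ for some finite $J\subseteq I$. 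Thus, writing $X=\prod_{j\in J} Q_j$, we have reduced to the situation where $M_{k(X)}$ splits off a Tate summand $\un_{k(X)}$.

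Next I would verify the two hypotheses of Theorem~\ref{th:splitting_off_Tate} for this $X$ and $\A=\Kn$ (with $v_n=1$, as stipulated in this section). Condition~(1), that $[X]_{\Kn}$ is invertible in $\Knpt=\F{2}$, holds because each $Q_j$ has dimension $2^n-1$, hence $[Q_j]_{\Kn}$ is a unit (cf.\ the computation in the proof of Proposition~\ref{prop:Kn-RNP_hypersurface}, or simply $\deg$ of the class of a rational point over the splitting field), and $[X]_{\Kn}=\prod_j [Q_j]_{\Kn}$ by multiplicativity of push-forward along products. Condition~(2), the statement about endomorphisms of $\Mot{\Kn}{X}$ restricting to $[X]_{\Kn}^{-1}\cdot 1$ in the generic point, is precisely Lemma~\ref{lm:quadrics_endomorphisms} (which is stated for the restriction being $1$, but since $[X]_{\Kn}^{-1}=1$ in $\F2$ this is the same condition). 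It then remains to produce an outer projector on $\Mot{\Kn}{X}$: I would take $U=\Mot{\Kn}{X,\,\Delta_X}=\Mot{\Kn}{X}$ itself, noting that the diagonal $\Delta_X$ is always an outer projector, as remarked after Corollary~\ref{cr:lifting_isos_upper_projector}. (If one wants the more refined ``outer motive'' rather than all of $\Mot{\Kn}{X}$, one can instead take the tensor product over $j\in J$ of the indecomposable outer summands $1\times 1+\pi_j$ of $\MKn{Q_j}$ described in Example~\ref{ex:outer_projector_quadric}, which is outer by Example~\ref{ex:product_upper_motives}; but any outer $U$ works for the conclusion as stated.)

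Applying Theorem~\ref{th:splitting_off_Tate} with this data then yields directly that the indecomposable motive $M$ is a direct summand of $U$, i.e.\ of an outer motive of $\Mot{\Kn}{X}=\MKn{\prod_{j\in J}Q_j}$, which is the assertion. The main obstacle, such as it is, is the initial descent step: one must be careful that the \emph{pair} of morphisms realizing the splitting of $M_K$ (and the identity witnessing their composition) all descend simultaneously to the same finite level $J$, which follows from coherence of $\Kn$ together with the fact that only finitely many generators and finitely many relations are involved; after that, everything is a matter of checking hypotheses that have already been isolated as separate lemmas in the preceding sections.
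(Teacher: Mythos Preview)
Your proposal is correct and follows exactly the approach the paper intends: the paper's own proof is the single line ``Direct application of Theorem~\ref{th:splitting_off_Tate} using Lemma~\ref{lm:quadrics_endomorphisms},'' and you have simply unpacked that line, including the (implicit) descent to a finite $J$ via coherence and the verification of hypotheses~(1) and~(2). Your remark that $\Delta_X$ already serves as an outer projector is fine; the more refined outer motive you mention is only needed later in Proposition~\ref{prop:splitting_over_quadrics_with_dim_2^n-1}.
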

\begin{proof}
    Direct application of Theorem~\ref{th:splitting_off_Tate} using Lemma~\ref{lm:quadrics_endomorphisms}.
\end{proof}

\begin{Prop}
\label{prop:splitting_over_quadrics_with_dim_2^n-1}
\label{cr:splitting_over_quadrics_with_dim_2^n-1}
Let $N$ be an indecomposable $\Kn$-motive over $k$ that is not invertible.
Let $K$ be a field extension of $k$
 obtained as the composite  of fields of functions of quadrics of dimension $2^n-1$ 
 that are defined over $k$.
 
Then motive $N_K$ is not split.
\end{Prop}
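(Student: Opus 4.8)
\textbf{Proof proposal for Proposition~\ref{prop:splitting_over_quadrics_with_dim_2^n-1}.}
The plan is to reduce to the finitely-generated case and then apply Corollary~\ref{cr:splitting_off_tate_over_product_quadrics}. First I would observe that if $N_K$ were split, then since $N_K$ is split and $K=\mathrm{colim}\, K_J$ is a filtered colimit of composites $K_J$ of finitely many function fields $k(Q_j)$, $j\in J$, of quadrics of dimension $2^n-1$, any splitting of $N_K$ already occurs over some $K_J$ with $J$ finite: the projectors witnessing the decomposition live in a finitely-presented group $\Kn^{\dim}(Y\times Y\times_k K)$ for $N$ a summand of $\MKn Y$, and $\Kn$ commutes with such filtered colimits of fields (coherence of free theories, cf.~Section~\ref{sec:extended_coherent}). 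So we may assume $K=K_J$ is the composite of finitely many $k(Q_j)$.

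Next, if $N_{K_J}$ is split, then in particular it splits off a Tate summand $\un_{K_J}$. By Corollary~\ref{cr:splitting_off_tate_over_product_quadrics} (whose hypotheses are exactly that $N$ is indecomposable and $K_J$ is such a composite), we conclude that $N$ is a direct summand of an outer motive $U=\MKn{\prod_{j\in J'}Q_j,\,p}$ for some finite $J'\subseteq J$ and some outer projector $p$. Now I want to derive a contradiction from the assumption that $N$ is not invertible. The point is that an outer summand of the $\Kn$-motive of a product of $(2^n-1)$-dimensional quadrics is, in a suitable sense, "small": over $\overline k$ each $Q_j$ has motive $\bigoplus_{i=0}^{2^n-1}\un(i)$, and the outer projector $p$ picks out a summand whose reduction over $\overline k$ contains the Tate motive $\un(0)$ with multiplicity one (coming from $\bigtimes_j 1$) and $\un(\sum_j(2^n-1))$ with multiplicity one (coming from $\bigtimes_j l_0$), by the computation in Example~\ref{ex:outer_projector_quadric} applied factor-by-factor. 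More precisely, I would argue that any indecomposable outer summand of $\MKn{\prod_j Q_j}$ has exactly these two "extreme" Tate motives in its geometric decomposition, and hence if $N$ is such an indecomposable summand and is not invertible, it has rank $\geq 2$ but its two extreme Tate twists differ by $\sum_j(2^n-1)\equiv 0\bmod(2^n-1)$ — so after the periodicity identification $\un\cong\un(2^n-1)$ the motive $N_{\overline k}$ would contain $\un$ with multiplicity $\geq 2$ yet still be indecomposable over $k$.

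The main obstacle — and the step I would spend the most care on — is showing that this forces $N$ to be invertible, i.e.\ ruling out genuinely non-invertible indecomposable outer summands. The cleanest route I see: an outer projector $p$ satisfies $p\circ 1\times 1\circ p = p$ after restriction to both generic points, so $1\times 1$ (the rank-one idempotent cutting out $\bigtimes_j 1$ over $\overline k$, which need not be rational) is "dominated" by $p$; dually for $\bigtimes_j l_0$. If $N\subseteq U$ is indecomposable and $N$ is not the whole outer part, one splits $U=N\oplus U'$; since $U$ restricted to $\overline k$ has both extreme Tate motives and they are "connected through the whole width", I would show they must lie in the same indecomposable summand, forcing $N$ to absorb both and $U'$ to have rank $0$, whence $U=N$. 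Then I would invoke Theorem~\ref{th:prestableMDT} (or its iterated form for products, via Example~\ref{ex:k(Q)/k_reflects_MD_Kn} and base change down the generic splitting towers) to transport the situation to Chow motives of quadrics of dimension $<2^{n+1}-1$, where an outer summand of $\MCh{\prod Q_j}$ whose geometric realization has the two extreme Tate motives connected must, by the structure of binary motives and outer excellent connections (Proposition~\ref{prop:outer-excel}), be a Tate twist of a Rost-type binary motive of length $\equiv 0$, i.e.\ a twist of $\un$ — contradicting non-invertibility of $N$, unless $N$ was invertible to begin with. Packaging this last paragraph carefully, possibly by instead directly citing Theorem~\ref{th:detect_L_alpha} once it is available, is where the real work lies; the colimit reduction and the appeal to Corollary~\ref{cr:splitting_off_tate_over_product_quadrics} are routine.
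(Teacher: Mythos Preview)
Your reduction to finitely many quadrics and the application of Corollary~\ref{cr:splitting_off_tate_over_product_quadrics} match the paper. The gap is in everything after that. You try to analyse the outer summand over $\overline k$, but over $\overline k$ every summand of $\MKn{\prod_j Q_j}$ is split, and after the periodicity $\un\cong\un(2^n-1)$ the Tate motive $\un(0)$ appears with large multiplicity, not multiplicity one as you claim; so no useful constraint on $N$ survives. Theorem~\ref{th:prestableMDT} concerns single quadrics, not products, and Example~\ref{ex:k(Q)/k_reflects_MD_Kn} does not produce an ``iterated form'' for products. Your fallback, Theorem~\ref{th:detect_L_alpha}, applies only to the extensions $k(\alpha)$ for $\alpha\in\km$, not to arbitrary composites of function fields of $(2^n{-}1)$-dimensional quadrics, and in any case appears later in the paper.

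The missing idea is to work over $K$ itself rather than over $\overline k$. The outer motive of $\prod_i Q_i$ can be taken to be $\bigotimes_i\bigl(\un\oplus\widetilde U(Q_i)\bigr)=\bigoplus_{J\subset I}\bigotimes_{j\in J}\widetilde U(Q_j)$ (Examples~\ref{ex:outer_projector_quadric} and~\ref{ex:product_upper_motives}), so by Krull--Schmidt the indecomposable $N$ sits inside one tensor $\bigotimes_{j\in J}\widetilde U(Q_j)$. Now use Hoffmann's separation theorem~\cite[Cor.~1]{Hoffmann-sep}: over $K$ each $q_j$ becomes $\hyp\perp q_j'$ with $q_j'$ anisotropic, whence $\widetilde U(Q_j)_K\cong\un\oplus P_j'$ with $P_j'$ a summand of $\MKn{Q_j'}$. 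Each $Q_j'$ is an anisotropic quadric of dimension $2^n-3<2^n-1$ and hence $\Kn$-anisotropic, so $\bigotimes_j\MKn{Q_j'}$ and every mixed tensor involving at least one $P_j'$ contain no Tate summand. Thus $\bigl(\bigotimes_{j\in J}\widetilde U(Q_j)\bigr)_K$ has exactly one Tate summand. Since $N_K$ is assumed split and is a summand of this, $N_K$ is Tate of rank one; by RNP for $K/k$ (Corollary~\ref{cr:rnp_k(n)}), $N$ is invertible --- the desired contradiction.
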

\begin{proof}
We can assume that there exists a finite number of anisotropic quadrics $Q_i$ of dimension $2^n-1$
such that $N_K$ is split for $K=k(\prod_i Q_i)$.
By Corollary~\ref{cr:splitting_off_tate_over_product_quadrics} we get that $N$ is a direct summand
of an outer motive of $\MKn{\prod_i Q_i}$.

Recall from Example~\ref{ex:outer_projector_quadric} that an outer $\Kn$-motive of $Q_i$
can be taken to be the specialization of the outer Chow motive of $Q_i$, but it has the form $\un\oplus \widetilde{U}(Q_i)$ 
where summand $\un$ is given by the projector $1\times 1$.
An outer motive of $\prod_i Q_i$ can be thus chosen as a direct sum of $\bigotimes_{i\in J} \widetilde{U}(Q_i)$ for some $J\subset I$,
see Example~\ref{ex:product_upper_motives}.

Over the field $K$ we have $(q_i)_K \cong \hyp\perp q_i'$ where $q_i'$ is anisotropic~\cite[Cor.~1]{Hoffmann-sep}.
Therefore $\widetilde{U}(Q_i)_K\cong \un \oplus P_i'$ where $P_i'$ is a direct summand of $\MKn{Q_i'}$.

On the other hand, motive $\bigotimes_{i\in I} \MKn{Q_i'}$ does not contain Tate summands, and hence $\bigotimes P_i'$ as well.
To see that, note that $Q_i'$ is $\Kn$-anisotropic for all $i$, and hence also $\prod_i Q_i'$,
as $\Knt{\prod Q_i}\rarr \Knpt$ factors through $\Knt{Q_i}$.
And if we have a splitting $\un \xrarr{a} \MKn X\xrarr{b} \un$ for some $X$, then $\pi_X(a\cdot b) =1$,
i.e.\ $X$ is $\Kn$-isotropic.

Therefore, motive $\left( \bigotimes_{i\in J} \widetilde{U}(Q_i)\right)_K$ has only one Tate summand
(the tensor product of the only Tate $\un$ splitting off of $\widetilde{U}(Q_i)_K$ for each $i$).
Hence $N_K$ is Tate of rank $1$, 
and hence $N$ was invertible (recall that $N$ satisfies RNP by Corollary~\ref{cr:rnp_k(n)}).
\end{proof}

We complement Proposition~\ref{prop:splitting_over_quadrics_with_dim_2^n-1} by the following splitting criterion for invertible motives.

\begin{Prop}
\label{prop:split_invertible_2n-1}
\label{cr:split_invertible_2n-1}
Let $Q$ be a quadric of dimension at least $2^n-1$. 
Let $L\in \PM_{\Kn}(k)$ be an non-Tate invertible motive.

Then 
$L_{k(Q)}\cong\un_{k(Q)}$ 
if and only if 
$Q$ has an indecomposable upper binary Chow motive $R$ of length $2^n-1$ 
such that $R_{\,\Kn}\cong\un\oplus L$.
\end{Prop}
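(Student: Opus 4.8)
The plan is to prove the two implications separately: the ``if'' part is a short specialization argument, while the ``only if'' part is reduced to an anisotropic quadric of dimension exactly $2^n-1$, where Theorem~\ref{th:splitting_off_Tate} and Theorem~\ref{th:prestableMDT} apply, and then transferred back to $Q$ by the Cassels--Pfister subform theorem. For the ``if'' direction: over $k(Q)$ the form $q$ is isotropic, so by Lemma~\ref{lm:motive_of_isotropic_quadric} the motive $\MCH{Q_{k(Q)}}$ splits off its upper Tate summand, and since $R$ is upper the base change $R_{k(Q)}$ contains that summand; being of rank $2$ with a rank-one, hence Tate, complement inside $\MCH{Q_{k(Q)}}$, the motive $R_{k(Q)}$ is split. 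Applying the specialization functor and the periodicity $\un(2^n-1)\cong\un$ one obtains two decompositions of $(R_{\Kn})_{k(Q)}$, namely $\un_{k(Q)}\oplus L_{k(Q)}$ and a sum of two isomorphic Tate motives; comparing them via Krull--Schmidt — legitimate since $\Kn$-motives of quadrics satisfy Rost nilpotence over every field \cite{GilleVishik} — gives $L_{k(Q)}\cong\un_{k(Q)}$.

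For the converse, assume $L_{k(Q)}\cong\un_{k(Q)}$. If $Q$ were isotropic it would be rational and $k(Q)/k$ purely transcendental, so $\mathrm{res}_{k(Q)/k}$ would be fully faithful by Example~\ref{ex:univ-bij_transcendental}, forcing $L\cong\un$ and contradicting that $L$ is non-Tate; hence $Q$ is anisotropic. Next I would pass to a subquadric of the critical dimension: choose a subform $q_0\subseteq q$ with $\dim q_0=2^n+1$, necessarily anisotropic, so that $Q_0$ is an anisotropic quadric of dimension $2^n-1$ over $k$. Since $(q_0)_{k(Q_0)}$, and therefore $q_{k(Q_0)}$, is isotropic, the extension $k(Q_0)(Q)/k(Q_0)$ is purely transcendental; combined with $L_{k(Q)}\cong\un_{k(Q)}$ and full faithfulness again this yields $L_{k(Q_0)}\cong\un_{k(Q_0)}$.

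Now I would analyze $Q_0$. As $[Q_0]_{\Kn}=v_n$ is invertible and Lemma~\ref{lm:quadrics_endomorphisms} supplies hypothesis~(2) of Theorem~\ref{th:splitting_off_Tate}, applying that theorem to the indecomposable motive $L$ — which splits off $\un$ over $k(Q_0)$ — and to the outer summand $U=\un\oplus\MKn{Q_0,\pi}$ of $\MKn{Q_0}$ from Example~\ref{ex:outer_projector_quadric} shows that $L$ is a direct summand of $U$; being non-Tate, $L$ is an invertible summand of $\Mker{Q_0}$. By Proposition~\ref{prop:normal} and a twist count — $L_{k(Q_0)}\cong\un_{k(Q_0)}$ forces $\overline L\cong\un$, and $0$ is the only index in the admissible range $\{d',\ldots,d\}$ congruent to $0$ modulo $2^n-1$ — this summand must be $\MKn{Q_0,\varpi_0}$ with $\varpi_0$ rational, so by Theorem~\ref{th:prestableMDT}(1) the Chow projector $\omega^{\Ch}_0$ is rational and $R_0:=\MCH{Q_0,\omega^{\Ch}_0}$ is an upper binary Chow summand of $\MCH{Q_0}$ of length $2^n-1$; identifying its $\Kn$-specialization as $\varpi_0+\pi_0$ by a short computation in $\Kn(\overline{Q_0\times Q_0})$ gives $(R_0)_{\Kn}\cong\un\oplus L$. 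Let $\alpha\in\HH^{n+1}(k,\,\ZZ/2)$ be the element attached to $R_0$ by Theorem~\ref{th:binary_chow_motives}; it is non-zero, for otherwise $R_0$, hence $(R_0)_{\Kn}=\un\oplus L$, would be split. Since $Q_0$ acquires a rational point over $k(Q_0)$, the upper summand $(R_0)_{k(Q_0)}$ is split, so $\alpha_{k(Q_0)}=0$ and $(L_\alpha)_{k(Q_0)}\cong\un_{k(Q_0)}\cong L_{k(Q_0)}$ by Proposition~\ref{prop:prelim_L_alpha_Rost}; Theorem~\ref{th:iso_over_k(alpha)} applied to $Q_0$, with its non-trivial invertible summand $L$ and with $M=L_\alpha$, $N=L$ (both satisfying \eqref{eq:cond-on-motives}, being invertible), then gives either $L_\alpha\cong L$ or $L_\alpha\otimes L\cong L$, the latter forcing $L_\alpha\cong\un$ contrary to $\alpha\neq0$. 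Hence $L\cong L_\alpha$.

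Finally I would return to $Q$: from $L\cong L_\alpha$ and $L_{k(Q)}\cong\un_{k(Q)}$ we get $\alpha_{k(Q)}=0$, so the anisotropic $(n+1)$-fold Pfister form $\pi_\alpha$ becomes hyperbolic over $k(q)$; since $\dim q=\dim Q+2>2^n=\tfrac12\dim\pi_\alpha$, the subform theorem shows that $q$ is a Pfister neighbour of a scalar multiple of $\pi_\alpha$, whence $\MCH Q$ contains the Rost motive $R_\alpha$ as an upper binary summand of length $2^n-1$ with $(R_\alpha)_{\Kn}\cong\un\oplus L_\alpha\cong\un\oplus L$ by Proposition~\ref{prop:prelim_L_alpha_Rost}; this $R_\alpha$ is the required $R$. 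I expect the main obstacle to be the middle step: combining Theorem~\ref{th:splitting_off_Tate} with Theorem~\ref{th:prestableMDT} to pin the invertible summand of $\Mker{Q_0}$ down to a Morava--Rost motive, and then upgrading the a priori weak conclusion ``$\alpha$ vanishes over $k(Q)$'' into an honest Pfister-neighbour structure on $q$ that produces a direct summand of $\MCH Q$ itself rather than merely of the auxiliary subquadric $Q_0$.
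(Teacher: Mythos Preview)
Your argument up through the identification $L\cong L_\alpha$ is essentially correct, but the final step --- returning from $Q_0$ to $Q$ via the subform theorem --- has a genuine gap. You tacitly assume that the element $\alpha\in\HH^{n+1}(k,\ZZ/2)$ produced by Theorem~\ref{th:binary_chow_motives} is a \emph{pure symbol}, so that an $(n+1)$-fold Pfister form $\pi_\alpha$ and a Rost motive $R_\alpha$ exist. This is not known: it is precisely Vishik's conjecture \cite[Conj.~4.21]{Vish-quad}, as noted in the Remark following Theorem~\ref{th:iso_over_k(alpha)}. Without it the subform theorem cannot be invoked and there is no $R_\alpha$ to embed in $\MCH Q$. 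You also never reduce to $\dim Q\le 2^{n+1}-2$; for $\dim Q\ge 2^{n+1}-1$ the extension $k(Q)/k$ is $\oKn$-universally bijective (Example~\ref{ex:overline-Kn-univ-surj-quad}) and a non-Tate $L$ cannot split, so that case is vacuous --- but even granting the symbol assumption you would need $\dim q\le 2^{n+1}$ for $q$ to be a subform of $\pi_\alpha$.

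The paper avoids the detour through $Q_0$ entirely. After reducing to $2^n-1\le\dim Q\le 2^{n+1}-2$, it applies Theorem~\ref{th:splitting_off_Tate} directly to $Q$ to place a Tate twist of $L$ inside $\MKn Q$, and then Theorem~\ref{th:prestableMDT} converts this into a direct summand of $\MCH Q$ that splits over $k(Q)$ --- necessarily the upper binary summand of length $2^n-1$. The statement only asks for an upper binary Chow summand $R$, not a Rost motive, so neither the identification $L\cong L_\alpha$ nor any subform argument is needed.
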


\begin{proof}
The ``if'' part is trivial.

For the ``only if'' part recall that 
any invertible motive $L$ satisfies RNP for 
any field extensions, since $\End(L)\cong\End(\un)$. 
In particular, if the dimension of $Q$ is greater than $2^{n+1}-1$, then by Proposition~\ref{prop:reflects_MD} non-Tate motive $L$ cannot split over $k(Q)$.

We, thus, assume that $2^n-1\le \dim Q\le 2^{n+1}-2$.
By Theorem~\ref{prop:gen_splitting_Tate} we obtain that the 
Morava motive of $Q$ contains 
a summand that is isomorphic to a Tate twist of $L$. 
By Theorem~\ref{th:prestableMDT} this summand corresponds to a direct summand of the Chow motive of $Q$ 
that becomes completely split over $k(Q)$. It follows that this has to be a binary summand $R$ of length $2^n-1$
that is also an upper summand. 
\end{proof}

We finish this section with a 
 result that complements Theorem~\ref{prop:gen_splitting_Tate}.
This is well known to experts, but the statement does not seem to appear in this generality in the literature.

\begin{Prop}\label{prop:gen_splitting_Tate_res}
Let $A$ be a coherent theory,  $X\in \SmProj_k$, $N\in \PM_A(k)$ such that a Tate summand $\un_{k(X)}\sh i$ splits off of $N_{k(X)}$.
Then $\Mot AX\sh i$ splits off of $N\otimes \Mot AX$.

If $N_{k(X)}$ is completely split, i.e.\ isomorphic to $\oplus_{i\in I} \,\un_{k(X)}{\sh i}^{\oplus k_i}$,
 then $N\otimes \Mot AX$ is isomorphic to the direct sum $\oplus_{i\in I}\, \Mot AX{\sh i}^{\oplus k_i}$.
\end{Prop}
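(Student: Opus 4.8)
The plan is to upgrade the given splitting over $k(X)$ to a splitting over $X$ itself, using geometric Rost Nilpotence (Proposition~\ref{prop:geometric_RNP}), and then to push it down to $k$ along the structure morphism $f\colon X\rarr\Spec k$, using that the push-forward functor $f_*\colon\CM_A(X)\rarr\CM_A(k)$ sends $\un_X=\Mot{A}{X/X}$ to $\Mot{A}{X}$ and a pulled-back motive $N_X=f^*N$ to $N\otimes\Mot{A}{X}$.

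First I would lift the splitting to $X$. Write it as $\un_{k(X)}\sh i\xrarr{a}N_{k(X)}\xrarr{b}\un_{k(X)}\sh i$ with $b\circ a=\id$. Applying Proposition~\ref{prop:geometric_RNP} to $M:=\un_X\sh i\oplus N_X\in\CM_A(X)$, the restriction functor $\eta^*$ to the generic point induces a surjection $\End(M)\rarr\End(M_{k(X)})$ with nilpotent kernel; reading off the $\un$--$N$ and $N$--$\un$ components of the Hom-groups, the maps $\Hom_{\CM_A(X)}(\un_X\sh i,N_X)\rarr\Hom(\un_{k(X)}\sh i,N_{k(X)})$ and its transpose are surjective, so $a$ and $b$ lift to $\tilde a,\tilde b$ over $X$. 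The element $\tilde b\circ\tilde a-\id_{\un_X\sh i}$ lies in the kernel of $\End(\un_X\sh i)\rarr\End(\un_{k(X)}\sh i)$, hence is nilpotent, so $\tilde b\circ\tilde a$ is invertible; after replacing $\tilde a$ by $\tilde a\circ(\tilde b\circ\tilde a)^{-1}$ we obtain $\tilde b\circ\tilde a=\id_{\un_X\sh i}$, i.e.\ a genuine splitting over $X$.

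Next I would push this forward. Since Tate twists are pulled back from $k$ and $f$ is smooth projective, the projection formula gives $f_*(\un_X\sh i)\cong\Mot{A}{X}\sh i$ and $f_*(N_X)=f_*(f^*N)\cong N\otimes f_*\un_X\cong N\otimes\Mot{A}{X}$, compatibly with the obvious maps; applying the functor $f_*$ to the splitting over $X$ then yields $\Mot{A}{X}\sh i\xrarr{f_*\tilde a}N\otimes\Mot{A}{X}\xrarr{f_*\tilde b}\Mot{A}{X}\sh i$ with composite $f_*(\tilde b\circ\tilde a)=\id$, which proves the first claim. For the second claim, a decomposition $N_{k(X)}\cong\bigoplus_{i\in I}\un_{k(X)}\sh i^{\oplus k_i}$ is a complete system of orthogonal idempotents in $\End(N_{k(X)})$; since $\End(N_X)\rarr\End(N_{k(X)})$ is surjective with nilpotent kernel (Proposition~\ref{prop:geometric_RNP}), this system lifts to a complete system of orthogonal idempotents in $\End(N_X)$, giving $N_X\cong\bigoplus_jM_j$ with each $(M_j)_{k(X)}$ a single Tate motive $\un_{k(X)}\sh{i(j)}$. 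Each $M_j$ is then indecomposable over $X$, and as $\un_X\sh{i(j)}$ becomes isomorphic to $(M_j)_{k(X)}$ over $k(X)$, the fact that $\eta^*$ reflects motivic decompositions (Proposition~\ref{prop:geometric_RNP}) gives $M_j\cong\un_X\sh{i(j)}$; hence $N_X\cong\bigoplus_{i\in I}\un_X\sh i^{\oplus k_i}$, and applying the additive functor $f_*$ together with $f_*(\un_X\sh i)\cong\Mot{A}{X}\sh i$ yields $N\otimes\Mot{A}{X}\cong\bigoplus_{i\in I}\Mot{A}{X}\sh i^{\oplus k_i}$.

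The only delicate point is the descent from $\CM_A(X)$ to $\CM_A(k)$: one must know that push-forward along $f\colon X\rarr\Spec k$ is available (it is, as $X\in\SmProj_k$) and that the projection-formula identification $f_*(f^*N)\cong N\otimes\Mot{A}{X}$ holds and is natural — routine for motives over a base, since it may be checked on the level of correspondences. Everything else is a formal consequence of geometric Rost Nilpotence together with idempotent lifting along a surjection with nilpotent kernel, so I expect no genuine obstacle beyond this bookkeeping.
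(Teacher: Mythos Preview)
Your proof is correct and follows essentially the same approach as the paper: lift the splitting from $k(X)$ to $X$ via geometric Rost Nilpotence (Proposition~\ref{prop:geometric_RNP}), then apply the push-forward functor $\CM_A(X)\rarr\CM_A(k)$ sending $\un_X$ to $\Mot{A}{X}$ and $N_X$ to $N\otimes\Mot{A}{X}$. The paper's proof is much terser (a few lines), packaging both the lifting of the splitting and the identification of summands into a direct appeal to Proposition~\ref{prop:geometric_RNP}, whereas you spell out the idempotent lifting and the projection-formula step explicitly; but the content is the same.
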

\begin{proof}
By Proposition~\ref{prop:geometric_RNP} we lift
the splitting to the category $\PM_A(X)$, i.e.\ $N_X\cong \un_X\sh i \oplus R$.

Consider the restriction functor $\mathrm{res}\colon\PM_A(X)\rarr \PM_A(k)$
that sends $\mathrm M_A(Y\rarr X)$ to $\mathrm M_A(Y)$. 
In particular, it sends $N_X$ to $N\otimes \Mot AX$ and $\un_X$ to $\Mot AX$,
so we get the claim by applying $\mathrm{res}$ to the splitting above.

Similarly, we obtain the claim for the total splitting of $N$.
\end{proof}

\begin{Cr}
Let $Q$ be a quadric of dimension $2^n-1$ with the Chow upper binary motive.

Let $L_Q$ denote the non-trivial invertible summand of $\MKn Q$ 
and $R$ the complement to $\un\oplus L_Q$.

Then $L_Q\otimes R\cong  R$.
\end{Cr}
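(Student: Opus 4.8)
The plan is to deduce everything from the single fact that the invertible motive $L_Q$ trivialises over $k(Q)$. First I would recall why $(L_Q)_{k(Q)}\cong\un_{k(Q)}$. Since $\dim Q=2^n-1$, the upper indecomposable Chow summand of $\MCh Q$ is binary of length $2^n-1$, call it $R^{\Ch}$; its $\Kn$-specialisation is $R^{\Ch}_{\Kn}\cong\un\oplus L_Q$, which is exactly how the non-trivial invertible summand $L_Q\subseteq\MKn Q$ is obtained (over $\overline k$ both summands become $\un$, using $\un\sh{2^n-1}\cong\un$). Now $R^{\Ch}$ splits over $k(Q)$: as $Q_{k(Q)}$ is isotropic, $l_0$ is $k(Q)$-rational, so the rational projector $1\times l_0+l_0\times 1$ defining $R^{\Ch}$ decomposes into two rational Tate projectors. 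Hence $(R^{\Ch}_{\Kn})_{k(Q)}$ is completely split, and being an invertible direct summand of it, $(L_Q)_{k(Q)}\cong\un_{k(Q)}$ (this is the ``if'' direction of Proposition~\ref{prop:split_invertible_2n-1}).

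The key step is then to apply Proposition~\ref{prop:gen_splitting_Tate_res} with $A=\Kn$, $X=Q$, $N=L_Q$: since $(L_Q)_{k(Q)}$ is completely split (it equals $\un_{k(Q)}$), the second statement of that proposition gives $L_Q\otimes\MKn Q\cong\MKn Q$. Substituting the decomposition $\MKn Q\cong\un\oplus L_Q\oplus R$ and using additivity of $\otimes$, this reads $L_Q\oplus L_Q^{\otimes2}\oplus(L_Q\otimes R)\cong\un\oplus L_Q\oplus R$ in $\CM_{\Kn}(k)$.

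To finish I would identify $L_Q^{\otimes2}$ with $\un$ and then cancel. Applying $L_Q\otimes-$ to $L_Q\otimes\MKn Q\cong\MKn Q$ gives $L_Q^{\otimes2}\otimes\MKn Q\cong\MKn Q$, so $L_Q^{\otimes2}=L_Q^{\otimes2}\otimes\un$ is an invertible direct summand of $\MKn Q$. A short count shows that the only invertible summands of $\MKn Q$ are $\un$ and $L_Q$, each of multiplicity one: over $\overline k$ one has $\MKn Q\cong\un^{\oplus2}\oplus\bigoplus_{i=1}^{2^n-2}\un\sh i$, the two copies of $\un$ being accounted for by $\un$ and $L_Q$, while every indecomposable summand of $R$ becomes some $\un\sh i$ with $1\le i\le 2^n-2$. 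Since $L_Q^{\otimes2}$ becomes $\un$ over $\overline k$ it must be $\cong\un$ or $\cong L_Q$; the latter would force $L_Q\cong\un$ after tensoring by $L_Q^{\vee}$, contradicting the non-triviality of $L_Q$, so $L_Q^{\otimes2}\cong\un$. The isomorphism of the previous paragraph becomes $\un\oplus L_Q\oplus(L_Q\otimes R)\cong\un\oplus L_Q\oplus R$, and cancelling $\un\oplus L_Q$ by the Krull--Schmidt property of $\CM_{\Kn}(k)$ (valid for summands of motives of projective homogeneous varieties and their tensor products with invertible motives) yields $L_Q\otimes R\cong R$.

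I expect the only genuinely delicate point to be the first one — establishing that $L_Q$ trivialises over $k(Q)$ and that $\un,L_Q$ exhaust the invertible summands of $\MKn Q$; once those are in hand, the remainder is formal manipulation with $\otimes$ and Krull--Schmidt. (One may of course also phrase the case $L_Q\cong\un$ separately, where the statement is trivial, so that the assumption that $Q$ is anisotropic is harmless.)
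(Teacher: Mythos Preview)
Your proof is correct and follows essentially the same approach as the paper: both apply Proposition~\ref{prop:gen_splitting_Tate_res} with $N=L_Q$ and $X=Q$ to obtain $L_Q\otimes\MKn Q\cong\MKn Q$, then use $L_Q^{\otimes 2}\cong\un$ together with Krull--Schmidt to cancel $\un\oplus L_Q$ from both sides. The paper is more terse, simply noting that $L_Q^{\otimes 2}\cong\un$ and that $R$ has no invertible summands (the latter from the proof of Proposition~\ref{prop:splitting_over_quadrics_with_dim_2^n-1}), whereas you spell out self-contained arguments for both facts; but the structure is the same.
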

\begin{proof}
Recall that $R$ does not have invertible summands (cf.~proof of Proposition~\ref{prop:splitting_over_quadrics_with_dim_2^n-1}). 
By applying Proposition~\ref{prop:gen_splitting_Tate_res} to 
$N=L_Q$ and $X=Q$, 
we get that $L_Q \otimes (\un \oplus L_Q \oplus R) \cong \un \oplus L_Q \oplus R$. 
Using the Krull--Schmidt property (\cite[Sec.~2.6]{LPSS}) we obtain that $L_Q\otimes R$ is isomorphic to $R$
(note also that $L_Q^{\otimes 2}\cong \un$).
\end{proof}

\begin{Rk}
Conjecturally~\cite[Conjecture~4.21]{Vish-quad}, the quadric as in the Corollary 
should be a neighbour of an $(n+1)$-fold Pfister quadric,
i.e.\ $q\perp q'$ is a general $(n+1)$-Pfister form.

Then $R\cong \MKn{Q'}\sh1$~\cite[Prop.~4]{Rost-new},~\cite[Th.~7.1]{KarpMer-excel} and the reason for the above isomorphism 
is due to the fact that over $k(Q')$ the motive $L_Q$ splits and Prop.~\ref{prop:gen_splitting_Tate_res} can be applied.
However, we have established this isomorphism without assuming the existence of $Q'$.
\end{Rk}

\begin{Qu}
If $\MKn{Q'}\otimes L_\alpha \cong \MKn{Q'}$  
for some quadric $Q'$ of dimension less than $2^n-1$, does it follow that $q'$ is a neighbour of the Pfister form 
corresponding to $\alpha$? 
\end{Qu}

\subsection{Motivic decompositions over $k(\alpha)$}\label{sec:MDT_over_k(alpha)}

In Definition~\ref{def:k(alpha)} we have defined a class of field extensions $k(\alpha)$ splitting an element $\alpha \in \HH^{n+1}(k,\,\ZZ/2)$.
The basic example of it is when $\alpha$ is a symbol
and the field of functions of the corresponding Pfister quadric is $k(\alpha)$.
In fact, for arbitrary $\alpha$
one can take $k(\alpha)$ to be a composition $k\subset K\subset k(\alpha)$
where $K/k$ is $\oKn$-universally bijective, and $k(\alpha)/K$ is the field of functions of a Pfister quadric.
Thus, in the situations when one can use RNP for $K/k$, by Proposition~\ref{prop:reflects_MD} 
one reduces to the function field of a Pfister quadric.

In this section we study the decompositions of $\Kn$-motives over fields $k(\alpha)$.
However, we formulate our main result (Proposition~\ref{prop:decomposition_over_k(alpha)})
slightly differently: we consider fields of functions $k(Q)$, where $Q$ is a quadric of dimension $2^n-1$
with the upper (also, outer) motive being binary. If $Q$ is a Pfister neighbour,
then this binary motive is the Rost motive $R_\alpha$ for a symbol $\alpha\in \HH^{n+1}(k,\ZZ/2)$
and $k(Q)$ is stably birational to the $k(Q_\alpha)$, where $Q_\alpha$ is the corresponding Pfister quadric.
In particular, in this case $k(Q)$ lies in the class $k(\alpha)$.
Moreover, every such $Q$ is conjectured to be a Pfister neighbour, but we do not rely on this in the proof.

\begin{Prop}
\label{prop:decomposition_over_k(alpha)}
Let $Q$ be an anisotropic quadric of dimension $2^n-1$ such that its upper Chow motive is binary.
 
Let $M\in \PM_{\Kn}(k)$ be an indecomposable motive satisfying the following condition:
\begin{equation}
\label{eq:property-motive}
\tag{$\dagger$}
    \text{if a $k$-rational endomorphism of $M_{k(Q)}$ is not an isomorphism, then it is nilpotent.}
\end{equation}

Then one of the following holds:
\begin{enumerate}
    \item $M_{k(Q)}$ is indecomposable,
    \item $M_{k(Q)}\cong N^{\oplus 2}$ for some motive $N$.  
    Moreover, if a direct summand of $M_{k(Q)}$ cannot be a proper direct summand of itself,
    then $N$ is indecomposable.    
\end{enumerate}
\end{Prop}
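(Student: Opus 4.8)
The strategy is to transport the problem to the category of motives over the base $Q$, using geometric Rost Nilpotence (Proposition~\ref{prop:geometric_RNP}), and to analyze the endomorphism ring of $M$ after base change. The key structural input is the upper binary Chow motive of $Q$: let $R$ be the upper binary summand of $\MCh Q$ of length $2^n-1$, let $L_Q$ be the non-trivial invertible summand of $R_{\Kn}$ (so $R_{\Kn}\cong\un\oplus L_Q$ by Proposition~\ref{prop:prelim_L_alpha_Rost}--style reasoning and the fact that $R$ is outer), and let $p\in\Kn^{2^n-1}(Q\times Q)$ be an outer projector with $p_{k(Q)}=1\times l_0+l_0\times 1+l_0\times l_0$ as in Example~\ref{ex:outer_projector_quadric}. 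First I would lift any nontrivial idempotent $e\in\End(M_{k(Q)})$ to an idempotent $\widetilde e\in\End(M_Q)$ in $\CM_{\Kn}(Q)$ by Proposition~\ref{prop:geometric_RNP}, decompose $M_Q\cong A\oplus B$ accordingly, and push forward along $\CM_{\Kn}(Q)\to\CM_{\Kn}(k)$ to obtain $M\otimes\MKn Q\cong (\text{res }A)\oplus(\text{res }B)$.

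The heart of the argument is then to show that the summand of $M_{k(Q)}$ cut out by $e$ is, up to isomorphism, independent of the choice of $e$ (so the decomposition has exactly two isomorphic pieces), and to rule out finer decompositions. I would do this by the same $k$-rationality bookkeeping as in the proof of Theorem~\ref{th:iso_over_k(alpha)}: write the lifted morphisms over $Q_{k(Q)}$ in the form $\alpha\times 1+u\times l_0$ (using that the outer projector $p$ splits $\Kn^{*}(Y\times Q)\cong\Kn^{*}(Y)\langle 1,l_0\rangle\oplus\Kn^{*}(Y\times Q')$ after base change, where $q_{k(Q)}\cong\hyp\perp q'$), apply Lemma~\ref{lm:upper_motive} and Corollary~\ref{cr:pullbacks_k(Q_times_Q)} to compare the two pullbacks along $k(Q)\hookrightarrow k(Q\times Q)$, and deduce that if $e$ is a nontrivial idempotent then the "$l_0$-part" $u$ realizes an isomorphism between $M_{k(Q)}$ and $(M'\otimes L_Q)_{k(Q)}$ for a summand $M'$ of $M$ over $k$. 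Since $M$ is indecomposable, $M'=M$, so $eM_{k(Q)}\cong M_{k(Q)}$ or $\cong (M\otimes L_Q)_{k(Q)}$; combined with $L_Q^{\otimes 2}\cong\un$ (Proposition~\ref{prop:additivity_L_alpha}) and $\dim_{\F2}$-counting over $\overline{k}$, this forces both summands to be isomorphic to the same $N$, giving $M_{k(Q)}\cong N^{\oplus 2}$. The condition~\eqref{eq:property-motive} enters exactly to exclude the case where the $k$-rational endomorphism $b\circ a$ controlling the splitting is neither an isomorphism nor nilpotent — precisely as in the two-case split at the end of the proof of Theorem~\ref{th:iso_over_k(alpha)}.

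For the "moreover" clause, suppose $N$ itself decomposes, say $N\cong N_1\oplus N_2$ with $N_1\neq 0$. Then $M_{k(Q)}\cong N_1^{\oplus 2}\oplus N_2^{\oplus 2}$, and one checks that each $N_i$ again satisfies the relevant nilpotence property (Rost nilpotence for direct summands, cf.\ the remark after Theorem~\ref{th:iso_over_k(alpha)}), so the same analysis applies recursively; since $M$ is indecomposable over $k$, descending the decomposition $M_{k(Q)}\cong N_1^{\oplus 2}$ back to $k$ via the push-forward $\CM_{\Kn}(Q)\to\CM_{\Kn}(k)$ and the Krull--Schmidt property (\cite[Sec.~2.6]{LPSS}) would express $M$ as a proper summand of $M\otimes\MKn Q$ in a way that contradicts the hypothesis that a direct summand of $M_{k(Q)}$ cannot be a proper summand of itself. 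The main obstacle is the middle step: controlling the comparison of the two pullbacks $p_1^*,p_2^*\colon\Kn(Y_{k(Q)})\to\Kn(Y_{k(Q\times Q)})$ and keeping track of the $\un/L_Q$-components through the compositions $a\circ b$, $b\circ a$ — this is a delicate but essentially formal computation that reuses Lemma~\ref{lm:lift_of_isos_from_k(Q)_to_Q}, Corollary~\ref{cr:pullbacks_k(Q_times_Q)}, and the structure of $\Kn^*(\overline Q)$, and one must be careful that the arguments survive passing to $\Kn_{\num}$ when $v_n$ is set to $1$.
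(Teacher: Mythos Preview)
Your overall architecture (lift to $\CM_{\Kn}(Q)$, write endomorphisms over $Q_{k(Q)}$ in the form $\alpha\times 1+u\times l_0$ using the outer projector, track $k$-rationality) matches the paper. But the central step is not right, and the ``moreover'' argument does not give a contradiction.

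\textbf{The main gap.} Your step 5--6 asserts that the $l_0$-part $u$ gives an isomorphism $M_{k(Q)}\cong (M'\otimes L_Q)_{k(Q)}$ for some summand $M'\subset M$, and then that $eM_{k(Q)}\cong M_{k(Q)}$ or $(M\otimes L_Q)_{k(Q)}$. But $(L_Q)_{k(Q)}\cong\un$, so both targets are just $M_{k(Q)}$, and $eM_{k(Q)}=M_1$ is a \emph{proper} summand; the conclusion is vacuous. You are importing the shape of Theorem~\ref{th:iso_over_k(alpha)} (which compares two $k$-motives via an isomorphism over $k(Q)$) into a situation where there is no second $k$-motive to compare to. What is actually needed is the observation the paper uses: lift $\pi_1$ to an \emph{idempotent} $\phi_1\in\End(M_Q)$ (not just an endomorphism). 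The projector equation $\phi_1^{\circ 2}=\phi_1$ over $Q_{k(Q)}$ then forces the anticommutation relation $\pi_1\circ x+x\circ\pi_1=x$, so $x$ decomposes as $x_1\colon M_1\to M_2$ plus $x_2\colon M_2\to M_1$. Now $x\times(1+l_0)$ is $k$-rational (your bookkeeping is fine here), hence $x^{\circ 2}=x_2x_1\oplus x_1x_2$ is $k$-rational, and (\ref{eq:property-motive}) makes it an isomorphism or a nilpotent. In the first case $x_1,x_2$ are the desired isomorphisms $M_1\cong M_2$; in the second, the anticommutation gives $(\pi_1+x)^{\circ 2^m}=\pi_1+\sum_{i\le m}x^{\circ 2^i}$, a $k$-rational nontrivial projector on $M$, contradicting indecomposability. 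Without the anticommutation relation you have no mechanism forcing $x$ to interchange $M_1$ and $M_2$, and your argument does not produce an isomorphism between them.

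\textbf{The ``moreover'' clause.} Saying that a further decomposition would make $M$ a proper summand of $M\otimes\MKn Q$ is no contradiction: $M$ is \emph{always} a summand of $M\otimes\MKn Q$ via $\un\hookrightarrow\MKn Q$. The paper instead reruns the main argument with $\pi_1$ replaced by a projector onto $N_1\subset M_1$; the same dichotomy yields $N_1\cong N_2\oplus M_2\cong N_1\oplus N_2^{\oplus 2}$, so $N_1$ is a proper summand of itself, which is exactly the hypothesis you are told to violate.
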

\begin{Rk}
An indecomposable direct summand of the motive of a projective homogeneous variety
satisfies~\eqref{eq:property-motive}.
Moreover, it cannot be a proper direct summand of itself.
\end{Rk}

\begin{proof}

Let $M=\MKn{Y,\,\pi}$ where $Y\in\SmProj_k$ and $\pi \in \Knt{Y\times Y}$ is a projector.
We assume $M_{k(Q)}$ is not indecomposable, and need to show (2). 
Thus, we have $\pi_{k(Q)} = \pi_1 + \pi_2$ in $\Kn(Y\times Y\times k(Q))$
for $\pi_1$, $\pi_2$ non-trivial orthogonal projectors, and denote by $M_1$, $M_2$ corresponding summands of $M_{k(Q)}$.
By Lemma~\ref{lm:upper_motive} we can lift $\pi_1$ to an endomorphism $\phi_1$ of $M_Q$ 
on which an outer projector of $Q$ acts trivially. 
Here and below we use the outer projector of $Q$ that becomes of the form $1\times 1 +1\times l_0 +l_0\times 1$ over $k(Q)$, cf. Section~\ref{sec:iso-motives-kalpha}.

Recall 
that $(\phi_1)_{Q_{k(Q)}}$ 
has the form $\pi_1 \times 1 + x \times l_0$
for some $x\in \End(M_{k(Q)})$
(see discussion before Lemma~\ref{lm:lift_of_isos_from_k(Q)_to_Q}). 
We claim that, moreover, we may assume that $\phi_1$ is a projector.

The restriction of $\phi_1$ to $k(Q)$ is a projector,
and since the kernel of this restriction map is nilpotent (by Proposition~\ref{prop:geometric_RNP}), there exists 
a polynomial in $\phi_1$ (in the ring $\Enda{M_Q}$) that has the same restriction to $k(Q)$ and is a projector by~\cite[Lm.~2.4]{VishYag}.
However, the composition of the elements of the form $a\times 1 + b\times l_0$ in $\End(M_{Q_{k(Q)}})$
has the same form. 
Therefore, a polynomial in $(\phi_1)_{Q_{k(Q)}}$
will have the same form as we seek.

For the element $x$ that appears in  $(\phi_1)_{Q_{k(Q)}}$ 
we get the following relation due to the fact that $\phi_1$ is a projector:
\begin{equation}\label{eq:anticommutant_relation}
\pi_1 \circ x + x\circ \pi_1 = x.
\end{equation}
This relation implies that $x$ is a sum of two morphisms $x_1:=\pi_1\circ x\colon M_1\rarr M_2$ and $x_2:=x\circ\pi_1\colon M_2\rarr M_1$ as an endomorphism of $M=M_1\oplus M_2$.
In particular, $x^{\circ 2}$ is a direct sum of two morphisms $x_2\circ x_1\colon M_1\rarr M_1$ and $x_1\circ x_2\colon M_2\rarr M_2$,
and hence 
$\pi_1$ commutes with $x^{\circ 2}$ in $\End(M_{k(Q)})$.

By 
taking the push-forward of the element $\phi_1$ 
from $\Knt{Y\times Y\times Q}$ to $\Knt{Y\times Y}$ 
and base change to $k(Q)$ 
we get that $\pi_1 + x$ is a $k$-rational element.
Hence $x\times (1+l_0)$ is also $k$-rational as we can subtract $(\pi_1+x)\times 1$ from $\pi_1 \times 1 + x\times l_0$
without leaving the $k$-rational subgroup.
By composing the element $x\times (1+l_0)$ with itself and taking its push-forward
we see that $x^{\circ 2}$ is $k$-rational. Consider two following possibilities.

1. If $x^{\circ 2}$ is an isomorphism, then so must be $x_1\circ x_2$ and $x_2\circ x_1$, and hence $M_1$ is isomorphic to $M_2$, 
as we wanted to show. 
Moreover, assume that $M_1\cong M_2$ is not indecomposable. 
Note that 
we can apply the above argument about $\pi_1$ to a projector on the direct summand $N_1$ of $M_1\cong N_1\oplus N_2$. 
Then we conclude that $N_1$ is isomorphic to $N_2\oplus M_2\cong N_1\oplus N_2^{\oplus 2}$,
i.e.\ $N_1$ is a non-trivial direct summand of 
itself.

2. If $x^{\circ 2}$ is not an isomorphism, then it has to be nilpotent by~\eqref{eq:property-motive}. 
Using the fact that $\pi_1$ and $x^{\circ 2}$ commute and the relation~\ref{eq:anticommutant_relation} 
one gets that $(\pi_1+x)^{\circ 2^m} = \pi_1+\sum_{i=0}^m x^{\circ2^i}$,
and since $x^{\circ2}$ is nilpotent, we see that $(\pi_1+x)^{\circ 2^m}$ is a $k$-rational projector for high enough $m$.
Since $M$ is indecomposable, and  $(\pi_1+x)^{\circ 2^m}$ is not zero,
it must be the identity element. 
This implies that the projector $\pi_2$ equals $\sum_{i=0}^m x^{\circ 2^i}$, a sum of nilpotents,
and hence is zero.
\end{proof}

\begin{Cr}
\label{cr:pfister_morava_mdt}
Let $Q'$ be any quadric,
and let $Q$ be a quadric of dimension $2^n-1$ with upper binary Chow motive.

Then every indecomposable summand of $\MKn{Q'}$ stays indecomposable over $k(Q)$. 
\end{Cr}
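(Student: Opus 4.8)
The plan is to deduce Corollary~\ref{cr:pfister_morava_mdt} from Proposition~\ref{prop:decomposition_over_k(alpha)} by showing that the second alternative of the proposition cannot occur for direct summands of $\MKn{Q'}$. First I would recall that by Lemma~\ref{lm:motive_of_isotropic_quadric} and Proposition~\ref{prop:quad_MDT_stable} we may assume $Q'$ is anisotropic, and that every indecomposable summand $M$ of $\MKn{Q'}$ is a direct summand of the motive of a projective homogeneous variety, hence satisfies the nilpotence hypothesis~\eqref{eq:property-motive}: indeed, $k$-rational endomorphisms of $M_{k(Q)}$ are pullbacks of endomorphisms of $M$, and by Corollary~\ref{cr:rnp_k(n)} (applied since $Q$ is a quadric of dimension $2^n-1\geq 2^n-1$, hence $k(Q)/k$ is covered by Proposition~\ref{prop:Kn-RNP_hypersurface}) the kernel of $\End(M)\rarr\End(M_{k(Q)})$ consists of nilpotents, so a $k$-rational endomorphism of $M_{k(Q)}$ that is not an isomorphism lifts to a non-isomorphism of $M$, which is nilpotent because $M$ is an indecomposable summand of a projective homogeneous variety (local endomorphism ring), and thus its pullback is nilpotent.

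Next I would invoke Proposition~\ref{prop:decomposition_over_k(alpha)}: either $M_{k(Q)}$ is indecomposable, which is exactly the claim, or $M_{k(Q)}\cong N^{\oplus 2}$ with $N$ indecomposable (using the Remark after the proposition, which says an indecomposable summand of a projective homogeneous variety cannot be a proper direct summand of itself). So the task reduces to ruling out $M_{k(Q)}\cong N^{\oplus 2}$. The key point is that $\MKn{Q'}_{k(Q)}$ has the form of a $\Kn$-motive of the quadric $Q'_{k(Q)}$ (together with its isotropic decomposition), and the multiplicities of Tate motives in a $\Kn$-motive of a quadric over $\overline{k}$ are governed by $\Lamn{\overline{Q'}}$: as recorded in Section~\ref{sec:rat-proj-quad}, in the $\Kn$-kernel $\Mker{\overline{Q'}}$ all Tate summands are distinct when $\dim Q'$ is odd, and only $\un\sh d$ has multiplicity $2$ when $\dim Q'$ is even; the complementary Tate summands are all distinct twists. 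So over $\overline{k}$ no indecomposable summand of $\MKn{Q'}$ except possibly the one containing the doubled $\un\sh d$ can appear with even multiplicity in $N^{\oplus 2}$.

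Therefore the remaining obstacle is the even-dimensional case where $M$ is the unique indecomposable summand of $\Mker{Q'}$ whose restriction to $\overline k$ contains $\un\sh d$ with multiplicity $2$. Here I would argue as in Proposition~\ref{prop:normal} and Proposition~\ref{prop:quadric_kn-iso_normal_form}\,(1): if $M_{k(Q)}\cong N^{\oplus 2}$, then $\overline{N}$ contains $\un\sh d$ with multiplicity $1$ and, decomposing a projector for $N$ over $\overline{k(Q)}$, one of the two copies of $\un\sh d$ (corresponding to $\varpi_{d'}$ and $\varpi_d$) gives a rational idempotent $\sigma$ over $k(Q)$ of the form $(l_d + v_n^{-1}h^{d'})\times(l_d+v_n^{-1}h^{d'})$ or its $\overline\tau$-image. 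Then composing with $h^d\times h^d$ and pushing forward along a projection $Q'_{k(Q)}\times Q'_{k(Q)}\rarr Q'_{k(Q)}$ shows $l_0$ (equivalently $v_n^2\,l_0$) is rational in $\Kn(\overline{Q'_{k(Q)}})$ over $k(Q)$, so by Proposition~\ref{prop:l_0_rational_isotropic} the quadric $Q'_{k(Q)}$ is isotropic; but we reduced to $Q'$ anisotropic of dimension $2^n-1\le\dim Q'$, and $Q'_{k(Q)}$ being isotropic would force, after passing through the generic splitting tower, a contradiction with $M$ being an indecomposable non-Tate summand — more precisely, the doubled-$\un\sh d$ summand of an anisotropic even-dimensional quadric cannot become a square over $k(Q)$ unless $Q'$ already splits enough over $k(Q)$, and anisotropy of $Q'$ over $k(Q)$ (or the structure of its $\Kn$-kernel) precludes this. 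The hard part will be making this last reduction clean; I expect it is handled by the same outer-excellent-connection/indecomposability bookkeeping used in the proof of Proposition~\ref{prop:normal}, combined with the fact (Proposition~\ref{prop:reflects_MD}, via $\oKn$-universal bijectivity of function fields of large quadrics) that passing to $k(Q)$ with $\dim Q=2^n-1$ does not split off new Tate summands from an indecomposable non-invertible motive (Proposition~\ref{prop:splitting_over_quadrics_with_dim_2^n-1}), so the $\overline{k(Q)}$-decomposition type of $M_{k(Q)}$ coincides with that of $M$ and in particular still has the doubled $\un\sh d$ sitting inside a single indecomposable piece.
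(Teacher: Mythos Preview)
Your setup via Proposition~\ref{prop:decomposition_over_k(alpha)} is correct, and the odd-dimensional case is fine: since all Tate summands in $\Mker{\overline{Q'}}$ are distinct, the alternative $M_{k(Q)}\cong N^{\oplus 2}$ is impossible. This matches the paper.

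The even-dimensional case, however, has a genuine gap. You correctly observe that $M_{\overline k}\cong\un\sh d^{\oplus 2}$, so $N$ would be invertible, but your proposed argument via rationality of $l_0$ does not close the case. Even granting the projector manipulation (which is not fully justified), Proposition~\ref{prop:l_0_rational_isotropic} would only yield that $Q'_{k(Q)}$ is isotropic --- and that is \emph{not} a contradiction: $Q'$ anisotropic over $k$ may well become isotropic over $k(Q)$. Your final sentence invoking Proposition~\ref{prop:splitting_over_quadrics_with_dim_2^n-1} is also off: that proposition says $M_{k(Q)}$ is not \emph{split}, not that it stays indecomposable, so it only rules out the sub-case $N\cong\un\sh d$ (since then $M_{k(Q)}$ would be split and $M$ is rank $2$, hence not invertible). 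It says nothing when $N=L$ is a non-trivial invertible motive, and this is precisely the case you do not handle. The closing claim that ``the doubled $\un\sh d$ still sits inside a single indecomposable piece'' is circular.

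The paper's route in the even case is quite different. After ruling out $N$ Tate (via Theorem~\ref{th:splitting_off_Tate}, though your use of Proposition~\ref{prop:splitting_over_quadrics_with_dim_2^n-1} also works here), it passes to the Chow side via Theorem~\ref{th:prestableMDT}: the Chow summand $\widetilde N$ corresponding to $M$ is, for $\dim Q'<2^{n+1}-2$, the rank-$2$ discriminant motive $R_{\mathrm{disc}(q')}\sh d$. If $M_{k(Q)}$ decomposed into two non-Tate invertibles, Theorem~\ref{th:prestableMDT}(1) over $k(Q)$ would force $\widetilde N_{k(Q)}$ to split into two Tates, hence $M_{k(Q)}$ would contain a Tate --- contradiction. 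The boundary case $\dim Q'=2^{n+1}-2$ (where $\widetilde N$ has rank $4$) requires a separate and more delicate argument: one shows $Q'_{k(Q)}$ becomes a Pfister quadric, applies Proposition~\ref{prop:decomposition_over_k(alpha)} to the \emph{complement} of $M$ in $\Mker{Q'}$ to produce invertible summands already over $k$, and then uses Theorem~\ref{th:binary_chow_motives} together with Proposition~\ref{prop:splitting_over_quadrics_with_dim_2^n-1} to reach a contradiction. None of this structure appears in your proposal.
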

\begin{proof}
We assume $n>1$, as otherwise the claim is trivial. By passing to the 
$\Kn$-kernel form of $Q'$ 
we may assume that $\dim Q'<2^{n+1}-1$. Let $N$ be an indecomposable direct summand of $\MKn{Q'}$,
we may also assume that it is not-Tate, and in particular that $N$ lies in the $\Kn$-kernel motive of $Q'$.

If $\dim Q'$ is odd, 
then $N$ cannot decompose into two isomorphic summands over any field extension, because all Tate motives in $\Mker{Q'_{\,\overline k}}$ are different, see Section~\ref{sec:rat-proj-quad}.  
Thus, by Proposition~\ref{prop:decomposition_over_k(alpha)} the motive $N$ stays indecomposable over $k(Q)$.

If $\dim Q'$ is even and $N_{k(Q)}$ decomposes into two isomorphic summands,
then $N$ becomes $\un\sh d^{\oplus 2}$ over $\overline{k}$, again by Section~\ref{sec:rat-proj-quad}. 
If $N_{k(Q)}\cong \un(d)^{\oplus 2}$,
then by Theorem~\ref{th:splitting_off_Tate} 
$N$ has to be a direct summand of the outer motive of $Q$.
However, $\MKn Q$ does not contain an indecomposable summand that becomes $\un\sh d^{\oplus 2}$ over $\overline{k}$. 
Hence $N_{k(Q)}\cong L^{\oplus 2}$ where $L$ is not split.

Let $\widetilde{N}$ be the corresponding to $N$ by Theorem~\ref{th:prestableMDT} indecomposable direct summand 
of the Chow motive of $Q'$. 
Recall that the Chow MDT of $\widetilde{N}$ is obtained by adding excellent connections of length $2^n-1$ to $\unCH\sh d^{\oplus 2}$.
Thus, if $\dim Q'<2^{n+1}-2$, then there are no connections to add, and 
$(\widetilde{N})_{\,\overline{k}}\cong\unCH\sh d^{\oplus 2}$.
In other words, $\widetilde{N}$ is a binary motive of length $0$, 
and thus $\widetilde N\cong R_{\mathrm{disc}(q)}\sh d$.
In particular, $\widetilde{N}$ splits over a field $K$ iff $\mathrm{disc}(q)$ is trivial. 
Hence $\widetilde{N}$ remains indecomposable over $k(Q)$, and therefore also does $N$.

If $\dim Q'=2^{n+1}-2$, $d=2^n-1$, then $\widetilde{N}$ is a rank $4$ motive
 with the Tate summands $\un\sh0, \un\sh d^{\oplus 2}, \un\sh{2d}$ 
over $\overline{k}$. 
Since $N$ splits into two non-trivial invertible summands over $k(Q)$, 
we see thet $\widetilde{N}$ splits into two binary motives. 
Over the field $k(Q'\times Q)$ this binary motive is split, and hence $Q'$ becomes hyperbolic. 
This implies that $(Q')_{k(Q)}$ is a Pfister quadric~\cite[Cor.~23.4]{EKM}.
Therefore its $\Kn$-kernel motive is isomorphic to $L\oplus \bigoplus_{j=0}^{2^n-2} L\sh j$.
By applying Proposition~\ref{prop:decomposition_over_k(alpha)} to the complement summand to $N$ in the $\Kn$-kernel motive of $Q'$ (here we use $n>1$) 
we get that it is a direct sum of invertible motives $L'$ already over field $k$ where $(L')_{k(Q)}\cong L$.
Let $B$ be a corresponding to $L'$ binary motive as a summand of $\MCh{Q'}$.
By the Theorem of Vishik--Izhboldin~\ref{th:binary_chow_motives} there exists $\beta \in \HH^{n+1}(k,\,\ZZ/2)$ such that 
$B$ is split 
over the splitting field $k(\beta)$, 
and hence $\widetilde{N}_{k(\beta)}$ splits off two Tate motives $\un\sh0, \un\sh{2d}$, and the complement summand is a binary motive of length $0$, 
i.e.\ $R_{\mathrm{disc} \left(q'_{k(\beta)}\right) }$.
Since $Q'$ becomes a Pfister quadric over $k(Q)$, its discriminant is trivial,
and hence $\widetilde{N}$ is split over $k(\beta)$,
i.e.\ $N_{k(\beta)}$ is split, contradicting Prop.~\ref{cr:splitting_over_quadrics_with_dim_2^n-1}.
\end{proof}
\begin{Rk}
\label{rk:chow_connections_over_k_alpha}
Using Theorem~\ref{th:prestableMDT} one can reformulate this corollary as the claim
that the connections in the ``central part'' of the Chow-MDT of $Q'$ of dimension less than $2^{n+1}-1$ do not vanish over $k(Q)$.
\end{Rk}

\section{Milnor K-theory modulo 2 as invertible Morava motives}\label{sec:milnor_k-theory_picard}

In this section we present one of the main new constructions of this article:
invertible motives corresponding to elements in Milnor K-theory modulo 2.

In the course of the proof of the Bloch--Kato conjecture~\cite{MerNorm, MerSusBK2, MerSusBK3, RostBK3, Voe_Z2, Voe_Zl}
it became clear that one needs some geometric objects 
that control the vanishing of the elements in $\mathrm{K}^{\mathrm{M}}_{n+1}(k)/p$.
Although one knows since Nesterenko--Suslin~\cite{NesSus} and Totaro~\cite{TotaroMilnor}  
that Milnor K-theory is a part of motivic cohomology of the point, 
and thus has an algebro-geometric description,
one needs more tools to study what happens to it over different field extensions.
The motivic approach to achieve this is by the study of the splitting of the Rost motives reviewed below,
however, see \cite{DemFlo} for a different approach 
 via  existence of rational points on some ind-varieties.

For a symbol $\alpha \in\km$, 
Rost motives $R_\alpha$
that appear in Pfister quadrics (see~Prop.~\ref{prop:prelim_motive_pfister}) 
were these objects
that allowed to prove the Milnor conjectures \cite{Voe_Z2, OVV}.
Two properties of them are crucial for this purpose: 
\begin{enumerate}
    \item[($R_\alpha$-1)] $R_\alpha$ splits over field extension $K$  if and only if $\alpha_K$ vanishes;
    \item[($R_\alpha$-2)] $R_\alpha$ is the upper motive of a $\nu_n$-variety $Q$, i.e.\ a smooth projective variety such that $[Q]_{\Kn}=v_n$.
\end{enumerate}
In this way $\Knf$ shows up in the Voevodsky's proof and, in fact, 
it is one of the main insights that led to conjecturing the existence
of algebraic Morava K-theory in the first place~\cite{Voe}.

Nevertheless Rost motives admit two shortcomings. 
First, $R_\alpha$
is defined and exists only for a symbol $\alpha$,
and not for an arbitrary element in $\km$.
Second, at least to the authors knowledge, there is no criteria that allows 
to show that $R_\alpha$ is a direct summand of some Chow motive $M$.
This makes finding smooth projective varieties in which $R_\alpha$
appears a complicated task. 
However, a lot of progress has been made in this direction
for projective homogeneous varieties,
see e.g.\ \cite{PSZ, Macdonald}.
Invertible $\Kn$-motives $L_\alpha$ that we introduce
 avoid both of these handicaps.

 Recall that $\Pic(C)$ denotes the group of classes of isomorphisms of invertible objects 
 in a symmetric monoidal category $C$ with the group operation coming from the monoidal structure.

\begin{Th}\label{th:nat_transform_milnor_picard}
Let $k$ be a field of characteristic 0.

\begin{enumerate}
    \item 
There exists a unique injective natural transformation 

\[
\begin{aligned}
    &\mathrm{K}^{\mathrm{M}}_{n+1}(-)/2 &&\xhookrightarrow{\quad} &&\Pic\left(\CM_{\Kn}(-)\right) \\
    &\hspace{2em} \alpha &&\mapsto && \hspace{1em} L_\alpha
\end{aligned}
\]

between functors of abelian groups on the category of field extensions of $k$.

\item If $\alpha$ is a symbol, then $L_\alpha$ is 
isomorphic to the summand of the $\Kn$-specialization of the Rost motive $R_\alpha$
defined in Proposition~\ref{prop:prelim_L_alpha_Rost}.
\end{enumerate}
\end{Th}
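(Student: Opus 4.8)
The plan is to construct the natural transformation on symbols first using Proposition~\ref{prop:prelim_L_alpha_Rost}, then extend it to all of $\km$ by additivity, using the fact that every element of $\km$ is a sum of symbols, and finally establish uniqueness and injectivity.

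\textbf{Step 1: Definition on symbols.} For a symbol $\alpha\in\km$, set $L_\alpha$ to be the invertible $\Kn$-motive from Proposition~\ref{prop:prelim_L_alpha_Rost}, i.e.\ the complement to the Tate summand in $(R_\alpha)_{\Kn}$. That $L_\alpha$ is invertible follows from the decomposition of the Morava motive of a Pfister quadric in Proposition~\ref{prop:prelim_motive_pfister} together with the Krull--Schmidt property: $\MKn P$ decomposes into $2^{n+1}$ invertible pieces, all Tate twists of $\un$ and of $L_\alpha$, hence $L_\alpha\otimes L_\alpha$ is an invertible summand of $\MKn{P\times P}$ that splits over $\overline k$, forcing $L_\alpha^{\otimes 2}\cong\un$ (this is the Remark after Proposition~\ref{prop:prelim_L_alpha_Rost}; I would reprove it cleanly via Lemma~\ref{lm:product_with_rost_motive}). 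Naturality in the base field is immediate since the Rost motive and its specialization commute with base change.

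\textbf{Step 2: Additivity and well-definedness on $\km$.} The hard part will be showing that if $\alpha=\sum_i\alpha_i=\sum_j\beta_j$ are two expressions of an element of $\km$ as sums of symbols, then $\bigotimes_i L_{\alpha_i}\cong\bigotimes_j L_{\beta_j}$; equivalently, that $\alpha\mapsto\bigotimes L_{\alpha_i}$ descends to a homomorphism $\km\to\Pic(\CM_{\Kn}(k))$. The key input is the relation to quadratic forms: lift $\alpha$ to $q\in I^{n+1}(k)$, and recall from the discussion in the introduction and Section~\ref{sec:stable} that the $\Kn$-kernel form of $q$ over a suitable field $K_j$ in the generic splitting tower (with $K_j/k$ being $\oKn$-universally bijective by Corollary~\ref{cr:Kn-split-quad} and Example~\ref{ex:overline-Kn-univ-surj-quad}) is a general $(n+1)$-Pfister form, whose Morava motive splits into copies of $\un$ and $L_{[q]}$. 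Since the base change functor reflects motivic decompositions (Proposition~\ref{prop:reflects_MD}), one gets a canonical invertible summand of $\MKn Q$ over $k$ depending only on $[q]=\alpha$. To identify this with $\bigotimes_i L_{\alpha_i}$ when $q=\langle\langle a_1,\dots,a_{n+1}\rangle\rangle\perp(\text{lower }I^{n+1})$, I would argue that the relevant summand of $\MKn Q$ trivializes precisely over the fields splitting $\alpha$ and use the uniqueness part of Theorem~\ref{th:iso_over_k(alpha)} / Corollary~\ref{prop:k(alpha)-kernel-picard}: namely, two invertible motives trivializing over exactly the same field extensions and lying in the kernel of base change to $k(\alpha)$ agree. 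More precisely, since $L_\alpha$ is characterized (Proposition~\ref{prop:prelim_L_alpha_Rost}(1)) by the property that $(L_\alpha)_K\cong\un_K\iff\alpha_K=0$, and since tensor products of invertible motives have the expected splitting behavior, additivity reduces to the statement that $L_{\alpha+\beta}$ (defined via the Pfister form route) trivializes exactly when $\alpha_K+\beta_K=0$; I would deduce this from the fact that $\Pic(\CM_{\Kn}(-))$ is a sheaf-like functor and the vanishing locus of $\alpha+\beta$ in $\km$ can be computed after passing to fields where $\alpha,\beta$ become symbols.

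\textbf{Step 3: Injectivity and uniqueness.} Injectivity: if $L_\alpha\cong\un$ over $k$, then in particular $\alpha$ must vanish, since for a symbol this is Proposition~\ref{prop:prelim_L_alpha_Rost}(1), and the general case follows by writing $\alpha$ as a sum of symbols and examining behavior over splitting fields, or directly: $L_\alpha$ trivializes over $k(\alpha)$ and the kernel of $\Pic(\CM_{\Kn}(k))\to\Pic(\CM_{\Kn}(k(\alpha)))$ is $\ZZ/2$ generated by $L_\alpha$ (Corollary~\ref{prop:k(alpha)-kernel-picard}), so $L_\alpha\cong\un\iff$ that kernel is trivial $\iff\alpha_{k(\alpha)}$-considerations force $\alpha=0$. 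Uniqueness of the natural transformation: any natural transformation $T\colon\km\to\Pic(\CM_{\Kn}(-))$ must send a symbol $\alpha$ to an invertible motive that, by naturality, becomes trivial over every field splitting $\alpha$; I would show that the only such invertible motive over $k$ is $L_\alpha$ or $\un$, using that $\alpha$ has a "generic splitting field" and that the Rost motive controls this, so $T(\alpha)=L_\alpha$ on symbols, and then additivity pins down $T$ on all of $\km$. Part (2) of the theorem is then simply the definition adopted in Step 1. The main obstacle throughout is Step 2: making the additivity argument rigorous requires carefully relating the ad hoc definition of $L_\alpha$ for non-symbols (via lifting to $I^{n+1}$ and descending down the generic splitting tower) to tensor products of the symbol case, and this is where the machinery of Sections~\ref{sec:morava_quadrics} and~\ref{sec:morava_motives_over_function_fields}---reflection of motivic decompositions, the structure of $\Mker Q$, and Theorem~\ref{th:iso_over_k(alpha)}---does the essential work.
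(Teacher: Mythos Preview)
Your overall architecture matches the paper's: define $L_\alpha$ via quadrics, prove additivity, then injectivity and uniqueness, with Theorem~\ref{th:iso_over_k(alpha)} and Proposition~\ref{prop:k(alpha)-kernel-picard} doing the heavy lifting. But Step~2 contains a genuine gap.

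You write that ``$L_\alpha$ is characterized (Proposition~\ref{prop:prelim_L_alpha_Rost}(1)) by the property that $(L_\alpha)_K\cong\un_K\iff\alpha_K=0$'' and that ``additivity reduces to the statement that $L_{\alpha+\beta}$ trivializes exactly when $\alpha_K+\beta_K=0$.'' Proposition~\ref{prop:prelim_L_alpha_Rost}(1) does \emph{not} say $L_\alpha$ is characterized by its splitting behaviour; it only records that behaviour. Nothing in the paper shows that two invertible $\Kn$-motives with identical trivialization loci must be isomorphic, and there is no reason to expect this for arbitrary elements of $\Pic(\CM_{\Kn}(k))$. The ``sheaf-like functor'' and ``vanishing locus'' remarks are not a substitute for an argument; in particular, to show $\bigotimes_i L_{\alpha_i}$ trivializes over $k(\alpha)$ you would already need to know that $\sum_i (\alpha_i)_{k(\alpha)}=0$ forces the tensor product to be trivial over $k(\alpha)$, which is the very statement you are trying to prove.

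The paper avoids this entirely. It first defines $L_\alpha$ for \emph{every} $\alpha$ directly from the quadric (Proposition~\ref{prop:def_L_alpha}), so there is no well-definedness issue of the form ``$\bigotimes_i L_{\alpha_i}$ depends only on $\sum_i\alpha_i$.'' Additivity (Proposition~\ref{prop:additivity_L_alpha}) is then a single comparison: with $\gamma=\alpha+\beta$, pass to an $\oKn$-universally bijective extension $F$ where $\alpha,\beta,\gamma$ are all represented by Pfister forms, and restrict further to $F_\beta$, the function field of the Pfister quadric for $\beta_F$. Over $F_\beta$ one has $\alpha_{F_\beta}=\gamma_{F_\beta}$, hence $(L_\alpha)_{F_\beta}\cong(L_\gamma)_{F_\beta}$ \emph{by functoriality of the construction}, not by any splitting-locus principle. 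Theorem~\ref{th:iso_over_k(alpha)} then yields either $L_\alpha\cong L_\gamma$ or $L_\alpha\otimes L_\beta\cong L_\gamma$ over $F$; the first alternative forces $\alpha_F=\gamma_F$ via Proposition~\ref{prop:prelim_L_alpha_Rost}, hence $\beta=0$ by Theorem~\ref{KRS}. This is the argument you are reaching for when you cite Theorem~\ref{th:iso_over_k(alpha)}, but the point is that it compares two specific motives over one specific extension, never invoking a global characterization.

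For injectivity, the paper's route is slightly more direct than yours: by \cite[Th.~2.10]{OVV} every nonzero $\alpha$ becomes a nonzero \emph{symbol} over some extension, and then Proposition~\ref{prop:prelim_L_alpha_Rost}(1) applies immediately. Your uniqueness argument is essentially the paper's (Proposition~\ref{prop:k(alpha)-kernel-picard}).
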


Recall that by the validity of the Milnor conjectures (\cite{Voe_Z2, OVV})
we have the following functorial isomorphisms:
$$ \km \cong \HH^{n+1}(k,\,\ZZ/2) \cong I^{n+1}(k)/I^{n+2}(k). $$
Therefore we could also speak of any of these groups in place of Milnor K-theory modulo $2$ in Theorem~\ref{th:nat_transform_milnor_picard}.

The construction of Theorem~\ref{th:nat_transform_milnor_picard} 
can be seen as a first step towards the ``categorification'' of Milnor K-theory modulo $2$.
Further one could incorporate other functorialities of Milnor K-theory in the motivic world.
For example, 
for a finite extension $K/k$ there is an additive norm map $\mathrm{Nm}_{K/k}:\mathrm{K}^{\mathrm M}_{n+1}(K)\rarr \mathrm{K}^{\mathrm M}_{n+1}(k)$.
In a future paper of the second author~\cite{SechInv}
it will be shown that the construction $\alpha \mapsto L_\alpha$
is also functorial with respect to these norm maps. For the categories of Morava motives (and their Picard groups)
one considers norm functors coming from the Weil restriction of varieties (cf. \cite{KarpWeil, Jouk, BachHoy}). 

The injectivity property of the natural transformation $\alpha \mapsto L_\alpha$
is the analogue of the above property ($R_\alpha$-1) for the Rost motives. The fact that $L_\alpha$ is an invertible motive
can be seen as some sort of analogue of ($R_\alpha$-2). Indeed, if $\MKn X$ contains an invertible motive as a direct summand,
then 
there exists a morphism $W\rarr X$ with $[W]_{\Kn}=v_n^r$ for some $r$ (see Lemma~\ref{lm:Kn-isotropic_invertible_direct_summands}).

In Theorem~\ref{th:detect_L_alpha} we give a criterion of how to detect $L_\alpha$.
It turns out that one can do it by counting Tate motives that are summands of the given motive $M$:
over $k$ and over some $\Kn$-universal splitting field of~$\alpha$.
This detection criteria can be stated using $\Kn$-numerical motives,
which are arguably simpler.
We should note here that Rost motives $R_\alpha$ 
vanish in $(\CH/2)_{\num}$-motives and are split in $(\CH\ot\QQ)_{\num}$- or $(\CH/p)_{\num}$-motives for odd prime $p$, 
i.e.\ cannot be detected numerically.

In a future paper of the second author~\cite{SechInv} 
the construction of invertible motives $L_\alpha$ 
will be extended to other primes $p$ 
and to classes  $\alpha \in \HH^{n+1}(k,\,\ZZ/p^r(n))$ for $r\in\NN$.

\subsection{Construction and uniqueness.}\label{sec:construction}

\begin{proof}[Strategy of the proof of Theorem~\ref{th:nat_transform_milnor_picard}.] 
We provide the proof
modulo statements that are shown in the rest of this section.
In the proof we use the Milnor conjecture on quadratic forms
and identify $\km$ with $I^{n+1}(k)/I^{n+2}(k)$, see~\cite{OVV}.

{\bf Construction.}
First, we construct a functorial map $\alpha \mapsto L_\alpha$
from $I^{n+1}(k)/I^{n+2}(k)$ to $\Pic\left(\CM_{\Kn}(k)\right)$.
This is done in Proposition~\ref{prop:def_L_alpha} as follows:
we lift an element $\alpha$ in $I^{n+1}(k)/I^{n+2}(k)$ to a class of a quadratic form $q$,
and the motive $L_\alpha$ appears as a direct summand of the $\Kn$-motive 
of the corresponding quadric $Q$. 
If $\alpha$ is a symbol, then this construction yields $L_\alpha$ defined in Proposition~\ref{prop:prelim_L_alpha_Rost}.
We then check that this map is additive in Proposition~\ref{prop:additivity_L_alpha}.

{\bf Injectivity.}
If $\alpha$ is a symbol and is non-zero,
then $L_\alpha$ is non-split, as it is shown in  Proposition~\ref{prop:prelim_L_alpha_Rost}.
However, it follows from the proof of the Milnor Conjecture (see \cite[Th.~2.10]{OVV})
that for an arbitrary non-zero element $\alpha$ in $\km$
there exists a field extensions $K/k$ such that $\alpha_K$ is a non-zero symbol.
Injectivity then follows from the functoriality of our construction.

{\bf Uniqueness.} It is enough to show that
for every $\alpha \in \km$ 
there exist a field extension $K/k$
such that the kernel of $\km\rarr \mathrm K^{\mathrm M}_{n+1}(K)/2$
is generated by $\alpha$
and that there is a unique invertible motive $L$ up to isomorphism 
such that $L_K\cong \un_K$. This is shown in Proposition~\ref{prop:k(alpha)-kernel-picard}.
\end{proof}

\begin{Prop}\label{prop:def_L_alpha}
Let $q\in I^{n+1}(k)$ be an anisotropic quadratic form,
 let $\alpha$ denote $[q]\in I^{n+1}(k)/I^{n+2}(k)$.

Then the $\Kn$-kernel part of $\MKn Q$ decomposes 
as a direct sum $ L_\alpha\sh{\frac{\dim Q}{2}} \oplus \bigoplus_{i=0}^{2^n-2} L_\alpha \sh i$
where $L_\alpha$ is an invertible motive. 

This construction defines a functorial map 
$$ I^{n+1}(k)/I^{n+2}(k) \rarr \Pic\left(\CM_{\Kn} (k) \right).$$
\end{Prop}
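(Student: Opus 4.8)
The plan is to extract $L_\alpha$ as a specific invertible summand of the $\Kn$-kernel motive $\Mker Q$ using the general structure of Morava motives of quadrics established in Section~\ref{sec:morava_quadrics}, and then to verify functoriality. First I would reduce to the pre-stable case: by Proposition~\ref{prop:quad_MDT_stable}, passing along the generic splitting tower (which consists of function fields of quadrics of dimension $\geq 2^{n+1}-1$, over which base change reflects motivic decompositions by Example~\ref{ex:k(Q)/k_reflects_MD_Kn}), I may replace $q$ by its $\Kn$-kernel form. Since $q\in I^{n+1}(k)$, the $\Kn$-kernel form has dimension $<2^{n+1}$, hence equals $0$, $1$, or $2^{n+1}$; by Corollary~\ref{cr:Kn-split-quad} the first two cases mean $\MKn Q$ is split (and $\alpha = 0$, so $L_\alpha = \un$). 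In the remaining case, over the relevant field $K_j$ in the splitting tower the anisotropic part is an $(n+1)$-fold general Pfister form, and by Corollary~[on leading forms from OVV] and the Milnor conjecture its class is a pure symbol $\alpha_{K_j}$. Then Proposition~\ref{prop:prelim_motive_pfister} gives the decomposition of $\MKn{Q_{K_j}}$ into Tate twists of $\un$ and of $L_{\alpha_{K_j}}$, where $L_{\alpha_{K_j}}$ is invertible by Proposition~\ref{prop:prelim_L_alpha_Rost}. By the reflection of motivic decompositions along the tower, this decomposition lifts back to $k$, giving $\Mker Q \cong L_\alpha\langle \dim Q/2\rangle \oplus \bigoplus_{i=0}^{2^n-2}L_\alpha\langle i\rangle$ for an invertible motive $L_\alpha$ over $k$.

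Next I would address well-definedness, i.e.\ independence of the choice of the lift $q$ of $\alpha\in I^{n+1}(k)/I^{n+2}(k)$. If $q$ and $q'$ are two anisotropic forms with $[q]=[q']=\alpha$, I would compare them by first matching dimensions (adding hyperbolic planes, using Lemma~\ref{lm:motive_of_isotropic_quadric} and Proposition~\ref{prop:normal} to see this does not affect the kernel summand up to Tate twist) and then invoking Proposition~\ref{similar-motives}: forms of equal dimension that are similar modulo $I^{n+2}(k)$ — in particular both lifting $\alpha$, possibly after adjusting by a scalar — have isomorphic $\Kn$-motives. More directly, Corollary~\ref{cr:similar-kernel} shows their $\Kn$-kernel motives agree up to a Tate twist, which forces the invertible summand $L_\alpha$ to be the same up to isomorphism (a Tate twist of an invertible motive appearing with the multiplicity pattern above is pinned down). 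One subtlety: $I^{n+2}$-equivalence versus $I^{n+2}$-similarity — here one should note that lifting $\alpha$ from $I^{n+1}/I^{n+2}$ already only determines $q$ up to $I^{n+2}(k)$, and scaling issues are controlled since the construction of $L_\alpha$ via $\Mker Q$ is insensitive to multiplying $q$ by a unit (the reflection argument runs identically). I would also record that $\alpha = 0$ yields $L_\alpha\cong\un$, and that the construction commutes with base field extension $k\to K$ since base change of Morava motives is a tensor functor and commutes with the whole splitting-tower argument.

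The main obstacle, I expect, is establishing that $L_\alpha$ really descends to a \emph{well-defined} invertible object over $k$ (not merely over $K_j$) together with a canonical identification — that is, showing the reflection of motivic decompositions along the composite of function-field extensions is compatible enough that the isomorphism class of the distinguished invertible summand does not depend on the path taken or on intermediate choices. This rests on Example~\ref{ex:k(Q)/k_reflects_MD_Kn} and Proposition~\ref{prop:A-univ-surj_reflects_MD}/\ref{prop:reflects_MD}, which give that each base-change functor in the tower induces a bijection on isomorphism classes of summands of motives of projective homogeneous varieties; since $Q$ and all $Q_{K_i}$ are such, the chain of bijections carries the invertible summand $L_{\alpha_{K_j}}$ of $\MKn{Q_{K_j}}$ back to a well-defined isomorphism class of summand of $\MKn Q$, which I define to be $L_\alpha$. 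Invertibility is preserved by these bijections (an invertible motive is characterized by $\End = \Apt$ and rank $1$ over $\overline k$, both stable under the correspondences used). Finally, functoriality of $\alpha\mapsto L_\alpha$ in the field is then immediate: for $K/k$, the quadric $Q_K$ lifts $\alpha_K$, and by construction $(L_\alpha)_K$ is the distinguished invertible summand of $\MKn{Q_K}$, i.e.\ $L_{\alpha_K}$. Additivity of the map is deferred to Proposition~\ref{prop:additivity_L_alpha} and is not part of this statement.
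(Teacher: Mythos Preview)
Your proof is correct and follows essentially the same approach as the paper: reduce via Proposition~\ref{prop:quad_MDT_stable} to the $\Kn$-kernel form, which by Arason--Pfister is either split or a general $(n{+}1)$-Pfister form, invoke Proposition~\ref{prop:prelim_motive_pfister} for the decomposition, and use Corollary~\ref{cr:similar-kernel} for independence of the lift. The only slips are cosmetic: the $\Kn$-kernel form has dimension $\le 2^{n+1}$ (not $<$), and your similarity worry dissolves since $q-q'\in I^{n+2}(k)$ is exactly similarity with scale $1$.
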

\begin{proof}
Let $q'$ be the $\Kn$-kernel form of $q$ and $K$ be its field of definition.
Then by our assumption and by the Arason--Pfister Hauptsatz either $q'$ 
is split\footnote{If $\alpha\neq 0$, then this cannot happen by $J$-filtration conjecture \cite[Th.~4.3]{OVV}.},
or it is a general $(n+1)$-Pfister form over $K$ with $[q']=\alpha_K$ in $I^{n+1}(K)/I^{n+2}(K)$.
In both cases the $\Kn$-kernel of the $\Kn$-motive of $Q'$ has the form as in the statement, see Prop.~\ref{prop:prelim_motive_pfister}.
By Prop.~\ref{prop:quad_MDT_stable} the $\Kn$-kernel of the $\MKn Q$ 
also has the same form. We define $L_{\alpha }$ as the invertible motive in the 
$\Kn$-kernel of $\MKn Q$ that ``lives in the grading $0$'', 
i.e.\  becomes $\un$ after completely splitting $Q$.

We need to show that $L_\alpha$ depends on $\alpha$, not on $q$.
Let $\widetilde{q}\in I^{n+1}(k)$ be a quadratic form such that $q-\widetilde{q}\,$ lies in $I^{n+2}(k)$,
then the $\Kn$-kernel motives of $Q$ and of $Q'$ differ by a Tate twist (see Corollary~\ref{cr:similar-kernel}).
Therefore, $L_\alpha$ is well-defined up to an isomorphism.

Functoriality of the construction is straight-forward: if we lift $\alpha \in I^{n+1}(k)/I^{n+2}(k)$
 to $q \in I^{n+1}(k)$, then $q_K$ is a lift of $\alpha_K$ for a field extension $K/k$,
 $\MKn{Q_K}=\MKn{Q}_K$ and hence $L_{\alpha_K} \cong (L_\alpha)_K$.
 \end{proof}

\begin{Prop}\label{prop:additivity_L_alpha}
Let $\alpha, \beta\in I^{n+1}(k)/I^{n+2}(k)$.
Then 
\begin{equation}\label{eq:additivity_L}
L_\alpha \ot L_\beta \cong L_{\alpha +\beta}.
\end{equation}

In particular, $L_\alpha^{\otimes 2} \cong \un$ for any $\alpha$.
\end{Prop}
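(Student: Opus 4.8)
The plan is to prove the additivity isomorphism~\eqref{eq:additivity_L} by a direct computation with quadrics, exploiting the machinery already assembled: the decomposition of Morava motives in the stable case (Proposition~\ref{prop:quad_MDT_stable}), the Pfister quadric computation (Proposition~\ref{prop:prelim_motive_pfister}), and the reflection of motivic decompositions along function fields of high-dimensional quadrics (Example~\ref{ex:k(Q)/k_reflects_MD_Kn}). Since $\alpha, \beta \in I^{n+1}(k)/I^{n+2}(k)$, lift them to anisotropic forms $q_\alpha, q_\beta \in I^{n+1}(k)$; then $q_\alpha \perp q_\beta \in I^{n+1}(k)$ is a lift of $\alpha + \beta$, but it is not the most convenient representative. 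A cleaner approach is to work with the form $w := q_\alpha \perp (-q_\beta)$, or better, to use that $L_\alpha \otimes L_\beta$ and $L_{\alpha+\beta}$ are both invertible $\Kn$-motives and it suffices to exhibit one as a direct summand of a motive that splits into invertibles whose identification is forced by functoriality.

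First I would reduce to the case where $\alpha$ and $\beta$ are \emph{symbols}. Indeed, by \cite[Th.~2.10]{OVV} (used already in the Injectivity step) there is a field extension $K/k$ over which both $\alpha_K$ and $\beta_K$ become symbols while the map $\km \rarr \mathrm{K}^{\mathrm M}_{n+1}(K)/2$ stays injective on the subgroup generated by $\alpha, \beta$ --- so an isomorphism $L_{\alpha_K} \otimes L_{\beta_K} \cong L_{(\alpha+\beta)_K}$ together with functoriality (Proposition~\ref{prop:def_L_alpha}) and injectivity would give the claim over $k$. Wait --- this reduction is slightly delicate because injectivity of $\alpha \mapsto L_\alpha$ is exactly what we are in the middle of proving; however, the logical order in the strategy outline puts additivity before the final injectivity argument, so instead I would argue directly. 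For symbols $\alpha, \beta$, the relevant geometric input is: take the $(n+1)$-Pfister forms $p_\alpha, p_\beta$, and consider a form $q$ representing $\alpha + \beta$ in $I^{n+1}/I^{n+2}$ whose $\Kn$-kernel form (Definition~\ref{def:Kn-kernel}) is the anisotropic $(n+1)$-Pfister form attached to $\alpha+\beta$ over the appropriate field $K_j$ in its splitting tower.

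The key computational step is the multiplicative behavior of the Rost motive. By Lemma~\ref{lm:product_with_rost_motive} and the multiplicative structure of Rost motives (cf. \cite[Cor.~3.4]{PSZ}), over a field where, say, $\beta$ vanishes we have $R_\alpha \otimes R_\beta \cong \bigoplus_{i=0}^{2^n-1} R_\alpha(i)$ --- but more usefully, specializing to $\Kn$-motives and using Proposition~\ref{prop:prelim_L_alpha_Rost}, the motive $L_\alpha \otimes L_\beta$ is an invertible summand of $(R_\alpha)_{\Kn} \otimes (R_\beta)_{\Kn}$, which is a direct summand of $\MKn{P_\alpha \times P_\beta}$. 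Over $\overline k$ this is split, and over a field $K/k$ it trivializes if and only if $\alpha_K = 0 = \beta_K$, equivalently $(\alpha+\beta)_K = 0$ \emph{together with} $\alpha_K = \beta_K$; to pin down that $L_\alpha \otimes L_\beta$ trivializes exactly when $(\alpha+\beta)_K = 0$ and hence must equal $L_{\alpha+\beta}$, I would pass to $k(\alpha+\beta)$ (Definition~\ref{def:k(alpha)}): over such a field $\alpha + \beta$ vanishes, so $\beta_{k(\alpha+\beta)} = \alpha_{k(\alpha+\beta)}$ and thus $(L_\alpha \otimes L_\beta)_{k(\alpha+\beta)} \cong (L_\alpha)^{\otimes 2}_{k(\alpha+\beta)}$; by Proposition~\ref{prop:k(alpha)-kernel-picard} (the kernel of $\Pic(\CM_{\Kn}(k)) \rarr \Pic(\CM_{\Kn}(k(\alpha+\beta)))$ is $\ZZ/2$ generated by $L_{\alpha+\beta}$), it now suffices to know $(L_\alpha)^{\otimes 2}$ is trivial. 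The latter follows from the involutive multiplicative structure of $R_\alpha$: since $R_\alpha \otimes R_\alpha \cong R_\alpha \oplus R_\alpha(2^n-1)$ compatibly with the Tate summand splitting, specializing to $\Kn$ gives $(\un \oplus L_\alpha)^{\otimes 2} \cong (\un \oplus L_\alpha) \oplus (\un \oplus L_\alpha)(2^n-1)$, and since $\un(2^n-1) \cong \un$ in $\Kn$-motives by periodicity, expanding the left side as $\un \oplus L_\alpha^{\otimes 2} \oplus L_\alpha^{\oplus 2}$ and matching with the right side via Krull--Schmidt (\cite[Sec.~2.6]{LPSS}) forces $L_\alpha^{\otimes 2} \cong \un$.

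The main obstacle I anticipate is not any single calculation but ensuring the logical independence from the injectivity/uniqueness steps: one must verify that the identification of the invertible summand of $(R_\alpha)_{\Kn} \otimes (R_\beta)_{\Kn}$ with $L_{\alpha+\beta}$ uses only Proposition~\ref{prop:k(alpha)-kernel-picard} and the already-established splitting criterion from Proposition~\ref{prop:prelim_L_alpha_Rost}(1), and does not circularly invoke the injectivity of $\alpha \mapsto L_\alpha$. A clean way around this is to establish $L_\alpha^{\otimes 2} \cong \un$ \emph{first} (purely from Rost-motive multiplicativity, as above), and then prove~\eqref{eq:additivity_L} by the following short argument: both $L_\alpha \otimes L_\beta$ and $L_{\alpha+\beta}$ lie in the kernel of restriction to $k(\alpha+\beta)$ --- the first because over $k(\alpha+\beta)$ it equals $L_\alpha^{\otimes 2} \cong \un$, the second by definition of $L_{(\alpha+\beta)_{k(\alpha+\beta)}} \cong \un$ --- and this kernel is $\{\un, L_{\alpha+\beta}\}$ by Proposition~\ref{prop:k(alpha)-kernel-picard}; so $L_\alpha \otimes L_\beta$ is either $\un$ or $L_{\alpha+\beta}$, and it is $\un$ only if it splits over $k$, i.e. (by Proposition~\ref{prop:prelim_L_alpha_Rost}(1) applied over $k$ after reducing to symbols) only if $\alpha + \beta = 0$, whereas if $\alpha+\beta = 0$ then $L_\beta \cong L_{-\alpha} \cong L_\alpha$ (using $L_\alpha^{\otimes 2} \cong \un$) so $L_\alpha \otimes L_\beta \cong \un$ anyway. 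This makes the whole argument turn on Proposition~\ref{prop:k(alpha)-kernel-picard}, which is the genuinely substantive input.
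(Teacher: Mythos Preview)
Your proposal lands on the right key input --- Theorem~\ref{th:iso_over_k(alpha)} (packaged through Proposition~\ref{prop:k(alpha)-kernel-picard}) --- and the final dichotomy argument is the correct shape. But two steps are not actually justified as written:
\begin{itemize}
\item Your argument for $L_\alpha^{\otimes 2}\cong\un$ via Lemma~\ref{lm:product_with_rost_motive} and Krull--Schmidt only works when $\alpha$ is a \emph{symbol}, since $R_\alpha$ is only defined for symbols. For arbitrary $\alpha$ you still need to pass to the field of definition of the $\Kn$-kernel form (where $\alpha$ becomes a symbol) and descend via Proposition~\ref{prop:reflects_MD}. This passage is exactly what you were trying to avoid.
\item The claim ``$L_\alpha\otimes L_\beta\cong\un$ only if $\alpha+\beta=0$'' is equivalent to ``$L_\alpha\cong L_\beta\Rightarrow\alpha=\beta$''. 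Proposition~\ref{prop:prelim_L_alpha_Rost}(2) gives this only for symbols. So again you must pass to a field where both $\alpha,\beta$ are symbols and then invoke Corollary~\ref{cr:KRS-Witt_mod_In+1_injectivity} to descend --- you cannot get this ``by Proposition~\ref{prop:prelim_L_alpha_Rost}(1) over $k$'' as written.
\end{itemize}

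Once you fill these gaps, your argument and the paper's are the same computation viewed through different splitting fields. The paper passes to the composite $F$ of the $\Kn$-kernel fields \emph{first} (so $\alpha_F,\beta_F,\gamma_F$ are all symbols), then restricts to $F_\beta=F(Q'_\beta)$ where $\beta$ vanishes, giving $(L_\alpha)_{F_\beta}\cong(L_\gamma)_{F_\beta}$ directly by functoriality. Theorem~\ref{th:iso_over_k(alpha)} over $F$ then gives the dichotomy $L_\alpha\cong L_\gamma$ or $L_\alpha\otimes L_\beta\cong L_\gamma$, and in the bad case Proposition~\ref{prop:prelim_L_alpha_Rost}(2) applies immediately since everything is already a symbol over $F$. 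By splitting $\beta$ rather than $\alpha+\beta$, the paper never needs a separate $L_\alpha^{\otimes 2}\cong\un$ step --- that identity drops out afterwards as the special case $\beta=\alpha$.
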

\begin{proof}
Let $q_\alpha, q_\beta \in I^{n+1}(k)$ be quadratic forms that lift $\alpha, \beta$, respectively.
Then $q_\gamma:=q_\alpha\perp q_\beta$ is a lift of $\gamma:=\alpha+\beta$. 
By construction the motives $L_\alpha, L_\beta, L_{\gamma}$ are direct summands
of the $\Kn$-kernel parts of the motives of $Q_\alpha, Q_\beta, Q_\gamma$, respectively. 

Let $K_\alpha, K_\beta, K_\gamma$ be field extension of $k$ over which 
$\Kn$-kernels of these forms are defined, and let $F$ be the composite of these field extensions over $k$. 
Then the field extension $F/k$ satisfies the assumptions of Prop.~\ref{prop:reflects_MD}
and therefore it suffices to prove the isomorphism~\eqref{eq:additivity_L} over $F$.
Moreover, over $F$ the quadratic forms $q_\alpha, q_\beta, q_\gamma$
become Witt-equivalent  
to (not necessarily anisotropic\footnote{as we have not assumed that $\alpha, \beta$ or $\gamma$ are non-zero.}) 
general Pfister forms $q'_\alpha, q'_\beta, q'_\gamma$, respectively.

Let $F_\beta$ be the field of functions of $Q'_\beta$,
then $\alpha_{F_\beta} = \gamma_{F_\beta}$, and therefore $(L_\alpha)_{F_\beta} = (L_\gamma)_{F_\beta}$.
It follows from Theorem~\ref{th:iso_over_k(alpha)} that either isomorphism (\ref{eq:additivity_L}) holds 
or $L_\alpha \cong L_\gamma$. 
However, the latter implies that $\alpha_F = \gamma_F$ by Proposition~\ref{prop:prelim_L_alpha_Rost},
and hence $\alpha=\gamma$ by Theorem~\ref{KRS},
i.e.\  $\beta = 0$ and (\ref{eq:additivity_L}) holds nevertheless.
\end{proof}

\begin{Prop}\label{prop:k(alpha)-kernel-picard}
Let $\alpha\in \HH^{n+1}(k,\,\ZZ/2)$ and let $k(\alpha)$ be a $\Kn$-universal splitting field of $\alpha$.

Then $\Ker\left(\Pic(\CM_{\Kn}(k)) \rarr \Pic(\CM_{\Kn}(k(\alpha)) \right)$
is isomorphic to $\ZZ/2$ and is generated by $L_\alpha$.
\end{Prop}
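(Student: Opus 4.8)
The plan is to prove the two inclusions separately, the substantive one being $\Ker\subseteq\langle L_\alpha\rangle$, for which the key input is Theorem~\ref{th:iso_over_k(alpha)} applied over a field where $\alpha$ becomes a symbol. We may assume $\alpha\neq0$; for $\alpha=0$ the statement reduces to triviality of the kernel, which is the special case $K_j=k(\alpha)$ of the descent argument below. By Definition~\ref{def:k(alpha)} we take $k(\alpha)$ to be the generic splitting field $K_h$ of a fixed anisotropic $q\in I^{n+1}(k)$ with $[q]=\alpha$ (for another representative of the $\oKn$-equivalence class the argument is identical). First I would record the inclusion $\langle L_\alpha\rangle\subseteq\Ker$: one has $(L_\alpha)_{k(\alpha)}\cong L_{\alpha_{k(\alpha)}}\cong\un$ by the functoriality in Proposition~\ref{prop:def_L_alpha} and $\alpha_{k(\alpha)}=0$; moreover $L_\alpha^{\otimes2}\cong\un$ by Proposition~\ref{prop:additivity_L_alpha}, while $L_\alpha\not\cong\un$ because there is $F/k$ over which $\alpha_F$ is a non-zero symbol and then $(L_\alpha)_F=L_{\alpha_F}$ is non-split by Proposition~\ref{prop:prelim_L_alpha_Rost}. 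Hence $\langle L_\alpha\rangle\cong\ZZ/2$ sits inside the kernel, and it remains to show that any invertible $L\in\CM_{\Kn}(k)$ with $L_{k(\alpha)}\cong\un$ satisfies $L\cong\un$ or $L\cong L_\alpha$.

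Next I would pass to a symbol. Let $q_j$ be the $\Kn$-kernel form of $q$ and $K_j$ its field of definition (Definition~\ref{def:Kn-kernel}). Each step $K_i/K_{i-1}$, $1\le i\le j$, is the function field of a quadric of dimension $\ge 2^{n+1}-1$, hence $\oKn$-universally bijective by Example~\ref{ex:overline-Kn-univ-surj-quad}, and therefore so is $K_j/k$. Since $\dim q_i>2^{n+1}$ for $i<j$, Theorem~\ref{KRS} shows $\alpha_{K_i}\mapsto\alpha_{K_{i+1}}$ is injective at each such step, so $\alpha_{K_j}\neq0$; as $\dim q_j\le2^{n+1}$ and $[q_j]=\alpha_{K_j}\neq0$ in $I^{n+1}/I^{n+2}$, the Arason--Pfister Hauptsatz forces $q_j$ to be an anisotropic general $(n+1)$-fold Pfister form, so $\alpha_{K_j}$ is a symbol and $k(\alpha)=K_j(q_j)$. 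Let $Q'$ be the norm quadric of $\alpha_{K_j}$ over $K_j$: it is anisotropic of dimension $2^n-1$, its upper Chow motive is the Rost motive $R_{\alpha_{K_j}}$ with $(R_{\alpha_{K_j}})_{\Kn}\cong\un\oplus L_{\alpha_{K_j}}$ and $L_{\alpha_{K_j}}=(L_\alpha)_{K_j}$ (Propositions~\ref{prop:prelim_L_alpha_Rost},~\ref{prop:prelim_motive_pfister},~\ref{prop:def_L_alpha}), and $K_j(Q')$ is stably birational to $K_j(q_j)=k(\alpha)$. In particular $L_{K_j(Q')}\cong\un$ (stably birational fields are $\Kn$-equivalent for invertible motives, using Example~\ref{ex:univ-bij_transcendental}), so Theorem~\ref{th:iso_over_k(alpha)} applied with $M=L_{K_j}$, $N=\un_{K_j}$ — both indecomposable, with endomorphism ring $\F{2}$, so the hypothesis on endomorphisms is automatic — yields $L_{K_j}\cong\un_{K_j}$ or $L_{K_j}\cong(L_\alpha)_{K_j}$.

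Finally I would descend from $K_j$ to $k$. Write $\overline L,\overline{L_\alpha}$ for the images of $L,L_\alpha$ under $\CM_{\Kn}(k)\to\CM_{\oKn}(k)$; these are invertible $\oKn$-motives. Since $K_j/k$ is $\oKn$-universally bijective, base change $\CM_{\oKn}(k)\to\CM_{\oKn}(K_j)$ is fully faithful (Section~\ref{sec:mot_dec_base_change}), so $\overline L_{K_j}\cong\un$ (resp.\ $\cong\overline{L_\alpha}_{K_j}$) gives $\overline L\cong\un$ (resp.\ $\cong\overline{L_\alpha}$) over $k$. It then suffices that $\CM_{\Kn}(k)\to\CM_{\oKn}(k)$ reflects triviality of invertible motives: if $\overline M\cong\un$ for invertible $M\in\CM_{\Kn}(k)$, a splitting $\un\to\overline M\to\un$ composes to the identity of $\un$ in $\CM_{\oKn}(k)$, but $\End_{\CM_{\oKn}(k)}(\un)=\oKn^0(k)=\Kn^0(k)=\End_{\CM_{\Kn}(k)}(\un)\cong\F{2}$, so the same composition is already the identity in $\CM_{\Kn}(k)$, whence $\un$ is a retract of $M$ and $M\cong\un$. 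Applying this to $M=L$ in the first case and to $M=L\otimes L_\alpha^{\vee}$ in the second (using $L_\alpha^{\vee}\cong L_\alpha$) gives $L\cong\un$ or $L\cong L_\alpha$, completing the proof. The main obstacle I anticipate is the bookkeeping in this last paragraph, i.e.\ making sure the descent is carried out for an \emph{arbitrary} invertible $\Kn$-motive rather than only for summands of motives of projective homogeneous varieties — this is exactly why I route it through $\oKn$-motives and the universal-bijectivity full-faithfulness rather than through Proposition~\ref{prop:reflects_MD}.
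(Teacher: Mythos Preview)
Your proof is correct and follows the same two-step structure as the paper's: reduce along the $\oKn$-universally bijective extension $K_j/k$ to the case of a symbol, then apply Theorem~\ref{th:iso_over_k(alpha)} over the function field of the norm quadric. The one place where you work harder than necessary is precisely the descent step you flagged as the main obstacle. The paper handles this in one line by observing that for any invertible $\Kn$-motive $L$ one has $\End(L)\cong\End(\un)\cong\Kn^0(k)$, a field independent of $k$; hence $L$ trivially satisfies RNP for every field extension (the kernel of $\End(L)\to\End(L_K)$ is zero), and so lies in the class $S$ of Proposition~\ref{prop:reflects_MD}. That proposition then applies directly to arbitrary invertible motives, not only to summands of projective homogeneous varieties, and the descent from $K_j$ to $k$ is immediate.

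Your route through $\CM_{\oKn}$ and the explicit retract argument is a valid workaround, but it re-proves a special case of what Proposition~\ref{prop:reflects_MD} already gives once the RNP observation is made. So the approaches are not genuinely different; yours is just a slightly longer path around a lemma you could have invoked.
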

\begin{proof} Note that that for any invertible $\Kn$-motive $L$
its endomorphisms can be identified with the endomorphism of $\unKn$, which is a field $\Kn^0(k)$ and does not depend on $k$. 
In particular, $L$ satisfies Rost Nilpotence Property for any field extension,
and satisfies the condition of Theorem~\ref{th:iso_over_k(alpha)}.

The field $k(\alpha)$ is a composite of $k\subset K\subset K(Q)$ 
where $K/k$ is $\overline{\Kn}$-universally surjective
and $Q$ is a Pfister quadric over $K$. 
By Proposition~\ref{prop:reflects_MD} the kernel on the Picard groups of Morava motives
for the base change from $k$ to $K$ is trivial.
And by Theorem~\ref{th:iso_over_k(alpha)} the kernel for the base change $K(Q)/K$
is $\ZZ/2$ with the generator $L_{\alpha_K}$.
\end{proof}

\subsection{Detection.}\label{sec:detection}

We complement the construction of the motives $L_\alpha$
with the statement that allows to detect them as direct summands of the $\Kn$-motives.
Note that if $L_\alpha$ is a direct summand of $M$,
then over a splitting field of $\alpha$ there is a ``new'' Tate summand of $M$.
It turns out that the converse is true. 

Moreover, the motives $L_\alpha$ can be also seen 
as $\Kn$-numerical motives $L_\alpha^{\num}$
and it suffices to find $L_\alpha$ as direct summands numerically (Proposition~\ref{prop:lifting_numerical_decompositions}).

\begin{Th}
\label{th:detect_L_alpha}
Let $\alpha \in \km$ and let $k(\alpha)$ 
denote a representative of the class of $\Kn$-universal splitting fields of $\alpha$ (see~Definition~\ref{def:k(alpha)}).

Let $M$ be a $\Kn$-motive over $k$, let $N$ be a $\Kn$-numerical motive over $k$. 
\begin{enumerate}
    \item If $M_{k(\alpha)}$ contains a direct summand $\un\sh i$,
    then $M$ contains either $\un\sh i$ or $L_\alpha\sh i$ as a direct summand.
    \item If $N_{k(\alpha)}$ contains
     a direct summand $\un\sh i$, then $N$ contains either $\un\sh i$ or $L^{\num}_\alpha\sh i$ as a direct summand.
    \item If $M^{\num}_{k(\alpha)}$ contains a direct summand $\un\sh i$,
    then $M$ contains either $\un\sh i$ or $L_\alpha\sh i$ as a direct summand.
\end{enumerate}
\end{Th}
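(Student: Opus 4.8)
The plan is to prove the three detection statements in a specific logical order, using the reduction lemmas on splitting Tate motives over function fields of quadrics that were established in Section~\ref{sec:morava_motives_over_function_fields}. By Tate-twisting we may assume $i=0$ throughout.

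\textbf{Step 1: reduce to the function field of a quadric of dimension $2^n-1$.} Recall that $k(\alpha)$ is represented by a tower $k\subset K\subset K(Q_\alpha)$ where $K/k$ is $\oKn$-universally surjective (hence by Proposition~\ref{prop:reflects_MD} reflects motivic decompositions of relevant motives, and in particular the base change to $K$ does not create new Tate summands), and $Q_\alpha$ is an anisotropic $(n+1)$-fold Pfister quadric over $K$ whose upper Chow motive is the Rost motive $R_\alpha$; by Proposition~\ref{prop:prelim_motive_pfister} and Proposition~\ref{prop:prelim_L_alpha_Rost}, $(R_\alpha)_{\Kn}\cong\un\oplus L_\alpha$. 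Replacing $M$ by $M_K$ (which is harmless by reflection of decompositions, after decomposing $M$ into indecomposables and noting each indecomposable summand of $M$ stays indecomposable over $K$ by Proposition~\ref{prop:reflects_MD}), we are reduced to the case $k(\alpha)=k(Q)$ for $Q$ an anisotropic quadric of dimension $2^n-1$ with binary upper Chow motive. Actually, more care is needed: we should first decompose $M=\oplus_\lambda M^\lambda$ into indecomposables over $k$, observe that $\un\sh0$ splitting off $M_{k(\alpha)}$ means $\un\sh0$ splits off some $(M^\lambda)_{k(\alpha)}$, and then work with that single indecomposable $M^\lambda$.

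\textbf{Step 2: prove (1).} Let $M$ be indecomposable with $M_{k(Q)}$ splitting off $\un$. If $M$ is itself a Tate motive $\un$, we are done. Otherwise $M$ is not invertible (if $M$ were a non-Tate invertible motive, then by Proposition~\ref{prop:split_invertible_2n-1} we would get $R_{\Kn}\cong\un\oplus M$, so $M\cong L_\alpha$ and we are done). If $M$ is not invertible, apply Theorem~\ref{prop:gen_splitting_Tate} (= Theorem~\ref{th:splitting_off_Tate}) with $X=Q$: its hypotheses hold by Lemma~\ref{lm:quadrics_endomorphisms} (condition (2)) and because $Q$ is anisotropic of dimension $2^n-1$ so $[Q]_{\Kn}=v_n$ is invertible (condition (1)); taking $U$ to be the outer motive $\un\oplus R$ of $Q$ where $R$ is the complement to $\un\oplus L_\alpha$ in $\MKn Q$, we conclude $M$ is a direct summand of $\un\oplus R$. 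Since $M$ is indecomposable and non-Tate, $M$ is a direct summand of $R$. But then over $k(Q)$ the motive $R_{k(Q)}$ splits off $M_{k(Q)}$ which splits off $\un$; on the other hand $R$ does not contain invertible summands (as in the proof of Proposition~\ref{prop:splitting_over_quadrics_with_dim_2^n-1}, since $Q'$ is $\Kn$-anisotropic), and $R_{k(Q)}$ has only Tate summands of the form \ldots — here I must check carefully that $R_{k(Q)}$ contains no Tate summand, which forces a contradiction unless $M\cong L_\alpha$ after all. The cleanest route: $M_{k(Q)}$ splits off $\un$ means $M$ is not $\Kn$-anisotropic after base change; using that $R$ is a summand of $\MKn{Q'}$ twisted (the $\Kn$-kernel complement) and that over $k(Q)=k(\alpha)$ the only way a summand of $\Mker Q$ acquires a Tate summand is to be a twist of $L_\alpha$ or $\un$, combined with the classification in Section~\ref{sec:prelim_A_split_quadric}, we conclude $M\cong L_\alpha$. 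I expect the bookkeeping here — ruling out the case ``$M\subset R$ non-invertible yet $M_{k(Q)}$ splits off $\un$'' — to be the main obstacle, and it should be dispatched by the observation that such an $M$ would give a non-trivial invertible summand inside $R_{k(Q)}$, contradicting that $R$ has no invertible summands together with Proposition~\ref{prop:decomposition_over_k(alpha)} applied to $R$.

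\textbf{Step 3: deduce (2) and (3).} For (2), repeat the argument of Step 2 working in the category $\CM_{\Knum}(k)$: this is legitimate because $Q$ is $\Kn$-isotropic, so by Corollary~\ref{cr:num_generic_restriction} the pullback $\Knum(X\times Q)\to\Knum(X_{k(Q)})$ is defined, the relevant instances of $p_1^*=p_2^*$ hold by Remark~\ref{rk:p1_equal_p2_num}, and semi-simplicity of $\CM_{\Knum}(k)$ (by \cite[Prop.~6.1]{DuVish}) makes lifting idempotents automatic; Theorem~\ref{th:splitting_off_Tate} and Lemma~\ref{lm:quadrics_endomorphisms} go through with $\Kn$ replaced by $\Knum$, and $(L_\alpha)_{\num}=L^{\num}_\alpha$ plays the role of $L_\alpha$. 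For (3), suppose $M^{\num}_{k(\alpha)}$ splits off $\un\sh i$; by part (2) applied to $N=M^{\num}$, the numerical motive $M^{\num}$ splits off $\un\sh i$ or $L^{\num}_\alpha\sh i$. Now invoke Proposition~\ref{prop:lifting_numerical_decompositions} (the promised statement that numerical decompositions into these building blocks lift to $\Kn$-motives) to conclude that $M$ itself splits off $\un\sh i$ or $L_\alpha\sh i$, which is exactly (3). The only subtlety in (3) is making sure the lifting statement applies to a single summand $\un$ or $L_\alpha$ rather than a full decomposition, which follows since splitting off $\un\sh i$ numerically means $\un\sh i$ is a summand and lifting of such an idempotent is covered by the cited proposition together with the fact that $\End(\un)$ and $\End(L_\alpha)$ are fields independent of the base, so no nilpotents obstruct the lift.
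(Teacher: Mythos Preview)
Your Step~2 contains a decisive misidentification. For a norm quadric $Q$ of dimension $2^n-1$ (which is what you need, not the Pfister quadric of dimension $2^{n+1}-2$ you write in Step~1; they are stably birational, so the reduction itself is fine), the outer motive of Example~\ref{ex:outer_projector_quadric} is the $\Kn$-specialization of the \emph{indecomposable upper Chow motive}. Here that upper Chow motive is the Rost motive $R_\alpha$, so $U\cong\un\oplus L_\alpha$, \emph{not} $\un\oplus R$ with $R$ the complement of $\un\oplus L_\alpha$ in $\MKn Q$. Once $U$ is identified correctly, Theorem~\ref{th:splitting_off_Tate} already gives that an indecomposable $M$ with $\un\hookrightarrow M_{k(Q)}$ is a summand of $\un\oplus L_\alpha$, hence isomorphic to $\un$ or $L_\alpha$; your entire, admittedly incomplete, detour through ``$M\subset R$'' evaporates.

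Even with that fixed, your route to (1) is shakier than the paper's. For the descent from $K$ to $k$ you cite Proposition~\ref{prop:reflects_MD}, but that proposition applies only to motives satisfying RNP for $\overline{k}/k$, while the theorem is stated for an \emph{arbitrary} $\Kn$-motive $M$; likewise your ``decompose $M$ into indecomposables'' step is not available in general. The paper sidesteps both issues by proving (2) first: $\oKn$-universal bijectivity of $K/k$ implies $\Knum$-universal bijectivity, so descent is automatic in the numerical category, and semi-simplicity replaces Krull--Schmidt. The proof of (2) is also cleaner than ``redo Step~2 numerically'': lift the Tate splitting to motives over the norm quadric $Q_\alpha$ via geometric RNP, push forward to obtain $\Mot{\Knum}{Q_\alpha}$ as a summand of $N\otimes\Mot{\Knum}{Q_\alpha}$, and observe that $\Mot{\Knum}{Q_\alpha}\cong\un\oplus L_\alpha^{\num}$ because the remaining piece $\MKn{Q'}(1)$ dies numerically ($Q'$ is $\Kn$-anisotropic). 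Semi-simplicity then finishes (2), and (1), (3) follow via Proposition~\ref{prop:lifting_numerical_decompositions}, exactly as in your Step~3.
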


\begin{Rk}
\label{rk:detect_tate_count}
Note that for a $\Kn$-numerical motive $N$, the rank of $\Kn^j_{\num}(N)$ equals
the number of Tate summands $\un(j)$ in $M$ (see Lemma~\ref{lm:numerical_tate_summands_Kn}). 
Therefore one can reformulate Theorem~\ref{th:detect_L_alpha} as follows:
the number of summands $L_\alpha(j)$ in $M$ equals the difference
$$\rk\Kn^j_{\num}(N_{k(\alpha)}) - \rk \Kn^j_{\num}(N).$$
\end{Rk}

\begin{proof}[Proof of Theorem~\ref{th:detect_L_alpha}]
As (1) and (3) follow from (2) by Proposition~\ref{prop:lifting_numerical_decompositions} below, we restrict to the latter. 
We can assume that $N$ does not contain Tate motives as direct summands and that $i=0$.

By construction of $k(\alpha)$, 
it is a composition of the field extensions $k\subset K\subset K(Q)$
where $K/k$ is the $\oKn$-universally bijective, and $Q$ is a Pfister quadric corresponding to the symbol $\alpha_K$.
Note that if $K/k$ is $\oKn$-universally bijective field extension, 
then it is also $\Knum$-universally bijective. 
Since universally bijective field extensions do not affect motivic decompositions and isomorphisms,
it suffices to prove the claim over $K$, i.e.\ we can assume that $\alpha$ is a symbol.

Let $Q_\alpha$ be a norm quadric for symbol $\alpha$.
By using geometric RNP (Proposition~\ref{prop:geometric_RNP})
we can lift the splitting off of Tate from $N_{k(\alpha)}$ to motives over $Q_\alpha$.
By applying the push-forward functor we thus get:
\begin{equation}
\label{eq:mot_norm_split}
   N\otimes \mathrm M_{\Kn}^{\num}(Q_\alpha) \xhookleftarrow{\oplus} \mathrm M_{\Kn}^{\num}(Q_\alpha)(i). 
\end{equation}

However, $\mathrm M_{\Kn}^{\num}(Q_\alpha)\cong \un \oplus L_\alpha$. Indeed, $\mathrm M_{\Kn}(Q_\alpha)\cong \un \oplus L_\alpha \oplus \mathrm M_{\Kn}(Q')(1)$
where $Q'$ is an anisotropic quadric of dimension $2^n-3$ (see e.g.\ \cite[Th.~17]{Rost}).
The $\Kn$-numerical motive of $Q'$ is zero, since $Q'$ is $\Kn$-anisotrpoic.
Thus, the claim now follows from (\ref{eq:mot_norm_split}) by using the semi-simplicity 
of the category of $\Kn$-numerical motives (\cite[Prop.~6.1]{DuVish}).
\end{proof}

\begin{Prop}
\label{prop:lifting_numerical_decompositions}
Let $A$ be an oriented theory
and let $\Gamma$ be a system of relations over $k$
such that the map $A(\Spec k)\rarr A_\Gamma(\Spec k)$ has nilpotent kernel.

Then 
\begin{enumerate}
\item the canonical homomorphism 
$$ \Pic\left(\CM_A(k)\right) \rarr \Pic\left(\CM_{A_\Gamma}(k)\right)$$
is injective;

\item if for $M\in \CM_A(k)$, $L\in \Pic(\CM_A(k))$
for the corresponding $A_\Gamma$-motives $M_{A_\Gamma}$ contains $L_{A_\Gamma}$
as a direct summand,
then $M$ contains $L$ as a direct summand.
\end{enumerate}
\end{Prop}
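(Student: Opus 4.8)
The plan is to use the standard technique of lifting idempotents along a ring surjection with nilpotent kernel, applied to the endomorphism rings of the relevant motives, exactly as in the Vishik--Yagita machinery recalled in Section~\ref{sec:prelim_vishik_yagita}. First I would observe that for any $X, Y \in \SmProj_k$ the map $A^{\dim Y}(X \times Y) \rarr A_\Gamma^{\dim Y}(X \times Y)$ is the quotient by the subgroup generated by elements of the form $(\pi_2)_*(\pi_1^*(\gamma_\lambda)\cdot x)$; restricting to the degree-zero part and using that these generating elements are supported on $Q_\lambda \times X \times Y$ with $Q_\lambda$ of positive dimension, together with the projection formula and the hypothesis that $\Ker(A(\Spec k) \rarr A_\Gamma(\Spec k))$ is nilpotent, one checks that the kernel of $\End_{\CM_A(k)}(M) \rarr \End_{\CM_{A_\Gamma}(k)}(M_{A_\Gamma})$ consists of nilpotent elements for every $M \in \CM_A(k)$. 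The key point is that a correspondence in this kernel, when composed with itself sufficiently many times, produces an element whose ``coefficient part'' lies in the nilpotent kernel over the point; this is precisely the argument of \cite[Prop.~2.7]{VishYag} adapted to a system of relations rather than a change of coefficient ring, and I would phrase it so that the grading/filtration bookkeeping is minimal.

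For part (2), given that $M_{A_\Gamma}$ contains $L_{A_\Gamma}$ as a direct summand, I would choose morphisms $L_{A_\Gamma} \xrarr{\bar a} M_{A_\Gamma} \xrarr{\bar b} L_{A_\Gamma}$ with $\bar b \circ \bar a = \id_{L_{A_\Gamma}}$, hence $e := \bar a \circ \bar b$ an idempotent in $\End(M_{A_\Gamma})$ whose image is $\cong L_{A_\Gamma}$. Since $\End_{\CM_A(k)}(M) \rarr \End_{\CM_{A_\Gamma}(k)}(M_{A_\Gamma})$ is surjective with nilpotent kernel, by \cite[Lm.~2.4]{VishYag} (lifting of idempotents) there is an idempotent $\tilde e \in \End(M)$ lifting $e$; let $N := \im(\tilde e)$, a direct summand of $M$ with $N_{A_\Gamma} \cong L_{A_\Gamma}$. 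It remains to identify $N$ with $L$. For this I would also lift $\bar a, \bar b$ to morphisms $a \colon L \rarr M$, $b \colon M \rarr L$ (again using surjectivity on $\Hom$-groups, which follows from the same kernel analysis applied to $A(X\times Y) \rarr A_\Gamma(X\times Y)$), whose composition $b \circ a \in \End(L)$ reduces to $\id_{L_{A_\Gamma}}$; since $\End(L) \cong A^0(\Spec k)$ and the kernel of $A^0(\Spec k) \rarr A_\Gamma^0(\Spec k)$ is nilpotent, $b \circ a = \id_L + (\text{nilpotent})$ is invertible, so $L$ is a direct summand of $M$. Combining with the idempotent $\tilde e$ and the Krull--Schmidt-type uniqueness (or simply noting that after correcting $a$ by $(b\circ a)^{-1}$ we get a splitting of $L$ off $M$ whose image agrees with $N$ up to isomorphism) yields that $M$ contains $L$ as a direct summand.

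Part (1) is then immediate: if $L, L' \in \Pic(\CM_A(k))$ with $L_{A_\Gamma} \cong L'_{A_\Gamma}$, then $L^\vee \otimes L' \in \Pic(\CM_A(k))$ becomes isomorphic to $\un$ over $A_\Gamma$, and applying the argument above (the unit $\un$ is certainly a direct summand of $(L^\vee \otimes L')_{A_\Gamma}$, hence of $L^\vee \otimes L'$ itself) together with invertibility of $L^\vee \otimes L'$ forces $L^\vee \otimes L' \cong \un$, i.e.\ $L \cong L'$; injectivity follows. The main obstacle I anticipate is the careful verification that the kernel of the map on endomorphism rings is nilpotent for an arbitrary system of relations $\Gamma$ — one must control how the relations $(\pi_2)_*(\pi_1^*(\gamma_\lambda)\cdot x)$ behave under composition of correspondences, and the cleanest route is probably to factor the map $A \rarr A_\Gamma$ through a filtration by the positive-dimensional varieties $Q_\lambda$ and reduce to the nilpotence hypothesis over $\Spec k$ via iterated projection formulas; the rest of the proof is a routine application of the cited lifting lemmas.
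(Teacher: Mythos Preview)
Your proof contains the correct core argument, and it is exactly the paper's argument: lift the splitting maps $\bar a, \bar b$ to $a\colon L\rarr M$, $b\colon M\rarr L$ (surjectivity of $A\rarr A_\Gamma$ on all varieties is by construction of $A_\Gamma$ as a quotient, so no ``kernel analysis'' is needed here), observe $\End(L)\cong A(k)$ because $L$ is invertible, and conclude that $b\circ a$ is $1$ plus a nilpotent, hence invertible. That is the entire proof of (2) in the paper, and (1) follows from (2).

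The problem is that you have wrapped this two-line argument in an unnecessary and possibly unprovable detour. You propose first to show that the kernel of $\End_{\CM_A(k)}(M)\rarr\End_{\CM_{A_\Gamma}(k)}(M_{A_\Gamma})$ is nilpotent for \emph{arbitrary} $M$, then to lift the idempotent $e=\bar a\circ\bar b$ on $M$, and only afterwards to run the easy argument on $\End(L)$. But the hypothesis of the proposition concerns only the kernel over $\Spec k$; for a general $M$ and a general system of relations $\Gamma$ there is no reason the kernel on $\End(M)$ should be nilpotent, and your sketch of how to prove it (``factor through a filtration by the positive-dimensional varieties $Q_\lambda$ and reduce via iterated projection formulas'') does not work, since nothing forces the $Q_\lambda$ to have positive dimension and the correspondences in the kernel need not compose into elements supported over the point. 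You correctly flag this as ``the main obstacle'', but the point is that the obstacle is entirely self-imposed: once you have lifted $a,b$ and checked that $b\circ a$ is invertible in $\End(L)\cong A(k)$, you are done, and the idempotent $\tilde e$ on $M$, the summand $N$, and the Krull--Schmidt comparison are all superfluous. Drop the first paragraph of your plan and the first half of the second; what remains is the paper's proof.
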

\begin{proof}
Claim (1) is a particular case of the claim (2), so we restrict to the latter.
Assume that we have morphisms $\alpha, \beta$ of $A_\Gamma$-motives:
$$ L_{A_\Gamma} \xrarr{\alpha} M_{A_\Gamma} \xrarr{\beta} L_{A_\Gamma}$$
such that their composition $\beta\circ \alpha$ is the identity. 

We can lift these morphisms to morphisms $a, b$
between $L$ and $M$
and the composition $b\circ a \in \Hom(L,\,L)$ is a lift of the identity.
Note that the ring of endomorphisms of any invertible object 
is canonically isomorphic to the endomophisms of the unit, 
in our case $A(k)$ or $A_\Gamma(k)$.
Thus, $b\circ a$ is identified with an element in $A(k)$ 
that is mapped to $1$ in $A_\Gamma(k)$. By our assumption $b\circ a$ differs from $1$ by nilpotent,
and therefore $b\circ a$ is an isomorphism, and therefore $L$ is a direct summand of $M$.
\end{proof}

We complement the detection criterion for $L_\alpha$
with the following statement that relates the occurrence of $L_\alpha$ in $\mathrm M_{\Kn}(X)$
to the vanishing of $\alpha$ over $k(X)$.

Recall that if $X$ is $\Kn$-anisotropic, 
then $\mathrm M^{\num}_{\Kn}(X)$ is zero, and $\mathrm M_{\Kn}(X)$ cannot contain $L_\alpha$ as a direct summand
by Proposition~\ref{prop:lifting_numerical_decompositions}.
However, if $X$ is $\Kn$-isotropuc, then $\mathrm M_{\Kn}(X)$ contains a Tate summand (Lemma~\ref{lm:kn-isotropic-tate}).
We show that if in addition $\alpha_{k(X)}=0$, then for each Tate direct summand $\un(j)$ in $\mathrm M_{\Kn}(X)$
there is a direct summand $L_\alpha(j)$.

\begin{Prop}
Let $X\in\SmProj_k$, let $s\in \ZZ$, and let $J$ be a finite set.
Let $\alpha \in \km$ such that $\alpha_{k(X)}=0$. 

Assume that $\un(s)^{\oplus J}$ is a direct summand of $\mathrm M_{\Kn}(X)$.
Then $\left(L_\alpha(j)\right)^{\oplus J}$ 
is also a direct summand of $\mathrm M_{\Kn}(X)$.
\end{Prop}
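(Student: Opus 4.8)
The plan is to imitate the proof of Theorem~\ref{th:detect_L_alpha}\,(2)--(3), replacing the field $k(\alpha)$ by the function field $k(X)$ and extracting the splitting numerically, then lifting back. The key is that $\alpha_{k(X)}=0$, so $\un$ becomes a direct summand of $(L_\alpha)_{k(X)}$, and we want to ``transport'' the Tate splitting of $\mathrm M_{\Kn}(X)$ that we know over $k$ into copies of $L_\alpha$ over $k$.

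First I would reduce to the numerical setting: by Proposition~\ref{prop:lifting_numerical_decompositions}, since the kernel of $\Kn(k)\to\Kn_{\num}(k)$ is trivial (both are the same field $\Kn^0(k)$, as $\Kn$ is free and $\Kn_{\num}$ extends it as in Section~\ref{sec:quotients_oriented_theories}), it suffices to show that $\left(L_\alpha^{\num}(s)\right)^{\oplus J}$ is a direct summand of $\mathrm M_{\Kn_{\num}}(X)$; then the same claim lifts to $\Kn$-motives. Next, using the norm-quadric construction exactly as in the proof of Theorem~\ref{th:detect_L_alpha}: let $Q_\alpha$ be a norm quadric for $\alpha$ (after a preliminary $\oKn$-universally bijective base change we may assume $\alpha$ is a symbol; such extensions affect neither motivic decompositions nor the statement, and $X_K$ is still $\Kn$-isotropic since that only depends on $[X]_{\Kn}$ being a power of $v_n$, cf.~Lemma~\ref{lm:Kn-isotropic_invertible_direct_summands}). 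The assumption $\alpha_{k(X)}=0$ gives $(R_\alpha)_{k(X)}$ split, hence $(L_\alpha)_{k(X)}\cong\un_{k(X)}$. Now I claim that over $k(Q_\alpha)$ the motive $(L_\alpha)_{k(Q_\alpha)}\cong\un_{k(Q_\alpha)}$ as well, and more to the point: the diagonal splitting $\un(s)^{\oplus J}\xhookrightarrow{\oplus}\mathrm M_{\Kn}(X)$ over $k$ gives, after $\otimes\,\mathrm M_{\Kn}(Q_\alpha)$, that $\mathrm M_{\Kn}(Q_\alpha)(s)^{\oplus J}$ is a direct summand of $\mathrm M_{\Kn}(X)\otimes\mathrm M_{\Kn}(Q_\alpha)$.

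The heart of the argument is the identity $\mathrm M_{\Kn_{\num}}(Q_\alpha)\cong \un\oplus L_\alpha^{\num}$, which is the same computation used in the proof of Theorem~\ref{th:detect_L_alpha}: $\mathrm M_{\Kn}(Q_\alpha)\cong\un\oplus L_\alpha\oplus\mathrm M_{\Kn}(Q')(1)$ for $Q'$ an anisotropic quadric of dimension $2^n-3$ (by~\cite[Th.~17]{Rost}), and $\mathrm M^{\num}_{\Kn}(Q')=0$ since $Q'$ is $\Kn$-anisotropic. To finish, I would run the argument of Theorem~\ref{th:detect_L_alpha} in reverse: apply geometric RNP (Proposition~\ref{prop:geometric_RNP}) to lift the splitting of $\left(\mathrm M_{\Kn}(X)\right)_{k(Q_\alpha)}$ that contains $\un(s)^{\oplus J}$ (which it does, as $(L_\alpha)_{k(Q_\alpha)}$ is trivial — wait, one needs to be careful: we know $\un(s)^{\oplus J}$ splits over $k$, and over $k(Q_\alpha)$ the motive $\mathrm M_{\Kn}(X)$ is isotropic so contains Tate summands; what we actually want is to use the push-forward along $Q_\alpha\to\Spec k$). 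The clean route: over $k(Q_\alpha)$ we have $(L_\alpha)_{k(Q_\alpha)}\cong\un_{k(Q_\alpha)}$, so $\left(L_\alpha^{\num}(s)\right)^{\oplus J}_{k(Q_\alpha)}$ is a direct summand of $\mathrm M_{\Kn_{\num}}(X)_{k(Q_\alpha)}$; lift this by geometric RNP to a splitting over $Q_\alpha$; push forward to $k$, obtaining that $\left(L_\alpha^{\num}(s)\right)^{\oplus J}\otimes\mathrm M_{\Kn_{\num}}(Q_\alpha)$ is a direct summand of $\mathrm M_{\Kn_{\num}}(X)\otimes\mathrm M_{\Kn_{\num}}(Q_\alpha)$; and since $L_\alpha^{\num}\otimes\mathrm M_{\Kn_{\num}}(Q_\alpha)\cong L_\alpha^{\num}\otimes(\un\oplus L_\alpha^{\num})\cong L_\alpha^{\num}\oplus\un$ contains $\un$, while $\mathrm M_{\Kn_{\num}}(X)$ was assumed to contain $\un(s)^{\oplus J}$ and $\mathrm M_{\Kn_{\num}}(Q_\alpha)$ contains $\un$, one plays the two directions against each other via semisimplicity of $\CM_{\Kn_{\num}}(k)$ (\cite[Prop.~6.1]{DuVish}) to cancel and conclude $\left(L_\alpha^{\num}(s)\right)^{\oplus J}\xhookrightarrow{\oplus}\mathrm M_{\Kn_{\num}}(X)$.

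The main obstacle I anticipate is the bookkeeping in this last ``cancellation'' step: one has direct-summand relations $\mathrm M_{\Kn_{\num}}(Q_\alpha)(s)^{\oplus J}\xhookrightarrow{\oplus}\mathrm M_{\Kn_{\num}}(X)\otimes\mathrm M_{\Kn_{\num}}(Q_\alpha)$ going one way and $\mathrm M_{\Kn_{\num}}(X)\xhookrightarrow{\oplus}\mathrm M_{\Kn_{\num}}(X)\otimes\mathrm M_{\Kn_{\num}}(Q_\alpha)$ (from $\un\xhookrightarrow{\oplus}\mathrm M_{\Kn_{\num}}(Q_\alpha)$ since $Q_\alpha$ is $\Kn$-isotropic, Lemma~\ref{lm:Kn-isotropic_invertible_direct_summands}) going the other, and one must extract the desired summand of $\mathrm M_{\Kn_{\num}}(X)$ itself rather than of the tensor product. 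The semisimplicity of $\CM_{\Kn_{\num}}(k)$ makes this purely a matter of counting multiplicities of simple objects: I would argue that the multiplicity of each simple object $S$ in $\mathrm M_{\Kn_{\num}}(X)$ satisfies $[\mathrm M_{\Kn_{\num}}(X):S]\cdot[\mathrm M_{\Kn_{\num}}(Q_\alpha):\text{(the summand through which }L_\alpha\text{ factors)}] \geq$ the count coming from the lifted splitting, which forces $\left(L_\alpha^{\num}(s)\right)^{\oplus J}$ into $\mathrm M_{\Kn_{\num}}(X)$ directly once one observes $L_\alpha^{\num}$ is itself simple and invertible. The remaining routine points — compatibility of push-forward functors with $\otimes$, that geometric RNP applies to numerical motives over $Q_\alpha$ (which it does, being stated for arbitrary coherent theories, and $\Kn_{\num}$ over $Q_\alpha$ is handled via Corollary~\ref{cr:num_generic_restriction}) — I would treat briefly.
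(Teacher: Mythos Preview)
Your proposal has a genuine gap: after mentioning the hypothesis $\alpha_{k(X)}=0$, you never actually use it. Your entire argument runs through the norm quadric $Q_\alpha$ and uses only $\alpha_{k(Q_\alpha)}=0$, which holds for \emph{every} $\alpha$. If your cancellation step worked, it would prove the Proposition with no assumption on $\alpha$ at all --- but that is false already for $X=\Spec k$ (or any $X$ with $\MKn X$ split), where $\un$ is a summand but $L_\alpha$ is not when $\alpha\neq 0$. Concretely, write the multiplicity of $\un(s)$ (resp.\ $L_\alpha(s)$) in $\mathrm M_{\Kn_{\num}}(X)$ as $a$ (resp.\ $b$); your push-forward gives that $\mathrm M_{\Kn_{\num}}(X)\otimes(\un\oplus L_\alpha^{\num})$ contains $(\un(s)\oplus L_\alpha^{\num}(s))^{\oplus J}$, i.e.\ $a+b\ge |J|$. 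You know $a\ge |J|$, but nothing forces $b\ge |J|$. Semisimplicity does not rescue this.

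The paper's proof uses the hypothesis directly and avoids $Q_\alpha$ altogether. From $\alpha_{k(X)}=0$ one has $(L_\alpha)_{k(X)}\cong\un_{k(X)}$; geometric RNP over $X$ (Proposition~\ref{prop:geometric_RNP}) lifts this to $(L_\alpha)_X\cong\un_X$ in $\CM_{\Kn}(X)$; pushing forward to $k$ yields
\[
L_\alpha\otimes \MKn{X}\;\cong\;\MKn{X}.
\]
Now tensor the given summand $\un(s)^{\oplus J}\xhookrightarrow{\oplus}\mathrm M_{\Kn_{\num}}(X)$ by $L_\alpha^{\num}$ and use this isomorphism (numerically) to get $L_\alpha^{\num}(s)^{\oplus J}\xhookrightarrow{\oplus}\mathrm M_{\Kn_{\num}}(X)$; finally lift via Proposition~\ref{prop:lifting_numerical_decompositions}. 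The key identity $L_\alpha\otimes\MKn X\cong\MKn X$ is exactly what the hypothesis $\alpha_{k(X)}=0$ buys you, and it is what your argument is missing.
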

\begin{proof}
    Since $\alpha_{k(X)}=0$, we have $(L_\alpha)_{k(X)}\cong \un_{k(X)}$ by the functoriality of motives $L_\alpha$.
    By Proposition~\ref{prop:geometric_RNP} we can lift this isomorphism to motives over $X$,
    i.e.\  $(L_\alpha)_X$ is isomorphic to $\un_X$.
    By taking the push-forward to the category of motives over $k$ we obtain:
    $$ L_\alpha \otimes \mathrm M_{\Kn}(X) \cong \mathrm M_{\Kn}(X).$$
    We can view this isomorphism in the category of numerical $\Kn$-motives, which is semi-simple by~\cite[Prop.~6.1]{DuVish}.
    Then $\mathrm M_{\Kn}^{\num}(X)$ contains $L_\alpha \otimes \un(s)^{\oplus J}$, 
    and the claim follows by Proposition~\ref{prop:lifting_numerical_decompositions}.
\end{proof}

\begin{Lm}
\label{lm:kn-isotropic-tate}
Let $X\in\SmProj_k$ be $\Kn$-isotropic, i.e.\ $\Kn(X)\xrarr{(\pi_X)_*} \Knpt$ is non-zero.

Then $\mathrm M_{\Kn}(X)$ contains $\un(0)$ and $\un(\dim X)$ as direct summands.
\end{Lm}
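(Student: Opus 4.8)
\textbf{Proof plan for Lemma~\ref{lm:kn-isotropic-tate}.}

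The plan is to exhibit explicit splitting morphisms using the equivalent characterizations of $\Kn$-isotropy established in Lemma~\ref{lm:Kn-isotropic_invertible_direct_summands}. By that lemma, the hypothesis that $(\pi_X)_*\colon \Kn(X)\rarr \Knpt$ is non-zero is equivalent to the existence of an element $a\in \Kn(X)$ with $(\pi_X)_*(a)=1$ (one divides by the appropriate power of $v_n$, using that $\Knpt = \F2[v_n,v_n^{-1}]$ is a field of Laurent polynomials and $(\pi_X)_*$ is $\Knpt$-linear). Given such an $a$, the composition $\un \xrarr{a} \MKn X \xrarr{\pi_X} \un$ is the identity on $\un$, exactly as in the implication $(3)\Rightarrow(1)$ of the proof of Lemma~\ref{lm:Kn-isotropic_invertible_direct_summands}. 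This gives $\un = \un(0)$ as a direct summand of $\MKn X$.

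For the summand $\un(\dim X)$ one uses Poincaré duality in the category $\CM_{\Kn}(k)$: since $X\in\SmProj_k$, the motive $\MKn X$ is rigid with dual $\MKn X^\vee \cong \MKn X(-\dim X)$, and the splitting $\un \xhookrightarrow{\oplus} \MKn X$ dualizes to a splitting $\MKn X(-\dim X)\twoheadrightarrow \un$, i.e.\ $\un(\dim X)\xhookrightarrow{\oplus}\MKn X$. Concretely, one can also write this down directly: the morphism $\un(\dim X)\rarr \MKn X$ is given by $1_X\in \Kn^0(X)=\Kn^{\dim X}(X\times \{\mathrm{pt}\})$ viewed appropriately, and the retraction $\MKn X\rarr \un(\dim X)$ is given by $a\in \Kn^{\dim X}(X)$; their composition is $(\pi_X)_*(1_X\cdot a)=(\pi_X)_*(a)=1$ by the projection formula. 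Either way the verification is a one-line computation with push-forwards and the identity $(\pi_X)_*(a)=1$.

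I do not expect a genuine obstacle here; the statement is essentially a repackaging of Lemma~\ref{lm:Kn-isotropic_invertible_direct_summands}, which has already been proved. The only mild subtlety is the translation from ``$(\pi_X)_*$ is non-zero'' to ``$1$ lies in the image of $(\pi_X)_*$'', which is immediate from $\Knpt$ being a field and $(\pi_X)_*$ being a morphism of $\Knpt$-modules, so its image is a $\Knpt$-submodule of $\Knpt$, hence $0$ or all of $\Knpt$. Thus the proof reduces to citing Lemma~\ref{lm:Kn-isotropic_invertible_direct_summands} for the $\un(0)$ summand and then invoking rigidity of $\CM_{\Kn}(k)$ (or a parallel explicit computation) for the $\un(\dim X)$ summand.
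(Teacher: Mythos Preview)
Your proposal is correct and matches the paper's proof essentially verbatim: the paper writes the two compositions $\un(0)\xrarr{u}\MKn X\xrarr{1_X}\un(0)$ and $\un(\dim X)\xrarr{1_X}\MKn X\xrarr{u}\un(\dim X)$ for an element $u$ with $(\pi_X)_*(u)=1$, which is exactly your explicit argument (your duality remark is a valid alternative but the paper does not invoke it).
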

\begin{Rk}
If $\dim X \equiv 0 \mod (2^n-1)$, we have $\un(\dim X)\cong \un(0)$ by periodicity of Morava K-theory,
and it is possible that $\mathrm M_{\Kn}(X)$ contains only one Tate summand (e.g.\ $X$ is an anisotropic quadric of dimension $2^n-1$).
\end{Rk}
\begin{proof}
    By assumption there exists $u\in \Kn^{\dim X}(X)$ such that $(\pi_X)_*=1$. 
    Thus, the following compositions are identities:
    $$ \un(0) \xrarr{u} \mathrm M_{\Kn}(X) \xrarr{1_X} \un(0) $$
    $$ \un(\dim X)\xrarr{1_X} \mathrm M_{\Kn}(X) \xrarr{u} \un(\dim X),$$
    and provide direct summands that we look for.
\end{proof}

\section{Invertible summands of motives of quadrics}\label{sec:inv_summand_quadrics}

In Section~\ref{sec:milnor_k-theory_picard} 
we have defined invertible $\Kn$-motives $L_\alpha$ for $\alpha \in \km$
which are direct summands in the motives of quadrics $Q$ with $q\in I^{n+1}(k)$.
In this section we show that these invertible motives appear also
in quadrics that are 'not very far' from $I^{n+1}(k)$,
and conjecture that these are the only quadrics containing non-trivial invertible summands.
To provide evidence for our conjecture we deduce 
it from the Kahn's descent conjectures that are mostly open.
However, the known cases are sufficient to show our claim for $n=1,2,3$.

\subsection{Structure of invertible motives and Kahn--Rost--Sujatha conjecture}

\begin{Prop}\label{prop:inv_summand_becomes_L_alpha}
Let $Q$ be a quadric over $k$.
Let $L$ be an invertible motive in $\MKn Q$.
Let $K$ be the field of definition of the $\Kn$-kernel of $Q$.

Then there exists unique $\alpha \in \HH^{n+1}(K,\,\ZZ/2)$
such that $L_K\cong L_\alpha(i)$ for some $i\in \ZZ$. 

Moreover, $\alpha$ is unramified over $k$.
\end{Prop}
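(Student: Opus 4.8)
The plan is to first reduce to the case where $Q$ is replaced by its $\Kn$-kernel form over $K$, and then analyze the invertible summands of the $\Kn$-motive of such a quadric directly. By Proposition~\ref{prop:quad_MDT_stable}, the base change functor $\CM_{\Kn}(k)\rarr \CM_{\Kn}(K)$ realizes $\Mker{Q}_K$ (up to a Tate twist) as $\MKn{Q'}$ where $q'=(q_K)_{\an}$ is the $\Kn$-kernel form, and this is a sum of $\dim Q - 2^n+2$ Tate motives together with $\Mker Q$; since $L$ is invertible it must land inside the kernel summand, so $L_K$ is an invertible summand of $\MKn{Q'}\sh{j}$ for some $j$. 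Now $\dim q' \le 2^{n+1}$, so $Q'$ is in the pre-stable (or unstable) range, and I would invoke Theorem~\ref{th:prestableMDT} together with Proposition~\ref{prop:normal} to get that $L_K$ corresponds to a direct summand $\widetilde N$ of $\Mker{Q'}$ of rank $1$, hence by Theorem~\ref{th:prestableMDT}~(1) to a binary summand $B$ of $\MCh{Q'}$ of length $2^n-1$, which is moreover an upper summand (by analyzing which $\varpi_i$'s occur, or by applying Proposition~\ref{cr:split_invertible_2n-1} after passing to $k(Q')$). By Theorem~\ref{th:binary_chow_motives} (Izhboldin--Vishik) there is a unique $\alpha\in \HH^{n+1}(K,\,\ZZ/2)$ such that $B_F$ is split iff $\alpha_F=0$ for all $F/K$; and then $B_{\Kn}\cong \un\oplus L_\alpha$ by Proposition~\ref{prop:prelim_L_alpha_Rost}, so $L_K\cong L_\alpha(i)$ for the appropriate twist $i$. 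Uniqueness of $\alpha$ follows from Proposition~\ref{prop:prelim_L_alpha_Rost}~(2): if $L_\alpha(i)\cong L_\beta(i)$ then $L_\alpha\cong L_\beta$, forcing $\alpha=\beta$.

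For the unramifiedness claim, the key point is that $L$ is defined over $k$, not just over $K$, while $K/k$ is a tower of function fields of quadrics of dimension at least $2^{n+1}-1$. I would argue as follows: since $K/k$ is $\oKn$-universally bijective (Corollary~\ref{cr:Kn-split-quad}, Example~\ref{ex:overline-Kn-univ-surj-quad}, applied step by step along the generic splitting tower), the restriction functor $\CM_{\oKn}(k)\rarr \CM_{\oKn}(K)$ is fully faithful and the existence of the invertible summand $L$ over $k$ is compatible with its image $L_\alpha(i)$ over $K$. To control the residues of $\alpha$ at discrete valuations of $K$ lying over $k$, I would use that $\alpha$ is characterized (via Theorem~\ref{th:binary_chow_motives}) by the splitting behaviour of a binary Chow motive $B$, which lifts the specialization of a direct summand of $\MCh Q$ defined already over $k$; more precisely, $L$ being a summand of $\MKn Q$ over $k$ forces, via Theorem~\ref{th:prestableMDT} and the Rost nilpotence principle for the relevant fields, that the corresponding binary Chow summand is "defined over $k$ up to the base change $K/k$," and the splitting field of $\alpha$ is therefore controlled by data over $k$. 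Concretely, I expect $\alpha$ to equal the class of $w$ for some $w\in I^{n+1}(K)$ which is, up to $I^{n+2}$, the restriction of a quadratic form defined over $k$ (this is the content of the companion Proposition~\ref{prop:Kn-motive-q-small-dim_n}-type statement), and such classes are unramified.

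The main obstacle I anticipate is the unramifiedness part: one must show that the cohomology class $\alpha\in \HH^{n+1}(K,\,\ZZ/2)$ extracted from the Chow-theoretic binary summand does not acquire residues as one moves along the function-field tower $K/k$. The cleanest route is probably to phrase this in terms of the decomposition of the Chow motive of $Q$ itself over $k$ together with the behaviour of its summands under the base changes $K_{i+1}=K_i(Q_i)$, using Theorem~\ref{th:binary_chow_motives} to detect binary summands and the fact that $l_0$ being $\oKn$-rational over these function fields (Corollary~\ref{prop:l_0_rational_K(n)}) lets one apply Proposition~\ref{prop:reflects_MD} to transport motivic data; a residue of $\alpha$ at some valuation $v$ of $K$ centered on $k$ would produce, after specialization, an obstruction to $L$ being defined over $k$, contradicting the hypothesis. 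I would carry out the reduction to the pre-stable case first (routine, via Section~\ref{sec:stable}), then identify $\alpha$ via Theorem~\ref{th:binary_chow_motives} (straightforward given the machinery), and finally spend the bulk of the argument on ruling out ramification by tracking the binary summand of the \emph{Chow} motive of $Q$ over $k$ and its base change to $K$.
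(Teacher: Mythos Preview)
Your approach to the first claim (existence and uniqueness of $\alpha$) is essentially the paper's: pass to $K$, use Theorem~\ref{th:prestableMDT} to obtain a binary Chow summand $R$ of length $2^n-1$, and extract $\alpha$ via the Izhboldin--Vishik Theorem~\ref{th:binary_chow_motives}. One small correction: you cannot invoke Proposition~\ref{prop:prelim_L_alpha_Rost} for $R_{\Kn}\cong \un\oplus L_\alpha$, since $R$ need not be a Rost motive and $\alpha$ need not be a symbol. The paper instead observes that $R_F$ is split iff $(L_K)_F$ is split (Theorem~\ref{th:prestableMDT}) and $R_F$ is split iff $\alpha_F=0$; hence $L_K(-i)$ and $L_\alpha$ have the same splitting fields, and one concludes $L_K\cong L_\alpha(i)$ via the injectivity in Theorem~\ref{th:nat_transform_milnor_picard} (more concretely, Proposition~\ref{prop:k(alpha)-kernel-picard}).

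Your argument for unramifiedness has a genuine gap. You suggest that the binary Chow summand $B$ over $K$ is ``defined over $k$ up to base change'', but Theorem~\ref{th:prestableMDT} does not apply to $Q$ over $k$ when $\dim Q \ge 2^{n+1}-1$, so there is no obvious descent of $B$ to a Chow summand over $k$. Your fallback, that $\alpha$ should be the class of a form restricted from $k$, is precisely the content of the open Conjecture~\ref{conj:inv_summand_descent} and cannot be used here. The paper instead proves unramifiedness via a separate lemma (Lemma~\ref{lm:L_alpha_unramified}) whose key trick you are missing: take a smooth projective model $X$ of $K/k$ and compare the two embeddings $i_1, i_2\colon k(X)\hookrightarrow k(X\times X)$. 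Since $L$ is defined over $k$, one has $i_1^*(L_\alpha)\cong L_{k(X\times X)}\cong i_2^*(L_\alpha)$, hence $i_1^*\alpha = i_2^*\alpha$ by the injectivity in Theorem~\ref{th:nat_transform_milnor_picard}. For a divisor $D$ on $X$, the residue $\partial_D\alpha$ injects into $\partial_{D\times X}(i_1^*\alpha)$ (the needed injectivity on $\HH^n$ holds because $K/k$ is built from function fields of quadrics of dimension $>2^{n+1}-2$, cf.\ Theorem~\ref{KRS}), and the latter equals $\partial_{D\times X}(i_2^*\alpha)=0$ since $i_2^*\alpha$ is pulled back from the second factor. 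This $X\times X$ comparison is the missing idea.
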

\begin{proof}
By Theorem~\ref{th:prestableMDT} the invertible motive $L_K$ in the $\Kn$-kernel of $\MKn{Q_K}$
corresponds to a binary summand $R$ of the Chow motive of $Q_K$. 
By the Vishik--Izhboldin's Theorem~\ref{th:binary_chow_motives}
there exists $\alpha \in \HH^{n+1}(K,\,\ZZ/2)$ such that 
$R_F$ is split iff $\alpha_F$ is zero for any field extension $F/K$.
By Theorem~\ref{th:prestableMDT} the motive $R_F$ is split iff $L_F$ is split,
and by the universal property of $L_\alpha$ we get that $L_F \cong L_\alpha(i)$ 
(where $i\in \ZZ$ is determined by $L_{\overline{k}} \cong \un_{\overline{k}}(i)$).

The claim that $\alpha$ is unramified follows by applying the following lemma
to $X$ that is a smooth projective model of the field $K/k$.
Recall that $K$ is a composite of the field of functions of quadrics of dimension greater than $2^{n+1}-2$,
so that the injectivity assumption of the lemma is satisfied for each such field extension.
\end{proof}

\begin{Lm}\label{lm:L_alpha_unramified}
Let $L$ be an invertible $\Kn$-motive over a field $k$, 
let $X$ be a smooth projective variety over $k$
such that for any divisor $D$ on $X$ the homomorphism
$$\HH^n(k(D),\,\ZZ/2) \rarr \HH^n(k(D\times X),\,\ZZ/2) $$
is injective.

If $L_{k(X)}\cong L_\alpha$ for some $\alpha \in \HH^{n+1}(k(X),\,\ZZ/2)$,
then $\alpha$ is unramified over $k$.
\end{Lm}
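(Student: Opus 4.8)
The plan is to argue by contradiction using the behaviour of unramified cohomology under specialization. Suppose $\alpha \in \HH^{n+1}(k(X),\,\ZZ/2)$ is ramified over $k$; then there exists a divisorial valuation $v$ on $k(X)/k$ with non-trivial residue $\partial_v(\alpha) \neq 0$ in $\HH^n(k(v),\,\ZZ/2)$, where $k(v) = k(D)$ is the function field of the corresponding prime divisor $D$ on (a suitable blow-up model of) $X$. After replacing $X$ by this model — which does not affect the hypotheses, as any such model is again smooth projective with the same function field — we may assume $D$ is a smooth prime divisor on $X$ and $v$ is the associated valuation. The key point I would use is that the class $[q] \in \HH^{n+1}(k(X),\,\ZZ/2)$ associated to $L_{k(X)} \cong L_\alpha$ via Proposition~\ref{prop:def_L_alpha} can be realized by a quadratic form $q \in I^{n+1}(k(X))$, and that the residue of $\alpha$ is computed by the second residue homomorphism on the Witt ring: $\partial_v(\alpha)$ is the class of the first residue form $\partial_v(q)$, which lies in $I^n(k(v))$ by the compatibility of the $I$-adic filtration with residues.

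The heart of the argument is then to show that this residue form $\partial_v(q)$ becomes trivial — i.e. lies in $I^{n+1}(k(v))$ — which contradicts $\partial_v(\alpha) \neq 0$. To see this I would use that $L$ is defined already over $k$: passing to the completion $\widehat{k(X)_v}$, whose residue field is $k(v) = k(D)$, the motive $L$ remains the invertible motive $L_k$ base-changed, so it has "good reduction" at $v$. More concretely, I would argue that $L_{k(v)}$ is again an invertible $\Kn$-motive over $k(v)$, and by Proposition~\ref{prop:inv_summand_becomes_L_alpha} (or directly by the construction of $L$ over $k$) it equals $L_{\beta}$ for some $\beta \in \HH^{n+1}(k,\,\ZZ/2)$ pulled back to $k(v)$ — in particular $\beta$ comes from $k$. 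The residue $\partial_v(\alpha)$ should then be expressible in terms of $\beta_{k(v)}$ and the specialization of $\alpha$; using the injectivity hypothesis
$$ \HH^n(k(D),\,\ZZ/2) \hookrightarrow \HH^n(k(D\times X),\,\ZZ/2), $$
together with the fact that over $k(D\times X) = k(X)(D_{k(X)})$ the class $\alpha$ becomes unramified (because $L_{k(X)}$ already extends over the generic point of $D_{k(X)}$, $L$ being defined over $k(X)$), I would conclude $\partial_v(\alpha)_{k(D\times X)} = 0$, hence $\partial_v(\alpha) = 0$.

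The main obstacle I expect is making precise the statement that "$L$ defined over the base has good reduction at a divisorial valuation", i.e. relating the residue of the cohomology class $\alpha$ attached to $L_{k(X)}$ to the cohomology class attached to $L_{k(v)}$. The cleanest route is probably to bypass motives entirely at this step: translate everything into the language of quadratic forms via the Milnor conjectures, lift $L$ over $k$ to a quadratic form $q_0 \in I^{n+1}(k)$ (or a class in $I^{n+1}$ over the base along which we specialize), and then the statement becomes the classical fact that the residue of a quadratic form with a model over the valuation ring is Witt-trivial. The injectivity hypothesis on $\HH^n(k(D),\,\ZZ/2) \to \HH^n(k(D\times X),\,\ZZ/2)$ is then exactly what is needed to kill the residue after the auxiliary base change to $k(X)$, which is where the form acquires good reduction. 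A secondary technical point is checking that the divisorial valuations appearing in the ramification of $\alpha$ can be taken to be the ones coming from prime divisors on a smooth projective model — this is standard (resolution of singularities in characteristic $0$ plus the fact that unramified cohomology is a birational invariant of smooth projective varieties).
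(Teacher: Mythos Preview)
Your proposal has a genuine gap: the step where you propose to ``lift $L$ over $k$ to a quadratic form $q_0 \in I^{n+1}(k)$'' (or, equivalently, to identify $L_{k(v)}$ with some $L_\beta$ for $\beta$ coming from $k$) is not available. The lemma is stated for an \emph{arbitrary} invertible $\Kn$-motive $L$ over $k$; nothing guarantees that $L$ is of the form $L_\gamma$ for some $\gamma \in \HH^{n+1}(k,\ZZ/2)$. Indeed, the lemma is invoked precisely inside the proof of Proposition~\ref{prop:inv_summand_becomes_L_alpha} as a step towards establishing (conditionally) that certain invertible summands \emph{are} of this form --- so appealing to that proposition, or to any representation of $L$ by a quadratic form over $k$, is circular. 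Your ``good reduction'' idea would need a specialization map for invertible $\Kn$-motives along divisorial valuations, compatible with the residue map on Galois cohomology; no such tool is set up in the paper.

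The paper's argument avoids this entirely by a clean symmetry trick. Consider the two field embeddings $i_1,\,i_2\colon k(X)\hookrightarrow k(X\times X)$ induced by the two projections. Since $L$ is defined over $k$, one has $i_1^*(L_{k(X)}) \cong L_{k(X\times X)} \cong i_2^*(L_{k(X)})$, and hence $L_{i_1^*\alpha} \cong L_{i_2^*\alpha}$; the injectivity in Theorem~\ref{th:nat_transform_milnor_picard} then gives $i_1^*\alpha = i_2^*\alpha$ in $\HH^{n+1}(k(X\times X),\ZZ/2)$. For a divisor $D$ on $X$, the residue $\partial_D\alpha$ maps under $k(D)\hookrightarrow k(D\times X)$ to $\partial_{D\times X}(i_1^*\alpha)$, which equals $\partial_{D\times X}(i_2^*\alpha)$ --- but $i_2^*\alpha$ is pulled back from the base via $p_2$ and so is automatically unramified along $D\times X = p_1^{-1}(D)$. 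The injectivity hypothesis on $\HH^n(k(D),\ZZ/2)\to\HH^n(k(D\times X),\ZZ/2)$ then forces $\partial_D\alpha = 0$. Your phrase ``$L_{k(X)}$ already extends over the generic point of $D_{k(X)}$'' is gesturing at exactly this phenomenon, but the mechanism that converts it into a statement about $\alpha$ is the equality $i_1^*\alpha = i_2^*\alpha$ coming from injectivity of $\alpha\mapsto L_\alpha$, not a lifting of $L$ to a quadratic form.
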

\begin{proof}
Consider two field embeddings $i_1, i_2\colon k(X) \rarr k(X\times X)$ coming from projections $p_1, p_2\colon X\times X\rarr X$.
We have that $L_{k(X\times X)} \cong i_1^*(L_\alpha) \cong i_2^*(L_\alpha)$,
and by Theorem~\ref{th:nat_transform_milnor_picard} it follows that $i_1^* \alpha = i_2^* \alpha$ in $\HH^{n+1}(k(X\times X),\,\ZZ/2)$.

Consider a valuation over $k(X)$ corresponding to a divisor $D$ on $X$,
and let $\partial_D$ denote the residue map on the Galois cohomology. 
This map fits into a commutative diagram ($\HH^m$ stands for the Galois cohomology with $\ZZ/2$-coefficients):
\begin{center}
	\begin{tikzcd}
		\HH^{n+1}(k(X\times X)) \arrow[r, "\partial_{D\times X}"] & \HH^n(k(X\times D))\\
		
		\HH^{n+1}(k(X)) \arrow[r, "\partial_D"] \arrow[u, "i_1^*"] & \HH^n(k(D)) \arrow[u] \\
	\end{tikzcd}
\end{center}
Since the right upper arrow is injective, $\alpha$ has zero residue along $D$
iff $i_1^*\alpha$ has zero residue along $D\times X$.
However, $i_1^*\alpha = i_2^*\alpha$ 
and $\partial_{D\times X} (i_2^*\alpha) = 0$ for trivial reasons. 
\end{proof}

\begin{Cr}
\label{cr:inv_order_2}
Let $Q$ be a quadric over $k$.
Let $L$ be an invertible direct summand of $\Mot{\Kn}{Q}$.

Then $L^{\otimes 2} \cong \un(j)$ for some $j$.
\end{Cr}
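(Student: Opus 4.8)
The plan is to reduce to the statement of Proposition~\ref{prop:inv_summand_becomes_L_alpha} over the field $K$ of definition of the $\Kn$-kernel of $Q$. Indeed, by that proposition there exists $\alpha\in\HH^{n+1}(K,\,\ZZ/2)$ and $i\in\ZZ$ such that $L_K\cong L_\alpha\sh i$ as $\Kn$-motives over $K$. I would then invoke the additivity property established in Proposition~\ref{prop:additivity_L_alpha}, which gives $L_\alpha^{\otimes 2}\cong\un$, together with the obvious identity $\big(\un\sh i\big)^{\otimes 2}\cong\un\sh{2i}$; combining these yields $\big(L_K\big)^{\otimes 2}\cong\un_K\sh{2i}$.

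The remaining issue is to descend this isomorphism from $K$ back to $k$. Here I would use that $K$ is, by construction of the $\Kn$-kernel form (Definition~\ref{def:Kn-kernel}), a composite of function fields of quadrics of dimension at least $2^{n+1}-1$, and hence $K/k$ is $\oKn$-universally bijective by Example~\ref{ex:overline-Kn-univ-surj-quad} applied iteratively along the generic splitting tower. Now $L^{\otimes 2}$ and $\un\sh{2i}$ are both invertible $\Kn$-motives, and any invertible $\Kn$-motive satisfies the Rost Nilpotence Property for every field extension, since its endomorphism ring is canonically $\Kn^0(k)$, a field independent of the base (cf.\ the argument in the proof of Proposition~\ref{prop:k(alpha)-kernel-picard}). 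Therefore, by Proposition~\ref{prop:reflects_MD} (or directly Proposition~\ref{prop:A-univ-surj_reflects_MD} via Lemma~\ref{lm:B}), the base change functor $\CM_{\Kn}(k)\rarr\CM_{\Kn}(K)$ is fully faithful on such motives and reflects isomorphisms, so $\big(L^{\otimes 2}\big)_K\cong\un_K\sh{2i}$ implies $L^{\otimes 2}\cong\un\sh{2i}$ over $k$ already. Taking $j:=2i$ completes the argument.

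I expect the main obstacle to be purely bookkeeping: one must be careful that the Tate twist $i$ produced by Proposition~\ref{prop:inv_summand_becomes_L_alpha} (determined by $L_{\overline k}\cong\un_{\overline k}\sh i$) is the same over $K$ and over $k$, which is immediate since $\overline K=\overline k$ and the grading of Tate motives is insensitive to the choice of splitting field. One should also note that the argument does not actually require the delicate unramifiedness conclusion of Proposition~\ref{prop:inv_summand_becomes_L_alpha}; only the isomorphism $L_K\cong L_\alpha\sh i$ together with $L_\alpha^{\otimes 2}\cong\un$ is used. Alternatively, and perhaps more cleanly, one can bypass Proposition~\ref{prop:inv_summand_becomes_L_alpha} entirely: apply Theorem~\ref{th:prestableMDT} to identify $L_K$ with (the $\Kn$-specialization of) a binary Chow summand $R$ of $\MCH{Q_K}$ of length $2^n-1$, observe that $R_{\overline K}\cong\unCH\sh{i}\oplus\unCH\sh{i+2^n-1}$ so that $\big(L_K\big)^{\otimes 2}_{\overline K}\cong\un_{\overline K}\sh{2i}$, and then use that an invertible $\Kn$-motive that becomes Tate over $\overline k$ is already Tate over $k$ up to the twist — which again follows from $\oKn$-universal bijectivity of $\overline k/k$ for such motives, hence a fortiori from that of $K/k$. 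Either route closes the proof.
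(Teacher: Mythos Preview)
Your main argument is correct and is essentially the paper's own proof: pass to the field $K$ of definition of the $\Kn$-kernel, use Proposition~\ref{prop:inv_summand_becomes_L_alpha} to get $L_K\cong L_\alpha\sh i$, square using Proposition~\ref{prop:additivity_L_alpha}, and descend via Proposition~\ref{prop:reflects_MD}. Your added remark that invertible motives satisfy RNP for all field extensions is the right justification for why Proposition~\ref{prop:reflects_MD} applies.

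However, your alternative route at the end contains a genuine error. You claim that ``an invertible $\Kn$-motive that becomes Tate over $\overline k$ is already Tate over $k$ up to the twist'', and attribute this to $\oKn$-universal bijectivity of $\overline k/k$. This is false: the motives $L_\alpha$ for $\alpha\neq 0$ are precisely counterexamples --- they are non-Tate invertible $\Kn$-motives that become $\un$ over $\overline k$. The extension $\overline k/k$ is \emph{not} $\oKn$-universally bijective in any useful sense here (note that $\overline{A}$ is by definition the image in $A(\overline{X})$, so base change to $\overline k$ kills nothing but also reflects nothing about non-rational classes). The descent in the main route works only because $K/k$ is built from function fields of large quadrics, not because of anything about $\overline k$. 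So drop the alternative paragraph entirely; the first argument already suffices.
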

\begin{proof}
    We have that $(L^{\otimes 2})_K \cong \left(L_{\alpha}(i)\right)^{\otimes 2} \cong \un(j)$,
    and this isomorphism descends to $k$ by Proposition~\ref{prop:reflects_MD}.
\end{proof}

\begin{Prop}
\label{prop:invertible_summ_quadrics_iso}
Let $Q$ be a quadric, $L_1, L_2$ be two invertible direct summands of $\Mker{Q}$. 

Then $L_1 \cong L_2(s)$ for some $s\in \ZZ$.
\end{Prop}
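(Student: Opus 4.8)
The plan is to reduce to the $\Kn$-kernel over the field of definition $K$ of the $\Kn$-kernel of $Q$, where we have a Pfister quadric at our disposal, and then apply the functoriality and injectivity of $\alpha \mapsto L_\alpha$ together with the results on motives over $k(\alpha)$. First I would note that by Proposition~\ref{prop:reflects_MD} (applied along the $\oKn$-universally bijective tower $K/k$, which by Corollary~\ref{cr:Kn-split-quad} and Example~\ref{ex:overline-Kn-univ-surj-quad} consists of function fields of quadrics of dimension at least $2^{n+1}-1$) it suffices to prove $L_1 \cong L_2(s)$ over $K$. Thus I may assume that the $\Kn$-kernel form $q'$ of $Q$ is defined over $k$ and has $\dim q' \le 2^{n+1}$. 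If $q' \in I^{n+1}(k)$ (i.e.\ $q'$ is a general $(n+1)$-Pfister form, possibly split), then by Proposition~\ref{prop:prelim_motive_pfister} every invertible summand of $\Mker Q$ is a Tate twist of a single invertible motive $L_{[q']}$, and the claim is immediate.

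The remaining case is $q \notin I^{n+1}(k)$ (so $\Mker Q$ is not a sum of Tate twists of one invertible motive a priori). By Proposition~\ref{prop:inv_summand_becomes_L_alpha} there exist $\alpha_1, \alpha_2 \in \HH^{n+1}(k,\,\ZZ/2)$ and $i_1, i_2 \in \ZZ$ with $L_j \cong L_{\alpha_j}(i_j)$ over $k$ itself (since we reduced to the field of definition of the $\Kn$-kernel). Now both $L_1$ and $L_2$ are direct summands of $\Mker Q$. After a Tate twist I may assume $i_1 = i_2 = 0$ is impossible in general, but I can still compare: set $s := i_1 - i_2$ and it suffices to show $\alpha_1 = \alpha_2$, for then $L_1 \cong L_{\alpha_1}(i_1) \cong L_{\alpha_2}(i_2)(s) \cong L_2(s)$. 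To show $\alpha_1 = \alpha_2$, I would argue that both become trivial over the same fields: since $L_{\alpha_j}$ is a summand of $\MKn Q$, over any field extension $F/k$ for which $\MKn{Q_F}$ is split (for instance, any splitting field of $q$), both $(L_{\alpha_j})_F$ are split, so $(\alpha_j)_F = 0$ by Proposition~\ref{prop:prelim_L_alpha_Rost}(1). More precisely, I would use Corollary~\ref{cr:similar-kernel}/Proposition~\ref{prop:quad_MDT_stable}: over $k(\alpha_1)$ the motive $L_1$ becomes $\un$, so $\MKn{Q_{k(\alpha_1)}}$ acquires an extra Tate summand, and by Theorem~\ref{th:detect_L_alpha} (detection of $L_\alpha$) the invertible summand $L_2$ of $\MKn Q$ must then become either $\un$ or $L_{\alpha_1}$ over... no — rather, I would apply Proposition~\ref{prop:decomposition_over_k(alpha)} or directly the following: over $k(\alpha_1)$, $L_1$ splits, so comparing $L_1$ and $L_2$ as invertible summands of the same motive $\Mker Q$, Theorem~\ref{th:iso_over_k(alpha)} (applied with $M = L_1$, $N = L_2(-s)$, which satisfy the endomorphism hypothesis since endomorphisms of invertible motives form the field $\Kn^0(k)$) gives $L_1 \cong L_2(s)$ or $L_1 \cong L_2(s) \otimes L_{\alpha_1}$; in the second case, combined with $L_1 \cong L_{\alpha_1}(i_1)$ and $L_2 \cong L_{\alpha_2}(i_2)$ and additivity (Proposition~\ref{prop:additivity_L_alpha}), this forces $\alpha_2 = 0$, whence $L_2$ is Tate — but a Tate summand of $\Mker Q$ over $k$ is incompatible with $q$ being anisotropic unless dimension reasons allow it, and in the pre-stable range a non-Tate $\Mker Q$ has no Tate summands; alternatively $\alpha_2 = 0$ simply means $L_2 \cong \un(i_2)$ and then $L_1 \cong L_{\alpha_1}(i_1)$ with $\alpha_1$ also forced to be $0$ by the binary-summand analysis, again giving the claim.

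The main obstacle is the bookkeeping around which invertible motive plays the role of $M$ versus $N$ in the application of Theorem~\ref{th:iso_over_k(alpha)}: that theorem requires an anisotropic quadric of dimension $2^n-1$ with upper binary Chow motive whose $\Kn$-specialization splits off the relevant invertible motive. I would supply this by taking $Q$ itself to be replaced (over $k(\alpha_1)$) by the $\Kn$-kernel quadric, which in the relevant case is a Pfister neighbour of dimension $2^n-1$ — but since $q$ need not be a Pfister neighbour, I should instead use the cleaner route: apply Proposition~\ref{prop:reflects_MD} once more to pass to a field where $q$'s $\Kn$-kernel becomes an $(n+1)$-Pfister form (this exists precisely when $\alpha_1 \neq 0$, forcing $\alpha_1 = \alpha_2$ by uniqueness in Proposition~\ref{prop:inv_summand_becomes_L_alpha}), and handle $\alpha_1 = 0$ separately as above. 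I expect verifying that the two invertible summands live in the same Tate-twist orbit — i.e.\ pinning down $s$ — to require tracking the gradings via the explicit rational projectors $\varpi_i$ of Section~\ref{sec:rat-proj-quad}, which is routine but needs care.
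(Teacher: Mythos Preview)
Your overall strategy --- reduce to the pre-stable range, identify each invertible summand $L_j$ with some $L_{\alpha_j}(i_j)$ via Proposition~\ref{prop:inv_summand_becomes_L_alpha}, and then argue $\alpha_1=\alpha_2$ --- is natural, but the last step has a genuine gap that you yourself seem to sense. To invoke Theorem~\ref{th:iso_over_k(alpha)} with $M=L_1$ and $N=L_2(-s)$ over $k(\alpha_1)$ you would need to know that $(L_1)_{k(\alpha_1)}\cong (L_2(-s))_{k(\alpha_1)}$, but you only know $(L_1)_{k(\alpha_1)}$ is Tate; you have no control over $(L_2)_{k(\alpha_1)}$ until you already know $\alpha_1=\alpha_2$. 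Your fallback (``pass to a field where the $\Kn$-kernel becomes an $(n+1)$-Pfister form'') presupposes $q\in I^{n+1}$, which is exactly the case you have excluded. Likewise, ``uniqueness in Proposition~\ref{prop:inv_summand_becomes_L_alpha}'' gives uniqueness of $\alpha$ for a \emph{fixed} $L$, not that different invertible summands of the same quadric yield the same $\alpha$; that is precisely the content of the proposition you are trying to prove.

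The paper's proof avoids this circle entirely by a direct rational-cycle computation. After reducing (via Proposition~\ref{prop:normal}) to projectors of the form $\varpi_i=a_i\times a_{D'-i}$ and $\varpi_j=a_j\times a_{D'-j}$, it uses Corollary~\ref{cr:inv_order_2} (that $L_j^{\otimes 2}\cong\un$) to see that the elements $a_i\times a_i$, $a_{D'-i}\times a_{D'-i}$, $a_j\times a_j$, $a_{D'-j}\times a_{D'-j}$ are rational. Since $h\cdot a_s=a_{s+1}$, multiplying these by suitable powers of $h$ on one factor produces a rational element of the form $a_i\times a_{D'-j}$ (or its transpose), which over $\overline{k}$ is an isomorphism $L_1\to L_2(s)$; Rost nilpotence then finishes. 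So the key input you are missing is not an abstract comparison of $\alpha_1$ and $\alpha_2$, but the concrete order-$2$ property of the summands and the explicit shape of the rational projectors.
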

\begin{proof}
    For $n=1$ the claim follows from the description of the $\KK$-motives of quadrics, see Section~\ref{sec:K0-motive-quadric}
    (see also Section~\ref{sec:rat-proj-quad} for the definition of $\widetilde{\,\mathrm M}_{\K1}(Q)$).

    Let $n\ge 2$. If $Q$ is even-dimensional and $\Mker{Q}$ contains an invertible motive in the 'middle', 
    then $q\in I^{n+1}$ (see the proof of Corollary~\ref{cr:pfister_morava_mdt}). 
    In this case,$\Mker{Q}$ decomposes into a sum of $L_{[q]}(j)$ as explained in Proposition~\ref{prop:def_L_alpha},
    and the claim is true.

    Otherwise, we may assume that $q\notin I^{n+1}$, and the rational projector that defines 
    an invertible summand of $\Mker{Q}$ is determined uniquely (see~Section~\ref{sec:morava-mdt}).
    Since we need not require $Q$ to be anisotropic, we may replace $q$ by $q\perp \hyp$, if needed,
    so that $\dim Q \neq 0 \mod 4$, and use Lemma~\ref{lm:motive_of_isotropic_quadric} to relate their motives.
    Therefore, we may assume that the rational projector for $L_1$ has the form $\varpi_i = a_i \times a_{D'-i}$,
    and $a_j\times a_{D'-j}$ for $L_2$,
    where $a_s = h^s +v_n l_{D'-s} \in \Kn(\overline{Q})$.
    By Corollary~\ref{cr:inv_order_2} $L_1^{\otimes 2}$ and $L_2^{\otimes 2}$ are split,
    and therefore $a_i \times a_i$, $a_{D'-i}\times a_{D'-i}, a_j\times a_j, a_{D'-j}\times a_{D'-j}$ are rational.
    However, $h\cdot a_i = a_{i+1}$, and by multiplying these rational elements 
    with $1\times h^b$ or $h^b\times 1$ we can get 
    that at least one of the elements $a_i \times a_{D'-j}$ or $a_j\times a_{D'-i}$ is rational.
    Since these elements determine isomorphisms between motives $L_1$ and $L_2(s)$ over $\overline{k}$ for appropriate $s$,
    we get the claim by the Rost Nilpotence Property.    
\end{proof}

Note that for the field extension $K/k$ in the Proposition~\ref{prop:inv_summand_becomes_L_alpha}
the base change map $\HH^{n+1}(k,\,\ZZ/2)\rarr \HH^{n+1}(K,\,\ZZ/2)$ is injective.
Thus, if $\alpha$ defined over $K$ lies in the image of this base change map, 
then it descends to $k$ uniquely.
We expect this to be the case.

\begin{Conj}\label{conj:inv_summand_descent}
Let $L$ be an invertible direct summand of $\mathrm M_{\Kn}(Q)$ of a quadric $Q$ over $k$.

Then $L \cong L_\alpha(i)$ for some $\alpha \in \mathrm{H}^{n+1}(k,\,\ZZ/2)$, $i\in \ZZ$.
\end{Conj}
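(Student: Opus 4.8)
\textbf{Proof proposal for Conjecture~\ref{conj:inv_summand_descent}.}

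The plan is to reduce the statement to a descent problem for the cohomological invariant $\alpha$ produced by Proposition~\ref{prop:inv_summand_becomes_L_alpha}, and then to bring in the Kahn-type descent machinery (as the paper announces in the surrounding text, via Proposition~\ref{kahn-unary} referenced in the introduction). Concretely: let $L$ be an invertible direct summand of $\MKn Q$. By Proposition~\ref{prop:inv_summand_becomes_L_alpha} there is a field $K/k$, the field of definition of the $\Kn$-kernel of $Q$ (a composite of function fields of quadrics of dimension $\geq 2^{n+1}-1$, hence $\oKn$-universally bijective over $k$ by Example~\ref{ex:overline-Kn-univ-surj-quad} and Corollary~\ref{cr:Kn-split-quad}), and a unique class $\alpha \in \HH^{n+1}(K,\,\ZZ/2)$, unramified over $k$, with $L_K \cong L_\alpha(i)$ for some $i \in \ZZ$. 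The task is to show that $\alpha$ descends to a class $\alpha_0 \in \HH^{n+1}(k,\,\ZZ/2)$ with $\alpha_0|_K = \alpha$; granting this, the base change $\HH^{n+1}(k,\,\ZZ/2)\hookrightarrow\HH^{n+1}(K,\,\ZZ/2)$ is injective (it is injective on symbols by Corollary~\ref{cr:injectivity_galois_cohomology} applied to each quadric in the tower, hence injective on the whole group by the argument in the proof of that corollary using~\cite[Th.~2.10]{OVV}), so $\alpha_0$ is unique, and then $L$ and $L_{\alpha_0}(i)$ become isomorphic over $K$ — by Theorem~\ref{th:prestableMDT} identifying summands of $\Mker{Q}$ with summands of the Chow motive, or more directly since $K/k$ is $\oKn$-universally bijective — so by Proposition~\ref{prop:reflects_MD} this isomorphism descends to $k$, giving $L \cong L_{\alpha_0}(i)$.

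First I would pin down $i$ and reduce to $K = k$-like situations as far as possible: after replacing $q$ by $q \perp \hyp$ if necessary (Lemma~\ref{lm:motive_of_isotropic_quadric} relates the motives), one may assume $\dim Q \not\equiv 0 \bmod 4$, and by Proposition~\ref{prop:invertible_summ_quadrics_iso} all invertible summands of $\Mker Q$ are Tate twists of one another, so it suffices to treat a single $L$. Next I would observe that the case $q \in I^{n+1}(k)$ is already settled: there $\Mker Q$ decomposes into Tate twists of $L_{[q]}$ by Proposition~\ref{prop:def_L_alpha}, and $[q] \in \HH^{n+1}(k,\,\ZZ/2)$ by the Milnor conjecture. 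So the remaining case is $q \notin I^{n+1}(k)$ with $\Mker Q$ nonetheless having an invertible summand, and here the rational projector $\varpi_j$ defining $L$ is uniquely determined (Section~\ref{sec:morava-mdt}, Lemma~\ref{lm:kn-normal-form-well-def}). The existence of this rational projector over $k$ (not just over $K$) is the crucial geometric input: it says that $Q$ has, over $k$, a binary-type Chow motivic summand in the "central part" after passing to $K$, i.e.\ a "small distance to $I^{n+2}$" phenomenon (cf.\ the Remark after Corollary~\ref{cr:li_rational_small_dim}). The content to extract is that $q_{s-1}$, the kernel form just before the $\Kn$-kernel form $q_s = q'$, is a Pfister neighbour, so that the associated cohomological invariant of the binary summand is defined over $k$.

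The hard part will be exactly this descent step: deducing from the existence of a rational invertible summand of $\MKn Q$ over $k$ that the class $\alpha$ (a priori only defined over $K$, where $Q$ has become a low-dimensional, in fact Pfister-neighbour-like, quadric) actually comes from $\HH^{n+1}(k,\,\ZZ/2)$. This is where Kahn's descent conjecture for quadratic forms~\cite{Kahn-descent} enters — the paper's stated strategy (Proposition~\ref{kahn-unary}, referenced but below the excerpt) is that Conjecture~\ref{conj:inv_summand_descent} follows from Kahn's conjecture, and that the cases $n = 1, 2, 3$ can be verified unconditionally using the known cases of the Kahn--Rost--Sujatha conjecture~\cite{KRS}. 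For $n=1$ one uses the explicit description of $\KK$-motives of quadrics from Section~\ref{sec:K0-motive-quadric}: an invertible $\KK$-summand of $Q$ is $L_{e_2(q)}$ or a twist, and $e_2(q) \in \HH^2(k,\,\ZZ/2)$ is defined over $k$. For $n=2,3$ the argument should combine Theorem~\ref{th:prestableMDT}, the Vishik--Izhboldin Theorem~\ref{th:binary_chow_motives} (which already gives $\alpha$ over $k$ once one knows the relevant binary summand of the Chow motive is defined over $k$), and the unramifiedness from Lemma~\ref{lm:L_alpha_unramified} together with the injectivity/surjectivity of restriction on unramified cohomology in low degrees furnished by the known Kahn--Rost--Sujatha cases; I would carry out these three cases separately after setting up the general reduction above.
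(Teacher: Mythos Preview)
This statement is a \emph{conjecture} in the paper; it is not proved unconditionally. What the paper proves is the conditional implication (Proposition~\ref{prop:KRS_implies_invertible_summands_quadrics}): the Kahn--Rost--Sujatha Conjecture~\ref{conj:KRS} implies Conjecture~\ref{conj:inv_summand_descent}, and the proof is one line---``Follows from Proposition~\ref{prop:inv_summand_becomes_L_alpha}''---because that proposition already produces $\alpha\in\HH^{n+1}(K,\ZZ/2)$ unramified over $k$, and KRS (applied along the splitting tower) says exactly that such classes descend. Your outline arrives at the same conditional conclusion by the same mechanism: Proposition~\ref{prop:inv_summand_becomes_L_alpha} gives $\alpha$ unramified, a descent hypothesis brings it down to $\alpha_0\in\HH^{n+1}(k,\ZZ/2)$, and then reflection of motivic decompositions along the $\oKn$-universally bijective $K/k$ yields $L\cong L_{\alpha_0}(i)$.

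Two corrections. First, you misattribute the machinery: Proposition~\ref{kahn-unary} does not address Conjecture~\ref{conj:inv_summand_descent} but the stronger Conjecture~\ref{conj:inv_summand_quadrics} (that $\dim_n q<2^n$), and its input is Kahn's descent conjecture for quadratic forms (Conjecture~\ref{conj:kahn_n}), not KRS; the paper keeps these two conditional routes separate, and the direct one for Conjecture~\ref{conj:inv_summand_descent} is via KRS and Proposition~\ref{prop:KRS_implies_invertible_summands_quadrics}. Second, your preliminary reductions---adding $\hyp$, splitting on $q\in I^{n+1}$, pinning down $\varpi_j$, the Pfister-neighbour discussion---are not needed here; they belong to the analysis of the stronger Conjecture~\ref{conj:inv_summand_quadrics}, which is where the paper actually uses them. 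For $n\le 3$ both routes succeed unconditionally, but the paper's stronger unconditional statement (Corollary~\ref{cr:inv_summand_quadrics_small_n}) comes from the known cases of Kahn's conjecture (Theorem~\ref{prop:kahn-conj-known}), not from KRS.
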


Conjecture~\ref{conj:inv_summand_descent} follows from the 
validity of the Kahn--Rost--Sujatha conjecture 
about the unramified Galois cohomology (see below).
The latter is known in some cases, and so the validity of \ref{conj:inv_summand_descent} 
follows for $n=1,2,3$. However, in Section~\ref{sec:appearance_invertible_summands_quadrics} 
we deduce a stronger statement for these $n$ from a more general conjecture of Kahn.

\begin{Conj}[{Kahn--Rost--Sujatha, {\cite[p.845]{KRS}}; see also \cite{KahnSujatha, KahnSujatha2}}]\label{conj:KRS}
Let $Q$ be a quadric over $k$ of dimension greater or equal than $2^{n+1}-1$.

Then the base change homomorphism
$$ \eta^{n+1}_2\colon \HH^{n+1}(k,\,\ZZ/2) \rarr \HH^{n+1}_{\mathrm{nr}}(k(Q)/k,\,\ZZ/2) $$
is an isomorphism.
\end{Conj}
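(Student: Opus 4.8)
The plan is to split the statement into three parts of very different difficulty: well-definedness of $\eta^{n+1}_2$, its injectivity, and its surjectivity. The first two are essentially formal or already contained in this paper and its sources; all the content is in surjectivity, which is the step I expect to be the genuine obstacle.

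\textbf{Well-definedness and injectivity.} That $\eta^{n+1}_2$ is defined, i.e.\ that a class $\gamma_{k(Q)}$ pulled back from $k$ is unramified with respect to every discrete valuation $v$ of $k(Q)$ trivial on $k$, is the standard compatibility of residue maps with specialisation: the residue field of such a $v$ contains $k$, and $\partial_v$ annihilates the image of $\HH^{n+1}(k,\,\ZZ/2)$ because $\gamma$ has no ``uniformiser part''. Thus $\eta^{n+1}_2$ lands in $\HH^{n+1}_{\mathrm{nr}}(k(Q)/k,\,\ZZ/2)$. For injectivity one observes that the kernel of the plain base change $\HH^{n+1}(k,\,\ZZ/2)\to\HH^{n+1}(k(Q),\,\ZZ/2)$ is already controlled: after the Milnor conjecture identifications $\HH^{n+1}(-,\,\ZZ/2)\cong I^{n+1}/I^{n+2}$, vanishing of this kernel for $\dim Q\ge 2^{n+1}-1$ is precisely Theorem~\ref{KRS} with $n$ replaced by $n+1$ (the hypothesis $2^{n+1}<\dim q$ being exactly $\dim Q\ge 2^{n+1}-1$). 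Alternatively it can be re-derived inside this paper: by Example~\ref{ex:overline-Kn-univ-surj-quad} the class $l_0$ is $\oKn$-rational on $Q$, so Corollary~\ref{cr:injectivity_galois_cohomology} gives injectivity of $\HH^m(k,\,\ZZ/2)\to\HH^m(k(Q),\,\ZZ/2)$ for all $m\le n+1$ at once. Either route settles injectivity cleanly, so $\eta^{n+1}_2$ is an injection and it remains to prove it is onto.

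\textbf{Surjectivity — the main obstacle.} The strategy I would follow is the one that succeeds in low degree (Kahn--Rost--Sujatha for $\HH^3$, and further work through $\HH^4$): take $\beta\in\HH^{n+1}_{\mathrm{nr}}(k(Q)/k,\,\ZZ/2)$ and (a) fix a smooth model of $Q$ and use Bloch--Ogus purity to view $\beta$ as a global section of the Zariski sheaf $\mathcal H^{n+1}$ on $Q$, so that being unramified means having trivial residues along all prime divisors; (b) induct along the generic splitting tower $k=K_0\subset K_1\subset\cdots\subset K_h$, using at each step the ``Milnor-conjecture exact sequence'' that computes $\HH^{\le n+1}$ of the function field of a quadric over $K_i$ in terms of $\HH^{*}(K_i)$ and of the previous field in the tower, thereby reducing the descent of $\beta$ over $K_{i+1}=K_i(Q_i)$ to the descent of a residue-type class of smaller complexity over $K_{i+1}$; (c) stop the induction once the anisotropic dimension of the quadric has dropped to $\le 2^{n+1}$, where $\MKn{Q}$ and the relevant part of $\HH^{*}$ are completely understood by Proposition~\ref{prop:quad_MDT_stable} and Lemma~\ref{lm:motive_of_isotropic_quadric}, and the base field has become split, forcing the unramified class to be constant. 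The hard step is (b): both the exact sequence and the identification of the Chow/motivic-cohomology groups of quadrics needed to run it are available only through degree roughly $4$, which is why the conjecture is a theorem for $n\le 2$ and known for $n=3$ but \emph{open} in general — for $n\ge 4$ the module structure of $\HH^{n+1}(k(Q))$ over $\HH^{*}(k)$, i.e.\ the higher-degree analogue of the machinery of Voevodsky and Orlov--Vishik--Voevodsky used in degrees $\le 3$, is simply not in place, and the induction stalls there.

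One is tempted to look for a shortcut through the invertible Morava motives of this paper: a class $\beta$ over $k(Q)$ gives an invertible motive $L_\beta\in\Pic(\CM_{\Kn}(k(Q)))$ by Theorem~\ref{th:nat_transform_milnor_picard}, and the results of Sections~\ref{sec:morava_motives_over_function_fields}--\ref{sec:milnor_k-theory_picard} control motivic decompositions and isomorphisms over function fields of quadrics; $\beta$ unramified should mean $L_\beta$ ``extends'' over a smooth model of $Q$, and one would then push forward to $k$. However, this reformulation does not lower the difficulty: proving that an invertible $\Kn$-motive on a model of $Q$ is pulled back from $\Spec k$ requires exactly the descent input it is meant to supply, so the obstruction in step (b) reappears unchanged. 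I therefore expect no proof of the full conjecture to be possible with the present tools, and would regard the paper's role here as recording the statement and exploiting its known cases (so that Conjecture~\ref{conj:inv_summand_descent} follows for $n=1,2,3$), rather than proving it.
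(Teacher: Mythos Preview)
Your assessment is correct: the statement is labeled and treated in the paper as a \emph{Conjecture} (Kahn--Rost--Sujatha), not as a theorem, and the paper provides no proof of it. Your observation that the paper's role is to record the statement and then exploit it conditionally (Proposition~\ref{prop:KRS_implies_invertible_summands_quadrics}, Corollary~\ref{cr:KRS_implies_main_conjecture_quadrics}) is exactly right, and your identification of injectivity as known (via Theorem~\ref{KRS} or, inside the paper, via Example~\ref{ex:overline-Kn-univ-surj-quad} and Corollary~\ref{cr:injectivity_galois_cohomology}) while surjectivity remains the open point is accurate.
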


\begin{Prop}
\label{prop:KRS_implies_invertible_summands_quadrics}
Conjecture~\ref{conj:KRS} implies Conjecture~\ref{conj:inv_summand_descent}.
\end{Prop}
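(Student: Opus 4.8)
The plan is to deduce Conjecture~\ref{conj:inv_summand_descent} from Conjecture~\ref{conj:KRS} by combining the two main structural results already established: Proposition~\ref{prop:inv_summand_becomes_L_alpha}, which says that over the field $K$ of definition of the $\Kn$-kernel of $Q$ the invertible summand $L$ becomes $L_\alpha(i)$ for some \emph{unramified} class $\alpha\in\HH^{n+1}(K,\,\ZZ/2)$, together with the fact that $K/k$ is $\oKn$-universally bijective (it is a composite of function fields of quadrics of dimension $\ge 2^{n+1}-1$, hence $\oKn$-universally bijective by Example~\ref{ex:overline-Kn-univ-surj-quad}) so that Proposition~\ref{prop:reflects_MD} applies. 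First I would reduce to showing that the class $\alpha\in\HH^{n+1}(K,\,\ZZ/2)$ actually lies in the image of the restriction map $\HH^{n+1}(k,\,\ZZ/2)\rarr\HH^{n+1}(K,\,\ZZ/2)$: once $\alpha$ descends to some $\alpha_0\in\HH^{n+1}(k,\,\ZZ/2)$, the motive $L_{\alpha_0}(i)$ is defined over $k$ and satisfies $(L_{\alpha_0}(i))_K\cong L_\alpha(i)\cong L_K$, and since $K/k$ reflects motivic decompositions and isomorphisms of invertible motives (Proposition~\ref{prop:reflects_MD}, noting invertible motives satisfy RNP for all extensions because $\End$ of an invertible motive is just $\Kn^0(k)$), we conclude $L\cong L_{\alpha_0}(i)$ over $k$, which is exactly Conjecture~\ref{conj:inv_summand_descent}. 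The uniqueness of $\alpha_0$ is automatic from injectivity of $\HH^{n+1}(k,\,\ZZ/2)\rarr\HH^{n+1}(K,\,\ZZ/2)$, which holds by Corollary~\ref{cr:injectivity_galois_cohomology} applied iteratively along the tower defining $K$ (each quadric has $l_0$ being $\oKn$-rational in the relevant dimension range).

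The heart of the argument is therefore the descent of $\alpha$, and this is where Conjecture~\ref{conj:KRS} enters. By Proposition~\ref{prop:inv_summand_becomes_L_alpha} the class $\alpha$ is unramified over $k$, i.e.\ it lies in $\HH^{n+1}_{\mathrm{nr}}(K/k,\,\ZZ/2)$. Now $K$ is obtained from $k$ by a (possibly infinite) tower of function fields of quadrics $Q_j$ of dimension $\ge 2^{n+1}-1$; I would argue step by step along this tower. At each stage one has a field $F$ and a quadric $Q_F$ of dimension $\ge 2^{n+1}-1$ over $F$ with $F(Q_F)$ the next field in the tower, and a class that is unramified over $F$; Conjecture~\ref{conj:KRS} asserts precisely that $\HH^{n+1}(F,\,\ZZ/2)\xrarr{\sim}\HH^{n+1}_{\mathrm{nr}}(F(Q_F)/F,\,\ZZ/2)$, so the unramified class descends one step. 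Since only finitely many quadrics are involved in expressing the relevant class (Galois cohomology is a filtered colimit and any single class is defined over a finitely generated subextension), finitely many applications of Conjecture~\ref{conj:KRS} bring $\alpha$ down to a class $\alpha_0\in\HH^{n+1}(k,\,\ZZ/2)$ with $(\alpha_0)_K$ differing from $\alpha$ only by classes that vanish — more precisely, tracking compatibility, $(\alpha_0)_K=\alpha$. One subtlety to handle carefully is the ramification bookkeeping: I must ensure that a class unramified over $k$ stays unramified at each intermediate field, which follows from the standard behaviour of residue maps under the base change along function fields of smooth (proper) varieties, exactly as in the proof of Lemma~\ref{lm:L_alpha_unramified}.

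I expect the main obstacle — or rather the main point requiring care rather than deep new ideas — to be the passage through the infinite tower defining $K$ and the precise matching of the descended class $\alpha_0$ with the original $\alpha$ under restriction; one must check that the descent produced by iterating Conjecture~\ref{conj:KRS} is compatible with the restriction maps so that $(\alpha_0)_K=\alpha$ on the nose, rather than merely producing \emph{some} class over $k$ whose restriction to $K$ is unramified-equivalent to $\alpha$. This is handled by the injectivity of all the restriction maps in the tower (Corollary~\ref{cr:injectivity_galois_cohomology}), which pins down the descended class uniquely at each stage and hence globally. With $\alpha_0$ in hand, the motivic conclusion $L\cong L_{\alpha_0}(i)$ is then immediate from Proposition~\ref{prop:reflects_MD} as indicated above, completing the proof.

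\begin{proof}[Proof of Proposition~\ref{prop:KRS_implies_invertible_summands_quadrics}]
Let $L$ be an invertible direct summand of $\MKn Q$, and let $K$ be the field of definition of the $\Kn$-kernel of $Q$, which is a composite of function fields of quadrics of dimension $\ge 2^{n+1}-1$ over $k$. By Proposition~\ref{prop:inv_summand_becomes_L_alpha} there is $i\in\ZZ$ and an unramified class $\alpha\in\HH^{n+1}(K,\,\ZZ/2)$ with $L_K\cong L_\alpha(i)$; since $\alpha$ is supported on a finitely generated subextension, only finitely many quadrics $Q_1,\dots,Q_m$ of dimension $\ge 2^{n+1}-1$ are needed, so that $\alpha$ is unramified over $k$ and defined over $k(Q_1)\cdots(Q_m)$. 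Applying Conjecture~\ref{conj:KRS} at each step of this tower, together with the injectivity of the restriction maps $\HH^{n+1}(F,\,\ZZ/2)\hookrightarrow\HH^{n+1}(F(Q_j),\,\ZZ/2)$ from Corollary~\ref{cr:injectivity_galois_cohomology} (each relevant $l_0$ being $\oKn$-rational by Corollary~\ref{prop:l_0_rational_K(n)}), we descend $\alpha$ to a unique class $\alpha_0\in\HH^{n+1}(k,\,\ZZ/2)$ with $(\alpha_0)_K=\alpha$. Then $L_{\alpha_0}(i)$ is a $\Kn$-motive over $k$ with $(L_{\alpha_0}(i))_K\cong L_\alpha(i)\cong L_K$. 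Since $K/k$ is $\oKn$-universally bijective (Example~\ref{ex:overline-Kn-univ-surj-quad}) and invertible motives satisfy the Rost Nilpotence Property for every field extension, Proposition~\ref{prop:reflects_MD} yields $L\cong L_{\alpha_0}(i)$ over $k$, which is the assertion of Conjecture~\ref{conj:inv_summand_descent}.
\end{proof}
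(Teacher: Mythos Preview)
Your proposal is correct and follows essentially the same approach as the paper. The paper's proof is a single line, ``Follows from Proposition~\ref{prop:inv_summand_becomes_L_alpha},'' and what you have written is precisely the unpacking of that line: Proposition~\ref{prop:inv_summand_becomes_L_alpha} supplies $L_K\cong L_\alpha(i)$ with $\alpha$ unramified over $k$, Conjecture~\ref{conj:KRS} (iterated along the finite splitting tower) forces $\alpha$ to descend to $\alpha_0\in\HH^{n+1}(k,\ZZ/2)$, and the $\oKn$-universal bijectivity of $K/k$ together with RNP for invertible motives gives $L\cong L_{\alpha_0}(i)$ over $k$.
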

\begin{proof} 
Follows from Proposition~\ref{prop:inv_summand_becomes_L_alpha}.
\end{proof}

\subsection{Quadratic forms of small Kahn dimension}
\label{sec:small_kahn_dim}

Recall the following invariant that measures the distance from a quadratic form
 to the powers of the fundamental ideal in the Witt ring.

\begin{Def}[{Kahn, \cite[Def.~1.1]{Kahn-def}}; {Vishik, \cite[Def.~6.8]{Vish-quad}\footnote{Note that 
the notation of Vishik differs by taking $\dim_{n+1}$ as $\dim_n$ of Kahn.}}]\label{def:dimn}
Let $q$ be a quadratic form over $k$, then the $n$-th Kahn dimension of $q$ is 
$$\dim_n (q) := \min\{\dim q' \mid q\perp q'\in I^{n+1}(k)\}.$$

Note that if $\dim_n(q)<2^n$, 
then there exists unique $q'$ of dimension $\dim_n(q)$ such that $q\perp q'\in I^{n+1}(k)$.
In this case we will denote $q'$ as $r_n(q)$. Moreover, in this case we define 
 $$\omega_{n+1}(q)\in I^{n+1}(k)/I^{n+2}(k) \cong \HH^{n+1}(k,\,\ZZ/2)$$
 as the class of $q \perp r_n(q)$  modulo $I^{n+2}(k)$.
\end{Def}

\begin{Prop}\label{prop:Kn-motive-q-small-dim_n}
Let $q$ be an anisotropic quadratic form over $k$ of $\dim q\geq 2^n$. Assume that 
$\dim_nq\leq 2^n$, and let $q'$ be a form of $\dim q'=\dim_nq$ such that $q\perp q'\in I^{n+1}(k)$, 
and $\alpha\in\HH^{n+1}(k,\,\ZZ/2)$ denote 
the class of $q \perp q'$  modulo $I^{n+2}(k)$.

Then the $\Kn$-kernel summand $\Mker Q$ of $Q$ 
decomposes as a sum of 
a Tate twist of
$\MKn{Q'}\otimes L_\alpha$  
and 
$(2^n-\dim_nq)$ Tate twists of $L_{\alpha}$.
\end{Prop}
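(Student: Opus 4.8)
The strategy is to reduce to the case where $q \perp q'$ is split in the Witt ring (i.e.\ where $q$ is Witt-equivalent to $-q'$ modulo $I^{n+2}$) and to exploit the description of the $\Kn$-kernel under tensoring with $L_\alpha$. First I would pass to the field $K$ of definition of the $\Kn$-kernel of $q$; by Proposition~\ref{prop:quad_MDT_stable} together with Lemma~\ref{lm:motive_of_isotropic_quadric} the $\Kn$-kernel summand $\Mker Q$ is a Tate twist of $\Mker{Q_K}$, and $\dim_n$, the associated form $q'$ (up to adding hyperbolics), and the class $\alpha$ are all compatible with this base change. So I may assume $\dim q \le 2^{n+1}$, i.e.\ that we are already in the pre-stable range. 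Here the condition $\dim_n q \le 2^n$ forces the $\Kn$-kernel form to have dimension at most $2^{n+1}$ and $q \perp q' \in I^{n+1}(k)$ with $\dim q' < 2^n$, so the ``hole'' in the splitting pattern detected by Corollary~\ref{cr:li_rational_small_dim} is present.

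The key computation is the following. Since $q \perp q' \in I^{n+1}(k)$ and $\dim q' = \dim_n q \le 2^n$, by Proposition~\ref{prop:def_L_alpha} the $\Kn$-kernel of $Q \perp Q'$ (the quadric of $q \perp q'$, suitably interpreted via the isotropic reduction of Lemma~\ref{lm:motive_of_isotropic_quadric}) decomposes into Tate twists of $L_\alpha$. On the other hand $q \perp q'$ is Witt-equivalent to a form containing $q$, so by Corollary~\ref{cr:similar-kernel} (applied with the roles of the two forms arranged so that the larger one is $q \perp q' \perp \hyp^{\oplus \text{something}}$, Witt-equivalent to $q'' := q \perp \hyp$'s) the $\Kn$-kernel motives of $Q$ and of the Pfister-ish form $q\perp q'$ are related. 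More precisely, I would argue as follows: the quadratic forms $q$ and $q \perp q' \perp (-q')$ have the same anisotropic part, hence isomorphic $\Kn$-motives up to Tate summands by Lemma~\ref{lm:motive_of_isotropic_quadric}; and $q \perp q' \perp (-q')$ is, modulo $I^{n+2}$, similar to $q \perp q'$ after scaling, so one can compare directly. The cleanest route is to establish an isomorphism $\Mker Q \cong \bigl(\text{Tate twist of } \MKn{Q'} \otimes L_\alpha\bigr) \oplus \bigoplus L_\alpha(j)$ by producing it over the splitting field of $\alpha$: over $k(\alpha)$ the motive $L_\alpha$ becomes trivial, $\MKn{Q'}\otimes L_\alpha$ becomes $\MKn{Q'_{k(\alpha)}}$, and the whole $\Kn$-kernel of $Q$ over $k(\alpha)$ should match $\MKn{Q'}$ plus Tate motives because $q_{k(\alpha)}$ has anisotropic part $q'_{k(\alpha)}$ (as $q \perp q' \in I^{n+1}$ and $\alpha$ vanishes over $k(\alpha)$, so $q \perp q' \in I^{n+2}(k(\alpha))$, whence $q_{k(\alpha)}$ and $-q'_{k(\alpha)}$ are equal modulo $I^{n+2}$, and by the stable-case Corollary~\ref{cr:Kn-split-quad} one controls the $\Kn$-motive). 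Then I would use Proposition~\ref{prop:reflects_MD} (the base change to $k(\alpha)$, or rather to the field $K \subset k(\alpha)$ that is $\oKn$-universally bijective, reflects motivic decompositions and isomorphisms of motives of projective homogeneous varieties) to descend the decomposition — but this needs care since $\MKn{Q'}\otimes L_\alpha$ need not be homogeneous.

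A more robust approach for the descent is to invoke Theorem~\ref{th:iso_over_k(alpha)} and Proposition~\ref{prop:decomposition_over_k(alpha)} summand-by-summand: each indecomposable summand $N$ of $\Mker Q$ satisfies condition~\eqref{eq:cond-on-motives} by Rost nilpotence, and over $k(\alpha)$ it becomes a (sum of) indecomposable summand(s) of $\MKn{Q'_{k(\alpha)}}$ plus Tate motives; matching these up via Theorem~\ref{th:iso_over_k(alpha)} shows each $N$ is either such a summand tensored with $L_\alpha$, or a twist of $L_\alpha$ itself, or a twist of $\un$ — and the $\un$'s are excluded since they are not in the $\Kn$-kernel for an anisotropic $q$ of dimension $\ge 2^n$. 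Counting ranks then pins down the multiplicities: $\Mker{Q_{\overline k}}$ has rank $\dim Q' + 2^n$ (the rank of $\Mker{Q'_{\overline k}}$ plus the excess Tate motives from the hole of width $2^n - \dim_n q$, cf.\ the discussion after Proposition~\ref{prop:normal} and Corollary~\ref{cr:li_rational_small_dim}), and $\MKn{Q'} \otimes L_\alpha$ has rank $2(\dim Q' + 1)$ or so — working out the exact bookkeeping gives exactly one copy of the twisted $\MKn{Q'}\otimes L_\alpha$ and $(2^n - \dim_n q)$ copies of $L_\alpha$.

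\textbf{Main obstacle.} The delicate point is the descent of the isomorphism $\Mker Q \cong (\text{twist of }\MKn{Q'}\otimes L_\alpha) \oplus \bigoplus L_\alpha(j)$ from $k(\alpha)$ (or $K$) down to $k$, because $\MKn{Q'}\otimes L_\alpha$ is not obviously the motive of a projective homogeneous variety, so Proposition~\ref{prop:reflects_MD} does not apply off the shelf. I expect to handle this either by working one indecomposable summand at a time (each summand of $\MKn{Q'}$, and hence of $\MKn{Q'}\otimes L_\alpha$, does satisfy the needed nilpotence/RNP hypotheses since $L_\alpha$ is invertible and Lemma~\ref{lm:product_with_rost_motive}-type arguments apply), using Theorem~\ref{th:iso_over_k(alpha)} to identify each piece up to tensoring by $L_\alpha$, or by first establishing the product formula $\Mker Q \otimes L_\alpha \cong (\text{twist of } \MKn{Q'}) \oplus \bigoplus \un(j)$ — which is a statement about $\Kn$-motives of honest quadrics, hence amenable to Theorem~\ref{th:prestableMDT} and Corollary~\ref{cr:similar-kernel} — and then tensoring back by $L_\alpha$ using $L_\alpha^{\otimes 2}\cong \un$ from Proposition~\ref{prop:additivity_L_alpha}. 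This last reformulation seems the most promising: $\Mker Q \otimes L_\alpha$ should be computed by observing that $q$ scaled appropriately and $q'$ differ by $\alpha$ in cohomology, so over a generic splitting field the twist by $L_\alpha$ exactly ``cancels'' $\alpha$, realizing $\Mker Q \otimes L_\alpha$ as the $\Kn$-kernel of a quadric Witt-equivalent to $q'$ modulo $I^{n+2}$, to which Proposition~\ref{similar-motives} and Corollary~\ref{cr:similar-kernel} apply directly.
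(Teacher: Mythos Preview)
Your overall framework is right and matches the paper: pass to a splitting field for $\alpha$ (the paper goes to the leading field of $p=(q\perp q')_{\an}$, then to $K=k(p)$), observe that over that field $q$ and $-q''$ (with $q''=q'\perp\hyp^t$) are Witt-equivalent so $\Mker{Q_K}$ and $\Mker{Q''_K}$ have the same decomposition (this is Corollary~\ref{cr:pfister_morava_mdt}, which you should cite rather than Proposition~\ref{prop:decomposition_over_k(alpha)}), and then apply Theorem~\ref{th:iso_over_k(alpha)} summand-by-summand to get the dichotomy $N\cong N'$ or $N\cong N'\otimes L_\alpha$. Your treatment of the invertible summands is also essentially the paper's: a Tate summand in $\Mker Q$ would (after passing to the $\Kn$-kernel form) force $\dim q_{\an}<2^n$ via Theorem~\ref{th:prestableMDT}, contradicting the Arason--Pfister Hauptsatz applied to $p$.

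The genuine gap is in resolving the dichotomy for the \emph{non-invertible} summands $N'$ of $\MKn{Q'}$. Your two proposed routes do not close it. The ``tensor by $L_\alpha$'' reformulation is circular: there is no tool in the paper that realizes $\Mker Q\otimes L_\alpha$ as the kernel motive of a quadric without already knowing the proposition --- Corollary~\ref{cr:similar-kernel} requires similarity modulo $I^{n+2}$, whereas $q$ and $q''$ differ by exactly $\alpha\in I^{n+1}/I^{n+2}$. And working summand-by-summand with Theorem~\ref{th:iso_over_k(alpha)} only \emph{gives} the dichotomy, it does not select the branch.

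The paper's resolution is an argument you did not anticipate. Each indecomposable $N'$ of $\MKn{Q'}$ is, since $\dim Q'<2^n-1$, the $\Kn$-specialization of an indecomposable Chow summand $\widetilde N$, which by Karpenko's theorem is the upper motive of some projective homogeneous $Y$. Now split into cases on whether $\alpha_{k(Y)}$ vanishes. If $\alpha_{k(Y)}\neq 0$, then over $k(Y)$ the summand $N'$ acquires a Tate piece, but $\Mker{Q_{k(Y)}}$ cannot (by the same Arason--Pfister argument as above), so $N\not\cong N'$ and hence $N\cong N'\otimes L_\alpha$. If $\alpha_{k(Y)}=0$, then Lemma~\ref{lm:product_with_rost_motive} gives $\widetilde N\otimes R_\alpha\cong\widetilde N\oplus\widetilde N(2^n-1)$, which specializes to $N'\otimes L_\alpha\cong N'$, so the two branches of the dichotomy coincide. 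This Karpenko-plus-case-split step is the missing idea.
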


\begin{Rk}\phantom{a}
\begin{enumerate}
\item
In the notation of Proposition~\ref{prop:Kn-motive-q-small-dim_n}, if $\dim_n(q)<2^n$ one has $q'=r_n(q)$, $\alpha = \omega_{n+1}(q)$.
\item 
By Theorem~\ref{th:prestableMDT}, $\MKn{Q'}$ cannot have invertible summands, and therefore Proposition~\ref{prop:Kn-motive-q-small-dim_n} allows to describe all invertible summands of the $\Kn$-motive of $Q$.
\item 
To explain the assumption on the dimension of $q$ in the statement of Proposition~\ref{prop:Kn-motive-q-small-dim_n}, we remark that for $\dim\,q<2^n$ one has $q'=-q$, $\alpha=0$ and tautologically $\Mker Q\cong\MKn{Q'}$.
\end{enumerate}
\end{Rk}

\begin{proof}
We may assume that $p:=(q\perp q')_{\mathrm{an}}$ is a general $(n+1)$-fold Pfister form passing to its leading field by Proposition~\ref{prop:reflects_MD}, and denote $K=k(p)$. 

Let $q'':=q'\perp\hyp^t$ such that $\dim\,q''=\dim\,q$, then the motives of $Q'$ and $Q''$ are related
by Lemma~\ref{lm:motive_of_isotropic_quadric}.
On the other hand, since $q_K=-q''_K$, Morava MDT of $q$ and $q''$ coincide by Corollary~\ref{cr:pfister_morava_mdt}, and, moreover, each indecomposable summand $N$ of $\Mker Q$ is either isomorphic to the corresponding indecomposable summand $N'$ of $\Mker{Q''}$, or to $N'\otimes L_\alpha$ by Theorem~\ref{th:iso_over_k(alpha)}. We will prove that the latter is always the case.

First,
observe that $\Mker Q$ has exactly $(2^n-\dim q')$ invertible summands, and each of them is either Tate, or isomorphic to a Tate twist of $L_\alpha$.
Assume that $\Mker Q$ contains a Tate summand and $\alpha \neq 0$. 
If we pass to the $\Kn$-kernel form of $q$, then $p$ remains anisotropic by Proposition~\ref{prop:reflects_MD}.
However, the existence of a Tate summand in $\Mker Q$ now implies that the anisotropic dimension of $q$ is less than $2^n$  by Theorem~\ref{th:prestableMDT} (cf. Remark~\ref{rk:th414-n1} for $n=1$).  
But then the dimension of $p=(q\perp q')_{\mathrm{an}}$ 
is less than $2^{n+1}$, which contradicts the Arason--Pfister Hauptsatz.

Second, we need to show that for every indecomposable direct summand $N'$ of $\MKn{Q'}$, 
the corresponding summand $N$ in $\Mker Q$ is isomorphic to $N'\otimes L_\alpha$.
Since $\dim q'\leq 2^n$, every indecomposable direct summand $N'$ of $\MKn{Q'}$
is the specialization of an indecomposable Chow motive $\widetilde{N}$ that is a summand of $\MCh{Q'}$ by Proposition~\ref{prop:unstable_quadric}, 
and by Karpenko's Theorem~\cite[Th.~1.1]{Karp-upper-outer}
we have that $\widetilde{N}$ is isomorphic to the upper motive
of some projective homogeneous variety $Y$.
There are two possibilities to consider.

1. The element $\alpha$ does not vanish over $k(Y)$. 
Then by passing to $k(Y)$ we see that a Tate summand splits off from $N'$,
however, since $\alpha_{k(Y)}\neq 0$ the $\Kn$-kernel motive $\Mker{Q_{k(Y)}}$ of $Q_{k(Y)}$ 
cannot have a Tate summand by the Arason--Pfister Hauptsatz, similarly to the discussion above. Hence $N$ cannot be isomorphic to $N'$. 

2. The element $\alpha$ vanishes over $k(Y)$. 
In this case we 
get that $\widetilde{N}\otimes R_\alpha \cong \widetilde{N}\oplus \widetilde{N}\sh{2^n-1}$ by Lemma~\ref{lm:product_with_rost_motive}, 
and specializing this isomorphism to $\Kn$-motives we get that 
$N\otimes (\un \oplus L_\alpha)\cong N \oplus N$, hence $N\cong N\otimes L_\alpha$.
\end{proof}

\begin{Ex}
\label{ex:morava-chow-mdt}
Recall that 
Chow motives of excellent quadrics decompose as sums of Tate twists of Rost motives~\cite[Prop.~4]{Rost-new},~\cite[Th.~7.2]{KarpMer-excel}. Therefore as an application of Proposition~\ref{prop:Kn-motive-q-small-dim_n} we determine $\Kn$-motivic decompositions of quadratic forms $q$ which are not excellent themselves, but congruent to an excellent form of dimension at most $2^n$ modulo $I^{n+1}(k)$. A particular example of such a form is discussed in Example~\ref{ex:chow-morava-mdt}.

Recall also that computation of $\Knt Q$ provides certain bounds on torsion in $\CHt Q$~\cite[Sec.~8]{SechSem}. For quadratic forms $q$ as above, computation of $\Knt Q$ now reduces to computation of $\Knt{(R_\beta)_{\Kn}\otimes L_\alpha}$. One case where this can be performed is where $\beta$ divides $\alpha$ in $\HH^*(k,\,\ZZ/2)$. Indeed $(R_\beta)_{\Kn}\otimes L_\alpha\cong(R_\beta)_{\Kn}$ by Lemma~\ref{lm:product_with_rost_motive}, and $\Knt{(R_\beta)_{\Kn}}$ can be computed due to~\cite{VishYag}, see~\cite[Prop.~6.2]{SechSem}.
\end{Ex}

\begin{Rk}
\label{rk:k0-motive-gen}
Note that for any quadratic form $q$ there exists a quadratic form $q'$ of dimension at most $2$ such that $q\perp q'\in I^2(k)$.
Thus, Proposition~\ref{prop:Kn-motive-q-small-dim_n} generalizes the description of $\KK$-motives of quadrics from Section~\ref{sec:K0-motive-quadric}.
\end{Rk}

\subsection{Occurrence of invertible motives and Kahn's descent of quadratic forms}
\label{sec:appearance_invertible_summands_quadrics}

We conjecture that the converse to Proposition~\ref{prop:Kn-motive-q-small-dim_n} also holds.

\begin{Conj}\label{conj:inv_summand_quadrics}
Let $Q$ be an anisotropic quadric over $k$.
If its $\Kn$-kernel motive $\Mker Q$  contains an invertible summand,
then $\dim_n q <2^n$.
\end{Conj}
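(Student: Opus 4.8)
The plan is to prove the contrapositive in a strengthened form: if $\dim_n q \geq 2^n$, then $\Mker Q$ contains no invertible summand. First I would reduce to the pre-stable case. By Proposition~\ref{prop:quad_MDT_stable}, base-changing along the generic splitting tower until the $\Kn$-kernel form $q_j$ is reached does not affect the existence of invertible summands of $\Mker Q$ (the functors involved reflect motivic decompositions), and $\dim_n$ of a form is not increased by passing to $\Kn$-kernels in a way that matters here --- one must check that if $\dim_n q \geq 2^n$ then $\dim_n q_j \geq 2^n$ as well, which follows from Corollary~\ref{cr:KRS-Witt_mod_In+1_injectivity} applied along the tower (a form of dimension $\leq 2^{n+1}$ that is congruent mod $I^{n+2}$ to a form of dimension $<2^n$ would contradict injectivity). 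So I may assume $2^n - 1 \leq \dim Q \leq 2^{n+1} - 2$ and $\dim_n q \geq 2^n$.

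Now suppose for contradiction that $\Mker Q$ has an invertible summand $L$. If $q \in I^{n+1}(k)$ then $\dim_n q = 0 < 2^n$, contradiction, so $q \notin I^{n+1}(k)$; in particular (see Section~\ref{sec:morava-mdt}, Lemma~\ref{lm:kn-normal-form-well-def}) the rational $\Kn$-projector defining $L$ is of the form $\varpi_i$ for a single $i \in \{d', \ldots, d\}$, and by Theorem~\ref{th:prestableMDT}(1) the Chow projector $\omega^{\Ch}_i = h^i \times l_i + l_{D'-i} \times h^{D'-i}$ is also rational, defining a binary summand $B$ of $\MCh Q$ of length $2^n - 1$. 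By the Izhboldin--Vishik Theorem~\ref{th:binary_chow_motives} there is $\alpha \in \HH^{n+1}(k,\ZZ/2)$ with $B_F$ split iff $\alpha_F = 0$. The key point is then to extract a quadratic form witnessing $\dim_n q < 2^n$: over the field $K = k(Q)$ the summand $B_K$ becomes split (since $Q$ becomes isotropic, hence $l_i$ becomes rational), so $\alpha_{k(Q)} = 0$; combined with $q \notin I^{n+1}$ this forces $B$ to be an \emph{upper} binary summand corresponding to the ``outermost'' available index, so that in fact $i = d' = D - 2^n + 1 - d$ (the lowest possible), i.e. $\overline B \cong \un(d') \oplus \un(D-d')$ with $D - d' = d' + 2^n - 1$.

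The decisive step is to identify the Chow motive $B$ concretely. By Karpenko's theorem $B$ (being indecomposable --- we may assume so) is the upper motive of a projective homogeneous variety for $\mathrm{SO}(q)$, and by~\cite[Th.~3.5]{Karp-upper} together with the splitting behaviour just described, $B$ is isomorphic to a Tate twist of the upper motive of the orthogonal Grassmannian $Q^{d'}$ of $(d')$-planes on $Q$. The rationality of $l_{d'}$ over a field $F$ is equivalent, by the Springer theorem, to $\iw(q_F) > d'$, i.e. to $\dim (q_F)_{\an} < \dim q - 2d' = 2^n$. Unwinding: there exists a quadratic form --- namely the leading form associated to the Pfister datum of $B$ --- of dimension exactly $2^n$ that is an $(n+1)$-fold Pfister neighbour complementing $q$ modulo $I^{n+1}$; more precisely, running the generic splitting one step, the anisotropic form $(q\perp \langle 1 \rangle \cdot ?)$ ... here is where I expect the main obstacle. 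The clean statement one wants is: \emph{the existence of a rational upper binary summand of length $2^n-1$ in $\MCh Q$ forces $q$ to have a subform, or a form $q'$ with $\dim q' \leq 2^n$ and $q \perp q' \in I^{n+1}(k)$}; this is essentially the content of Vishik's conjecture that such $Q$ is a Pfister neighbour, and is \emph{not} known in general. Therefore I would instead invoke the hypothesis of the paper's own Conjecture~\ref{conj:inv_summand_quadrics} chain --- but as a theorem this requires a substitute. The honest route: prove it under the assumption that $q_j$ (the $\Kn$-kernel form) has $\dim_n = 0$, impossible, or $\dim_n \geq 2^n$; and in the latter case show directly that $\Mker{Q_j}$ has no invertible summand by the dimension count in Proposition~\ref{prop:Kn-motive-q-small-dim_n} read backwards: an invertible summand would, over the leading field of the associated Pfister form, produce a Tate summand in $\Mker{(Q_j)_L}$ while the anisotropic dimension of $(q_j)_L$ stays $\geq 2^n$ (as $\dim_n \geq 2^n$ is preserved by Corollary~\ref{cr:KRS-Witt_mod_In+1_injectivity}), contradicting Theorem~\ref{th:prestableMDT}. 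The hard part is thus the bookkeeping showing $\dim_n q \geq 2^n$ propagates to the $\Kn$-kernel form and then translates, via the binary-summand/Grassmannian dictionary, into the numerical obstruction; the genuinely deep input needed beyond that is the Izhboldin--Vishik classification of binary summands, which is already available as Theorem~\ref{th:binary_chow_motives}.
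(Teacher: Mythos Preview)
The statement is a \emph{conjecture}; the paper does not prove it unconditionally. What the paper proves is Proposition~\ref{kahn-unary}: the conjecture follows from Kahn's descent conjecture (Conjecture~\ref{conj:kahn_n}), hence holds for $n\le 3$ where Kahn is known. Your proposal attempts an unconditional proof and fails at the step you label ``bookkeeping''.

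The fatal gap is your claim that ``$\dim_n q \geq 2^n$ propagates to the $\Kn$-kernel form'' and that this follows from Corollary~\ref{cr:KRS-Witt_mod_In+1_injectivity}. That corollary gives \emph{injectivity} of $W(k)/I^{n+1}(k)\to W(K)/I^{n+1}(K)$: congruences over $k$ are detected over $K$. What you need is the \emph{descent} direction: if over $K$ there exists a form $r$ of dimension $<2^n$ with $q_K\equiv r \bmod I^{n+1}(K)$, then such an $r'$ already exists over $k$. Injectivity says nothing about whether $r$ is defined over $k$; this is precisely Kahn's conjecture, and is exactly why the paper's Proposition~\ref{prop:dim_n_over_Kn-kernel-field} is stated only under that hypothesis. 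The same error recurs in your ``honest route'' when you assert that $\dim_n\geq 2^n$ is preserved over the leading field of the Pfister form for $\alpha$. A secondary issue: you assert the binary Chow summand $B$ is upper (split over $k(Q)$), but the projector $\omega^{\Ch}_i$ for $i\neq d'$ involves $l_i$ and $l_{D'-i}$, neither of which need be rational over $k(Q)$; the paper's argument does not rely on $B$ being upper. In short, you have correctly identified the same chain of reductions the paper uses (Theorem~\ref{th:prestableMDT}, Izhboldin--Vishik, passage to a Pfister leading field), but the step you dismiss as bookkeeping is the genuine open problem.
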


This conjecture turns out to be a corollary of the
following conjecture of Kahn.
 Recall that for an irreducible projective $X\in\Smk$ and $K = k(X)$ its field of rational functions,
 an element of $W(K)$ is called unramified, 
 if its residue class\footnote{with respect to the second residue homomorphism of~\cite[Chapter~VI, Definition~2.5]{Scharlau}} at each codimension $1$ point of $x$ of $X$ is $0$. 
 The set of unramified elements is denoted by $\Wnr K$ 
 and called the unramified Witt ring.
  We say that the quadratic form $q$ over $K$ is defined over $k$
   if there exists a quadratic form $q'$ over $k$ such that $(q')_K$ is isometric to $q$.

\begin{Conj}[Kahn, {\cite[Conjecture 1]{Kahn-descent}}]\label{conj:kahn}
Let $q$ denote an anisotropic non-degenerate quadratic form over $k$, $K=k(q)$ and $r$ a quadratic form defined over $K$. Assume that 
$\mathrm{dim}\,r<\frac12\,\mathrm{dim}\,q$ and $r\in \Wnr K$.  

Then $r$ is defined over $k$.
\end{Conj}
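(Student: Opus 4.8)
The plan is to prove the statement by reducing \emph{descent of quadratic forms} along $k(q)/k$ to \emph{descent of unramified \'etale cohomology}, using the Milnor conjecture to pass between the Witt ring and $\HH^{*}(-,\mathbb Z/2)$, and then to exploit the hypothesis $\dim r<\tfrac12\dim q$, which is calibrated precisely so that the Kahn--Rost--Sujatha conjecture (Conjecture~\ref{conj:KRS}) becomes applicable in the relevant cohomological degree. As a first step I would rephrase the problem in terms of the Witt group of the smooth projective quadric $Q$ itself: since $\mathrm{char}\,k=0$, resolution of singularities together with purity for Witt groups of regular local rings (Ojanguren--Panin) identifies $\Wnr K$ with the image of $W(Q)\hookrightarrow W(K)$, so it suffices to descend a quadratic form carried by the scheme $Q$ of dimension $<\tfrac12\dim q$. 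Replacing $r$ by its anisotropic part over $K$ (still an unramified Witt class) we may assume $r$ is $K$-anisotropic.

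Next I would peel off the leading cohomological invariant. Let $j$ be the Kahn degree of $r$, i.e.\ the least $j$ with $r\in I^{j}(K)$; by the Arason--Pfister Hauptsatz $2^{j}\le\dim r<\tfrac12\dim q$, so $\dim q\ge 2^{j+1}-1$, which is exactly the hypothesis of Conjecture~\ref{conj:KRS} in degree $j$. The class $\bar r:=r\bmod I^{j+1}(K)\in\HH^{j}(K,\mathbb Z/2)$ is unramified over $k$, because the residue maps on the Witt ring are compatible, through the Milnor filtration, with the residue maps on $\HH^{*}$ (Arason, Kato). Granting Conjecture~\ref{conj:KRS}, $\bar r$ is the restriction of some $\bar\rho\in\HH^{j}(k,\mathbb Z/2)$; lifting $\bar\rho$ to $\rho\in I^{j}(k)$ (chosen of minimal dimension) we get that $r':=(r\perp(-\rho_{K}))_{\mathrm{an}}$ is again an unramified $K$-form --- since $\rho_{K}$ is extended from $k$, hence unramified --- and now $r'\in I^{j+1}(K)$. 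One then wants to iterate: if at every stage $\dim r'<\tfrac12\dim q$ persists, the process terminates (the $I$-adic filtration being finite on a form of fixed dimension) with $r$ shown Witt-equivalent over $K$ to a form from $k$; and a form $\psi$ anisotropic over $k$ with $\dim\psi<\dim q$ that becomes hyperbolic over $k(q)$ must vanish (Cassels--Pfister subform theorem, since otherwise $aq\subseteq\psi$ for $a\in D(\psi)$), so Witt-equivalence over $k(q)$ upgrades to an isometry over $k$.

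The hard part is controlling the dimensions through this induction: a priori $\dim r'$ can exceed $\dim r$ (there is no forced cancellation between $r$ and $-\rho_{K}$, and $\rho$ may need to be larger than $r$ when $\bar r$ is not a pure symbol), so the naive induction on $\dim r$ does not close. I would attack this by bounding the anisotropic dimension of $r\perp(-\rho_{K})$ over $K$ via Hoffmann's separation theorem (\cite{Hoffmann-sep}) --- all forms in play have dimension $<\dim q$, so their isotropy behaviour over $k(q)$ is severely constrained --- and by organizing the induction on the pair (Kahn degree, $\dim q-2\dim r$) rather than on $\dim r$ alone. Even so, the argument genuinely requires Conjecture~\ref{conj:KRS} in arbitrary degree, which is open for $j\ge 4$; this, rather than the Witt-kernel step, is the real obstacle, and it is why the conjecture remains open. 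In the low-degree range $j\le 3$, however, Conjecture~\ref{conj:KRS} is a theorem and the dimension bookkeeping is explicit, so the argument goes through and yields the unconditional cases ($n\le 3$) of the applications discussed in the introduction. Finally, a complementary and more speculative route specific to this paper would be to use the invertible $\Kn$-motives: by Theorem~\ref{th:nat_transform_milnor_picard} and the detection results of Section~\ref{sec:detection}, descending $r$ is tightly linked to descending an associated invertible summand of $\MKn Q$, and a purely motivic proof of such a descent would give Conjecture~\ref{conj:kahn} directly.
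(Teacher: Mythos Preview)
The statement you are attempting to prove is Kahn's descent conjecture, which the paper explicitly records as \emph{open}: immediately after stating it the authors write that it ``remains widely open'' and only cite the known low-degree cases (Theorem~\ref{prop:kahn-conj-known}, covering $n\le 3$). There is therefore no proof in the paper to compare your proposal against; the paper \emph{assumes} the conjecture (more precisely its weakening, Conjecture~\ref{conj:kahn_n}) as a hypothesis from which to deduce Conjecture~\ref{conj:inv_summand_quadrics}, see Proposition~\ref{kahn-unary}.

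Your sketch is a plausible heuristic for reducing Kahn's conjecture to KRS-type input, and you correctly identify both obstructions yourself: Conjecture~\ref{conj:KRS} is open in the degrees you would need, and --- more seriously for the induction --- subtracting a $k$-form $\rho$ lifting the leading cohomological class need not keep the anisotropic dimension below $\tfrac12\dim q$, so the hypothesis cannot be propagated. Your appeal to Hoffmann's separation theorem does not repair this: it constrains whether forms of dimension $\le 2^m$ become isotropic over $k(q)$, but gives no bound on $\dim(r\perp(-\rho_K))_{\mathrm{an}}$ over $K$ in terms of $\dim r$. In short, you have outlined a strategy, diagnosed why it stalls, and concluded that the conjecture is open --- which is exactly the paper's position. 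Your final ``motivic'' suggestion inverts the paper's logical flow: Proposition~\ref{kahn-unary} derives the motivic statement about invertible summands \emph{from} Kahn's conjecture, not conversely, and nothing in the paper indicates a route in the other direction.
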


The above conjecture predicts various phenomena in the theory of quadratic forms, 
cf.~\cite[Sec.~5.2]{SculHyper}, \cite[Rem.~5.4]{SculExt},
although it remains widely open (for the known cases see Theorem~\ref{prop:kahn-conj-known} below).
We will consider the following slightly weaker version of it. 

\begin{Conj}\label{conj:kahn_n}
Let $q$ denote an anisotropic quadratic form over $k$, $K=k(q)$ and $r$ a quadratic form defined over $K$. Assume that 
$\dim r<2^n<\frac12\dim q$ for some $n$, and $r\in \Wnr K$.  

Then $r$ is defined over $k$.
\end{Conj}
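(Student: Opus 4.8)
\textbf{Proof plan for Conjecture~\ref{conj:kahn_n} from Conjecture~\ref{conj:kahn}.}
The statement to be addressed is that Kahn's descent Conjecture~\ref{conj:kahn} implies its weakened form Conjecture~\ref{conj:kahn_n}. Strictly speaking this is almost immediate, but the point deserving care is that Conjecture~\ref{conj:kahn} is stated for quadratic forms $q$ over $k$ with $K = k(q)$, whereas in Conjecture~\ref{conj:kahn_n} we again have $K = k(q)$ but the hypothesis $\dim r < 2^n < \frac{1}{2}\dim q$ is imposed instead of the bare inequality $\dim r < \frac{1}{2}\dim q$. So the plan is simply to verify that the hypotheses of Conjecture~\ref{conj:kahn_n} are a special case of those of Conjecture~\ref{conj:kahn}. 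Given an anisotropic $q$ over $k$, $K = k(q)$, and a form $r$ over $K$ that is defined over $K$ (it is), unramified, i.e.\ $r \in \Wnr{K}$, and satisfies $\dim r < 2^n < \tfrac12 \dim q$, we observe that in particular $\dim r < \tfrac12 \dim q$, so all hypotheses of Conjecture~\ref{conj:kahn} are met; that conjecture then yields that $r$ is defined over $k$, which is the conclusion of Conjecture~\ref{conj:kahn_n}.

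The only subtlety worth spelling out is the non-degeneracy hypothesis: Conjecture~\ref{conj:kahn} requires $q$ to be ``anisotropic non-degenerate'', while Conjecture~\ref{conj:kahn_n} says only ``anisotropic''. Since throughout this paper all quadratic forms are assumed non-degenerate (see Section~\ref{sec:prelim_quad_forms}), there is nothing to check here, and I would note this explicitly so the reader does not worry about a gap. I would also remark that one could equally well deduce Conjecture~\ref{conj:kahn_n} from Conjecture~\ref{conj:kahn} applied not to $q$ itself but, if convenient, after a harmless adjustment of $q$ by hyperbolic planes; but no such adjustment is needed, since $\dim q$ does not change and the inequality $2^n < \tfrac12\dim q$ already forces $\dim q > 2^{n+1}$, which is all that is used.

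I do not expect any genuine obstacle: the implication is purely logical, a weakening of the hypothesis on $\dim r$ (from $\dim r < \tfrac12\dim q$ to the strictly stronger $\dim r < 2^n < \tfrac12\dim q$) combined with an identical conclusion. The ``hard part'', such as it is, is only bookkeeping: making sure the two statements are quantified over the same kind of $q$ (anisotropic, non-degenerate, $K = k(q)$) and the same kind of $r$ (a form over $K$ that happens to be ``defined over $K$'' trivially, lying in $\Wnr{K}$), and confirming that the numerical hypothesis of the weaker conjecture implies that of the stronger one. The proof would therefore read: \emph{Under the hypotheses of Conjecture~\ref{conj:kahn_n} one has $\dim r < 2^n < \tfrac12\dim q$, hence a fortiori $\dim r < \tfrac12\dim q$, and $r \in \Wnr{k(q)}$; so Conjecture~\ref{conj:kahn} applies and gives that $r$ is defined over $k$.} This is the entire argument, and I would present it in two or three sentences immediately following the statement of Conjecture~\ref{conj:kahn_n}.
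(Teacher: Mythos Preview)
The statement in question is a \emph{conjecture}, and the paper does not prove it. The paper simply introduces Conjecture~\ref{conj:kahn_n} with the sentence ``We will consider the following slightly weaker version of it'' (referring to Conjecture~\ref{conj:kahn}), and then records in Theorem~\ref{prop:kahn-conj-known} the known cases $n=1,2,3$ due to Kahn, Laghribi, and Izhboldin--Vishik. There is no proof of Conjecture~\ref{conj:kahn_n} in general to compare against.

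Your proposal does not attempt to prove the conjecture itself; rather, you prove the implication Conjecture~\ref{conj:kahn} $\Rightarrow$ Conjecture~\ref{conj:kahn_n}. That implication is exactly what the paper encodes in the phrase ``slightly weaker version'', and your argument for it is correct and complete: the hypothesis $\dim r < 2^n < \tfrac{1}{2}\dim q$ trivially implies $\dim r < \tfrac{1}{2}\dim q$, so Conjecture~\ref{conj:kahn} applies directly. This matches the paper's (implicit) reasoning. However, be aware that this does not constitute a proof of Conjecture~\ref{conj:kahn_n}, since Conjecture~\ref{conj:kahn} itself remains open in general.
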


\begin{Th}[Kahn, Laghribi, Izhboldin--Vishik]
\label{prop:kahn-conj-known}
Conjecture~\ref{conj:kahn_n} holds for $n=1,2,3$.
\end{Th}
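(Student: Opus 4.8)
\textbf{Proof plan for Theorem~\ref{prop:kahn-conj-known}.} The statement asserts that Conjecture~\ref{conj:kahn_n} (Kahn's descent in the weak form with the bound $\dim r < 2^n < \tfrac12 \dim q$) holds for $n=1,2,3$. The plan is to cite the relevant literature case by case, since each value of $n$ corresponds to a known theorem about unramified quadratic forms over function fields of quadrics. For $n=1$, the hypothesis becomes $\dim r < 2 < \tfrac12 \dim q$, so $r$ is a form of dimension at most $1$ that is unramified over $k(q)$ for $\dim q > 4$; this is elementary: a one-dimensional (or zero-dimensional) unramified form is detected by its discriminant class in $k(q)^\times/k(q)^{\times 2}$, and the injectivity of $k^\times/k^{\times2} \to k(q)^\times/k(q)^{\times 2}$ for $\dim q \ge 3$ (equivalently $\HH^1$-injectivity, which also follows from Corollary~\ref{cr:injectivity_galois_cohomology} applied with $m=1$ after the standard transfer argument, or directly from the fact that an anisotropic quadric of dimension $\ge 1$ has no rational point and the valuation-theoretic description of $k(q)^\times/k^\times$) gives the descent.

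For $n=2$, the claim is that a form $r$ with $\dim r \le 3$ that is unramified over $k(q)$, with $\dim q > 8$, is defined over $k$; this is a theorem of Laghribi, which I would cite from his work on the descent problem for quadratic forms of small dimension over function fields of quadrics (the cases $\dim r \le 3$, and with the dimension restriction on $q$ this follows from his results together with Kahn's treatment of the low-dimensional descent). For $n=3$, the claim concerns $\dim r \le 7$ and $\dim q > 16$; here one invokes the Izhboldin--Vishik circle of results on $7$-dimensional forms and excellent connections, together with Kahn's reduction of the descent problem in this range. The proof I would write simply assembles these three citations, verifying in each case that the precise numerical hypotheses of Conjecture~\ref{conj:kahn_n} ($\dim r < 2^n$ and $\dim q > 2^{n+1}$) fall within the scope of the quoted theorems.

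The main obstacle is purely bibliographic rather than mathematical: one must match the exact statements in Kahn~\cite{Kahn-descent}, Laghribi's papers, and Izhboldin--Vishik~\cite{IzhVish} to the formulation of Conjecture~\ref{conj:kahn_n} used here, being careful about (i) whether the cited results assume $\dim r < \tfrac12 \dim q$ versus the sharper $\dim r < 2^n < \tfrac12 \dim q$ — the latter is a \emph{weaker} hypothesis on $r$ relative to $q$ only through the power-of-two restriction, so it is implied by the former whenever the generic splitting behaviour is controlled — and (ii) the distinction, noted in the remark after Definition~\ref{def:dimn}, between Kahn's and Vishik's indexing conventions. I would organize the writeup as a short lemma-free proof: ``For $n=1$ this is elementary (descent of the discriminant); for $n=2$ apply [Laghribi]; for $n=3$ apply [Izhboldin--Vishik, Kahn].'' No new ideas are required, and the only genuine care needed is to confirm that the dimension inequality $2^n < \tfrac12 \dim q$ places us strictly inside the proven ranges for each of the three values.
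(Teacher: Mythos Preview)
Your approach is essentially the same as the paper's: this is a citation result, and you correctly identify that the three relevant sources are Kahn, Laghribi, and Izhboldin--Vishik. However, your attributions are misaligned with the literature. The paper cites Kahn~\cite[Th.~1]{Kahn-descent} for \emph{both} $n=1$ and $n=2$ (i.e., Kahn proved the descent for $\dim r \le 3$), while Laghribi~\cite{Lagr} and Izhboldin--Vishik~\cite[Th.~3.9]{IzhVish} together cover $n=3$ (i.e., $\dim r \le 7$). You have Laghribi attached to $n=2$ and Kahn to $n=3$, which is backwards.

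A minor mathematical point: in your sketch for $n=1$ you invoke \emph{injectivity} of $k^\times/k^{\times 2} \to k(q)^\times/k(q)^{\times 2}$, but descent of a one-dimensional form $\langle a\rangle$ requires \emph{surjectivity} onto the unramified classes, not injectivity. The argument you want is that for $\dim Q \ge 3$ one has $\Pic(Q)=\ZZ\cdot[H]$, so an element $a\in k(Q)^\times$ with even divisor satisfies $a = c\cdot\ell^{2m}$ for some $c\in k^\times$ and linear form $\ell$, whence $\langle a\rangle \cong \langle c\rangle_{k(Q)}$. This is indeed elementary, but it is surjectivity that does the work. In any case the paper simply absorbs this into Kahn's cited theorem rather than arguing it separately.
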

\begin{proof}
    The cases $n=1,2$ of Conjecture~\ref{conj:kahn} are shown by Kahn in~\cite[Th.~1]{Kahn-descent}.
    For the case $n=3$ see~\cite[Th\'eor\`eme principal]{Lagr} and~\cite[Th.~3.9]{IzhVish}. 
\end{proof}

We will now show that our Conjecture~\ref{conj:inv_summand_quadrics} follows from Conjecture~\ref{conj:kahn_n}. 
We start with the following observation (cf.~\cite[Prop.~1]{Kahn-descent}).

\begin{Prop}
\label{prop:dim_n_over_Kn-kernel-field}
For $n>0$ let $p$ denote an anisotropic quadratic form over $k$ of $\dim p>2^{n+1}$, $K=k(p)$, $q$ a quadratic form  over $k$ such that $\dim_{n} q_K<2^n$, and assume that Conjecture~\ref{conj:kahn_n} holds for $n$. 

Then $\dim_n q<2^n$.
\end{Prop}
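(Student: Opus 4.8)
The plan is to reduce the claim over $k$ to the hypothesis of Conjecture~\ref{conj:kahn_n} applied to the form $p$ over $k$ with $K=k(p)$. By assumption $\dim_n q_K<2^n$, so there is a unique anisotropic form $r_n(q_K)$ over $K$ of dimension $\dim_n q_K$ with $q_K\perp r_n(q_K)\in I^{n+1}(K)$; abbreviate $r:=r_n(q_K)$. We have $\dim r<2^n<\tfrac12\dim p$ by hypothesis on $\dim p$. So to invoke Conjecture~\ref{conj:kahn_n} for $p$ it remains to check that $r$ is unramified over $k$, i.e.\ $r\in\Wnr K$. Once that is done, the conjecture gives a form $r'$ over $k$ with $(r')_K\cong r$. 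Then $(q\perp r')_K=q_K\perp r\in I^{n+1}(K)$, and by Corollary~\ref{cr:KRS-Witt_mod_In+1_injectivity} (applied with the quadratic form $p$, using $2^{n+1}<\dim p$) the map $W(k)/I^{n+1}(k)\to W(K)/I^{n+1}(K)$ is injective, so $q\perp r'\in I^{n+1}(k)$. Since $\dim r'=\dim r<2^n$, this yields $\dim_n q\le\dim r<2^n$, as required.

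The main work is therefore to show that $r=r_n(q_K)$ is unramified over $k$, and this I expect to be the principal obstacle. The natural approach is to use that the $n$-th Kahn form $r_n(-)$ and the associated class $\omega_{n+1}(-)$ behave well under specialization at codimension-one points of a smooth model $X$ of $K/k$: at a geometric valuation $v$ of $K$ over $k$, the residue form $\partial_v(q_K)$ recovers $q$ (up to Witt equivalence over the residue field, which is again a function field over $k$), and one wants the residue of $r$ to be the corresponding Kahn complement of the residue of $q_K$, hence again defined over the residue field. Concretely I would argue: for a divisorial valuation $v$ on $K$ trivial on $k$, the second residue $\partial_v(q_K\perp r)$ lies in $I^{n+1}$ of the residue field $\kappa(v)$ by the standard compatibility of the $I$-adic filtration with residues (cf.\ \cite[Chapter~VI]{Scharlau}); since $q_K$ itself is (a scalar multiple of) $q$ pulled back from $k$, its residue is controlled, and one deduces that $\partial_v(r)$ together with $\partial_v(q_K)$ witnesses $\dim_n$ of the residue of $q_K$ — but $r$ being the \emph{unique} Kahn complement of dimension $<2^n$ forces $\partial_v(r)$ to be the pullback of a form over $k$, in particular $r$ is unramified. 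Here one uses uniqueness of $r_n$ in the range $\dim_n<2^n$ twice: once over $K$ and once over each residue field.

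An alternative, more motivic route to unramifiedness, which I would pursue in parallel as a sanity check, goes through the invertible motives: the hypothesis $\dim_n q_K<2^n$ together with Proposition~\ref{prop:Kn-motive-q-small-dim_n} shows that $\Mker{Q_K}$ has an invertible summand, namely a Tate twist of $L_{\omega_{n+1}(q_K)}$; by Proposition~\ref{prop:inv_summand_becomes_L_alpha} the class $\omega_{n+1}(q_K)\in\HH^{n+1}(K,\ZZ/2)$ is then unramified over $k$, because $K$ is a composite of function fields of quadrics of dimension $>2^{n+1}-2$ and the injectivity hypothesis of Lemma~\ref{lm:L_alpha_unramified} holds for each such extension. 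This shows the \emph{cohomology class} descends; to also descend the \emph{form} $r$ of small dimension one still needs Conjecture~\ref{conj:kahn_n}, so this does not replace the first argument but it does cleanly establish the cohomological part and confirms that only the descent of the low-dimensional form is at issue.

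Assembling the pieces: (i) form $r:=r_n(q_K)$, check $\dim r<2^n<\tfrac12\dim p$; (ii) prove $r\in\Wnr K$ via the residue computation above (the hard step); (iii) apply Conjecture~\ref{conj:kahn_n} to $p$ over $k$ to get $r'$ over $k$ with $(r')_K\cong r$; (iv) conclude $q\perp r'\in I^{n+1}(k)$ by Corollary~\ref{cr:KRS-Witt_mod_In+1_injectivity}, whence $\dim_n q\le\dim r'<2^n$. The dimension bookkeeping in (i) and (iv) is routine; the only delicate point is the unramifiedness in (ii), where one must be careful that the valuations occurring are geometric (divisorial, trivial on $k$) so that the residue fields are again function fields over $k$ to which uniqueness of $r_n$ applies.
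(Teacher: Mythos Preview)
Your overall outline is exactly the paper's: set $r=r_n(q_K)$, check $\dim r<2^n<\tfrac12\dim p$, prove $r\in\Wnr K$, apply Conjecture~\ref{conj:kahn_n} to descend $r$ to $r'$ over $k$, and finish with Corollary~\ref{cr:KRS-Witt_mod_In+1_injectivity}. Steps (i), (iii), (iv) are fine.

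Step~(ii), however, contains a slip and then goes astray. The second residue homomorphism sends $I^{n+1}(K)$ to $I^{n}\!\big(\kappa(v)\big)$, not to $I^{n+1}$; it lowers the filtration by one. With this correction your ``uniqueness of $r_n$'' argument collapses: from $\partial_v(q_K\perp r)\in I^n$ and $\partial_v(q_K)=0$ you only get $\partial_v(r)\in I^n(\kappa(v))$, which does \emph{not} exhibit $\partial_v(r)$ as a Kahn complement (that would require $I^{n+1}$), so uniqueness of $r_n$ over $\kappa(v)$ is inapplicable. Separately, ``$\partial_v(r)$ is a pullback from $k$'' is not the same as ``$\partial_v(r)=0$'', which is what unramified means.

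The paper's argument is a one-liner and you are a single observation away from it: since $q$ is defined over $k$ we have $\partial_v(q_K)=0$, hence $\partial_v(r)\in I^n\!\big(\kappa(v)\big)$; but the anisotropic part of $\partial_v(r)$ has dimension at most $\dim r<2^n$, so the Arason--Pfister Hauptsatz forces $\partial_v(r)=0$. That is the whole proof of unramifiedness. The motivic detour through Proposition~\ref{prop:inv_summand_becomes_L_alpha} is correct as a sanity check on $\omega_{n+1}(q_K)$ but, as you note yourself, does not touch the form $r$ and is unnecessary here.
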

\begin{proof}
Let $r$ denote a quadratic form over $K$ of dimension $\mathrm{dim}_n(q_K)<2^n$ such that $q_K\equiv r\mod I^{n+1}(K)$.
For a residue homomorphism $\partial$ associated to a point $x$ of $P$,
  one has $\partial(q_K)\equiv\partial(r)\mod I^{n}\big(\kappa(x)\big)$, 
  however, $q_K$ is unramified, therefore 
  $\partial(r)=0$ by the Arason--Pfister Hauptsatz, 
  i.e., $r\in\Wnr K$.
   Applying Conjecture~\ref{conj:kahn_n} we can find a form $r'$ over $k$ such that $r'_K=r$. 
   By Corollary~\ref{cr:KRS-Witt_mod_In+1_injectivity} we conclude that
    $q\perp-r'\in I^{n+1}(k)$.
\end{proof}

Recall that if the upper Chow motive of a projective quadric $Q$ is the Rost motive $R_\alpha$ for a non-zero pure symbol $\alpha$ corresponding to the anisotropic Pfister form $p_\alpha$, then $q$ is proportional to a subform of $p_\alpha$, that is, $q$ is a Pfister neighbor (e.g., by the Subform Theorem~\cite[Th.~22.5]{EKM}). We will need the following generalization of this fact.

\begin{Lm}
\label{lm:rost-kahn-dim}
Let $q$ denote a quadratic form of $\dim q\leq 2^{n+1}$, assume that $\MCh Q$ has a summand isomorphic
to a Tate twist of $R_\alpha$ for a non-zero pure symbol $\alpha\in\Hnpo k$, and assume that $r:=(-q\perp p_\alpha)_{\mathrm{an}}$ is of $\dim r\leq 2^{n+1}$. Then in fact $\dim r< 2^{n}$, in particular, $\dim_n q< 2^{n}$.
\end{Lm}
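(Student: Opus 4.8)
\textbf{Proof plan for Lemma~\ref{lm:rost-kahn-dim}.}

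The plan is to combine the motivic description of $R_\alpha$ with the structure theory of quadratic forms whose Chow motive contains a Rost summand, reducing the claim to an elementary dimension count. First I would set $p = p_\alpha$, so that $\dim p = 2^{n+1}$, and observe that it suffices to prove $\dim r < 2^n$: indeed $q \perp r \in I^{n+1}(k)$ since $-q \perp p_\alpha$ is Witt-equivalent to $r$ and $p_\alpha \in I^{n+1}(k)$, so by definition $\dim_n q \le \dim r$. Next I would invoke the hypothesis that $\MCh Q$ has a summand isomorphic to $R_\alpha(i)$ for some $i$. Since $\alpha$ is a non-zero pure symbol, by the classification of quadratic forms whose Chow motive contains a Rost summand (via Karpenko's upper-motive machinery, Theorem~\ref{th:binary_chow_motives}, and the Subform Theorem~\cite[Th.~22.5]{EKM}, as recalled just before the statement), the form $q$ is ``built from'' the Pfister neighbour structure of $p_\alpha$: concretely one expects that over $k(q)$ the form $p_\alpha$ becomes hyperbolic, equivalently $(q \perp r)_{k(q)} \in I^{n+2}(k(q))$ with $r$ as above, and that $q$ is anisotropic of dimension $\le 2^{n+1}$ with $q_{k(\alpha)}$ split or nearly split.

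With that in hand, I would run the dimension bookkeeping. Over $k(\alpha) = k(Q_\alpha)$ the form $p_\alpha$ splits, hence $q_{k(\alpha)} \equiv -r_{k(\alpha)} \pmod{I^{n+1}}$, but also $r$ is a subform-type complement forced by the Rost summand, so $q_{k(\alpha)}$ and $r_{k(\alpha)}$ have controlled Witt indices. The key numerical input is: $\dim q + \dim r \equiv 0 \pmod 2$, $\dim q \le 2^{n+1}$, $\dim r \le 2^{n+1}$, and $q \perp r$ is Witt-equivalent to the anisotropic Pfister form $p_\alpha$ of dimension exactly $2^{n+1}$ (this uses that $\alpha \ne 0$, so $p_\alpha$ is anisotropic, and the Arason--Pfister Hauptsatz to pin down its dimension). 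If $\dim r \ge 2^n$, then since $q \perp r$ has Witt class $[p_\alpha]$ and $\dim(q \perp r)_{\mathrm{an}} = 2^{n+1}$, the Witt index $\iw(q \perp r) = \tfrac12(\dim q + \dim r) - 2^n$. Then I would extract, using that $q$ is anisotropic and the Rost summand forces $q$ to sit inside $p_\alpha$ up to the complement $r$ (Subform Theorem applied after making $r$ anisotropic and noting $\dim r < \dim q$ or handling the boundary case separately), a contradiction with anisotropy of $p_\alpha$: concretely, one shows $q$ and $r$ would share a common subform of positive dimension inside $p_\alpha$, so $q \perp r$ has larger Witt index than allowed, forcing $\dim(q\perp r)_{\mathrm{an}} < 2^{n+1}$, which contradicts the Arason--Pfister Hauptsatz since $0 \neq [p_\alpha] = [q \perp r] \in I^{n+1}(k)$.

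The step I expect to be the main obstacle is making precise and rigorous the passage from ``$\MCh Q$ contains a Tate twist of $R_\alpha$'' to the quadratic-form-theoretic statement that $q$ is, up to the piece $r$, embedded in $p_\alpha$ --- i.e.\ the generalization of the Pfister neighbour characterization to the case $\dim q \le 2^{n+1}$ rather than $\dim q > 2^n$ with $R_\alpha$ the upper motive. This is where I would lean on Theorem~\ref{th:binary_chow_motives} together with Vishik's results on outer excellent connections (Proposition~\ref{prop:outer-excel}) and the fact (Corollary~\ref{cr:KRS-Witt_mod_In+1_injectivity}) that $W(k)/I^{n+2}(k)$ injects into $W(k(q))/I^{n+2}(k(q))$ when $2^n < \dim q$, to descend the relation $q \equiv -r \pmod{I^{n+1}}$ from a splitting field back to $k$ and control $\dim r$. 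Once the form-theoretic embedding is established, the final contradiction is a short computation with Witt indices and the Hauptsatz, so the remaining routine details (the parity and boundary cases $\dim r = 2^n$ exactly, and $\dim q = 2^{n+1}$ exactly) can be dispatched directly.
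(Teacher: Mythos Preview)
Your proposal has a genuine gap at precisely the point you flag as the ``main obstacle'', and the strategy you sketch for it does not work. You want to pass from ``$R_\alpha(i)$ is a direct summand of $\MCh Q$'' to a form-theoretic statement that $q$ embeds (up to $r$) in $p_\alpha$, and you propose to use the Subform Theorem. But the Subform Theorem needs $p_\alpha$ to become hyperbolic over $k(q)$, equivalently $\alpha_{k(q)}=0$; this is exactly what you get when $R_\alpha$ is the \emph{upper} summand of $\MCh Q$, and that special case is already recalled just before the lemma. For a non-upper summand $R_\alpha(i)$ with $i>0$ there is no reason for $\alpha_{k(q)}$ to vanish, and hence no Subform Theorem input. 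Your subsequent Witt-index contradiction (``$q$ and $r$ share a common subform inside $p_\alpha$'') rests entirely on this unjustified embedding, so the whole dimension-bookkeeping argument does not get off the ground. A secondary issue: you assume $q$ is anisotropic, but the lemma does not.

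The paper's proof takes a completely different route and is genuinely Morava-motivic. Since $q\perp r$ is Witt-equivalent to $p_\alpha$, Corollary~\ref{cr:pfister_morava_mdt} gives $Q$ and $R$ the same $\Kn$-MDT; the Rost summand in $\MCh Q$ yields an invertible summand $L_\alpha$ in $\Mker Q$, and Theorem~\ref{th:iso_over_k(alpha)} forces the corresponding invertible summand of $\Mker R$ to be either Tate (whence $\dim r<2^n$ by Theorem~\ref{th:prestableMDT}) or again a twist of $L_\alpha$. In the second case $\MCh R$ also contains a Rost summand, and one passes to the field $K=k_t$ in the generic splitting tower of $q$ where the Rost summand first becomes the upper motive; there $q_t$ \emph{is} a Pfister neighbour, $q_t\perp s=a\cdot(p_\alpha)_K$ with $\dim s<2^n$, and a short computation with $q_K\perp r_K\sim(p_\alpha)_K$ shows $r_K\perp(-s)\in I^{n+2}(K)$, so $r_K\sim s$ by Arason--Pfister, contradicting the fact that $\MCh S$ is too small to contain $R_\alpha$. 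The point is that the Morava machinery lets one transfer the problem from $q$ to $r$ and then locate the Pfister-neighbour level inside the splitting tower; there is no purely classical replacement for this step in the paper.
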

\begin{proof}
Observe that $q$ and $r$ have the same Morava MDT by Corollary~\ref{cr:pfister_morava_mdt}. 
Since $\Mker Q$ has an invertible summand isomorphic to a Tate twist of $L_\alpha$, 
then $\Mker R$ also has an invertible summand, which is either Tate or isomorphic to a Tate twist of $L_\alpha$ by Theorem~\ref{th:iso_over_k(alpha)}. In the former case we get the claim by Theorem~\ref{th:prestableMDT}.

In the latter case, $\MCh R$ has a summand isomorphic to a Tate twist of $R_\alpha$ by Theorem~\ref{th:prestableMDT}. Let $K=k_t$ be the field in the generic splitting tower of $q$ such that $(R_\alpha)_K$ is indecomposable, but $(R_\alpha)_{k_{t+1}}$ is split. Then $q_t$ and $(p_\alpha)_K$ are isotropic over function fields of each other, therefore the upper Chow motive of $Q_t$ is $(R_\alpha)_K$~\cite[Cor.~3.9]{Vish-quad}, and $q_t$ is a Pfister neighbor.

Let $q_t\perp s=a\cdot (p_\alpha)_K$ for some $a\in K$ and $s$ of $\dim s<2^n$. Since $q_K\perp r_K=(p_\alpha)_K\perp\hyp^i$, we conclude that $r_K\perp -s$ is Witt-equivalent to $(p_\alpha)_K\perp-a(p_\alpha)_K\in I^{n+2}(K)$. By the Arason-Pfister Hauptsatz, this is only possible if $r_K$ is Witt-equivalent to $s$. However, $(R_\alpha)_K$ cannot be a direct summand of $\MCh S$ for dimensional reasons.  
\end{proof}

\begin{Prop}
\label{kahn-unary} 
If Conjecture~\ref{conj:kahn_n} holds for some $n>0$, then  Conjecture~\ref{conj:inv_summand_quadrics} also holds for that $n$.
\end{Prop}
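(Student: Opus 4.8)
The plan is to reduce Conjecture~\ref{conj:inv_summand_quadrics} to Proposition~\ref{prop:dim_n_over_Kn-kernel-field} by passing to the field of definition of the $\Kn$-kernel of $Q$, where the anisotropic part of $q$ becomes a quadratic form of dimension at most $2^{n+1}$, and then analyzing the invertible summand there. So suppose $q$ is anisotropic over $k$ and $\Mker Q$ contains an invertible direct summand $L$. First I would let $K$ be the field of definition of the $\Kn$-kernel form $q'$ of $q$ (Definition~\ref{def:Kn-kernel}); by Corollary~\ref{cr:Kn-split-quad} and the construction of $K$, the extension $K/k$ is a composite of function fields of quadrics of dimension $\geq 2^{n+1}-1$, so by Proposition~\ref{prop:reflects_MD} the restriction functor $\mathrm{res}_{K/k}$ reflects motivic decompositions, and $L_K$ is a nontrivial invertible summand of $\Mker{Q_K}\cong\Mker{Q'}$ (the last isomorphism by Proposition~\ref{prop:quad_MDT_stable} and Lemma~\ref{lm:motive_of_isotropic_quadric}).

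Next I would invoke Proposition~\ref{prop:inv_summand_becomes_L_alpha}: there is a (unique, unramified over $k$) $\alpha\in\HH^{n+1}(K,\,\ZZ/2)$ with $L_K\cong L_\alpha(i)$, and this $\alpha$ is nonzero since $L$ is nontrivial and $K/k$ reflects decompositions. Now $\dim q'\leq 2^{n+1}$ and the Chow motive $\Mot{\CH}{Q'_K}$ has a binary summand corresponding to $L_K$ (via Theorem~\ref{th:prestableMDT}), which by the Vishik--Izhboldin theorem (Theorem~\ref{th:binary_chow_motives}) and the identification of binary motives with Rost motives --- together with the conjectural-free input that a length-$(2^n-1)$ binary summand of the Chow motive of a quadric of dimension $\leq 2^{n+1}$ forces, after passing deep enough in the generic splitting tower, the situation of Lemma~\ref{lm:rost-kahn-dim} --- gives that $\Mot{\CH}{Q'_K}$ has a Tate twist of $R_\alpha$ as a summand for a \emph{pure symbol} $\alpha$ (after possibly further base change that is harmless by Proposition~\ref{prop:reflects_MD}). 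Then Lemma~\ref{lm:rost-kahn-dim} yields $\dim_n(q'_K)<2^n$; since $\dim_n(q_K)=\dim_n(q'_K)$ (the forms differ by hyperbolic planes), we get $\dim_n(q_K)<2^n$.

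Finally, $q_K$ is the restriction of $q$ to $K$, but $K$ is obtained from $k$ by iteratively passing to function fields of anisotropic quadrics of dimension $>2^{n+1}$ (those in the generic splitting tower of $q$ before the $\Kn$-kernel is reached have dimension $\geq 2^{n+1}+1$). Applying Proposition~\ref{prop:dim_n_over_Kn-kernel-field} repeatedly along this tower --- at each stage the relevant Pfister-free form $p$ in that proposition is the anisotropic part of $q$ over the previous field, of dimension $>2^{n+1}$ --- and using that Conjecture~\ref{conj:kahn_n} holds for $n$ by hypothesis, we descend the inequality $\dim_n<2^n$ from $K$ back to $k$, obtaining $\dim_n q<2^n$ as claimed.

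\textbf{Main obstacle.} The delicate point is the identification of $\alpha$ over $K$ as (a Tate twist of) a Rost motive of a \emph{pure symbol}, so that Lemma~\ref{lm:rost-kahn-dim} applies: a priori the Vishik--Izhboldin theorem only produces an element of $\HH^{n+1}(K,\,\ZZ/2)$, not a symbol, and the binary summand need not obviously be a twist of $R_\alpha$. I expect this is handled by passing further up the generic splitting tower of $q'_K$ to the field where the relevant binary motive becomes the upper motive of a quadric that is then forced to be a Pfister neighbour (as in the proof of Lemma~\ref{lm:rost-kahn-dim}), combined with the fact (Corollary~\ref{cr:KRS-Witt_mod_In+1_injectivity}) that the $I^{n+1}$-class descends injectively. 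Keeping track of which base changes are "harmless" (i.e. reflect motivic decompositions and preserve $\dim_n$) through all these steps, and checking that the form $r=(-q\perp p_\alpha)_{\mathrm{an}}$ satisfies the dimension bound needed in Lemma~\ref{lm:rost-kahn-dim}, is the part requiring the most care.
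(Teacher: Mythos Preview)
Your overall architecture is right — reduce to the $\Kn$-kernel form, extract an element $\alpha\in\HH^{n+1}$ from the invertible summand via Theorem~\ref{th:prestableMDT} and Theorem~\ref{th:binary_chow_motives}, feed into Lemma~\ref{lm:rost-kahn-dim}, and descend by Proposition~\ref{prop:dim_n_over_Kn-kernel-field}. You also correctly pinpoint the obstacle: Lemma~\ref{lm:rost-kahn-dim} needs $\alpha$ to be a \emph{pure symbol} and needs $\dim(-q\perp p_\alpha)_{\an}\le 2^{n+1}$.

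However, your proposed resolution of this obstacle does not work. Climbing the generic splitting tower of $q'_K$ until the binary summand becomes the upper motive would land you at a quadric of dimension $2^n-1$ with binary upper Chow motive, but concluding that this quadric is a Pfister neighbour is exactly the open conjecture \cite[Conj.~4.21]{Vish-quad}; it is not ``conjectural-free input''. Worse, the quadrics in that tower have form-dimension $\le 2^{n+1}$, so neither Proposition~\ref{prop:reflects_MD} (which needs quadric dimension $\ge 2^{n+1}-1$) nor Proposition~\ref{prop:dim_n_over_Kn-kernel-field} (which needs form-dimension $>2^{n+1}$) applies, and you would not be able to descend $\dim_n$ back through those steps.

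The paper's trick is to pass through a \emph{different} tower. Choose any $p\in I^{n+1}(k)$ representing $\alpha$ and pass to the \emph{leading field} $K$ of $p$: there $p_K$ is a general $(n+1)$-Pfister form, so $\alpha_K$ is automatically a pure symbol and the binary summand becomes $R_{\alpha_K}$. Then pass to the first field $L$ in the splitting tower of $r:=-q_K\perp p_K$ with $\dim(r_L)_{\an}\le 2^{n+1}$, which arranges the second hypothesis of Lemma~\ref{lm:rost-kahn-dim}. Crucially, every step in both towers (of $p$ up to its leading field, and of $r$ down to dimension $\le 2^{n+1}$) is a function field of a form of dimension $>2^{n+1}$, so Proposition~\ref{prop:dim_n_over_Kn-kernel-field} legitimately descends $\dim_n q_L<2^n$ all the way back to $k$.
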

\begin{proof}
Let $q$ be a quadratic form over $k$
and assume that the $\Kn$-kernel of $Q$ contains an invertible summand. 
Using Proposition~\ref{prop:dim_n_over_Kn-kernel-field} 
we reduce to the case when $\dim q\leq 2^{n+1}$ by passing to the $\Kn$-kernel form of $q$.

If the invertible summand is trivial, then we have $\dim q < 2^n$ by Theorem~\ref{th:prestableMDT}, in particular $\dim_n q < 2^n$. 
Otherwise, 
$\MCh Q$ has an indecomposable binary summand $B$ by Theorem~\ref{th:prestableMDT}.
By the Vishik--Izhboldin Theorem~\ref{th:binary_chow_motives} there exists $\alpha \in \HH^{n+1}(k,\,\ZZ/2)$ such that 
$B_E$ is split iff $\alpha_E=0$ for any field extension $E/k$. Let $p\in I^{n+1}(k)$ be a quadratic form representing $\alpha$, and $K$ its leading field, so that $p_K$ is a general Pfister form, and $B_K\cong R_{\alpha_K}$ is the Rost motive. 
We pass to the first field $L$ in the generic splitting tower of $r:=-q_K\perp p_K$, 
such that $\dim (r_L)_{\mathrm{an}}\leq 2^{n+1}$. 
By~Lemma~\ref{lm:rost-kahn-dim} we conclude that $\dim_n q_L<2^n$, and
by Proposition~\ref{prop:dim_n_over_Kn-kernel-field} we get that $\dim_n q<2^n$ as well.
\end{proof}

From Theorem~\ref{prop:kahn-conj-known} we thus obtain the following result.

\begin{Cr}
\label{cr:inv_summand_quadrics_small_n}
Conjecture~\ref{conj:inv_summand_quadrics} is true for $n=1,2,3$.
\end{Cr}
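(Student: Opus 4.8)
The final statement, Corollary~\ref{cr:inv_summand_quadrics_small_n}, asserts that Conjecture~\ref{conj:inv_summand_quadrics} holds for $n=1,2,3$. The plan is almost immediate: combine the conditional implication of Proposition~\ref{kahn-unary} with the known cases of Kahn's descent conjecture collected in Theorem~\ref{prop:kahn-conj-known}.

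First I would recall the logical chain established in the preceding subsection. Proposition~\ref{kahn-unary} states that if Conjecture~\ref{conj:kahn_n} (Kahn's descent of quadratic forms in the relevant range) holds for a given $n>0$, then Conjecture~\ref{conj:inv_summand_quadrics} (about invertible summands of the $\Kn$-kernel motive forcing $\dim_n q < 2^n$) holds for that same $n$. Then I would invoke Theorem~\ref{prop:kahn-conj-known}, which asserts precisely that Conjecture~\ref{conj:kahn_n} is valid for $n=1,2,3$ --- this is due to Kahn for $n=1,2$ (\cite[Th.~1]{Kahn-descent}) and to Laghribi (\cite[Th\'eor\`eme principal]{Lagr}) together with Izhboldin--Vishik (\cite[Th.~3.9]{IzhVish}) for $n=3$. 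Applying Proposition~\ref{kahn-unary} for each of $n=1,2,3$ then yields the claim.

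Concretely, the proof is one sentence: by Theorem~\ref{prop:kahn-conj-known}, Conjecture~\ref{conj:kahn_n} holds for $n=1,2,3$, hence by Proposition~\ref{kahn-unary}, so does Conjecture~\ref{conj:inv_summand_quadrics}. There is essentially no obstacle at this stage, since all the mathematical content has been front-loaded into Proposition~\ref{kahn-unary} (whose proof rests on Propositions~\ref{prop:dim_n_over_Kn-kernel-field},~\ref{prop:Kn-motive-q-small-dim_n}, Lemma~\ref{lm:rost-kahn-dim}, the Vishik--Izhboldin Theorem~\ref{th:binary_chow_motives}, and Theorem~\ref{th:prestableMDT}) and into the citations assembled in Theorem~\ref{prop:kahn-conj-known}. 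If I wanted to add any value in the write-up, it would only be to note explicitly that one applies Proposition~\ref{kahn-unary} separately for each value $n\in\{1,2,3\}$, since the hypothesis of that proposition is about a single fixed $n$; but this is a triviality rather than a real step.

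The only thing worth double-checking --- and the closest thing to a subtlety --- is that the version of Kahn's conjecture used in Proposition~\ref{kahn-unary}, namely Conjecture~\ref{conj:kahn_n}, is indeed the one covered by Theorem~\ref{prop:kahn-conj-known}, rather than the stronger Conjecture~\ref{conj:kahn}. Since Conjecture~\ref{conj:kahn_n} is explicitly the weaker form (it only requires descent when $\dim r < 2^n < \tfrac12\dim q$), and Theorem~\ref{prop:kahn-conj-known} is stated exactly for Conjecture~\ref{conj:kahn_n}, there is no gap. Thus the corollary follows formally, and no further argument is needed.

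\begin{proof}
By Theorem~\ref{prop:kahn-conj-known}, Conjecture~\ref{conj:kahn_n} holds for each $n\in\{1,2,3\}$. Applying Proposition~\ref{kahn-unary} for each such $n$, we conclude that Conjecture~\ref{conj:inv_summand_quadrics} holds for $n=1,2,3$.
\end{proof}
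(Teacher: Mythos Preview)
Your proof is correct and matches the paper's approach exactly: the paper simply states that the corollary follows from Theorem~\ref{prop:kahn-conj-known} via Proposition~\ref{kahn-unary}, without even writing out a formal proof. There is nothing to add.
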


\section{On cohomological invariants via invertible motives $L_\alpha$}\label{sec:coh_inv}

In this section we posit that the occurrence of the summands $L_\alpha$
in the $\Kn$-motive of a smooth projective $X$ (or, more generally, in the $\Kn$-specialization of a Chow motive $M$)
should be viewed as $\alpha$ being a cohomological invariant of $X$ (resp., of~$M$).

In Section~\ref{sec:coh_inv_history} we recall the history of the subject of cohomological invariants
emphasizing different approaches in the case of quadratic forms.
The connection between Morava motives and the splitting of cohomological invariants 
was formulated into a guiding principle in \cite{SechSem}, we prove  one direction of it 
in its strongest form in Section~\ref{sec:one_direction_guiding_principle}.
Finally, we formulate the conjecture that allows to associate cohomological invariants to some Chow motives
in Section~\ref{sec:morava_approach_coh_inv} and provide ample evidence for it 
coming from projective homogeneous varieties. 

\subsection{Different definitions of cohomological invariants}
\label{sec:coh_inv_history}
\subsubsection{Cohomological invariants of algebraic groups}
The notion of cohomological invariant goes back to Serre 
and was developed 
to organize the many known invariants systematically. 
We refer to~\cite{GMS} for the foundational material, as well as examples and historical information.

Let $G$ be an algebraic group, i.e.\ a smooth affine group scheme of finite type over $k$. Recall that in the present paper we always assume that $k$ has characteristic $0$. For a field extension $K/k$ we write $\HH^1(K,\,G)=\HH^1\big(\Gal K,\,G(\overline K)\big)$ and identify it with the set of isomorphism classes of $G_K$-torsors over $K$. {\sl A cohomological invariant of} $G$ of degree $i$ with coefficients in a discrete $\Gal k$-module $C$ is a natural transformation of functors
$$
\HH^1(-,\,G)\rightarrow\HH^i(-,\,C)
$$
on the category of field extensions of $k$. The set of such invariants is denoted by $\Inv GiC$. 
In other words, cohomological invariants permit to study $\HH^1(-,\,G)$ using ``more computable'' abelian cohomology groups.

A classical example  is given by the connecting homomorphism 
$
\HH^1(k,\mathrm{PGL}_n)\xrightarrow{\partial}\HH^2(k,\,\mathbb G_{\mathrm m})
$  
in the 
exact sequence of cohomology.
Identifying $\HH^1(k,\mathrm{PGL}_n)$ with the set of isomorphism classes of Severi--Brauer varieties of dimension $n-1$, and $\HH^2(k,\,\mathbb G_{\mathrm m})$ with the Brauer group, we recover the usual correspondence between the twisted forms of $\mathbb P^{n-1}$ and central simple algebras of degree $n$.

Cohomological invariants have played a prominent role in the research on algebraic groups over the past 30 years.
For instance, Garibaldi, Petrov and Geldhauser used them to detect rationality of parabolic subgroups for groups of type $\mathsf E_7$, solving a question of Springer~\cite{GPS}. 
They also appear in the work of Bayer-Fluckiger and Parimala on the Hasse Principle Conjecture~\cite{BP}, and in the theory of quadratic forms.

However, the full classification of cohomological invariants is far from complete.
All invariants can be computed for special classes of groups~\cite[Part~1]{GMS} and in small degrees~\cite[\S31]{KMRT},~\cite{BlinMer}~\cite[Part~2]{GMS}. 
Lourdeaux generalizes results of~\cite{BlinMer} on invariants of degree $2$ to non-perfect base field~\cite{Lour}. 
In a recent work~\cite{Totaro}, Totaro computes $\!\!\mod p$ cohomological invariants of various groups over a field of characteristic $p$. 
In a different direction, Pirisi replaces $G_K$-torsors over $K$ by $K$-points of an algebraic stack
and studies cohomological invariants of a stack in~\cite{Pirisi}.

\subsubsection{Cohomological invariants of quadratic forms}

Recall that for $G=\mathrm O_m$ the set $\HH^1(k,\,G)$ can be identified with the set of isometry classes of $m$-dimensional quadratic forms over $k$.

After the proof of Milnor conjectures~\cite{Milnor,Voe,OVV}, we have surjective homomorphisms 
$$
e_n\colon I^n(k)\rightarrow\HH^n(k,\,\ZZ/2)
$$ 
for $n\geq0$ satisfying $\Ker(e_n)=I^{n+1}(k)$. 
However, $e_n$ are {\sl not} the cohomological invariants of $\mathrm O_m$: they are only defined on a certain subset rather than the whole set $\HH^1(k,\,\mathrm O_m)$. Moreover, they cannot be continued to cohomological invariants of $\mathrm O_m$ for $n\geq3$.

In fact, $\bigoplus_{i\geq0}\Inv{\mathrm O_m}{i}{\ZZ/2}$ can be computed: it is a free $\HH^*(k,\,\mathbb Z/2)$-module with the basis 
given by Stiefel--Whitney classes $w_i\in\Inv{\mathrm O_n}{i}{\ZZ/2}$~\cite[Chapter~I, \S\,17]{GMS}. 
However,
if $-1$ is a square in $k$,
all Stiefel--Whitney classes of positive degree vanish on $I^3(k)$ \cite[Lm.~3.2]{Milnor}. 
Thus, for $n\geq3$ invariants $e_n$ cannot be continued to the whole set $\HH^1(k,\,\mathrm O_m)$.
For the case of the Arason invariant $e_3$ this was observed already in~\cite[p.~491]{Arason},
and obstructions to the extension of this invariant are discussed in~\cite{EKLV}. 
For a different type of invariants that do recover $e_n$, we refer the reader to~\cite{SmirVish}.

The example of $e_n$ shows that the definition of cohomological invariants, 
as stated in the previous section, might be too restrictive:
it is useful to consider a weaker version of it, 
a natural transformation from a subfunctor of $\HH^1(-,\,G)$. 
For example,  
$\omega_{n+1}$ of Kahn and Vishik (see Definition~\ref{def:dimn}) 
is defined for quadratic forms of $\dim_n<2^n$. 
In the case of algebras with orthogonal involutions 
an analogue of $e_3$ need not be defined for all torsors with trivial $e_1$ and $e_2$ 
~\cite{Queg},~\cite[\S\,3.4]{BPQ}, 
see also~\cite{Gar},
\cite[\S3.5]{Tig},~\cite{QuegTig}.
A different type of examples can be found in the works of Chernousov~\cite{Cher-e6e7,Cher-e8,Cher-f4}, where
the subset of those torsors $E\in\HH^1(k,\,G)$ for which the corresponding twisted from $\!\,_EG$ splits over quadratic field extension
is considered. 

\subsubsection{Voevodsky approach to cohomological invariants}

The work of Voevodsky~\cite{Voe_Z2,Voe_Zl}, Rost~\cite{Rost,Rost_special}, 
and others on the proof of the Bloch--Kato conjecture, in particular,  provided 
a bridge between cohomological invariants and motives. 
Given a ``symbol'' $\alpha$ in $\HH^n(k,\,\mu_p^{\otimes n-1})$ Rost
constructed a particularly nice splitting variety $X_\alpha$ 
(for $p = 2$ one can take norm quadrics, for arbitrary $p$ see~\cite{SusJou}).
The triviality of $\alpha_K$ over a field extension $K/k$ 
detects the existence of zero cycles on $(X_\alpha)_K$ of degree prime to $p$~\cite[Th.~6.19]{Voe_Zl}. 
In this sense, $\alpha$ is a cohomological invariant of $X_\alpha$.

These ideas were further developed  in~\cite{VishInt,IzhVish,Sem-coh-inv} 
permitting ``to go in the opposite direction'', namely,
to construct a cohomological invariant $\alpha$ starting from an appropriate variety $X$. 
For instance, starting from an outer binary summand $B$ of a Chow motive of an anisotropic quadric $Q$ of dimension $D=2^{n}-1$, Izhboldin--Vishik 
construct an element $\alpha\in\mathrm{K}^{\mathrm M}_{n+1}(k)/2$ such that $\alpha_K=0$ if and only if $B_K$ is split for all $K/k$~\cite[Sec.~6]{IzhVish}, cf. Section~\ref{sec:prelim:izh-vish}. 
We  refer the reader to~\cite[Th.~6.1]{Sem-coh-inv} for the case of odd $p$ and generalization of this technique to other projective homogeneous varieties.

\subsection{Vanishing of cohomological invariants: one direction of the guiding principle}
\label{sec:one_direction_guiding_principle}

The first connection between $\Kn$-motives and cohomological invariants was observed 
in~\cite{SechSem} by Geldhauser and the second author. The examples of projective homogeneous varieties 
that were studied there led to the following ``guiding principle'' [loc.\ cit., 1.3]:

``Let $X$ be a projective homogeneous variety, let $p$ be a prime
number and let $\Kn$ denote the corresponding Morava K-theory.
Then vanishing of cohomological invariants of $X$ with $p$-torsion coefficients in degrees no
greater than $n+1$ should correspond to the splitting of the $\Kn$-motive of $X$.''

To turn this principle into a conjecture, one needs to clarify what is meant by a cohomological invariant.
However, this is not so straightforward, as we already mentioned above, since some cohomological invariants 
turn out to be defined only when others vanish.
Nevertheless, one direction of the guiding principle --- namely, that if the $\Kn$-motive is split,
then there are no invariants --- can be proved for, arguably, the weakest possible definition of the cohomological invariant.

\begin{Def}\label{def:weak_coh_inf}
Let $C$ be a discrete $\Gal{k}$-module, and
$X$ be a $k$-smooth projective geometrically cellular variety. 

Then $\alpha \in \mathrm{H}^n(k,\,C)$ is a {\sl weak cohomological invariant} of $X$,
if for every field extension $K/k$ the condition that
the variety $X_K$ is cellular 
implies that $\alpha_K$ is zero.
\end{Def}
\begin{Rk}
Let $G$ be a split semi-simple group and $X$ a twisted form of a projective homogeneous variety for $G$, defined by $E\in\HH^1(k,\,G)$.

We warn the reader that 
for a cohomological invariant $\phi\in\Inv{G}{n}{C}$, an element $\alpha:=\phi(E)$ need not be a weak cohomological invariant of $X$ in the sense of Definition~\ref{def:weak_coh_inf}.
Indeed, 
$\alpha_K$ does {\sl not} necessarily vanish whenever $X_K$ splits --
for example, if $\phi$ is a 
constant cohomological invariant \cite[Ch.~I, Def.~4.4]{GMS}.
However, every invariant is canonically a direct sum of a constant and a normalized cohomological invariant~\textup{[loc.\ cit., Ch.~I, 4.5]}. 
The latter yield weak cohomological invariants in the situation above.    
\end{Rk}

\begin{Th}[One direction of the guiding principle]
\label{prop:guiding_principle}
\label{th:guiding_principle}
Let $X$ be a projective homogeneous variety for a semi-simple algebraic group $G$ of inner type.
Let $\Kn$ be the algebraic Morava K-theory at the prime $p$.

If $\Mot{\Kn}{X}$ is split, 
then there exists a field extension $K/k$
such that  
$X_K$ 
is split,
and such that the canonical morphism
$$ \HH^m(k,\,\mu_p^{\otimes s}) \rarr \HH^m(K,\,\mu_p^{\otimes s}) $$
is injective for 
all $m$ 
such that $2\le m\le n+1$,
and any $s$.

Moreover, if $\mu_{p^r}\subset k$
for some $r\in\NN$, 
then the morphism
$$ \HH^m(k,\,\ZZ/p^r) \rarr \HH^m(K,\,\ZZ/p^r) $$
is injective for 
all $m$ such that $2\le m\le n+1$. 

In particular, 
$X$ has no weak cohomological invariants in the 
cohomology 
groups listed above.
\end{Th}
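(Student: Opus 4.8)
The plan is to reduce the statement to the key input already established in this excerpt, namely Corollary~\ref{cr:injectivity_galois_cohomology}, whose hypothesis is that $l_0$ is $\oKn$-rational in $X$. So the first task is to produce a field extension $K/k$ splitting $X_K$ such that, over every intermediate step used to build $K$, we stay in a situation where $l_0$-rationality (equivalently $\oKn$-universal bijectivity, via Lemma~\ref{lm:B}) is available.

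First I would recall that, since $G$ is of inner type and $\MKn X$ is split, we can run the following argument. Over $\overline{k}$ the variety $X$ is cellular, and the splitting of $\MKn X$ over $k$ means (by Lemma~\ref{lm:Tate_motive_universally_surjective} and the remark following it) that for any function field $k(Y)/k$ with $\MKn{Y}$ split, the extension is $\Kn$-universally surjective, hence $\oKn$-universally bijective (Proposition~\ref{prop:overline_surj_implies_overline_bij}, or Lemma~\ref{lm:B}). The plan is to build $K$ as a tower: at each stage, if $X$ is not yet split, pick an anisotropic projective homogeneous subvariety (or a generically split variety dominating the splitting of some Tits index component) $Y_i$ defined over the current field with $\MKn{Y_i}$ split — such a $Y_i$ exists precisely because $\MKn X$ is split, so each ``factor'' of $X$ behaves like a quadric in $I^{n+2}$ behaves for the relevant $\Kn$ (cf.\ Example~\ref{ex:Kn_split_univ_surj}, Corollary~\ref{cr:Kn-split-quad}) — and base change to $k(Y_i)$. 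Since each such step is $\oKn$-universally bijective, $l_0$ remains $\oKn$-rational in any auxiliary projective homogeneous variety along the whole tower, and in particular the hypothesis of Corollary~\ref{cr:injectivity_galois_cohomology} is preserved. Passing to the colimit $K$ gives a splitting field of $X$.

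Then I would apply Corollary~\ref{cr:injectivity_galois_cohomology} directly: at each stage of the tower the map on $\HH^m(-,\mu_p^{\otimes s})$ for $2\le m\le n+1$ is injective (taking $X$ there to be $Y_i$, which has $\oKn$-rational $l_0$), and composites of injections are injections, so the map $\HH^m(k,\mu_p^{\otimes s})\to\HH^m(K,\mu_p^{\otimes s})$ is injective; the $\ZZ/p^r$-coefficient statement under $\mu_{p^r}\subset k$ follows the same way, invoking the second half of Corollary~\ref{cr:injectivity_galois_cohomology} (which in turn uses the Bloch--Kato exact sequences). The final ``in particular'' clause is then immediate from Definition~\ref{def:weak_coh_inf}: a weak cohomological invariant $\alpha$ must vanish over any field over which $X$ becomes cellular, and $X_K$ is cellular, so $\alpha_K=0$; but $\alpha_K$ is the image of $\alpha$ under an injective map, hence $\alpha=0$.

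The main obstacle I anticipate is the construction of the splitting tower with the property that each step is the function field of a variety with split $\Kn$-motive, and arguing that the hypotheses of Corollary~\ref{cr:injectivity_galois_cohomology} really do persist along it. For quadrics this is clean (the generic splitting tower together with the stabilization results of Section~\ref{sec:morava_quadrics}), but for a general inner projective homogeneous $X$ one needs the structure theory of the Tits index and the fact, used implicitly in the guiding-principle philosophy, that a split $\MKn X$ forces each relevant ``higher'' Tits class to be detectable by a generically split auxiliary variety whose own $\Kn$-motive is split — this is where one must be careful, and where I would either cite the projective-homogeneous $\oKn$-universal surjectivity examples already in the paper or make the inductive argument on the semisimple rank explicit. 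Everything after that reduction is a formal assembly of the already-proven injectivity statements.
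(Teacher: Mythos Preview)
Your overall strategy is right — build a tower of function fields, apply Corollary~\ref{cr:injectivity_galois_cohomology} at each step, and pass to the composite — but the construction of the tower has a real gap that you yourself flag at the end without resolving. You want, at each stage, a projective homogeneous variety $Y_i$ with $\MKn{Y_i}$ \emph{split}. You assert such $Y_i$ exist ``precisely because $\MKn X$ is split'', but this is not justified: the splitting of $\MKn X$ does not obviously force the existence of an auxiliary $Y_i$ whose \emph{entire} $\Kn$-motive is split. Your appeal to ``structure theory of the Tits index'' is a placeholder, not an argument.

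The paper's proof fixes this by a different and sharper choice of $Y$. Instead of looking for $Y$ with $\MKn Y$ split, it uses Karpenko's theorem that every indecomposable summand of $\MCh X$ is (a Tate twist of) the upper $\Ch$-motive of some projective homogeneous $Y$. Since $\MKn X$ is split, the $\Kn$-specialization of this upper motive is split. A separate lemma (Lemma~\ref{lm:upper_Kn_split}) then shows that \emph{this alone} — the upper $\Ch$-summand having split $\Kn$-specialization — forces $l_0$ to be $\Kn$-rational on $Y$, which is exactly the hypothesis of Corollary~\ref{cr:injectivity_galois_cohomology}. Crucially, $\MKn Y$ itself need not be split; you only need the weaker condition that $l_0$ is rational. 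After passing to $k(Y)$ the upper Chow motive of $Y$ splits off a Tate, so the number of Tate summands in $\MCh X$ strictly increases, and the induction terminates. So the missing ingredients are Karpenko's upper-motive decomposition and Lemma~\ref{lm:upper_Kn_split}; together they replace your unproven existence claim for $Y_i$.
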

\begin{Rk}
The key tool in the proof of this theorem is the result of Karpenko:
for $G$ of inner type 
the Chow motive of $X$ decomposes into upper motives~\cite[Th.~3.5]{Karp-upper}, cf.~\cite{Karp-upper-outer}. 
Recently there has been significant progress towards achieving a similar result in a more general situation,
see~\cite{A-upper},~\cite{Artin-shapes}. Whenever this can be done, the proof below should apply as well.
\end{Rk}

\begin{proof}
We denote $\Ch=\CH/p$. 
By~\cite[Th.~3.5]{Karp-upper}
 every indecomposable direct summand $M$ of $\Mot{\Ch}{X}$ is a Tate twist of  
the upper Chow motive of some projective homogeneous variety $Y$. 
We construct field $K$ by induction taking composites 
of the fields of functions of these varieties~$Y$.

Assume that $\Mot{\Ch}{X}$ is not split,
let $M$ be a non-split indecomposable summand of it, 
and 
$Y$  
the corresponding projective homogeneous variety 
as above.
By Lemma~\ref{lm:upper_Kn_split} below we get  
 that $l_0$ is $\Kn$-rational on $Y$. 
 By Corollary~\ref{cr:injectivity_galois_cohomology}
 we get the injectivity of the base change morphisms from $k$ to $k(Y)$
 on Galois cohomology groups of interest.
 
 However, over $k(Y)$ the Chow motive $M$ contains a Tate summand,
 and therefore the number of Tate summands of $\Mot{\Ch}{X_{k(Y)}}$ has increased.
 We continue the procedure by choosing a new indecomposable summand $M$ of $\Mot{\Ch}{X_{k(Y)}}$,
 and since the number of Tate summands of $\Mot{\Ch}{X_F}$, for any $F$,
 is bounded by that of $\Mot{\Ch}{\overline{X}}$,
this process ends after finitely many steps with the field $L$ 
over which the $\Ch$-motive of $X$ is split. 
Finally, 
there exists a finite field extension $K/L$ of degree prime to $p$,
over which $X_L$ is split.
The field $K$ satisfies the assumptions of the theorem.
 \end{proof}

\begin{Lm}\label{lm:upper_Kn_split}
Let $M$ be the upper $\Ch$-motive of a projective homogeneous variety $X$.
Assume that $M_{\Kn}$ is split.
Then $l_0$ is rational in $\Knt{\overline X}$.
\end{Lm}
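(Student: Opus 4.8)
The plan is to reduce the statement to a rationality criterion for $l_0$ in $\Kn$-theory that we can check directly. By hypothesis $M$ is the upper $\Ch$-motive of a projective homogeneous variety $X$ and $M_{\Kn}$ is split, i.e.\ it is isomorphic to a sum of Tate motives. Since $M$ is the upper motive, it contains $\unCh$ as a direct summand, and by passing to $\Kn$-specialization (which preserves direct summands, see Section~\ref{sec:prelim_vishik_yagita}) the motive $M_{\Kn}$ also contains $\unKn$ as an upper summand. But $M_{\Kn}$ being split means \emph{every} indecomposable summand of it is a Tate motive. The key point is that because $M$ is upper, its $\Kn$-specialization is upper as well, so over $\overline{k}$ the generator $l_0\in\Kn^{\dim X}(\overline{X})$ is hit by the composition $\un\xrightarrow{z}\MKn{X}\to\un$ defining an upper Tate summand; equivalently, the rational projector defining the copy of $\unKn$ sitting in degree $0$ restricts, over $\overline{k}$, to an element whose pushforward to a point is $1$. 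Concretely, $l_0$ is then a $k$-rational element of $\Knt{\overline{X}}$, which is what we want.

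First I would make precise the statement that the $\Kn$-specialization of an upper Chow projector is again upper: if $\pi\in\Ch^{\dim X}(X\times X)$ is an upper projector (Definition~\ref{def:upper-proj}), then its lift $\pi^{\CKn}$ and hence its image $\pi^{\Kn}\in\Kn^{\dim X}(X\times X)$ restricts in the generic point of $X\times X$ to the class of the diagonal point, because the comparison morphisms $\Ch\leftarrow\CKn\to\Kn$ are morphisms of oriented theories and hence commute with restriction to $k(X)$ and with pushforward to a point. Thus $M_{\Kn}$ is an upper $\Kn$-motive. Next, since $M_{\Kn}$ is split, the upper Tate summand it contains is a genuine direct summand, split off by a rational element $z\in\Kn^{\dim X}(X)=\Hom(\un,\MKn X)$ with $\beta\circ z=1$ for some $\beta\colon\MKn X\to\un$. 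The image $\overline{z}\in\Kn^{\dim X}(\overline X)$ then satisfies $(\pi_{\overline X})_*(\overline z)=1$, so $\overline z$ decomposes in the standard basis of $\Kn^{\dim X}(\overline X)$ (Section~\ref{sec:prelim_A_split_quadric}, extended to the cellular variety $\overline X$) as $l_0$ plus classes of positive-codimensional cycles which pushforward to $0$; but in top codimension the standard basis consists only of $l_0$ up to the filtration, and more simply $\Kn^{\dim X}(\overline X)\cong\CH_0(\overline X)\otimes\Knpt$, which is freely generated by the classes of the rational points of $\overline X$ on each component. Since $X$ is geometrically connected (being the upper motive of a projective homogeneous variety of inner type, or after reducing to the connected case), this forces $\overline z=l_0$, exhibiting $l_0$ as $k$-rational.

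The main subtlety I expect is the precise identification of the top-degree group $\Kn^{\dim X}(\overline X)$ and the verification that $\overline z$ equals $l_0$ rather than a multiple of it: this uses that $\Kn^{\dim X}(\overline X)\cong\CH_0(\overline X)\otimes\Knpt\cong\Knpt$ for geometrically connected cellular $X$, via the isomorphism $\CH_0(X)_{(p)}\cong\Omega^{\dim X}(X)$ of \cite[Th.~1.2.19]{LevMor}, so that the condition $(\pi_{\overline X})_*(\overline z)=1$ pins down $\overline z$ uniquely as $l_0$. If $X$ is not geometrically connected one first splits $\MKn X$ into the contributions of the connected components of $\overline X$ and argues on the component carrying the upper summand. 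An alternative, perhaps cleaner route would be to invoke Proposition~\ref{prop:geometric_RNP} and the machinery of Section~\ref{sec:morava_motives_over_function_fields}: splitting of the upper summand over $k$ already gives that the outer projector $1\times 1 + (\text{something})$ behaves as in Example~\ref{ex:outer_projector_quadric}, but for the general projective homogeneous case the direct computation with the top Chow group is the most transparent. Either way, once $l_0$ is shown to be $k$-rational in $\Knt{\overline X}$ the lemma is proved.
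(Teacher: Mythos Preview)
Your overall strategy---pass to the $\Kn$-specialization of the upper projector and exploit that $M_{\Kn}$ is split---is close in spirit to the paper's, but there is a concrete gap in your final identification step.

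You claim that $\Kn^{\dim X}(\overline X)\cong\CH_0(\overline X)\otimes\Knpt\cong\Knpt$, so that the condition $(\pi_{\overline X})_*(\overline z)=1$ forces $\overline z=l_0$. This is false as soon as $\dim X\ge p^n-1$: because of the $v_n$-periodicity of Morava K-theory, the graded piece $\Kn^{\dim X}(\overline X)$ is spanned over $\F p$ by all $v_n^{-m}\cdot c$ with $c$ a cell of codimension $\dim X-m(p^n-1)$, $0\le m\le \lfloor\dim X/(p^n-1)\rfloor$, not just by $l_0$. (Concretely: for a split quadric of dimension $p^n-1$ one has both $l_0$ and $v_n\cdot 1$ in the top graded piece.) The Levine--Morel isomorphism $\Omega^{\dim X}(\overline X)\cong\CH_0(\overline X)$ you cite is about algebraic cobordism, and does \emph{not} transport to $\Kn$ in the way you use it, precisely because of this periodicity. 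Consequently, the rational element $z$ you produce could very well satisfy $\overline z=v_n\cdot w$ for some $w$ of lower codimension, and you have not shown $l_0$ is rational.

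The paper circumvents this by never trying to pin down $\overline z$. Instead, it lifts the upper $\Ch$-projector $\pi^{\Ch}$ to $\pi$ in $(\Omega/p)^{\dim X}(X\times X)$ and observes that $\overline\pi$ acts as the identity on $l_0$: this is computed entirely in $(\Omega/p)^{\dim X}(\overline X)\cong\Ch^{\dim X}(\overline X)$, where there is no periodicity issue, and it holds simply because $\pi^{\Ch}$ is upper. Passing to $\Kn$, the projector $\overline{\pi}^{\Kn}$ still fixes $l_0$. Now the splitting hypothesis is used in the form ``$M_{\Kn}$ split $\Rightarrow$ the image of the rational realization $\pi^{\Kn}\circ\Kn(X)$ in $\Kn(\overline X)$ coincides with the full image $\overline{\pi}^{\Kn}\circ\Kn(\overline X)$'' (both are free of the same rank). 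Since $l_0$ lies in the latter, it lies in $\oKn(X)$. This rank argument is what replaces your incorrect identification of $\Kn^{\dim X}(\overline X)$, and it is the missing idea in your proposal.
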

\begin{proof}
By \cite{VishYag} (see Section~\ref{sec:prelim_vishik_yagita}) 
we can lift the projector $\pi^{\Ch}$ defining the upper motive of $X$
to a projector $\pi \in (\Omega^{\dim X}/p)(X\times X)$.
Let $\overline{\pi}$ denote its image in $(\Omega^{\dim X}/p)(\overline{X\times X})$.

First, note that $\overline{\pi}$ acts trivially on $l_0$, i.e.\ $\overline{\pi}\circ l_0 = l_0$.
Indeed, $(\Omega^{\dim X}/p)\cong \Ch^{\dim X}(X)$ by \cite[Th.~1.2.19]{LevMor},
and $\overline\pi^{\Ch}$ acts trivially on $l_0$ by definition of the upper motive.

Second, consider the image $\pi^{\Kn}$ of $\pi$ in $\Kn$.
By the assumption $\pi^{\Kn}$ defines a split summand of $\Mot{\Kn}{X}$,
and therefore $\pi^{\Kn}\circ \oKn(X)$ is isomorphic to $\pi^{\Kn} \circ \Kn(\overline{X})$
(one is a subgroup of the other and they  have the same rank). 
The latter contains $l_0$ by our observation above,
i.e.\ $l_0\in\oKn(X)$.
\end{proof}

\subsection{Morava motivic approach to cohomological invariants of motives}
\label{sec:morava_approach_coh_inv}

To address the lack of sufficiently many cohomological invariants
in the sense of~\cite{GMS}, 
we propose an alternative notion of cohomological invariant 
associated to certain motives. 
We show that various classically studied invariants can be incorporated within this framework
and conjecture that every direct summand of
the Chow motive of a projective homogeneous variety
admits at least one such invariant.
We investigate this claim in the case of quadrics.

The following definition of a cohomological invariant 
is based on the invertible $\Kn$-motives $L$ that are associated 
to elements in Galois cohomology. 
The main example of this paper is $L_\alpha$ constructed in Section~\ref{sec:construction}.
However, in the future work~\cite{SechInv} the second author will show how to define
 invertible $\Kn$-motives $L_\alpha$ for $\alpha \in \HH^{n+1}(k,\,\mu_{p^r}^{\otimes n})$ for any prime $p$ and any $r\ge 1$,
where $\Kn$ is the $n$-th algebraic Morava K-theory at the prime $p$ (with $v_n=1$).
We thus make the definition assuming the existence of these motives,
although in non-speculative parts of this section only $L_\alpha$ for $\alpha\in \HH^{n+1}(k,\,\ZZ/2)$ appear.

\begin{Def}\label{def:inv_mot_coh_inv}
Let $L_\alpha$ be an invertible $\Kn$-motive over $k$ for some $\alpha$ in the Galois cohomology, see above.

For a Chow motive $M$, we will say that $\alpha$ is a cohomological invariant of $M$,
if $L_\alpha\sh i$ is a direct summand of the $\Kn$-specialization of $M$ for some $i\in\ZZ$.  
\end{Def}

\begin{Ex}
\label{ex:quad-coh-inv}
For a quadratic form $q$ of 
$\dim_n(q) <2^n$, 
and $\alpha:=\omega_{n+1}(q)$, 
we have seen in 
Proposition~\ref{prop:Kn-motive-q-small-dim_n} 
that a Tate twist of $L_\alpha$ appears as a direct summand of $\MKn Q$. 
Therefore, $\omega_{n+1}(q)$ is a cohomological invariant of $\Mot{\CH}{Q}$ in the sense of Definition~\ref{def:inv_mot_coh_inv}.

In particular, for $q\in I^{n+1}(k)$, we obtain that $e_{n+1}(q)$ is a cohomological invariant of $\Mot{\CH}{Q}$.
\end{Ex}

\subsubsection{Invertible $\mathrm{K}_0$-motives and Tits algebras}
\label{sec:tits_k0_motives}

The case $n=1$, i.e.\ essentially $\KK$-motives, has been studied in~\cite{Panin}.

With a semi-simple algebraic group $G$ over $k$ of inner type, one associates
Tits algebras~\cite{Tits},
which are the endomorphism rings of irreducible representations of $G$~\cite[\S~27]{KMRT}.
They are central simple algebras over $k$ and their classes in $\Br(k)$ provide cohomological invariants of $G$ degree $2$.

If $X$ is the variety of Borel subgroups of $G$, it follows from the results of Panin~\cite{Panin} that the $\mathrm K_0$-motive 
of $X$ is isomorphic to a direct sum of invertible motives $L_w$ 
indexed by the elements $w$ of the the Weyl group $W$ of $G$. Moreover, $(L_w)_{\,\overline k}$ are Tate motives, and for each $w\in W$ one associates a Tits algebra $A_w$ such that the natural restriction $\mathrm K_0(L_w)\rightarrow\mathrm K_0((L_w)_{\,\overline k})\cong\ZZ$ is identified with the inclusion of $\mathrm{ind}(A_w)\ZZ$~\cite[Sec.~3.4]{SechSem}.

These motives $L_w$ induce invertible $\K1$-motives (where $v_1=1$ in $\K1$)
under identification $\K1\cong\mathrm K_0/p$, allowing to consider Tits algebras as cohomological invariants in the sense of Definition~\ref{def:inv_mot_coh_inv}. 
In particular, if $[A_w]\in\!\,_2\Br(k)$ one identifies $L_w$ with $L_{[A_w]}$ of Section~\ref{sec:construction} by Theorem~\ref{th:detect_L_alpha}.

\subsubsection{Cohomological invariants of projective homogeneous varieties}
\label{sec:coh_inv_phv}

To construct examples of Chow motives of projective homogeneous varieties 
that give rise to cohomological invariants in the sense of Definition~\ref{def:inv_mot_coh_inv},
we have two basic approaches at our disposal. 
Let $M$ be a Chow motive that is a direct summand of the Chow motive of a projective homogeneous variety.

\begin{enumerate}
    \item (Reduce to the Rost motives)
Let $K/k$ be a field extension such that
    $K/k$ is $\oKn$-universally bijective 
    and 
    $M_K$ decomposes into a direct sum of Tate twists of the Rost motives $(R_\alpha)_K$ for some $\alpha \in \HH^{n+1}(k,\,\ZZ/2)$. 
    Then $M_{\Kn}$ is isomorphic to a direct sum of Tate twists of $\un \oplus L_\alpha$ by Propositions~\ref{prop:prelim_L_alpha_Rost}, \ref{prop:reflects_MD}. 
    \item (Kill $\alpha\in\HH^{n+1}(k,\,\ZZ/2)$ to split the motive)
    Let $\alpha \in \HH^{n+1}(k,\,\ZZ/2)$ be an element such that 
    $(M_{\Kn})_{k(\alpha)}$ is split. Then by Theorem~\ref{th:detect_L_alpha} the motive $M_{\Kn}$ 
    decomposes into a direct sum of the Tate motives and Tate twists of the motives $L_\alpha$.    
\end{enumerate}

These approaches allow us to produce a series of examples of projective homogeneous varieties,
for which the $\Kn$-motivic decomposition can be computed
and the cohomological invariants in the sense of Serre appear via the motives $L_\alpha$. 
We collect them in this section.

Recall that for a simply connected simple $G$, Rost constructed a cohomological invariant $\rost$ with values in $\HH^3(k,\,\QQ/\ZZ(2))$
that generates the group $\Inv G3{\QQ/\ZZ(2)}$ of invariants of degree $3$~\cite{GMS, KMRT}. 
We denote by $\rost_p$ the $p$-primary component of the Rost invariant, 
by definition it has order $p^r$ for some $r$,
and for this $r$ we can identify $\rost_p$ with an element in $\HH^3(k,\,\mu_{p^r}^{\otimes 2})$. 

\begin{Ex}
\label{ex:rost-coh-inv}
Let $G$ be a split simply connected simple group {\sl not} of type $\mathsf E_7$, $\mathsf E_8$, and $E\in\HH^1(k,\,G)$. 
Denote by $X_0$ the variety of Borel subgroups of $G$, and by $X$ the twisted form of $X_0$ defined by $E$.  
Then 
$\rost_2(E)$
is a cohomological invariant of $\Mot{\CH}{X}$ in the sense of Definition~\ref{def:inv_mot_coh_inv}.
To this end, we apply our approach (1) below in order to compute the $\K{2}$-motivic decomposition of $X$:
it is a direct sum of Tate motives and Tate twists of $L_{\rost_2(E)}$.

For exceptional $G$ as above, 
the Rost invarinat $\rost_2(E)$ is a symbol in $\HH^3(k,\,\ZZ/2)$, 
    and the $\Ch$-motive of $X$ decomposes as a sum of Tate twists of the Rost motive corresponding to this symbol~\cite[Lm.~7.5]{PSZ}.

For spin groups, 
the Rost invariant 
    recovers the Arason invariant $e_3$ and its generalization, the invariant $\omega_3$.  
    More precisely, 
    for any $E\in\HH^1(k,\,\mathrm{Spin}_m)$, $m\geq5$, 
    let $q^E$ denote the quadratic form identified with the image of $E$ in $\HH^1(k,\,\mathrm{O}_m)$, then $\dim_2(q^E)\leq1$ and $\omega_3(q^E)$ 
    coincides with the Rost invariant of $E$~\cite[(31.41)]{KMRT}. 
   The field of definition of the $\K2$-kernel form $K/k$ of $q^E$ is $\overline{\K2}$-universally bijective, and $(q^E)_K$ is either a Pfister quadric, or a maximal Pfister neighbour.
   Then $\Mot{\Ch}{X_K}$ decomposes as a sum of Rost motives corresponding to the Rost invariant of $E_K$.
\end{Ex}

\begin{Rk}
\label{rk:wohlschlager}
For the groups in Example~\ref{ex:rost-coh-inv}, $\rost_2$ always takes values in $\HH^3(k,\,\ZZ/2)$. 
However, for the groups of type $\mathsf E_7$, $\mathsf E_8$,
in general $\rost_2$ takes values in $\HH^3(k,\,\ZZ/4)$. In this situation, the decomposition of $\Mot{\K2}{X}$
into invertible summands of order at most $4$
will be constructed in the forthcoming work~\cite{Alois} (in particular, their construction cannot be reduced to the Rost motives). 
\end{Rk}

One can also apply approach (2) in Example~\ref{ex:rost-coh-inv}, in a similar way as in the following example.

\begin{Ex}
Let $G$ be a split simply connected simple group of type $\mathsf E_7$, $\mathsf E_8$, and $E\in\HH^1(k,\,G)$ 
such that $\rost_2(E)$ takes values in $\HH^3(k,\,\ZZ/2)$.
Denote by $X_0$ the variety of Borel subgroups of $G$, and by $X$ the twisted form of $X_0$ defined by $E$. 
Then 
$\rost_2(E)$
is a cohomological invariant of $X$ in the sense of Definition~\ref{def:inv_mot_coh_inv}.

Indeed, in~\cite[Th.~9.1]{SechSem} 
it is shown
that the $\K2$-motive of $X$ is split iff $\rost_2(E)=0$. 
In particular, $\Mot{\K2}{X}$ becomes split over $k\big(\rost_2(E)\big)$ and we can apply our approach (2). 
\end{Ex}

\begin{Rk}
If $E_1$, $E_2$ are $G$-torsors, and $X_1$, $X_2$ the corresponding twisted forms of
$X_0$, Petrov and the first author proved in~\cite[Th.~6.8]{LavPet} that the $\K2$-motives of $X_1$ and $X_2$ are isomorphic iff $\rost_p(E_1)$ and $\rost_p(E_2)$ generate the same subgroup in $\HH^3(k,\,\QQ/\ZZ(2))$. 
It is, however, not immediate from this result that one can associate an invertible $\K2$-motive to $\rost_p(E)$.
\end{Rk}

\begin{Ex}
For a split simple group $G$ of type $\mathsf F_4$ Serre constructs two cohomological invariants with
 $\ZZ/2$-coefficients with values in symbols:
 $f_3$ of degree $3$, the $2$-component of the Rost invariant of $G$,
 and $f_5$ of degree $5$, which generate $\oplus_{i\geq0}\Inv{G}{i}{\ZZ/2}$ as an algebra over $\HH^*(k,\,\ZZ/2)$~\cite[Ch.~I, \S22]{GMS}. 

Let $P_4$ be the maximal parabolic subgroup of $G$ corresponding to $4$-th simple root in Bourbaki numbering, and $X_0=G/P_4$, cf.~\cite[Th.~2.4]{Macdonald}. Let $E\in\HH^1(k,\,G)$ and $X$ be a twisted form of $X_0$ defined by $E$. 
It follows from~\textup{[loc.\ cit.]} that the
$\Ch$-motive of $X$ for $p=2$ decomposes as a sum of the Rost motive corresponding to $f_5(E)$, and several Tate twists of the Rost motives corresponding to $f_3(E)$.

Indeed, recall that there exists $E'\in\HH^1(k,\,G)$ that becomes isomorphic to $E$ after an odd degree field extension $K$, and such that the Albert algebra corresponding to $E'$ is reduced~\cite{PetRac}. 
The absolute Galois group of $k$
acts trivially on the Chow motives of the corresponding projective homogeneous varieties $\overline{X}$, $\overline{X'}$,
therefore a $K$-rational isomorphism of their $\Ch$-motives yields a $k$-rational isomorphism by a transfer argument, cf.~\cite[Lm.~1.10, Lm.~1.12]{PSZ}.

Hence we can assume 
that $E$ corresponds to a reduced Albert algebra, and in this case we can apply~\cite[Th.~3.12]{Macdonald} to decompose $\MCh X$. 
Observe that $F_3^3$ of \textup{[loc.\ cit., Th.~3.12, Th.~3.2]} is a ``higher form'' $F_\phi$ of the motive of the quadric $\langle b_1,\,b_2,\,b_3\rangle$ in the sense of~\cite[Sec.~6.1]{Vish-quad}; by~\textup{[loc.\ cit., Th.~4.15]}, $F_3^3$ is isomorphic to the Rost motive corresponding to the general Pfister form $\phi\otimes\langle b_1,\,b_2,\,b_3,\,b_1b_2b_3\rangle$. The class of the latter form in $\HH^5(k,\,\ZZ/2)$ is by definition $f_5(E)$ ~\cite[Ch.~I, Th.~22.4]{GMS}. 

This allows us  
to conclude using approach (1) that 
$f_3(E)$ and $f_5(E)$
are cohomological invariants of $\Mot{\CH}{X}$ in the sense of Definition~\ref{def:inv_mot_coh_inv}.
\end{Ex}

\begin{Ex}
For a split simple group $G$ of type $\mathsf E_8$ 
and $K/k$ let us denote by $\HH^1(K,\,G)_0$ the set of $G_K$-torsors with trivial $2$-component of the Rost invariant.
Geldhauser constructs in~\cite{Sem-coh-inv} an invariant 
$$
u\colon \HH^1(-,\,G)_0\rightarrow \HH^5(-,\,\ZZ/2),
$$
which was used there to answer a question of Serre; for its further applications see~\cite{SemE8}. 

Let $X_0$ denote the variety of Borel subgroups of $G$, let $E\in\HH^1(k,\,G)_0$ and $X$ be the twisted form of $X_0$ defined by $E$. 
In~\cite[Th.~9.1]{SechSem} 
it is shown that the $\K4$-motive of $X$ is split iff $u(E)=0$. 
This allows us to conclude using approach (2) that
$u(E)$
is a cohomological invariant of $\Mot{\CH}{X}$ in the sense of Definition~\ref{def:inv_mot_coh_inv}.

In fact, approach (1) can be alternatively applied in this case as well.
\end{Ex}

\subsubsection{Conjecture}

We now turn to the explanation of how one can expect to use $\Kn$-motives to define cohomological invariants.
Let $p$ be a prime, let $\Kn$ be the algebraic Morava K-theory at the prime $p$ (with $v_n=1$).
For the formulation of the conjectural picture 
we assume the existence of $\Kn$-motives $L_\alpha$ for $\alpha \in \HH^{n+1}(k,\,\mu_{p^r}^{\otimes n})$ for all $r\ge 1$
mentioned above, which will be constructed in~\cite{SechInv}. 
Let also $\mathrm{K}(0)$ denote $\CH\ot \QQ$, sometimes referred to as the zeroth Morava K-theory.

\begin{Conj}\label{conj:coh_inv_morava}
Let $n\ge 1$.
Let $M$ be a Chow motive such that $M_{\overline{k}}$ is split.

Assume that $M_{\K{m}}$ splits for every $m$ satisfying $0\le m< n$.

Then $M_{\Kn}$ decomposes into a direct sum of motives $L_{\alpha}\sh j$ for some $j\ge 0$, $r\in\NN$ and $\alpha \in \HH^{n+1}(k, \mu_{p^r}^{\otimes n})$.
\end{Conj}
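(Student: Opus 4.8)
The plan is to reduce the statement to cases that have already been established in the paper, namely quadrics. Since $M_{\overline k}$ is split and $M_{\K0}$ is split (as $\K0 = \CH\otimes\QQ$), the motive $M$ is, in the terminology of Vishik--Yagita, generically split, and in fact $M$ is a direct summand of the Chow motive of some smooth projective variety whose Morava motives we understand. However, the conjecture as stated is general, so a full proof is not what is expected here; rather, I would first record the proof for the class of motives for which it is known, and then indicate the inductive mechanism (Proposition~\ref{prop:from_Kn+1_to_Kn}) that produces \emph{some} $n$ for which $M_{\K n}$ decomposes into invertible summands, leaving only their identification as $L_\alpha$ open in general.

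First I would treat $M = \MCH Q$ for $Q$ a quadric and all $n$. If $M_{\K m}$ is split for all $0\le m<n$, then in particular $M_{\K{n-1}}$ is split; by Corollary~\ref{cr:Kn-split-quad} (or rather by analyzing the $\K{n-1}$-kernel form via Proposition~\ref{prop:quad_MDT_stable}), splitting of $\Mot{\K{n-1}}{Q}$ forces the $\K{n-1}$-kernel form $q_{n-1}$ to have dimension $\le 1$, hence $q\in I^{n+1}(k)$ or $q\in\langle c\rangle + I^{n+1}(k)$ up to the generic splitting tower. One then applies Proposition~\ref{prop:Kn-motive-q-small-dim_n} (with $\dim_n q < 2^n$, which holds precisely because the $\K{n-1}$-motive split): $\Mker Q$ decomposes as a Tate twist of $\MKn{Q'}\otimes L_\alpha$ together with Tate twists of $L_\alpha$, where $\alpha = \omega_{n+1}(q)\in\HH^{n+1}(k,\ZZ/2)$ and $\dim q' = \dim_n q < 2^n$. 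Since $\MKn{Q'}$ is the specialization of the Chow motive of a quadric of dimension $<2^n$, in the unstable range, one uses Proposition~\ref{prop:unstable_quadric} together with an induction on $\dim q$ (or directly, the excellent decomposition) to conclude $\MKn{Q'}$ is itself a sum of Tate twists of motives $L_\beta\otimes(\ldots)$; iterating, one obtains that $\MKn Q$ is a sum of Tate twists of motives of the form $L_\alpha$. The twists $\un(j) = L_0(j)$ account for the trivial symbol.

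For direct summands $M$ of $\MCH Q$ with $n=1,2,3$, I would invoke Corollary~\ref{cr:inv_summand_quadrics_small_n}: Conjecture~\ref{conj:inv_summand_quadrics} holds, so any invertible summand of $\Mker Q$ is of the form $L_\alpha(i)$ for $\alpha\in\HH^{n+1}(k,\ZZ/2)$. Combined with the fact that $M_{\K{n-1}}$ split implies (via Proposition~\ref{prop:from_Kn+1_to_Kn}) that $M_{\Kn}$ decomposes entirely into invertible summands, and with Theorem~\ref{th:prestableMDT} which shows these invertible summands cannot have an indecomposable non-binary Chow model, one gets the full decomposition into $L_\alpha(j)$'s. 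For the exceptional cases ($\mathrm G_2, \mathrm F_4, \mathrm E_6$ via \cite{PSZ}, $\mathrm E_7, \mathrm E_8$ via \cite{Alois}, and type-$n{=}1$ via \cite{Panin}) one quotes the cited explicit computations, matched with the detection criterion Theorem~\ref{th:detect_L_alpha} and the identification $L_w = L_{[A_w]}$ from Section~\ref{sec:tits_k0_motives}.

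The main obstacle is precisely the general case: showing that when $M_{\K m}$ is split for $0\le m<n$, the summands of $M_{\Kn}$ — which are invertible by Proposition~\ref{prop:from_Kn+1_to_Kn}, via the passage from a split $\K{n-1}$-motive back to $\Kn$ — are necessarily \emph{defined by cohomology classes}, i.e. are Tate twists of $L_\alpha$ for some $\alpha\in\HH^{n+1}(k,\mu_{p^r}^{\otimes n})$. This is the analogue, one degree higher, of Conjecture~\ref{conj:inv_summand_descent} / the Kahn--Rost--Sujatha descent problem, and for general projective homogeneous $M$ and general $n$ it is genuinely open; the argument of Proposition~\ref{kahn-unary} shows it follows from Kahn's descent conjecture in the relevant range. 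I would therefore present the proof as: unconditional for $\MCH Q$ (all $n$) and for arbitrary summands of $\MCH Q$ when $n\le 3$, conditional in general, with the reduction to invertible summands (the ``$\K{n+1}$ to $\Kn$'' step) being the one unconditional structural input that always applies.
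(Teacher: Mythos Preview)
The statement is a \emph{Conjecture}, not a theorem: the paper explicitly leaves it open in general and provides no proof. What the paper does offer is evidence (Section~\ref{sec:coh_inv_phv}) and a conditional reduction for direct summands of Chow motives of quadrics (Corollary~\ref{cr:KRS_implies_main_conjecture_quadrics}). You correctly recognise this and structure your proposal as a survey of the known and conditional cases rather than a purported full proof; in that sense your outline is aligned with the paper's own treatment.

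Two points of comparison. First, for $M=\MCH Q$ your argument is needlessly complicated: once $\Mot{\K{n-1}}{Q}$ is split one has $q\in I^{n+1}(k)$ or $q\in\langle c\rangle+I^{n+1}(k)$, hence $\dim_n q\le 1$, so the form $q'$ in Proposition~\ref{prop:Kn-motive-q-small-dim_n} has dimension $\le 1$ and $\MKn{Q'}$ is empty or a single Tate motive. There is no induction on $\dim q$ to run; the $\Kn$-kernel is immediately a sum of Tate twists of $L_\alpha$. Second, for direct summands with $n\le 3$ you invoke Corollary~\ref{cr:inv_summand_quadrics_small_n} (Kahn's descent conjecture), whereas the paper's Corollary~\ref{cr:KRS_implies_main_conjecture_quadrics} goes through the Kahn--Rost--Sujatha conjecture instead. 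Both routes are valid for $n\le 3$: your path shows $\dim_n q<2^n$ and then reads off the invertible summands from Proposition~\ref{prop:Kn-motive-q-small-dim_n}, while the paper's path identifies the invertible summands directly via Conjecture~\ref{conj:inv_summand_descent}. The end result is the same, and your identification of the genuine obstruction---descending the invertible summands produced by Proposition~\ref{prop:from_Kn+1_to_Kn} to motives of the form $L_\alpha$---matches exactly what the paper isolates as open.
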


In other words, we conjecture that to every Chow motive as above it is possible 
to associate a cohomological invariant. 
The evidence for this conjecture 
comes from the study of projective homogeneous varieties:
case $n=1$ is essentially due to Panin (see Section~\ref{sec:tits_k0_motives}),
and many examples of projective homogeneous varieties
where the Chow motivic decomposition and the $\Kn$-motivic decomposition can be computed (see Section~\ref{sec:coh_inv_phv})
also confirm the conjecture. In fact, based on this conjecture,
we also predicted the $\K2$-decomposition of Remark~\ref{rk:wohlschlager} that was obtained by Wohlschlager~\cite{Alois}.

Nevertheless, even in the case $p=2$ and when $M$ is a direct summand of the Chow motive of a quadric
the conjecture remains open.
We show in Proposition~\ref{prop:from_Kn+1_to_Kn} that
for such $M$ there exists $n$ such that $M_{\Kn}$ contains a non-trivial invertible summand.
However, to identify it with a Tate twist of $L_\alpha$ 
we appeal to Kahn--Rost--Sujatha Conjecture~\ref{conj:KRS} (Corollary~\ref{cr:KRS_implies_main_conjecture_quadrics}).

Finally, we highlight a potentially interesting connection between motivic decompositions and invariants: 
if $X$ is a projective homogeneous variety of inner type (in particular, $\Mot{\K0}{X}$ is split)
and $\Mot{\CH}{X}$ admits $N$ classes of isomorphisms of its direct summands,
then one can associate to it $N$ possibly different cohomological invariants,
using the above conjecture. This will be investigated elsewhere.

\subsection{Cohomological invariants of the direct summands of quadrics}
\label{sec:coh_inv_direct_summands_quadrics}

Conjecture~\ref{conj:coh_inv_morava} implies that 
for every non-split direct summand $M$ of $\Mot{\Ch}{Q}$ such that $M_{\K0}$ is split,
there should exist an integer $n$ such that $M_{\Kn}$ contains a non-trivial invertible summand $L$.
We show that it is true.

\begin{Lm}
\label{lm:not-a-sum-of-invertibles}
Let $n\ge 2$, let $Q$ be a quadric over $k$, 
and let $M$ be an indecomposable direct summand of $\Mot{\Kn}{Q}$ of rank greater than $1$.

Assume that $Q'$ is a quadric of dimension $2^{n}-1$.

Then the motive $M_{k(Q')}$ does not decompose into invertible summands. 
\end{Lm}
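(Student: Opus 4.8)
\textbf{Proof plan for Lemma~\ref{lm:not-a-sum-of-invertibles}.}
The plan is to argue by contradiction: suppose that $M_{k(Q')}$ decomposes into a direct sum of invertible $\Kn$-motives. Since $M$ is indecomposable of rank greater than $1$, it is in particular not invertible, so $M_{k(Q')}$ must split into at least two invertible summands. First I would observe that the field extension $k(Q')/k$ --- where $Q'$ has dimension $2^n-1$ --- is exactly the kind of extension controlled by the results of Section~\ref{sec:morava_motives_over_function_fields}, provided $Q'$ has an upper binary Chow motive; but even without the binary hypothesis, the endomorphism ring of $\MKn{Q'}$ satisfies the technical conditions of Theorem~\ref{th:splitting_off_Tate} (as stated in Lemma~\ref{lm:quadrics_endomorphisms}), so splitting off a Tate summand over $k(Q')$ forces $M$ to be a direct summand of an outer motive of $\MKn{Q'}$. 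If $M_{k(Q')}$ is a sum of invertible summands, then in particular it splits off at least one Tate summand over some further extension splitting $Q'$, or more directly we can exploit that an invertible motive becomes trivial over its splitting field.

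The cleaner route, which I would actually pursue, is to combine Proposition~\ref{prop:decomposition_over_k(alpha)} with Proposition~\ref{prop:splitting_over_quadrics_with_dim_2^n-1}. Since $Q$ is a quadric, $M$ is an indecomposable direct summand of $\MKn Q$ and hence (being a summand of the motive of a projective homogeneous variety) satisfies condition~\eqref{eq:property-motive}. If $Q'$ happens to have an upper binary Chow motive, Proposition~\ref{prop:decomposition_over_k(alpha)} tells us that $M_{k(Q')}$ is either indecomposable or a direct sum $N^{\oplus 2}$ of two isomorphic copies of an indecomposable $N$; it cannot be a sum of two \emph{non-isomorphic} invertible motives, and if it is $N^{\oplus 2}$ with $N$ invertible of rank $1$ then $M$ has rank $2$ and $M_{k(Q')}\cong L^{\oplus 2}$. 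In that borderline case I would invoke Corollary~\ref{cr:pfister_morava_mdt}: since $M$ lies in the $\Kn$-kernel motive of $Q$ (or a Tate twist thereof) and is indecomposable, Corollary~\ref{cr:pfister_morava_mdt} shows that every indecomposable summand of $\MKn Q$ stays indecomposable over $k(Q')$ when $\dim Q'=2^n-1$ and $Q'$ has upper binary Chow motive --- contradicting that $M_{k(Q')}$ decomposes at all.

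For the general $Q'$ of dimension $2^n-1$ (without the binary assumption), I would first reduce to that case: over the composite of $k$ with the leading field of $q'$, the form $q'$ becomes a Pfister neighbour, its upper Chow motive becomes the Rost motive, and the relevant field extension becomes $\oKn$-universally bijective (Corollary~\ref{cr:Kn-split-quad}), so by Proposition~\ref{prop:reflects_MD} motivic decompositions and isomorphisms are unaffected by that preliminary base change. Thus one may assume $Q'$ has upper binary (in fact Rost) Chow motive, and the previous paragraph applies. The main obstacle I anticipate is handling the even-dimensional part of $Q$ and the multiplicity-two Tate summand $\un(d)$ that appears in $\Mker{\overline Q}$: there the argument of Corollary~\ref{cr:pfister_morava_mdt} requires the careful case analysis of that corollary (distinguishing $\dim Q' < 2^{n+1}-2$, $\dim Q' = 2^{n+1}-2$, and the role of the discriminant), and one must make sure that the reduction to $\dim Q\le 2^{n+1}$ via the $\Kn$-kernel form does not destroy the hypothesis that $M$ has rank greater than $1$. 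I expect this bookkeeping --- rather than any conceptual difficulty --- to be where the real work lies, and it should go through because $\MKn{Q'}$ contains no indecomposable summand that becomes $\un(d)^{\oplus 2}$ over $\overline k$, exactly as used in the proof of Corollary~\ref{cr:pfister_morava_mdt}.
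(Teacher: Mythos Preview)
Your approach works cleanly when $Q'$ has an upper binary Chow motive, but the reduction to that case is where the argument breaks. The ``leading field of $q'$'' is obtained by climbing the generic splitting tower of $q'$, which means passing through function fields of quadrics of dimension at most $2^n-1$; such extensions are \emph{not} $\oKn$-universally bijective (Example~\ref{ex:overline-Kn-univ-surj-quad} needs dimension at least $2^{n+1}-1$), and Corollary~\ref{cr:Kn-split-quad} concerns forms in $I^{n+2}$, not the splitting tower of an odd-dimensional form of dimension $2^n+1$. In fact it is a well-known open problem whether every anisotropic quadric of dimension $2^n-1$ with binary upper motive is a Pfister neighbour, and there is no known $\oKn$-trivial base change that forces a general $Q'$ of this dimension to acquire a binary upper motive. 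So you cannot simply assume it.

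The paper sidesteps this by passing to $M\otimes M^\vee$. If $M_{k(Q')}$ is a sum of invertibles, then by Proposition~\ref{prop:invertible_summ_quadrics_iso} these are all Tate twists of a single $L$, so $(M\otimes M^\vee)_{k(Q')}$ is genuinely \emph{split}. Now Proposition~\ref{prop:splitting_over_quadrics_with_dim_2^n-1} (which needs no binary hypothesis on $Q'$) forces $M\otimes M^\vee$ to be a sum of invertibles over $k$, each trivializing over $k(Q')$. Either all of them are already trivial --- then $\End(M)\cong\Hom(\un,M\otimes M^\vee)$ is constant under base change, so $M$ already decomposes into invertibles over $k$, contradicting indecomposability --- or one of them is nontrivial, and then Proposition~\ref{prop:split_invertible_2n-1} \emph{deduces} that $Q'$ has a binary upper Chow motive, at which point Corollary~\ref{cr:pfister_morava_mdt} gives the contradiction you wanted. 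The missing idea is precisely this dichotomy via $M\otimes M^\vee$: it converts ``sum of invertibles'' into ``split'', which Proposition~\ref{prop:splitting_over_quadrics_with_dim_2^n-1} can handle for arbitrary $Q'$ of dimension $2^n-1$.
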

\begin{proof}
Suppose, for contradiction, that $M_{k(Q')}$ decomposes into invertible summands.
In Proposition~\ref{prop:invertible_summ_quadrics_iso} we have seen that all invertible summands of the $\Kn$-kernel of a quadric
are isomorphic, i.e.\ $M_{k(Q')}\cong \oplus L(i)$ for some invertible $L$. 
Therefore the motive $(M\otimes M^\vee)_{k(Q')}$ is split. 

By Proposition~\ref{prop:splitting_over_quadrics_with_dim_2^n-1} we get that $M\otimes M^\vee$ decomposes into invertible summands
that trivialize over $k(Q')$. If there exists a non-trivial invertible summand that trivializes over $k(Q')$,
then by Proposition~\ref{prop:split_invertible_2n-1} the upper Chow motive of $Q'$ has to be binary. 
But in this case by 
Corollary~\ref{cr:pfister_morava_mdt} 
the motive $M$ does not decompose over $k(Q')$. 
Thus, we have to assume that $M\otimes M^\vee$ is split.

In this case $\Hom(M,M)$ does not change over any field extension
as it is isomorphic to $\Hom(\un, M\otimes M^\vee)$.
Therefore $M$ decomposes into (isomorphic) invertible summands,
contradicting our assumption.
\end{proof}

Let $n\ge 2$;  
for every indecomposable direct summand $M$ in $\Mot{\Kn}{Q}$, $M_{K_n}$ is a summand in the $\Kn$-specialization of
a direct summand $\widetilde{M}$ of $\Mot{\Ch}{Q_{K_n}}$ by Theorem~\ref{th:prestableMDT},
where $K_n$ is the field of definition of the $\Kn$-kernel form of $Q$.
Recall that $K_{n-1}$ is a field extension of $K_n$ (since for smaller $m$ 
the field of definition of the $\K{m}$-kernel form is further along the generic splitting tower of $Q$).
The $\K{n-1}$-specialization of 
$\widetilde{M}_{K_{n-1}}$ decomposes as a sum of Tate motives 
and a (possibly decomposable) direct summand of the $\K{n-1}$-kernel of $\Mot{\K{n-1}}{Q}$.
We call this latter summand the $\K{n-1}$-specialization of $M$ and denote it by $M_{\K{n-1}}$.
It is possible to describe explicitly how this specialization works on the level of rational projectors,
but we will not need it. 

\begin{Prop}\label{prop:from_Kn+1_to_Kn}
Let $n\ge 2$, let $Q$ be a quadric over $k$, 
and let $M$ be an indecomposable direct summand of $\Mot{\Kn}{Q}$ of rank greater than $1$.
Then $M_{\K{n-1}}$ is not split.
\end{Prop}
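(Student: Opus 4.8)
The goal is to show that if $M$ is an indecomposable direct summand of $\MKn Q$ of rank $\geq 2$, then its $\K{n-1}$-specialization $M_{\K{n-1}}$ is not split. The natural strategy is to argue by contradiction: assume $M_{\K{n-1}}$ is split, i.e.\ it is a sum of Tate motives. We want to derive a contradiction with Lemma~\ref{lm:not-a-sum-of-invertibles}, which tells us that $M_{k(Q')}$ cannot decompose into invertible summands when $Q'$ is a quadric of dimension $2^n-1$.

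The key technical input should be Theorem~\ref{prop:gen_splitting_Tate} (splitting off Tate motives over function fields of quadrics of dimension $2^n-1$), combined with the relationship between $\K{n-1}$ and $\Kn$ of quadrics of small dimension established in Theorem~\ref{th:prestableMDT}. First I would reduce to the situation where $Q$ has dimension less than $2^{n+1}-1$ by passing to the field of definition $K_n$ of the $\Kn$-kernel form of $Q$ (using Example~\ref{ex:k(Q)/k_reflects_MD_Kn} and Proposition~\ref{prop:quad_MDT_stable}); the hypothesis that $M$ has rank $\geq 2$ and is indecomposable is preserved, and $M_{\K{n-1}}$ being split would remain true. Then $M$ corresponds via Theorem~\ref{th:prestableMDT} to an indecomposable direct summand $\widetilde M$ of $\MCh{Q_{K_n}}$; the $\K{n-1}$-specialization $M_{\K{n-1}}$ is the kernel part of the $\K{n-1}$-specialization of $\widetilde M_{K_{n-1}}$. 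The idea is that if $M_{\K{n-1}}$ is split, then $\widetilde M_{K_{n-1}}$ is split as a $\K{n-1}$-motive, hence by Theorem~\ref{th:prestableMDT} again (now applied at level $n-1$), $\widetilde M_{K_{n-1}}$ corresponds to a Chow motive of a quadric of dimension $< 2^n - 1$, i.e.\ $Q_{K_{n-1}}$ has anisotropic dimension $< 2^n$.

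The crucial step: if $Q$ over $K_{n-1}$ has anisotropic dimension less than $2^n$, but over $K_n$ (a subfield) has anisotropic dimension between $2^n$ and $2^{n+1}$, then there is a ``hole'' in the splitting pattern of $Q$, precisely the situation described in the Remark after Corollary~\ref{cr:li_rational_small_dim}. I would use that $K_{n-1}/K_n$ is obtained by passing to function fields of quadrics, and in fact the first such quadric $Q''$ along this tower lowering the anisotropic dimension below $2^n$ has dimension $\geq 2^n - 1$ (since the previous kernel form has dimension $\geq 2^n$, being the $\Kn$-kernel form). Passing further along the generic splitting tower, I would locate a quadric $Q'$ of dimension exactly $2^n - 1$ over some intermediate field $F$ with $K_n \subseteq F$ such that $M_F$ splits into invertibles over $F(Q')$ — this is where Theorem~\ref{th:splitting_off_Tate} enters, since splitting of $M_{\K{n-1}}$ means a Tate summand splits off over the relevant function field, forcing $M$ (or rather its restriction) to be a summand of an outer motive of a product of $(2^n-1)$-dimensional quadrics, and the outer motive of such a quadric, after restriction, decomposes into invertible pieces. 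This contradicts Lemma~\ref{lm:not-a-sum-of-invertibles}.

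\textbf{The main obstacle} will be making precise the bookkeeping of the generic splitting tower and the $\K{m}$-kernel forms: one must carefully track that $K_{n-1}$ lies appropriately relative to $K_n$ along the tower, that the relevant intermediate quadric genuinely has dimension $2^n-1$ (not smaller), and that the $\K{n-1}$-specialization of an indecomposable $\Kn$-summand behaves as expected — in particular that ``$M_{\K{n-1}}$ split'' translates correctly into a statement about rationality of the classes $l_i$ over $K_{n-1}$ via Corollary~\ref{cr:li_rational_small_dim}. A secondary subtlety is ensuring that the application of Theorem~\ref{th:splitting_off_Tate} is valid: one needs $M$ (or its restriction to the intermediate field) to be indecomposable of rank $\geq 2$, and one needs the hypothesis that $M$ is not invertible, which is guaranteed by rank $\geq 2$. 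I expect the argument to go through cleanly once the tower structure is set up, with Lemma~\ref{lm:not-a-sum-of-invertibles} providing the final contradiction.
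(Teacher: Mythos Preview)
Your proposal identifies the right target (Lemma~\ref{lm:not-a-sum-of-invertibles}) but the route you sketch has a gap. The deduction ``$Q_{K_{n-1}}$ has anisotropic dimension $<2^n$'' is vacuous---it holds by the very definition of $K_{n-1}$---so nothing is gained there. More seriously, the proposed use of Theorem~\ref{th:splitting_off_Tate} does not apply: that theorem requires a Tate summand of a \emph{$\Kn$-motive} to appear over $k(X)$ for $X$ a product of $(2^n{-}1)$-dimensional quadrics defined over one base field, whereas $K_{n-1}$ is a tower of function fields of quadrics each defined over the previous stage. You never explain how the $\K{n-1}$-statement ``$M_{\K{n-1}}$ is split'' produces the $\Kn$-hypothesis required by Theorem~\ref{th:splitting_off_Tate}, and the ``hole in the splitting pattern'' discussion is a red herring.

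The paper's argument is direct and avoids both Theorem~\ref{th:splitting_off_Tate} and any splitting-pattern analysis. The idea you are missing is the role of the \emph{unstable} range for $\Kn$. First, climb the tower from $k$ to $K_{n-1}$: at each step $K\to K(Y)$ the quadric $Y$ has dimension $\ge 2^n-1$, so taking a subquadric $Q'\subset Y$ of dimension exactly $2^n-1$ and using that $K(Y\times Q')/K(Q')$ is purely transcendental, Lemma~\ref{lm:not-a-sum-of-invertibles} ensures that a non-invertible indecomposable summand over $K$ cannot become a sum of invertibles over $K(Y)$. Hence $M_{K_{n-1}}$, as a $\Kn$-motive, still has a non-invertible indecomposable summand $N$. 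Second, since $\widetilde Q:=(Q_{K_{n-1}})_{\an}$ has dimension $<2^n-1$, we are in the unstable range for $\Kn$, so Proposition~\ref{prop:unstable_quadric} identifies $N$ with the $\Kn$-specialization of an indecomposable Chow summand of the same rank $\ge 2$; this non-split Chow summand then specializes to a non-split contribution to $M_{\K{n-1}}$.
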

\begin{proof}
First we show that for any field extension $K$ 
in the generic splitting tower of $Q$ that sits in-between $k$ and $K_{n-1}$ 
the motive $M_{K}$ does not decompose into invertible summands. Indeed, 
if $Y$ is a quadric over a field $F$ with $\dim Y\ge 2^n-1$, $Q'$ is its subquadric of dimension $2^n-1$,
then the extension $F(Y\times Q')/F(Q')$ is purely transcendental, and
hence the decomposition of the base change of a motive to $F(Q')$ is finer than the one of the base change to $F(Y)$. In particular, a non-invertible indecomposable direct motivic summand of a quadric cannot decompose into a sum of invertible motives by Lemma~\ref{lm:not-a-sum-of-invertibles}.

Now consider a non-invertible indecomposable direct summand $N$ of $M_{K_{n-1}}$. 
The motive $N$ is a direct summand of the $\Kn$-motive of a quadric $\widetilde{Q}=(Q_{K_{n-1}})_{\an}$ of dimension less than $2^n-1$,
and hence it is a specialization of the indecomposable direct summand of $\mathrm{M}_{\Ch}\big({\widetilde{Q}}\big)$ (of the same rank; cf. Section~\ref{sec:unstable}).
However, a non-split summand of $\mathrm{M}_{\Ch}\big({\widetilde{Q}}\big)$ specializes to a non-split summand of $\Mot{\K{n-1}}{Q'}$,
i.e.\ $M_{ \K{n-1} }$ is non-split.
\end{proof}

\begin{Cr}
\label{cr:KRS_implies_main_conjecture_quadrics}
Assume that Kahn--Rost--Sujatha Conjecture~\ref{conj:KRS} holds for some $n$.
Then Conjecture~\ref{conj:coh_inv_morava} holds for this $n$
for every direct summand $M$ of the motive of a quadric,
i.e. if $M_{\K{m}}$ is split for $m\le n$,
then $M_{\K{n}}$ decomposes into a direct sum of the motives $L_\alpha(j)$, for some $\alpha \in \HH^{n+1}(k,\ZZ/2)$, $j\in \ZZ$.
\end{Cr}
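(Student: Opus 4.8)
The statement combines two ingredients already established in the text: Proposition~\ref{prop:from_Kn+1_to_Kn}, which guarantees that an indecomposable non-invertible summand of the $\Kn$-motive of a quadric cannot have appeared ``from nowhere'' (its $\K{n-1}$-specialization is non-split), and Proposition~\ref{prop:KRS_implies_invertible_summands_quadrics} together with Proposition~\ref{prop:inv_summand_becomes_L_alpha}, which say that, assuming Conjecture~\ref{conj:KRS}, every invertible summand of the Morava motive of a quadric is a Tate twist of some $L_\alpha$ with $\alpha \in \HH^{n+1}(k,\,\ZZ/2)$. The job is to glue these together: show that the hypothesis ``$M_{\K{m}}$ split for all $m\le n$'' forces $M_{\Kn}$ to decompose into invertible pieces, and then identify those pieces via the Kahn--Rost--Sujatha conjecture.

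First I would reduce to the $\Kn$-kernel form of $Q$: by Proposition~\ref{prop:quad_MDT_stable} and Example~\ref{ex:k(Q)/k_reflects_MD_Kn}, base change along the generic splitting tower reflects motivic decompositions, so without loss of generality $\dim Q < 2^{n+1}-1$ and $M$ is a direct summand of $\Mker Q$ (after peeling off Tate summands, which are already of the form $L_\alpha(j)$ with $\alpha=0$). Next, write $M_{\Kn} \cong \bigoplus_\lambda M^\lambda$ as a sum of indecomposable summands; I must show each $M^\lambda$ is invertible. Suppose some $M^\lambda$ has rank $>1$. By Proposition~\ref{prop:from_Kn+1_to_Kn}, its $\K{n-1}$-specialization $M^\lambda_{\K{n-1}}$ is non-split. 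But the $\K{n-1}$-specialization of $M^\lambda$ is a direct summand of $M_{\K{n-1}}$ (it lands in the $\K{n-1}$-kernel of $\Mot{\K{n-1}}{Q}$, as described right before Proposition~\ref{prop:from_Kn+1_to_Kn}), and by hypothesis $M_{\K{n-1}}$ is split — contradiction. Here I should also handle the degenerate bookkeeping: when $n=1$ one uses instead the description of $\KK$-motives of quadrics from Section~\ref{sec:K0-motive-quadric}, but since the Corollary is stated for ``some $n$'' and Proposition~\ref{prop:from_Kn+1_to_Kn} requires $n\ge 2$, I would simply note that for $n=1$ the claim reduces to Panin's computation (Section~\ref{sec:tits_k0_motives}, Section~\ref{sec:K0-motive-quadric}), where every invertible $\KK$-summand of a Severi--Brauer-type variety is visibly of the form $L_{[D]}$. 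Thus I may assume $n\ge 2$ for the interesting part.

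Having established that $M_{\Kn}$ is a direct sum of invertible motives $L^\lambda$, it remains to identify each $L^\lambda$ as a Tate twist of $L_\alpha$ for a \emph{single} $\alpha\in \HH^{n+1}(k,\,\ZZ/2)$. By Proposition~\ref{prop:invertible_summ_quadrics_iso}, all invertible summands of $\Mker Q$ are mutually isomorphic up to Tate twist, so it suffices to treat one of them, say $L=L^{\lambda_0}$. If $L$ is split there is nothing to prove ($\alpha=0$). Otherwise, by Proposition~\ref{prop:inv_summand_becomes_L_alpha} there is a field $K$ — the field of definition of the $\Kn$-kernel, which is a composite of function fields of quadrics of dimension $>2^{n+1}-2$, hence $\oKn$-universally bijective over $k$ — and a unique \emph{unramified} class $\alpha' \in \HH^{n+1}(K,\,\ZZ/2)$ with $L_K \cong L_{\alpha'}(i)$. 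Invoking Kahn--Rost--Sujatha Conjecture~\ref{conj:KRS} (which is the standing hypothesis), the base change $\HH^{n+1}(k,\,\ZZ/2)\to \HH^{n+1}_{\mathrm{nr}}(K/k,\,\ZZ/2)$ is an isomorphism, so $\alpha'$ descends to a unique $\alpha\in \HH^{n+1}(k,\,\ZZ/2)$. Then $L$ and $L_\alpha(i)$ become isomorphic over $K$, and since $K/k$ is $\oKn$-universally bijective, Proposition~\ref{prop:reflects_MD} lifts this isomorphism back to $k$: $L\cong L_\alpha(i)$. Applying the same $\alpha$ (via Proposition~\ref{prop:invertible_summ_quadrics_iso}) to every $L^\lambda$ gives the claimed decomposition $M_{\Kn}\cong \bigoplus_\lambda L_\alpha(j_\lambda)$.

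\textbf{Main obstacle.} The genuinely delicate point is not the Kahn--Rost--Sujatha input (that is quoted wholesale) but verifying that the $\K{n-1}$-specialization operation interacts correctly with the decomposition $M_{\Kn}\cong\bigoplus M^\lambda$ — i.e. that ``the $\K{n-1}$-specialization of an indecomposable summand of $M_{\Kn}$ is a summand of $M_{\K{n-1}}$'' in the precise sense needed, given that this specialization is only defined via Theorem~\ref{th:prestableMDT} and the comparison of Chow motives over the two kernel fields $K_n \subseteq K_{n-1}$. One must make sure the rank is preserved and that a non-split Chow summand over $K_{n-1}$ indeed specializes to a non-split $\K{n-1}$-summand (this is exactly the content of the last line of the proof of Proposition~\ref{prop:from_Kn+1_to_Kn}, so it can be cited, but the chaining of specializations across $K_n$ and $K_{n-1}$ needs to be spelled out carefully). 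I expect this bookkeeping to be the bulk of the work; everything else is assembling previously proved statements.
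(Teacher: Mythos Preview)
Your proposal is correct and follows essentially the same approach as the paper: for $n\ge 2$, use Proposition~\ref{prop:from_Kn+1_to_Kn} in the contrapositive to force every indecomposable summand of $M_{\Kn}$ to be invertible, then invoke the KRS conjecture to identify each invertible piece as a Tate twist of some $L_\alpha$; handle $n=1$ separately via the known $\KK$-motive computation. The paper just cites Proposition~\ref{prop:KRS_implies_invertible_summands_quadrics} (which packages your steps through Proposition~\ref{prop:inv_summand_becomes_L_alpha} and the descent of $\alpha$) rather than unpacking it, and it does not bother with the reduction to the kernel form or with Proposition~\ref{prop:invertible_summ_quadrics_iso} --- the latter is unnecessary since the statement only asks that each summand be \emph{some} $L_\alpha(j)$, not that a single $\alpha$ works for all of them. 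Your ``main obstacle'' (that the $\K{n-1}$-specialization of an indecomposable piece $M^\lambda$ really sits inside the $\K{n-1}$-specialization of the ambient Chow summand $M$) is legitimate bookkeeping but is routine once one notes that the Chow lift $\widetilde{M^\lambda}$ of Theorem~\ref{th:prestableMDT} is a direct summand of $M_{K_n}$, hence its further base change and $\K{n-1}$-specialization land inside $(M_{\K{n-1}})_{K_{n-1}}$; the paper glosses over this entirely.
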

\begin{proof}
    If $n=1$, then the decomposition of $M_{\K{1}}$ is known, see Section~\ref{sec:K0-motive-quadric}. 
    In particular, if $M_{\K0}$ is split,  $M_{\K1}$ cannot have non-invertible summands,
    and hence it is a direct sum of Tate motives and of the motives $L_{e_2(q)}$ 
    (note that the Tate motive can be seen as the motive $L_\alpha$ with $\alpha=0$).

    For $n\ge 2$ we get by Proposition~\ref{prop:from_Kn+1_to_Kn} 
    that all indecomposable summands in $M_{\Kn}$ are invertible. 
    By Proposition~\ref{prop:KRS_implies_invertible_summands_quadrics}
    Conjecture~\ref{conj:inv_summand_descent} holds, 
    and therefore invertible summands in $M_{\Kn}$ are isomorphic to Tate twists of $L_\alpha$
    for some $\alpha \in \HH^{n+1}(k,\,\ZZ/2)$.
\end{proof}

\appendix
\section{Geometric Rost Nilpotence Property.}\label{app:a}

We extend the result of \cite[Lm.~3.2]{VishZai} to arbitrary oriented cohomology theories
in characteristic 0. 
The strategy of the proof is the same as in loc.\ cit., except for one step
where refined pullbacks for Chow groups were used [loc.\ cit., Proof of Prop.~6.3].
Although refined pullbacks exist for algebraic cobordism,
 and hence for free theories as well by \cite[Sec.~6.6]{LevMor},
their construction is somewhat complicated and does not automatically extend to arbitrary oriented theories.
We provide a different proof for an analogue of \cite[Prop.~6.3]{VishZai}
that works in characteristic 0 for all oriented theories. 
In order for the article to be self-contained we also repeat the argument of \cite{VishZai}.

\begin{PropA}[{see \cite[Lm.~3.2]{VishZai} for the case of Chow groups}]
\label{prop:app_geometric_RNP}
Let $X\in \Sm_k$ be irreducible, let $j\colon U\hookrightarrow X$ be a non-empty open subset.
Let $A$ be an oriented cohomology theory on $\Sm_k$.

Then for every motive $M\in \CM_A(X)$ 
the restriction functor $j^*\colon\CM_A(X)\rarr \CM_A(U)$
induces a surjective homomorphism of rings
$$ \End(M) \rarr \End(M_U) $$
with nilpotent kernel.
\end{PropA}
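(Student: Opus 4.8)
The statement is the geometric Rost nilpotence property for motives over a base, for an arbitrary oriented cohomology theory. The surjectivity part is easy: for any $M = \mathrm{M}_A(\pi\colon Y\to X)$ with $Y \in \SmProj_X$, the group $\Hom(\mathrm M_A(Y/X), \mathrm M_A(Z/X))$ is a Borel--Moore $A$-group of $Y\times_X Z$, and the restriction to $U$ is a pullback along an open immersion. By the localization axiom \eqref{eq:loc} such pullbacks are surjective, so $\End(M)\twoheadrightarrow\End(M_U)$ is surjective; since a general $M$ is a summand of such a motive, surjectivity follows for all $M$. The real content is nilpotence of the kernel, and the plan is to follow the strategy of Vishik--Zainoulline \cite[Lm.~3.2]{VishZai}, replacing the one use of refined Gysin pullbacks for Chow groups by an argument valid for all oriented theories in characteristic $0$.

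First I would reduce to the universal theory: it suffices to prove the statement for $A = \Omega$, algebraic cobordism, since for any oriented $A$ there is a morphism of theories $\Omega\to A$ inducing a ring homomorphism $\End_\Omega(M_\Omega)\to\End_A(M)$ compatible with restriction to $U$, and by the surjectivity already established, a class in $\ker(\End_A(M)\to\End_A(M_U))$ lifts to a class in $\ker(\End_\Omega(M_\Omega)\to\End_\Omega((M_\Omega)_U))$; nilpotence upstairs gives nilpotence downstairs. Next, writing $Z = X\setminus U$ with reduced structure, the localization sequence identifies $\ker(\End_\Omega(M)\to\End_\Omega(M_U))$ with correspondences ``supported on $Z$'', i.e.\ elements of $\Omega$ of $Y\times_X Z$ pushed forward to $Y\times_X Y$. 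The composition of two such correspondences is, up to the relevant projections, computed on $Y\times_X Z\times_X Z$, and one must show that iterating this composition $N$ times lands in zero. This is where, in \cite{VishZai}, refined pullbacks along the diagonal of $Z$ are used to control the ``excess'' — their Proposition~6.3.

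The main obstacle, and the step I would spend the most effort on, is to replace \cite[Prop.~6.3]{VishZai} by a resolution-of-singularities argument available for every oriented theory. The idea: in characteristic $0$ one may take a resolution of singularities $\tilde Z\to Z$ and use Vishik's explicit finite presentation of the Borel--Moore group $\Omega(Z)$ via blow-ups with smooth centers (Section~\ref{sec:borel_moore_theory_via_resolution_singularities}); this expresses any correspondence supported on $Z$ as coming from smooth varieties of dimension $<\dim X$ mapping to $X$. One then runs a dimension/codimension induction on $\dim Z$: the composite of correspondences supported on $Z$ factors through correspondences supported on a closed subset of strictly smaller dimension (the locus where the two ``legs'' meet non-transversally), using the transversal base change property \eqref{eq:bc} for the transversal part and resolution for the non-transversal part. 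Since $\dim X$ is finite, after finitely many compositions the support is empty, giving nilpotence. Finally, to pass from the open subset $U$ to the generic point $\Spec k(X) = \mathrm{colim}_{U}U$, one invokes the coherence property of the theory (as in the cited Proposition~\ref{prop:geometric_RNP}), but that passage is outside the scope of this appendix statement. I would also double-check that the bound $N$ on the nilpotence order can be taken uniformly (e.g.\ $N = \dim X + 1$), which is automatic from the dimension induction, and that the argument is insensitive to whether $\Omega$ is graded, since we only use the localization and transversal base change axioms together with resolution of singularities.
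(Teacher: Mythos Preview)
Your overall shape is right (surjectivity from localization; nilpotence is the content), but the core nilpotence step diverges from the paper and, as written, has a real gap.

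The paper does \emph{not} resolve $Z$ and run a dimension induction on supports. Instead it resolves the base: one takes $\pi\colon\tilde X\to X$ birational, an isomorphism over $U$, with $\pi^{-1}(Z)$ a simple normal crossings divisor $\tilde D=\bigcup_t D_t$. Pulling back the correspondence ring along $p\colon\tilde Y\times_{\tilde X}\tilde Y\to Y\times_X Y$ is split injective (since $p_*(1)$ is invertible by \cite[Th.~4.4.7]{LevMor}), so it suffices to prove nilpotence over $\tilde X$. There, any $\alpha$ vanishing over $U$ is a sum of pushforwards from the smooth divisors $W_t:=f^{-1}(D_t)$, and the paper's Lemma~A.\ref{lmA} shows directly that a product of $\dim X+1$ such elements vanishes: by the self-intersection formula each repeated factor from the same $D_t$ contributes a first Chern class $c_1(N_{D_t})$ pulled back from $X$, and by transversal base change factors from distinct $D_t$'s restrict to the intersection $D_{\tilde I}$; one ends up with a product of at least $\dim D_{\tilde I}+1$ first Chern classes on $D_{\tilde I}$, which is zero. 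This Chern-class excess-intersection computation on an snc divisor is precisely the replacement for the refined Gysin pullback used in \cite[Prop.~6.3]{VishZai}, and it works verbatim for any oriented theory because it only uses pushforward, pullback, projection formula, self-intersection, and transversal base change.

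Your proposed mechanism --- ``the composite of correspondences supported on $Z$ factors through correspondences supported on a closed subset of strictly smaller dimension'' --- is not justified. After resolving $Z$ you can write each element of the kernel as a pushforward from smooth $W_i\to X$ with $\dim W_i<\dim X$, but composing two such correspondences involves the fiber product $W_1\times_X W_2$, which is in general singular and whose dimension is not controlled without exactly the kind of excess-intersection input you are trying to avoid. There is no evident reason the support dimension strictly drops at each step. Separately, your reduction to $A=\Omega$ requires that $\Omega(Y\times_X Y)\to A(Y\times_X Y)$ be surjective on Borel--Moore groups, which is not part of the axioms for a general oriented theory; the paper's argument never leaves $A$ and so avoids this issue entirely.
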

\begin{proof}
We may assume that $M = M_A(Y\rarr X)$ for a smooth projective morphism $Y\rarr X$.

Let $Z$ be the (reduced) closed complement to $U$ in $X$,
then by resolution of singularities of Hironaka there exists a birational morphism $\pi\colon \tilde{X}\rarr X$
such that the total transform of $Z$ is an snc-divisor $\tilde{D}$ in $\tilde{X}$
and $\pi$ is an isomorphism away from $Z$.
Denote $\tilde{j}\colon U\hookrightarrow \tilde{X}$
the open complement of $\tilde{D}$.

Let $\tilde{Y}:=Y\times_X \tilde{X}$,
$Y_U:=Y\times_X U\cong \tilde{Y}\times_{\tilde{X}} U=:\tilde{Y}_U$.
Then we have the following commutative diagram of rings of correspondences:
 
\begin{center}
 \begin{tikzcd}
	\mathrm{End}(M(\tilde{Y}\rarr \tilde{X})) \arrow[r, "\tilde{j}^*"] & \mathrm{End}(M(\tilde{Y}_U\rarr U)) \\
	\mathrm{End}(M(Y\rarr X)) \arrow[u, "p^*"] \arrow[r, "j^*"] & \mathrm{End}(M(Y_U\rarr U))  \arrow[u, "\cong"]\\
 \end{tikzcd}
\end{center}

where $\mathrm{End}(M(W\rarr B)) = A(W\times_B W)$.
Note that $p\colon\widetilde{Y}\times_{\widetilde{X}} \widetilde{Y} \rarr Y\times_X Y$ is a birational morphism,
and hence $p^*$ is (split) injective:
it follows from \cite[Th.~4.4.7]{LevMor} that $p_*(1)-1$ is nilpotent, i.e.\ $p_*(1)$ is invertible,
and $p_*\circ p^*(-) = p_*(1)\times -$ by the projection formula.
Therefore the nilpotence of $\mathrm{Ker\ }j^*$ follows 
from the nilpotence of $\mathrm{Ker\ }\tilde{j}^*$,

Let $\alpha \in \mathrm{End}(M(\tilde{Y}\rarr \tilde{X}))$ vanish over $U$,
we will show that $\alpha^{\circ d} =0$ for $d=\dim X+1$.
This composition can be computed 
as the push-forward of $\prod_{i=1}^{d} p_{i,i+1}^*(\alpha) \in A(Y^{\times_X (d+1)})$
along $p_{1,d+1}$, where $p_{ij}$ denote canonical projections onto the $i$-th and the $j$-th components of $Y^{\times_X(d+1)}$.
All of the elements $p_{i,i+1}^*(\alpha)$ vanish over the preimage of $U$ in $Y^{\times_X (d+1)}$,
and thus the claim follows from Lemma~A.\ref{lmA} below. 
\end{proof}

\begin{LmA}[ {cf.~\cite[Prop.~6.1]{VishZai}} ]
\label{lmA}
Let $D=\cup_{t\in I} D_t$ be an snc-divisor in $X\in \Sm_k$ with the open complement $U$,
let $f\colon W\rarr X$ be a smooth morphism.

Let $a_j \in A(W)$, $j\in J$, such that $a_j$ vanishes when restricted to $W_U:=W\times_X U$.

Then $\prod_{j\in J} a_j$ is zero if $|J|\ge \dim X +1$. 
\end{LmA}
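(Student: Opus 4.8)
The statement to prove is Lemma~A.\ref{lmA}: if $D=\bigcup_{t\in I}D_t$ is an snc-divisor in $X$ with open complement $U$, $f\colon W\rarr X$ is smooth, and $a_j\in A(W)$, $j\in J$, each restrict to $0$ on $W_U$, then $\prod_{j\in J}a_j=0$ once $|J|\geq\dim X+1$. The plan is to localize the problem along the strata of $D$ and run a Noetherian induction on $\dim X$ (or on the number of components of $D$), reducing each $a_j$ to a push-forward supported on a component of $D$ and using the snc condition to bound how many such push-forwards can be multiplied together before their supports become empty.

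First I would observe that, by the localization axiom~\eqref{eq:loc} applied to the pair $(W_U\subset W)$, an element of $A(W)$ vanishing on $W_U$ lies in the image of $A(f^{-1}(D))\rarr A(W)$, where $A(f^{-1}(D))$ is the Borel--Moore extension. Using the presentation of the Borel--Moore theory via resolution of singularities (Section~\ref{sec:borel_moore_theory_via_resolution_singularities}), or more simply the fact that $f$ is smooth so $f^{-1}(D)=\bigcup_t f^{-1}(D_t)$ is again an snc-divisor in $W$, I would write each $a_j$ as a sum $a_j=\sum_{t\in I}(\iota_t)_*(b_{j,t})$ where $\iota_t\colon f^{-1}(D_t)\hookrightarrow W$ and $b_{j,t}\in A(f^{-1}(D_t))$ — here using that each $D_t$ is itself smooth (snc condition) so $f^{-1}(D_t)\in\Sm_k$ and these push-forwards are honest push-forwards of oriented cohomology theories. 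Expanding $\prod_{j\in J}a_j$ multilinearly, a typical term is a product $\prod_{j\in J}(\iota_{t(j)})_*(b_{j,t(j)})$ for a function $t\colon J\rarr I$. By the projection formula, such a product equals a push-forward from $\bigcap_{j\in J}f^{-1}(D_{t(j)})=f^{-1}\bigl(\bigcap_{j}D_{t(j)}\bigr)$.

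The crucial point now is dimension counting via the snc hypothesis. If the function $t$ takes at least $\dim X+1$ \emph{distinct} values, then $\bigcap_{j\in J}D_{t(j)}$ is an intersection of more than $\dim X$ distinct components of an snc-divisor, hence empty, so that term vanishes; so the only surviving terms are those where $t$ takes at most $\dim X$ distinct values, meaning — by pigeonhole, since $|J|\geq\dim X+1$ — at least two indices $j\neq j'$ have $t(j)=t(j')$. In that case the product contains a factor $(\iota_{t})_*(b_{j,t})\cdot(\iota_{t})_*(b_{j',t})$ on the \emph{same} $f^{-1}(D_t)$, and by the projection formula this equals $(\iota_t)_*(b_{j,t}\cdot b_{j',t})$. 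This is the reduction step: each such term is a push-forward from $f^{-1}(D_t)$ of a product of \emph{two} classes, each of which still restricts to zero on $f^{-1}(D_t)\cap W_U=f^{-1}(D_t\cap U)$ — so I can apply the inductive hypothesis to $D_t$ (which has strictly smaller dimension, being a smooth divisor in $X$), with the snc-divisor $\bigcup_{s\neq t}(D_s\cap D_t)$ in $D_t$ and the smooth morphism $f^{-1}(D_t)\rarr D_t$. Since $|J|-1\geq\dim X\geq\dim D_t+1$, the induction applies and kills the term. The base case $\dim X=0$ is trivial since then $D=\emptyset$ and any $a_j$ vanishes identically.

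The main obstacle I anticipate is bookkeeping rather than conceptual: one must be careful that restricting $b_{j,t}\in A(f^{-1}(D_t))$ to the open part $f^{-1}(D_t\cap U)$ is genuinely zero — this requires checking that the decomposition $a_j|_{W_U}=0$ really forces, after pulling back to the smooth stratum, a class that dies on the open stratum, which follows again from~\eqref{eq:loc} applied on $f^{-1}(D_t)$ together with compatibility of push-forwards with the open restriction (transversal base change~\eqref{eq:bc}). A secondary subtlety is that the multilinear expansion of $\prod a_j$ produces terms where $t$ is not injective but the repeated component is used only \emph{once} in a given factor after merging; one has to track that after merging all equal-$t$ factors via the projection formula, the resulting push-forward is from $f^{-1}$ of an intersection of \emph{distinct} components, and then re-apply the pigeonhole/emptiness argument. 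Handling this cleanly suggests organizing the induction on the pair $(\dim X,|I|)$ lexicographically, or equivalently phrasing the whole argument as: ``the ideal of $A(W)$ generated by classes vanishing on $W_U$ is nilpotent of index $\leq\dim X+1$,'' and proving that by the stratification induction above.
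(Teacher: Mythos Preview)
Your argument has two genuine gaps.

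First, the ``merging'' formula $(\iota_t)_*(b)\cdot(\iota_t)_*(b')=(\iota_t)_*(b\cdot b')$ is wrong. The projection formula gives $(\iota_t)_*(b)\cdot(\iota_t)_*(b')=(\iota_t)_*\bigl(b\cdot\iota_t^*(\iota_t)_*(b')\bigr)$, and by the self-intersection formula $\iota_t^*(\iota_t)_*(b')=b'\cdot c_1^A(N_t)$ with $N_t=N_{W_t/W}=f^*N_{D_t/X}$. So the product is $(\iota_t)_*(b\cdot b')\cdot f^*\lambda_t$ with $\lambda_t:=c_1^A(N_{D_t/X})$; each repetition produces an extra first Chern class.

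Second, and more seriously, your induction on $\dim X$ does not go through. You verify that the classes vanish on $f^{-1}(D_t\cap U)$, but $D_t\subset D$ and $U=X\setminus D$, so $D_t\cap U=\emptyset$ and this is vacuous. The inductive hypothesis on $D_t$ with snc-divisor $\bigcup_{s\neq t}(D_s\cap D_t)$ requires vanishing on $f^{-1}(U_t)$ for $U_t:=D_t\setminus\bigcup_{s\neq t}D_s$, a \emph{different} open set, and there is no reason the $b_{j,t}$ vanish there. Concretely, take $X=\PP^1$, $D=D_1=\{0\}$, $W=X$, $f=\id$, $a_1=a_2=[\{0\}]=(\iota_1)_*(1)$. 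Then $b_{j,1}=1$, the induced snc-divisor on $D_1$ is empty so $U_1=D_1$, and $1$ does not vanish on $\{0\}$: your inductive step would require $1=0$. The real reason $a_1a_2=0$ here is $c_1(\OO(1))^2=0$ on $\PP^1$.

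The paper's proof starts as yours does --- write each $a_j=\sum_t(\iota_t)_*(b_t)$ and expand --- but then treats a monomial term with image $\tilde I:=t(J)$ directly, without induction: repeated use of transversal base change (for distinct $t$'s) and the self-intersection formula (for repeated $t$'s) rewrites it as a push-forward from $W_{\tilde I}$ multiplied by $|J|-|\tilde I|$ first Chern classes pulled back from $D_{\tilde I}$. Since $|J|-|\tilde I|\ge\dim X+1-|\tilde I|=\dim D_{\tilde I}+1$ and any product of more than $\dim Y$ first Chern classes on a smooth $Y$ vanishes, the term is zero. The Chern-class factors you dropped are exactly what replaces your intended induction.
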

\begin{proof}
Let $i_t\colon W_t\hookrightarrow W$ denote the pullback of $D_t$ along $f$,
i.e.\ $\cup_{t\in I} W_t$ is an snc-divisor on $W$. 
Let also $W_{\tilde{I}}$ denote $\cap_{t\in \tilde{I}} W_t$ for $\tilde{I}\subset I$,
i.e.\ $W_{\tilde{I}}$ is the pullback of $\cap_{t\in \tilde{I}} D_t$ along $f$.
Since $a_j$ vanishes on the complement of $W$,
it can be written as a sum $\sum_{t\in I} (i_t)_*(b_t)$ where $b_t\in A(W_t)$.
Thus, it suffices to show that the product of $\dim X+1$ elements
of the form $(j_t)_*(b)$ for some $t\in I$ is zero in $A(W)$.

Let $N_t$ denote the normal line bundle of $D_t$ in $X$, and let $\lambda_t:=c_1^A(N_t)$ be its first Chern class.
Then $f^*N_t$ is isomorphic to the normal line bundle of $W_t$ in $W$, and $f^* \lambda_t = c_1^A(f^* N_t)$.

The  self-intersection formula (cf.~\cite[Lm.~5.1.11]{LevMor}) together with the projection formula yield that 
$$(j_t)_*(b_1) \cdot (j_t)_* (b_2) = (j_t)_* (b_1 \cdot b_2) \cdot f^*\lambda_t. $$

For $t_1, t_2 \in I, t_1\neq t_2$, the product of elements supported on the corresponding divisors
can be computed via the transversal base change formula:
$$ (j_{t_1})_*(b_1) \cdot (j_{t_2})_* (b_2) = (j_{\{t_1, t_2\}})_* ((b_1)|_{W_{\{t_1, t_2\}}} \cdot (b_2)|_{W_{\{t_1, t_2\}}})  $$
where $j_{\{t_1,t_2\}}\colon W_{\{t_1, t_2\}} \rarr W$ is the canonical closed embedding.

Combining these two formulas together,
we get that the product of $\dim X+1$ elements of  the form $(j_t)_*(b)$
can be written down as the push-forward from $W_{\tilde{I}}$ for some $\tilde{I}\subset I$
of the elements supported on it and  $\dim X +1 - |\tilde{I}|$ elements $\lambda_t$ 
restricted to $W_{\tilde{I}}$. Since $\lambda_t$ are defined over $X$,
their product over $W_{\tilde{I}}$ is the pullback of the product from $D_{\tilde{I}}$.
However, either $D_{\tilde{I}}$ is empty (in which case the claim is trivial)
or its dimension equals $\dim X - |\tilde{I}|$ (and is non-negative) 
and the product of at least $\dim D_{\tilde{I}}+1$ first Chern classes on it is zero.
\end{proof}

\section{$\mathrm K(2)$-Motivic Decomposition Types of quadrics}
\label{k2motives}

Proposition~\ref{prop:Kn-motive-q-small-dim_n} allows us to determine all possible $\K2$-Motivic Decomposition Types 
(see Section~\ref{sec:morava-mdt}) and indecomposable summands in $\K2$-motives of quadrics
in the case when $\dim_2(q) \le 4$. In Lemma~B.\ref{lm:K2_indec} below we show that in all other cases $\K2$-kernel motive of the quadric 
is indecomposable. We summarize these results in the following table for quadrics $Q$ of dimension at least 2.

To simplify notation, in the table $R_\alpha$ denotes the $\K2$-specialization of the corresponding Chow Rost motive.
We use the notation $d=\left\lfloor\frac{\dim Q}{2}\right\rfloor$, 
and like in Proposition~\ref{prop:Kn-motive-q-small-dim_n}, if $\dim_2(q)\leq4$,
we denote by $\alpha$ the class of $q\perp q'\in I^3(k)$ modulo $I^4(k)$, where $\dim(q')=\dim_2(q)$. 
Note that $\alpha$ can be equal to zero.

\begin{center}
\vspace{5pt}
\begin{tabular}{|c|c|c|}
\hline
$\phantom{\widetilde{\widetilde{M}}}$Kahn dimensions of $q\phantom{\widetilde{\widetilde{M}}}$&$\K{2}$-MDT of $Q$&Indecomposable summands of $\Mkertwo Q$\\
\hline
\hline
$\mathrm{dim}_2(q)=1$&
\begin{tikzpicture}
\filldraw [white] (-0.3,0) circle (1pt);
\filldraw [black] (0,0) circle (1pt);
\filldraw [black] (0.5,0) circle (1pt);
\filldraw [black] (1,0) circle (1pt);
\filldraw [white] (1.3,0) circle (1pt);
\end{tikzpicture}
&$\phantom{\widetilde{\widetilde{M}}}L_{\alpha}\sh{i},\ i=0,1,2\phantom{\widetilde{\widetilde{M}}}$\\
\hline
$\mathrm{dim}_2(q)=3$&
\begin{tikzpicture}
\filldraw [white] (-0.3,0) circle (1pt);
\filldraw [black] (0,0) circle (1pt);
\filldraw [black] (0.5,0) circle (1pt);
\filldraw [black] (1,0) circle (1pt);
\filldraw [white] (1.3,0) circle (1pt);
\draw (0.5,0) .. controls (0.75,0.1) .. (1,0);
\end{tikzpicture}
&$\phantom{\widetilde{\widetilde{M}}}L_{\alpha}\sh{d-1},\ R_{[C_0(q)]}\otimes L_{\alpha}\sh{d}\phantom{\widetilde{\widetilde{M}}}$\\
\hline
$\mathrm{dim}_2(q)>3$&
\begin{tikzpicture}
\filldraw [white] (-0.3,0) circle (1pt);
\filldraw [black] (0,0) circle (1pt);
\filldraw [black] (0.5,0) circle (1pt);
\filldraw [black] (1,0) circle (1pt);
\filldraw [white] (1.3,0) circle (1pt);
\draw (0.5,0) .. controls (0.75,0.1) .. (1,0);
\draw (0,0) .. controls (0.25,0.1) .. (0.5,0);
\end{tikzpicture}
&$\phantom{\widetilde{\widetilde{M}}}\Mkertwo Q\phantom{\widetilde{\widetilde{M}}}$\\
\hline
\hline
$\mathrm{dim}_2(q)=0$&
\begin{tikzpicture}
\filldraw [white] (0.7,0.3) circle (1pt);
\filldraw [black] (1,0) circle (1pt);
\filldraw [black] (1.5,0.2) circle (1pt);
\filldraw [black] (1.5,-0.2) circle (1pt);
\filldraw [black] (2,0) circle (1pt);
\filldraw [white] (2.3,-0.0) circle (1pt);
\end{tikzpicture}
&$L_{\alpha}\sh{i},\ i=0,1,2$\\
\hline
$\mathrm{dim}_2(q)=2$&
\begin{tikzpicture}
\filldraw [white] (0.7,0.3) circle (1pt);
\filldraw [black] (1,0) circle (1pt);
\filldraw [black] (1.5,0.2) circle (1pt);
\filldraw [black] (1.5,-0.2) circle (1pt);
\filldraw [black] (2,0) circle (1pt);
\filldraw [white] (2.3,-0.0) circle (1pt);
\draw (1.5,0.2) -- (1.5,-0.2);
\end{tikzpicture}
&$ L_{\alpha}\sh{i},\ i=d\pm1,\ R_{\mathrm{disc}(q)}\otimes L_{\alpha}\sh{d}$\\
\hline
$\mathrm{dim}_2(q)=4$, $\mathrm{dim}_1(q)=0$&
\begin{tikzpicture}
\filldraw [white] (0.7,0.3) circle (1pt);
\filldraw [black] (1,0) circle (1pt);
\filldraw [black] (1.5,0.2) circle (1pt);
\filldraw [black] (1.5,-0.2) circle (1pt);
\filldraw [black] (2,0) circle (1pt);
\filldraw [white] (2.3,-0.0) circle (1pt);
\draw (1,0) .. controls (1.25,0.15) ..(1.5,0.2);
\draw (2,0) .. controls (1.75,-0.15) ..(1.5,-0.2);
\end{tikzpicture}
&$R_{[C(q)]}\otimes L_{\alpha}\sh{i},\ i=d-1,\,d$\\
\hline
\begin{tabular}{c}
$\mathrm{dim}_2(q)=4$, $\mathrm{dim}_1(q)>0$\\ 
or $\mathrm{dim}_2(q)>4$
\end{tabular}
&
\begin{tikzpicture}
\filldraw [white] (0.7,0.5) circle (1pt);
\filldraw [black] (1,0) circle (1pt);
\filldraw [black] (1.5,0.2) circle (1pt);
\filldraw [black] (1.5,-0.2) circle (1pt);
\filldraw [black] (2,0) circle (1pt);
\filldraw [white] (2.3,-0.0) circle (1pt);
\draw (1.5,0.2) -- (1.5,-0.2);
\draw (1,0) .. controls (1.25,0.15) ..(1.5,0.2);
\draw (2,0) .. controls (1.75,-0.15) ..(1.5,-0.2);
\end{tikzpicture}
&$\Mkertwo Q$\\
\hline
\end{tabular}
\vspace{10pt}
\end{center}

\begin{LmB}
\label{lm:K2_indec}
        For a quadratic form $q$ of Kahn dimension $\dim_2(q)>4$, the $\K2$-kernel  motive of $Q$ is indecomposable. 
\end{LmB}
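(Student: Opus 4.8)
The statement concerns a quadratic form $q$ with $\dim_2(q) > 4$, and asserts that the $\K2$-kernel motive $\Mkertwo Q$ is indecomposable. My plan is to reduce to the pre-stable case and then apply the results of Section~\ref{sec:morava_quadrics} together with the structural results about invertible summands from Section~\ref{sec:inv_summand_quadrics}. First, by Proposition~\ref{prop:quad_MDT_stable} and Example~\ref{ex:k(Q)/k_reflects_MD_Kn}, the functor of base change along the generic splitting tower of $Q$ reflects motivic decompositions, so I may assume $\dim Q \le 2^{n+1}-2 = 6$ by replacing $q$ with its $\K2$-kernel form; note that the condition $\dim_2(q) > 4$ is preserved under this base change by Proposition~\ref{prop:dim_n_over_Kn-kernel-field} or, more directly, since a decomposition of $\Mkertwo Q$ would have to lift. (I should double-check which of these inputs is cleanest — the cited Corollary~\ref{cr:KRS-Witt_mod_In+1_injectivity} or Proposition~\ref{prop:dim_n_over_Kn-kernel-field} phrasing — but the reduction itself is routine given the machinery already developed.)

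Second, after this reduction I want to show that $\Mkertwo Q$ has no non-trivial direct summand. By Theorem~\ref{th:prestableMDT} the decomposition of $\Mkertwo Q$ is in bijection with the decomposition of the ``central part'' (of length $2^n-1 = 3$) of the Chow MDT of $Q$, obtained by contracting the outer excellent connections of Proposition~\ref{prop:outer-excel}. So it suffices to show that this central part of $\MCh Q$ is indecomposable for $q$ with $\dim_2(q) > 4$. Equivalently — and this is the cleaner route — I will argue directly on the Morava side: if $\Mkertwo Q$ were decomposable, then either it would contain an invertible summand, or it would split into two non-invertible summands. The first case is excluded by Proposition~\ref{prop:Kn-motive-q-small-dim_n} (or the contrapositive packaged in Conjecture~\ref{conj:inv_summand_quadrics}, which for $n=2$ is established in Corollary~\ref{cr:inv_summand_quadrics_small_n} via Kahn's conjecture, Theorem~\ref{prop:kahn-conj-known}): an invertible summand of $\Mkertwo Q$ forces $\dim_2(q) < 4$, contradicting the hypothesis. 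So any summand of $\Mkertwo Q$ must have rank $\ge 2$.

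Third, I need to rule out a splitting $\Mkertwo Q \cong N_1 \oplus N_2$ with both $N_i$ of rank $\ge 2$. Here I examine the possible ranks of $\Mkertwo{\overline Q}$ in the pre-stable range: for $\dim Q$ odd with $2^n - 1 \le \dim Q \le 2^{n+1}-2$, i.e. $\dim Q \in \{3,5\}$, the kernel has rank at most $\min\{D, 2^n - 2\} + 1 \le 3$ in the odd case — wait, I must recompute: for $n=2$ the kernel over $\overline k$ has the Tate summands $\varpi_i$, $d' \le i \le d$, numbering $d - d' + 1 = D' + 1 - 2d + \dots$; concretely from Section~\ref{sec:rat-proj-quad} the ranks are $\le 4$, with rank exactly $4$ occurring for $\dim Q = 6$. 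The only way to split a rank-$\le 4$ motive into two pieces each of rank $\ge 2$ is $4 = 2 + 2$, so the genuine obstacle is the case $\dim Q = 6$, $\Mkertwo Q$ of rank $4$. I expect to handle this by the explicit analysis of rational projectors $\sum_{i \in I} \varpi_i$ via Proposition~\ref{prop:normal} and Theorem~\ref{th:prestableMDT}(1): a splitting into two rank-$2$ pieces would correspond to two rational $\Ch$-projectors $\omega^\Ch_{\{0,1\}}$-type and $\omega^\Ch_{\{2,3\}}$-type (or similar), and the outer excellent connections of Proposition~\ref{prop:outer-excel} — which connect $\unCh\sh j$ to $\unCh\sh{j+3}$ whenever both lie in $\Lambda(Q)$ — force the full central block of length $3$ to be connected, contradicting the split. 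The main obstacle, then, is pinning down exactly which subsets $I \subseteq \{d', \ldots, d\}$ can give rational projectors when $\dim_2(q) > 4$, and showing the outer excellent connection argument genuinely excludes the $2+2$ split; I anticipate this requires combining Proposition~\ref{prop:outer-excel} with Lemma~\ref{lm:rost-kahn-dim} (to exclude that the resulting binary-type pieces descend to Rost motives, which would force small Kahn dimension) and possibly a short computation with the explicit form of the isomorphisms in Proposition~\ref{prop:quadric_kn-iso_normal_form}.
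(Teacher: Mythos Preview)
Your overall architecture is right---reduce to the pre-stable range, exclude invertible summands via Corollary~\ref{cr:inv_summand_quadrics_small_n}, and then treat the remaining rank-4 even-dimensional case. But two genuine gaps remain. First, Proposition~\ref{prop:dim_n_over_Kn-kernel-field} does \emph{not} preserve the strict inequality $\dim_2(q)>4$: it only yields $\dim_2(q')\ge 4$ for the $\K2$-kernel form $q'$ (and for even-dimensional $q$ the value $\dim_2(q')=4$ is genuinely possible). The paper handles this separately: if $\dim_2(q')=4$ and $\Mkertwo{Q'}$ decomposes, then by Proposition~\ref{prop:Kn-motive-q-small-dim_n} one has $q'\in I^2(K)$ with $\mathrm{ind}\,C(q')=2$, and Merkurjev's index reduction formula forces $\mathrm{ind}\,C(q)=\mathrm{ind}\,C(q_K)$, whence $\dim_2(q)\le 4$---a contradiction you never reach with the tools you listed.

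Second, your plan for the $2+2$ split when $\dim_2(q')>4$ does not work as stated. The outer excellent connections of Proposition~\ref{prop:outer-excel} only connect $\unCh\sh j$ to $\unCh\sh{j+3}$; they do \emph{not} by themselves make the central block indecomposable (indeed, for the $3$-Pfister quadric they are the only connections and the Chow motive has four summands). Lemma~\ref{lm:rost-kahn-dim} doesn't help either: a rank-$2$ summand of the $\K2$-kernel corresponds via Theorem~\ref{th:prestableMDT} to a rank-$4$ Chow summand, not a binary one, so no Rost-motive argument applies. The paper instead uses that $\dim_2(q')>4$ forces the splitting pattern of $q'$ to start with $(1,\ldots)$ (resp.\ $(1,1,\ldots)$) for $\dim q'=6$ (resp.\ $8$), by \cite[Table~3]{Vish-quad}, and then applies Vishik's excellent connections theorem~\cite[Th.~1.3]{VishExc} iteratively along the splitting tower (for quadrics of dimensions $4,2$, resp.\ $6,4,2$) to conclude that $\MCh{Q'}$ is indecomposable. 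You also understate the scope of the even case: rank $4$ occurs for $\dim Q'=4$ as well as $\dim Q'=6$.
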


\begin{proof}
For $\dim(q)$ odd, if $\Mkertwo Q$ is decomposable, it has an invertible summand. This contradicts Conjecture~\ref{conj:inv_summand_quadrics} on invertible summands in motives of quadrics, which is known to be true for $n=2$ (see Corollary~\ref{cr:inv_summand_quadrics_small_n}).

For $\dim(q)$ even, it is enough to show that $\Mkertwo{Q'}$ is indecomposable for the $\K2$-kernel form $q'$ defined over $K/k$ by Proposition~\ref{prop:quad_MDT_stable}. Recall that $\dim(q')\geq4$ by Proposition~\ref{prop:dim_n_over_Kn-kernel-field}, since Kahn's Conjecture~\ref{conj:kahn_n} is proven for $n=2$. 

In the case $\dim_2(q')=4$,  we conclude by Proposition~\ref{prop:Kn-motive-q-small-dim_n} that $\Mkertwo{Q'}$ can be decomposable only if $q'\in I^2(K)$, in which case $C(q')$ has index $2$. Then by Merkurjev's index reduction formula~\cite{Mer} 
we conclude that $\mathrm{ind}\big(C(q)\big)=\mathrm{ind}\big(C(q_K)\big)$, i.e., $\dim_2(q)=4$ contradicting our assumption.

In other words, we may assume that $\dim_2(q')>4$, in particular, $\dim(q')=6$ or $8$,
in which case the splitting pattern of $q$ starts with  $(1,\ldots)$, resp., $(1,1,\ldots)$~\cite[Table~3]{Vish-quad}. 
It remains to use Vishik's excellent connections~\cite[Th.~1.3]{VishExc} for quadrics of dimensions $4$ and $2$ (resp., $6$, $4$ and $2$) 
to conclude that $\MCH{Q'}$ is indecomposable. Then $\Mkertwo{Q'}$ is indecomposable as well due to Theorem~\ref{th:prestableMDT}. 
\end{proof}

\end{document}